\newtheorem{theorem}{Theorem}[section]
\newtheorem{proposition}[theorem]{Proposition}
\newtheorem{lemma}[theorem]{Lemma}
\newtheorem{definition}[theorem]{Definition}
\newtheorem{corollary}[theorem]{Corollary}
\newtheorem{remark}{Remark}[section]
\numberwithin{equation}{section}
\newcommand{\ZZ}{\mathbb{Z}}
\newcommand{\RR}{\mathbb{R}}
\newcommand{\NN}{\mathbb{N}}
\newcommand{\bbA}{\mathbb{A}}
\newcommand{\bbB}{\mathbb{B}}
\newcommand{\bbI}{\mathbb{I}}
\newcommand{\bbJ}{\mathbb{J}}
\newcommand{\bbK}{\mathbb{K}}
\newcommand{\calx}{\mathcal{X}}
\newcommand{\calt}{\mathcal{T}}
\newcommand{\cali}{\mathcal{I}}
\newcommand{\calj}{\mathcal{J}}
\newcommand{\calk}{\mathcal{K}}
\newcommand{\calm}{\mathcal{M}}
\newcommand{\call}{\mathcal{L}}
\newcommand{\Div}{{\mathrm{Div}}}
\newcommand{\Span}{{\mathrm{Span}}}
\newcommand{\Bispan}{{\mathrm{Bispan}}}
\subjclass[2020]{Primary 05B45, 11B75, 20K01. Secondary 11C08, 43A47, 51D20, 52C22.}
\title{The Coven-Meyerowitz tiling conditions for 3 odd prime factors} 
\author{Izabella {\L}aba and Itay Londner}	
\date{Revised version, \today}
\begin{document}

\begin{abstract}
It is well known that if a finite set $A\subset\ZZ$ tiles the integers by translations, then the translation set must be periodic, so that the tiling is equivalent to a factorization $A\oplus B=\ZZ_M$ of a finite cyclic group. We are interested in characterizing all finite sets $A\subset\ZZ$ that have this property.
Coven and Meyerowitz \cite{CM} proposed conditions (T1), (T2) that are sufficient for $A$ to tile, and necessary when the cardinality of $A$ has at most two distinct prime factors. They also proved that (T1) holds for all finite tiles, regardless of size. It is not known whether (T2) must hold for all tilings with no restrictions on the number of prime factors of $|A|$.

We prove that the Coven-Meyerowitz tiling condition (T2) holds for all integer tilings of period $M=(p_ip_jp_k)^2$, where
$p_i,p_j,p_k$ are distinct odd primes. The proof also provides a classification of all such tilings.
\end{abstract}

\maketitle

\tableofcontents

%%%%%%%%%%%%%%%%%%%%%%%%%%%%%%%%%%%%%%

\section{Introduction}

%%%%%%%%%%%%%%%%%%%%%%%%%%%%%%%%%%%%%%

We say that a set $A\subset\ZZ$ {\em tiles the integers by translations} 
if there is a set $T\subset\ZZ$ such that every integer $n$ can be represented uniquely
as $n=a+t$ with $a\in A$ and $t\in T$. Throughout this
article, we assume that $A$ is finite.  It is well known (see
\cite{New}) that any tiling of $\ZZ$ by a finite set $A$ must be periodic, i.e.
$T=B\oplus M\ZZ$ for some finite set $B\subset \ZZ$ such that $|A|\,|B|=M$.  Equivalently,
$A\oplus B$ is a factorization of the cyclic group $\ZZ_M$.

We are interested in determining which finite sets $A\subset\ZZ$ have this property, and, in particular,
in a characterization proposed by Coven and Meyerowitz \cite{CM}. In order to state their conditions, we need to introduce some notation.
By translational invariance, we may assume that
$A,B\subset\{0,1,\dots\}$ and that $0\in A\cap B$. The {\it characteristic 
polynomials} (also known as {\it mask polynomials}) of $A$ and $B$ are
$$
A(X)=\sum_{a\in A}X^a,\ B(x)=\sum_{b\in B}X^b .
$$
Then the tiling condition $A\oplus B=\ZZ_M$ is equivalent to
\begin{equation}\label{poly-e1}
 A(X)B(X)=1+X+\dots+X^{M-1}\ \mod (X^M-1).
\end{equation}

Let $\Phi_s(X)$ be the $s$-th cyclotomic
polynomial, i.e., the unique monic, irreducible polynomial whose roots are the
primitive $s$-th roots of unity. Alternatively, $\Phi_s$ can be defined inductively via the identity
%\begin{equation}\label{poly-e0}
$$
X^n-1=\prod_{s|n}\Phi_s(X).
$$
%\end{equation}
In particular, (\ref{poly-e1}) is equivalent to
\begin{equation}\label{poly-e2}
  |A||B|=M\hbox{ and }\Phi_s(X)\ |\ A(X)B(X)\hbox{ for all }s|M,\ s\neq 1.
\end{equation}
Since $\Phi_s$ are irreducible, each $\Phi_s(X)$ with $s|M$ must divide at least one of $A(X)$ and $B(X)$.

The following result is due to Coven and Meyerowitz \cite{CM}. 

\begin{theorem}\label{CM-thm} \cite{CM}
Let $S_A$ be the set of prime powers
$p^\alpha$ such that $\Phi_{p^\alpha}(X)$ divides $A(X)$.  Consider the following conditions.

\smallskip
{\it (T1) $A(1)=\prod_{s\in S_A}\Phi_s(1)$,}

\smallskip
{\it (T2) if $s_1,\dots,s_k\in S_A$ are powers of different
primes, then $\Phi_{s_1\dots s_k}(X)$ divides $A(X)$.}
\medskip

Then:

\begin{itemize}

\item if $A$ satisfies (T1), (T2), then $A$ tiles $\ZZ$;

\item  if $A$ tiles $\ZZ$ then (T1) holds;

\item if $A$ tiles $\ZZ$ and $|A|$ has at most two distinct prime factors,
then (T2) holds.
\end{itemize}

\end{theorem}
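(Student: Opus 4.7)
My plan splits along the three bullets. For the necessity of (T1), the starting point is that $\Phi_s(X)$ is irreducible for each $s\mid M$, so the factorization in (\ref{poly-e2}) forces every $\Phi_{p^\alpha}$ with $p^\alpha\mid M$, $\alpha\geq 1$, to divide $A(X)$ or $B(X)$; hence every such $p^\alpha$ lies in $S_A\cup S_B$. Using the elementary facts $\Phi_{p^\alpha}(1)=p$ and $\Phi_s(1)=1$ when $s$ has at least two distinct prime factors, I evaluate at $X=1$ to obtain $|A|\cdot|B|=M$ together with the divisibilities $\prod_{s\in S_A}\Phi_s(1)\mid|A|$ and $\prod_{s\in S_B}\Phi_s(1)\mid|B|$. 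Writing $M=\prod_p p^{\alpha_p}$ and comparing exponents of each prime on both sides, the bound $|\{\alpha:p^\alpha\in S_A\}|+|\{\alpha:p^\alpha\in S_B\}|\geq \alpha_p$ combined with the product identity forces equality everywhere. In particular, $S_A\cap S_B=\emptyset$ and (T1) holds for both $A$ and $B$.

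For sufficiency of (T1)+(T2), the plan is to construct $B$ directly from the cyclotomic data of $A$. Choose $M$ so that its prime-power divisors are the elements of $S_A$ together with a complementary set $S_B$, containing for each prime $p$ appearing in $S_A$ exactly those $p^j$, $1\leq j\leq \alpha_p$, not already in $S_A$. Propose $B(X)$ as a suitably normalized product of $\Phi_s(X)$ for $s$ ranging over divisors of $M$ built from $S_B$, mimicking the structure of $A$; the task is to verify that $A(X)B(X)$ has every $\Phi_d$ with $d\mid M$, $d\neq 1$, as a factor. Prime-power cyclotomic factors are handled by the construction of $S_A\cup S_B$, while cyclotomic polynomials $\Phi_d$ indexed by products of distinct prime powers are exactly where (T2) is needed: it guarantees that the required mixed $\Phi_d$'s already divide $A$ whenever the prime-power pieces lie in $S_A$, and the symmetric statement (by construction) handles those that must sit on the $B$-side. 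I would expect to formalize this via an induction on the number of distinct primes dividing $M$, producing $B$ as an iterated direct sum of arithmetic progressions whose common differences track $S_B$.

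For the (T2) claim in the two-prime case $|A|=p^aq^b$, I expect this to be the main obstacle. My strategy is induction on $|A|$, leveraging the classical theorem of Sands that every factorization of a cyclic group of order $p^aq^b$ is \emph{quasi-periodic}: one factor is a union of cosets of a non-trivial subgroup. Given a hypothetical tiling $A\oplus B=\ZZ_M$ in which some $\Phi_{s_1s_2}$ with $s_1,s_2\in S_A$ powers of distinct primes fails to divide $A(X)$, the divisibility in (\ref{poly-e2}) forces $\Phi_{s_1s_2}\mid B(X)$; I would then use the quasi-periodic structure to split the tiling along a subgroup, producing a strictly smaller tiling that still violates (T2), and close the induction. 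The essential tension is that Sands' structure theorem is \emph{sharp} in the two-prime setting but is known to fail for three or more primes (Szab\'o and others produced non-quasi-periodic factorizations), which is precisely the reason the method is not expected to generalize and why the three-prime case remains open.
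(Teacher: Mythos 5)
This theorem is not proved in the present paper at all: it is quoted from Coven--Meyerowitz \cite{CM}, and the introduction only sketches their argument. Your first two bullets follow that argument essentially as in \cite{CM}. For (T1), evaluating $A(X)B(X)\equiv 1+X+\dots+X^{M-1} \pmod{X^M-1}$ at $X=1$, using $\Phi_{p^\alpha}(1)=p$ and $\Phi_s(1)=1$ for $s$ with two or more prime factors, and comparing prime-power counts does force equality everywhere (and, as a byproduct, rules out prime powers $s\in S_A$ with $s\nmid M$, since the product restricted to divisors of $M$ already exhausts $|A|$). For sufficiency of (T1)+(T2), your construction of $B$ is exactly the standard tiling complement of \eqref{replacement-factors}: (T2) supplies the factors $\Phi_d$ whose prime-power constituents all lie in $S_A$, the fiber factors of the complement supply every $\Phi_d$ with at least one constituent outside $S_A$, and no induction on the number of primes is needed.

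The genuine gap is in the third bullet, which is the hard part of \cite{CM}. First, you omit the Tijdeman reduction: if $A$ tiles $\ZZ$ with $|A|=p^aq^b$, the period $M$ of the tiling may involve other primes, and Coven--Meyerowitz must first invoke Tijdeman's theorem to pass to a tiling of some $\ZZ_M$ whose prime factors are exactly those of $|A|$; without this step the two-prime structure theorem does not apply. Second, the structure theorem you invoke is misstated: Sands's theorem, as used in \cite{CM} and quoted in this paper, says that in a tiling $A\oplus B=\ZZ_M$ with $M=p^aq^b$ at least one of $A,B$ is contained in the proper subgroup $p\ZZ_M$ for some prime $p\mid M$; it does not say that a factor is a union of cosets of a nontrivial subgroup (i.e.\ periodic), and that stronger statement is false for general $p^aq^b$ (the cyclic groups in which every factorization has a periodic factor form a short, explicitly known list). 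Third, the step ``split the tiling along a subgroup, producing a strictly smaller tiling that still violates (T2)'' is precisely where the work lies: one must decompose the factor not contained in the subgroup into its residue classes, rescale (e.g.\ $A(X)=A'(X^p)$), and track how the cyclotomic divisors --- hence $S_A$ and the allegedly missing $\Phi_{s_1s_2}$ --- transform under this rescaling, with separate arguments according to which of $A$ or $B$ lies in $p\ZZ_M$. As a pointer to the intended strategy your sketch is aligned with \cite{CM}, but the inductive descent, which is the substance of the two-prime (T2) proof, is left unverified.
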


While (T1) is relatively easy to prove, (T2) turns out to be much deeper and more difficult. Coven and Meyerowitz \cite{CM} proved that if $A$ satisfies (T2), then $A\oplus B^\flat=\ZZ_M$, where $M=\hbox{lcm}(S_A)$ and $B^\flat$ is an explicit, highly structured ``standard" tiling complement (defined here in Section \ref{standards}). We prove in \cite{LaLo1} that having a tiling complement of this type is in fact equivalent to (T2). In this formulation, (T2) bears some resemblance to questions on replacement of factors in theory of factorizations of abelian groups (see \cite{Szabo-book} for an overview of the latter).

The proof of Theorem \ref{CM-thm} in \cite{CM} is based on an inductive argument. Coven and Meyerowitz use a theorem of Tijdeman \cite{Tij} to prove that if $A$ tiles the integers, then it also tiles $\ZZ_M$ for some $M$ which
has the same prime factors as $|A|$. Hence, if $|A|$ has at most two distinct prime factors, we may assume that so does $M$. The authors then use Sands's theorem \cite{Sands}, which states that, in any tiling 
$A\oplus B=\ZZ_M$ with $M$ divisible by at most 2 primes, at least one of $A$ and $B$ must be contained in $p\ZZ_M$ for some prime $p|M$. Coven and Meyerowitz use this to decompose the given tiling into tilings of smaller groups while keeping track of the (T2) property.
We also note that if $|A|$ is a prime power, then the Coven-Meyerowitz characterization simplifies 
further since (T2) is vacuous; in this case, the result had been proved earlier by 
Newman \cite{New}.

The Coven-Meyerowitz proof does not extend to the general case. Sands's factor replacement
theorem is false if $M$ has three or more
prime factors, with counterexamples in \cite{Sz}, \cite{LS}. On the other hand, we prove in \cite[Corollary 6.2]{LaLo1} (using a relatively minor modification of the argument in \cite{CM}) that if $A\oplus B=\ZZ_M$, and if $|A|$ and $|B|$ share at most two distinct prime factors, then both $A$ and $B$ satisfy (T2). (See also \cite{Tao-blog}, \cite{shi}.) Thus the simplest case that is not covered by these methods is when $|A|=|B|=p_ip_jp_k$, where $p_i,p_j,p_k$ are distinct primes.

Our main result is the following theorem.

\begin{theorem}\label{T2-3primes-thm}
Let $M=p_i^2p_j^2p_k^2$, where $p_i,p_j,p_k$ are distinct odd primes. 
Assume that $A\oplus B=\ZZ_M$, with $|A|=|B|=p_ip_jp_k$. Then both $A$ and $B$ satisfy (T2).
\end{theorem}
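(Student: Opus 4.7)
The plan is to exploit the extreme rigidity that $|A|=|B|=p_ip_jp_k$ imposes on the cyclotomic factorizations. Since $\Phi_{p_\ell}(1)=\Phi_{p_\ell^2}(1)=p_\ell$ for each $\ell\in\{i,j,k\}$ and (T1) holds for both $A$ and $B$ (both tile $\ZZ_M$), comparing $A(1)=p_ip_jp_k$ with $\prod_{s\in S_A}\Phi_s(1)$ forces that for each prime $p_\ell$ exactly one of $\Phi_{p_\ell},\Phi_{p_\ell^2}$ divides $A(X)$ and the other divides $B(X)$. Introduce an exponent vector $(e_i,e_j,e_k)\in\{1,2\}^3$ so that the prime-power part of $S_A$ is $\{p_\ell^{e_\ell}\}$, and write $f_\ell=3-e_\ell$. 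By the $A\leftrightarrow B$ symmetry and by permutation of primes the problem reduces to a small number of representative configurations. In each, (T2) for $A$ amounts to showing $\Phi_d\mid A(X)$ for the four composite divisors $d\in\{p_i^{e_i}p_j^{e_j},\,p_i^{e_i}p_k^{e_k},\,p_j^{e_j}p_k^{e_k},\,p_i^{e_i}p_j^{e_j}p_k^{e_k}\}$, and symmetrically for $B$ with $e_\ell$ replaced by $f_\ell$.

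For the three two-prime composite divisors I would first attempt a slab reduction. Fix $\ell\neq m$ in $\{i,j,k\}$ and study how $A$ and $B$ distribute across the cosets of the subgroup $\tfrac{M}{p_\ell^2 p_m^2}\ZZ_M$. The resulting ``fiber'' structure should yield a tiling on a group whose order involves only the two primes $p_\ell,p_m$, and the reduced mask polynomials should inherit enough of the original cyclotomic content that the two-prime case of Coven--Meyerowitz, via \cite[Cor.~6.2]{LaLo1}, forces $\Phi_{p_\ell^{e_\ell}p_m^{e_m}}\mid A(X)$ and the analogous statement for $B$. In other words, the two-prime ``edges'' of the divisor lattice of $M$ should be handled by projection.

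The genuinely new content is the three-prime divisor $d^{\ast}:=p_i^{e_i}p_j^{e_j}p_k^{e_k}$: showing $\Phi_{d^{\ast}}\mid A(X)$. Here Sands's factor-replacement theorem, the pivot of the Coven--Meyerowitz proof in the two-prime case, is both unavailable and known to fail at three primes \cite{Sz,LS}, so the argument must be structural. I would proceed by contradiction: suppose $\Phi_{d^{\ast}}\nmid A(X)$, so $\Phi_{d^{\ast}}\mid B(X)$; then testing the tiling identity $A(X)B(X)\equiv 1+X+\dots+X^{M-1}\pmod{X^M-1}$ against primitive $d^{\ast}$-th roots of unity gives equations on how $A$ distributes across cosets of the order-$d^{\ast}$ subgroup of $\ZZ_M$. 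Combining these with the symmetric equations coming from $\Phi_{p_i^{f_i}p_j^{f_j}p_k^{f_k}}\mid B(X)$, with the two-prime divisibilities already established, and with the balanced cardinality $|A|=|B|$, the plan is to produce an overdetermined system and derive a contradiction; the oddness of $p_i,p_j,p_k$ should be what rules out the low-modulus coincidences that would otherwise permit a counterexample. I expect this step -- finding the correct structural dichotomy to substitute for Sands's theorem at three primes -- to be by far the most delicate and novel part of the proof, and the classification of tilings promised in the introduction will emerge from tracking the cases that arise in this final argument.
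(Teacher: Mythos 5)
There is a genuine gap, and it occurs at both of the places where your plan would have to do real work. First, the ``slab reduction by projection'' for the two-prime composite divisors does not go through. If you reduce $A$ and $B$ modulo $N=p_\ell^2p_m^2$, the tiling identity becomes $A(X)B(X)\equiv \tfrac{M}{N}(1+X+\dots+X^{N-1}) \bmod (X^N-1)$: the images of $A$ and $B$ are weighted multisets and you obtain a multiple factorization, not a genuine set tiling of a two-prime cyclic group, so neither Coven--Meyerowitz nor \cite[Corollary 6.2]{LaLo1} applies to force $\Phi_{p_\ell^{e_\ell}p_m^{e_m}}\mid A$. Indeed, the hypothesis of that corollary is that $|A|$ and $|B|$ share at most two prime factors, which is exactly what fails here ($|A|=|B|=p_ip_jp_k$); this is precisely why the case treated in the paper is the first genuinely open one, and the two-prime ``edges'' of the divisor lattice are not easier than the triple product -- the paper never separates them, and establishes all of (T2) at once through a structural classification. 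The slab reduction that does exist (Theorem \ref{subtile} and Corollary \ref{slab-reduction}) is conditional: it requires verifying a nontrivial list of cyclotomic divisibilities (condition (ii)), and in the paper it is only invoked after the fibering structure of $A$ has been pinned down.

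Second, for the three-prime divisor your proposal is a statement of intent rather than an argument: evaluating the tiling identity at primitive $d^\ast$-th roots of unity and hoping for an ``overdetermined system'' does not identify any mechanism that replaces Sands's factor-replacement theorem. The paper's substitute is an extensive structural analysis: assuming $\Phi_M\mid A$, one classifies the restrictions of $A$ to $D(M)$-grids (either $M$-fibered, or containing diagonal boxes or an extended corner, Propositions \ref{db-prop} and \ref{prop-ecorner}), and then uses saturating sets, the box product identity, cofibered structures and the Fiber-Shifting Lemma \ref{fibershift} to show the tiling is T2-equivalent to the standard tiling $\Lambda\oplus B$ (Theorems \ref{db-theorem}, \ref{cornerthm}, \ref{unfibered-mainthm}), while the fibered case (Theorem \ref{fibered-mainthm}) terminates in the subgroup or slab reductions. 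None of this machinery, nor any equivalent of it, is present in your outline, so the proposal as written does not constitute a proof; only the opening observation (that T1 forces each of $A$, $B$ to receive exactly one prime-power cyclotomic per prime, so (T2) amounts to four composite divisibilities for each set) is correct and is also implicit in the paper.
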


We also obtain a classification of all tilings $A\oplus B=\ZZ_M$, where $M=p_i^2p_j^2p_k^2$. Our main results in that regard are Theorems \ref{unfibered-mainthm} and \ref{fibered-thm}. Since those theorems require some notation and definitions, we postpone their statements until Section \ref{classified}.

The proof of Theorem \ref{T2-3primes-thm} relies on the methods and concepts introduced in 
 \cite{LaLo1}. In order to keep this article reasonably self-contained modulo results that can be used as black boxes, 
 we provide a summary of the concepts and results that we will need here, specialized 
to the 3-prime setting, in Section \ref{sec-prelim}.
We then state our classification results in
Section \ref{classified1}. In Section \ref{classified2}, we discuss our strategy and the main new ideas of the proof. The rest of the paper is devoted to the proof of Theorems \ref{T2-3primes-thm}, \ref{unfibered-mainthm}, and \ref{fibered-thm}.

Since \cite{CM}, there has been essentially no progress on proving (T2), except for a few special cases that either assume particular structure of the tiling (see \cite{KL}, \cite{dutkay-kraus}) or are covered by the methods of \cite{CM} as in \cite[Corollary 6.2]{LaLo1} (see \cite{Tao-blog}, \cite{shi}). However, there has been recent work on other questions related to tiling. For instance, Bhattacharya \cite{Bh} has established the periodic tiling conjecture in $\ZZ^2$, with a quantitative version due to Greenfeld and Tao \cite{GT}. There has also been interesting work on tilings of the real line by a function (see \cite{KoLev} for a survey and some open questions).

The Coven-Meyerowitz tiling conditions have implications for the ongoing work on Fuglede's spectral set conjecture \cite{Fug}. Fuglede conjectured that a set $\Omega\subset \RR^n$ of positive $n$-dimensional Lebesgue measure tiles $\RR^n$ by translations if and only if it is {\em spectral}, in the sense that the space $L^2(\Omega)$ admits an orthogonal basis of exponential functions. 
While the conjecture has been disproved in its full generality in dimensions 3 and higher \cite{tao-fuglede}, \cite{KM}, \cite{KM2}, \cite{FMM}, \cite{matolcsi}, \cite{FR}, significant connections between tiling and spectrality do exist (see \cite{dutkay-lai} for an overview of the problem in dimension 1), and 
there is a large body of work investigating such connections from many points of view. 
In higher dimensions, the conjecture has been proved for convex sets in $\RR^n$, by Iosevich, Katz and Tao \cite{IKT} for $n=2$,   Greenfeld and Lev \cite{GL} for $n=3$, and by Lev and Matolcsi \cite{LM} for general $n$. There have been many recent results on special cases of the finite abelian group analogue of the conjecture \cite{IMP}, \cite{MK}, \cite{FMV}, \cite{KMSV}, \cite{KMSV2}, \cite{KS}, \cite{M}, \cite{shi}, \cite{shi2}, \cite{somlai}, \cite{FKS}, \cite{zhang}.

If (T2) could be proved for all finite integer tiles, this would imply the ``tiling implies spectrum" part of Fuglede's spectral set conjecture for all compact tiles in dimension 1, as well as for all cyclic groups $\ZZ_M$. This follows from the 
results of \cite{LW1}, \cite{LW2}, \cite{L}. Proving (T2) for specific tiling problems does not resolve the full conjecture, but it does imply that the conjecture holds in those settings. 
In that regard, our Theorem \ref{T2-3primes-thm} combined with \cite[Theorem 1.5]{L} and \cite[Corollary 6.2]{LaLo1} has the following immediate corollary.

\begin{corollary}\label{cor-Fuglede}
Let $M={p_i^2p_j^2p_k^2}$ be odd.

\smallskip
(i) The ``tiling implies spectrum" part of Fuglede's spectral set conjecture holds for the cyclic group $\ZZ_M$. In other words, if $A\subset\ZZ_M$ tiles $\ZZ_M$ by translations, then it is spectral.

\smallskip
(ii) Let $A\subset \ZZ$ be a finite set such that $A$ mod $M$ tiles $\ZZ_{M}$, and let ${F=\bigcup_{a\in A}[a,a+1]}$, so that $F$ 
tiles $\RR$ by translations. Then $F$ is spectral. 
\end{corollary}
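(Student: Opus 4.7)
The plan is to deduce both parts of the corollary directly from Theorem \ref{T2-3primes-thm} combined with \cite[Theorem 1.5]{L} and \cite[Corollary 6.2]{LaLo1}, as already suggested in the paragraph preceding the statement. The only actual work is a short case analysis on the factorization $|A|\cdot|B| = M$; everything else is invocation of the three cited results.

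For part (i), I would fix a tiling $A\oplus B = \ZZ_M$ with $M = p_i^2 p_j^2 p_k^2$ and write $|A| = p_i^a p_j^b p_k^c$ with $a,b,c\in\{0,1,2\}$, so that $|B| = p_i^{2-a}p_j^{2-b}p_k^{2-c}$. A prime $p_x$ divides both $|A|$ and $|B|$ precisely when the corresponding exponent equals $1$, so $|A|$ and $|B|$ share all three primes if and only if $(a,b,c) = (1,1,1)$, equivalently $|A| = |B| = p_ip_jp_k$. In that ``critical'' case Theorem \ref{T2-3primes-thm} yields (T2) for both $A$ and $B$; in every other case $|A|$ and $|B|$ share at most two primes and \cite[Corollary 6.2]{LaLo1} delivers (T2) for both factors. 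Since (T1) is automatic for any integer tile by Theorem \ref{CM-thm}, both $A$ and $B$ satisfy (T1) and (T2), and \cite[Theorem 1.5]{L} then asserts that $A$ is a spectral subset of $\ZZ_M$.

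For part (ii), the same case analysis gives (T1) and (T2) for the integer tile $A$, and what remains is the lifting from spectrality in $\ZZ_M$ to spectrality of the step set $F = \bigcup_{a\in A}[a,a+1]\subset \RR$. This is the content of the bridge between finite cyclic tiles and step-function tiles of the real line developed in \cite{LW1}, \cite{LW2}, \cite{L}: once (T1) and (T2) are known, an explicit exponential basis for $L^2(F)$ can be written down in terms of $S_A$ and the cyclotomic data supplied by (T2). I do not expect any genuine obstacle in either part, since the whole corollary is a bookkeeping exercise combining three already-available inputs; the only step that genuinely uses the new results of this paper is the borderline case $|A| = |B| = p_ip_jp_k$, where \cite[Corollary 6.2]{LaLo1} stops working and Theorem \ref{T2-3primes-thm} takes over.
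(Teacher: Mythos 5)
Your proposal is correct and follows essentially the same route as the paper: split according to whether $|A|=p_ip_jp_k$ (where Theorem \ref{T2-3primes-thm} gives (T2)) or not (where \cite[Corollary 6.2]{LaLo1} applies, since then $|A|$ and $|B|$ share at most two primes), and then invoke \cite[Theorem 1.5]{L} for spectrality. The only cosmetic difference is that the paper notes \cite[Theorem 1.5]{L} is stated for unions of intervals as in (ii) and that the same argument transfers to the cyclic group setting of (i), whereas you phrase the transfer in the opposite direction; this does not affect correctness.
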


Indeed, let $M$ and $A$ be as in Corollary \ref{cor-Fuglede}. 
If $|A|\neq p_ip_jp_k$, then both $A$ and $B$ satisfy (T2) by \cite[Corollary 6.2]{LaLo1}. If on the other hand 
$|A|= p_ip_jp_k$, then both $A$ and $B$ satisfy (T2) by Theorem \ref{T2-3primes-thm}. In both cases, spectrality follows from \cite[Theorem 1.5]{L}. (While Theorem 1.5 in \cite{L} is stated for unions of finite intervals as in (ii), the same argument applies in the finite group setting. See e.g. \cite{dutkay-lai}.)

After this paper was completed, we were able to extend Theorem
\ref{T2-3primes-thm} and Corollary \ref{cor-Fuglede} to the case when $M={p_i^2p_j^2p_k^2}$ is even. Thus, both results are now known to hold with no restrictions on the parity of $M$. See the follow-up article \cite{LaLo3} for details.

%%%%%%%%%%%%%%%%%%%%%%%%%%%%%%%%%%%%%

\section{Notation and preliminaries}\label{sec-prelim}

%%%%%%%%%%%%%%%%%%%%%%%%%%%%%%%%%%%%%

This section summarizes the relevant definitions and results of \cite{LaLo1}, specialized to the 3-prime case. All material due to other authors is indicated explicitly as such.

\subsection{Multisets and mask polynomials}

Throughout this paper, we will assume that $M=p_i^{n_i}p_j^{n_j}p_k^{n_k}$, where $p_i,p_j,p_k$ are distinct primes and $n_i,n_j,n_k\in\NN$. The indices $\{i,j,k\}$ can be thought of as a permutation of $\{1,2,3\}$; however, we will always use $i,j,k$ for this purpose, freeing up numerical subscripts for other uses. While the full proof of Theorem \ref{T2-3primes-thm} requires that $n_i=n_j=n_k=2$ and that $p_i,p_j,p_k\neq 2$, many of our intermediate results are valid under weaker assumptions as indicated.

We will always work in either $\ZZ_M$ or in $\ZZ_N$ for some $N|M$. 
We use $A(X)$, $B(X)$, etc. to denote polynomials modulo $X^M-1$ with integer coefficients. 
Each such polynomial $A(X)=\sum_{a\in\ZZ_M} w_A(a) X^a$ is associated with a weighted multiset in $\ZZ_M$, which we will also denote by $A$, with weights $w_A(x)$ assigned to each $x\in\ZZ_M$. (If the coefficient of $X^x$ in $A(X)$ is 0, we set $w_A(x)=0$.) In particular, if $A$ has $\{0,1\}$ coefficients, then
$w_A$ is the characteristic function of a set $A\subset \ZZ_M$. We will use $\calm(\ZZ_M)$ to denote the 
family of all weighted multisets in $\ZZ_M$, and reserve the notation $A\subset \ZZ_M$ for sets.

If $N|M$, then any $A\in \calm(\ZZ_M)$ induces a weighted multiset $A$ mod $N$ in $\ZZ_N$, with the corresponding mask polynomial $A(X)$ mod $(X^N-1)$ and induced weights 
%\begin{equation}\label{induced-weights}
$$
w_A^N(x)= \sum_{x'\in\ZZ_M: x'\equiv x\,{\rm mod}\, N} w_A(x'),\ \ x\in\ZZ_N.
$$
%\end{equation}
We will continue to write $A$ and $A(X)$ for $A$ mod $N$ and $A(X)$ mod $X^N-1$, respectively, while working in $\ZZ_N$.

 If $A,B\in\calm(\ZZ_M)$, we will use $A+B$ to indicate the weighted multiset corresponding to the mask polynomial
 $(A+B)(X)=A(X)+B(X)$, with the weight function $w_{A+B}(x)=w_A(x)+w_B(x)$.
 We use the convolution notation $A*B$ to denote the weighted sumset of $A$ and $B$, so that $(A*B)(X)=A(X)B(X)$
 and
 $$w_{A*B}(x)=(w_A*w_B)(x)=\sum_{y\in\ZZ_M} w_A(x-y)w_B(y).$$
 If one of the sets is a singleton, say $A=\{x\}$, we will simplify the notation and write $x*B=\{x\}*B$. 
 The direct sum notation $A\oplus B$ is reserved for tilings, i.e., $A\oplus B=\ZZ_M$ means that $A,B\subset\ZZ_M$ are both sets and $A(X)B(X)=\frac{X^M-1}{X-1}$ mod $X^M-1$. 
We will not use derivatives of polynomials in this paper, hence notation such as $A'$, $A''$, etc., will be used to denote auxiliary multisets and polynomials rather than derivatives.

%%%%%%%%%%%%%%%%%%%%%%%%%%%%

%%%%%%%%%%%%%%%%%%%%%%%%%%%%%%%%%%%%%

\subsection{Array coordinates and geometric representation}
\label{sec-array}

%%%%%%%%%%%%%%%%%%%%%%%%%%%%%%%%%%%%%

For $\nu\in\{i,j,k\}$, define 
$M_\nu: = M/p_\nu^{n_\nu}= \prod_{\kappa\neq \nu} p_\kappa^{n_\kappa}$. 
Then each $x\in \ZZ_M$ can be written uniquely as
$$
x=\sum_{\nu\in\{i,j,k\}} \pi_\nu(x) M_\nu,\ \ \pi_\nu(x)\in \ZZ_{p_\nu^{n_\nu}}.
$$
This sets up an isomorphism $\ZZ_M\simeq \ZZ_{p_i^{n_i}}\oplus \ZZ_{p_j^{n_j}}\oplus \ZZ_{p_k^{n_k}}$, and identifies each element 
$x\in\ZZ_M$ with an element of a 3-dimensional lattice with coordinates $(\pi_i(x),\pi_j(x),\pi_k(x))$.
The tiling $A\oplus B=\ZZ_M$ can then be interpreted as a tiling of that lattice. 

For $D|M$, a {\em $D$-grid} in $\ZZ_M$ is a set of the form
$$
\Lambda(x,D):= x+D\ZZ_M=\{x'\in\ZZ_M:\ D|(x-x')\}
$$
for some $x\in\ZZ_M$. In other words, it is a coset of the subgroup of order $M/D$ in $\ZZ_M$. 

A few special cases have a geometric interpretation of interest. 
A {\em line} through $x\in\ZZ_M$ in the $p_\nu$ direction is the set
$\ell_\nu(x):= \Lambda(x,M_\nu)$, and a {\em plane} through $x\in\ZZ_M$ perpendicular to the $p_\nu$ direction, on the scale $M_\nu p_\nu^{\alpha_\nu}$, is the set
$\Pi(x,p_\nu^{\alpha_\nu}):=\Lambda(x,p_\nu^{\alpha_\nu}).$

An {\em $M$-fiber in the $p_\nu$ direction} is a set of the form $x*F_\nu$, where $x\in\ZZ_M$ and
%\begin{equation}\label{def-Fi}
$$
F_\nu=\{0,M/p_\nu,2M/p_\nu,\dots,(p_\nu-1)M/p_\nu\}.
$$
%\end{equation}
Thus $x*F_\nu=\Lambda(x,M/p_\nu)$.  A set $A\subset \ZZ_M$ is {\em $M$-fibered in the $p_\nu$ direction} if there is a subset $A'\subset A$ such that $A=A'*F_\nu$.

For $N|M$, we define
$$D(N)=\frac{N}{\hbox{rad}(N)},$$
where $\hbox{rad}(N)$ is the radical of $N$, i.e., the product of the distinct primes dividing $N$. Explicitly, 
if $N=p_i^{\alpha_i} p_j^{\alpha_j} p_k^{\alpha_k}$ with  $0\leq \alpha_\nu\leq n_\nu$, then 
$$
D(N):= p_i^{\gamma_i} p_j^{\gamma_j} p_k^{\gamma_k},\ \hbox{ where }
\gamma_\nu=\max(0,\alpha_\nu-1)\hbox{ for }\nu\in\{i,j,k\}.
$$ 
We will also write $N_\nu=M/p_\nu$ for $\nu\in\{i,j,k\}$.

%%%%%%%%%%%%%%%%%%%%%%%%%%%%%%%%%%%%

\subsection{Divisor set and divisor exclusion}

%%%%%%%%%%%%%%%%%%%%%%%%%%%%%%%%%%%%

For $m,n\in\ZZ$, we use $(m,n)$ to denote the greatest common divisor of $m$ and $n$. We will also write $p^\alpha\parallel m$, where $p$ is prime and $\alpha$ is a nonnegative integer, if $p^\alpha|m$ and $p^{\alpha+1}\nmid m$.

For $N|M$ and $A\subset\ZZ_M$, we define
%\begin{equation}\label{divisors}
$$
\Div_N(A):=\{(a-a',N):\ a,a'\in A\}
$$
%\end{equation}
When $N=M$, we will omit the subscript and write $\Div(A)=\Div_M(A)$. Informally, we will refer to the elements of $\Div(A)$ as the {\em divisors of $A$} or {\em differences in $A$}. 
A theorem due to Sands \cite{Sands} states that
$A\oplus B=\ZZ_M$ if and only if $A,B\subset \ZZ_M$ are sets such that $|A|\,|B|=M$ and 
%\begin{equation}\label{div-exclusion}
$$
\Div(A) \cap \Div(B)=\{M\}.
$$
%\end{equation}
We will refer to this as {\em divisor exclusion}.

In cases when we need to indicate where a particular divisor of $A$ must occur, we will 
use the following notation for localized divisor sets. If $A,A_1,A_2\subset \ZZ_M$ and $a_0\in\ZZ_M$, we will write
\begin{equation*}
\begin{split}
\Div_N(A_1,A_2)&:=\{(a_1-a_2,N):\ a_1\in A_1,a_2\in A_2\},\\
\Div_N(A,a_0)=\Div_N(a_0,A) &:=\Div_N(A,\{a_0\})= \{(a-a_0,N):\ a\in A\}.
\end{split}
\end{equation*}
For example, if $A\oplus B=\ZZ_M$, we will often need to consider $\Div(A_1,A_2)$, where $A_1$ and $A_2$ are restrictions of $A$ to geometric structures such as planes or lines.

%%%%%%%%%%%%%%%%%%%%%%%%%%%%%%%%%%

\subsection{Standard tiling complements}\label{standards}

Suppose that $A\oplus B=\ZZ_M$, and let
\begin{equation*}
\mathfrak{A}_\nu(A) =\left\{\alpha_\nu \in \{1,2,\dots,n_\nu\}:\ \Phi_{p_\nu^{\alpha_\nu}}(X) | A(X) \right\}  
\end{equation*}
The {\em standard tiling complement} $A^\flat$ is defined via its mask polynomial
%\begin{equation}\label{replacement-factors}
$$
A^\flat(X) =  \prod_{\nu\in\{i,j,k\}} \prod_{\alpha_\nu\in \mathfrak{A}_\nu(A)}  
\left(1+ X^{M_\nu p_\nu^{\alpha_\nu-1}} + \dots + X^{(p_\nu-1)M_\nu p_\nu^{\alpha_\nu-1}} \right).
$$
%\end{equation}

\begin{figure}[h]
	\captionsetup{justification=centering}
	\includegraphics[scale=1.5]{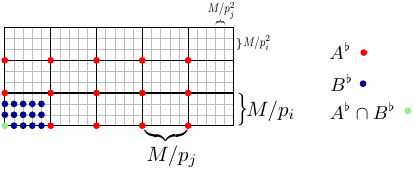}
	\caption{The standard sets $A^\flat,B^\flat\subset\ZZ_{p_i^2p_j^2}$\\
		with $p_i=3, p_j=5$ and $\Phi_{p_i^2}\Phi_{p_j^2}|A,\Phi_{p_i}\Phi_{p_j}|B$.}
\end{figure}

Then $A^\flat(X)$ satisfies (T2) and has the same prime power cyclotomic divisors as $A(X)$. 
For each prime power $s|M$, $\Phi_s$ divides exactly one of $A$ and $B$ (\cite{CM}), hence $A^\flat$ is also uniquely determined by $M$ and $B$. Coven and Meyerowitz proved in \cite{CM} that if a finite tile satisfies (T2), it has a standard tiling complement. We prove the converse in \cite{LaLo1}.

\begin{proposition}\label{replacement} Let $A\oplus B=\ZZ_M$. Then $A^\flat \oplus B= \ZZ_M$
if and only if $B$ satisfies (T2).
\end{proposition}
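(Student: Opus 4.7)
The plan is to reduce both implications to a careful accounting of which cyclotomic polynomials $\Phi_s(X)$ divide $A^\flat(X)$, and then to invoke the tiling criterion (\ref{poly-e2}).

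The key structural observation is that each factor appearing in (\ref{replacement-factors}),
\[
C_{\nu,\alpha_\nu}(X):=1+X^{M_\nu p_\nu^{\alpha_\nu-1}}+\dots+X^{(p_\nu-1)M_\nu p_\nu^{\alpha_\nu-1}} = \frac{X^{M_\nu p_\nu^{\alpha_\nu}}-1}{X^{M_\nu p_\nu^{\alpha_\nu-1}}-1},
\]
factors over $\QQ$ as $\prod_d \Phi_d(X)$ taken over those $d\mid M$ whose $p_\nu$-adic exponent equals \emph{exactly} $\alpha_\nu$. Writing $s\mid M$ with $s>1$ as $s=p_i^{a_i}p_j^{a_j}p_k^{a_k}$, one therefore obtains the clean rule:
\[
\Phi_s(X)\mid A^\flat(X) \iff a_\nu\in\mathfrak{A}_\nu(A) \text{ for some }\nu \text{ with }a_\nu\geq 1.
\]
A direct cardinality check also gives $A^\flat(1)=\prod_\nu p_\nu^{|\mathfrak{A}_\nu(A)|}=|A|$ by (T1) for $A$, so the cardinality hypothesis $|A^\flat|\,|B|=M$ in (\ref{poly-e2}) holds automatically.

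For the ``if'' direction, assume $B$ satisfies (T2). Fix $s\mid M$ with $s>1$ and let $\calp=\{\nu:a_\nu\geq 1\}$. Since $A\oplus B=\ZZ_M$, each $\Phi_{p_\nu^{a_\nu}}$ with $\nu\in\calp$ divides $A(X)B(X)$, so by irreducibility $p_\nu^{a_\nu}\in S_A\cup S_B$. If some $p_{\nu_0}^{a_{\nu_0}}\in S_A$, the rule gives $\Phi_s\mid A^\flat$; otherwise $\{p_\nu^{a_\nu}\}_{\nu\in\calp}\subset S_B$ is a collection of prime powers of distinct primes, and (T2) for $B$ yields $\Phi_s\mid B$. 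Either way $\Phi_s\mid A^\flat(X)B(X)$, so (\ref{poly-e2}) gives the polynomial identity $A^\flat(X)B(X)\equiv 1+X+\dots+X^{M-1}\pmod{X^M-1}$. Since $A^\flat$ has nonnegative integer coefficients and $B$ is a $0/1$ set, the pointwise equation $\sum_{b\in B}w_{A^\flat}^M(x-b)=1$ at each $x\in\ZZ_M$ forces $w_{A^\flat}^M$ to take values in $\{0,1\}$; hence $A^\flat$ is a set and $A^\flat\oplus B=\ZZ_M$.

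For the converse, assume $A^\flat\oplus B=\ZZ_M$ and let $s_1,\dots,s_r\in S_B$ be prime powers of distinct primes $p_{\nu_1},\dots,p_{\nu_r}$. Since $\Phi_{s_l}\mid B$ and (by the Coven--Meyerowitz fact cited just before Proposition \ref{replacement}) each prime-power cyclotomic divides exactly one of $A,B$, no $s_l$ lies in $S_A$. For the product $s:=s_1\cdots s_r$, the $p_\nu$-part of $s$ equals $s_l$ when $\nu=\nu_l$ and is $1$ otherwise, and none of $s_1,\dots,s_r$ lies in $S_A$; by the rule, $\Phi_s\nmid A^\flat$. Combined with $\Phi_s\mid A^\flat(X)B(X)$ from the new tiling, this forces $\Phi_s\mid B$, which is exactly (T2) for $B$. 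I expect no deep obstacle here---the argument is essentially cyclotomic bookkeeping---but one must track the valuation precisely: $C_{\nu,\alpha_\nu}$ contributes $\Phi_d$ only when $v_{p_\nu}(d)=\alpha_\nu$, not merely when $p_\nu^{\alpha_\nu}\mid d$, and conflating the two would break both directions.
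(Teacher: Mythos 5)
Your argument is essentially correct, and it is the natural cyclotomic bookkeeping behind this statement; note that the paper itself gives no proof of Proposition \ref{replacement}, deferring the ``if'' direction to \cite{CM} and the converse to \cite{LaLo1}, and your route (factor each block of (\ref{replacement-factors}) as $C_{\nu,\alpha}=\prod_{d\mid M,\ v_{p_\nu}(d)=\alpha}\Phi_d$, read off exactly which $\Phi_s$ divide $A^\flat$, then verify (\ref{poly-e2}) and upgrade to (\ref{poly-e1})) is the standard one underlying those references. The divisibility rule for $A^\flat$, the case split using (T2) for $B$ in the ``if'' direction, and the observation that the convolution identity forces $w^M_{A^\flat}\in\{0,1\}$ (so that $A^\flat\oplus B$ is genuinely a tiling of sets) are all fine.

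One step is used silently in both directions and deserves a sentence: you assume that every prime power in $S_A\cup S_B$ divides $M$. The identity $A^\flat(1)=\prod_{s\in S_A}\Phi_s(1)=|A|$ already needs $S_A$ to contain no prime powers $p^\beta$ with $p\nmid M$ or with $\beta$ exceeding the exponent of $p$ in $M$, since the sets $\mathfrak{A}_\nu(A)$ only record exponents up to $n_\nu$; and in the converse you apply the ``exactly one of $A,B$'' fact and the tiling criterion to $s=s_1\cdots s_r$, which requires $s\mid M$ (the paper states that fact only for prime powers dividing $M$, and if $S_B$ contained an extraneous prime power, (T2) would make demands your argument does not reach). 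This is not a real obstruction, but it must be said: writing $A=R_A\prod_{s\in S_A}\Phi_s$ and $B=R_B\prod_{s\in S_B}\Phi_s$ with $R_A(1),R_B(1)\ge 1$ and evaluating at $X=1$ gives
\begin{equation*}
M=|A|\,|B|\ \ge\ \prod_{s\in S_A}\Phi_s(1)\prod_{s\in S_B}\Phi_s(1)\ \ge\ \prod_{q}\Phi_q(1)=M,
\end{equation*}
the last product being over prime powers $q\mid M$ (each of which lies in $S_A\cup S_B$ because $\Phi_q\mid A(X)B(X)$); equality throughout forces $S_A$ and $S_B$ to partition exactly the prime powers dividing $M$. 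This is the same evaluation-at-one argument that proves (T1) in \cite{CM}. With that remark added, your proof is complete.
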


We say that the tilings $A\oplus B=\ZZ_M$ and $A'\oplus B=\ZZ_M$ are {\em T2-equivalent} if 
%\begin{equation}\label{t2-equi-a}
$$
A\hbox{ satisfies (T2) }\Leftrightarrow A'\hbox{ satisfies (T2).}
$$
%\end{equation}

Since $A$ and $A'$ tile the same group $\ZZ_M$ with the same tiling complement $B$, they must have the same cardinality and the same prime power cyclotomic divisors. We will sometimes say simply that $A$ is T2-equivalent to $A'$ if both $M$ and $B$ are clear from context.
Usually, $A'$ will be derived from $A$ using certain permitted manipulations such as fiber shifts (Lemma \ref{fibershift}).
In particular, if we can prove that either $A$ or $B$ in a given tiling is T2-equivalent to a standard tiling complement, this resolves the problem completely in that case.

\begin{corollary}\label{get-standard}
Suppose that the tiling $A\oplus B=\ZZ_M$ is T2-equivalent to the tiling $A^\flat \oplus B=\ZZ_M$. Then 
$A$ and $B$ satisfy (T2).
\end{corollary}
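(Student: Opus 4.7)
The plan is essentially to unpack the two definitions involved and apply Proposition \ref{replacement}. The corollary is really a packaging of the T2-equivalence concept together with the converse to the Coven--Meyerowitz replacement result, so the proof should be short.

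First, I would observe that $A^\flat$ \emph{always} satisfies (T2): this is built into its definition via (\ref{replacement-factors}), because $A^\flat(X)$ is an explicit product of factors $1+X^{M_\nu p_\nu^{\alpha_\nu-1}}+\dots+X^{(p_\nu-1)M_\nu p_\nu^{\alpha_\nu-1}}$ over $\alpha_\nu\in\mathfrak{A}_\nu(A)$, and such a product is well known (and is recalled in the text preceding the corollary) to satisfy the Coven--Meyerowitz conditions. Moreover, $A^\flat$ has the same prime power cyclotomic divisors as $A$, so $\mathfrak{A}_\nu(A^\flat)=\mathfrak{A}_\nu(A)$ for each $\nu$.

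Next, applying the T2-equivalence hypothesis (\ref{t2-equi-a}) with $A'=A^\flat$: since $A^\flat$ satisfies (T2), so does $A$. This disposes of (T2) for $A$.

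For $B$, I would invoke Proposition \ref{replacement} directly. The hypothesis of the corollary asserts that $A^\flat \oplus B=\ZZ_M$ is a tiling, i.e. the replacement of $A$ by $A^\flat$ still tiles $\ZZ_M$ with complement $B$. But Proposition \ref{replacement} states that $A^\flat\oplus B=\ZZ_M$ holds if and only if $B$ satisfies (T2). Therefore $B$ satisfies (T2), completing the proof.

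There is no real obstacle here; the entire content sits in the two auxiliary statements already established, namely that standard tiling complements satisfy (T2) by construction and Proposition \ref{replacement} of \cite{LaLo1}. The corollary is designed as a convenient bridge: in the main argument one produces, via fiber shifts and other permitted manipulations, a T2-equivalent tiling whose $A$-side coincides with (or can be matched to) the standard complement $A^\flat$, at which point this corollary closes the case.
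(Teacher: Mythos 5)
Your proposal is correct and is exactly the intended argument: $A^\flat$ satisfies (T2) by construction (as stated in the paper right after (\ref{replacement-factors})), so the T2-equivalence (\ref{t2-equi-a}) transfers (T2) to $A$, and Proposition \ref{replacement} applied to the tiling $A^\flat\oplus B=\ZZ_M$ gives (T2) for $B$. The paper leaves this corollary without an explicit proof, but your reasoning matches its implicit one.
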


%%%%%%%%%%%%%%%%%%%%%%%%%%%%%%%%%%%%%%%%

\subsection{Box product}\label{sec-box-product}

Let $A\subset \ZZ_M$ and $N|M$. 
For $x\in\ZZ_M$, define
\begin{align*}
\bbA^N_m[x] & = \# \{a\in A:\ (x-a,N)=m \}.
\end{align*}
We may think of $\bbA^N_m[x]$, with $x$ fixed and $m$ ranging over the divisors of $N$, as the entries of the 
{\em $N$-box} $\bbA^N[x] = (\bbA^N_m[x])_{m|N}$ \cite{LaLo1}. If $C\in\calm(\ZZ_M)$, we write 
$$\bbA_m^N [C] :=\sum_{x\in\ZZ_N}w_C^N(x) \bbA^N_m[x].$$
In particular, if $C\subset \ZZ_M$, we have
$\bbA_m^N [C] :=\sum_{c\in C}\bbA^N_m[c]$. Furthermore, 
if $X\subset\ZZ_M$ and $x\in\ZZ_M$, we define the restricted box entries
$$
\bbA^N_m[x|X] = \# \{a\in A\cap X: \ (x-a,N)=m\}.
$$
If $N=M$, we will usually omit the superscript and write
$\bbA^M_m[x]=\bbA_m[x]$, $\bbA^M[x]=\bbA[x]$, $\bbA^M_m[C]=\bbA_m[C]$, and so on.

If $A,B\subset\ZZ_M$, we define the {\em box product} of the associated $M$-boxes as
%\begin{equation}\label{inner-product}
$$
\langle \bbA[x], \bbB[y] \rangle = \sum_{m|M} \frac{1}{\phi(M/m)} \bbA_m[x] \bbB_m[y].
$$
%\end{equation}
Here $\phi$ is the Euler totient function: if 
$n=\prod_{\iota=1}^L q_\iota^{r_\iota}$, where $q_1,\dots,q_L$ are distinct primes and $r_\iota\in\NN$, then
$
\phi(n)= \prod_{\iota=1}^L (q_\iota-1)q_\iota^{r_\iota-1}.
$

\begin{theorem} (\cite{LaLo1}; following \cite[Theorem 1]{GLW}) If
$A\oplus B=\ZZ_M$, then 
\begin{equation}\label{e-ortho2}
\langle \bbA^M[x], \bbB^M[y] \rangle =1\ \ \forall x,y\in\ZZ_M.
\end{equation}
\end{theorem}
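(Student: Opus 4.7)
The plan is to expand $\bbA_m[x]$ and $\bbB_m[y]$ via the discrete Fourier transform on $\ZZ_M$, collapse the resulting double sum using an orthogonality identity for Ramanujan sums, and then apply the tiling hypothesis in its cyclotomic form. Setting $\psi_m(n):=\mathbf{1}_{\gcd(n,M)=m}$, so that $\bbA_m[x]=(\mathbf{1}_A*\psi_m)(x)$, I would first compute the Fourier transform $\widehat{\psi_m}(\xi)=c_{M/m}(\xi)$ by parametrizing $n=mk$ with $k\in(\ZZ_{M/m})^*$, where $c_r$ denotes the Ramanujan sum. Fourier inversion then yields
$$\bbA_m[x]=\frac{1}{M}\sum_{\xi\in\ZZ_M}\overline{A(\chi_\xi)}\,c_{M/m}(\xi)\,\chi_\xi(x),\qquad \chi_\xi(x):=e^{2\pi i\xi x/M},$$
together with the analogous formula for $\bbB_m[y]$. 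Substituting into the definition of the box product and interchanging the sums gives
$$\langle\bbA[x],\bbB[y]\rangle=\frac{1}{M^2}\sum_{\xi,\eta\in\ZZ_M}\overline{A(\chi_\xi)B(\chi_\eta)}\,\chi_\xi(x)\,\chi_\eta(y)\,T(\xi,\eta),$$
where $T(\xi,\eta):=\sum_{r\mid M}c_r(\xi)c_r(\eta)/\phi(r)$.

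The heart of the argument is the Ramanujan-type orthogonality
$$T(\xi,\eta)=\frac{M}{\phi(M/d)}\cdot\mathbf{1}_{\gcd(\xi,M)=\gcd(\eta,M)=d}.$$
Because the summand $c_r(\xi)c_r(\eta)/\phi(r)$ is multiplicative in $r$, this reduces to the prime-power case $M=p^n$, which I would verify directly from the explicit values $c_{p^i}(n)=\phi(p^i)$ if $p^i\mid n$, $-p^{i-1}$ if $p^{i-1}\mid n$ but $p^i\nmid n$, and $0$ otherwise. When $\gcd(\xi,M)\neq\gcd(\eta,M)$, a short geometric sum cancels exactly against the $-p^{i-1}$ contribution at the top of the allowable range, yielding $0$. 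After substituting this identity and regrouping characters by their common order $r=M/d$, the box product collapses to
$$\langle\bbA[x],\bbB[y]\rangle=\frac{1}{M}\sum_{r\mid M}\frac{1}{\phi(r)}\Bigl(\sum_{\mathrm{ord}(\chi)=r}\overline{A(\chi)}\chi(x)\Bigr)\Bigl(\sum_{\mathrm{ord}(\chi')=r}\overline{B(\chi')}\chi'(y)\Bigr).$$

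Finally, the tiling hypothesis (\ref{poly-e2}) gives $\Phi_r(X)\mid A(X)B(X)$ for every $r\mid M$ with $r>1$, so by irreducibility of $\Phi_r$ at least one of $\Phi_r\mid A$ or $\Phi_r\mid B$ holds. Either way, one of the two inner sums above vanishes (since $A(\chi)$ or $B(\chi')$ is uniformly zero on primitive $r$-th root characters), killing every term with $r>1$. Only the $r=1$ term survives, contributing $|A|\,|B|/M=1$. The main obstacle is the Ramanujan orthogonality identity for $T(\xi,\eta)$: while it reduces via multiplicativity to a prime-power evaluation, the off-diagonal cancellation requires careful bookkeeping of Möbius signs and $p$-adic valuations, and most of the work sits there. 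Once that identity is established, the remaining steps are a clean application of cyclotomic character theory.
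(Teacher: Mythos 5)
Your argument is correct: the Ramanujan-sum orthogonality $\sum_{r\mid M} c_r(\xi)c_r(\eta)/\phi(r)=\tfrac{M}{\phi(M/d)}\mathbf{1}_{(\xi,M)=(\eta,M)=d}$ does hold (the summand is multiplicative in $r$ and the prime-power case checks out from the standard values of $c_{p^i}$), and you handle the one genuinely delicate point correctly — since the two inner sums range over independent characters, pointwise vanishing of $A(\omega)B(\omega)$ at primitive $r$-th roots would not suffice, and you invoke irreducibility of $\Phi_r$ over $\QQ$ to force one entire factor to vanish, leaving only the $r=1$ term $|A|\,|B|/M=1$. Note that the present paper does not prove this theorem but cites it from \cite{LaLo1}, following \cite{GLW}; your Fourier/Ramanujan-sum computation is essentially the harmonic-analytic argument behind those sources, so there is no substantive methodological divergence to report.
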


%%%%%%%%%%%%%%%%%%%%%%%%%%%%%%%%%%%%%

\subsection{Cuboids}\label{cuboid-section}

\begin{definition}
(i) A {\em cuboid type} $\calt$ on $\ZZ_N$ is an ordered triple $\calt=(N,\vec{\delta}, T)$, where:

\begin{itemize}
\item $N=\prod_{\nu\in\{i,j,k\}} p_\nu^{n_\nu-\alpha_\nu}$ is a divisor of $M$, with $0\leq \alpha_\nu\leq n_\nu$ for each $\nu$,

\item $\vec{\delta}=(\delta_i,\delta_j,\delta_k)$, with $0\leq \delta_\nu\leq n_\nu-\alpha_\nu$, 

\item the {\em template} $T$ is a nonempty subset of $\ZZ_N$.

\end{itemize}

\medskip

(ii) A {\em cuboid} $\Delta$ of type $\calt$ is a weighted multiset corresponding to a mask polynomial of the form
%\begin{equation}\label{gc2}
$$
\Delta(X)= X^c\prod_{\nu\in \mathfrak{J}} (1-X^{d_\nu}),
$$
%\end{equation}
where $\mathfrak{J} =\mathfrak{J}_{\vec\delta} := \{\nu: \delta_\nu\neq 0\}$,
and $c,d_\nu$ are elements of $\ZZ_M$ such that $(d_\nu,N)=N/p_\nu^{\delta_\nu}$ for $\nu\in\{i,j,k\}$.
The {\em vertices} of $\Delta$ are the points
%\begin{equation}\label{gc1}
$$
x_{\vec\epsilon}=c+\sum_{\nu\in \mathfrak{J}} \epsilon_\nu d_\nu:\ \vec{\epsilon} = (\epsilon_\nu)_{\nu\in \mathfrak{J}} \in\{0,1\}^{|\mathfrak{J}|},
$$
%\end{equation}
with weights
$w_\Delta(x_{\vec\epsilon})=(-1)^{\sum_{\nu\in \mathfrak{J}}\epsilon_\nu}$.

\medskip  
(iii) 
Let $A\in\calm(\ZZ_N)$, and let $\Delta$ be a cuboid of type $\calt$. Define
$$
\bbA^\calt [\Delta] = \bbA^N_N[\Delta*T]= \sum_{  \vec\epsilon\in\{0,1\}^k} w_\Delta(x_{\vec\epsilon}) \bbA^N_N[x_{\vec\epsilon}*T],
$$
where we recall that $x*T=\{x+t: \ t\in T\}$, so that
$$
\bbA^N_N[x_{\vec\epsilon}*T]:= \sum_{t\in T} \bbA^N_N[x_{\vec\epsilon}+t].$$
\end{definition}

Informally, a cuboid type $\calt=(N,\vec{\delta}, T)$ is a class of cuboids $\Delta$ on the scale $N$, defined as in (ii) (so that $\vec{\delta}$ indicates the greatest common divisors of the differences in $\Delta$ with $N$), together with the evaluation rule in (iii) given by the template $T$. Thus, when we say that $\Delta$ is a cuboid of type $\calt$, this just means that $\Delta$ satisfies the condition in (ii) which is independent of $T$; however, the evaluation $\bbA^\calt [\Delta]$ depends on all of $A$, $N$, $\Delta$, and $T$.
For consistency, we will also write $
\bbA^\calt [x] = \bbA^N_N[x*T]$ for $x\in\ZZ_M.$ 

An important special case is as follows: for $N|M$, 
an {\em $N$-cuboid} is a cuboid of type $\calt=(N,\vec{\delta},T)$, where 
$N|M$, $T(X)=1$, and $\delta_\nu=1$ for all $\nu$ such that $p_\nu|N$. Thus, $N$-cuboids have the form
\begin{equation}\label{def-N-cuboids}
\Delta(X)= X^c\prod_{p_\nu|N} (1-X^{d_\nu}),
\end{equation}
 with $(d_\nu,N)=N/p_\nu$ for all $\nu $ such that $p_\nu|N$.
We reserve the term ``$N$-cuboid", without cuboid type explicitly indicated, to refer to cuboids as in (\ref{def-N-cuboids}); for cuboids of any other type, we will always specify $\calt$.

Cuboids provide useful criteria to determine cyclotomic divisibility properties of mask polynomials. 
We say that a multiset $A\in\calm(\ZZ_M)$
{\em is $\mathcal{T}$-null} if for every cuboid $\Delta$ of type $\mathcal{T}$, 
%\begin{equation}\label{tnull}
$$
\bbA^\calt [\Delta] =0.
$$
%\end{equation}
Note that this is a property of $A$ that depends on all of $N$, $\vec{\delta}$, and $T$.

For $A\in\calm(\ZZ_N)$, we have
$\Phi_N(X)|A(X)$ if and only if $\bbA^N_N[\Delta]=0$ for every $N$-cuboid $\Delta$. This 
has been known and used previously in the literature,
see e.g.  \cite[Section 3]{Steinberger}, or \cite[Section 3]{KMSV}.
In particular, for any $N|M$, $\Phi_N$ divides $A$ if and only if it divides the mask polynomial of $A\cap\Lambda(x,D(N))$ for every $x\in\ZZ_M$. If $N=M$, we will follow the convention from Section \ref{sec-box-product} and use the simplified notation $\bbA^M_M[\Delta]=\bbA_M[\Delta]$.

More generally, if for every $m|N$ the polynomial
$\Phi_m(X)$ divides at least one of $A(X)$, $T(X)$, or $\Delta(X)$ for every $\Delta$ of type $\calt=(N,\vec{\delta}, T)$,
then $A$ is $\calt$-null \cite[Lemma 5.3]{LaLo1}. We use such cuboid types to test for divisibility by combinations of cyclotomic polynomials.
In particular, the following are special cases of Examples (1)--(3) in \cite[Section 5.3]{LaLo1} with $M=p_i^{n_i}p_j^{n_j}p_k^{n_k}$.

\begin{itemize}

\item Assume that $n_i\geq 2$, and let $\calt=(M, \vec{\delta}, 1)$, with $\delta_i=2$ and $\delta_j=\delta_k=1$. Then
\begin{equation*}
\Phi_M \Phi_{M/p_i}|A  \Leftrightarrow A \hbox{ is }\calt\hbox{-null}.
\end{equation*}
If $n_i=1$, the same is true with $\delta_i=0$ (instead of $\delta_i=2$).

\item Assume that $n_i= 2$, and let $\calt=(M, \vec{\delta}, T)$,
where $\delta_i=0$, $\delta_j=\delta_k=1$, and
$$
T(X)=\frac{X^{M/p_i}-1}{X^{M/p_i^2}-1}
= 1+X^{M/p_i^2}+\dots+X^{(p_i-1)M/p_i^2}.
$$
If $\Phi_M\Phi_{M/p_i^2}|A$, then $A$ is $\calt$-null.
\end{itemize}

%%%%%%%%%%%%%%%%%%%%%%%%%%%%%%%%%%%%%%%

\subsection{Tiling reductions}
The general formulations of the tiling reductions below are provided in Theorems 6.1, 6.5, and Corollary 6.6 in 
\cite{LaLo1}. The additional assumption that $p_\nu\parallel |B|$ in Theorem \ref{subgroup-reduction} ensures that in any tiling 
$A'\oplus B'=\ZZ_{N_\nu}$ with $|A'|=|A|$ and $|B'|=|B|/p_\nu$, $|A'|$ and $|B'|$ have only two common factors. Hence the assumption (ii) of \cite[Theorem 6.1]{LaLo1} is satisfied by \cite[Corollary 6.2]{LaLo1}, and we deduce that $A$ and $B$ both satisfy (T2). The assumption that $p_\nu\parallel |A|$ in Corollary
\ref{slab-reduction} serves the same purpose.

\begin{theorem}\label{subgroup-reduction} {\bf (Subgroup reduction)} \cite[Lemma 2.5]{CM}
Let $M=p_i^{n_i}p_j^{n_j}p_k^{n_k}$.
Assume that $ A\oplus B=\ZZ_M $, and that $A\subset p_\nu \ZZ_M$ for some 
$\nu\in\{i,j,k\}$ such that $p_\nu\parallel |B|$.
Then $A$ and $B$ satisfy (T2). 
\end{theorem}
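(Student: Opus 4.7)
The plan is to exploit the subgroup structure imposed by $A \subset p_\nu \ZZ_M$. First I would partition $B$ according to cosets of $H := p_\nu \ZZ_M$, a cyclic subgroup of $\ZZ_M$ of order $N_\nu = M/p_\nu$: setting $B_r := B \cap (r + H)$ for $r = 0, 1, \ldots, p_\nu - 1$, the containment $A \subset H$ together with $A \oplus B = \ZZ_M$ forces $A + B_r \subset r + H$ and hence $A \oplus (B_r - r) = H$ for every $r$. Transporting these $p_\nu$ restricted tilings along the isomorphism $H \cong \ZZ_{N_\nu}$ given by $p_\nu k \mapsto k$, I obtain $p_\nu$ tilings $A' \oplus B_r' = \ZZ_{N_\nu}$, where $A'$ is the image of $A$ and $|B_r'| = |B|/p_\nu$.

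Next I would apply \cite[Corollary 6.2]{LaLo1} to each of these smaller tilings. Here the hypothesis $p_\nu \parallel |B|$ does the essential work: since $|B_r'| = |B|/p_\nu$ is coprime to $p_\nu$, any prime dividing both $|A'| = |A|$ and $|B_r'|$ must lie in $\{p_i, p_j, p_k\} \setminus \{p_\nu\}$, giving at most two common prime factors. The corollary then yields that $A'$ and each $B_r'$ satisfy (T2) as tiles of $\ZZ_{N_\nu}$.

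The remaining step is to lift (T2) back to the tiling in $\ZZ_M$, and I expect this cyclotomic bookkeeping to be the only real technical work. The mask polynomial identities $A(X) = A'(X^{p_\nu})$ and $B(X) = \sum_{r=0}^{p_\nu - 1} X^r B_r'(X^{p_\nu})$ reduce evaluation of $A$ or $B$ at a primitive $d$-th root of unity $\zeta$ (with $d \mid M$) to evaluating $A'$ and the $B_r'$ at $\zeta^{p_\nu}$, which is a primitive $(d/\gcd(d,p_\nu))$-th root of unity in $\ZZ_{N_\nu}$. This sets up a dictionary between the prime-power cyclotomic divisors of the two tilings: for every prime $p \neq p_\nu$ and every $\alpha \geq 1$, one has $p^\alpha \in S_A \Leftrightarrow p^\alpha \in S_{A'}$ and $p^\alpha \in S_B \Leftrightarrow p^\alpha \in S_{B_r'}$ for each $r$; on the other hand $p_\nu$ always lies in $S_B$ but never in $S_A$, and for $\alpha \geq 2$ one has $p_\nu^\alpha \in S_A \Leftrightarrow p_\nu^{\alpha-1} \in S_{A'}$ and $p_\nu^\alpha \in S_B \Leftrightarrow p_\nu^{\alpha-1} \in S_{B_r'}$. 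Given any selection $s_1, \ldots, s_k \in S_A$ of powers of distinct primes, I would apply (T2) for $A'$ to the ``descended'' selection (obtained by replacing the $p_\nu$-power factor, if any, by its quotient by $p_\nu$) to obtain $\Phi_{d'} \mid A'$ with $d' = (s_1 \cdots s_k)/\gcd(s_1 \cdots s_k, p_\nu)$; this translates back via $A(X) = A'(X^{p_\nu})$ to $\Phi_{s_1 \cdots s_k} \mid A$. The analogous argument works for $B$, with the additional case $s_m = p_\nu$ absorbed by the vanishing $B_r'(\zeta^{p_\nu}) = 0$ that follows from (T2) for the $B_r'$.
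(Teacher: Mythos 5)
Your route is the same one the paper takes; the paper simply delegates the work to the cited results. You decompose along cosets of $p_\nu\ZZ_M$ to get $p_\nu$ tilings $A'\oplus B_r'=\ZZ_{M/p_\nu}$, observe that $p_\nu\parallel|B|$ guarantees that $|A'|=|A|$ and $|B_r'|=|B|/p_\nu$ share at most two prime factors so that \cite[Corollary 6.2]{LaLo1} yields (T2) for $A'$ and each $B_r'$, and then transfer (T2) back to $A$ and $B$; this is precisely the content of \cite[Lemma 2.5]{CM} and \cite[Theorem 6.1]{LaLo1} that the paper invokes, together with the remark preceding the theorem about why $p_\nu\parallel|B|$ is needed. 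The coset decomposition, the verification of the two-common-primes hypothesis, and the lifting for $A$ via $A(X)=A'(X^{p_\nu})$ are all correct as stated.

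The one step whose stated justification does not work is your dictionary for $B$. You claim $p^\alpha\in S_B\Leftrightarrow p^\alpha\in S_{B_r'}$ for every $r$ (with the descended version for powers of $p_\nu$), and you derive it from the identity $B(X)=\sum_r X^r B_r'(X^{p_\nu})$. Evaluating this at a root of unity only says that a sum over $r$ vanishes; it does not give the term-by-term vanishing $B_r'(\zeta^{p_\nu})=0$, and the implication $S_B\Rightarrow S_{B_r'}$ is exactly the direction you need in order to invoke (T2) for each $B_r'$ in the final lifting step. The claim is true, but it needs the additional fact, recorded in Section \ref{standards} of the paper and due to \cite{CM}, that in any tiling each prime power $s$ dividing the group order has $\Phi_s$ dividing exactly one of the two tiles. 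Applying this in $\ZZ_M$ and in $\ZZ_{M/p_\nu}$, and using your (correct) dictionary for $A$, you get that $S_{B_r'}$ is the complement of $S_{A'}$ among prime powers dividing $M/p_\nu$, hence the same for every $r$, and the transfer from $S_B$ follows by complementation. With that inserted, your lifting argument for $B$, including the case when some $s_l$ is a power of $p_\nu$, goes through; note only that when the selection for $B$ consists of the single element $p_\nu$ the vanishing comes from $\sum_r\zeta_{p_\nu}^r=0$ rather than from vanishing of the $B_r'$.
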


\begin{theorem}\label{subtile}
Assume that $A\oplus B=\ZZ_M$ and $\Phi_{p_\nu^{n_\nu}}|A$ for some $\nu\in\{i,j,k\}$.
Define
%\begin{equation}\label{Asubtile}
$$
A_{p_\nu}=\{a\in A:\ 0\leq\pi_\nu(a)\leq p_\nu^{n_\nu-1}-1\}.
$$
%\end{equation}
Then the following are equivalent:

\smallskip

(i) For any translate $A'$ of $A$ we have $A'_{p_\nu}\oplus B=\ZZ_{M/p_\nu}$.
 
\smallskip

(ii) For every $d$ such that $p_\nu^{n_\nu}|d|M$, at least one of the following holds:
%\begin{equation}\label{ts-e2}
$$
\Phi_d|A,
$$
%\end{equation}
%\begin{equation}\label{ts-e1}
$$
\Phi_{d/p_\nu}\Phi_{d/p_\nu^2}\dots \Phi_{d/p_\nu^{n_\nu}}\mid B.
$$
%\end{equation}

\end{theorem}

\begin{corollary}\label{slab-reduction} {\bf (Slab reduction)}
Let $M=p_i^{n_i}p_j^{n_j}p_k^{n_k}$.
Assume that $A\oplus B=\ZZ_M$, and that 
there exists a $\nu\in\{i,j,k\}$ such that $\Phi_{p_\nu^{n_\nu}}|A$, $p_\nu\parallel |A|$,  and $A,B$ obey the condition (ii) of Theorem \ref{subtile}. (In particular, this holds if $A$ is $M$-fibered in one of the $p_i,p_j,p_k$ directions.)
Then $A$ and $B$ satisfy (T2). 
\end{corollary}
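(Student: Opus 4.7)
The plan is to use Theorem \ref{subtile} to descend the given tiling to a sub-tiling on $\ZZ_{M/p_\nu}$ in which the two cardinalities share at most two distinct primes, invoke the two-prime result \cite[Corollary 6.2]{LaLo1} there, and then promote the (T2) conclusion back to the original tiling using the reduction framework described in the paragraph preceding the statement.

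The hypotheses $\Phi_{p_\nu^2}\mid A$ and $p_\nu\parallel|A|$ combined with (T1) pin down $\mathfrak{A}_\nu(A)=\{2\}$. Condition (ii) of Theorem \ref{subtile} applied at $d=p_\nu^{n_\nu}$ forces $n_\nu=2$ (otherwise its second alternative would require $\Phi_{p_\nu^\alpha}\mid B$ for some $\alpha\leq n_\nu-1$, conflicting with $\Phi_{p_\nu^2}\mid A$), and hence $\Phi_{p_\nu^{n_\nu}}\mid A$, so Theorem \ref{subtile} is available. Its direction (ii)$\Rightarrow$(i) then yields a translate $A'$ of $A$ with
\begin{equation*}
A'_{p_\nu}\oplus B=\ZZ_{M/p_\nu}.
\end{equation*}
Comparing cardinalities gives $|A'_{p_\nu}|=|A|/p_\nu$, which has no factor of $p_\nu$, so its prime divisors lie in $\{p_\iota:\iota\neq\nu\}$, a set of size at most two. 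Hence $|A'_{p_\nu}|$ and $|B|$ share at most two distinct prime factors, and \cite[Corollary 6.2]{LaLo1} applied to the sub-tiling yields that both $A'_{p_\nu}$ and $B$ satisfy (T2) in $\ZZ_{M/p_\nu}$.

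Since $\Phi_{p_\nu^{n_\nu}}\nmid B$, the prime-power divisor set $S_B$ is unchanged when $B$ is viewed in $\ZZ_M$ rather than in $\ZZ_{M/p_\nu}$, so $B$ satisfies (T2) in the original tiling as well. The (T2) conclusion for $A$ in the full tiling then follows from the general reduction mechanism of \cite[Theorem 6.1]{LaLo1}, whose hypothesis of a sub-tiling enjoying the two-prime (T2) property has just been verified---precisely as described in the preceding paragraph for the analogous role played by the assumption $p_\nu\parallel|A|$.

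The main obstacle is Theorem \ref{subtile}, whose nontrivial direction (ii)$\Rightarrow$(i) supplies the sub-tiling; once that is available, the remaining argument is a clean chain of reductions to settings already treated in \cite{LaLo1}. The parenthetical remark about $M$-fibered sets is then immediate: if $A=A''*F_\nu$, then
\begin{equation*}
F_\nu(X)=\Phi_{p_\nu}(X^{M/p_\nu})=\prod_{d\mid M,\,v_{p_\nu}(d)=n_\nu}\Phi_d(X)
\end{equation*}
divides $A(X)$, so $\Phi_d\mid A$ for every $d$ with $p_\nu^{n_\nu}\mid d\mid M$, verifying condition (ii) of Theorem \ref{subtile} automatically.
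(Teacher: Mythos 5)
Your proposal is correct and takes essentially the same route the paper intends: check that Theorem \ref{subtile} is applicable (your observation via (T1) that $\mathfrak{A}_\nu(A)=\{2\}$, hence $n_\nu=2$ and $\Phi_{p_\nu^{n_\nu}}\mid A$, is exactly the right bookkeeping), pass to the subtiling $A'_{p_\nu}\oplus B=\ZZ_{M/p_\nu}$, use $p_\nu\parallel|A|$ to see that $|A'_{p_\nu}|$ and $|B|$ share at most two distinct primes so that \cite[Corollary 6.2]{LaLo1} yields (T2) there, and then promote back to the full tiling via the general reduction machinery of \cite{LaLo1}. The one slip is in the final citation: for the slab reduction the relevant black box is the general formulation in \cite[Theorem 6.5 and Corollary 6.6]{LaLo1}, not \cite[Theorem 6.1]{LaLo1}, which is the subgroup reduction underlying Theorem \ref{subgroup-reduction}; with that reference corrected, your argument (including the direct remark that $S_B$ is unchanged since $\Phi_{p_\nu^{n_\nu}}\nmid B$, and the verification of condition (ii) for $M$-fibered $A$) matches the paper's intended proof.
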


%%%%%%%%%%%%%%%%%%%%%%%%%%%%%%%%%%%%%%%%%%%%%
%%%%%%%%%%%%%%%%%%%%%%%%%%%%%%%%%%%%%%%%%%%%%%%

\subsection{Saturating sets}

Let $A\oplus B= \ZZ_M$, and $x,y\in\ZZ_M$.
Define
$$
A_{x,y}:=\{a\in A:\ (x-a,M)=(y-b,M) \hbox{ for some }b\in B\},
$$
$$
A_{x}:=\{a\in A: (x-a,M)\in\Div(B)\} =\bigcup_{b\in B} A_{x,b}.
$$
We will refer to $A_x$ as the {\em saturating set} for $x$. The sets $B_{y,x}$ and $B_y$ are defined similarly, with $A$ and $B$ interchanged.

By divisor exclusion, $A_a=\{a\}$ for all $a\in A$. For $x\in\ZZ_M\setminus A$, saturating spaces are more robust, but are still subject to geometric constraints based on divisor exclusion. 

\begin{lemma}[\bf Bispan lemma]\cite[Lemma 7.7]{LaLo1}\label{bispan-lemma} 
Let $A\oplus B=\ZZ_M$. 
For $x,x'\in\ZZ_M$ such that $(x-x',M)=p_i^{\alpha_i}p_j^{\alpha_j} p_k^{\alpha_k}$, with $0\leq \alpha_\nu\leq n_\nu$, define
\begin{equation}\label{e-span}
\begin{split}
\Span(x,x')&=\bigcup_{\nu: \alpha_\nu<n_\nu} \Pi(x,p_\nu^{\alpha_\nu+1}),
\\
\Bispan(x,x')&= \Span(x,x')\cup \Span(x',x).
\end{split}
\end{equation}
Then for any $x,x',y\in\ZZ_M$, we have
%\begin{equation}\label{setplusspan}
$$
A_{x',y}\subset A_{x,y}\cup\Bispan(x,x'),
$$
%\end{equation}
and in particular,
\begin{equation}\label{bispan}
A_x \subset \bigcap_{a\in A} \Bispan (x,a).
\end{equation}
\end{lemma}

The following special case will be used often. 

\begin{corollary}\label{bispan-corollary}
Let $A\oplus B=\ZZ_M$. 

\begin{itemize}
\item[(i)] Suppose that $x\in \ZZ_M\setminus A$ satisfies $(x-a,M)=M/p_\nu$ for some $a\in A$ and $\nu\in\{i,j,k\}$. Then
\begin{equation}\label{two-planes}
A_x\subset \Pi(x,p_\nu^{n_\nu}) \cup\Pi(a,p_\nu^{n_\nu}).
\end{equation}

\item[(ii)] If $x\in \ZZ_M\setminus A$ satisfies $\bbA_{M/p_\nu}[x]\geq 2$ for some $\nu\in\{i,j,k\}$, then
%\begin{equation}\label{one-plane}
$$
A_x\subset \Pi(x,p_i^{n_i}).
$$
%\end{equation}

\end{itemize}
\end{corollary}

\begin{proof}
Let $x,a$ be as in (i). Then
$$
\Bispan(x,a)=\Pi(x,p_i^{n_i}) \cup\Pi(a,p_i^{n_i}),
$$
so that (\ref{two-planes}) follows from (\ref{bispan}). 

Suppose now that $\bbA_{M/p_\nu}[x]\geq 2$, and let $a,a'\in A$ satisfy $a\neq a'$ and $(x-a,M)=(x-a',M)=M/p_\nu$. Then (\ref{two-planes}) holds for $x$ and $a$, as well as for $x$ and $a'$. Taking the intersection, we get
\begin{align*}
A_x& \subset \Big( \Pi(x,p_\nu^{n_\nu}) \cup\Pi(a,p_\nu^{n_\nu})\Big)
\cap \Big( \Pi(x,p_\nu^{n_\nu}) \cup\Pi(a',p_\nu^{n_\nu})\Big)
\\
&= \Pi(x,p_\nu^{n_\nu}),
\end{align*}
as claimed.
\end{proof}

In the sequel, whenever we evaluate saturating sets, we will always start with geometric restrictions based on Lemma \ref{bispan-lemma} and Corollary \ref{bispan-corollary}.

\subsection{Fibers and cofibered structures}
The following is a simplified version of the definitions and results of  \cite[Section 8]{LaLo1}. In the exposition below, we focus primarily on the case $M=p_i^2p_j^2p_k^2$, which contains all of the main ideas and will be sufficient for our purposes most of the time. The more general case is covered in Lemma \ref{fibershift-plus}. We refer the reader to \cite[Section 8]{LaLo1} for more details.

Let $N|M$, $c\in\NN$, and $\nu\in\{i,j,k\}$. Assume that $p_\nu|N$.
An {\em $N$-fiber in the $p_\nu$ direction} with multiplicity $c$ is a set $F\subset \ZZ_M$ such that
$F$ mod $N$ has the mask polynomial
$$
F(X)\equiv cX^a(1+X^{N/p_\nu}+ X^{2N/p_\nu}+\dots +X^{(p_\nu-1)N/p_\nu})\mod (X^{N}-1)
$$
%$$
%w_{F}^N(x) = \begin{cases}
%c & \hbox{ if }x\in \{ a, a+N/p_i, a+2N/p_i,\dots,a+(p_i-1) N/p_i\} , \\
%0 & \hbox{ otherwise.}\\
%\end{cases}
%$$
for some $a\in \ZZ_M$. We will say sometimes that $F$ {\em passes through} $a$, or {\em is rooted at} $a$.
A set $A\subset \ZZ_M$ is {\em $N$-fibered in the $p_\nu$ direction} if it can be written as a union of disjoint 
$N$-fibers in the $p_\nu$ direction, all with the same multiplicity.

{\em Fiber chains} in the $p_\nu$ direction are translates of sets that tile $(M/p_\nu^\gamma)\ZZ_M$ for some $\gamma$ with $1\leq\gamma\leq n_i$. For $M=p_i^2p_j^2p_k^2$, the only fiber chains of interest that are not fibers on some scale are multisets $\tilde{F}$ with mask polynomials
$$
\tilde{F}(X)=cX^a(1+X^{M_\nu}+ X^{2M_\nu}+\dots +X^{(p_\nu^2-1)M_\nu}),\ \ \nu\in\{i,j,k\},\ a\in\ZZ_M,\ c\in\NN.
$$

If $F\subset \ZZ_M$ is an $M$-fiber in the $p_\nu$ direction, we say that an element $x\in \ZZ_M$ is at {\em distance} $m$ from $F$ if $m|M$ is the maximal divisor
such that $(z-x,M)=m$ for some $z\in F$. It is easy to see that such $m$ exists.

Let $A\oplus B=\ZZ_M$. We will often be interested 
in finding ``complementary" fibers and fibered structures in $A$ and $B$, with the following special case occurring particularly often.

\begin{definition}[\bf Cofibers and (1,2) cofibered structures]\label{cofibers} 
Let $A, B\subset \ZZ_M$,  with $M=p_i^2p_j^2p_k^2$, and let $\nu\in\{i,j,k\}$.

\smallskip
 
(i) We say that $F\subset A,G\subset B$ are {\em $(1,2)$-cofibers} in the $p_\nu$ direction
if $F$ is an $M$-fiber and $G$ is an $M/p_\nu$-fiber, both in the $p_\nu$ direction.

\smallskip
(ii) We say that the pair $(A,B)$ has a {\em (1,2)-cofibered structure} in the $p_\nu$ direction if

\begin{itemize}
\item $B$ is  $M/p_\nu$-fibered in the $p_\nu$ direction,

\item $A$ contains at least one ``complementary" $M$-fiber $F\subset A$
 in the $p_\nu$ direction, which we will call a  {\em cofiber} for this structure. 
 \end{itemize}

\end{definition}

The advantage of cofibered structure is that it permits fiber shifts as described below. In many cases, we will be able to use this to reduce the given tiling to a simpler one.

\begin{lemma}[\bf Fiber-Shifting Lemma]\label{fibershift} \cite[Lemma 8.7]{LaLo1}
Let $A\oplus B=\ZZ_M$. Assume that the pair $(A,B)$ has a $(1,2)$-cofibered structure in the $p_\nu$ direction,
with a cofiber $F\subset A$. Let $A'$ be the set obtained from $A$ by shifting $F$ to a point $x\in\ZZ_M$ at 
a distance $M/p_\nu^{2}$ from it. 
Then $A'\oplus B=\ZZ_M$, and $A$ is T2-equivalent to $A'$.
\end{lemma}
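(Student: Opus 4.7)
The plan is to reduce the claim to a cyclotomic divisibility identity for $(F'(X) - F(X))\,B(X)$. Fix a root $a_0 \in F$, so that
\[
F(X) = X^{a_0}\bigl(1 + X^{M/p_\nu} + \cdots + X^{(p_\nu-1)M/p_\nu}\bigr) = X^{a_0}\prod_{\substack{d\mid M\\ p_\nu^2 \mid d}}\Phi_d(X),
\]
and set $h = x - a_0$. Then $F'(X) = X^h F(X)$ and $F'(X) - F(X) = (X^h - 1)\,F(X)$. The distance hypothesis translates to $h = M_\nu\,e$ with $M_\nu = M/p_\nu^2$ and $e$ a unit in $\ZZ_{p_\nu^2}$; it also gives $F \cap F' = \emptyset$.

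The key step is to prove
\begin{equation}\label{plan-key}
(F'(X) - F(X))\,B(X) \equiv 0 \pmod{X^M-1},
\end{equation}
which amounts to $\Phi_s \mid (F' - F)(X)\,B(X)$ for every $s \mid M$. I would split by the $p_\nu$-content of $s$:
\begin{itemize}
\item If $p_\nu^2 \mid s$, then $\Phi_s \mid F(X)$ by the formula above.
\item If $p_\nu \nmid s$ (including $s=1$), then $s \mid M_\nu$, hence $s \mid h$, so $\Phi_s \mid X^h - 1$.
\item If $p_\nu \parallel s$, write $s = p_\nu m$ with $m \mid M_\nu$. The assumption that $B$ is $M/p_\nu$-fibered in the $p_\nu$ direction gives $B(X) \equiv \bigl(1 + X^{M/p_\nu^2} + \cdots + X^{(p_\nu-1)M/p_\nu^2}\bigr) Q(X) \pmod{X^{M/p_\nu} - 1}$ for some $Q \in \ZZ[X]$. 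For a primitive $s$-th root $\zeta$, one has $\zeta^{M/p_\nu} = 1$ and $\zeta^{M_\nu}$ of exact order $p_\nu$, so the bracketed factor vanishes at $\zeta$, giving $B(\zeta) = 0$ and hence $\Phi_s \mid B(X)$.
\end{itemize}

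Combining \eqref{plan-key} with the tiling identity $A(X)B(X) \equiv (X^M-1)/(X-1) \pmod{X^M-1}$ gives $A'(X)\,B(X) \equiv (X^M-1)/(X-1) \pmod{X^M-1}$. The polynomial $A'(X) = A(X) - F(X) + F'(X)$ has non-negative integer coefficients (since $F \subseteq A$), and convolving with the $0$-$1$ polynomial $B(X)$ produces $1 + X + \cdots + X^{M-1}$ modulo $X^M-1$; since every convolution entry is a non-negative integer and these entries sum to $1$ at each point of $\ZZ_M$, each coefficient of $A'(X)$ must lie in $\{0,1\}$. Thus $A'$ is a genuine subset of $\ZZ_M$ and $A' \oplus B = \ZZ_M$. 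For T2-equivalence, in cases (i) and (ii) above $\Phi_s \mid F' - F$ yields $A \equiv A' \pmod{\Phi_s}$, and in case (iii) $\Phi_s \mid B$ forces $\Phi_s$ to divide neither $A$ nor $A'$. Since any tile of $\ZZ_M$ has all its prime-power cyclotomic factors among $\{\Phi_{p^\alpha}:\ p^\alpha \mid M\}$, we conclude $S_A = S_{A'}$, and (T2) transfers verbatim.

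The single step requiring real input is case (iii) of the cyclotomic analysis: translating the geometric $M/p_\nu$-fibering of $B$, a priori only a statement modulo $X^{M/p_\nu}-1$, into the honest $\ZZ[X]$-divisibility $\Phi_s \mid B(X)$ for every $s \mid M$ with $p_\nu \parallel s$. This is exactly why the cofibered structure is the correct hypothesis: the fibering of $B$ at scale $M/p_\nu$ produces precisely the cyclotomic absorption needed to neutralize the correction $(X^h-1)F(X)$ introduced on the $A$-side by shifting the cofiber.
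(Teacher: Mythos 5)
This lemma is imported into the present paper as a black box from \cite{LaLo1} (Section \ref{sec-prelim} only summarizes it), so there is no in-paper proof to compare against; judged on its own, your argument is essentially the standard one. The reduction to the identity $(X^h-1)F(X)B(X)\equiv 0 \pmod{X^M-1}$, checked one cyclotomic factor at a time, with the $M/p_\nu$-fibering of $B$ supplying $\Phi_s\mid B$ exactly in the case $p_\nu\parallel s$, is the same mechanism that underlies the companion paper's treatment (there phrased via fiber chains: the convolution of the $M$-cofiber with an $M/p_\nu$-fiber of $B$ is invariant under translation by $M/p_\nu^{2}$). Your deduction that $A'$ has $\{0,1\}$ coefficients directly from the convolution identity $A'(X)B(X)\equiv 1+X+\dots+X^{M-1}$ is also correct, so the tiling statement $A'\oplus B=\ZZ_M$ is established.

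The one step you should tighten is the T2-equivalence. Your assertion that in case (iii) ``$\Phi_s\mid B$ forces $\Phi_s$ to divide neither $A$ nor $A'$'' is valid only when $s$ is a prime power, where it follows from the \cite{CM} fact quoted in Section \ref{standards} that each $\Phi_{p^\alpha}$ with $p^\alpha\mid M$ divides exactly one of the two tiles; for non-prime-power $s$ with $p_\nu\parallel s$, the polynomial $\Phi_s$ can perfectly well divide both $A$ and $B$, so the claim as written is false and cannot be invoked to handle products $s_1\cdots s_k$ whose $p_\nu$-adic valuation is $1$. The repair is one line: the only prime power in case (iii) is $s=p_\nu$, and there $\Phi_{p_\nu}\mid B$ does give $p_\nu\notin S_A$ and $p_\nu\notin S_{A'}$, while case (i) gives $\Phi_{p_\nu^{2}}\mid A\Leftrightarrow\Phi_{p_\nu^{2}}\mid A'$; hence $S_A=S_{A'}$, and every product of powers of distinct primes drawn from $S_A=S_{A'}$ has $p_\nu$-adic valuation $0$ or $2$. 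Thus every $s$ that actually needs to be tested for (T2) falls into your cases (i)--(ii), where $A\equiv A'\pmod{\Phi_s}$, and (T2) for $A$ is equivalent to (T2) for $A'$. (The closing remark about prime-power cyclotomic factors of a tile is not needed and does not quite do the work you ask of it; the argument above replaces it.) With that patch the proof is complete.
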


In order to identify $(1,2)$-cofibered structures in $(A,B)$, we will use
saturating sets, via the following lemma.

\begin{lemma}\label{1dim_sat-cor}\cite[Corollary 8.11]{LaLo1}
Assume that $A\oplus B =\ZZ_M$, with $M=p_i^2p_j^2p_k^2$. 
Suppose that $x\in\ZZ_M\setminus A$, $b\in B$, $M/p_\nu\in\Div(A)$, and $A_{x,b}\subset \ell_\nu(x)$ for some $\nu\in\{i,j,k\}$. Then
%\begin{equation}\label{fib-e300}
$$
\mathbb{A}^M_{M/p_\nu^2}[x]\mathbb{B}^M_{M/p_\nu^2}[b]=\phi(p_\nu^2).
$$
%\end{equation}
with the product saturated by a $(1,2)$-cofiber pair $(F,G)$ such that $F\subset A$ is at distance $M/p_\nu^2$ from $x$ and $G\subset B$ is rooted at $b$.
In particular, if $A_x\subset \ell_\nu(x)$,
then the pair $(A,B)$ has a $(1,2)$-cofibered structure in the $p_\nu$ direction.
\end{lemma}

We now return to the more general case when we do not assume that $n_i=n_j=n_k=2$.
In this case, the inclusion $A_x\subset \ell_\nu(x)$ implies a more complicated cofibered structure, described in \cite{LaLo1} in terms of fiber chains. We will only need the following fact, which is a consequence of Lemmas 8.7, 8.8, and 8.10 (i) of \cite{LaLo1}.

\begin{lemma}\label{fibershift-plus}
Assume that $A\oplus B =\ZZ_M$, with $M=p_i^{n_i}p_j^{n_j}p_k^{n_k}$. 
Suppose that $x\in\ZZ_M\setminus A$, $M/p_\nu\in\Div(A)$, and $A_x\subset \ell_\nu(x)$ for some $\nu\in\{i,j,k\}$. Then:
\begin{itemize}
\item[(i)]  There exists a single exponent $\gamma$ with $2\leq\gamma\leq n_i$ such that 
\begin{equation}\label{fibershift-plus-e1}
\mathbb{A}^M_{M/p_\nu^\gamma}[x]\mathbb{B}^M_{M/p_\nu^\gamma}[b]=\phi(p_\nu^\gamma) \text{ for all } b\in B.
\end{equation}
\item[(ii)]  $A_x$ is a disjoint union of $M$-fibers in the 
$p_\nu$ direction. %Moreover, if $p_\nu\parallel |A|$ then $A_x$ is composed of a single $M$ fiber in the $p_\nu$ direction.
\item[(iii)] Let $A'$ be the set obtained from $A$ by shifting $A_x$ to $x$.
More precisely, let $a\in A_x$, and let $A'\subset\ZZ_M$ be the set such that for all $z\in\ZZ_M$,
$$
w_{A'}(z)=\begin{cases}
w_A(z) &\hbox{ if } M/p_\nu^\gamma \nmid x-z,\\
w_A(z-x+a) &\hbox{ if } M/p_\nu^\gamma \mid x-z.
\end{cases} 
$$
Then $A'\oplus B=\ZZ_M$, and $A$ is T2-equivalent to $A'$. Moreover, $x*F_\nu\subset A'$.
\end{itemize}

\end{lemma}

\begin{proof}
Part (i) follows from \cite[Lemma 8.8 (ii)]{LaLo1}, with $\gamma\geq 2$ since $x\not\in A$ and $M/p_\nu\in\Div(A)$. 

Part (ii) follows from \cite[Lemma 8.10 (ii)]{LaLo1}. The lemma asserts, in particular, that either the cofiber $A_x$ must be $M$-fibered in the $p_\nu$ direction, or else every $b\in B$ must belong to a fiber chain in $B$ that is $M$-fibered in the $p_\nu$ direction (see \cite[Definition 8.2]{LaLo1}). However, the second alternative is not possible since 
$M/p_\nu\in\Div(A)$.

Finally, (iii) follows from \cite[Lemma 8.7]{LaLo1}, with $A_x$ as a cofiber as provided by \cite[Lemma 8.10 (ii)]{LaLo1}.
\end{proof}

%%%%%%%%%%%%%%%%%%%%%%%%%%%%%%%%%%%%%%

\section{Classification results}\label{classified}

%%%%%%%%%%%%%%%%%%%%%%%%%%%%%%%%%%%%%%

\subsection{Classification results}\label{classified1}

We are now ready to state our classification results and provide a more detailed outline of the proof. 
Let $A\oplus B=\ZZ_M$, where $M=p_i^{n_i}p_j^{n_j}p_k^{n_k}$. By (\ref{poly-e2}), we have $\Phi_s(X)\ |\ A(X)B(X)$ for all $s|M$ such that $s\neq 1$. In particular, $\Phi_M$ divides at least one of $A(X)$ and $B(X)$. Without loss of generality, we may assume that $\Phi_M|A$.

We have $\Phi_M|A$ if and only if $\Phi_M$ divides $A\cap\Lambda(x,D(M))$ for every $x\in\ZZ_M$
(see Section \ref{cuboid-section}). This implies structure results for restrictions of $A$ to such grids. Let $\Lambda:=\Lambda(a,D(M))$ for some $a\in A$, so that $A\cap\Lambda$ is nonempty. It is easy to see that $\Phi_M|F_\nu$ for each $\nu\in\{i,j,k\}$. 
By the classic results on vanishing sums of roots of unity \cite{deB}, \cite{Re1}, \cite{Re2}, \cite{schoen}, \cite{Mann}, \cite{LL}, $\Phi_M$ divides $A\cap\Lambda$ if and only if 
$$
(A\cap\Lambda)(X)=\sum_{\nu\in\{i,j,k\}} Q_\nu(X) F_\nu(X),
$$
where $Q_i,Q_j,Q_k$ are polynomials with integer coefficients depending on both $A$ and $\Lambda$.

A particularly simple case occurs when $A(X)=Q_\nu(X)F_\nu(X)$ for a single $\nu\in \{i,j,k\}$, so that $A$ is fibered on all $D(M)$-grids in the same direction. However, much more complicated structures are also possible. 
For instance, $A\cap\Lambda$ may be $M$-fibered in different directions on different $D(M)$-grids $\Lambda$, or there may exist a $D(M)$-grid $\Lambda$ such that $A\cap\Lambda$ contains nonintersecting $M$-fibers in two or three different directions.
An additional issue is that the polynomials $Q_i,Q_j,Q_k$ are not required to have nonnegative coefficients. In such cases, there may be points $a\in A$ such that $a*F_\nu\not\subset A$ for any $\nu$, due to cancellations between $M$-fibers in different directions.

Our classification results, and our proof of (T2), split into cases according to the fibering properties of $A$. 
Theorems \ref{unfibered-mainthm} and \ref{fibered-thm} summarize our main findings in the
unfibered and fibered case, respectively.

\begin{theorem}\label{unfibered-mainthm}
Let $A\oplus B=\ZZ_M$, where $M=p_i^{2}p_j^{2}p_k^{2}$ is odd. Assume that
$|A|=|B|=p_ip_jp_k$, $\Phi_M|A$, and there exists a $D(M)$-grid $\Lambda$ such that $A\cap\Lambda$ is nonempty and is not $M$-fibered in any direction. Assume further, without loss of generality, that $0\in\Lambda$.
Then $A^\flat=\Lambda$, and the tiling $A\oplus B=\ZZ_M$ is T2-equivalent to $\Lambda \oplus B=\ZZ_M$ via fiber shifts. By Corollary \ref{get-standard}, both $A$ and $B$ satisfy (T2).
\end{theorem}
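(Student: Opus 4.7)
The plan has two parts: (a) show $A^\flat=\Lambda$; (b) exhibit a sequence of fiber shifts transforming the tiling $A\oplus B=\ZZ_M$ into $\Lambda\oplus B=\ZZ_M$. Once both are done, Corollary \ref{get-standard} gives (T2) for $A$ and $B$.

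For (a), (T1) applied to $A$ and $B$, together with the fact that each $\Phi_{p_\nu^\alpha}$ with $\alpha\in\{1,2\}$ divides exactly one of $A,B$, forces $\mathfrak{A}_\nu(A)=\{\alpha_\nu\}$ and $\mathfrak{A}_\nu(B)=\{3-\alpha_\nu\}$ for each $\nu\in\{i,j,k\}$, with $\alpha_\nu\in\{1,2\}$. By (\ref{replacement-factors}), the identity $A^\flat=\Lambda$ is equivalent to $\alpha_\nu=2$ for all $\nu$. I argue by contradiction: suppose $\alpha_i=1$, so $\Phi_{p_i^2}\mid B$. Translated into cuboid form, this says that the projection of $B$ onto $\ZZ_{p_i^2}$ is constant on each coset of $p_i\ZZ_{p_i^2}$. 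I then combine this uniformity of $B$ with the quasi-fiber decomposition $(A\cap\Lambda)(X)=Q_i(X)F_i(X)+Q_j(X)F_j(X)+Q_k(X)F_k(X)$ over $\ZZ$ (guaranteed by $\Phi_M\mid A$) to exhibit a point $x\in\Lambda\setminus A$ and $b\in B$ whose saturating set $A_{x,b}$ is forced into the single line $\ell_i(x)$. Lemma \ref{1dim_sat-cor} then produces a $(1,2)$-cofibered structure whose cofiber is an $M$-fiber $F\subset A$; shifting $F$ via Lemma \ref{fibershift} modifies $A\cap\Lambda$ in a controlled way, and tracking these changes reveals that $A\cap\Lambda$ must be a union of $M$-fibers in the $p_i$ direction, contradicting the hypothesis.

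For (b), having $\Phi_{p_\nu^2}\mid A$ and $\Phi_{p_\nu}\mid B$ for every $\nu$, I proceed by induction on $|A\setminus\Lambda|$. When $A\ne\Lambda$, select $x\in\Lambda\setminus A$; the geometric restriction (\ref{bispan}) together with the established cyclotomic divisibilities constrain $A_x$ enough to invoke Lemma \ref{1dim_sat-cor}. The resulting $(1,2)$-cofibered structure enables a fiber shift producing a tiling $A'\oplus B=\ZZ_M$ with $|A'\setminus\Lambda|<|A\setminus\Lambda|$ and $A$ T2-equivalent to $A'$. Iterating finitely many times reaches $A=\Lambda$, which establishes both $\Lambda\oplus B=\ZZ_M$ and the required T2-equivalence.

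The main obstacle is part (a). The cofiber produced by Lemma \ref{1dim_sat-cor} is an $M$-fiber in $A$ at distance $M/p_i^2$ from $x\in\Lambda$, which lies in a different $D(M)$-grid than $\Lambda$; hence the contradiction with the unfiberedness of $A\cap\Lambda$ is not direct, and one is forced to use the fiber shift together with the signed quasi-fiber decomposition (in which $Q_i,Q_j,Q_k$ may have negative coefficients, by the remarks following (\ref{poly-e2})) to extract a genuine $M$-fiber structure on $A\cap\Lambda$ itself. A delicate case analysis on the supports of $Q_i,Q_j,Q_k$, controlling which cancellations between fibers in different directions are possible, appears to be the crucial technical input.
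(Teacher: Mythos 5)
There is a genuine gap, and it sits exactly where you place your ``main obstacle'': both halves of your plan reduce to the assertion that, for a suitable $x\in\Lambda\setminus A$ (and $b\in B$), the saturating set $A_{x,b}$ or $A_x$ can be forced into a single line so that Lemma \ref{1dim_sat-cor} and Lemma \ref{fibershift} apply. You never argue this, and as a blanket statement it is false: without further structural information, (\ref{bispan}) and the divisibility hypotheses typically confine $A_x$ only to a union of two lines or to lines through \emph{other} points of $A$ (compare Lemma \ref{db1-satline}, where $A_x\subset\ell_j(x)\cup\ell_k(x)$ and one must then rule out mixed saturation by a fiber-crossing argument against the plane bound of Lemma \ref{planebound}, or the case (C3), where the relevant sets lie in $\ell_k(a_i)$ rather than in a line through $x$). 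Supplying that control is precisely the content of the paper's Sections \ref{unfibered-structure-section}--\ref{corner-section}: one first classifies unfibered grids --- they must contain diagonal boxes (Proposition \ref{db-prop}) or an extended corner (Proposition \ref{prop-ecorner}), with the missing-top-difference structures of Section \ref{unfibered-missing} (full plane, corner, almost corner) handled separately --- and then resolves each structure (Theorems \ref{db-theorem} and \ref{cornerthm}) using Lemma \ref{triangles}, flat corners, plane counts, and, in the corner case, a determination of how $B$ is fibered on the scales $N_j,N_k$ (Lemma \ref{Bfiber}, Corollary \ref{Bfibercor}, Proposition \ref{N_j->N_k}). Your proposed ``delicate case analysis on the supports of $Q_i,Q_j,Q_k$'' is a placeholder for all of this; it is not a proof.

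Your part (a) also inverts the logical order of the paper without making the shortcut work. The paper does not first establish $\Phi_{p_\nu^2}\mid A$ for all $\nu$ and then shift fibers; the identity $A^\flat=\Lambda$ falls out \emph{after} the T2-equivalence is proved (fiber shifts preserve the tiling with the same $B$, so $\Lambda\oplus B=\ZZ_M$ at the end, and the prime-power cyclotomic divisors of $A$ are then those of $\Lambda$). Proving $\Phi_{p_i^2}\mid A$ directly from unfiberedness of $A\cap\Lambda$ is itself a statement of comparable depth --- in the corner case the paper only obtains $\Phi_{p_i^{n_i}}\mid A'$ for a shifted tile after substantial work (Corollary \ref{wniosekwniesiony}) --- so deriving a contradiction from ``$\Phi_{p_i^2}\mid B$ plus the signed decomposition $\sum_\nu Q_\nu F_\nu$'' would need an argument you have not given. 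Likewise, the induction on $|A\setminus\Lambda|$ in part (b) does not reflect the actual process: in the corner case the shifts that are available initially are dictated by the cofibered structure of $(A,B)$ coming from $B$'s fibering on scale $N_k$, and the final alignment step shifts fibers lying in planes $\Pi(z,p_i^{n_i})$ with $z\in a_i*F_i$, not fibers attached to points of $\Lambda\setminus A$; monotone decrease of $|A\setminus\Lambda|$ is not what drives termination there. So the proposal shares the paper's toolkit (saturating sets, Lemmas \ref{1dim_sat-cor} and \ref{fibershift}, Corollary \ref{get-standard}) but omits the structural classification and resolution theorems that constitute the proof.
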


\begin{theorem}\label{fibered-thm}
Let $A\oplus B=\ZZ_M$, where $M=p_i^{2}p_j^{2}p_k^{2}$ is odd. Assume that
$|A|=|B|=p_ip_jp_k$, $\Phi_M|A$, and that for every $a\in A$, the set $A\cap\Lambda(a,D(M))$ is $M$-fibered in at least one direction (possibly depending on $a$). 

\medskip\noindent
{\rm (I)} Suppose that there exists an element $a_0\in A$ such that 
\begin{equation}\label{intro-inter}
a_0*F_\nu\subset A\ \ \forall \nu\in\{i,j,k\}.
\end{equation}
Then the tiling $A\oplus B=\ZZ_M$ is T2-equivalent to $\Lambda\oplus B=\ZZ_M$ via fiber shifts, where $\Lambda:=\Lambda(a_0,D(M))$.
By Corollary \ref{get-standard}, both $A$ and $B$ satisfy (T2).

\medskip\noindent
{\rm (II)} Assume that (\ref{intro-inter}) does not hold for any $a_0\in A$. Then at least one of the following holds.

\begin{itemize}
\item $A\subset \Pi(a,p_\nu)$ for some $a\in A$ and $\nu\in\{i,j,k\}$. By Theorem \ref{subgroup-reduction}, both $A$ and $B$ satisfy (T2).

\item There exists a $\nu\in\{i,j,k\}$ such that (possibly after interchanging $A$ and $B$) the conditions of Theorem \ref{subtile} are satisfied in the $p_\nu$ direction. By Corollary \ref{slab-reduction}, both $A$ and $B$ satisfy (T2).

\end{itemize}

\end{theorem}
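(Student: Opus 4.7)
My plan is to treat Parts (I) and (II) separately, using the framework of fiber shifts (Lemma \ref{fibershift}) and cofibered structures established via the saturating-set criterion of Lemma \ref{1dim_sat-cor}. In both parts, the goal is to reduce the given tiling to one in which a previous reduction result (Theorem \ref{subgroup-reduction}, Corollary \ref{slab-reduction}, or Corollary \ref{get-standard}) applies.

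For Part (I), I translate so that $a_0 = 0$, so that $\Lambda = F_i + F_j + F_k$ and the three axial fibers $F_i, F_j, F_k$ lie in $A$. First I would analyze $A \cap \Lambda$: the fibering hypothesis applied to the grid $\Lambda$ itself (in some direction, say $p_i$) forces $A \cap \Lambda$ to contain the full cross $F_i + (F_j \cup F_k)$, since each element of $F_j \cup F_k$ must sit in a complete $i$-fiber inside $A \cap \Lambda$. Next, for each $a \in A \setminus \Lambda$, the fibering hypothesis on $\Lambda(a, D(M))$ provides an $M$-fiber $F = a * F_{\nu(a)} \subset A$ in some direction $\nu(a)$. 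I would then show that the axial fibers through $0$, together with divisor exclusion against $B$, constrain the saturating set of a carefully chosen witness point $x \in \Lambda$ aligned with $F$ to lie on the line $\ell_{\nu(a)}(x)$; Lemma \ref{1dim_sat-cor} then yields a $(1,2)$-cofibered structure, and Lemma \ref{fibershift} lets me shift $F$ to a parallel copy inside $\Lambda$. Since each shift strictly reduces $|A \setminus \Lambda|$ and preserves T2-equivalence, iteration terminates with $A = \Lambda$, and Corollary \ref{get-standard} yields (T2).

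For Part (II), I classify each $a \in A$ by the set $V(a) \subset \{i,j,k\}$ of directions $\nu$ with $a * F_\nu \subset A$. The combined hypotheses force $\emptyset \neq V(a) \neq \{i,j,k\}$ for every $a$. If some direction $\nu$ lies in $V(a)$ for all $a \in A$, then $A$ is globally $M$-fibered in direction $\nu$; this yields $\Phi_{p_\nu^2}\,|\,A$ and $p_\nu \parallel |A|$, so verifying the cyclotomic compatibility in condition (ii) of Theorem \ref{subtile} through the cuboid criteria of Section \ref{cuboid-section} makes Corollary \ref{slab-reduction} applicable. Otherwise, no direction is uniformly included in $V(\cdot)$, and I would use cuboid evaluations together with the box-product identity (\ref{e-ortho2}) and the localized divisor sets $\Div(A_1, A_2)$ to trap $A$ inside a single plane $\Pi(a, p_\nu)$, whereupon Theorem \ref{subgroup-reduction} applies. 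I expect the main obstacle to be precisely this last case: controlling the mixed fibering patterns where $V(a)$ varies non-trivially across $A$, and ruling out configurations outside the two listed reductions, is the genuinely new case beyond the methods of \cite{CM} and \cite[Corollary 6.2]{LaLo1}, and it requires the full interplay of cuboids, saturating sets, and fiber shifts since none of these tools alone suffices to force the planar containment.
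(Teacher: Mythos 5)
Your Part (I) is essentially the paper's argument (Lemma \ref{fiberedstructure} plus Corollary \ref{triple-intersection}): after normalizing, the fibering of $A\cap\Lambda$ in one direction together with the three axial fibers gives the cross-shaped set, and the empty grid positions at distance $M/p_jp_k$ from $a_0$ are filled by cofibers located via saturating sets (the paper uses Lemma \ref{flatcorner} rather than a generic ``witness point'', and the shifted fiber must sit at distance exactly $M/p_\nu^2$ from the target, but the mechanism you describe is the same) and shifted in by Lemma \ref{fibershift}; a cardinality count then forces $A'=\Lambda$.

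Part (II), however, has a genuine gap, and it is precisely where the paper does most of its work. First, your plan contains no argument ruling out the configuration in which fibers in all three directions occur (your ``no direction is uniformly included in $V(\cdot)$'' case includes it); the paper's Propositions \ref{F1emptyset} and \ref{F2emptyset} show that one of the sets $\cali,\calj,\calk$ must be empty, and this occupies the bulk of Section \ref{fibered-sec}, requiring the fibering analysis of $B$ on the scales $N_\nu$ and $M_\nu$, the unfibered-grid classification on lower scales (Lemma \ref{Kunfibered}, Corollary \ref{subset}), counting lemmas, and even the extended-corner Theorem \ref{cornerthm} as a subroutine; acknowledging that this is ``the main obstacle'' is not a proof strategy. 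Second, your proposed endpoint for the non-uniform case --- trapping $A$ in a plane $\Pi(a,p_\nu)$ and invoking Theorem \ref{subgroup-reduction} --- cannot be the whole story. In the two-direction case ($\cali=\emptyset$, $\calj\setminus\calk$ and $\calk\setminus\calj$ nonempty), the paper's Proposition \ref{F3structure} splits on cyclotomic divisibility: when $\Phi_{p_i^2}\mid A$ one indeed gets $A\subset\Pi(a,p_i)$, but when $\Phi_{p_i}\mid A$ the set $A$ meets every plane perpendicular to the $p_i$ direction, so no subgroup reduction in that direction is possible, and the resolution is instead to verify condition (ii) of Theorem \ref{subtile} for $B$ in the $p_i$ direction (Lemmas \ref{M_kp_isubtile} and \ref{noM_nusubtile}) and apply Corollary \ref{slab-reduction} with $A$ and $B$ interchanged --- an outcome your plan omits entirely for this case. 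Without an argument eliminating the three-direction case and without the interchanged slab reduction in the $\Phi_{p_i}\mid A$ branch, the proposal does not establish Part (II).
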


A more detailed breakdown of the case (II) of  Theorem \ref{fibered-thm} is provided in
Theorem \ref{fibered-mainthm}.

%%%%%%%%%%%%%%%%%%%%%%%%%%%%%%%

\subsection{Outline of the proof}\label{classified2}

In the rest of this section, we provide an outline of the proof of Theorems \ref{unfibered-mainthm} and \ref{fibered-thm}. 
We assume that $A\oplus B=\ZZ_M$, where $M=p_i^{2}p_j^{2}p_k^{2}$ is odd, 
$|A|=|B|=p_ip_jp_k$, and $\Phi_M|A$. Some of our arguments apply to tilings with more general $M$. In order to be able to sketch the main ideas without interruptions, we postpone the discussion of such extensions until the end of this section.

We begin with general arguments that are needed in both fibered and unfibered cases. 
In Section \ref{sec-toolbox}, we develop technical tools we will use throughout the article. Lemma \ref{triangles} is from \cite{LaLo1}; several of the other results in that section are specific to the 3-prime setting.

Assume first that $\Phi_M|A$ and that there exists a $D(M)$-grid $\Lambda$ such that $A\cap\Lambda$ is not $M$-fibered in any direction.
In Sections \ref{unfibered-structure-section}
and \ref{unfibered-missing}, we prove that $A\cap\Lambda$ must then contain at least one of two special structures, either {\em diagonal boxes} (Proposition \ref{db-prop}) or an {\em extended corner} (Proposition \ref{prop-ecorner}). Large parts of the argument are combinatorial and apply to all $A\subset\ZZ_M$ such that $\Phi_M|A$; however, to get the full strength of our results, we need to use saturating set techniques, hence the tiling assumption is necessary.

Some of our technical tools work only when all the ``top differences" are divisors of $A$, i.e.,
\begin{equation}\label{topdivisorsoncemore}
\{m:\ D(M)|m|M\}\subset\Div(A).
\end{equation}
We therefore must pay special attention to the cases where (\ref{topdivisorsoncemore}) fails. A classification of such structures is provided in Section \ref{unfibered-missing}. This analysis is also needed  in the fibered case (Theorem \ref{fibered-thm}) when fibering, or lack thereof, on lower scales must be considered.

We resolve diagonal boxes and extended corner structures in Sections \ref{res-boxes} and \ref{corner-section}, respectively. In Theorem \ref{db-theorem}, we prove that if $A\cap\Lambda$ contains diagonal boxes, then $A$ is T2-equivalent to either $\Lambda$ (in which case we are done) or to another tile $A'$ containing an extended corner. We then prove in Theorem \ref{cornerthm} that if $A\cap\Lambda$ contains an extended corner structure, then $A$ is T2-equivalent to $\Lambda$. Theorem \ref{unfibered-mainthm} follows by combining Theorem \ref{db-theorem} and Theorem \ref{cornerthm}. 

The main idea of that part of the proof is that all such tilings 
can be obtained via fiber shifts (Lemma \ref{fibershift}) from the tiling 
\begin{equation}\label{happyending}
A^\flat\oplus B=\ZZ_M,
\end{equation}
where $A^\flat=\Lambda(0,D(M))$. 
In the case when $B=B^\flat$ is the standard tiling complement with $\Phi_{p_i}\Phi_{p_j}\Phi_{p_k}|B^\flat$, tilings of this type were constructed by 
Szab\'o (\cite{Sz}; see also \cite{LS}). We prove that all tilings satisfying the assumptions of Theorems \ref{unfibered-mainthm} must in fact come from constructions of this type..
Starting with an unfibered grid in the given tiling, and using saturating set methods, we are able to locate the shifted fibers and shift them back into place, returning to (\ref{happyending}). This proves (T2) and provides full information about the structure of the tiling.

In Section \ref{fibered-sec}, we consider the fibered case. In the simple case when the entire set $A$ is $M$-fibered in the same direction, we can apply Corollary \ref{slab-reduction} and be done; however, it is possible for $A$ to be fibered in different directions on different $D(M)$-grids. The proof breaks down into cases, according to how the fibers in different directions interact. 

Suppose first that there exists an element $a_0\in A$ such that (\ref{intro-inter}) holds. This case turns out to be similar to that of unfibered grids and is resolved by similar methods, ending in T2-equivalence to (\ref{happyending}). 

Assume now that no such element exists. Our main intermediate result in this case is that, in fact, only two fibering directions are allowed (see Theorem \ref{fibered-mainthm} for more details). This breaks down further into cases according to fibering properties and cyclotomic divisibility, with each case terminating in either the subgroup reduction (Theorem \ref{subgroup-reduction}) or slab reduction (Theorem \ref{subtile} and Corollary \ref{slab-reduction}).

While our final result is restricted to the case when $M=p_i^{2}p_j^{2}p_k^{2}$ is odd, many of our methods and intermediate results apply under weaker assumptions. Whenever a significant part of the argument can be run in a more general case with little or no additional effort, we do so, assuming that $M=p_i^{n_i}p_j^{n_j}p_k^{n_k}$ for more general $n_i,n_j,n_k\geq 2$ and $p_i,p_j,p_k\geq 2$. 
For example, the classification of unfibered grids in Sections \ref{unfibered-structure-section}
and \ref{unfibered-missing} allows all $n_\nu$ to be arbitrary and $M$ to be either odd or even. The resolution of the $p_i$ extended corner case in Section \ref{corner-section} works for both odd and even $M$, with $n_i\geq 2$ arbitrary and with only a few additional lines needed to accommodate the even case. 
On the other hand, the arguments in Section \ref{fibered-sec} are limited to the odd $M=p_i^{2}p_j^{2}p_k^{2}$ case from the beginning.

In the follow-up article \cite{LaLo3}, we prove that our main conclusions continue to hold in the even case. However, many of our technical tools work differently when one of the primes is equal to 2. 
We would like to draw the reader's attention to the basic fibering argument in Lemma \ref{gen_top_div_mis}. This argument does not work when $p_i=2$, and indeed, in Section \ref{unfibered-missing} we provide examples of unfibered grids in the even case where the fibering conclusions of the lemma fail. 
In particular, the unfibered structures in Lemma \ref{even_struct} do not have a counterpart in the odd case.
Additionally, with fewer geometric restrictions coming from (\ref{bispan}), saturating set arguments can be more difficult to run. In \cite{LaLo3}, we compensate for this by introducing additional new methods.

The constraint $n_i=n_j=n_k=2$ is often needed in arguments based on divisor exclusion. For example, while (\ref{bispan}) provides geometric restrictions on saturating sets, we often need additional constraints based on availability of divisors, and with $n_i=n_j=n_k=2$ there are fewer divisors available to begin with. In the fibered case, several of our proofs terminate in an essentially 2-dimensional (therefore easier) problem after we have run out of scales in one direction. In order to allow arbitrary $n_i,n_j,n_k$ throughout the argument, we expect that a systematic way to induct on scales may be necessary.

%%%%%%%%%%%%%%%%%%%%%%%%%%%%%%%

%%%%%%%%%%%%%%%%%%%%%%%%%%%%%%%%%%%%%%

\section{Toolbox}\label{sec-toolbox}

%%%%%%%%%%%%%%%%%%%%%%%%%%%%%%%%%%%%%%

\subsection{Divisors} The first lemma is Lemma 8.9 of \cite{LaLo1}, specialized to the 3-prime case.

\begin{lemma}[\bf Enhanced divisor exclusion] \label{triangles} 
Let $A\oplus B=\ZZ_M$, with $M=\prod_{\iota\in\{i,j,k\}} p_\iota^{n_\iota}$. Let $m=\prod_{\iota\in\{i,j,k\}} p_\iota^{\alpha_\iota}$
and $m'=\prod_{\iota\in\{i,j,k\}} p_\iota^{\alpha'_\iota}$, with $0\leq \alpha_\iota,\alpha'_\iota\leq n_\iota$. 
Assume that at least
one of $m,m'$ is different from $M$, and that for every $\iota\in\{i,j,k\}$ we have 
\begin{equation}\label{triangles-e1}
\hbox{ either }\alpha_\iota\neq \alpha'_\iota \hbox{ or }
\alpha_\iota =\alpha'_\iota=n_\iota.
\end{equation}
 Then for all $x,y\in\ZZ_M$ we have
$$
\bbA^M_m[x] \,\bbA^M_{m'}[x] \, \bbB^M_m[y] \, \bbB^M_{m'}[y] =0.
$$
In other words, there are no configurations $(a,a',b,b')\in A\times A\times B\times B$ such that
\begin{equation}\label{step}
(a-x,M)=(b-y,M)=m,\ \ (a'-x,M)=(b'-y,M)=m'.
\end{equation}
\end{lemma}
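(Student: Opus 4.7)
The strategy is a short proof by contradiction using the classical divisor exclusion property $\Div(A)\cap\Div(B)=\{M\}$ from Sands's theorem.

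Suppose that there were a configuration $(a,a',b,b')\in A\times A\times B\times B$ and points $x,y\in\ZZ_M$ satisfying (\ref{step}). The plan is to compute $(a-a',M)$ and $(b-b',M)$ coordinate-by-coordinate and show they coincide with a common divisor $d$ which must then lie in $\Div(A)\cap\Div(B)=\{M\}$, forcing $d=M$ and hence $m=m'=M$, a contradiction.

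Fix $\iota\in\{i,j,k\}$ and examine the $p_\iota$-adic valuation of $a-a'=(a-x)-(a'-x)$. There are two cases allowed by (\ref{triangles-e1}). If $\alpha_\iota\neq\alpha'_\iota$, then $\pi_\iota(a-x)$ has $p_\iota$-adic valuation exactly $\alpha_\iota$ and $\pi_\iota(a'-x)$ has $p_\iota$-adic valuation exactly $\alpha'_\iota$; since these valuations differ, the valuation of $\pi_\iota(a-a')$ equals $\min(\alpha_\iota,\alpha'_\iota)$, so the $p_\iota$-factor of $(a-a',M)$ is exactly $p_\iota^{\min(\alpha_\iota,\alpha'_\iota)}$. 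If $\alpha_\iota=\alpha'_\iota=n_\iota$, then $\pi_\iota(a-x)=\pi_\iota(a'-x)=0$ in $\ZZ_{p_\iota^{n_\iota}}$, so $\pi_\iota(a-a')=0$ and the $p_\iota$-factor of $(a-a',M)$ is $p_\iota^{n_\iota}$. Combining over all $\iota$, we obtain
\[
(a-a',M)=d:=\prod_{\iota\in\{i,j,k\}} p_\iota^{\beta_\iota},
\]
where $\beta_\iota=\min(\alpha_\iota,\alpha'_\iota)$ when $\alpha_\iota\neq\alpha'_\iota$ and $\beta_\iota=n_\iota$ otherwise. The same computation, with $(b,b',y)$ in place of $(a,a',x)$, yields $(b-b',M)=d$ as well; this is the key point at which hypothesis (\ref{triangles-e1}) is used, since it prevents any ``unexpected'' cancellation that could make the two gcds differ.

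Thus $d\in\Div(A)\cap\Div(B)$, and by Sands's divisor exclusion $d=M$. This forces $\beta_\iota=n_\iota$ for every $\iota$, which under (\ref{triangles-e1}) is only possible if $\alpha_\iota=\alpha'_\iota=n_\iota$ for every $\iota$. But then $m=m'=M$, contradicting the hypothesis that at least one of $m,m'$ is different from $M$. Hence no such configuration exists and the product $\bbA^M_m[x]\,\bbA^M_{m'}[x]\,\bbB^M_m[y]\,\bbB^M_{m'}[y]$ vanishes. The only subtlety worth highlighting is the bookkeeping in the case $\alpha_\iota=\alpha'_\iota=n_\iota$: the convention that $(c,M)=m$ only records exponents up to $n_\iota$ makes the cancellation harmless, and this is precisely why condition (\ref{triangles-e1}) is stated with the disjunctive clause.
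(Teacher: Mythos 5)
Your proof is correct and is essentially the paper's own argument: in both cases one computes $(a-a',M)=(b-b',M)=\prod_\iota p_\iota^{\min(\alpha_\iota,\alpha'_\iota)}$ coordinatewise (condition (\ref{triangles-e1}) ruling out the only case where cancellation could raise the valuation) and then invokes Sands's divisor exclusion, since this common divisor cannot equal $M$. The minor reorganization — deducing $d=M$ first and then contradicting $m\neq M$ or $m'\neq M$ — is only a cosmetic difference from the paper's phrasing.
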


\begin{proof}
If we did have a configuration as in (\ref{step}), then, under the assumption (\ref{triangles-e1}) for all $\iota$,
we would have
$$
(a-a',M)=(b-b',M)=\prod_{\iota\in\{i,j,k\}}p_\iota^{\min(\alpha_\iota,\alpha'_\iota)},
$$
with the right side different from $M$. But that is prohibited by divisor exclusion.
\end{proof}

%%%%%%%%%%%%%%%%%%%%%%%%%%%%%%%%%%%%%%

\subsection{Cyclotomic divisibility}

\begin{lemma}\label{cyclo-grid}
Let $A\in\calm(\ZZ_M)$, and let $m,s|M$ with $s\neq 1$. Suppose that for every $a\in A$, $\Phi_s$ divides $A\cap\Lambda(a,m)$. Then $\Phi_s|A$.
\end{lemma}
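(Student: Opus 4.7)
The plan is to use the fact that the grids $\Lambda(x,m)=x+m\ZZ_M$ partition $\ZZ_M$ into cosets of the subgroup $m\ZZ_M$, and then decompose $A$ accordingly.

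First I would fix a set $R\subset \ZZ_M$ of representatives for the cosets of $m\ZZ_M$ in $\ZZ_M$ (there are $m$ such cosets, since $m\mid M$ implies $|m\ZZ_M|=M/m$). Then $\ZZ_M=\bigsqcup_{j\in R}\Lambda(j,m)$, so as weighted multisets
$$
A \;=\; \sum_{j\in R}\bigl(A\cap\Lambda(j,m)\bigr),
$$
and on the level of mask polynomials (working mod $X^M-1$),
$$
A(X) \;=\; \sum_{j\in R}\bigl(A\cap\Lambda(j,m)\bigr)(X).
$$
Thus it suffices to show $\Phi_s$ divides every summand on the right.

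Next, I would split the representatives into two types. If $j\in R$ is such that $A\cap\Lambda(j,m)$ is the empty multiset, then its mask polynomial is identically $0$, and $\Phi_s\mid 0$ trivially. If instead there is some $a$ in the support of $A$ with $a\in\Lambda(j,m)$, then $\Lambda(a,m)=\Lambda(j,m)$, and the hypothesis gives $\Phi_s(X)\mid (A\cap\Lambda(a,m))(X)=(A\cap\Lambda(j,m))(X)$. In both cases the summand is divisible by $\Phi_s$.

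Summing, we conclude that $\Phi_s(X)\mid A(X)$, as desired. There is no real obstacle here: the only subtle point is noticing that the hypothesis, stated only for $a$ in the support of $A$, automatically covers all cosets because the cosets that miss $A$ contribute the zero polynomial. No tiling assumption, no cuboid machinery, and no restriction on $M$ or on the weights of $A$ is needed.
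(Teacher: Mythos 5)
Your proof is correct and is essentially identical to the paper's: both decompose $\ZZ_M$ into the disjoint $m$-grids (cosets of $m\ZZ_M$), write $A(X)$ as the sum of the mask polynomials of $A\cap\Lambda$ over these grids, and observe that each summand is either zero or divisible by $\Phi_s$ by hypothesis. Nothing further is needed.
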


\begin{proof}
Write $\ZZ_M=\bigcup_\nu \Lambda_\nu$, where $\Lambda_\nu$ are pairwise disjoint $m$-grids. Accordingly, $A(X)=\sum_\nu A_\nu(X)$, where $A_\nu=A\cap\Lambda_\nu$. If $A$ is disjoint from $\Lambda_\nu$, we have $A_\nu(X)\equiv 0$. If on the other hand $ A\cap\Lambda_\nu\neq \emptyset$, then $\Phi_s|A_\nu$. Summing up in $\nu$, we get $\Phi_s|A$.
\end{proof}

The next two lemmas are based on a combinatorial interpretation of divisibility by prime power cyclotomics. 
For $A\subset\ZZ_M$ and $1\leq \alpha\leq n_i$, we have $\Phi_{p_i^\alpha}(X)|A(X)$ if and only if 
\begin{equation}\label{cyclo-uniform}
|A\cap \Pi(x,p_i^\alpha)|=\frac{1}{p_i} |A\cap \Pi(x,p_i^{\alpha-1})| \ \ \forall x\in\ZZ_M,
\end{equation} 
so that the elements of $A$ are uniformly distributed mod $p_i^\alpha$ within each residue class mod $p_i^{\alpha-1}$.
This in particular limits the number of elements that a tiling set may have in a plane on some scale.

\begin{lemma}[\bf Plane bound]\label{planebound}
Let $A\oplus B=\ZZ_M$, where $M=p_i^{n_i}p_j^{n_j}p_k^{n_k}$ and $|A|=p_i^{\beta_i}p_j^{\beta_j}p_k^{\beta_k}$.
Then for every $x\in\ZZ_M$ and $0\leq\alpha_i\leq n_i$ we have 
$$
|A\cap\Pi(x,p_i^{n_i-\alpha_i})|\leq p_i^{\alpha_i}p_j^{\beta_j} p_k^{\beta_k}.
$$
\end{lemma}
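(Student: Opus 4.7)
The plan is to exploit the combinatorial interpretation \eqref{cyclo-uniform} of prime power cyclotomic divisibility along the descending chain of planes
$$
\ZZ_M = \Pi(x, p_i^0) \supset \Pi(x, p_i) \supset \Pi(x, p_i^2) \supset \cdots \supset \Pi(x, p_i^{n_i - \alpha_i}).
$$
At each step we have the trivial containment bound $|A \cap \Pi(x, p_i^\gamma)| \leq |A \cap \Pi(x, p_i^{\gamma-1})|$; moreover, whenever $\Phi_{p_i^\gamma}$ divides $A(X)$, the uniformity \eqref{cyclo-uniform} promotes this to the exact equality $|A \cap \Pi(x, p_i^\gamma)| = p_i^{-1}|A \cap \Pi(x, p_i^{\gamma-1})|$. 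Telescoping along the chain yields
$$
|A \cap \Pi(x, p_i^{n_i - \alpha_i})| \leq |A| \cdot p_i^{-m_i},
$$
where $m_i := \#\{\gamma \in \{1, \ldots, n_i - \alpha_i\} : \Phi_{p_i^\gamma}(X) \mid A(X)\}$.

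The next step is to lower-bound $m_i$ using the Coven--Meyerowitz condition (T1) of Theorem \ref{CM-thm}, which holds for any tile. From $|A| = \prod_{s \in S_A} \Phi_s(1)$ together with $\Phi_{p^\alpha}(1) = p$ for every prime $p$ and $\alpha \geq 1$, and comparing with $|A| = p_i^{\beta_i} p_j^{\beta_j} p_k^{\beta_k}$, it follows that exactly $\beta_i$ values of $\gamma \in \{1, \ldots, n_i\}$ satisfy $\Phi_{p_i^\gamma} \mid A$. Since at most $\alpha_i$ of these can lie in the top block $\{n_i - \alpha_i + 1, \ldots, n_i\}$, pigeonhole gives $m_i \geq \max(0, \beta_i - \alpha_i)$. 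Substituting back,
$$
|A \cap \Pi(x, p_i^{n_i - \alpha_i})| \leq p_i^{\beta_i} p_j^{\beta_j} p_k^{\beta_k} \cdot p_i^{-\max(0, \beta_i - \alpha_i)} \leq p_i^{\alpha_i} p_j^{\beta_j} p_k^{\beta_k},
$$
as required. There is no real obstacle here: the argument is essentially a pigeonhole count once \eqref{cyclo-uniform} and (T1) are in hand. The only minor verification is that the two regimes $\alpha_i \geq \beta_i$ (where the bound on $m_i$ degenerates to $0$ and one uses only the trivial bound $|A|$) and $\alpha_i < \beta_i$ (where the full saving by $p_i^{\beta_i - \alpha_i}$ is achieved) both conclude with the same claimed inequality.
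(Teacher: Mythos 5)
Your overall route is the intended one: the paper states this lemma without proof immediately after the equivalence (\ref{cyclo-uniform}), and the telescoping along the chain $\Pi(x,p_i^0)\supset\Pi(x,p_i)\supset\dots\supset\Pi(x,p_i^{n_i-\alpha_i})$, saving a factor $p_i$ at each scale $\gamma$ with $\Phi_{p_i^\gamma}\mid A$, is exactly the mechanism the authors have in mind. That part of your argument, and the closing case split between $\alpha_i\geq\beta_i$ and $\alpha_i<\beta_i$, are correct.

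There is, however, one step that does not follow as written. Condition (T1) gives $|A|=\prod_{s\in S_A}\Phi_s(1)$, where $S_A$ consists of \emph{all} prime powers $p^\alpha$ with $\Phi_{p^\alpha}\mid A$, with no a priori restriction to divisors of $M$; so what you actually get is that exactly $\beta_i$ exponents $\gamma\geq 1$ satisfy $\Phi_{p_i^\gamma}\mid A$, not that these all lie in $\{1,\dots,n_i\}$. If some of them exceeded $n_i$, your pigeonhole bound $m_i\geq\beta_i-\alpha_i$ could fail, since such divisors contribute nothing to the telescoping. (T1) for $A$ alone cannot rule this out: for instance $A(X)=1+X^{p_i^{n_i}}+\dots+X^{(p_i-1)p_i^{n_i}}=\Phi_{p_i^{n_i+1}}(X)$ is consistent with (T1) and $|A|=p_i$, yet its only prime-power cyclotomic divisor exceeds $n_i$; what excludes such behaviour for a tile is the partner $B$. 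The patch is short: by (\ref{poly-e2}), every $\Phi_{p_i^\gamma}$ with $1\leq\gamma\leq n_i$ divides $A(X)B(X)$; applying (T1) to $B$ (which also tiles, and whose cardinality has $p_i$-adic valuation $n_i-\beta_i$), at most $n_i-\beta_i$ of these indices can have $\Phi_{p_i^\gamma}\mid B$, so at least $\beta_i$ of the indices $\gamma\in\{1,\dots,n_i\}$ satisfy $\Phi_{p_i^\gamma}\mid A$ --- which is all your pigeonhole needs. (Equivalently, invoke the fact quoted in Section \ref{standards} that for each prime power $s\mid M$, $\Phi_s$ divides exactly one of $A$ and $B$.) With this insertion your proof is complete.
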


\begin{corollary}\label{planegrid}
Let $A\oplus B=\ZZ_M$, where $M=p_i^{n_i}p_j^{n_j}p_k^{n_k}$ and $|A|=p_i^{\beta_i}p_j^{\beta_j}p_k^{\beta_k}$ 
with $\beta_i>0$.
Suppose that for some $x\in \ZZ_M$ and $1\leq\alpha_0\leq n_i$
\begin{equation}\label{Aplanebound}
|A\cap\Pi(x,p_i^{n_i-\alpha_0})|>p_i^{\beta_i-1}p_j^{\beta_j}p_k^{\beta_k},
\end{equation}
then $\Phi_{p_i^{n_i-\alpha}}|A$ for at least one $\alpha\in \{0,\ldots,\alpha_0-1\}$. 
\end{corollary}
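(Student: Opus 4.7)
I would argue by contradiction. Suppose $\Phi_{p_i^{n_i-\alpha}}\nmid A$ for every $\alpha\in\{0,\ldots,\alpha_0-1\}$; by divisor exclusion then $\Phi_{p_i^{\alpha'}}\mid B$ for every $\alpha'\in\{n_i-\alpha_0+1,\ldots,n_i\}$. Iterating (\ref{cyclo-uniform}) for $B$ across these $\alpha_0$ consecutive top scales yields
$$|B\cap\Pi(y,p_i^{n_i-\alpha_0})|=p_i^{\alpha_0}\,|B\cap\Pi(y,p_i^{n_i})|\quad\text{for every }y,$$
so the slice count $b^\sharp(s):=|B\cap\Pi(y,p_i^{n_i})|$ is a function of $s:=\pi_i(y)\bmod p_i^{n_i-\alpha_0}$ alone. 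Set $\tilde a(t):=|A\cap\Pi(y_t,p_i^{n_i-\alpha_0})|$ for $\pi_i(y_t)\equiv t$; counting the plane $\Pi(x,p_i^{n_i-\alpha_0})$ (size $p_i^{\alpha_0}p_j^{n_j}p_k^{n_k}$) via $A\oplus B=\ZZ_M$ then gives the constant convolution identity
$$(\tilde a*b^\sharp)(s)\equiv p_j^{n_j}p_k^{n_k}\quad\text{on }\ZZ_{p_i^{n_i-\alpha_0}}.$$

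Next I would split on whether the circulant operator determined by $b^\sharp$ is invertible on $\ZZ_{p_i^{n_i-\alpha_0}}$. In the invertible case (no $\Phi_{p_i^\beta}$ with $1\leq\beta\leq n_i-\alpha_0$ divides $B$), the constant-$p_j^{n_j}p_k^{n_k}$ convolution has a unique solution, forcing $\tilde a$ to be constant with value $|A|/p_i^{n_i-\alpha_0}=p_i^{\beta_i+\alpha_0-n_i}p_j^{\beta_j}p_k^{\beta_k}$. When $\alpha_0\leq n_i-1$ this is at most $p_i^{\beta_i-1}p_j^{\beta_j}p_k^{\beta_k}=|A|/p_i$, directly contradicting $\tilde a(s_0)>|A|/p_i$; the boundary $\alpha_0=n_i$ makes the hypothesis automatic but now forces all of $\Phi_{p_i},\ldots,\Phi_{p_i^{n_i}}$ to divide $B$, so $p_i^{n_i}\mid|B|$, contradicting $\beta_i\geq 1$.

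The expected main obstacle is the singular case, where $\Phi_{p_i^{\beta^*}}|B$ for some $\beta^*\leq n_i-\alpha_0$. Then $S_B\cap\{p_i^\beta\}$ already contains at least $\alpha_0+1$ elements, so the (T1) identity $|B|=\prod_{s\in S_B}\Phi_s(1)$ gives $p_i^{\alpha_0+1}\mid|B|$, i.e.\ $\beta_i\leq n_i-\alpha_0-1$. In the $n_i=2$ setting in which the corollary will be applied this is already a contradiction with $\beta_i\geq 1$ and $\alpha_0\geq 1$, so the corollary is immediate. For general $n_i$ I would iterate: each additional divisibility $\Phi_{p_i^{\beta^*}}|B$ combined with (\ref{cyclo-uniform}) produces further uniformity of $b^\sharp$, and rerunning the invertible/singular dichotomy either returns to the invertible case on a coarser quotient (again yielding $\tilde a(s_0)\leq|A|/p_i$) or keeps enlarging the set of $p_i$-power cyclotomics dividing $B$ until it exhausts $\{1,\ldots,n_i\}$, producing the same $p_i^{n_i}\mid|B|$ contradiction with $\beta_i\geq 1$.
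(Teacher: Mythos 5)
Your construction of the convolution identity $(\tilde a * b^\sharp)(s)\equiv p_j^{n_j}p_k^{n_k}$ and the invertible-case analysis are correct, and your argument does settle the cases $n_i=2$ and $\alpha_0=n_i$ (by a considerably longer route than needed). But the statement is for general $n_i$, and your treatment of the singular case has a genuine gap: the claimed iteration does not terminate the way you say. At each stage your dichotomy can only reveal a cyclotomic $\Phi_{p_i^{\beta}}\mid B$ at a scale $\beta$ \emph{strictly below} the current quotient exponent; it never forces the skipped intermediate scales into $S_B$. Consequently the process stalls, with no contradiction, as soon as $\Phi_{p_i}\mid B$ enters the picture, because the next ``coarser quotient'' is trivial and the invertible case there says nothing, while $S_B$ need not exhaust $\{1,\dots,n_i\}$. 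Concretely, take $n_i=3$, $\alpha_0=1$, $\beta_i=1$, with $\Phi_{p_i}\Phi_{p_i^3}\mid B$ and $\Phi_{p_i^2}\mid A$: your hypothesis puts $3\in S_B$, the singular case reveals $1\in S_B$, the quotient drops to the trivial group, and neither horn of your dichotomy applies --- yet $S_B=\{1,3\}$ is perfectly consistent with (T1), so no $p_i^{n_i}\mid |B|$ contradiction arises. Purely $B$-side information cannot close this case: the Fourier content of your convolution identity at a character of order $p_i^{\beta}$ is exactly ``$\Phi_{p_i^{\beta}}$ divides $A$ or $B$'', so at the scales where $B$ fails you are forced back to divisibility of $A$, which your sketch never uses.

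The missing idea is the one the paper's own (two-line) proof runs on, entirely on the $A$ side: since $\beta_i>0$, some $\Phi_{p_i^{\beta'}}$ divides $A$, and under your contradiction hypothesis necessarily $\beta'\le n_i-\alpha_0$. Then (\ref{cyclo-uniform}) applied to $A$ at scale $p_i^{\beta'}$ gives
$$
|A\cap\Pi(x,p_i^{n_i-\alpha_0})|\le |A\cap\Pi(x,p_i^{\beta'})|=\tfrac{1}{p_i}\,|A\cap\Pi(x,p_i^{\beta'-1})|\le \tfrac{|A|}{p_i},
$$
using $\Pi(x,p_i^{n_i-\alpha_0})\subseteq\Pi(x,p_i^{\beta'})$, which contradicts (\ref{Aplanebound}) at once and handles all $n_i$ uniformly. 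If you want to keep your $B$-side framework, the repair is the same observation: in your stuck configuration some $\beta'\le n_i-\alpha_0$ lies outside $S_B$, hence $\Phi_{p_i^{\beta'}}\mid A$, and the displayed bound finishes; but at that point the convolution machinery, the circulant dichotomy, and (T1) for $B$ are all unnecessary.
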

\begin{proof}
Suppose that $\Phi_{p_i^{n_i-\alpha}}\nmid A$ for all $\alpha\in \{0,\ldots,\alpha_0-1\}$. It follows that there must exist a $\gamma$ with $\alpha_0\leq\gamma\leq n_i$ such that $\Phi_{p_i^{n_i-\gamma}}|A$. The latter implies, by (\ref{cyclo-uniform}) and (\ref{Aplanebound}), that $|A|>\prod_\nu p_\nu^{\beta_\nu}$, which is a contradiction. 
\end{proof}

%%%%%%%%%%%%%%%%%%%%%%%%%%%%%%%%%%%%%%

\subsection{Saturating sets}

\begin{lemma}[\bf No missing joints]\label{smallcube}
Let $A\oplus B=\ZZ_M$, where $M=p_i^{n_i}p_j^{n_j}p_k^{n_k}$. Suppose that
\begin{equation}\label{notopdivB}
\{D(M)|m|M\}\cap \Div(B)=\{M\},
\end{equation}
and that for some $x\in\ZZ_M$ there exist $a_i,a_j,a_k\in A$ such that 
\begin{equation}\label{joint-e}
(x-a_i,M)=M/p_i,\ (x-a_j,M)=M/p_j,\ (x-a_k,M)=M/p_k.
\end{equation}
Then $x\in A$.
\end{lemma}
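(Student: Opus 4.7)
The plan is to argue by contradiction using Sands's divisor exclusion $\Div(A)\cap\Div(B)=\{M\}$. Suppose $x\notin A$. By the tiling, there is a unique decomposition $x=a+b$ with $a\in A$ and $b\in B$, and necessarily $b\neq 0$. Set $d:=(b,M)=(x-a,M)$, a proper divisor of $M$. Under the standard normalization $0\in A\cap B$, the identity $(b-0,M)=d$ puts $d\in\Div(B)$.

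Next I will use the hypothesis to show that $D(M)\nmid d$. Indeed, if $D(M)\mid d$ then, since $d\neq M$, $d$ would belong to $\{m:D(M)\mid m\mid M,\ m\neq M\}$, contradicting the assumption that this set is disjoint from $\Div(B)$. Consequently there exists $\mu\in\{i,j,k\}$ with $p_\mu^{n_\mu-1}\nmid d$, that is, $\pi_\mu(x)-\pi_\mu(a)$ is not divisible by $p_\mu^{n_\mu-1}$ in $\ZZ_{p_\mu^{n_\mu}}$.

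The key step is then to exploit the given neighbor $a_\mu\in A$ and compute $(a-a_\mu,M)$ via the splitting $a-a_\mu=(x-a_\mu)-(x-a)$. The condition $(x-a_\mu,M)=M/p_\mu$ means $\pi_\rho(x)=\pi_\rho(a_\mu)$ for every $\rho\neq\mu$, while $\pi_\mu(x)-\pi_\mu(a_\mu)$ is a nonzero multiple of $p_\mu^{n_\mu-1}$. Subtracting $(x-a)$ coordinate by coordinate: for $\rho\neq\mu$ the $\rho$-part reduces to $-\pi_\rho(x-a)$, contributing the same $p_\rho$-power as $d$; for $\rho=\mu$, because $p_\mu^{n_\mu-1}$ divides $\pi_\mu(x-a_\mu)$ but not $\pi_\mu(x-a)$, the $p_\mu$-power in the GCD equals that in $d$. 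Combining, $(a-a_\mu,M)=d$, and $a\neq a_\mu$ since $d\neq M$. Hence $d\in\Div(A)$.

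This gives $d\in\Div(A)\cap\Div(B)=\{M\}$ by Sands, contradicting $d<M$. The only step requiring care is the coordinate-wise valuation computation in paragraph three, but it is essentially forced: the hypothesis on $\Div(B)$ was tailored precisely so that some prime $\mu$ would make the valuations of $(x-a_\mu,M)$ and $(x-a,M)$ incompatible enough to cancel cleanly into $d$. I do not foresee a genuine obstacle.
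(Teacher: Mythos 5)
Your proof is correct. Where the paper argues in two lines through the saturating-set formalism --- if $x\notin A$, then by (\ref{bispan}) the set $A_x$ is trapped in the vertex set of the $M$-cuboid on $x,a_i,a_j,a_k$, so every element of $A_x$ would sit at a distance $m$ with $D(M)\,|\,m\,|\,M$, $m\neq M$, from $x$, which (\ref{notopdivB}) forbids while $A_x\neq\emptyset$ --- you instead take the unique representation $x=a+b$, $a\in A$, $b\in B$, and run the divisor-exclusion argument by hand: $d=(x-a,M)=(b,M)\in\Div(B)$ (after translating so $0\in B$, which is harmless), so $d\neq M$ and $D(M)\nmid d$ by (\ref{notopdivB}), and then the valuation computation $(a-a_\mu,M)=d$ for the deficient prime $p_\mu$ puts $d\in\Div(A)$, contradicting Sands's condition (\ref{div-exclusion}). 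This is genuinely the same underlying mechanism --- the joints $a_i,a_j,a_k$ pin the ``responsible'' element $a$ into a divisor conflict --- but your route is self-contained: it uses only the tiling decomposition and (\ref{div-exclusion}), rather than importing the $\Bispan$/saturating-set machinery of \cite{LaLo1}, and it in effect re-derives the one instance of (\ref{bispan}) that is needed (in the arithmetic form $(a-a_\mu,M)=d$ rather than the geometric ``$a$ lies on a cuboid vertex''). What the paper's phrasing buys is uniformity with the rest of the article, where $A_x$ and $\Bispan$ are used repeatedly and in more delicate ways; what yours buys is an elementary, verification-friendly proof that also makes explicit that only the joint $a_\mu$ in the direction where $d$ is deficient is actually used once $\mu$ is chosen. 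One cosmetic point common to both readings: taken literally, the set $\{D(M)\,|\,m\,|\,M\}$ contains $M$ itself, which always lies in $\Div(B)$; like the paper, you implicitly (and correctly) treat (\ref{notopdivB}) as a statement about the proper divisors in that chain, which your observation $d\neq M$ makes explicit.
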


\begin{proof}
Suppose that $x\not\in A$, and let $\Delta$ be the $M$-cuboid with vertices $x,a_i,a_j,a_k$. By (\ref{joint-e}) and (\ref{two-planes}), we have the saturating set inclusions
$$
A_x\subset\Pi(x,p_\nu^{n_\nu})\cup \Pi(a_\nu,p_\nu^{n_\nu})\hbox{ for each }\nu\in\{i,j,k\}.
$$
Taking the intersection, we see that
$A_x$ is contained in the vertex set of $\Delta$. But that is impossible by (\ref{notopdivB}).
\end{proof}

\begin{lemma}[\bf Flat corner]\label{flatcorner}
Let $A\oplus B=\ZZ_M$, where $M=p_i^2p_j^{n_j}p_k^{n_k}$ and $|A|=p_ip_jp_k$. Suppose that (\ref{notopdivB}) holds, 
and that $A$ contains the following $3$-point configuration: for some $x\in \ZZ_M\setminus A$ there exist $a,a_j,a_k\in A$ such that 
\begin{equation}\label{e-flat-corner}
(a-a_j,M)=(a_k-x,M)=M/p_j,\,(a-a_k,M)=(a_j-x,M)=M/p_k
\end{equation}
Then $A_{x}\subset \ell_i(x)$, and the pair $(A,B)$ has a $(1,2)$-cofibered structure in the $p_i$ direction, with an $M$-cofiber in $A$ at distance $M/p_i^2$ from $x$.
\end{lemma}
\begin{proof}
Fix $b\in B$. 
By (\ref{e-flat-corner}) and (\ref{two-planes}), we have 
$$
A_{x,b}\subset\Pi(x,p_\nu^{n_\nu})\cup \Pi(a_\nu,p_\nu^{n_\nu})\hbox{ for each }\nu\in\{j,k\}.
$$
Taking the intersection, we see that
$$
A_{x,b}\subset \ell_i(x)\cup\ell_i(a)\cup\ell_i(a_j)\cup\ell_i(a_k).
$$
By (\ref{notopdivB}), we have the following.
\begin{itemize}
\item $A_{x,b}\cap\ell_i(x)\neq\emptyset$ implies that $\bbA_{M/p_i^2}[x]\bbB_{M/p_i^2}[b]>0$, hence 
\begin{equation}\label{flatcorner-e3}
M/p_i^2p_j, M/p_i^2p_k, M/p_i^2p_jp_k\in \Div(A) \hbox{ and }M/p_i^2\in \Div(B).
\end{equation}
\item $A_{x,b}\cap\ell_i(a)\neq\emptyset$ implies that $\bbA_{M/p_i^2p_jp_k}[x|\ell_i(a)]\bbB_{M/p_i^2p_jp_k}[b]>0$, hence 
$$M/p_i^2, M/p_i^2p_j, M/p_i^2p_k\in \Div(A) \hbox{ and }M/p_i^2p_jp_k\in \Div(B).
$$
\item $A_{x,b}\cap\ell_i(a_j)\neq\emptyset$ implies that 
$\bbA_{M/p_i^2p_k}[x|\ell_i(a_j)]\bbB_{M/p_i^2p_k}[b]>0$, hence 
$$M/p_i^2, M/p_i^2p_j, M/p_i^2p_jp_k\in \Div(A)\hbox{ and }M/p_i^2p_k\in \Div(B).
$$
\item $A_{x,b}\cap\ell_i(a_k)\neq\emptyset$ implies that $\bbA_{M/p_i^2p_j}[x|\ell_i(a_k)]\bbB_{M/p_i^2p_j}[b]>0$, hence 
$$M/p_i^2, M/p_i^2p_k, M/p_i^2p_jp_k\in \Div(A)\hbox{ and }M/p_i^2p_j\in \Div(B).
$$
\end{itemize}
It follows from divisor exclusion that $A_{x,b}$ cannot intersect more than one of the above lines. We now show it cannot intersect any line other than $\ell_i(x)$. To this end, it suffices to prove that neither one of the following can hold:
\begin{equation}\label{flatcorner-e1}
A_{x,b}\subset \ell_i(a),
\end{equation}
\begin{equation}\label{flatcorner-e2}
A_{x,b}\subset \ell_i(a_\nu) \hbox{ for some }\nu\in\{j,k\}.
\end{equation}

Assume for contradiction that (\ref{flatcorner-e1}) holds. It follows from (\ref{e-ortho2}) that
$$
1=\sum_{m|M}\frac{1}{\phi(M/m)} \bbA_m[x|\ell_i(a)]\bbB_m[b],
$$
and by (\ref{notopdivB}), the only contributing divisor can be $m={M/p_i^2p_jp_k}$. Hence
\begin{align*}	\phi(p_i^2p_jp_k)&=\bbA_{M/p_i^2p_jp_k}[x|\ell_i(a)]\bbB_{M/p_i^2p_jp_k}[b]\\
&=\bbA_{M/p_i^2}[a]\sum_{y:(y-b,M)=M/p_jp_k}\bbB_{M/p_i^2}[y].
\end{align*}
Observe that $\bbA_{M/p_i^2}[a]\leq\phi(p_i^2)$. Since $M/p_i,M/p_i^2\notin \Div(B)$, we have $\bbB_{M/p_i^2}[y]\leq 1$ for all $y\in \ZZ_M\setminus B$ with $(y-b,M)=M/p_jp_k$. It follows that both must hold with equality. Now, if $p_i>p_j$ then $\bbA_{M/p_i^2}[a]=\phi(p_i^2)$ implies that 
\begin{align*}
|A\cap \Pi(a,p_k^{n_k})|&\geq |\{a\}|+\bbA_{M/p_i^2}[a]\\
& = 1+\phi(p_i^2)>p_ip_j,
\end{align*}
which contradicts Lemma \ref{planebound}. The same argument works if $p_i>p_k$, with the $j$ and $k$ indices interchanged. 
It remains to consider the case when $p_i<\min(p_j,p_k)$. In this case we have $\phi(p_i^2)<\phi(p_jp_k)$, so that there are $y_1\neq y_2$ with $y_1-b,M)=(y_2-b,M)=M/p_jp_k$, and $b_1,b_2\in B$ with $(y_1-b_1,M)=(y_2-b_2,M)=M/p_i^2$, such that $y_1-b_1=y_2-b_2$. But then $M/p_jp_k$ divides $b_1-b_2$, contradicting (\ref{notopdivB}). 
Hence (\ref{flatcorner-e1}) cannot be true.

Next, assume that (\ref{flatcorner-e2}) holds with $\nu=k$, so that $A_{x,b}\subset \ell_i(a_k)$. In this case, (\ref{e-ortho2}) implies that
$$
1=\sum_{m|M}\frac{1}{\phi(M/m)} \bbA_m[x|\ell_i(a_k)]\bbB_m[b],
$$
and by (\ref{notopdivB}), the only contributing divisor can be $m={M/p_i^2p_j}$. Hence
\begin{align*}
\phi(p_jp_i^2)&=\bbA_{M/p_i^2p_j}[x|\ell_i(a_k)]\bbB_{M/p_i^2p_j}[b]\\
&=\bbA_{M/p_i^2}[a_k]\sum_{y:(y-b,M)=M/p_j}\bbB_{M/p_i^2}[y]
\end{align*}
By the same argument as above, we deduce that $\bbA_{M/p_i^2}[a_k]=\phi(p_i^2)$ and $\bbB_{M/p_i^2}[y]=1$ for all $y\in \ZZ_M\setminus B$ with $(y-b,M)=M/p_j$. When $p_i>p_j$, we therefore get $|A\cap \Pi(a_k,p_k^{n_k})|>p_ip_j$, which contradicts Lemma \ref{planebound}. When $p_i<p_j$, we have $\phi(p_i)<\phi(p_j)$, so that 
there are $y_1\neq y_2$ with $(y_1-b,M)=(y_2-b,M)=M/p_j$, and $b_1,b_2\in B$ with $(y_1-b_1,M)=(y_2-b_2,M)=M/p_i^2$, such that $p_i|b_1-b_2$. Hence $M/p_ip_j$ divides $b_1-b_2$, contradicting (\ref{notopdivB}).
This proves that $A_{x,b}\cap\ell_i(a_k)=\emptyset$. By symmetry, $A_{x,b}\cap\ell_i(a_j)=\emptyset$.

We have proved that $A_{x}\subset \ell_i(x)$, as claimed. 
If we know that $M/p_i\in\Div(A)$, the cofibered structure statement now follows immediately from Lemma \ref{1dim_sat-cor} with $\nu=i$. If we only assume that (\ref{notopdivB}) holds, we still have $M/p_i\not\in\Div(B)$, so that (\ref{e-ortho2}) implies
\begin{equation}\label{flatcorner-e4}
\bbA_{M/p_i^2}[x]\bbB_{M/p_i^2}[b]=\phi(p_i^2)\hbox{ for all }b\in B.
\end{equation}
Moreover, since $M/p_i\not\in\Div(B)$ and (by (\ref{flatcorner-e3})) $M/p_i^2\not\in\Div(A)$,  
we must have $\bbA_{M/p_i^2}[x]\leq p_i$ and $\bbB_{M/p_i^2}[b]\leq \phi(p_i)$ for each $b\in B$. By (\ref{flatcorner-e4}), both must hold with equality. This implies the desired cofibered structure, with the cofiber in $A$ equal to $A\cap\ell_i(x)$.
\end{proof}

%%%%%%%%%%%%%%%%%%%%%%%%%%%%%%%%%%%%%%

\subsection{Fibering lemmas}

%%%%%%%%%%%%%%%%%%%%%%%%%%%%%%%%%%%%%%

Lemma \ref{2d-cyclo} below is a simple version of the de Bruijn-R\'edei-Schoenberg theorem for cyclic groups $\ZZ_N$, where $N$ has at most two distinct prime factors. This was essentially proved in \cite{deB}; see also \cite[Theorem 3.3]{LL}.

\begin{lemma}\label{2d-cyclo} {\bf (Cyclotomic divisibility for 2 prime factors)}
Let $ A\in \mathcal{M}(\ZZ_N)$ for some $N\mid M, M=p_i^{n_i}p_j^{n_j}p_k^{n_k}$ such that $p_i\nmid N$. Then:

\smallskip
(i) $\Phi_N|A$ if and only if $A$ is a linear combination of $N$-fibers in the $p_j$ and $p_k$ direction with non-negative integer coefficients.

\smallskip
(ii) Let $\Lambda$ be a $D(N)$-grid. Assume that $\Phi_N|A$, and that there
exists $c_0\in\NN$ such that $\bbA^N_N[x]\in\{0,c_0\}$ for all $x\in\Lambda$. 
Then $A\cap\Lambda$ is $N$-fibered in either the $p_j$ or the $p_k$ direction.
\end{lemma}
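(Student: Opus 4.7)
My plan for part (i) goes as follows. The $(\Leftarrow)$ direction is immediate since $\Phi_N$ divides each $N$-fiber's mask polynomial, so $\Phi_N$ divides any non-negative integer combination. For $(\Rightarrow)$, I will reduce to a single $D(N)$-grid via the $N$-cuboid characterization and then handle the resulting two-dimensional problem directly. Recall $\Phi_N\mid A$ is equivalent to $\bbA^N_N[\Delta]=0$ on every $N$-cuboid $\Delta$, and since all vertices of any such $\Delta$ lie in a single $D(N)$-grid, Lemma \ref{cyclo-grid} lets me decompose $A = \sum_\Lambda (A\cap\Lambda)$ and work one grid at a time. On a fixed grid $\Lambda$, identify $\Lambda\cong\ZZ_{p_j}\times\ZZ_{p_k}$ and write the weights as $a_{s,t}$; the vanishing $N$-cuboid condition then reads $a_{s_1,t_1}+a_{s_2,t_2}=a_{s_1,t_2}+a_{s_2,t_1}$ for all distinct $s_1,s_2\in\ZZ_{p_j}$ and distinct $t_1,t_2\in\ZZ_{p_k}$. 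Fixing a base point $(s_0,t_0)$ and setting $r_s:=a_{s,t_0}$ and $c_t:=a_{s_0,t}-a_{s_0,t_0}$ yields integer sequences with $a_{s,t}=r_s+c_t$.

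To obtain non-negative integer coefficients, I will shift $(r_s,c_t)\mapsto (r_s-\rho,\,c_t+\rho)$ with $\rho:=\min_s r_s$. The new $r_s$ are then non-negative integers; picking any $s^*$ with $r_{s^*}=\rho$ before the shift gives, after the shift, $c_t = a_{s^*,t}\geq 0$, since the original weights are non-negative. This produces the desired decomposition
\begin{equation*}
(A\cap\Lambda)(X) = \sum_{s\in\ZZ_{p_j}} r_s\,F_k^{(s)}(X) + \sum_{t\in\ZZ_{p_k}} c_t\,F_j^{(t)}(X),
\end{equation*}
where $F_k^{(s)}$ and $F_j^{(t)}$ are the $N$-fibers in $\Lambda$ in the $p_k$ and $p_j$ directions, respectively. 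Summing over all $D(N)$-grids proves (i).

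For part (ii), I will apply (i) on each $D(N)$-grid $\Lambda$ to write $a_{s,t}=r_s+c_t$ with $r_s,c_t\in\ZZ_{\geq 0}$, and then argue that under (\ref{bin_cond}) only one of the two sequences can be non-trivial on $\Lambda$. Suppose for contradiction that $r_{s_0}\geq 1$ and $c_{t_0}\geq 1$ simultaneously; then $a_{s_0,t_0}=r_{s_0}+c_{t_0}$ strictly exceeds both $a_{s_0,t}=r_{s_0}$ (for indices $t$ with $c_t=0$) and $a_{s,t_0}=c_{t_0}$ (for indices $s$ with $r_s=0$). The uniform weight control provided by (\ref{bin_cond}) with constant $c_0$ is incompatible with such a configuration (non-zero weights at a point cannot simultaneously receive contributions from both fiber families without violating the allowed weight pattern), forcing either $r_s\equiv 0$ or $c_t\equiv 0$ on $\Lambda$. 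Hence $A\cap\Lambda$ is a non-negative integer combination of $N$-fibers in exactly one of the two directions.

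The hardest step will be the simultaneous integrality and non-negativity in the decomposition of (i). The vanishing mixed-difference condition yields $a_{s,t}=r_s+c_t$ only up to a common shift $(r_s-\rho,\,c_t+\rho)$, and the shift must be chosen so that \emph{both} sequences remain non-negative—which is exactly what $\rho=\min_s r_s$ accomplishes, using crucially the global non-negativity of the weights $a_{s,t}$. Once this structural decomposition is in hand, the grid-by-grid summation for (i) and the pigeonhole-style argument for (ii) are both routine.
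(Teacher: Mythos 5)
Your argument is correct, and it takes a genuinely different route from the paper: the paper offers no proof of Lemma \ref{2d-cyclo} at all, attributing it to the structure theory of vanishing sums of roots of unity (de Bruijn \cite{deB}; see \cite[Theorem 3.3]{LL}). You instead give a short self-contained proof: the $N$-cuboid criterion for $\Phi_N\mid A$ localizes to $D(N)$-grids (every $N$-cuboid has all its vertices in a single $D(N)$-grid), and on a grid identified with $\ZZ_{p_j}\times\ZZ_{p_k}$ the cuboid condition says exactly that the weight matrix $(a_{s,t})$ has vanishing $2\times 2$ mixed differences, i.e. $a_{s,t}=r_s+c_t$; the shift by $\rho=\min_s r_s$ converts this decomposition into one with non-negative integer coefficients, using the non-negativity of the weights along a row realizing the minimum. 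This buys self-containedness and makes transparent exactly where non-negativity of the multiset is used; the citation route buys only brevity, since in the two-prime case the cited results amount to your grid computation. One small attribution point: the localization you need in (i), namely that $\Phi_N\mid A$ implies $\Phi_N\mid A\cap\Lambda$ for each $D(N)$-grid $\Lambda$, is the cuboid statement recorded at the end of Section \ref{cuboid-section}, not Lemma \ref{cyclo-grid}, which gives the converse direction (harmless, since both directions are available).

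In part (ii) the final step should be tightened, though the fix is one line. The clause ``for indices $t$ with $c_t=0$'' is unnecessary, and such $t$ need not exist; what you do need, and have by your normalization in (i), is an index $s_1$ with $r_{s_1}=0$. Then $a_{s_1,t_0}=c_{t_0}$ is a nonzero weight, so (\ref{bin_cond}) forces $c_{t_0}=c_0$, and hence $a_{s_0,t_0}=r_{s_0}+c_0>c_0$, contradicting (\ref{bin_cond}); this is the precise content of your ``incompatible configuration''. Finally, it is worth saying explicitly that once one of the two coefficient families vanishes, the surviving nonzero coefficients all equal $c_0$, so $A\cap\Lambda$ is a disjoint union of $N$-fibers of equal multiplicity, which is what the paper's definition of ``$N$-fibered'' requires.
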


Lemma \ref{flat-cuboids} is a localized version of the above.

\begin{lemma}\label{flat-cuboids} {\bf (Flat cuboids)}
Let $A\subset \ZZ_M$. Assume that $\Phi_M|A$, and that there is a plane $\Pi:=\Pi(z,p_i^{n_i})$ such that
$A\cap\Pi$ is a disjoint union of $M$-fibers in the $p_j$ and $p_k$ directions. 
 Then for every parallel plane $\Pi':=\Pi(z',p_i^{n_i})$, where
 $(z-z',M)=M/p_i$, the set $A\cap\Pi'$ is a disjoint union of $M$-fibers in the $p_j$ and $p_k$ directions.
\end{lemma}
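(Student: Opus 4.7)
The plan is to identify each of the planes $\Pi, \Pi'$ with $\ZZ_N$ for $N := p_j^{n_j}p_k^{n_k}$, use $\Phi_M\mid A$ to couple the slices of $A$ on these two parallel planes via a 3D cuboid identity, and then conclude with Lemma \ref{2d-cyclo}(i). Since $|\Pi|=M/p_i^{n_i}=N$ and $\gcd(p_i^{n_i},N)=1$, reduction modulo $N$ gives a bijection $\Pi\to\ZZ_N$, and likewise for $\Pi'$. Under these bijections, an $M$-fiber in $\Pi$ in the $p_j$ or $p_k$ direction corresponds to an $N$-fiber in $\ZZ_N$ in the same direction, since $M/p_\nu\equiv u\cdot N/p_\nu\pmod{N}$ for some unit $u$ when $\nu\in\{j,k\}$. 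In particular, the hypothesis that $A\cap\Pi$ is a disjoint union of such $M$-fibers translates to $\Phi_N\mid (A\cap\Pi)$ viewed as a subset of $\ZZ_N$.

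The central step is a \emph{flat cuboid} identity coupling $A\cap\Pi$ and $A\cap\Pi'$. Given any $N$-cuboid $\tilde\Delta$ in $\ZZ_N$ with base $\tilde c$ and edges $\tilde d_j,\tilde d_k$, lift $\tilde c$ to $c\in\Pi$ and lift $\tilde d_j,\tilde d_k$ to $d_j,d_k\in\ZZ_M$ with $\pi_i(d_\nu)=0$; this automatically forces $(d_\nu,M)=M/p_\nu$. Setting $d_i:=z'-z$, we have $(d_i,M)=M/p_i$ by hypothesis, together with $\pi_j(d_i)=\pi_k(d_i)=0$ and therefore $d_i\equiv 0\pmod{N}$. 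The cuboid $\Delta:=X^c\prod_{\nu}(1-X^{d_\nu})$ is then an $M$-cuboid whose eight vertices split into four in $\Pi$ (those with $\epsilon_i=0$) and four in $\Pi'$ (those with $\epsilon_i=1$), and the $\Pi'$-vertices reduce modulo $N$ to the same four points as the $\Pi$-vertices. Splitting the alternating sum $\bbA^M_M[\Delta]$ by $\epsilon_i$ yields
$$
\bbA^M_M[\Delta] \;=\; S_\Pi(\tilde\Delta) - S_{\Pi'}(\tilde\Delta),
$$
where $S_\Pi(\tilde\Delta) := \sum_{\epsilon_j,\epsilon_k\in\{0,1\}}(-1)^{\epsilon_j+\epsilon_k}\,\mathbf{1}_A(c+\epsilon_j d_j+\epsilon_k d_k)$ is precisely the $N$-cuboid sum for $A\cap\Pi$ at $\tilde\Delta$ under the identification $\Pi\cong\ZZ_N$, and similarly for $S_{\Pi'}(\tilde\Delta)$. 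Since $\Phi_M\mid A$ forces the left side to vanish and $\Phi_N\mid(A\cap\Pi)$ forces $S_\Pi(\tilde\Delta)=0$, we conclude $S_{\Pi'}(\tilde\Delta)=0$ for every $N$-cuboid $\tilde\Delta$, i.e., $\Phi_N\mid (A\cap\Pi')$ in $\ZZ_N$.

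Finally, Lemma \ref{2d-cyclo}(i) applied to $A\cap\Pi'\subset\ZZ_N$ expresses it as a non-negative integer combination of $N$-fibers in the $p_j$ and $p_k$ directions. Since $A\cap\Pi'$ is a $0/1$-set, each of its points can be covered at most once, forcing the combination to be a disjoint union of distinct $N$-fibers; transporting back to $\Pi'\subset\ZZ_M$ then gives the stated conclusion. The only non-routine point is the flat cuboid identity, and this is largely bookkeeping: as $(c,d_j,d_k)$ vary with the constraints above, the reductions $(\tilde c,\tilde d_j,\tilde d_k)$ range over all valid $N$-cuboid data in $\ZZ_N$, which is immediate from $\gcd(p_i^{n_i},N)=1$.
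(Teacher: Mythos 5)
Your argument is correct and is essentially the paper's own proof: you couple a $2$-dimensional cuboid in $\Pi$ with its translate in $\Pi'$ via a full $M$-cuboid (using $\Phi_M\mid A$), deduce that all flat cuboid sums in $\Pi'$ vanish so that $\Phi_{M/p_i^{n_i}}$ divides $(A\cap\Pi')(X)$ viewed in $\ZZ_{M/p_i^{n_i}}$, and finish with Lemma \ref{2d-cyclo}. The extra bookkeeping with the identification $\Pi\cong\ZZ_N$ and the lifting of cuboid data is fine and just makes explicit what the paper leaves implicit.
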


\begin{proof}
Consider a 2-dimensional cuboid $\Delta'$ with vertices $x',x'+d_j,x'+d_k,x'+d_j+d_k$, where 
 $x'\in\Pi'$, $(d_j,M)=M/p_j$, $(d_k,M)=M/p_k$. Let $x\in\Pi$ be the point such that $(x-x',M)=M/p_i$,
and let $\Delta$ be the 2-dimensional cuboid with vertices $x,x+d_j,x+d_k,x+d_j+d_k$.
By the fibering property of $A\cap\Pi$, we have
$\bbA_M[\Delta]=0$. Since $\Phi_M|A$, we also have $\bbA_M[\Delta-\Delta']=0$, hence
$\bbA_M[\Delta']=0$. If we consider $A\cap\Pi'$ (after translation) as a subset of $\ZZ_{M/p_i^{n_i}}$,
it follows that $\Phi_{M/p_i^{n_i}}\mid (A\cap \Pi')(X)$. By Lemma \ref{2d-cyclo}, $A\cap\Pi'$ is a union of fibers as claimed.
\end{proof}

\begin{lemma} \label{gen_top_div_mis} {\bf (Missing top difference implies fibering)} 
Let $N|M$ with $p_ip_jp_k|N$.  Let $\Lambda:=\Lambda(x_0,D(N))$ for some $x_0\in \ZZ_N$.
Assume that $ A\subset \ZZ_M$ satisfies $\Phi_N|A$ and $\Lambda\cap A\neq\emptyset$.
Assume further that there exists a constant $c_0\in\NN$ such that
$$
\bbA^N_N[x]\in\{0,c_0\}\hbox{ for all }x\in\ZZ_N.
$$

\smallskip
(i) Suppose that $p_i\neq 2 $, and that 
\begin{equation}\label{topdiv_restr}
N/p_i\not\in\Div_{N}(A\cap\Lambda).
%\bbA^{L}_{L/p_k^{\delta_k+1} }[a]=0\,\text{ for all } a\in A\cap\Lambda.
\end{equation}
Then $A\cap\Lambda$ is $N$-fibered in one of the $p_j$ and $p_k$ directions. In particular, if $N/p_i\not\in\Div_N(A)$, then
$A\cap\Lambda$ is $N$-fibered in one of the $p_j$ and $p_k$ directions for every $D(N)$-grid $\Lambda$.

\smallskip

(ii) Suppose that $N/p_i,N/p_j\not\in\Div_N(A\cap\Lambda)$. Then
$A\cap\Lambda $ is $N$-fibered in the $p_k$ direction.
\end{lemma}

\begin{proof}
(i) We will assume that $c_0=1$ and identify $A$ with the set $A$ mod $N$ in $\ZZ_N$. (The general case is identical, except that every element of $A$ mod $N$ has multiplicity $c_0$ instead of 1.)

We first prove that each $a\in A\cap\Lambda$ belongs to an $N$-fiber in the $p_{\nu}$ direction for at least one $\nu\in\{j,k\}$. Suppose, for contradiction, that there exists an $a\in A\cap\Lambda$ that does not have this property. Then there are $x_j, x_k\in \ZZ_N\setminus A$ such that 
$$
%\bbA^{N}_N[x_j]=\bbA^{N}_N[x_k]=0\,\text{ and }
 (x_j-a,N)=N/p_j,\,(x_k-a,N)=N/p_k.
$$
Since $ p_i>2 $ and $\bbA^N_{N/p_i}[a]=0$, there exist at least two distinct elements $ x_i,x_i'\in \ZZ_N\setminus A$ satisfying 
$$
%\bbA^{N}_N[x_i]=\bbA^{N}_N[x_i']=0,\ 
(a-x_i,N)=(a-x_i',N)=N/p_i.
$$
Consider two $N$-cuboids in $\ZZ_N$, each with vertices at $ a,x_j, x_k$, and with another vertex at $x_i$ and $x_i'$ respectively. 
By the cyclotomic divisibility assumption, each of those cuboids must be balanced. This can only happen if there are two elements $a_{ijk},a_{ijk}'\in A$ at the opposite vertex of each cuboid from $a$ (that is, with $(a-a_{ijk},N)=(a-a_{ijk}',N)=D(N)$).
% satisfying  $\bbA^{N}_N[a_{ijk}]=\bbA^{N}_N[a_{ijk}']=1$. 
However, this leads to a contradiction, since $ (a_{ijk}-a'_{ijk},N)=N/p_i$. We therefore conclude that 
each $a\in A\cap\Lambda$ belongs to an $N$-fiber in at least one direction as indicated.

Next, suppose that $a_j,a_k\in A\cap\Lambda$ belong to $N$-fibers in, respectively, the $p_{j}$ and $p_k$ direction.
If $a_j*F_j$ and $a_k*F_k$ do not intersect, then $N/p_i\in\Div_N (a_j*F_j,a_k*F_k)$, contradicting the assumption (\ref{topdiv_restr}). We may therefore assume that $a_j=a_k=a$ and that $a*F_j,a*F_k\subset A$.

Consider any $N$-cuboid with one vertex at ${a}$. Then the vertices at distance $N/p_j$ and $N/p_k$ from $a$ belong to $A$, and, by (\ref{topdiv_restr}), the vertices at distance $N/p_ip_j$ and $N/p_ip_k$ from $a$ cannot be in $A$. 
The only way to balance the cuboid is for the vertex at distance $N/p_jp_k$ from $a$ to be in $A$. 
Allowing such cuboids to vary, we see that $a*F_j*F_k\subset A$. This also implies that $A$ cannot have any other elements in $\Lambda$, since that would contradict  (\ref{topdiv_restr}). Hence $A\cap\Lambda=a*F_j*F_k$ is $N$-fibered in both of the $p_j$ and $p_k$ directions.  This proves part (i) of the lemma.

\smallskip
(ii) 
Assume that $N/p_i,N/p_j\not\in\Div_N(A\cap\Lambda)$. At least one of $p_i,p_j$ must be odd; without loss of generality, we may assume that $p_i\neq 2$. By part (i) of the lemma, $A\cap\Lambda$ must be $N$-fibered in at least one of the $p_j$ and $p_k$ directions on $\Lambda$. However, it cannot be $N$-fibered in the $p_j$ direction, since $N/p_j\not\in\Div_N(A\cap\Lambda)$. Part (ii) follows.
\end{proof}

%%%%%%%%%%%%%%%%%%%%%%%%%%%%%%%%%%%%%%

\section{Structure on unfibered grids}\label{unfibered-structure-section}

%%%%%%%%%%%%%%%%%%%%%%%%%%%%%%%%%%%%%%

%%%%%%%%%%%%%%%%%%%%%%%%%%%%%%%%%%%%%%

\subsection{Diagonal boxes}\label{diag-box-section}

%%%%%%%%%%%%%%%%%%%%%%%%%%%%%%%%%%%%%%

Throughout this section, we will use the following notation. Let $M=p_i^{n_i}p_j^{n_j}p_k^{n_k}$, and let
$\Lambda$ be a fixed $D(M)$-grid such that $A\cap\Lambda\neq\emptyset$. 
We identify $\Lambda$ with $\ZZ_{p_i}\oplus \ZZ_{p_j} \oplus \ZZ_{p_k}$, and represent
each point $x\in \Lambda$ as $(\lambda_ix,\lambda_j x, \lambda_k x)$ in the implied coordinate system.

\begin{definition}\label{def-db}
Let $A\subset\ZZ_M$. We say that $A\cap\Lambda$ {\em contains diagonal boxes} if
there are nonempty sets 
$I\subset \ZZ_{p_i}$, $J\subset \ZZ_{p_j}$, $K\subset \ZZ_{p_k}$, such that  
$$
I^c:=\ZZ_{p_i}\setminus I,\  J^c:=\ZZ_{p_j}\setminus J,\  K^c:=\ZZ_{p_k}\setminus K
$$
are also nonempty, and 
%\begin{equation}\label{db-e3}
$$
(I\times J\times K) \cup (I^c\times J^c\times K^c)\subset A\cap\Lambda.
$$
%\end{equation}
\end{definition}

\begin{figure}[h]
	\includegraphics[scale=0.65]{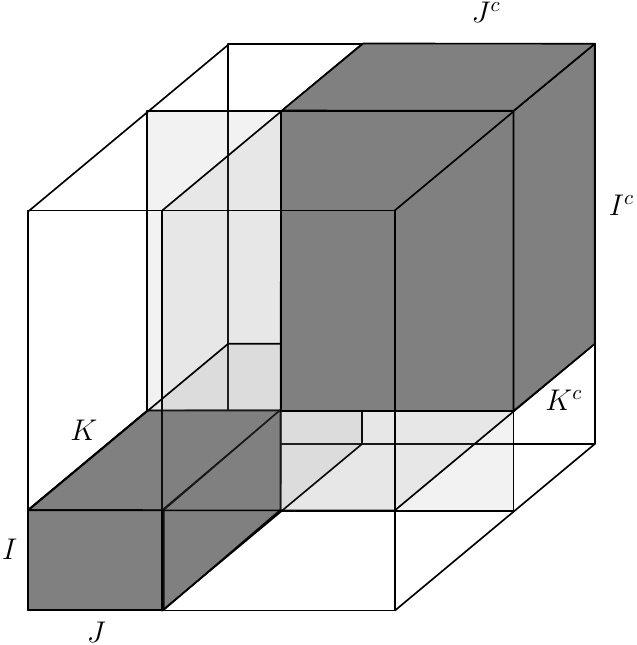}
	\caption{A pair of diagonal boxes.}
\end{figure}

\begin{proposition}\label{db-prop}
Let $A\oplus B=\ZZ_M$, where $M=p_i^{n_i}p_j^{n_j}p_k^{n_k}$, and assume that $\Phi_M\mid A$. Suppose that there is a $D(M)$-grid $\Lambda$ such that
$A\cap\Lambda$ is not a union of disjoint $M$-fibers (possibly in different directions).
Then $A\cap\Lambda$ contains diagonal boxes.
\end{proposition}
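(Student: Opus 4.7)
The strategy is to locate a point $(x_0,y_0,z_0)\in A\cap\Lambda$ through which no complete $M$-fiber lies entirely inside $A\cap\Lambda$, and then to read $I$, $J$, $K$ off the three coordinate axes through this point, using the cuboid identities coming from $\Phi_M\mid A$.

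For the existence of such an unfibered point, identify $\Lambda\cong\ZZ_{p_i}\oplus\ZZ_{p_j}\oplus\ZZ_{p_k}$; then $\Phi_M\mid(A\cap\Lambda)$ is equivalent to the vanishing of the alternating sum of $w_{A\cap\Lambda}$ on every $M$-cuboid in $\Lambda$, which by the classical theorem on vanishing sums of roots of unity (\cite{LL}) is equivalent to a decomposition
\[w_{A\cap\Lambda}(x,y,z)=R_i(y,z)+R_j(x,z)+R_k(x,y)\]
with integer-valued $R_i,R_j,R_k$. Such a decomposition can be chosen with every $R_\nu\geq 0$ if and only if $A\cap\Lambda$ is a disjoint union of $M$-fibers; hence under the hypothesis of the proposition any decomposition has some negative value, and tracing this defect one exhibits $(x_0,y_0,z_0)\in A\cap\Lambda$ for which each of the three $M$-fibers through it is only partially inside $A\cap\Lambda$.

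Define
\[I=\{x:(x,y_0,z_0)\notin A\cap\Lambda\},\ J=\{y:(x_0,y,z_0)\notin A\cap\Lambda\},\ K=\{z:(x_0,y_0,z)\notin A\cap\Lambda\},\]
all nonempty by unfiberedness, with nonempty complements $I^c\ni x_0$, $J^c\ni y_0$, $K^c\ni z_0$. Applying the cuboid identity on $\{x_0,x'\}\times\{y_0,y'\}\times\{z_0,z'\}$ for $(x',y',z')\in I\times J\times K$, and using the zero values at the three axis endpoints, we obtain
\[w_{A\cap\Lambda}(x',y',z')=1+w_{A\cap\Lambda}(x',y',z_0)+w_{A\cap\Lambda}(x',y_0,z')+w_{A\cap\Lambda}(x_0,y',z'),\]
which, since the left side is $\leq 1$ and every right-hand term is in $\{0,1\}$, forces $w_{A\cap\Lambda}(x',y',z')=1$ together with the vanishing of all three $2$-dimensional face values. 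This proves $I\times J\times K\subset A\cap\Lambda$ and yields crucial ``zero-face'' data on the three coordinate planes through $(x_0,y_0,z_0)$.

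For the second box $I^c\times J^c\times K^c\subset A\cap\Lambda$, take $(x'',y'',z'')$ there; the same cuboid identity (now with unit values at $(x'',y_0,z_0)$, $(x_0,y'',z_0)$, $(x_0,y_0,z'')$) reads
\[w_{A\cap\Lambda}(x'',y'',z'')=-2+w_{A\cap\Lambda}(x'',y'',z_0)+w_{A\cap\Lambda}(x'',y_0,z'')+w_{A\cap\Lambda}(x_0,y'',z''),\]
reducing the problem to showing each $2$-dimensional face value equals $1$. Fix an auxiliary $(x^*,y^*,z^*)\in I\times J\times K\subset A\cap\Lambda$ and apply cuboid identities based at $(x^*,y^*,z^*)$; combined with the zero-face data above, each identity isolates one unknown face value and pins it to $1$. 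Iterating over auxiliary bases establishes $I^c\times J^c$, $I^c\times K^c$, $J^c\times K^c$ inside the respective axis planes through $(x_0,y_0,z_0)$, completing the proof. The principal obstacle is this last step, which requires a careful iteration of cuboid identities with shifted base points in $I\times J\times K$, leveraging the zero-face data to successively pin down the 2D face values.
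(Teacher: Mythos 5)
Your first-box step is correct and is essentially Claim~2 of the paper's own proof: with a base point through which all three fibers are incomplete, the $M$-cuboid with three zero ``axis'' vertices forces both $I\times J\times K\subset A\cap\Lambda$ and the zero-face data. But the two steps surrounding it have genuine gaps. First, the existence of such a base point in $A\cap\Lambda$ is only asserted (``tracing this defect one exhibits\dots''). The hypothesis is that $A\cap\Lambda$ is not a union of \emph{pairwise disjoint} $M$-fibers; this does not obviously prevent every point of $A\cap\Lambda$ from lying on some fully contained fiber, with the fibers merely overlapping, and you give no argument ruling this out. The paper does not claim such a point exists: it instead removes fibers greedily to produce an auxiliary set $A_0$ containing no fibers at all, and then must track how $A_0$ sits inside $A$ (Lemma~\ref{takesomegetsome}), which is a real source of later complications rather than a formality.

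Second, and more seriously, the second box. Your reduction to showing that the three face values $w(x'',y'',z_0)$, $w(x'',y_0,z'')$, $w(x_0,y'',z'')$ all equal $1$ is fine, but the proposed mechanism --- cuboids based at auxiliary points of $I\times J\times K$, each ``isolating one unknown face value'' --- does not work. For instance, the cuboid on $\{x_0,x''\}\times\{y_0,y''\}\times\{z^*,z_0\}$ with $z^*\in K$ gives $w(x'',y'',z_0)=1-w(x'',y_0,z^*)-w(x_0,y'',z^*)+w(x'',y'',z^*)$, and your zero-face data says nothing about vertices in the mixed boxes $I^c\times J\times K$, $I\times J^c\times K$, $I^c\times J^c\times K$, etc.; every natural cuboid one writes down has several undetermined vertices, so nothing is pinned to $1$. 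This is exactly where the paper has to work hard: it chooses the base point to maximize $|K(a)|$ and exploits that maximality (Claim~1), compares the boxes attached to different layers $l\in K(a)$ (Claim~3), and in the remaining case of incomparable layer boxes (Claim~4) it invokes the tiling hypothesis in an essential way --- divisor exclusion, saturating sets, Lemma~\ref{smallcube}, and fiber shifts via Lemma~\ref{fibershift}. Your argument never uses $B$ or the tiling at all, only $\Phi_M\mid A$ and $0$--$1$ coefficients; the paper states explicitly (Section~\ref{classified2}, and compare Proposition~\ref{notopdiff}, whose purely cyclotomic conclusion is weaker and requires extra hypotheses) that the tiling assumption is needed for the full strength of this structure result. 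As sketched, the final step does not go through.
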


\begin{proof}
Write $D=D(M)$ for short.
We first construct a set $A_0\subset A\cap\Lambda$ as follows. If $A\cap\Lambda$ contains no $M$-fibers, let $A_0:=A\cap\Lambda$. If $A\cap \Lambda$ does contain an $M$-fiber $F$, consider the set $(A\cap\Lambda)\setminus F$ (if there is more than one such fiber, just choose one arbitrarily and remove it). If this set contains no $M$-fibers, we let $A_0$ be that set; otherwise continue by induction. The procedure terminates when no more $M$-fibers can be found. The remaining set $A_0$ is nonempty by our assumption on $A$, contains no $M$-fibers,
and $\Phi_M | A_0$; however, $A_0$ need not be a tiling complement. 

We remark that $A_0$ is not necessarily uniquely determined by $A$, as the fiber removal procedure may lead to different outcomes depending on the order in which fibers are removed. In that event, we fix one such set $A_0$ and keep it fixed throughout the proof. 

For future reference, we record a lemma.

\begin{lemma}\label{takesomegetsome}
Suppose that $x\in A\setminus A_0$. Then there exists a $\nu\in\{i,j,k\}$ such that $x*F_\nu\subset A\setminus A_0$.
\end{lemma}

\begin{proof}
 This follows directly from the construction, since any point $x\in A\setminus A_0$ would have been removed from $A$ together with an $M$-fiber (in some direction) containing $x$.
\end{proof}

For each $x\in\Lambda$, we define
\begin{align*}
I(x)&=\{l\in\ZZ_{p_i}:\ \ (l,\lambda_jx,\lambda_kx)\in A_0\}, & I^c(x)=\ZZ_{p_i}\setminus I(x),
\\
J(x)&=\{l\in\ZZ_{p_j}:\ \ (\lambda_i x,l,\lambda_k x)\in A_0\}, &J^c(x)=\ZZ_{p_j}\setminus J(x),
\\
K(x)& =\{l\in\ZZ_{p_k}:\ \ (\lambda_i x, \lambda_j x,l)\in A_0\}, & K^c(x)=\ZZ_{p_k}\setminus K(x).
\end{align*}
Let $a\in A_0$ be an element such that $|K(a)|$ is maximal, in the sense that
\begin{equation}\label{Kmax}
|K(a')|\leq |K(a)|\ \ \forall a'\in A_0.
\end{equation}
By translational invariance, we may assume that $a=(0,0,0)$. 
Observe that $I(a),J(a), K(a)$ are all nonempty since $a\in A_0$, and $I^c(a),J^c(a), K^c(a)$ are all nonempty since 
$a$ does not belong to an $M$-fiber in $A_0$ in any direction.

\medskip
\noindent
{\bf Claim 1.} {\it For all $a'=(0,0,l)\in A_0$, with $l\in K(a)$, we have $I(a')\times J(a')\times \{l\}\subset A_0$.}

%\medskip

\begin{proof}
 Since $K(a')=K(a)$, it suffices to prove this with $a'=a$. If $|I(a)|=1$ or $|J(a)|=1$, there is nothing to prove. Assume now that $\min(|I(a)|,|J(a)|\geq 2$. Let $a_i=(l_i,0,0)$ and $a_j=(0,l_j,0)$ for some $l_i\in I(a)\setminus\{0\}$,
$l_j\in J(a)\setminus\{0\}$, and let $z=(l_i,l_j,0)\in I(a)\times J(a)\times \{0\}$. We need to prove that $z\in A_0$. Suppose, for contradiction, that $z\not\in A_0$, and consider the cuboid with vertices at $a,a_i,a_j$, and
$(0,0,l_k)$. 
For $l_k\in K(a)$, the cuboid can only be balanced if both $(l_i,0,l_k)$ and $(0,l_j,l_k)$ are elements of $A_0$. Furthermore, let $l_k\in K^c(a)$, then for the cuboid to be balanced we still need at least one of $(l_i,0,l_k)$ and $(0,l_j,l_k)$ to be in $A_0$. This implies that $\max(|K(a_i)|,|K(a_j)|)>|K(a)|$,
contradicting the maximality assumption (\ref{Kmax}). Hence Claim 1 follows.
\end{proof}

%\medskip
\noindent
{\bf Claim 2.} {\it For all $a'=(0,0,l)\in A_0$ with $l\in K(a)$,
we have $I^c(a')\times J^c(a')\times K^c(a)\subset A_0$.}

%\medskip

\begin{proof}
Let $x\in I^c(a')\times J^c(a')\times K^c(a)$.
Considering the $M$-cuboid with opposite vertices at $a'$ and $x$, we see that the three vertices at distance $M/p_i,M/p_j,M/p_k$ from $a'$ are not in $A_0$. Hence, in order for the cuboid to
be balanced, we must have $x\in A_0$ as claimed. 
%\begin{equation}\label{db-e4}
%(\lambda_ix,\lambda_j x, 0),(\lambda_ix,0, \lambda_k x),(0,\lambda_j x, \lambda_k x)\not\in A_0,
%\end{equation}
%as claimed.
\end{proof}

%\medskip
\noindent
{\bf Claim 3.} {\it Suppose that for all $l',l''\in K(a)$, we have either
$(I(a')\times J(a'))\subseteq (I(a'')\times J(a'')) $ or $(I(a'')\times J(a''))\subseteq (I(a')\times J(a')) $,
where $a'=(0,0,l')$, $a''=(0,0,l'')$.
Then $A_0$ contains diagonal boxes.
}

%\medskip

\begin{proof} Under the assumptions of the claim, the sets $I(a')\times J(a')$ with $a'\in A_0$ and $M/p_k\mid a-a'$ have a minimal element. Without loss of generality, we may assume that
$$
(I(a)\times J(a))\subseteq (I(a')\times J(a'))\ \ \forall a'\in A_0 \hbox{ such that }M/p_k\mid a-a'.
$$
Then Claims 1 and 2 imply that
$A_0$ (and therefore $A$) contains the diagonal boxes 
$$(I(a)\times J(a)\times K(a)) \cup (I^c(a)\times J^c(a)\times K^c(a)),
$$
which proves the claim. \end{proof}

\medskip

It remains to consider the case when there exist $l',l''\in K(a)$ such that for $a'=(0,0,l')$, $a''=(0,0,l'')$, we have
\begin{equation}\label{db-e8}
I(a')\not\subset I(a'')\hbox{ and }J(a'')\not\subset J(a').
\end{equation}
For this to be possible, $K(a)$ must have at least two distinct elements, and each of $I(a')$ and $J(a'')$ must have at least one element different from $0$. 
Since none of $a,a',a''$ belongs to an $M$-fiber in $A_0$, we must have
%\begin{equation}\label{db-e9}
$$
p_i,p_j,p_k>2.
$$
%\end{equation}
Furthermore, this configuration implies that
\begin{equation}\label{db-e10}
\{m:\ D|m| M\}\subset\Div(A_0)\subset\Div(A\cap\Lambda).
\end{equation}
Indeed, we have 
$$
\{m:\ D|m| M\}\setminus\{M/p_ip_j\}\subset 
\Div ((I(a')\times J(a')\times \{l'\})\cup  (I(a'')\times J(a'')\times \{l''\})),
$$
$$
M/p_ip_j\in\Div(I^c(a')\times J^c(a')\times \{l_k\}, I^c(a'')\times J^c(a'')\times \{l_k\}) \hbox{ for any }l_k\in K^c(a).
$$

\medskip

\noindent
{\bf Claim 4.} {\it Suppose that  (\ref{db-e8}) holds for some $l',l''\in K(a)$, with $a'=(0,0,l')$, $a''=(0,0,l'')$. Then 
$$I^c(a'')\times J^c(a')\times K^c(a)\subset A_0.$$
}

%\medskip

\begin{proof} 
We may assume that $a''=a$ and $l''=0$. We will also write $l_k=l'$, so that $a'=(0,0,l_k)$
Let $l_i\in I(a')\setminus I(a), l_j\in J(a)\setminus J(a'), l_i^c\in I^c(a')$ and $l_j^c\in J^c(a)$. We also fix $l^c_k\in K^c(a)$. For the purpose of this proof, we will need to consider points with coordinates $(\beta_i,\beta_j,\beta_k)$ such that 
$$
\beta_\nu\in \{0,l_\nu,l_\nu^c\} \text{ for } \nu\in\{i,j,k\}.
$$
Let $x_i:=(l_i,0,0)$, $x_j:=(0,l_j,l_k)$, and $a_i:=(l_i,0,l_k),a_j:=(0,l_j,0)$.  Then $x_i,x_j  \not\in  A_0$, and $a_i,a_j\in A_0$. By Claim 2, we have
\begin{equation}\label{zlocation}
\begin{split}
z_i &:=(l_i^c ,l_j,l_k^c)\in A_0 \hbox{ since } l_j\in J^c(a'),\\
z_j &:=(l_i,l_j^c,l_k^c)\in A_0 \hbox{ since } l_i\in I^c(a),\\
\end{split}
\end{equation}
Let $z=(l_i,l_j,l_k^c)\in I^c(a)\times J^c(a')\times K^c(a)$ be the point such that
\begin{equation}\label{ztriple}
(z-z_i,M)=M/p_i,\ (z-z_j,M)=M/p_j.
\end{equation}
We need to prove that 
\begin{equation}\label{claim4-ez}
z\in A_0. 
\end{equation}

It is tempting at this point to try to apply the flat corner argument to $z$, with the flat corner configuration given by $z_i$, $z_j$, and $z_{ij}:=(l_i^c,l_j^c,l_k^c)$. Unfortunately, we do not know that $A_0$ is a tiling complement. This means that an additional argument is required even in the basic case $n_i=n_j=n_k=2$.  With more work, we are also able to extend that argument to more general $n_i,n_j,n_k$.

We begin with the following reduction.

\medskip

\noindent
{\bf Claim 4'.} {\it Let $x_{ij}=(l_i,l_j,0)$ and $y_{ij}=(l_i,l_j,l_k)$. Suppose that at least one of the following holds: either
\begin{equation}\label{fiberacja3}
\{x_{ij},y_{ij}\}\cap A\neq\emptyset,
\end{equation}
or else there exists a set $A'\subset\ZZ_M$, identical to $A$ except possibly along the line $\ell_j(x_i)$, such that $A'\oplus B=\ZZ_M$ and 
\begin{equation}\label{fiberacja2}
	x_i*F_j\subset A', \text{ hence } x_{ij}\in A'.
\end{equation}
Then $z\in A_0$. 
}

\begin{proof}
For notational consistency, if (\ref{fiberacja3}) holds, we let $A'=A$. We have
\begin{equation}\label{zdoubt}
(x_{ij}-z,M)=(y_{ij}-z,M)=M/p_k.
\end{equation}
Suppose that at least one of $x_{ij}$, $y_{ij}$ belongs to $A'$. By (\ref{ztriple}) and 
(\ref{db-e10}), the assumptions of Lemma \ref{smallcube} hold for $z$ and $A'$. Hence $z\in A'$.
Since $A'$ and $A$ may differ only along $\ell_j(x_i)$, we must in fact have $z\in A$. 

Next, we claim that $z\in A_0$. Indeed, assume for contradiction that $z\in A\setminus A_0$. By Lemma \ref{takesomegetsome}, there is a $\nu\in\{i,j,k\}$ such that $z*F_\nu\subset (A\setminus A_0)$. But by (\ref{zlocation}) and (\ref{ztriple}), $z_i\in (z*F_i)\cap A_0$ and  $z_j\in (z*F_j)\cap A_0$. It follows that $\nu=k$, and $z*F_k\subset (A\setminus A_0)$. By (\ref{zdoubt}), we have $x_{ij},y_{ij}\in z*F_k$, and in particular $x_{ij},y_{ij}\in A$. 

Consider now the point $x_{i}$. We have 
\begin{equation}\label{pkpair}
(x_i-a,M)=M/p_i,\ \ (x_i-a_i,M)=M/p_k,\ \ (x_i-x_{ij},M)=M/p_j, 
\end{equation}
with $a,a'\in A_0,x_{ij}\in A$.
Taking also (\ref{db-e10}) into account, we see that $x_i$ satisfies the assumptions of Lemma \ref{smallcube} applied to $A$. Therefore $x_i\in A$.

Since $x_i\not\in A_0$, we must have  $x_i\in A\setminus A_0$. By Lemma \ref{takesomegetsome} and (\ref{pkpair}), it follows that $x_i*F_j\subset A\setminus A_0$. However, consider the point $x_{ij}\in x_i*F_j$. We have already seen that $z$ must have been removed from $A$ together with the fiber $z*F_k$, which also contains $x_{ij}$. This is a contradiction, since we are not allowed to remove the same point twice. 
\end{proof}

To prove Claim 4, it remains to prove that at least one of (\ref{claim4-ez}), 
(\ref{fiberacja3}), (\ref{fiberacja2}) must hold. Assume first that $x_i\in A\setminus A_0$. By Lemma \ref{takesomegetsome}, $A\setminus A_0$ must contain an $M$-fiber $x_i*F_\nu$ for some $\nu\in\{i,j,k\}$. Since $a\in (x_i*F_i)\cap A$ and $a_i\in (x_i*F_k)\cap A$,  we must have $\nu=j$. This clearly implies $x_{ij}\in A $. The same argument applies with $i$ and $j$ interchanged.

We are left with the case when (\ref{claim4-ez}) fails and
\begin{equation}\label{claim4-e4}
x_i,x_j,x_{ij},y_{ij}\notin A.
\end{equation}
Consider the $M$-cuboid with vertices at $x_{ij},z, z_i$ and $z_j$, and note that $z_i,z_j\in A_0$. In order to balance this cuboid in $A_0$, at least one of the points $y_i:=(l_i^c ,l_j,0)$ and $y_j:=(l_i,l_j^c,0)$ must be in $A_0$. However, if $y_j\in A_0$, then we can apply Lemma \ref{smallcube} again, this time to $A$ and $x_i$, using (\ref{db-e10}), (\ref{pkpair}), and 
$(x_i-y_j,M)=M/p_j$. Hence $x_i\in A$. This, however, contradicts the assumption (\ref{claim4-e4}) of this case.

%We assumed that $x_i\not\in A_0$, hence by Lemma \ref{takesomegetsome} there must be a $\nu\in\{i,j,k\}$ such that $x_i*F_\nu\subset A\setminus A_0$. However, the points $a,a_i,y_j\in A_0$ block all three directions,
%contradicting the assumption that $y_j\in A_0$. 

We therefore have $y_i\in A_0$. Now, considering the saturating set $A_{x_i}$, we claim that 
\begin{equation}\label{claim4-e0}
A_{x_i}\subset \bigcap_{\hat{a}\in \{a,a_i,z_j,y_i \}} \Bispan(x_i,\hat{a}) = \ell_j(x_i) \cup \{a,a',a_i, a_j, (l_i,l_j^c,l_k)\}.
\end{equation}
To prove this, we argue as follows. The first inclusion follows from Lemma \ref{bispan-lemma} (ii), since $a,a_i,z_j,y_i\in A$. We now prove the second part of the formula. By (\ref{two-planes}) applied to $x_i$, $a$ and $a_i$ we have 
\begin{equation}\label{claim4-e1}
\begin{split}
A_{x_i}&\subset \Bispan(x_i,a)\cap\Bispan(x_i,a_i)
\\
&= \Big(\Pi(x_i,p_i^{n_i})\cup \Pi(a,p_i^{n_i})\Big)
\cap \Big(\Pi(x_i,p_k^{n_k})\cup \Pi(a_i,p_k^{n_k})\Big)
\\
&= \ell_j(a)\cup\ell_j(a')\cup\ell_j(a_i)\cup\ell_j(x_i).
\end{split}
\end{equation}
By (\ref{e-span}), we have
$$
\Bispan(x_i,y_i)=\Pi(x_i, p_i^{n_i})\cup\Pi(x_i, p_j^{n_j})\cup \Pi(y_i, p_i^{n_i})\cup\Pi(y_i, p_j^{n_j}).
$$
Taking the intersection with (\ref{claim4-e1}), we get 
$$
A_{x_i}\subset \ell_j(a_i)\cup\ell_j(x_i)\cup\{a,a',a_j,x_j\}.
$$
Finally, 
$$
\Bispan(x_i,z_j)=\Pi(x_i, p_j^{n_j})\cup\Pi(x_i, p_k^{n_k})\cup \Pi(z_j, p_j^{n_j})\cup\Pi(z_j, p_k^{n_k}).
$$
Taking the intersection again, we get (\ref{claim4-e0}).

By (\ref{db-e10}), we must in fact have $A_{x_i}\subset \ell_j(x_i)$. Hence $A_{x_i}$ satisfies the conditions of Lemma \ref{fibershift-plus}. In particular, (\ref{fibershift-plus-e1}) holds with some $\gamma\geq 2$, and $A_{x_i}$ is a disjoint union of $M$-fibers in the $p_j$ direction. Applying Lemma \ref{fibershift-plus} (iii) to shift $A_{x_i}$ to $x_i$, we obtain a set $A'$ with the desired properties, thus concluding the proof of Claim 4.
\end{proof}

We can now complete the proof of the proposition. We claim that 
%\begin{equation}\label{db-e30}
$$
(I\times J\times K) \cup (I^c\times J^c\times K^c)\subset A_0,
$$
%\end{equation}
where 
$$
I=\bigcap_{l\in K(a)} I(a_l),\ \ J=\bigcap_{l\in K(a)} J(a_l),\ \ K=K(a),
$$
$$
I^c=\bigcup_{l\in K(a)} I^c(a_l),\ \ J^c=\bigcup_{l\in K(a)} J^c(a_l),\ \ K^c=K^c(a),
$$
and $a_l=(0,0,l)$.
 It is clear that $I\times J\times K\subset A_0$. Suppose now that
$(l_i,l_j,l_k)\in I^c\times J^c\times K^c$. Then there are $l,l'\in K(a)$ such that
$l_i\in I^c(a_l)$ and $l_j\in J^c(a_{l'})$. If $l=l'$, or if $l_i\in I^c(a_{l'})$, then 
$(l_i,l_j,l_k)\in A_0$ by Claim 2 applied to $a_{l'}$. The case $l_j\in J^c(a_l)$ is similar. 
Assume therefore that
$l_i\in I(a_{l'})\setminus I(a_{l})$ and $l_j\in J(a_l)\setminus J(a_{l'})$.
But then (\ref{db-e8}) holds with $a',a''$ replaced by $a_l,a_{l'}$. Applying Claim 4, we see that
$(l_i,l_j,l_k)\in A_0$ in that case as well. This ends the proof of the proposition.
\end{proof}

%%%%%%%%%%%%%%%%%%%%%%%%%%%%%%%%%%%%%%

\subsection{Extended corners}\label{ec-section}

%%%%%%%%%%%%%%%%%%%%%%%%%%%%%%%%%%%%%%

We continue to write $M=p_i^{n_i}p_j^{n_j}p_k^{n_k}$. Assume that $A\oplus B=\ZZ_M$, $\Phi_M\mid A$,
and there exists a $D(M)$-grid $\Lambda$ such that $A\cap\Lambda\neq\emptyset$ and $A\cap \Lambda$ is not $M$-fibered in any direction. 
By Proposition \ref{db-prop}, if 
$A\cap\Lambda$ is not a union of disjoint $M$-fibers, then $A\cap\Lambda$ contains diagonal boxes.
It remains to consider the case when $A\cap\Lambda$ is a union of disjoint $M$-fibers. In that case, we claim that $A\cap\Lambda$ contains the following structure.

\begin{definition}\label{corner}
Suppose that $ A\subset \ZZ_M$, and let $\Lambda$ be a $D(M)$-grid.

\smallskip

(i) We say that $A\cap\Lambda$ contains a {\em $p_i$ corner} if there exist $a,a_i\in A\cap\Lambda$
with $(a-a_i,M)=M/p_i$ satisfying  
%\begin{equation}\label{corner-strict}
$$
A\cap(a*F_j*F_k)= a*F_j,\ \ A\cap(a_i*F_j*F_k)= a_i*F_k.
$$
%\end{equation}
	
\smallskip
(ii) We say that $A\cap\Lambda$ contains a
{\em $p_i$ extended corner} if there exist $a,a_i\in A\cap\Lambda$
such that $(a-a_i,M)=M/p_i$ and 
\begin{itemize}
\item $A\cap(a*F_j*F_k)$ is $M$-fibered in the $p_j$ direction but not in the $p_k$ direction,
\item $A\cap(a_i*F_j*F_k)$ is $M$-fibered in the $p_k$ direction but not in the $p_j$ direction.
\end{itemize}
	
\end{definition}

We now prove that unfibered grids as described above must contain extended corners.

\begin{proposition}\label{prop-ecorner}
	Let $D=D(M)$, and let $\Lambda$ be a $D$-grid. Assume that $A\cap\Lambda$ is a union of disjoint $M$-fibers, but is not fibered in any direction. Then $A\cap\Lambda$ contains a $p_\nu$ extended corner for some $\nu\in\{i,j,k\}$.
\end{proposition}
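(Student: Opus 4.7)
The plan is to fix a decomposition $\calf$ of $A\cap\Lambda$ into disjoint $M$-fibers and case-split on how many of the three directions appear in $\calf$. Since $A\cap\Lambda$ is not $M$-fibered in any single direction, $\calf$ must use at least two directions, so the two live cases are ``exactly two directions'' and ``all three directions''. I would rule out the three-directions case using the tiling (Lemma \ref{smallcube}), and then produce the extended corner in the two-directions case by a direct planar analysis.

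For the three-directions case, I would suppose $\calf$ contains a $p_i$-fiber $F$ rooted at $(\cdot,y_0,z_0)$, a $p_j$-fiber $G$ at $(x_1,\cdot,z_1)$, and a $p_k$-fiber $H$ at $(x_2,y_2,\cdot)$, with coordinates taken in the $\ZZ_{p_i}\oplus\ZZ_{p_j}\oplus\ZZ_{p_k}$ identification of $\Lambda$. Pairwise disjointness forces $x_1\ne x_2$, $y_0\ne y_2$, and $z_0\ne z_1$, so the point $x=(x_1,y_2,z_0)$ avoids $F\cup G\cup H$ while its three canonical neighbors $(x_2,y_2,z_0)\in H$, $(x_1,y_0,z_0)\in F$, and $(x_1,y_2,z_1)\in G$ all lie in $A$. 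A short check shows that by choosing suitable pairs of points from $F,G,H$ one realizes every divisor $m$ with $D(M)\mid m\mid M$ and $m\ne M$ in $\Div(A\cap\Lambda)\subset\Div(A)$; divisor exclusion then verifies the hypothesis of Lemma \ref{smallcube} and forces $x\in A$. But then $x$ must lie on some fiber of $\calf$, and each possibility violates disjointness: the $p_i$-fiber $(\cdot,y_2,z_0)$ through $x$ shares $(x_2,y_2,z_0)$ with $H$, the $p_j$-fiber $(x_1,\cdot,z_0)$ shares $(x_1,y_0,z_0)$ with $F$, and the $p_k$-fiber $(x_1,y_2,\cdot)$ shares $(x_1,y_2,z_1)$ with $G$. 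This contradiction eliminates the three-directions case.

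In the remaining case, by relabeling I assume $\calf$ uses only $p_j$- and $p_k$-fibers. Since $\calf$ contains no $p_i$-fibers, for every $t\in\ZZ_{p_i}$ the slice $A_t$ of $A\cap\Lambda$ in the plane $a_0*F_j*F_k$ (with $\pi_i(a_0)=t$) is the union of the $\calf$-fibers lying in that plane; these fibers are either all $p_j$-rows or all $p_k$-columns, since a row and a column in the same plane intersect. Hence each $A_t$ has the form $\ZZ_{p_j}\times K_t$ or $J_t\times\ZZ_{p_k}$. Because $A\cap\Lambda$ is not $M$-fibered in direction $p_j$, some $A_{t_2}=J_{t_2}\times\ZZ_{p_k}$ has $J_{t_2}\subsetneq\ZZ_{p_j}$ (strict, because a full plane is also a union of rows); symmetrically some $A_{t_1}=\ZZ_{p_j}\times K_{t_1}$ has $K_{t_1}\subsetneq\ZZ_{p_k}$. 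Picking any $(\pi_j,\pi_k)\in J_{t_2}\times K_{t_1}$ and letting $a,a_i\in\Lambda$ be the points in the two planes with those transverse coordinates yields $(a-a_i,M)=M/p_i$, with $A\cap(a*F_j*F_k)=A_{t_1}$ fibered in $p_j$ but not $p_k$ and $A\cap(a_i*F_j*F_k)=A_{t_2}$ fibered in $p_k$ but not $p_j$. This is the required $p_i$-extended corner. The main obstacle is the three-directions case: the delicate step is confirming that all top divisors appear in $\Div(A\cap\Lambda)$ so that Lemma \ref{smallcube} becomes available, and then tracking precisely which disjointness constraint is violated by each possible direction of a fiber through the forced joint $x$.
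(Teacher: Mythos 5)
Your proof is correct, and the planar half of it (each plane's fibers are all $p_j$-rows or all $p_k$-columns since a row and a column in one plane meet; a plane witnessing the failure of $p_j$-fibering must be a proper union of columns, and symmetrically for $p_k$; then pick $a,a_i$ with matching $j,k$-coordinates) is essentially the argument in the paper — indeed you are slightly more explicit than the paper about why points $a,a_i$ with $(a-a_i,M)=M/p_i$ actually exist in the two distinguished planes. Where you genuinely diverge is the reduction to two fibering directions: the paper gets this by quoting \cite[Proposition 7.10]{LaLo1}, restated here as Proposition \ref{prop-twodirections}, whereas you prove it inside this paper's toolbox. Your observation that three pairwise disjoint fibers $F,G,H$ in the three distinct directions already realize every $m$ with $D(M)\mid m\mid M$ in $\Div(A\cap\Lambda)$ is correct, so divisor exclusion supplies the hypothesis of Lemma \ref{smallcube}; the forced joint $x=(x_1,y_2,z_0)$ then lies on some fiber of your decomposition $\calf$, and each of the three possible directions for that fiber collides with one of $F,G,H$, as you check. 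This buys a self-contained argument (in fact the stronger statement that \emph{no} disjoint-fiber decomposition of $A\cap\Lambda$ can use all three directions), at the price of invoking the tiling hypothesis hands-on — which is fine, since the tiling is a standing assumption of Section \ref{ec-section} and the paper's route through Proposition \ref{prop-twodirections} needs it as well. Two minor reading notes: the slice $J_{t_2}\times\ZZ_{p_k}$ witnessing non-$p_j$-fibering is automatically nonempty as well as proper (an empty slice is vacuously a union of rows), which you implicitly use when choosing $(\pi_j,\pi_k)$; and the hypothesis (\ref{notopdivB}) of Lemma \ref{smallcube} is to be read apart from the trivial divisor $M$, exactly as the paper itself applies it via (\ref{db-e10}).
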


\begin{proof}
Fix $A$ and $\Lambda$ as in the statement of the proposition.
	We will say that $\kappa:A\cap\Lambda\to\{i,j,k\}$ is an {\em assignment function} if $A\cap\Lambda$ can be written as
	$$
	A\cap\Lambda=\bigcup_{a\in A\cap\Lambda} (a*F_{\kappa(a)}),
	$$
	where for any $a,a'\in A\cap\Lambda$, the fibers $a*F_{\kappa(a)}$ and $a'*F_{\kappa(a')}$ are either identical or disjoint.
	Thus, if $a'\in a*F_{\kappa(a)}$, then $\kappa(a')=\kappa(a)$.
	
We recall \cite[Proposition 7.10]{LaLo1}:

\begin{proposition}\label{prop-twodirections}
Let $M=p_i^{n_i}p_j^{n_j}p_k^{n_k}$, and let $D=D(M)$. Assume that $A\oplus B=\ZZ_M$ 
	and that there exists a $D$-grid $\Lambda$ such that $A\cap \Lambda$ is a nonempty union of disjoint $M$-fibers.
	Then there is a subset $\{\nu_1,\nu_2\}\subset\{i,j,k\}$ of cardinality 2 such that $A\cap\Lambda$ is a union of disjoint $M$-fibers in the $p_{\nu_1}$ and $p_{\nu_2}$ directions.
\end{proposition}

Hence there exists an assignment function $\kappa$ that takes at most two distinct values. (In fact, the proof of \cite[Proposition 7.10]{LaLo1} shows that this is true for any assignment function.) 
	Without loss of generality, we may assume that $\kappa(a)\in\{j,k\}$ for all $a\in A\cap\Lambda$. We claim that this implies that $A\cap\Lambda$ contains a $p_i$ extended corner.
	
	Split $\Lambda$ into 2-dimensional grids 
	$\Pi_\iota:={x_\iota}*F_j*F_k$, $\iota=0,1,\dots,p_i-1$. Then for each $\iota$, the set $A\cap\Pi_\iota$ is a union of disjoint fibers in at least one of the $p_j$ or $p_k$ direction. Moreover, we are assuming that $A\cap\Lambda$ is not fibered in either the $p_j$ or $p_k$ direction. Therefore for each $\nu\in\{j,k\}$, there must be at least one $\iota(\nu)$ such that 
	$A\cap\Pi_{\iota(\nu)}$ is fibered only in the $p_\nu$ direction. Choosing $a\in A\cap \Pi_{\iota(j)}$
	and $a_i\in A\cap \Pi_{\iota(k)}$ with $(a-a_i)=M/p_i$, we see that the condition (ii) of Definition \ref{corner} is satisfied.
\end{proof}

A $p_i$ corner is a special case of a $p_i$ extended corner, with only one fiber in each of the planes through $a$ and $a_i$ in $\Lambda$. This is one of the special structures that occur when $\Phi_M\mid A$,
$A\cap \Lambda$ is not $M$-fibered in any direction, and $\{D(M)|m|M\}\not\subset\Div(A)$ 
(see Section \ref{unfibered-missing}).

In addition to the present purpose of classification of unfibered grids on scale $M$, 
we will also refer to Definition \ref{corner} (ii) in the fibered case, in the proofs of Proposition \ref{initialstatement} (Claim 1) and Lemma \ref{calknointer}.

%%%%%%%%%%%%%%%%%%%%%%%%%%%%%%%%%%%%%

\section{Unfibered grids with missing top differences}\label{unfibered-missing}

%%%%%%%%%%%%%%%%%%%%%%%%%%%%%%%%%%%%%%

Let $A\subset\ZZ_M$, and let $\Lambda$ be a $D(M)$-grid such that $A\cap\Lambda\neq\emptyset$.
The purpose of this section is to classify all possible unfibered grids $A\cap\Lambda$ under the assumption that $\Phi_M|A$ and that $\Div(A)$ does not contain all $m$ such that $D(M)|m|M$. We do {\em not} assume in this section that $A$ is a tiling complement.

\subsection{A structure result}

\begin{proposition}\label{notopdiff}
Let $M=p_i^{n_i}p_j^{n_j}p_k^{n_k}$.
Assume that $A\subset\ZZ_M$ satisfies $\Phi_M|A$. Let 
$\Lambda$ be a $D(M)$-grid such that $A\cap\Lambda\neq\emptyset$.
Suppose that $A\cap\Lambda$ is not $M$-fibered in any direction, 
and that (\ref{db-e10}) fails, i.e.
\begin{equation}\label{db-e18}
\{m:\ D(M)|m| M\}\not\subset\Div(A\cap\Lambda).
\end{equation}
Then $A\cap\Lambda$ is a union of at most one set of diagonal boxes
\begin{equation}\label{db-e19}
A_1=(I_1\times J_1\times K_1) \cup (I_1^c\times J_1^c\times K_1^c),
\end{equation}
where $I_1\subset \ZZ_{p_i}$, $J_1\subset \ZZ_{p_j}$, $K_1\subset \ZZ_{p_k}$ are non-empty sets such that  
$
I_1^c:=\ZZ_{p_i}\setminus I_1,\  J_1^c:=\ZZ_{p_j}\setminus J_1,\  K_1^c:=\ZZ_{p_k}\setminus K_1
$
are also non-empty, and possibly additional $M$-fibers in one or more directions, disjoint from $A_1$ and from each other. Furthermore, if $A\cap\Lambda$ does contain a set of diagonal boxes (\ref{db-e19}), then at least one of the sets
$I_1,J_1,K_1$ has cardinality 1, and at least one of the sets
$I^c_1,J^c_1,K^c_1$ has cardinality 1.
\end{proposition}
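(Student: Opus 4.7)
The plan is to reuse the iterative fiber-removal construction from the proof of Proposition \ref{db-prop} together with its combinatorial backbone (Claims~1--3). Starting from $A\cap\Lambda$, I would iteratively remove disjoint $M$-fibers until none remain, obtaining a core $A_0\subseteq A\cap\Lambda$. Since each $M$-fiber has mask $F_\nu(X)=\Phi_{p_\nu}(X^{M/p_\nu})$ divisible by $\Phi_M$, the relation $\Phi_M\mid A_0$ is preserved, and $A_0$ is free of $M$-fibers by construction. If $A_0=\emptyset$, the conclusion holds with $A_1=\emptyset$ and the pairwise-disjoint fibers removed during the construction give the required fiber decomposition.

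If $A_0\neq\emptyset$, translate so that $0\in A_0$ and pick $a=(0,0,0)\in A_0$ maximizing $|K(a)|$. Claims~1 and 2 in the proof of Proposition \ref{db-prop} are pure cuboid-balancing arguments using only $\Phi_M\mid A_0$ and carry over without the tiling hypothesis. The ensuing dichotomy is that either the chain condition of Claim~3 holds, or the configuration (\ref{db-e8}) occurs; but in the latter case the proof of Proposition \ref{db-prop} already establishes (\ref{db-e10}), i.e.\ $\{m:D(M)\mid m\mid M\}\subseteq\Div(A_0)\subseteq\Div(A\cap\Lambda)$, directly contradicting (\ref{db-e18}). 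Hence Claim~3 applies and gives the inclusion
\[
A_1:=(I_1\times J_1\times K_1)\cup(I_1^c\times J_1^c\times K_1^c)\subseteq A_0,
\]
with $I_1,I_1^c,J_1,J_1^c,K_1,K_1^c$ all nonempty.

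For the cardinality constraints I would compute $\Div(A_1)$ directly in reduced coordinates. A pair of points inside the primary block realizes $M/p_ip_j$ iff $|I_1|,|J_1|\geq 2$, and similarly for the complementary block and the other compound differences $M/p_ip_k$, $M/p_jp_k$; any pair of points from opposite blocks automatically realizes $D(M)$, and a single-block pair realizes $M/p_\nu$ whenever $|I_1|\geq 2$ (etc.). Consequently, if $\min(|I_1|,|J_1|,|K_1|)\geq 2$ then $\Div(A_1)$ already covers every top difference, contradicting (\ref{db-e18}); thus $\min(|I_1|,|J_1|,|K_1|)=1$, and the symmetric computation on the complementary block gives $\min(|I_1^c|,|J_1^c|,|K_1^c|)=1$.

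Finally I would prove $A_0=A_1$, so that the non-fiber content of $A\cap\Lambda$ is exactly one diagonal box structure. Evaluating at any primitive $M$-th root $\zeta$ and setting $S_I=\sum_{x\in I_1}\omega_i^x$ etc., one has $A_1(\zeta)=\zeta^{x_0}\bigl(S_IS_JS_K+(-S_I)(-S_J)(-S_K)\bigr)=0$, so $\Phi_M\mid A_1$, hence $\Phi_M\mid(A_0\setminus A_1)$, and $A_0\setminus A_1$ inherits from $A_0$ the absence of $M$-fibers. If $A_0\setminus A_1\neq\emptyset$, rerunning Claims~1--3 on $A_0\setminus A_1$ produces a second diagonal box $A_2\subseteq A_0\setminus A_1$ satisfying the same cardinality constraints. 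A case analysis on the cross-differences $\Div(A_1,A_2)$ then shows that the top difference missing in (\ref{db-e18}) is necessarily realized either internally in $A_1\cup A_2$ or across the two blocks, yielding the required contradiction. The main obstacle I anticipate is this uniqueness step: making the cross-difference analysis uniform over all cardinality profiles and relative positions of $A_1$ and $A_2$, and accommodating even primes (where Lemma \ref{gen_top_div_mis} is unavailable and additional unfibered structures can appear).
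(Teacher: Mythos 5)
Your construction of $A_0$, the reuse of Claims 1--3 of Proposition \ref{db-prop} (which indeed need no tiling hypothesis), the exclusion of the case (\ref{db-e8}) via (\ref{db-e10}) versus (\ref{db-e18}), and the direct divisor count giving the cardinality constraints all match the paper's argument and are fine.

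The genuine gap is the uniqueness step $A_0=A_1$, which you yourself flag as the main obstacle but whose proposed mechanism is wrong. You plan to show that if a second disjoint diagonal box $A_2\subset A_0\setminus A_1$ exists, then the cross-differences $\Div(A_1,A_2)$ force the missing top divisor to appear, contradicting (\ref{db-e18}). This cannot work in general: the proposition allows $2\mid M$, and with $p_i=2$ one can take $A_1=(\{0\}\times J\times K)\cup(\{1\}\times J^c\times K^c)$ and $A_2=(\{0\}\times J^c\times K)\cup(\{1\}\times J\times K^c)$; these are disjoint diagonal boxes, each satisfying the cardinality constraints, whose union misses both $M/p_i$ and $M/p_ip_j$ (any two points with the same $k$-coordinate have the same $i$-coordinate). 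So the missing divisor is not the obstruction -- the $M$-fibers are: in this example the union is $M$-fibered in the $p_j$ direction, which is incompatible with $A_1\cup A_2\subset A_0$. That is exactly how the paper argues, and the argument there never uses (\ref{db-e18}) at this stage: from $A_1\cap A_2=\emptyset$ alone one shows that one of $I_1=I_2$, $I_1=I_2^c$, $J_1=J_2$, $J_1=J_2^c$, $K_1=K_2$, $K_1=K_2^c$ must hold; assuming without loss of generality $K_1=K_2^c$, the various block intersections must be empty, since any nonempty one produces an $M$-fiber in the $p_k$ direction inside $A_0$; the remaining Boolean analysis forces $I_1=I_2^c$ and $J_1=J_2$, and in that configuration $A_1\cup A_2$ itself contains $M$-fibers, again contradicting the fiber-freeness of $A_0$. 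Note also that your worries about ``relative positions'' and about even primes are moot here: both boxes are coordinate-aligned subsets of the same grid, so the only data are the six subsets, and the paper's set-theoretic argument is independent of the parity of the primes (Lemma \ref{gen_top_div_mis} is not used in this proof). To complete your proposal, replace the divisor-based case analysis by this disjointness-plus-fiber-freeness argument.
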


\begin{remark}
For simplicity, we only state and prove
Proposition \ref{notopdiff} for sets $A\subset \ZZ_M$. However, if we assume instead that $A\in\calm(\ZZ_N)$ for some $N|M$, and that (\ref{bin_cond}) holds (i.e., $A\cap\Lambda$ is a multiset of
constant multiplicity $c_0$), the same argument applies except that the diagonal boxes and fibers in the conclusion also have multiplicity $c_0$.
\end{remark}

\begin{proof}
We begin as in the proof of Proposition \ref{db-prop}. 
Let $A_0\subset A\cap\Lambda$ be a set constructed by removing $M$-fibers from $A\cap\Lambda$ until none are left, so that $A\cap\Lambda$ is the union of $A_0$ and some number of $M$-fibers in one or more directions, disjoint from $A_0$ and from each other. If $A_0=\emptyset$, we are done. Otherwise, we proceed with Claims 1, 2, and 3 from the proof of Proposition \ref{db-prop}, noting that this part does not require the use of saturating sets (hence $A$ need not be a tile). At that point, the only remaining case in the proof of Proposition \ref{db-prop} is when there exist $l',l''\in K(a)$ such that (\ref{db-e8}) holds. However, in that case we have (\ref{db-e10}), which contradicts (\ref{db-e18}). Therefore, under the assumptions of Proposition \ref{notopdiff},
$A_0$ (if nonempty) contains diagonal boxes $A_1$ as in (\ref{db-e19}). 
Moreover, the cardinality statement must hold, since otherwise we would not have (\ref{db-e18}).

As in Proposition \ref{db-prop}, $A_0$ need not be unique and may depend on the order in which the fibers are removed, and $A_1$ may then depend on the choice of $A_0$. We fix one such choice of $A_0$ and $A_1$, and keep it fixed for the remainder of the proof.

We claim that $A_0=A_1$. To prove this, assume for contradiction that $A_0\setminus A_1$ is nonempty. We clearly have $\Phi_M|A_0$ and $\Phi_M|A_1$, therefore $\Phi_M|A_0(X)-A_1(X)$. Since the set $A_0\setminus A_1$ is nonempty and contains no fibers, it must contain another set of diagonal boxes
%\begin{equation}\label{db-e20}
$$
A_2=(I_2\times J_2\times K_2) \cup (I_2^c\times J_2^c\times K_2^c),
$$
%\end{equation} 
with obvious notation. Furthermore, since $A_2\subset A_0\setminus A_1$, we must have 
\begin{equation}\label{db-e21}
A_1\cap A_2=\emptyset.
\end{equation}

We first claim that at least one of
$$
I_1=I_2,\ I_1=I_2^c,\ J_1=J_2,\ J_1=J_2^c,\ K_1=K_2, \ K_1=K_2^c,
$$
must hold.
Indeed, by (\ref{db-e21}), we must have
$$
(I_1\times J_1\times K_1)\cap (I_2\times J_2\times K_2)=\emptyset,
$$
so that at least one of $I_1\cap I_2$, $J_1\cap J_2$, $K_1\cap K_2$ is empty. Without loss of generality, we may assume that $K_1\cap K_2=\emptyset$, so that
\begin{equation}\label{db-e22}
K_1\subset K_2^c,\ \ K_2\subset K_1^c.
\end{equation}
We also have
$$
(I_1\times J_1\times K_1)\cap (I_2^c\times J_2^c\times K_2^c)=\emptyset,
$$
and since $K_1\cap K_2^c\neq\emptyset$, one of $I_1\cap I_2^c$ and $J_1\cap J_2^c$ is empty. Without loss of generality, we may assume that $J_1\cap J^c_2=\emptyset$, so that
\begin{equation}\label{db-e23}
J_1\subset J_2,\ \ J_2^c\subset J_1^c.
\end{equation}
Next,
$$
(I_1^c \times J_1^c \times K_1^c)\cap (I_2^c\times J_2^c\times K_2^c)=\emptyset.
$$
By (\ref{db-e23}), we have $J^c_1\cap J_2^c\neq\emptyset$, therefore one of $I_1^c\cap I_2^c$ and $K^c_1\cap K_2^c$ is empty. If $K^c_1\cap K_2^c=\emptyset$, then $K_2^c\subset K_1$, which together with (\ref{db-e22}) shows that $K_1=K_2^c$ and proves the claim. If $I_1^c\cap I_2^c=\emptyset$, we get that
\begin{equation}\label{db-e24}
I_1^c \subset I_2,\ \ I_2^c\subset I_1.
\end{equation}
Using that
$$
(I_1^c \times J_1^c \times K_1^c)\cap (I_2\times J_2\times K_2)=\emptyset.
$$
and taking (\ref{db-e22}) and (\ref{db-e24}) into account, we see that $J_1^c\cap J_2=\emptyset$. But then $J_2\subset J_1$, which together with (\ref{db-e23}) proves the claim. 

We may assume without loss of generality that 
%\begin{equation}\label{db-e25}
$$
K_1= K_2^c.
$$
%\end{equation}
This implies that 
\begin{equation}\label{db-e26}
(I_1\times J_1)\cap (I_2\times J_2)=\emptyset \hbox{ and }(I_1^c \times J_1^c)\cap (I_2^c\times J_2^c)=\emptyset,
\end{equation}
since otherwise we would have an $M$-fiber in the $p_k$ direction in $A_0$. But also, considering the box layers with 
third coordinate $l\in K_1$ and $l'\in K_2$, we have
\begin{equation}\label{db-e27}
(I_1\times J_1)\cap (I_2^c\times J_2^c)=\emptyset \hbox{ and }(I_1^c \times J_1^c)\cap (I_2\times J_2)=\emptyset,
\end{equation}
The first parts of (\ref{db-e26}) and (\ref{db-e27}) imply that either
\begin{equation}\label{db-e28}
I_1\cap I_2=J_1\cap J_2^c=\emptyset,
\end{equation}
or else the same holds with $I,J$ interchanged. Assume that (\ref{db-e28}) holds. 
Then 
$I_2\subset I_1^c$ and $J_2^c\subset J_1^c$, so that in order for the second parts of (\ref{db-e26}) and (\ref{db-e27})
to hold, we must have
$$
I_1^c \cap I_2^c =J_1^c\cap J_2=\emptyset.
$$
But this implies that $I_1^c\subset I_2$ and $J_2^c\subset J_1^c$. Hence $I_1=I_2^c$ and $J_1=J_2$. But then
$A_0$ contains $M$-fibers in the $p_i$ direction, a contradiction.
\end{proof}

%%%%%%%%%%%%%%%%%%%%%%%%%%%%%%%%%

\subsection{Special unfibered structures: odd $M$}\label{special-odd}

\begin{lemma} \label{misscorner_fullplane}
Let $M=p_i^{n_i}p_j^{n_j}p_k^{n_k}$ and $N=M/p_i^{\alpha_i}p_j^{\alpha_j}p_k^{\alpha_k}$ with 
$\alpha_\iota<n_\iota$ for all $\iota\in\{i,j,k\}$. 
Assume that $ 2\nmid M $, and that $ A\in \mathcal{M}(\ZZ_N)$ satisfies $\Phi_N|A$. 
Let $\Lambda$ be a $D(N)$-grid such that $A\cap\Lambda\neq\emptyset$.
Assume further that 
\begin{itemize}
\item there exists a $c_0\in\NN$ such that
\begin{equation} \label{bin_cond}
\bbA^N_N[x]\in\{0,c_0\}\,\text{ for all } x\in\Lambda,
\end{equation}
\item $A\cap\Lambda$ is not $N$-fibered in any direction.
\end{itemize}
Then there is a permutation of $\{i,j,k\}$ such that
\begin{equation}\label{divrest}
\{D(N)|m|N\}\setminus \Div_N(A\cap\Lambda)\subseteq\{N/p_{i}p_j,N/p_{i}p_k\}. 
\end{equation}
Moreover, (\ref{divrest}) holds with equality if and only if there exists $x\in \ZZ_M\setminus A$ such that 
$$
 \bbA^{N}_{N/p_{i}}[x]=c_0\phi(p_{i}),\, \bbA^{N}_{N/p_jp_k}[x]=c_0\phi(p_jp_k),$$
$$
\bbA^{N}_m[x]=0\hbox{ for all }m\in \{D(N)|m|N\}\setminus \{N/p_{i},N/p_jp_k\}.
$$
We will refer to this structure as a {\bf $p_i$-full plane}.
\end{lemma}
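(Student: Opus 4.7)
The plan combines Lemma~\ref{gen_top_div_mis}(i) (to rule out any missing single-prime divisor $N/p_\nu$) with Proposition~\ref{notopdiff} (which, as soon as any top divisor is missing, pins the structure of $A\cap\Lambda$ down to a pair of diagonal boxes plus disjoint extra $M$-fibers), followed by a finite combinatorial bookkeeping of $\Div$; the equality statement and the $p_i$-full plane are then extracted by identifying the unique extremal configuration. Since $A\cap\Lambda$ is not $M$-fibered in any direction, in particular it is not fibered in the $p_j$- or $p_k$-direction, and $p_i$ being odd makes Lemma~\ref{gen_top_div_mis}(i) give $N/p_i\in\Div(A\cap\Lambda)$; by symmetry the same holds for $N/p_j$ and $N/p_k$, so any missing top divisor lies in $\{N/p_ip_j,\,N/p_ip_k,\,N/p_jp_k,\,D(N)\}$.

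Assuming at least one is missing (otherwise (\ref{divrest}) is vacuous), Proposition~\ref{notopdiff} (in its multiset form, via the remark after its statement) decomposes $A\cap\Lambda=A_1\cup(\text{disjoint additional $M$-fibers})$, where $A_1$ is either empty or a diagonal-box pair $(I_1\times J_1\times K_1)\cup(I_1^c\times J_1^c\times K_1^c)$ having one side of cardinality $1$ in each box. Odd-prime parity rules out $|X|=|X^c|=1$, so a permutation of $\{i,j,k\}$ together with a possible box-swap reduces to the normalized case $|I_1|=|K_1^c|=1$. A direct pair-by-pair analysis of $\Div(A_1)$, using that pairs from opposite boxes automatically differ in all three coordinates, shows that $\Div(A_1)$ always contains $N/p_i,\,N/p_k,\,D(N)$ and never $N/p_ip_k$, while the membership of $N/p_j,\,N/p_ip_j,\,N/p_jp_k$ hinges only on whether $|J_1|$ or $|J_1^c|$ equals $1$. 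A short subcase check then yields that the missing subset of $A_1$ is one of $\{N/p_ip_k\}$, $\{N/p_ip_j,N/p_ip_k\}$, or $\{N/p_ip_k,N/p_jp_k\}$; the last fits inside $\{N/p_ip_j,N/p_ip_k\}$ after a further $\{i,k\}$-swap. Since extra $M$-fibers can only enlarge $\Div(A\cap\Lambda)\supseteq\Div(A_1)$, (\ref{divrest}) persists for $A\cap\Lambda$. If instead $A_1=\emptyset$, then $A\cap\Lambda$ is a union of disjoint $M$-fibers in at least two directions, and a direct inter/intra-fiber divisor computation shows that only a single top divisor $N/p_\nu p_\mu$ can be missing, again compatible with~(\ref{divrest}).

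For the equality part of the lemma, tracking the subcase bookkeeping forces $|J_1|=p_j-1,\,|J_1^c|=1$ (or its $\{i,k\}$-swapped analogue); after translation we may take $I_1=\{0\}$, $J_1^c=K_1^c=\{0\}$, so that $A_1=\bigl(\{0\}\times J_1\times K_1\bigr)\cup\bigl(I_1^c\times\{0\}\times\{0\}\bigr)$. A case check on every possible additional $M$-fiber shows that any such fiber would inject either $N/p_ip_j$ or $N/p_ip_k$ into $\Div(A\cap\Lambda)$, contradicting equality; hence $A\cap\Lambda=A_1$ and $x:=(0,0,0)\notin A$. A direct distance count then gives $\bbA^N_{N/p_i}[x]=c_0\phi(p_i)$, $\bbA^N_{N/p_jp_k}[x]=c_0\phi(p_jp_k)$, and $\bbA^N_m[x]=0$ for every other $m$ with $D(N)\mid m\mid N$. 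Conversely, if such an $x$ exists, the prescribed values of $\bbA^N_m[x]$ force $A\cap\Lambda$ to equal the union of the $p_i$-line through $x$ (minus $x$) and the off-cross of the $p_jp_k$-plane through $x$, and a direct computation of $\Div$ of that set gives exactly the missing pair $\{N/p_ip_j,N/p_ip_k\}$.

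The principal obstacle is the middle-paragraph bookkeeping: one must simultaneously manage the permissible symmetries (permutations of $\{i,j,k\}$ and box-swap), the odd-prime parity constraint that prevents $|X|=|X^c|=1$, and the effect of extra $M$-fibers, which can shrink the missing set of $A_1$ down without ever spoiling the common-prime property. Once this is pinned down, both the pure-fiber case and the equality characterization follow from short direct computations at the point $x=(0,0,0)$.
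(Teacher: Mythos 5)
Your proposal is correct and follows essentially the same route as the paper: both reduce via Proposition~\ref{notopdiff} (in its multiset form) to either a diagonal-box pair plus disjoint fibers or a fibers-only configuration, run the same divisor bookkeeping driven by the cardinality-one and odd-prime constraints to get (\ref{divrest}), and characterize equality by forcing the third singleton and ruling out additional fibers, with $x$ the unique point of the complementary singleton box. The only differences are cosmetic: your preliminary appeal to Lemma~\ref{gen_top_div_mis}(i) is redundant (the single-prime divisors already fall out of the box bookkeeping), and you spell out the converse of the equality characterization that the paper leaves implicit.
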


\begin{figure}[h]
	\includegraphics[scale=0.7]{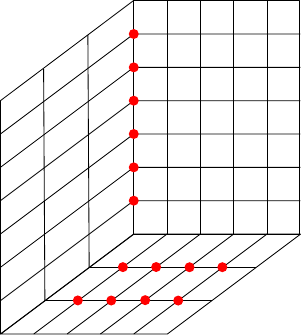}
	\caption{Full plane structure on a $D(M)$-grid.}
\end{figure}

\begin{proof}
We may assume that $M=N$ and $c_0=1$. We will also write $D=D(M)$ for short.
By Proposition \ref{notopdiff} and the assumption that $A\cap\Lambda$ is not fibered, $A\cap\Lambda$ must contain at least one of the following:

\medskip
(a) two nonintersecting $M$-fibers in different directions, say $p_i$ and $p_j$,

\smallskip
(b) diagonal boxes as in Definition \ref{def-db}, and possibly additional $M$-fibers in one or more directions, disjoint from the diagonal boxes and from each other.

\medskip

In the first case, we have $\{D|m|M\}\setminus \Div(A\cap\Lambda)\subseteq\{M/p_{i}p_j\}$, just based on these two fibers. It remains to consider the second case.
Suppose that $A\cap\Lambda$ contains diagonal boxes 
$$(I\times J\times K) \cup (I^c\times J^c\times K^c)$$
with $I,J,K,I^c,J^c,K^c$ all nonempty. In order for $\{D|m|M\}\setminus \Div(A\cap\Lambda)$ to be nonempty, we must have
\begin{equation}\label{db-e60}
\min(|I|,|J|,|K|)=\min(|I^c|,|J^c|,|K^c|)=1.
\end{equation}
We may assume without loss of generality that $|I^c|=|J|=1$.
Since $p_\iota\geq 3$ for all $\iota$, it follows that $|I|,|J^c|$, and at least one of $|K|,|K^c|$ are greater than 1. Assume that $|K^c| > 1$. Then 
\begin{equation}\label{db-e61}
\begin{split}
M/p_i &\in\Div(I\times J\times K),\\
M/p_j,M/p_k,M/p_jp_k  &\in\Div(I^c\times J^c\times K^c),\\
M/p_ip_jp_k &\in\Div(I\times J\times K, I^c\times J^c\times K^c).
\end{split}
\end{equation}
This implies (\ref{divrest}).
Furthermore, if (\ref{divrest}) holds with equality, then $|K|=1$, since otherwise we would also have 
$M/p_ip_k \in\Div(I\times J\times K)$. This proves the second conclusion of the lemma, with $x$ equal to the unique element of $I^c\times J\times K$. Note that if we add an $M$-fiber in any direction to this structure, then 
equality in (\ref{divrest}) can no longer hold.
\end{proof}

\begin{lemma}\label{oddcornerplus}
Let $M=p_i^{n_i}p_j^{n_j}p_k^{n_k}$ and $N=M/p_i^{\alpha_i}p_j^{\alpha_j}p_k^{\alpha_k}$ with 
$\alpha_\iota<n_\iota$ for all $\iota\in\{i,j,k\}$. 
Assume that $ 2\nmid M $, and that $ A\in \mathcal{M}(\ZZ_N)$ satisfies $\Phi_N|A$.
Let $\Lambda$ be a $D(N)$-grid such that $A\cap\Lambda\neq\emptyset$. Assume further that
\begin{itemize}
\item (\ref{bin_cond}) holds for all $ x\in\Lambda$, 
\item $A\cap\Lambda$ is not fibered in any direction,
\end{itemize}
and that
\begin{equation}\label{div_rest}
\{D|m|N\}\setminus \Div_N(A\cap\Lambda)=\{N/p_ip_j\}.
\end{equation}
Then $A\cap\Lambda$ has one of the following, mutually exclusive, possible structures:	

\begin{enumerate}
\item[(\emph{i})] ({\bf $p_k$-corner}, cf. Definition \ref{corner} (i).) For each $x\in\ZZ_N$, the set $A\cap\Lambda\cap\Pi(x,p_k^{n_k-\alpha_k})$ is either empty or 
consists of a single $N$-fiber in one of the $p_i$ or $p_j$ directions. Since $A\cap\Lambda$ is not fibered, there has to be at least one of each.

\begin{figure}[h]
	\includegraphics[scale=0.7]{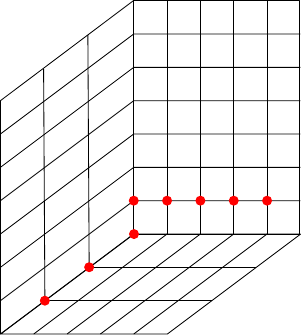}
	\caption{Corner structure on a $D(M)$-grid.}
\end{figure}

\item[(\emph{ii})] {\bf ($p_k$-almost corner)}
There exist $ x_0,x_1 ,\ldots,x_{\phi(p_k)}\in \ZZ_N $ with $(x_l-x_{l'},N)=N/p_k$ for $l\neq l'$, and 
two disjoint sets $ \mathcal{L}_i,\mathcal{L}_j \subset\ZZ_{p_k}$ satisfying $|\call_i|,|\call_j|>1$ and $ \call_i\cup \call_j=\{0,1,\ldots,\phi(p_k)\} $,
such that for all $z\in\Lambda$ we have
$$
\bbA^N_N[z]=\begin{cases}
c_0 &\hbox{ if } (z-x_l,N)=N/p_i \hbox{ for some } l\in \call_i\\
&\ \hbox{ or } (z-x_l,N)=N/p_j \hbox{ for some } l\in \call_j\\
0 &\hbox{ otherwise. }
\end{cases}
$$
In particular, $\bbA^{N}_N[x_l]=0$ and $\bbA^{N}_{N/p_i}[x_l]=c_0\phi(p_i)$ for all $l\in \call_i$, and similarly with $i$ and $j$ interchanged.

\begin{figure}[h]
	\includegraphics[scale=0.7]{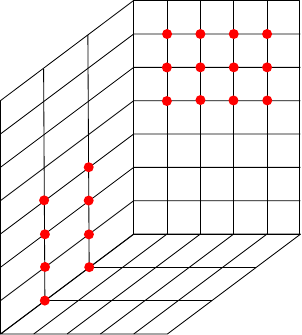}
	\caption{An almost corner structure on a $D(M)$-grid.}
\end{figure}

\end{enumerate}
\end{lemma}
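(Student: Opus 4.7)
The plan is to combine the divisor hypothesis (\ref{div_rest}) with Proposition \ref{notopdiff} and analyze the two possible configurations. Identifying $\Lambda$ with $\ZZ_{p_i}\oplus\ZZ_{p_j}\oplus\ZZ_{p_k}$ via the $D(N)$-grid coordinates, the hypothesis $N/p_ip_j\notin\Div(A\cap\Lambda)$ forces any two elements of $A\cap\Lambda$ sharing their $p_k$-coordinate to share at least one of their $p_i$- or $p_j$-coordinates. A short transitivity argument then shows that each nonempty $k$-slice of $A\cap\Lambda$ lies on a single $p_i$-line or a single $p_j$-line. Using (\ref{bin_cond}) and the remark following Proposition \ref{notopdiff}, I can write $A\cap\Lambda = A_1\cup\mathcal{F}$, where $A_1$ is either empty or one pair of diagonal boxes and $\mathcal{F}$ is a disjoint union of $N$-fibers, disjoint from $A_1$.

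If $A_1=\emptyset$, then $A\cap\Lambda$ is a disjoint union of $N$-fibers, and the slice constraint implies that any $p_i$- or $p_j$-fiber in a given slice coincides with the full line containing the slice; each nonempty slice is thus a single $N$-fiber in the $p_i$- or $p_j$-direction. A $p_k$-fiber would leave an isolated point in each slice it met, incompatible with this full-line structure, so no $p_k$-fibers appear. Since $A\cap\Lambda$ is not fibered, both $p_i$- and $p_j$-type slices must occur, yielding structure (i).

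If $A_1\neq\emptyset$, write $A_1=(I_1\times J_1\times K_1)\cup(I_1^c\times J_1^c\times K_1^c)$ with all six sets nonempty. The hypothesis $N/p_ip_j\notin\Div(A\cap\Lambda)$ forces $\min(|I_1|,|J_1|)=1$ in the first box and $\min(|I_1^c|,|J_1^c|)=1$ in the second. Since $p_i,p_j\neq 2$, the equalities $|I_1|=|I_1^c|=1$ and $|J_1|=|J_1^c|=1$ are impossible, so after swapping $i$ and $j$ if necessary we may take $|I_1|=|J_1^c|=1$, which gives $|I_1^c|=p_i-1$ and $|J_1|=p_j-1$. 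Requiring $N/p_ip_k, N/p_jp_k\in\Div(A\cap\Lambda)$ then forces $|K_1|,|K_1^c|\geq 2$. Writing $I_1=\{i_0\}$, $J_1^c=\{j_0^c\}$ and $x_l=(i_0,j_0^c,l)$ for $l\in\ZZ_{p_k}$, the first box $\{i_0\}\times J_1\times K_1$ is exactly the set of $z\in\Lambda$ with $(z-x_l,N)=N/p_j$ for some $l\in K_1$, and the second box $I_1^c\times\{j_0^c\}\times K_1^c$ is the set of $z$ with $(z-x_l,N)=N/p_i$ for some $l\in K_1^c$. Setting $\call_j=K_1$ and $\call_i=K_1^c$ matches structure (ii) exactly. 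Any additional $N$-fiber in $\mathcal{F}$ would enlarge some $k$-slice beyond its single line and immediately produce $N/p_ip_j\in\Div$, so $\mathcal{F}=\emptyset$. Mutual exclusivity of (i) and (ii) is automatic, since in (i) every nonempty slice is full while in (ii) each slice misses the point $x_l$. The main obstacle is the cardinality bookkeeping in the diagonal-box case, namely enumerating the patterns compatible with (\ref{div_rest}) and verifying that no extra fibers can be added without producing the forbidden divisor.
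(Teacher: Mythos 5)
Your overall route (decompose $A\cap\Lambda$ via Proposition \ref{notopdiff} into at most one pair of diagonal boxes $A_1$ plus a disjoint union of $N$-fibers $\mathcal{F}$, then analyze the two branches) is the same as the paper's, but the decisive step in the diagonal-box branch is wrong, and it is exactly the step the paper devotes its final paragraph to. Your claim that any additional $N$-fiber in $\mathcal{F}$ would ``enlarge some $k$-slice beyond its single line and immediately produce $N/p_ip_j\in\Div$'' fails for one specific fiber: the $N$-fiber in the $p_k$ direction through the distinguished point $(i_0,j_0^c)$, i.e.\ the set $\{x_l:\ l\in\ZZ_{p_k}\}$ in your own notation. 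That fiber meets each $k$-slice precisely at the one point missing from the broken line formed by the boxes there, so it completes each slice to a full line rather than enlarging it beyond a line; it is disjoint from the boxes, and its differences with them are only $N/p_i$, $N/p_j$, $N/p_ip_k$, $N/p_jp_k$, $N/p_k$, so neither (\ref{bin_cond}) nor (\ref{div_rest}) is violated. Hence $\mathcal{F}=\emptyset$ is false in general: the configuration ``diagonal boxes plus this one $p_k$-fiber'' satisfies every hypothesis of the lemma (it is unfibered, since slices of both line types occur), it can arise from the non-canonical fiber-removal procedure with $A_1\neq\emptyset$ (remove the $p_k$-fiber first; what remains is fiber-free), and its structure is (i), not (ii) — so your branch misclassifies it. The same oversight undercuts your deduction that $|K_1|,|K_1^c|\geq 2$: that inference presupposes $\mathcal{F}=\emptyset$, and once the $p_k$-fiber is allowed, the divisors $N/p_ip_k$, $N/p_jp_k$ can be realized between the fiber and the boxes, so the full-plane-type boxes ($|K_1|=1$ or $|K_1^c|=1$ of Lemma \ref{misscorner_fullplane}) are no longer excluded; they too occur and again yield structure (i) after the fiber is added. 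The paper closes this by checking that the $p_k$-fiber rooted at $x$ (full-plane case) or $x_0$ (almost-corner case) is the \emph{only} fiber that can be added without creating $N/p_ip_j$ or overlapping the boxes, and that adding it lands you in case (i); your proof needs this argument.

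A smaller point: in the branch $A_1=\emptyset$, your exclusion of $p_k$-fibers is circular as written, since the ``full-line structure'' of the slices is only established for slices meeting a $p_i$- or $p_j$-fiber. The correct argument is that a $p_k$-fiber cannot coexist with a $p_i$- or $p_j$-fiber (its point in the slice containing that fiber either lies on the full line, violating disjointness and (\ref{bin_cond}), or off it, creating $N/p_ip_j$), while a union consisting of $p_k$-fibers alone would make $A\cap\Lambda$ fibered, contrary to hypothesis. This is easily repaired; the diagonal-box branch is the genuine gap.
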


\begin{proof}
We may assume that $M=N$ and $c_0=1$, and proceed as in the proof of Lemma \ref{misscorner_fullplane}. Suppose first that (a) holds (i.e., $A\cap\Lambda$ contains two non-overlapping $M$-fibers in two different directions), but $A$ does not contain diagonal boxes. Then the $M$-fibers must be in the $p_i$ and $p_j$ directions, or else (\ref{div_rest}) would be violated. Moreover, having any two such fibers in the same plane
$\Pi(x,p_k^{n_k-\alpha_k})$ would also violate (\ref{div_rest}), hence (i) holds in this case.

In case (b), $A\cap\Lambda$ contains diagonal boxes. We proceed as in the proof of Lemma \ref{misscorner_fullplane} to get (\ref{db-e60}). Since $M/p_ip_j\not\in\Div(A)$, we must have
$$
\min(|I|,|J|)=\min(|I^c|,|J^c|)=1. 
$$
Therefore we may again assume without loss of generality that $|I^c|=|J|=1$ and that $|K^c| > 1$, and get  
(\ref{db-e61}). We now consider two cases.
\begin{itemize}
\item If $|K|=1$, the diagonal boxes are as in the conclusion of Lemma \ref{misscorner_fullplane}, and instead of 
(\ref{div_rest}) we have (\ref{divrest}) with equality. 

\item If $|K|>1$, the diagonal boxes present the structure described in (ii), with $\call_i=K$ and $\call_j=K^c$.

\end{itemize}

The only way we can add an $M$-fiber to either of these structures without adding $M/p_ip_j$ to $\Div(A)$ is to add an $M$-fiber in the $p_k$ direction rooted at $x$ (in the first case, with $x$ defined as in Lemma \ref{misscorner_fullplane}), or at $x_0$ (in the second case, with $x_0$ specified in (ii))
That, however, puts us in the case (i) of the lemma. 
\end{proof}

%%%%%%%%%%%%%%%%%%%%%%%%%%%%%%%%%%%%%%

\subsection{Special unfibered structures: even $M$}\label{special-even}

\begin{lemma}\label{evencorner}
Let $M=p_i^{n_i}p_j^{n_j}p_k^{n_k}$ with $2|M$, and $N=M/p_i^{\alpha_i}p_j^{\alpha_j}p_k^{\alpha_k}$ with 
$\alpha_\iota<n_\iota$ for all $\iota\in\{i,j,k\}$. 
Assume that $ A\in \mathcal{M}(\ZZ_N)$ satisfies $\Phi_N|A$. 
Let $\Lambda$ be a $D(N)$-grid such that $A\cap\Lambda\neq\emptyset$.
Assume further that
\begin{itemize}
\item (\ref{bin_cond}) holds for all $ x\in\Lambda$, 
\item $A\cap\Lambda$ is not fibered in any direction.
\end{itemize}
Assume that $N/p_\iota \in \Div_N(A\cap\Lambda)$ for all $\iota\in\{i,j,k\}$, but $\{D(N)|m|N\}\not\subset \Div_N(A\cap\Lambda)$. Then for some permutation of $\{i,j,k\}$ we have
$$\{D(N)|m|N\}\setminus \Div_N(A\cap\Lambda)=\{N/p_ip_j\},$$
and $A\cap\Lambda$ has the $p_k$ corner structure in the sense of Definition \ref{corner} (i):
for each $x\in\Lambda$, the set $A\cap\Lambda\cap\Pi(x,p_k^{n_k-\alpha_k})$ is either empty or 
consists of a single $N$-fiber in one of the $p_i$ or $p_j$ directions. Since $A\cap\Lambda$ is not fibered, there has to be at least one of each. 
\end{lemma}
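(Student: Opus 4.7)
The plan is to apply Proposition \ref{notopdiff} to $A\cap\Lambda$ and then use the hypothesis $2\mid M$ to eliminate the diagonal-box configurations except those that coincide with a $p_\kappa$-corner (where $p_\kappa=2$). As in the proof of Lemma \ref{oddcornerplus}, I would first normalize to $M=N$ and $c_0=1$ and write
\begin{equation*}
A\cap\Lambda=A_1\cup(\text{disjoint }M\text{-fibers}),
\end{equation*}
where $A_1$ is either empty or a single diagonal box $(S_i\times S_j\times S_k)\cup(S_i^c\times S_j^c\times S_k^c)$ with $\min(|S_\iota|,|S_\iota^c|)=1$ for at least one index on each side.

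In the case $A_1=\emptyset$, I would first rule out fibers in three distinct directions: enumerating cross-differences between pairwise-disjoint $p_i$-, $p_j$-, and $p_k$-fibers (using the disjointness constraints on their anchor coordinates) shows that every top divisor in $\{D(N)\mid m\mid N\}$ appears in $\Div(A\cap\Lambda)$, contradicting the hypothesis. Hence the fibers occupy exactly two directions $p_\nu,p_\mu$; let $p_\rho$ be the third. The hypothesis $N/p_\rho\in\Div(A\cap\Lambda)$ forces the union of fibers not to lie in any single $p_\rho$-perpendicular plane, so some $p_\nu$- and $p_\mu$-fiber sit in distinct such planes; their cross-differences immediately produce $N/p_\nu p_\rho$, $N/p_\mu p_\rho$, and $D(N)$. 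Therefore the only divisor that can be missing is $N/p_\nu p_\mu$, and the assumption $\{D(N)\mid m\mid N\}\not\subseteq\Div(A\cap\Lambda)$ forces it to be missing. Absence of $N/p_\nu p_\mu$ is equivalent to every $p_\rho$-perpendicular plane within $\Lambda$ meeting $A$ in at most one $M$-fiber, which is the $p_\rho$-corner structure; permuting $\{i,j,k\}$ so that $\rho=k$ matches the lemma's conclusion.

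In the case $A_1\neq\emptyset$, the hypothesis $2\mid M$ lets me assume $p_i=2$, so $|S_i|=|S_i^c|=1$. A short direct computation shows $N/p_i\notin\Div(A_1)$, since the two halves of the box occupy disjoint $(j,k)$-coordinate sets and thus no pair of $A_1$-points can differ only in the $i$-coordinate. To realize $N/p_i\in\Div(A\cap\Lambda)$, some $p_i$-fiber $F\subset A\cap\Lambda\setminus A_1$ must then appear. I would then enumerate the diagonal-box shapes indexed by which of $|S_j|,|S_j^c|$ and of $|S_k|,|S_k^c|$ equal $1$. In every shape except the reflection-equivalent pair $(|S_j|,|S_k^c|)=(1,1)$ and $(|S_j^c|,|S_k|)=(1,1)$, a computation of cross-differences between any admissible anchor of $F$ and each of the two halves of $A_1$ shows that $N/p_ip_j$, $N/p_ip_k$, and $N/p_jp_k$ all enter $\Div$, giving the full top-divisor set and contradicting the hypothesis. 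In the surviving shape, the disjointness requirement on $F$ uniquely forces the anchor, and the resulting $A_1\cup F$ consists of a complete $p_k$-fiber in the plane $\Pi_0=\{0\}\times\ZZ_{p_j}\times\ZZ_{p_k}$ and a complete $p_j$-fiber in $\Pi_1=\{1\}\times\ZZ_{p_j}\times\ZZ_{p_k}$, with a further check ruling out any additional fiber. This structure is exactly a $p_i$-corner with missing divisor $N/p_jp_k$; the cyclic permutation $(i,j,k)\mapsto(j,k,i)$ places this in the form required by the lemma, with the even prime playing the role of $p_k$.

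The main obstacle is the bookkeeping in the second case: with $p_i=2$, several of the symmetries available in the odd setting (Lemma \ref{oddcornerplus}) are destroyed, so one cannot simply reuse the odd-case argument, and in particular the almost-corner configuration that is genuinely possible when $M$ is odd must be ruled out by explicit cross-difference computations. Carefully enumerating the admissible anchors for $F$, verifying the rigidity of the single surviving configuration, and checking that no further fibers may be adjoined without over-populating $\Div$, is where the bulk of the technical work lies.
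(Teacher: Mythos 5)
Your overall strategy (apply Proposition \ref{notopdiff}, split into ``no box'' and ``box'' cases, and do cross-difference bookkeeping) is close in spirit to the paper's, and your no-box case is essentially sound: the detour through ``$N/p_\rho\in\Div$ forces fibers in distinct planes'' is not even needed, since any disjoint $p_\nu$- and $p_\mu$-fibers automatically lie in distinct $p_\rho$-perpendicular planes, and the rest of that paragraph is correct. The gap is in the box case, at the step ``to realize $N/p_i\in\Div(A\cap\Lambda)$, some $p_i$-fiber $F\subset A\cap\Lambda\setminus A_1$ must then appear.'' This does not follow: the difference $N/p_i$ can be realized between a point of the diagonal box and a point of one of the removed fibers in an \emph{odd} direction, with no $p_i$-fiber disjoint from the box present at all. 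Concretely, with $p_i=2$ take $S_k=\{s_k\}$ a singleton and let $A\cap\Lambda$ be the box $(\{0\}\times S_j\times\{s_k\})\cup(\{1\}\times S_j^c\times S_k^c)$ together with the single $p_j$-fiber $\{1\}\times\ZZ_{p_j}\times\{s_k\}$. This set is null on all $N$-cuboids (box and fiber separately are), has $0$--$1$ weights, is not fibered in any direction, contains $N/p_i$, $N/p_j$, $N/p_k$ as differences, and misses exactly $N/p_ip_k$; yet it contains no $p_i$-fiber disjoint from the box, so your enumeration anchored on such an $F$ never reaches it. Note also that your box-case conclusion (a corner in the even direction with missing divisor $N/p_jp_k$ between the two odd primes) is too narrow: in this example the structure is a $p_j$-corner and the missing divisor involves the even prime — still within the lemma's conclusion, since any permutation is allowed, but outside what your case analysis produces.

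The paper avoids this by ordering the steps differently: because the even prime equals $2$, any two points of $A\cap\Lambda$ at distance $N/2$ already constitute a complete fiber in that direction, so the hypothesis $N/p_\iota\in\Div(A\cap\Lambda)$ for the even index directly produces such a fiber as a subset. The paper then removes \emph{all} fibers in the even direction and applies Proposition \ref{notopdiff} to the remainder $A_1$, which by construction misses the even top divisor; hence if a box survives, it is automatically accompanied by an even-direction fiber disjoint from it (and the cross-difference analysis you sketch then goes through), while if no box survives, one is in the pure two-direction fiber situation handled as in Lemma \ref{oddcornerplus}(i). To repair your argument you would either adopt this order of operations, or add a separate analysis of the configuration ``box plus fibers only in odd directions,'' which your current enumeration omits.
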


\begin{proof}
We may assume that $M=N$ and $c_0=1$. 
Assume without loss of generality that $p_k=2$.
In this case, in order
for $N/p_k\in\Div(A\cap\Lambda)$, we must have at least one $N$-fiber $F$ in the $p_k$ direction in $A\cap\Lambda$.
Let $A_1$ be the set 
obtained from $A\cap\Lambda$ by removing all $N$-fibers in the $p_k$ direction. Since $A\cap\Lambda$ is not fibered, $A_1$ is nonempty and satisfies (\ref{db-e18}). By Proposition \ref{notopdiff},
it must contain either at least one fiber in another direction or a set of diagonal boxes, each disjoint from $F$. 

\begin{itemize}
\item Suppose that $A_1$ contains diagonal boxes as in Definition \ref{def-db}. Without loss of generality, we may assume that $F$ is rooted at a point $a\in I^c\times J \times K$. Then 
\begin{align*}
M/p_i,M/p_ip_k &\in \Div(F,I\times J\times K), \\
M/p_j,M/p_jp_k & \in \Div(F,I^c\times J^c\times K^c),\\
M/p_ip_jp_k& \in \Div(I\times J\times K,I^c\times J^c\times K^c).
\end{align*}
It follows that the only missing divisor can be $M/p_ip_j$. In order to avoid that divisor within each box, we must have 
$$
\min(|I|,|J|)=\min(|I^c|,|J^c|)=1. 
$$
Taking into account the differences $(a'-a'',M)$, where $a'\in F$ and $a''$ belongs to one of the boxes, we see that the only possible case is $|I^c|=|J|=1$. Then $A$ contains the $M$-fiber in the $p_i$ direction rooted at the
unique point $x\in I^c\times J\times K$, and the $M$-fiber in the $p_j$ direction rooted at the
unique point $x'\in I^c\times J\times K^c$. Any other points in $A\cap\Lambda$ would add $M/p_ip_j$ to $\Div(A\cap\Lambda)$. Thus the conclusion of the lemma holds.

\item If $A_1$ contains no diagonal boxes, then it must contain a fiber in at least one other direction. 
This case is identical to the corresponding case of Lemma \ref{oddcornerplus} (i), for some permutation of $\{i,j,k\}$.

\end{itemize}
\end{proof}

\begin{lemma}\label{even_struct}
Let $M=p_i^{n_i}p_j^{n_j}p_k^{n_k}$ with $2|M$, and $N=M/p_i^{\alpha_i}p_j^{\alpha_j}p_k^{\alpha_k}$ with 
$\alpha_\iota<n_\iota$ for all $\iota\in\{i,j,k\}$. 
Assume that $ A\in \mathcal{M}(\ZZ_N)$ satisfies $\Phi_N|A$. 
Let $\Lambda$ be a $D(N)$-grid such that $A\cap\Lambda\neq\emptyset$.
Assume further that $p_k=2$, and that
\begin{itemize}
\item (\ref{bin_cond}) holds for all $ x\in\Lambda$, 
\item $A\cap\Lambda$ is not fibered in any direction,
\item $\{N/p_i,N/p_j,N/p_k\}\not\subset \Div_N(A\cap\Lambda)$.
\end{itemize}
Then 
\begin{equation}\label{db-e70}
N/p_k\not\in \Div_N(A\cap\Lambda),
\end{equation} and
there is a pair of diagonal boxes 
$$A_0=
(I\times J\times K) \cup (I^c\times J^c\times K^c)\subset \Lambda,$$
as in Definition \ref{def-db}, such that for all $z\in A\cap\Lambda$ we have 
$$
\bbA^N_N[z]=\begin{cases}
c_0 &\hbox{ if } z\in A_0,\\
0 &\hbox{ otherwise. }
\end{cases}
$$
\end{lemma}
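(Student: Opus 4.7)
The plan is threefold: identify the missing top divisor, apply the decomposition of Proposition \ref{notopdiff}, and eliminate both the empty-box case and any stray extra fibers.

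Since $p_k=2$, both $p_i$ and $p_j$ are odd, so Lemma \ref{gen_top_div_mis}(i) applies with either of them in the distinguished role. If $N/p_i$ were missing from $\Div(A\cap\Lambda)$, that lemma would force $A\cap\Lambda$ to be $N$-fibered in the $p_j$ or $p_k$ direction, contradicting our non-fiberedness hypothesis; the symmetric argument rules out the case $N/p_j\notin \Div(A\cap\Lambda)$. Hence the single missing element of $\{N/p_i,N/p_j,N/p_k\}$ must be $N/p_k$, giving (\ref{db-e70}).

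With (\ref{db-e70}) in hand, the multiset form of Proposition \ref{notopdiff} (valid under (\ref{bin_cond}), per the remark following its statement) decomposes $A\cap\Lambda$ as a possibly empty pair of diagonal boxes $A_1$ together with finitely many $N$-fibers of weight $c_0$, pairwise disjoint and disjoint from $A_1$. Since $N/p_k\notin \Div(A\cap\Lambda)$, none of these extra fibers can be a $p_k$-fiber, so each runs in the $p_i$ or $p_j$ direction. To rule out $A_1=\emptyset$: were that so, non-fiberedness would force both a $p_i$-fiber $F_i=\{(l,j_0,k_0):l\in \ZZ_{p_i}\}$ and a $p_j$-fiber $F_j=\{(i_0,l,k_0'):l\in \ZZ_{p_j}\}$ to appear in $A\cap\Lambda$. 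In a common $p_k$-layer ($k_0=k_0'$) these fibers meet at the single point $(i_0,j_0,k_0)$, contradicting their disjointness; in distinct layers, the points $(i_0,j_0,k_0)\in F_i$ and $(i_0,j_0,k_0')\in F_j$ have gcd-difference $N/p_k$, contradicting (\ref{db-e70}).

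Finally, assume $A_1=(I\times J\times K)\cup(I^c\times J^c\times K^c)$ is nonempty, and suppose some extra $p_i$-fiber $F=\{(l,j_0,k_0):l\in\ZZ_{p_i}\}$ is present (the $p_j$ case is symmetric). The disjointness of $F$ from each of the two boxes forces $(j_0,k_0)\in (J^c\times K)\cup(J\times K^c)$. In the first subcase, picking any $l\in I^c$ and $k''\in K^c$ produces $(l,j_0,k'')\in I^c\times J^c\times K^c\subset A_1$ while $(l,j_0,k_0)\in F$, and their difference has gcd $N/p_k$; the second subcase is handled symmetrically using a point of $I\times J\times K$. Either way we contradict (\ref{db-e70}), so $A\cap\Lambda=A_1$ as desired. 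The main technical obstacle is the coordinate bookkeeping in the last two steps, where the assumption $p_k=2$ is essential: only then do disjoint $p_i$- and $p_j$-fibers necessarily produce either a forbidden overlap or a forbidden $N/p_k$-difference, and only then does the diagonal-box structure of $A_1$ always supply a witness point blocking any would-be extra fiber.
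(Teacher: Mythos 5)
Your proof is correct and takes essentially the same route as the paper: apply Lemma \ref{gen_top_div_mis} in the odd directions $p_i,p_j$ to force the missing divisor to be $N/p_k$, then use Proposition \ref{notopdiff} (multiset version) and observe that a fibers-only decomposition or any extra fiber attached to the diagonal boxes would reintroduce an $N/p_k$ difference; your coordinate checks just spell out what the paper states in one line. Only your closing commentary is slightly off: the overlap/$N/p_k$-difference arguments in the last two steps work for any $p_k$, and the assumption $p_k=2$ is needed only so that Lemma \ref{gen_top_div_mis} applies when $N/p_i$ or $N/p_j$ is the missing divisor.
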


\begin{figure}[h]
	\captionsetup{justification=centering}
	\includegraphics[scale=0.7]{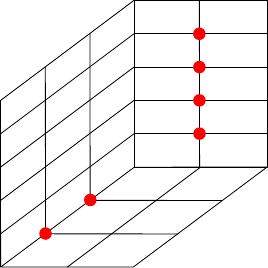}
	\caption{An even almost corner structure on a $D(M)$-grid\\
		with $M/p_k\notin \Div(A)$, $p_k=2$. }
\end{figure}

\begin{proof}
We may assume that $M=N$ and $c_0=1$. If $M/p_i$ or $M/p_j$ is not in $\Div(A\cap\Lambda)$, then 
$A\cap\Lambda$ is fibered by Lemma \ref{gen_top_div_mis}, contradicting the assumptions of the lemma. Therefore (\ref{db-e70}) holds.
Invoking Proposition \ref{notopdiff} again, we see that $A\cap\Lambda$ must contain either diagonal boxes or
at least two non-overlapping $M$-fibers in different directions. The second case cannot be reconciled with (\ref{db-e70}). Therefore $A\cap\Lambda$ must contain a set $A_0$ of diagonal boxes. Notice that adding an $M$-fiber in any direction to $A_0$ would introduce $M/p_k$ as a divisor of $A\cap\Lambda$. Therefore $A\cap\Lambda=A_0$.
\end{proof}

%%%%%%%%%%%%%%%%%%%%%%%%%%%%%%%%%%%%%%%%

\section{Resolving diagonal boxes}\label{res-boxes}

%%%%%%%%%%%%%%%%%%%%%%%%%%%%%%%%%%%%%%%%

\begin{theorem}\label{db-theorem}
Let $A\oplus B=\ZZ_M$, where $M=p_i^2p_j^2p_k^2$ with $p_i,p_j,p_k\geq 3$, $|A|=|B|=p_ip_jp_k$, and assume that $\Phi_M\mid A$. 
Let $D=D(M)$, and let $\Lambda$ be a $D(M)$-grid such that $A\cap\Lambda\neq\emptyset$.
Assume further that $A\cap\Lambda$ is not fibered in any direction, and 
that one of the following holds: either
\begin{equation}\label{db-e1000}
\{m:\ D(M)|m|M\}\subset \Div(A\cap\Lambda)
\end{equation}
and $A\cap\Lambda$ contains diagonal boxes as in Definition \ref{def-db},
or else $A\cap\Lambda$ has one of the structures described in Lemma \ref{misscorner_fullplane} (full plane) or Lemma \ref{oddcornerplus} (ii) (almost corner). Then at least one of the following is true:
\begin{itemize}
\item The tiling $A\oplus B=\ZZ_M$ is T2-equivalent to $\Lambda \oplus B=\ZZ_M$ via fiber shifts. Thus $\Lambda$ is a translate of $A^\flat$, and by Corollary \ref{get-standard}, both $A$ and $B$ satisfy (T2).

\item The tiling $A\oplus B=\ZZ_M$ is T2-equivalent to a tiling $A'\oplus B$, where $A'\cap\Lambda$ contains a $p_\nu$ corner structure as in Definition \ref{corner} (i) for some $\nu\in\{i,j,k\}$.
\end{itemize}
\end{theorem}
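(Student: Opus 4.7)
The plan is to treat each of the three hypothesized structures in turn, using the Fiber-Shifting Lemma (Lemma \ref{fibershift}) as the engine of T2-preserving modifications. In every case, the strategy is the same: locate a point $x \in \Lambda \setminus A$ whose geometric position forces the saturating set $A_x$ into a single line through $x$, so that Lemma \ref{1dim_sat-cor} produces a $(1,2)$-cofibered pair and hence a legal fiber shift. Each shift either removes one "excess" element of the structure or collapses an extra layer, so iteration drives the grid either to a translate of $\Lambda$ (concluding in case 1) or to a $p_\nu$-corner (concluding in case 2).

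First I would dispatch the almost-corner case of Lemma \ref{oddcornerplus}(ii): $A \cap \Lambda$ is a union of $M$-fibers in the $p_i$ direction rooted along a $p_k$-line at levels $l \in \mathcal{L}_i$ together with $M$-fibers in the $p_j$ direction at levels $l \in \mathcal{L}_j$. Since $|\mathcal{L}_i|, |\mathcal{L}_j| > 1$, I would fix $l_\ast \in \mathcal{L}_j$, pick a point $x$ lying on $x_{l_\ast} \ast F_j$ but not equal to $x_{l_\ast}$, and apply \eqref{bispan}. The constraint $M/p_ip_j \notin \Div(A\cap\Lambda)$ (from (\ref{div_rest})) blocks the saturating set from crossing from the $p_j$-layers into the $p_i$-layers; combined with the geometric restrictions from all $a \in A$ on the same or adjacent layers, this forces $A_x \subset \ell_k(x)$. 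Lemma \ref{1dim_sat-cor} then delivers a $p_k$-direction cofiber, and the fiber shift slides $x_{l_\ast} \ast F_j$ onto $\ell_k(x_{l'})$ for some other $l' \in \mathcal{L}_j$. Iterating until $|\mathcal{L}_i| = |\mathcal{L}_j| = 1$ gives a $p_k$-corner, so case 2 holds. The full plane case of Lemma \ref{misscorner_fullplane} is parallel but simpler: there is a single distinguished point $x \notin A$ surrounded by a dense $p_i$-plane, and divisor exclusion against that plane together with the missing top divisors forces $A_x \subset \ell_i(x)$; one shift collapses the structure into $\Lambda$, ending in case 1.

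For the diagonal boxes case with all top differences \eqref{db-e1000}, I would use the cardinality rigidity implied by Proposition \ref{notopdiff} (for any additional fibers) together with the combinatorial structure of $(I \times J \times K) \cup (I^c \times J^c \times K^c)$. Choose $x$ in one of the three off-diagonal octants, say $x \in I^c \times J \times K$. The Bispan formula \eqref{bispan}, applied against elements of both boxes, cuts $A_x$ down to a union of at most three lines through $x$; the plane bound (Lemma \ref{planebound}) and the cardinality $|A| = p_ip_jp_k$ then eliminate all but one direction, giving $A_x \subset \ell_\nu(x)$ for a specific $\nu$. The associated fiber shift either fills in an off-diagonal region and increases the overlap with $\Lambda$, or transports a fiber across a box to produce a structure that (by Proposition \ref{prop-ecorner} or by recourse to the almost-corner case already handled) contains an extended corner. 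Iterating, each step strictly decreases the symmetric difference with $\Lambda$, so the process terminates.

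The main obstacle will be the diagonal box case, precisely because \eqref{db-e1000} makes all top divisors of $A$ available and therefore weakens the usual divisor-exclusion constraints on $A_x$. One must exploit the thinness of the boxes — recall that either some $|I|,|J|,|K|$ or some $|I^c|,|J^c|,|K^c|$ equals 1 in many relevant sub-configurations — together with the plane bound to eliminate saturating elements transverse to the intended shift direction. A secondary technical point is to verify that each fiber shift produces a set $A'$ whose grid $A' \cap \Lambda$ either lands in one of the cases already resolved or progresses monotonically toward $\Lambda$; this requires checking that the hypotheses of Lemma \ref{1dim_sat-cor} and the structural dichotomy of Proposition \ref{notopdiff} are preserved at each step, so the induction on the number of "missing" grid points closes cleanly.
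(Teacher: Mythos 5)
Your toolkit is the right one (saturating sets constrained via (\ref{bispan}), Lemma \ref{1dim_sat-cor}, fiber shifts, the plane bound), but several of the concrete steps fail. In the full plane case your claim that $A_x\subset\ell_i(x)$ is impossible: since $|I|\geq 2$, the distinguished point $x$ satisfies $\bbA_{M/p_i}[x]\geq 2$, so (\ref{bispan}) forces $A_x\subset\Pi(x,p_i^2)$, and $\ell_i(x)\cap\Pi(x,p_i^2)=\{x\}\not\subset A$; intersecting further with the Bispans of the points at distance $M/p_jp_k$ only yields $A_x\subset\ell_j(x)\cup\ell_k(x)$. Consequently the full plane case does \emph{not} collapse to $\Lambda$ in one shift: the shifted fiber lands in the $p_j$ or $p_k$ direction and the outcome is a corner structure, i.e.\ the second bullet of the theorem, not the first. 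The almost corner endgame is also incoherent: $\call_i\cup\call_j$ partitions a set of $p_k\geq 3$ elements, so you can never reach $|\call_i|=|\call_j|=1$; moreover Lemma \ref{fibershift} only moves an $M$-cofiber by $M/p_\nu^2$, which takes any fiber of the grid off $\Lambda$ altogether, so there is no legal move that ``slides $x_{l_*}*F_j$ onto $\ell_k(x_{l'})$''. In the paper both special structures are absorbed into the case $\min(|I|,|J^c|,|K^c|)\geq 2$ and terminate either in a corner, or (when the shifted fibers restore all top differences) in $\Lambda$ via the completion results Lemma \ref{db-nontrivial} and Corollary \ref{db+fiber}.

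In the main diagonal box case the step ``Bispan plus the plane bound give $A_x\subset\ell_\nu(x)$ for a single $\nu$'' is not valid: under (\ref{db-e1000}) one only gets $A_x\subset\ell_j(x)\cup\ell_k(x)$, and the reduction to a single line holds only per $b\in B$ (Lemma \ref{triangles}); to obtain one common direction for a whole row $\calx=\{l_i\}\times J\times K$ one needs the fiber-crossing count against Lemma \ref{planebound}, which rules out cofibers in both directions inside one plane. More seriously, you omit the sub-case $|I^c|=|J^c|=|K^c|=1$ entirely: there the small box is a single point $a_0$, the candidate lines for $A_{x,b}$ include $\ell_i(a_0)$ and $\ell_k(a_0)$, and excluding $\ell_k(a_0)$ requires first deducing $\Phi_{N_k}|A$ from $M/p_k^2\in\Div(A)$ and then a cuboid-balancing argument with a size comparison of $p_i$ and $p_k$ — roughly half of the paper's proof, with no counterpart in your outline. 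Finally, your termination argument (``symmetric difference with $\Lambda$ strictly decreases'') presupposes the completion step, namely that once a complementary box or a disjoint fiber has been inserted the entire grid can be filled; this needs its own argument (iterated flat corners and Lemma \ref{smallcube}), and you cannot re-invoke Proposition \ref{notopdiff} after a shift, since its hypothesis of a missing top difference typically fails once new fibers have been added.
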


We remark that, in the case when (\ref{db-e1000}) holds, $A\cap\Lambda$ might be larger than just a pair of diagonal boxes. For example, it could contain diagonal boxes and some number of $M$-fibers in various directions disjoint from the boxes. However, any such additional structures can only make our task easier.

We split the proof into cases. Since $p_i,p_j,p_k\geq 3$, at least one of $I$ and $I^c$ must have cardinality greater than 1, and similarly for each of the pairs $J,J^c$ and $K,K^c$. We claim that it suffices to consider the following two cases.

\medskip\noindent
{\bf Case (DB1):} The tiling $A\oplus B$ satisfies the assumptions of Theorem \ref{db-theorem}, and additionally
$\min( |I|, |J^c|, |K^c|)\geq 2$.

\medskip\noindent
{\bf Case (DB2):} The tiling $A\oplus B$ satisfies the assumptions of Theorem \ref{db-theorem}, and additionally
$|I^c|=|J^c|=|K^c|=1$.

\medskip

Indeed, if either $|I|=|J|=|K|=1$ or $|I^c|=|J^c|=|K^c|=1$, then we are in the case (DB2), possibly after relabelling $I,J,K$ as $I^c,J^c, K^c$ and vice versa. Suppose now that neither of these holds, say $|I|\geq 2$ and $|J^c|\geq 2$. Since $p_k\geq 3$, at least one of $K$ and $K^c$ must have cardinality at least 2. If $|K^c|\geq 2$, we are in the case (DB1). 
If $|K|\geq 2$, we are in the case (DB1) again, 
with $p_i$ and $p_j$ interchanged, and with the sets $I,J,K$ relabelled as $I^c,J^c, K^c$ and vice versa.
All other cases are identical up to a permutation of the indices $i,j,k$.

We will proceed to resolve the cases (DB1) and (DB2) in Sections \ref{subsec-db1} and \ref{subsec-db2}, respectively.
Throughout this section, we continue to use the notation of Section \ref{diag-box-section}.

%%%%%%%%%%%%%%%%%%%%%%%%%%%%%%

\subsection{Preliminary results}\label{db-subsecprel}

\begin{lemma}\label{db-nontrivial}
Let $A\oplus B=\ZZ_M$, where $M=p_i^2p_j^2p_k^2$, $|A|=|B|=p_ip_jp_k$, and assume that $\Phi_M\mid A$. 
Let $D=D(M)$, and let $\Lambda$ be a $D$-grid such that $A\cap\Lambda\neq\emptyset$.
Assume that (\ref{db-e1000}) holds.
Suppose that $A\cap\Lambda$ is not fibered, and that it contains diagonal boxes
\begin{equation}\label{db-eeA0}
A_0:=(I\times J\times K) \cup (I^c\times J^c\times K^c)\subset A\cap\Lambda,
\end{equation}
with $I,J,K,I^c,J^c,K^c$ as in Definition \ref{def-db}. If one of the ``complementary boxes" is contained in $A$, say
\begin{equation}\label{db-ee1}
(I^c\times J\times K)\subset A,
\end{equation}
then the tiling $A\oplus B=\ZZ_M$ is T2-equivalent to $\Lambda\oplus B=\ZZ_M$. Consequently, $A$ and $B$ both satisfy T2.
\end{lemma}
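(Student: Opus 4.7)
The key observation is that the hypothesis $(I^c\times J\times K)\subset A$, combined with $(I\times J\times K)\subset A_0\subset A$, yields $\ZZ_{p_i}\times J\times K\subset A$: a disjoint union of $|J||K|$ $M$-fibers in the $p_i$ direction lying inside $A\cap\Lambda$. The plan is to use these as cofibers in a $(1,2)$-cofibered structure in the $p_i$ direction via Lemma \ref{1dim_sat-cor}, then apply fiber shifts (Lemma \ref{fibershift}) to transform $A$ into a translate of $\Lambda=\Lambda(a_0,D(M))$, so that Corollary \ref{get-standard} gives the desired conclusion.

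The preparatory observation is that (\ref{db-e1000}) together with divisor exclusion gives $\{m:D(M)\mid m\mid M,\ m\neq M\}\cap\Div(B)=\emptyset$, so the hypothesis (\ref{notopdivB}) of Lemmas \ref{smallcube} and \ref{flatcorner} holds. The main step is then to invoke Lemma \ref{1dim_sat-cor} in the $p_i$ direction: for a suitable $x\in\ZZ_M\setminus A$, verify $A_x\subset\ell_i(x)$. A natural choice is to fix $(j_0,k_0)\in J\times K$ and take $x\in\ell_i((0,j_0,k_0))\setminus(A\cup\Lambda)$; such $x$ exists because $\ell_i((0,j_0,k_0))$ contains $p_i^2$ elements of which only the $p_i$-fiber $\ZZ_{p_i}\times\{j_0\}\times\{k_0\}\subset A\cap\Lambda$ is forced to lie in $A$. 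The divisor $M/p_i\in\Div(A)$ is ensured by (\ref{db-e1000}), and the remaining confinement $A_x\subset\ell_i(x)$ would be proved by combining the geometric restrictions $A_x\subset\bigcap_{a\in A}\Bispan(x,a)$ with flat-corner arguments (Lemma \ref{flatcorner}) applied to configurations involving elements from $\ZZ_{p_i}\times J\times K$ and $I^c\times J^c\times K^c$ in $A$, which rule out the remaining off-line candidates. Lemma \ref{1dim_sat-cor} then delivers the $(1,2)$-cofibered structure.

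With the cofibered structure in hand, Lemma \ref{fibershift} permits arbitrary shifts of $M$-fibers in $A$ within their ambient $\ell_i$-lines, and analogous arguments (possibly after the first shifts expose further cofibered structures) provide shifts in the $p_j,p_k$ directions as needed. A judiciously chosen sequence of such shifts transforms $A$ into a translate of $\Lambda$, which is the standard tile $A^\flat$. Corollary \ref{get-standard} then gives the T2-equivalence of $A\oplus B=\ZZ_M$ and $\Lambda\oplus B=\ZZ_M$, and hence both $A$ and $B$ satisfy (T2).

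The main obstacle is the verification of $A_x\subset\ell_i(x)$. The saturating-set analysis must handle every potential off-line element of $A$ via a case analysis combining the divisor restrictions from (\ref{db-e1000}) with flat-corner configurations drawn from the known subsets of $A\cap\Lambda$; in particular, one must arrange that the apex $x$ and the three other vertices of Lemma \ref{flatcorner} all sit in positions that are compatible with the partial structure. A secondary obstacle is the bookkeeping of the fiber-shift sequence required to reach $\Lambda$, which must ensure that each intermediate configuration still satisfies the cofibered hypothesis needed to justify the next shift.
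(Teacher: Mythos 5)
Your proposal is a plan rather than a proof: the two steps you yourself flag as ``obstacles'' --- verifying the saturating-set confinement and organizing a terminating sequence of fiber shifts that ends at $\Lambda$ --- are exactly the content of the paper's argument (its Claims 1, 1', 2), and nothing in your sketch supplies them. Moreover, the shape of your plan has a concrete flaw. You propose to take $x\in\ell_i((0,j_0,k_0))\setminus(A\cup\Lambda)$ and build a $(1,2)$-cofibered structure there, treating the fibers $\ZZ_{p_i}\times J\times K\subset A\cap\Lambda$ as the cofibers to be shifted. But the points that must be filled are the missing points of $\Lambda\setminus A$, not points outside $\Lambda$; at your $x$ the cofiber at distance $M/p_i^2$ produced by Lemma \ref{1dim_sat-cor} can be precisely one of the fibers already sitting in $\Lambda$, and shifting it to $x$ moves mass \emph{out} of $\Lambda$, making no progress toward the target tiling. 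In the paper the saturating-set and flat-corner arguments (Lemmas \ref{flatcorner} and \ref{smallcube}, using (\ref{db-e1000}) to get (\ref{notopdivB})) are run at points $z$ of the complementary boxes $I^c\times J^c\times K$, $I^c\times J\times K^c$, $I\times J^c\times K$ inside $\Lambda$, so that each shift imports a fiber of $A$ from outside $\Lambda$ to the missing point $z$; the cofibered directions that arise are dictated by the flat-corner configurations ($p_i$, then $p_k$, then $p_j$), not fixed in advance. Also note that Lemma \ref{fibershift} does not permit ``arbitrary shifts of $M$-fibers within their $\ell_i$-lines''; each shift requires a previously established $(1,2)$-cofibered structure, i.e.\ an $M/p_\nu$-fibering of $B$ in that direction, which must be derived at each stage.

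A second structural omission: you never use the hypothesis that $A\cap\Lambda$ is not fibered, and the argument cannot close without it. The fill-in machinery only runs when $A$ meets one of the boxes $I^c\times J\times K^c$ or $I^c\times J^c\times K$; if it meets neither, a cuboid-balancing argument (using $\Phi_M\mid A$) forces $I\times J^c\times K^c\subset A$, so that $A$ contains $\ZZ_{p_i}\times\bigl[(J\times K)\cup(J^c\times K^c)\bigr]$, and if $A\cap\Lambda$ equals this set there is no configuration left to exploit --- that case is eliminated only because such a set is $M$-fibered in the $p_i$ direction, contradicting the unfibered hypothesis. Any correct proof must confront this dichotomy; your sketch does not.
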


\begin{proof} Throughout the proof, we assume that $A\oplus B=\ZZ_M$ is a tiling satisfying the assumptions of the lemma.

\medskip\noindent
{\bf Claim 1.} {\it Assume that (\ref{db-ee1}) holds. Then either $A\oplus B=\ZZ_M$ is T2-equivalent to $\Lambda\oplus B=\ZZ_M$, or }
\begin{equation}\label{db-ee2}
(I^c\times J\times K^c) \cap A=\emptyset.
\end{equation}

\begin{proof}
Suppose that there is an $a\in( I^c\times J\times K^c )\cap A$, and let 
$$Z=\Pi(a,p_i^2)\cap ( I^c\times J^c\times K).$$
 For any $z\in Z$, let $a_j,a_k$ be points such that $(z-a_j,M)=(a_k-a,M)=M/p_j$ and $(z-a_k,M)=(a_j-a,M)=M/p_k$.
Then $a_j,a_k\in A$, since $a_j\in I^c\times J\times K$ and $a_k\in I^c\times J^c\times K^c$. By Lemma \ref{flatcorner},
either $z\in A$, or else $A_z\subset\ell_i(z)$, with a $(1,2)$ cofibered structure for $(A,B)$ in the $p_i$ direction and an $M$-fiber in $A$ at distance $M/p_i^2$ from $z$ as a cofiber. We can use Lemma \ref{fibershift} to shift that fiber to $z$. Repeating this procedure for all $z\in Z\setminus A$, we get a new set $A_1\subset\ZZ_M$ such that $A_1\oplus B=\ZZ_M$ and $A_1$ is T2-equivalent to $A$. Moreover,
$Z\subset A_1$, and, since $A\cap\Lambda\subset A_1\cap\Lambda$, (\ref{db-e1000}) holds with $A$ replaced by $A_1$.

Now, any $x\in (I^c\times J^c\times K)\setminus Z$ satisfies the assumptions of Lemma \ref{smallcube} applied to $A_1$, with 
$(x-z,M)=M/p_i$ for some $z\in Z$, $(x-a'_j,M)=M/p_j$ for some $a'_j\in I^c\times J\times K$, and 
$(x-a'_k,M)=M/p_k$ for some $a'_k\in I^c\times J^c\times K^c$. It follows that
\begin{equation}\label{db-ee4}
I^c\times J^c\times K \subset A_1.
\end{equation}

We can then apply Lemma \ref{flatcorner} and the fiber shifting argument again, first to all points in $I^c\times J\times K^c$ with the flat corner configurations in planes perpendicular to the $p_i$ direction, then to all points in $I\times J^c\times K$ with the flat corner configurations in planes perpendicular to the $p_k$ direction. Thus $A_1$ is T2-equivalent to a set $A_2$ that 
satisfies $A_2\oplus B=\ZZ_M$, continues to obey (\ref{db-ee1})--(\ref{db-ee4}), and moreover has the property that 
$$
(I^c\times J\times K^c)\cup (I\times J^c\times K)\subset A_2.
$$
Finally, we apply Lemma \ref{flatcorner} and the fiber shifting argument to all points in $(I\times J\times K^c)\cup (I\times J^c\times K^c)$ that are not in $A_2$, with the flat corner configuration perpendicular to the $p_j$ direction. This proves that $A_2$ (therefore $A$) is T2-equivalent to $\Lambda$.
\end{proof}

\medskip\noindent
{\bf Claim 1'.} {\it Assume that (\ref{db-ee1}) holds. Then either $A\oplus B=\ZZ_M$ is T2-equivalent to $\Lambda\oplus B=\ZZ_M$, or }
\begin{equation}\label{db-ee2b}
(I^c\times J^c\times K) \cap A=\emptyset.
\end{equation}

\begin{proof}
This is identical to the proof of Claim 1, with the $p_j$ and $p_k$ directions interchanged, and with $J$ and $K^c$ replaced by $J^c$ and $K$.
\end{proof}

\medskip\noindent
{\bf Claim 2.} {\it Assume that (\ref{db-ee1}), (\ref{db-ee2}), and (\ref{db-ee2b}) hold. Then}
\begin{equation}\label{db-ee3}
I\times J^c\times K^c \subset  A.
\end{equation}

\begin{proof}
Let $x\in I\times J^c\times K^c$. Considering any $M$-cuboid with one vertex at $x$ and another in $I^c\times J\times K$, we see that it can only be balanced if $x\in A$.
\end{proof}

We can now finish the proof of the lemma. It suffices to consider the case when (\ref{db-ee1}) and (\ref{db-ee3}) hold,
so that
%\begin{equation}\label{db-ee5}
$$
Y:=( \ZZ_{p_i}\times J\times K)\cup ( \ZZ_{p_i}\times J^c\times K^c) \subset  A.
$$
%\end{equation}
If there are any elements $a\in (A\cap\Lambda)\setminus Y$, we can repeat the argument in the proof of Claim 1 to prove that the tiling $A\oplus B=\ZZ_M$ is T2-equivalent to $\Lambda\oplus B=\ZZ_M$. If on the other hand $A\cap\Lambda=Y$, then this set is fibered in the $p_i$ direction, contradicting the assumptions of the lemma.
\end{proof}

\begin{corollary}\label{db+fiber}
Let $A\oplus B=\ZZ_M$, $M=p_i^2p_j^2p_k^2$, $|A|=|B|=p_ip_jp_k$, and assume that $\Phi_M\mid A$. 
Let $D=D(M)$, and let $\Lambda$ be a $D$-grid such that $A\cap\Lambda\neq\emptyset$.
Assume that (\ref{db-e10}) holds.
Suppose that $A\cap\Lambda$ is not fibered, that it contains diagonal boxes (\ref{db-eeA0})
with $I,J,K,I^c,J^c,K^c$ as in Definition \ref{def-db}, and that it also contains at least one $M$-fiber disjoint from these boxes. Then the tiling $A\oplus B=\ZZ_M$ is T2-equivalent to $\Lambda\oplus B=\ZZ_M$. Consequently, $A$ and $B$ both satisfy T2.
\end{corollary}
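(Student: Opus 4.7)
The plan is to reduce to Lemma~\ref{db-nontrivial} by exhibiting, after a sequence of fiber shifts, one of the complementary boxes of the diagonal configuration inside the modified tile. Without loss of generality (after permuting the three primes and, if necessary, interchanging the two diagonal boxes of $A_0$), we may assume that $F_0$ is an $M$-fiber in the $p_k$ direction rooted at a point with $\Lambda$-coordinates $(l_i,l_j)\in I^c\times J$. In the plane $\Pi:=\{\lambda_i=l_i\}\cap\Lambda$, the set $A$ then contains both the rectangle $\{l_i\}\times J^c\times K^c$ (the second diagonal box restricted to $\Pi$) and the line $\{l_i\}\times\{l_j\}\times\ZZ_{p_k}$ coming from $F_0$. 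Together these provide exactly the planar structure needed to mimic the argument of Claim~1 in the proof of Lemma~\ref{db-nontrivial}, but now seeded by $F_0$ rather than by a complementary box already contained in $A$.

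Concretely, fix $k_0^c\in K^c$ (nonempty since $K^c\neq\emptyset$), and for each $z=(l_i,l_j'',k')\in\{l_i\}\times J^c\times K$ define
$$
a:=(l_i,l_j,k_0^c)\in F_0,\qquad a_j:=(l_i,l_j'',k_0^c)\in I^c\times J^c\times K^c,\qquad a_k:=(l_i,l_j,k')\in F_0.
$$
A direct coordinate check shows $(a-a_j,M)=(a_k-z,M)=M/p_j$ and $(a-a_k,M)=(a_j-z,M)=M/p_k$, so the flat-corner hypothesis of Lemma~\ref{flatcorner} is satisfied with $x=z$. Whenever $z\notin A$, Lemma~\ref{flatcorner} gives $A_z\subset\ell_i(z)$ together with a $(1,2)$-cofibered structure in the $p_i$ direction and an $M$-cofiber in $A$ at distance $M/p_i^2$ from $z$; Lemma~\ref{fibershift} then shifts that cofiber to $z$, inserting the $M$-fiber $z*F_i=\ZZ_{p_i}\times\{l_j''\}\times\{k'\}$ into the tile T2-equivalently. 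Since all three auxiliary vertices $a,a_j,a_k$ lie in $F_0\cup(I^c\times J^c\times K^c)\subset A_0\cup F_0$, which is unaffected by any such shift, the iteration over all $z\in\{l_i\}\times J^c\times K$ goes through and produces a tile $A'$, T2-equivalent to $A$, with $\ZZ_{p_i}\times J^c\times K\subset A'$.

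In particular, $A'$ contains the complementary box $I\times J^c\times K$. By the symmetries of the diagonal-box structure (permutation of the three primes together with the option of interchanging the two diagonal boxes, both of which preserve $A_0$), Lemma~\ref{db-nontrivial} applies equally to any of the six complementary boxes; invoking it for $I\times J^c\times K$ gives that $A'\oplus B$ is T2-equivalent to $\Lambda\oplus B$, and composing with the T2-equivalence of $A\oplus B$ with $A'\oplus B$ already established yields the claim. The ``Consequently'' part of the corollary then follows from Corollary~\ref{get-standard}.

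The principal point that requires care is the bookkeeping for the iteration of fiber shifts: one must verify that the flat-corner configuration at each new $z$ is unaffected by the previous shifts. This reduces to the disjointness observation that $F_0$ lies in the column $\{(\cdot,l_j,\cdot)\}$ with $l_j\in J$, that $I^c\times J^c\times K^c$ lives at $\lambda_k\in K^c$, and that each new $p_i$-fiber inserted at a previous $z$ occupies a distinct column $\{(\cdot,l_j'',k')\}$ with $(l_j'',k')\in J^c\times K$; hence none of the vertices $a,a_j,a_k$ used at the current step is ever disturbed, and distinct cofibered structures (with genuinely distinct available cofibers) are produced at each stage.
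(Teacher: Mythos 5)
Your flat-corner configuration at each $z=(l_i,l_j'',k')\in\{l_i\}\times J^c\times K$ is correctly set up (the three auxiliary vertices do lie in $F_0\cup(I^c\times J^c\times K^c)$, the coordinate differences are as required for Lemma \ref{flatcorner} with the roles of the primes permuted, and your bookkeeping is sound: the removed cofibers are at distance $M/p_i^2$ from points of $\Lambda$, hence lie outside $\Lambda$ and never disturb the boxes, the fiber $F_0$, or previously inserted fibers). The gap is in the conclusion you draw from the iteration. A fiber $z*F_i$ is inserted only at those $z$ with $z\notin A$; if $z$ already belongs to $A$, Lemma \ref{flatcorner} does not apply (it requires $x\in\ZZ_M\setminus A$), no shift is performed, and all you know is that the single point $z$ lies in $A'$ — nothing forces the column $\ZZ_{p_i}\times\{l_j''\}\times\{k'\}$ to be contained in $A'$, since there is no reason why a point of $A\cap(I^c\times J^c\times K)$ should carry a $p_i$-fiber of $A$. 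Consequently the claim $\ZZ_{p_i}\times J^c\times K\subset A'$, and with it the containment of the complementary box $I\times J^c\times K$ needed to invoke Lemma \ref{db-nontrivial}, is not established; what your argument actually yields is only the slab $\{l_i\}\times J^c\times K\subset A'$, together with full $p_i$-fibers over those base points that were genuinely shifted.

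The defect is repairable, but it needs another round of work rather than a direct appeal to Lemma \ref{db-nontrivial}. For instance, having secured $\{l_i\}\times J^c\times K\subset A'$, you can run a second iteration of flat corners in planes perpendicular to the $p_k$ direction: for $w=(l_i^*,l_j'',k')\in I\times J^c\times K$ with $w\notin A'$, the rectangle with vertices $w$, $(l_i^*,l_j,k')\in I\times J\times K$, $(l_i,l_j,k')\in F_0$, and $(l_i,l_j'',k')\in\{l_i\}\times J^c\times K$ satisfies the hypotheses of Lemma \ref{flatcorner}, giving a $(1,2)$-cofibered structure in the $p_k$ direction whose cofiber can be shifted to $w$ by Lemma \ref{fibershift}; iterating fills in $I\times J^c\times K$ and only then does Lemma \ref{db-nontrivial} apply. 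This is essentially the structure of the paper's own proof, which builds up a complementary box in several stages ((\ref{db+1})--(\ref{db+4})), using at each stage flat corners whose known vertices come from the boxes, the extra fiber, and the points added in earlier stages; a single round of shifts, as in your write-up, does not suffice.
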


\begin{proof}
We may assume without loss of generality that the $M$-fiber $F\subset A\cap\Lambda$ is in the $p_j$ direction, with $F\subset I^c\times \ZZ_{p_j}\times K$. By translational invariance, we may further assume that 
$$F=\{0\}\times \ZZ_{p_j}\times \{0\},
$$
with $0\in I^c$ and $0\in K$. 

We first claim that there is a set $A_1\subset\ZZ_M$, either equal to $A$ or T2-equivalent to it, such that
$A_1\oplus B=\ZZ_M$ and 
\begin{equation}\label{db+1}
\{0\}\times J\times K\subset A_1.
\end{equation}
Indeed, if $K=\{0\}$, then $\{0\}\times J\times K\subset F\subset A$ and there is nothing to prove. Otherwise, 
let $l_j\in J$ and $l_k\in K\setminus\{0\}$. Then the point $x=(0,l_j,l_k)$ either belongs to $A$, or else it satisfies the assumptions of Lemma \ref{flatcorner}, with $(0,l_j,0)$, $(l_i,l_j,0)$, and $(l_i,l_j,l_k)$ all in $A$ for any $l_i\in I$, so that the flat corner configuration is perpendicular to the $p_j$ direction. 
By Lemma \ref{flatcorner}, in the latter case we have $A_x\subset \ell_j(x)$, implying a $(1,2)$ cofibered structure. We can then use Lemma \ref{fibershift} to shift the $M$-cofiber in $A$ to $x$. After all such shifts have been performed, we arrive at $A_1$.

By a similar argument, but with $p_i$ and $p_k$ interchanged and with the possible flat corner configurations perpendicular to the $p_k$ direction, we may further replace $A_1$ by a T2-equivalent set $A_2$ such that $A_2\oplus B=\ZZ_M$, (\ref{db+1}) continues to hold for $A_2$, and 
\begin{equation}\label{db+2}
I^c\times J^c\times \{0\}\subset A_2.
\end{equation}

Next, we replace $A_2$ by a T2-equivalent set $A_3$ such that $A_3\oplus B=\ZZ_M$, (\ref{db+1}) and (\ref{db+2}) both continue to hold for $A_3$, and 
\begin{equation}\label{db+3}
I^c\times J\times \{0\}\subset A_3.
\end{equation}
Indeed, consider a point $z=(l'_i,l_j,0)\in I^c\times J\times \{0\}$, with $l'_i\neq 0$. (If no such $l'_i$ exists, (\ref{db+3}) holds with $A_3=A_2$.) If $z\not\in A_2$, then it satisfies the assumptions of Lemma \ref{flatcorner}, with $(0,l_j,0)$, $(0,l'_j,0)$, and $(l'_i,l'_j,0)$ all in $A_2$ for any $l'_j\in J^c$, so that the flat corner configuration is perpendicular to the $p_k$ direction. Then $A_z\subset\ell_k(z)$, and again our conclusion follows by Lemma \ref{fibershift}.

Finally, we may pass to another T2-equivalent set $A_4$ such that $A_4\oplus B=\ZZ_M$ and
\begin{equation}\label{db+4}
I^c\times J\times K\subset A_4.
\end{equation}
If $I^c=\{0\}$ or $K=\{0\}$, this follows from (\ref{db+1}) or (\ref{db+3}), with $A_4=A_3$. Otherwise, 
we let  $w=(l'_i,l_j,l_k)\in I^c\times J\times K$ with $l'_i\neq 0$ and $l_k\neq 0$, and repeat the
argument from the proof of (\ref{db+1}) with the first coordinate $0$ replaced by $l'_i$. 

With (\ref{db+4}) in place, the corollary now follows from Lemma \ref{db-nontrivial}.
\end{proof}

%%%%%%%%%%%%%%%%%%%%%%%%%%%%%%%%%%%%%%%%%

\subsection{Case (DB1)}\label{subsec-db1}

Assume that $A\oplus B=\ZZ_M$ is a tiling satisfying the assumptions of Theorem \ref{db-theorem}. Let $D=D(M)$, and let $\Lambda$ be the $D$-grid provided by the assumption of the theorem. Additionally, we assume that
\begin{equation}\label{db1-e0}
 \min( |I|, |J^c|, |K^c|)\geq 2.
\end{equation}

If $A\cap\Lambda$ has one of the structures described in Lemma \ref{misscorner_fullplane} (full plane) or Lemma \ref{oddcornerplus} (ii) (almost corner), then in both cases we have $A\cap\Lambda=(I\times J\times K)\cup (I^c\times J^c\times K^c)$ with no other points permitted, so that
%\begin{equation}\label{db1-e99}
$$
(I^c\times J\times K)\cap A=\emptyset.
$$
%\end{equation}
(Note that (\ref{db1-e0}) covers the cases of a $p_i$ full plane and a $p_j$ almost corner structure. See the end of this section for more details.)

If on the other hand (\ref{db-e1000}) holds and 
$I^c\times J\times K\subset A$, the conclusion of Theorem \ref{db-theorem} follows by Lemma \ref{db-nontrivial}. 
We may therefore assume that there exists a point
\begin{equation}\label{db1-e1}
x\in (I^c\times J\times K)\setminus A.
\end{equation} 

\begin{lemma}\label{db1-satline} 
Assume (DB1), and let $x$ satisfy (\ref{db1-e1}). 
Then for every $b\in B$ we have exactly one of the following:
\begin{equation}\label{db1-jsatura}
A_{x,b}\subset \ell_j(x), \hbox{ with }\bbA_{M/p_j^2}[x]\bbB_{M/p_j^2}[b]=\phi(p_j^2),
\end{equation} 
\begin{equation}\label{db1-ksatura}
A_{x,b}\subset \ell_k(x), \hbox{ with }\bbA_{M/p_k^2}[x]\bbB_{M/p_k^2}[b]=\phi(p_k^2).
\end{equation}
Furthermore:
\begin{itemize}
\item $\bbA_{M/p_j}[x]\cdot\bbA_{M/p_k}[x]=0$,
\item if $\bbA_{M/p_j}[x]>0$, then (\ref{db1-jsatura}) cannot hold for any $b\in B$, and if $\bbA_{M/p_k}[x]>0$, then (\ref{db1-ksatura}) cannot hold for any $b\in B$,
\item if (\ref{db1-jsatura}) holds for some $b\in B$, then the product $\langle \bbA[x],\bbB[b]\rangle$ is saturated by a $(1,2)$-cofiber pair in the $p_j$
direction, with the $A$-cofiber at distance $M/p_j^2$ from $x$ and the $B$-fiber rooted at $b$. The same is true for 
(\ref{db1-ksatura}), with $j$ and $k$ interchanged.
\end{itemize}
\end{lemma}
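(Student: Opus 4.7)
The plan is to refine the saturating set $A_x$ in two successive $\Bispan$ steps, use divisor exclusion read off the boxes, and then combine Lemma \ref{triangles} with Lemma \ref{1dim_sat-cor}. First, since $\lambda_j x\in J$ and $\lambda_k x\in K$, the set $I\times\{\lambda_j x\}\times\{\lambda_k x\}\subset A$ supplies at least $|I|\geq 2$ points $a_1$ on $\ell_i(x)\cap\Lambda$ with $(x-a_1,M)=M/p_i$ and distinct $\pi_i$-values. For each such $a_1$, $\Bispan(x,a_1)=\Pi(x,p_i^2)\cup\Pi(a_1,p_i^2)$; the second summands are pairwise disjoint, so intersecting the $\Bispan$ constraint over two such $a_1$ forces $A_x\subset\Pi(x,p_i^2)$. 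Next, since $\lambda_i x\in I^c$, the set $\{\lambda_i x\}\times J^c\times K^c\subset A$ supplies $|J^c||K^c|\geq 4$ points $a_2$ with $(x-a_2,M)=M/p_jp_k$. For each such $a_2$, $\Pi(x,p_i^2)\cap\Bispan(x,a_2)=\ell_j(x)\cup\ell_k(x)\cup L_j(a_2)\cup L_k(a_2)$, where $L_\nu(a_2)$ is the line in $\Pi(x,p_i^2)$ with $\pi_\nu$-coordinate $\pi_\nu(a_2)$. A hypothetical $a\in A_x\setminus(\ell_j(x)\cup\ell_k(x))$ would have to satisfy $\pi_j(a)=\pi_j(a_2)$ or $\pi_k(a)=\pi_k(a_2)$ for \emph{every} such $a_2$; since $\pi_j(a_2)$ and $\pi_k(a_2)$ each range over at least two distinct values, I can produce an $a_2$ disagreeing with $a$ in both, a contradiction. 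Hence $A_x\subset\ell_j(x)\cup\ell_k(x)$.

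Next I would identify the contributing divisors to $\langle\bbA[x],\bbB[b]\rangle$. Since $|J^c|,|K^c|\geq 2$, within $I^c\times J^c\times K^c$ the differences $M/p_j$, $M/p_k$, $M/p_jp_k$ already lie in $\Div(A)$, so divisor exclusion removes them from $\Div(B)$, giving $\bbB_m[b]=0$ for these $m$ and every $b$. Combined with the first step, every $a\in A_{x,b}\cap\ell_j(x)$ must satisfy $(x-a,M)=M/p_j^2$, and every $a\in A_{x,b}\cap\ell_k(x)$ satisfies $(x-a,M)=M/p_k^2$. The pair $(M/p_j^2,M/p_k^2)$ obeys (\ref{triangles-e1}) (equal $i$-exponents $n_i=2$, and disagreeing $j$- and $k$-exponents), so Lemma \ref{triangles} yields $\bbA_{M/p_j^2}[x]\bbA_{M/p_k^2}[x]\bbB_{M/p_j^2}[b]\bbB_{M/p_k^2}[b]=0$, forbidding $A_{x,b}$ from meeting both lines. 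Since $\langle\bbA[x],\bbB[b]\rangle=1$ and only one divisor can contribute, exactly one of (\ref{db1-jsatura}), (\ref{db1-ksatura}) holds, with the asserted saturation, and the cofibered structure in each case is delivered by Lemma \ref{1dim_sat-cor}.

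To establish the third bullet, I would argue by contradiction: suppose $\bbA_{M/p_j}[x]>0$ and (\ref{db1-jsatura}) holds for some $b$. Lemma \ref{1dim_sat-cor} then produces an $M$-fiber $F\subset A$ sitting on $\ell_j(x)$ at distance $M/p_j^2$ from $x$, so every $z\in F$ shares the $i$- and $k$-coordinates of $x$ but has $\pi_j(z)\not\equiv\pi_j(x)\pmod{p_j}$. Pairing such a $z$ with the witness $a^*\in A\cap\ell_j(x)\cap\Lambda$ of $(x-a^*,M)=M/p_j$ gives $(z-a^*,M)=M/p_j^2$, so $M/p_j^2\in\Div(A)$. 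But (\ref{db1-jsatura}) forces $M/p_j^2\in\Div(B)$, contradicting divisor exclusion. The second bullet is then immediate: if both $\bbA_{M/p_j}[x]$ and $\bbA_{M/p_k}[x]$ were positive, the third bullet would rule out every saturation case for every $b$, contradicting the dichotomy established above.

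The main care is in the divisor bookkeeping: the cardinality hypothesis $\min(|I|,|J^c|,|K^c|)\geq 2$ of case (DB1) is used precisely to place enough top-level differences into $\Div(A)$ both to collapse $\Pi(x,p_i^2)$ onto the two axis-lines through $x$ via the second $\Bispan$ step and to trigger the divisor-exclusion contradiction in the third bullet. Without it the reduction from plane to lines would stall, and even with the reduction in hand the Lemma \ref{1dim_sat-cor} cofiber would no longer witness a forbidden element of $\Div(A)\cap\Div(B)$.
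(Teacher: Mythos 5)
Your proposal is correct and follows essentially the same route as the paper: reduce $A_x$ to $\Pi(x,p_i^2)$ and then to $\ell_j(x)\cup\ell_k(x)$ via (\ref{bispan}) using the cardinality hypotheses of (DB1), use the divisors $M/p_j,M/p_k,M/p_jp_k\in\Div(A)$ together with Lemma \ref{triangles} to force the per-$b$ dichotomy and saturation, and invoke Lemma \ref{1dim_sat-cor} for the cofiber pair and the final bullets. The only minor deviation is your proof of the second bullet (exhibiting $M/p_j^2\in\Div(A)\cap\Div(B)$ via the cofiber) where the paper argues directly that a point at distance $M/p_j^2$ from $x$ cannot lie in $\Bispan(x,a)$ for $a$ at distance $M/p_j$; both are valid two-line arguments within the same framework.
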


\begin{proof}
The assumptions (DB1) and (\ref{db1-e1}) imply in particular that $\bbA_{M/p_i}[x]\geq 2$. Corollary \ref{bispan-corollary} (ii) then gives
\begin{equation}\label{db1-e2}
A_x\subset \Pi(x,p_i^2).
\end{equation}
Moreover, by Lemma \ref{bispan-lemma} applied to $A_x$ and a single element $a\in A$ satisfying $(x-a,M)=M/p_jp_k$, we get $A_x\subset \Bispan(x,a)= \Pi(x,p_j^2)\cup \Pi(x,p_k^2)\cup \Pi(a,p_j^2)\cup \Pi(a,p_k^2)$. Taking the intersection for all such $a$, and using that $\min(|J^c|,|K^c|)\geq 2$, we get
$$
A_x\subset \bigcap_{a: (x-a,M)=M/p_jp_k} \Bispan(x,a) = \Pi(x,p_j^2)\cup \Pi(x,p_k^2),
$$
which together with (\ref{db1-e2}) proves that 
%\begin{equation}\label{A_xtwolines}
$$
A_x\subset \ell_j(x)\cup \ell_k(x).
$$
%\end{equation}
By Lemma \ref{triangles}, for each $b\in B$ we must in fact have either $A_{x,b}\subset\ell_j(x)$ or $A_{x,b}\subset\ell_k(x)$. If $A_{x,b}\subset\ell_j(x)$, then the second part of (\ref{db1-jsatura}) follows since $M/p_j\in\Div(A)$, and Lemma \ref{1dim_sat-cor} implies the existence of a cofiber pair as described in the last part of the lemma. The same applies with $j$ and $k$ interchanged.

Next, suppose that $\bbA_{M/p_j}[x]>0$ and let $a\in A$ with $(x-a,M)=M/p_j$. 
By Corollary \ref{bispan-corollary} (i), it follows that $A_x\subset\Pi(x,p_j^2)\cup \Pi(a,p_j^2)$. If we also assume that  
$A_x\subset \ell_j(x)$, this implies that $A_x\subset \{a,x\}$. However,$x\not\in A$ by the assumption (\ref{db1-e1}), and (DB1)  (in particular, (\ref{db1-e0})) implies that $M/p_j\in Div(A)$, so that it cannot contribute to the product $\langle\bbA[x], \bbB[b]\rangle$ for any $b\in B$. Hence the assumption that $A_x\subset \ell_j(x)$ is not compatible with (\ref{db1-jsatura}) for any $b\in B$. The same applies with $j$ and $k$ interchanged.
On the other hand, one of (\ref{db1-jsatura}) and (\ref{db1-ksatura}) must hold for each $b$, therefore we cannot have both $\bbA_{M/p_j}[x]>0$ and $\bbA_{M/p_j}[x]>0$. 

Finally, the last part of the lemma follows from Lemma \ref{1dim_sat-cor}.
\end{proof}

\begin{lemma}\label{db1-lemma2}
Assume (DB1). Suppose that there is a point $x\in\Lambda$ such that (\ref{db1-e1}) holds and
\begin{equation}\label{db1-e3}
\max(\bbA_{M/p_j}[x], \bbA_{M/p_k}[x])>0.
\end{equation}
Then the conclusion of Theorem \ref{db-theorem} holds.
\end{lemma}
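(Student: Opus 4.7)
To prove Lemma \ref{db1-lemma2}, I would argue by symmetry that it suffices to treat the case $\bbA_{M/p_j}[x]>0$; the other case follows by swapping the roles of $p_j$ and $p_k$. In this case, Lemma \ref{db1-satline} applied to $x$ yields $A_{x,b}\subset\ell_k(x)$ for every $b\in B$, together with a $(1,2)$-cofibered structure for $(A,B)$ in the $p_k$ direction: there is an $M$-fiber $F\subset A$ in the $p_k$ direction at distance $M/p_k^2$ from $x$. The plan is then to shift this cofiber to $x$ using Lemma \ref{fibershift}, obtaining a T2-equivalent tiling $A'\oplus B=\ZZ_M$ with $x*F_k\subset A'$, and finally to apply Corollary \ref{db+fiber} to the new tiling.

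The crucial geometric fact I would establish first is that $F$ lies outside $\Lambda$. Any $z\in F$ realizing the maximum gcd with $x$ satisfies $(z-x,M)=M/p_k^2$, so its $p_k$-coordinate differs from $\pi_k(x)$ modulo $p_k$; since all elements of $F$ share the same $p_k$-residue modulo $p_k$, $F$ lies in a different $D(M)$-grid than $\Lambda$. (Were $F$ inside $\Lambda$, in $\Lambda$-coordinates it would fill the entire $p_k$-column through $x$, forcing $x\in F\subset A$ and contradicting $x\notin A$.) Consequently the shift yields
\[
A'\cap\Lambda=(A\cap\Lambda)\cup(x*F_k),
\]
where $x*F_k$ is, in $\Lambda$-coordinates, the full $p_k$-column at the coordinate $(\pi_i(x)/p_i,\pi_j(x)/p_j)\in I^c\times J$; this column is disjoint from the diagonal boxes $A_0=(I\times J\times K)\cup(I^c\times J^c\times K^c)$ by the disjointness of $I,I^c$ and of $J,J^c$.

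With this in hand, I would verify the hypotheses of Corollary \ref{db+fiber} for $A'\oplus B$: $A'\cap\Lambda$ contains the diagonal boxes $A_0$ together with an $M$-fiber disjoint from them, $\Phi_M\mid A'$ is inherited via T2-equivalence, and the divisor condition (\ref{db-e10}) transfers from $A$ to $A'$ since $A\cap\Lambda\subset A'\cap\Lambda\subset A'$. A short preliminary check shows that the hypothesis (\ref{db1-e3}) of the present lemma cannot hold in the full-plane or almost-corner sub-cases of Theorem \ref{db-theorem} --- any $a\in A$ at distance $M/p_j$ or $M/p_k$ from $x\in\Lambda$ must itself lie in $\Lambda$, but no such $a\in A\cap\Lambda$ exists in those sub-cases --- so (\ref{db-e1000}) is actually in force. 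Then Corollary \ref{db+fiber} yields T2-equivalence of $A'\oplus B$ with $\Lambda\oplus B$; composed with T2-equivalence of $A\oplus B$ and $A'\oplus B$, we conclude the first bullet of Theorem \ref{db-theorem}. The main technical obstacle is verifying that $A'\cap\Lambda$ is \emph{not} fibered in any direction (a prerequisite for Corollary \ref{db+fiber}): this requires a column count in each of the $p_i,p_j,p_k$ directions after adjoining $x*F_k$ to $A_0$, exhibiting at least one column of partial content using $\min(|I|,|J^c|,|K^c|)\geq 2$ from (DB1).
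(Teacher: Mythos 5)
Your proposal is essentially the paper's proof: use Lemma \ref{db1-satline} to force $A_{x,b}$ into a single line and obtain a $(1,2)$-cofibered structure (your $p_k$ labelling is in fact the one consistent with that lemma; the paper's text writes $p_j$ there), shift the cofiber to $x$ by Lemma \ref{fibershift}, note that (\ref{db1-e3}) rules out the full-plane and almost-corner structures so (\ref{db-e1000}) holds and is inherited by the shifted set, and conclude with Corollary \ref{db+fiber}. The supplementary checks you make (the cofiber lies off $\Lambda$, the shifted fiber $x*F_k$ is disjoint from the diagonal boxes, the one-line reason (\ref{db1-e3}) fails in the special structures) are correct, and the unfibered-ness hypothesis of Corollary \ref{db+fiber} that you flag as the remaining obstacle is simply not addressed in the paper's proof either.
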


\begin{proof}
Assume without loss of generality that $\bbA_{M/p_j}[x]> 0$. Then $A_x\subset \ell_k(x)$ by Lemma \ref{db1-satline}. By Lemma \ref{1dim_sat-cor}, the pair $(A,B)$ has a (1,2)-cofibered structure in the $p_k$ direction with a cofiber in $A$ at distance $M/p_k^2$ from $x$. 
We apply Lemma \ref{fibershift} to shift the $M$-cofiber in $A$ to $x$. Let $A_1$ be the set thus obtained. 

We note that (\ref{db1-e3}) does not hold for either the full plane structure or the almost corner structure. Therefore 
(\ref{db-e1000}) must hold, and since $A\cap\Lambda\subset A_1\cap\Lambda$, the same holds for $A_1$. 
Furthermore, $A_1$ contains the diagonal boxes inherited from $A$ as well as the added $M$-fiber through $x$ in the $p_j$ direction, disjoint from the boxes. By Corollary \ref{db+fiber}, the tiling $A_1\oplus B=\ZZ_M$ (therefore also $A\oplus B=\ZZ_M$)
 is T2-equivalent to $\Lambda\oplus B=\ZZ_M$. 
\end{proof}

It remains to consider the complementary case when 
\begin{equation}\label{db1-e4}
\bbA_{M/p_j}[x]= \bbA_{M/p_k}[x]=0 \ \ \forall x\in (I^c\times J\times K)\setminus A.
\end{equation}
%In particular, this implies that (\ref{db1-e99}) holds.

\begin{lemma}\label{db1-lemma3}
Assume that (DB1) holds, that $(I^c\times J\times K)\setminus A\neq\emptyset$, and that
(\ref{db1-e4}) holds. Fix an index $l_i\in I^c$ such that $(\{l_i\}\times J\times K)\setminus A$ is nonempty, and define
$$
\calx=\calx(l_i):= \{l_i\}\times J\times K.
$$ 
Then either $A$ is T2-equivalent to a set $A'\subset\ZZ_M$ such that 
$A'\oplus B=\ZZ_M$ and every point $x'\in \calx$ belongs to a fiber in $A'$ in the $p_j$ direction, or else 
the same holds with $j$ and $k$
interchanged. 
\end{lemma}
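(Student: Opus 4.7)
The strategy is: at each $x\in\calx\setminus A$, Lemma \ref{db1-satline} together with the assumption (\ref{db1-e4}) locates a $(1,2)$-cofiber pair in either the $p_j$ or the $p_k$ direction (possibly both). The goal is to show that one fixed direction is available throughout $\calx\setminus A$, so that Lemma \ref{fibershift} may be applied consistently to fill every missing point of $\calx$.

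First I would upgrade the per-$b$ dichotomy of Lemma \ref{db1-satline} to a per-$x$ statement: for each $x\in\calx\setminus A$, either $A_x\subseteq \ell_j(x)$ or $A_x\subseteq \ell_k(x)$. Assume $A_{x,b_j}\subseteq \ell_j(x)$ for some $b_j\in B$. By Lemma \ref{1dim_sat-cor}, $\bbA_{M/p_j^2}[x]\bbB_{M/p_j^2}[b_j]=\phi(p_j^2)$, which already saturates $\langle\bbA[x],\bbB[b_j]\rangle=1$, so every other $m$-contribution must vanish at $b_j$. The diagonal-box counts give $\bbA_{M/p_jp_k}[x]\geq |J^c||K^c|\geq 4$ and $\bbA_{M/p_i}[x]=|I|\geq 2$, forcing $\bbB_{M/p_jp_k}[b_j]=\bbB_{M/p_i}[b_j]=0$. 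The coexistence of a competing witness $b_k$ with $A_{x,b_k}\subseteq \ell_k(x)$ is then to be ruled out by applying Lemma \ref{triangles} to pairs $(m,m')\in\{(M/p_j^2,M/p_k^2),(M/p_i,M/p_j^2),(M/p_j^2,M/p_jp_k)\}$, each satisfying (\ref{triangles-e1}), which together pin down the $\bbB$-structure at $b_j$ and $b_k$ tightly enough to produce a conflict with the cofiber identity on the $B$-side.

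Second, I would show the direction is uniform in $x$ across $\calx\setminus A$. If $x,x'\in\calx\setminus A$ required different directions, then Lemma \ref{1dim_sat-cor} would yield two simultaneous global $(1,2)$-cofibered structures, so $B$ would be both $M/p_j$-fibered in the $p_j$ direction and $M/p_k$-fibered in the $p_k$ direction. The resulting $A$-cofibers in both directions, together with the diagonal-box points and a further application of Lemma \ref{triangles}, should produce a divisor-exclusion contradiction. This is the main obstacle of the proof, as the appropriate triangle configuration depends on the relative positions of $x$ and $x'$ in the planar grid $\calx$, and on whether the two cofibers $F_j,F_k\subset A$ share a common base point; the cardinality constraints $|I|,|J^c|,|K^c|\geq 2$ of (DB1) are needed to guarantee the relevant box-counts do not degenerate.

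Once a common direction $\nu^*\in\{j,k\}$ is identified, Lemma \ref{1dim_sat-cor} supplies a global $(1,2)$-cofibered structure for $(A,B)$ in the $p_{\nu^*}$ direction, and iterating Lemma \ref{fibershift} shifts an $M$-cofiber in $A$ onto each point of $\calx\setminus A$. Since each shift in the $p_{\nu^*}$ direction fills the entire $p_{\nu^*}$-line of $\Lambda$ through the shift target, at most $|K|$ shifts (if $\nu^*=j$) or $|J|$ shifts (if $\nu^*=k$) suffice for every $x'\in\calx$ to belong to an $M$-fiber of the resulting T2-equivalent set $A'$ in the $p_{\nu^*}$ direction, completing the proof.
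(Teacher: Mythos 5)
Your overall skeleton (locate cofiber pairs via Lemma \ref{db1-satline}, argue that one direction works uniformly on $\calx$, then apply Lemma \ref{fibershift}) matches the paper's, but the two steps that carry the actual content are not proved, and the tools you cite do not suffice for them. For your per-$x$ dichotomy, Lemma \ref{triangles} requires the two divisors $m,m'$ to occur at the \emph{same} element of $B$: you would need $\bbB_m[y]\bbB_{m'}[y]>0$ for a single $y$, whereas your witnesses $b_j,b_k$ are distinct points of $B$, so none of the pairs $(M/p_j^2,M/p_k^2)$, $(M/p_i,M/p_j^2)$, $(M/p_j^2,M/p_jp_k)$ gives a configuration to which the lemma applies. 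More seriously, the uniformity step --- which you yourself flag as the main obstacle --- cannot be obtained the way you suggest: the mere coexistence of a $(1,2)$-cofibered structure in the $p_j$ direction and one in the $p_k$ direction is not contradictory (in Section \ref{ec12} the paper happily works with a $B$ that is simultaneously $N_j$-fibered in the $p_j$ direction and $N_k$-fibered in the $p_k$ direction, cf.\ (\ref{Bdivfinal})), and a single $A$-cofiber in each direction inside the plane $\Pi(x,p_i^2)$ contributes only $(p_j-|J|)(p_k-|K|)+p_j+p_k$ points, which in general stays below the plane bound $p_jp_k$, so no contradiction with Lemma \ref{planebound} arises at this level.

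The paper's mechanism, which is missing from your sketch, is propagation along a line followed by counting. If some $x_0\in\calx$ has a witness $b$ with $A_{x_0,b}\subset\ell_k(x_0)$, then the \emph{same} $b$ forces $A_{x_\nu,b}\subset\ell_k(x_\nu)$ for all $|J|$ points $x_\nu\in\calx$ with $(x_0-x_\nu,M)=M/p_j$: otherwise, by Lemma \ref{1dim_sat-cor}, a $(1,2)$-cofiber pair in the $p_j$ direction would saturate $\langle \bbA[x_\nu],\bbB[b]\rangle$, and the same pair would then saturate $\langle \bbA[x_0],\bbB[b]\rangle$, contradicting the $\ell_k$-saturation at $x_0$. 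This yields $|J|$ pairwise disjoint $M$-fibers of $A$ in the $p_k$ direction inside $\Pi:=\Pi(x_0,p_i^2)$; symmetrically, a $p_j$-witness anywhere in $\calx$ yields $|K|$ such fibers in the $p_j$ direction in the same plane. Together with the $(p_j-|J|)(p_k-|K|)$ diagonal-box points this gives $|A\cap\Pi|\ge p_jp_k+|J|\,|K|$, contradicting Lemma \ref{planebound}; hence one direction is never witnessed, $A_x\subset\ell_j(x)$ (say) holds for every $x\in\calx$, and the shifts go through. Note that no per-$x$ dichotomy is ever needed: the paper works with the sets $\calx_j,\calx_k$ of points admitting at least one witness in each direction and rules out their simultaneous nonemptiness globally. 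Without the fixed-$b$ line propagation and the resulting multiplicity of fibers, your argument has no source of contradiction, so as written the proof has a genuine gap.
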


\begin{proof} 
We fix $l_i$ as in the statement of the lemma, and keep it fixed for the duration of the proof.
Let also $\Pi:=\Pi(x,p_i^2)$ for any $x\in\calx$, and note that $\calx \subset \Pi$.

Let $x\in \calx\setminus A$. 
By (\ref{db1-e4}), we must have $x'\not\in A$ for all $x'\in \calx$ with $(x'-x,M)\in\{M/p_j,M/p_k\}$. 
Applying (\ref{db1-e4}) to all such $x'$, we see that
$\calx\cap A=\emptyset$.
Let
$$\calx_j=\{ x\in \calx:\ \exists b\in B \hbox{ such that }A_{x,b}\subset \ell_j(x)\},$$
and similarly for $\calx_k$.
By Lemma \ref{db1-satline}, we have $\calx=\calx_j\cup\calx_k$.

Suppose that $\calx_k\neq\emptyset$, with $x_0\in \calx_k$, and let $x_1,x_2,\dots,x_{|J|-1}$, be the distinct points in $\calx$ such that $(x_0-x_\nu,M)=M/p_j$ for $\nu\neq 0$. By the definition of $\calx_k$, there exists $b\in B$ such that 
\begin{equation}\label{db1-e5}
A_{x_\nu,b} \subset\ell_k(x_\nu)
\end{equation}
for $\nu=0$. We claim that (\ref{db1-e5}) also holds for $\nu=1,\dots,|J|-1$, with the same $b\in B$. Indeed, suppose that  $A_{x_\nu,b}\subset\ell_j(x_\nu)$ for some $\nu\geq 1$. By Lemma \ref{1dim_sat-cor}, the product $\langle \bbA[x_\nu],\bbB[b]\rangle$ would be saturated by a $(1,2)$ cofiber pair in the $p_j$ direction at distance $M/p_j^2$ from $x_\nu$. But then we would also have $A_{x_0,b}\subset\ell_j(x_0)$, with the product saturated by the same cofiber pair, contradicting the assumption that (\ref{db1-e5}) holds for $\nu=0$. 

By Lemma \ref{1dim_sat-cor} again, it follows that if $\calx_k\neq\emptyset$, then $A$ contains an $M$-fiber in the $p_k$ direction at distance $M/p_k^2$ from each of the points $x_0,x_1,x_2,\dots,x_{|J|-1}$. Since $\ell_k(x_\nu)\subset \Pi$, all these fibers are contained in $\Pi$.

If we had both $\calx_j\neq\emptyset$ and $\calx_k\neq\emptyset$, we would get such sets of fibers in both directions, all contained in $\Pi$. But then 
$\Pi$ contains $|J^c|\,|K^c|$ points of $I^c\times J^c\times K^c$, at least $|J|$ fibers in $A$ in the $p_k$ direction, and at least $|K|$ fibers in $A$ in the $p_j$ direction, 
disjoint from $\Lambda\cap \Pi$ and from each other. Thus
$$
|A\cap\Pi|\geq (p_j-|J|)(p_k-|K|)+p_k|J|+p_j|K|= p_jp_k+|J||K|.
$$
This contradicts Lemma \ref{planebound}.

It follows that at least one of $\calx_j$ and $\calx_k$ must be empty. Assume without loss of generality that $\calx_j=\calx$ and $\calx_k=\emptyset$. It follows that $A_x\subset \ell_j(x)$ for all $x\in\calx$. 
By Lemma \ref{1dim_sat-cor}, the pair $(A,B)$ has a $(1,2)$-cofibered structure in the $p_j$ direction, and for every $x\in\calx$ there is a cofiber in $A$ at distance $M/p_j^2$ from $x$. We can now use Lemma \ref{fibershift} to shift all of the aforementioned cofibers in the $p_j$ direction, obtaining the new set $A'$ as indicated in the conclusion of the lemma.
 \end{proof}

 We can now conclude the proof of Theorem \ref{db-theorem} under the assumption that (\ref{db1-e0}), (\ref{db1-e1}), and (\ref{db1-e4})  
 hold.

\begin{itemize}
\item If (\ref{db-e1000}) holds, then the set $A'\subset\ZZ_M$ from Lemma \ref{db1-lemma3} still satisfies (\ref{db-e1000}), and $A'\cap\Lambda$ contains the diagonal boxes inherited from $A$ as well as the additional fibers added in Lemma \ref{db1-lemma3}. By Corollary \ref{db+fiber}, $A'$ is T2-equivalent to $\Lambda$, and the theorem follows.

\item 
Suppose that $A\cap\Lambda$ has the $p_i$ full plane structure (Lemma \ref{misscorner_fullplane}), with $|I^c|=|J|=|K|=1$.
Then there is only one index $l_i\in I^c$, and for that $l_i$, $\calx=\{x\}$ is a single point. In this case, Lemma \ref{db1-lemma3} identifies a direction $p_\nu$ for some $\nu\in\{j,k\}$, such that the pair $(A,B)$ has a $(1,2)$-cofibered structure in the $p_\nu$ direction with a cofiber in $A$ at distance $M/p_\nu^2$ from $x$. Assume without loss of generality that $\nu=j$. After shifting the cofiber to $x$ as permitted by Lemma \ref{fibershift}, we arrive at a $p_j$ corner structure, where $A'\cap\Lambda$ is the union of one $M$-fiber in the $p_i$ direction and $(p_j-1)$ $M$-fibers in the $p_k$ directions, each in a different plane perpendicular to the $p_j$ direction.

\item Assume now that $A\cap\Lambda$ has the $p_j$ almost corner structure (Lemma \ref{oddcornerplus} (ii)), with 
$|K|=|I^c|=1$ and $|J|,|J^c|>1$. Then there is again only one index $l_i\in I^c$, and the set $\calx= \{l_i\}\times J\times K
$ for that value of $l_i$ has dimensions $1\times |J|\times 1$. 
We again apply Lemma \ref{db1-lemma3}. If the fiber identified and shifted in the lemma was in the $p_j$ direction, then we are in the $p_j$ corner situation again, with $A'\cap\Lambda$ consisting of non-overlapping fibers in the $p_i$ and $p_k$ directions. If on the other hand the shifted fibers were in the $p_k$ direction, then we have
$\{D|m|M\}\subset\Div(A')$, and
$A'\cap\Lambda $ contains the diagonal boxes inherited from $A$ as well as the added fibers disjoint from the boxes. By Corollary \ref{db+fiber}, the tiling $A'\oplus B=\ZZ_M$ (therefore also $A\oplus B=\ZZ_M$)
 is T2-equivalent to $\Lambda\oplus B=\ZZ_M$. 

\end{itemize}

%%%%%%%%%%%%%%%%%%%%%%%%%%%%%%%%%%%%%%%%%% 
 
 \subsection{Case (DB2)}\label{subsec-db2}

We now assume that $A\oplus B=\ZZ_M$ is a tiling satisfying the assumptions of Theorem \ref{db-theorem} on a $D$-grid $\Lambda$. Additionally, we assume that
\begin{equation}\label{db2-e0}
|I^c|=|J^c|=|K^c|=1.
\end{equation}
Since $p_i,p_j,p_k$ are all odd, the assumption (\ref{db2-e0}) implies that $\min(|I|,|J|,|K|)\geq 2$, and in particular
(\ref{db-e1000}) holds. Let also
$$
I^c\times J^c\times K^c=\{a_0\}.
$$

\begin{lemma}\label{db2-lemma0}
Assume (DB2). Suppose that there exists a point $x\in\Lambda\setminus A$ such that $A_x$ is contained in one of the lines $\ell_i(x)$, $\ell_j(x)$, $\ell_k(x)$. Then the tiling
$A\oplus B=\ZZ_M$ is T2-equivalent to $\Lambda\oplus B=\ZZ_M$. Hence the conclusion of Theorem \ref{db-theorem} holds.

\end{lemma}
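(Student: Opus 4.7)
The plan is to use the line-saturation hypothesis to extract a $(1,2)$-cofibered structure, perform a single fiber shift that places an $M$-fiber on $\Lambda$ at $x$, and then invoke Corollary \ref{db+fiber}. First, in case (DB2), since $p_i,p_j,p_k\geq 3$ and $|I^c|=|J^c|=|K^c|=1$, we have $|I|,|J|,|K|\geq 2$, so all three top differences $M/p_i,M/p_j,M/p_k$ are realised inside $I\times J\times K\subset A$; in particular $M/p_\nu\in\Div(A)$ for the direction $\nu\in\{i,j,k\}$ furnished by the hypothesis $A_x\subset\ell_\nu(x)$. Lemma \ref{1dim_sat-cor} then applies and produces a $(1,2)$-cofibered structure for $(A,B)$ in the $p_\nu$ direction, together with an $M$-cofiber $F\subset A$ in the $p_\nu$ direction at distance $M/p_\nu^2$ from $x$.

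Next, I would apply Lemma \ref{fibershift} to shift $F$ to $x$, producing $A'\subset\ZZ_M$ with $A'\oplus B=\ZZ_M$ and $A$ T2-equivalent to $A'$. A short coordinate check confirms the geometry of the shift: the root $y$ of $F$ satisfies $p_\nu\nmid(x-y)$, so since $p_\nu\mid x-x_0$ for the distinguished $x_0\in\Lambda$ we get $p_\nu\nmid y-x_0$ and hence $y\notin\Lambda$; translating $y$ by multiples of $M/p_\nu$ leaves the residue mod $D(M)=p_ip_jp_k$ unchanged (since $D(M)\mid M/p_\nu$), so $F\cap\Lambda=\emptyset$. Symmetrically $x*F_\nu\subset\Lambda$. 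Therefore $A'\cap\Lambda=(A\cap\Lambda)\cup(x*F_\nu)$, and validity of the shift forces $x*F_\nu$ to be disjoint from $A\setminus F\supseteq A\cap\Lambda$, hence from the diagonal boxes.

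It remains to apply Corollary \ref{db+fiber}. The diagonal boxes $(I\times J\times K)\cup\{a_0\}$ persist intact inside $A'\cap\Lambda$ with all six sets $I,J,K,I^c,J^c,K^c$ nonempty, and $x*F_\nu$ is an $M$-fiber disjoint from them. The set $A'\cap\Lambda$ is still not $M$-fibered in any direction: $I\times J\times K$ produces axis-aligned intersections with $\Lambda$ of cardinality $|I|,|J|$, or $|K|$, each strictly between $0$ and the corresponding prime, and adding one extra $M$-fiber cannot bring every such intersection simultaneously to $0$ or to the full prime. Corollary \ref{db+fiber} then yields that $A'\oplus B=\ZZ_M$ is T2-equivalent to $\Lambda\oplus B=\ZZ_M$, and by transitivity so is $A\oplus B=\ZZ_M$. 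The most delicate part of the argument is the geometric bookkeeping of the fiber shift, namely confirming that $F$ lies off $\Lambda$ while $x*F_\nu$ lies on $\Lambda$ disjoint from the boxes, and that the resulting $A'\cap\Lambda$ retains the unfibered diagonal-box structure required by Corollary \ref{db+fiber}; the rest is a direct chaining of pre-established results.
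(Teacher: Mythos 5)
Your proof is correct and follows essentially the same route as the paper's: Lemma \ref{1dim_sat-cor} yields the $(1,2)$-cofibered structure (with $M/p_\nu\in\Div(A)$ supplied by $|I|,|J|,|K|\geq 2$), Lemma \ref{fibershift} moves the cofiber onto $\Lambda$ at $x$, and Corollary \ref{db+fiber} concludes, using that (\ref{db-e1000}) holds under (DB2). The geometric bookkeeping you spell out (the cofiber lying off $\Lambda$, the shifted fiber lying on $\Lambda$ and disjoint from the boxes) is exactly what the paper leaves implicit.
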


\begin{proof}
Suppose that $x\in \Lambda\setminus A$ and $A_x\subset \ell_k(x)$. By Lemma \ref{1dim_sat-cor}, the pair $(A,B)$ has a (1,2)-cofibered structure in the $p_k$ direction. By Lemma \ref{fibershift}, the cofiber in $A\setminus\Lambda$ can be shifted to a fiber in $\Lambda$ rooted at $x$. This yields a new tiling $A'\oplus B=\ZZ_M$, T2-equivalent to $A\oplus B=\ZZ_M$, such that $A'\cap\Lambda$ contains both the diagonal boxes $\{a_0\}\cup (I\times J\times K)$ and the added $M$-fiber. In light of (\ref{db-e1000}), we can apply Corollary \ref{db+fiber} to conclude the proof.
\end{proof}

\begin{lemma}\label{db2-lemma1}
Assume (DB2). If $(A\cap\Lambda)\setminus(\{a_0\}\cup (I\times J\times K))\neq\emptyset$, then the tiling
$A\oplus B=\ZZ_M$ is T2-equivalent to $\Lambda\oplus B=\ZZ_M$. \end{lemma}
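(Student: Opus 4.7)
The plan is to show that any extra point in $A\cap\Lambda$ beyond the diagonal boxes $A_1:=\{a_0\}\cup(I\times J\times K)$ forces, possibly after a T2-equivalence via fiber shifts, one of the six ``complementary boxes'' to be entirely contained in $A$, so that Lemma \ref{db-nontrivial} concludes. Set $E:=(A\cap\Lambda)\setminus A_1$; since $\Phi_M\mid A$ and $\Phi_M\mid A_1$, we also have $\Phi_M\mid E$. Normalize $a_0=(0,0,0)$ so that $I^c=J^c=K^c=\{0\}$. By the symmetry of the six complementary regions of $A_1$, and a short preliminary cuboid argument showing that an edge-type extra forces either a face-type extra or an entire face-type complementary box inside $A$, I may assume the extra point $x^*\in E$ lies in the face-type complementary box $I\times J\times K^c$, say $x^*=(l_i^*,l_j^*,0)$ with $l_i^*,l_j^*\neq 0$.

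First I would apply $\Phi_M\mid E$ to the $M$-cuboid with vertex set $\{0,l_i^*\}\times\{0,l_j^*\}\times\{0,l_k\}$ for each $l_k\in K$. Using $E(0,0,0)=E(l_i^*,l_j^*,l_k)=0$ and $E(x^*)=1$, the signed vertex sum collapses to
\[
E(l_i^*,0,l_k)+E(0,l_j^*,l_k)+1=E(l_i^*,0,0)+E(0,l_j^*,0)+E(0,0,l_k),
\]
so at least one of the edge points $(l_i^*,0,0)$, $(0,l_j^*,0)$, $(0,0,l_k)$ lies in $A$ for every $l_k\in K$. If neither $(l_i^*,0,0)$ nor $(0,l_j^*,0)$ is in $A$, then $(0,0,l_k)\in A$ for all $l_k\in K$, giving the edge complementary box $I^c\times J^c\times K\subset A$, and Lemma \ref{db-nontrivial} finishes.

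Otherwise, without loss of generality $y':=(l_i^*,0,0)\in A$. Rerunning the cuboid balance with $y'$ in place of $x^*$ on the cuboids $\{0,l_i^*\}\times\{0,l_j\}\times\{0,l_k\}$ gives
\[
E(l_i^*,l_j,0)+E(l_i^*,0,l_k)+E(0,l_j,l_k)=E(0,l_j,0)+E(0,0,l_k)+1
\]
for every $l_j\in J$ and $l_k\in K$. If $(l_i^*,l_j,0)\in E$ for every $l_j\in J$, then $\{l_i^*\}\times\ZZ_{p_j}\times\{0\}\subset E$ is an $M$-fiber in the $p_j$ direction disjoint from $A_1$, and Corollary \ref{db+fiber} applies. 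Otherwise some $l_j\in J\setminus\{l_j^*\}$ has $(l_i^*,l_j,0)\notin E$, and then for every $l_k\in K$ at least one of $(l_i^*,0,l_k)$, $(0,l_j,l_k)$ must lie in $A$. I would iterate this branching: at each stage, either a complementary box becomes contained in $A$ (triggering Lemma \ref{db-nontrivial}), or an $M$-fiber inside $E$ disjoint from $A_1$ is extracted (triggering Corollary \ref{db+fiber}), or a point $y\in\Lambda\setminus A$ is produced whose saturating set $A_y$ is contained in a single line $\ell_\nu(y)$ (triggering Lemma \ref{db2-lemma0}).

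The main obstacle is running the iteration cleanly through the last branch, where the new extra points appear as isolated edge or face points rather than complete fibers, so the case tree branches anew. Each additional extra point feeds a new cuboid balance; the hypotheses $p_i,p_j,p_k\ge 3$ (so $|I|,|J|,|K|\ge 2$) together with the divisor-exclusion constraints of Lemma \ref{triangles} and the line-saturation criterion of Lemma \ref{1dim_sat-cor} are the essential tools for forcing each branch to terminate in one of the three terminal configurations identified above.
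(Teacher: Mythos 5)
Your opening moves are sound: $\Phi_M$ does divide the mask polynomial of the diagonal boxes, hence of the difference multiset $E$; your two cuboid identities are computed correctly; and a full edge box or a full $M$-fiber disjoint from the boxes does trigger Lemma \ref{db-nontrivial} or Corollary \ref{db+fiber} (the former applies to any of the six complementary boxes after relabelling, since its hypotheses are symmetric under permuting $i,j,k$ and under swapping $I,J,K$ with $I^c,J^c,K^c$). But the proof is not complete, and you say so yourself: the branch in which the forced points appear as isolated face- or edge-type points is exactly the hard case, and your iteration comes with no termination argument. This is not a bookkeeping issue. Nonnegativity together with $\Phi_M\mid E$ does not force $E$ to decompose into fibers or full complementary boxes --- diagonal boxes themselves are the basic example of a nonnegative $\Phi_M$-null configuration that is neither --- so pure cuboid balancing on $E$ can keep producing scattered points indefinitely. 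To stop the branching you must inject the tiling structure (saturating sets, $\Div(B)$ constraints, plane bounds), and your sketch never actually does so: the third terminal outcome you list, a point $y\notin A$ with $A_y\subset\ell_\nu(y)$, is never derived in any concrete branch, and your preliminary edge-to-face reduction is likewise only asserted.

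This is precisely where the paper's argument differs. For a point of $A$ in a face-type box, say $a\in A\cap(I^c\times J\times K)$, the paper does not balance cuboids in $E$ at all; it observes that each point $x$ of $I^c\times\{\lambda_j a\}\times K$ and of $I^c\times J\times\{\lambda_k a\}$ either lies in $A$ or sits at the empty vertex of a flat-corner configuration, in which case Lemma \ref{flatcorner} gives $A_x\subset\ell_i(x)$ and Lemma \ref{db2-lemma0} finishes immediately. If no flat corner ever occurs, those rows and columns fill up, Lemma \ref{smallcube} together with (\ref{db-e1000}) fills all of $I^c\times J\times K$, and Lemma \ref{db-nontrivial} applies; a point in an edge-type box is reduced to the face-type case by a single cuboid balance, much as in your preliminary step. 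If you want to salvage your approach, replace the open-ended iteration by this dichotomy at each unfilled point of the relevant box (``flat corner, hence done via Lemma \ref{db2-lemma0}, or the point is in $A$''), which is what makes the case analysis finite.
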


\begin{proof}
Observe first that if $A\cap\Lambda$ contains a flat corner configuration in the sense of Lemma \ref{flatcorner}, then the conclusion holds. Indeed, Lemma \ref{flatcorner} implies that the point $x\not\in A$ in the flat corner configuration satisfies the assumptions of Lemma \ref{db2-lemma0}, which we then apply.

Suppose that there is an element $a\in A\cap(I^c\times J\times K)$. In order to avoid a flat corner configuration perpendicular to the $p_j$ direction at points $x\in I^c\times\{\lambda_ja\}\times K$, all such points must belong to $A$. Similarly, we must have $I^c\times J\times\{\lambda_ka\}\subset A$, in order to avoid a flat corner perpendicular to the $p_k$ direction at those points. By Lemma \ref{smallcube} and (\ref{db-e1000}), we must in fact have
$I^c\times J\times K\subset A$. But then the conclusion follows by Lemma \ref{db-nontrivial}. Since the assumption (DB2) is symmetric with respect to all permutations of the indices $\{i,j,k\}$, the same argument applies when $A\cap(I\times J^c\times K)$ or $A\cap(I\times J\times K^c)$ is nonempty.

Assume now that $a\in  A\cap(I^c\times J^c\times K)$. Consider an $M$-cuboid with one vertex at $a_0$, another at $a$, and a third one at $a'\in I\times J\times K$ with $(a-a',M)=M/p_ip_j$ and $(a_0-a,M)=D$. In order to balance this cuboid, at least one of the vertices in $I^c\times J\times K$, $I\times J^c\times K$, or $I\times J\times K^c$, must be in $A$. But then we are in the situation from the last paragraph. Since this case is also symmetric with respect to all permutations of $\{i,j,k\}$, we are done.
\end{proof}

\begin{lemma}\label{db2-lemma2}
Assume (DB2), and let $N_k=M/p_k$. If $M/p_k^2\in\Div(A)$, then $\Phi_{N_k}|A$.
\end{lemma}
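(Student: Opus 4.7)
My plan is to verify the $N_k$-cuboid condition for $A\bmod N_k$ directly, by first extracting consequences of the hypothesis $M/p_k^2\in\Div(A)$ via divisor exclusion, and then reducing the problem to a uniformity statement that will follow from $\Phi_M\mid A$ together with a plane-bound argument.

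First, the diagonal box structure on $\Lambda$ gives $M/p_k\in\Div(A\cap\Lambda)$ (from pairs in $I\times J\times K$ differing only in their $p_k$-coordinate, of which there are at least two per column since $|K|=p_k-1\geq 2$), so together with the hypothesis $M/p_k^2\in\Div(A)$ and divisor exclusion we obtain $\{M/p_k,M/p_k^2\}\cap\Div(B)=\emptyset$. Since every nonzero within-$p_k$-line difference in $\ZZ_M$ equals one of these two values, $B$ meets each $p_k$-line in at most one point. Consequently the reduction $\bar B$ of $B$ modulo $N_k$ (collapsing the $p_k$-coordinate mod $p_k$) is a $0$-$1$ set of size $p_ip_jp_k$ in $\ZZ_{N_k}$, and the tiling relation descends to
\[
\bar A(X)\bar B(X)\equiv p_k\cdot\frac{X^{N_k}-1}{X-1}\pmod{X^{N_k}-1}.
\]

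To set up the cuboid test, write $a^{(s)}(\xi_i,\xi_j)=\sum_{\xi_k\equiv s\,(\mathrm{mod}\,p_k)}w_A(\xi_i,\xi_j,\xi_k)$ for $s\in\ZZ_{p_k}$, so that $\Phi_{N_k}\mid A$ is equivalent to the 2D alternating sum
\[
T^{(s)}=\sum_{\epsilon_i,\epsilon_j\in\{0,1\}}(-1)^{\epsilon_i+\epsilon_j}a^{(s)}(C_i+\epsilon_iD_i,C_j+\epsilon_jD_j)
\]
being independent of $s\in\ZZ_{p_k}$ for all admissible 2D cuboid data $(C_i,C_j,D_i,D_j)$. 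On the other hand, $\Phi_M\mid A$ applied to $M$-cuboids whose $p_k$-step lies in $p_k\ZZ_{p_k^2}\setminus\{0\}$ shows that the single-height version $T_{\xi_k}$ depends only on $\xi_k\bmod p_k$, and hence $T^{(s)}=p_kT_{\xi_k}$ for any lift $\xi_k\equiv s\pmod{p_k}$. Thus the target reduces to showing that $T_{\xi_k}$ is in fact constant across all mod-$p_k$ residue classes of $\xi_k\in\ZZ_{p_k^2}$, not merely within each class.

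The hardest part will be this last step---bridging mod-$p_k$ classes using the single-pair hypothesis $M/p_k^2\in\Div(A)$. Here I would first apply Corollary \ref{planegrid} with $\nu=k,\alpha_0=2$ (applicable since $|A|=p_ip_jp_k>p_ip_j$) to conclude that at least one of $\Phi_{p_k^2}\mid A$ or $\Phi_{p_k}\mid A$ holds. In the $\Phi_{p_k^2}\mid A$ case, the height profile $f(\xi_k)=|A\cap\Pi(\cdot,p_k^2,\xi_k)|$ is $p_k$-periodic, which, combined with $\Phi_M\mid A$ and the explicit diagonal box structure on $\Lambda$, should promote the existing $T_{\xi_k}$-constancy on each coset of $p_k\ZZ_{p_k^2}$ to full constancy on $\ZZ_{p_k^2}$. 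In the $\Phi_{p_k}\mid A$ case, a more delicate argument using the explicit pair $(a_1,a_2)\in A$ with $(a_1-a_2,M)=M/p_k^2$ as a saturating-set anchor, together with the descended 2D tiling relation above and the injectivity of $B$ on $p_k$-lines, will be required to force $T_{\xi_k}$-constancy across classes; this cross-class comparison of the layers $a^{(s)}$ and $a^{(s')}$ via the tiling relation is where the finest analysis is needed.
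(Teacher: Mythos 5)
There is a genuine gap. Your preliminary reductions are fine: under (DB2) the boxes give $M/p_k\in\Div(A)$, so together with the hypothesis $M/p_k^2\in\Div(A)$ and divisor exclusion you correctly get $\{M/p_k,M/p_k^2\}\cap\Div(B)=\emptyset$, and your reformulation of $\Phi_{N_k}\mid A$ (given $\Phi_M\mid A$) as constancy of the single-height $2$-dimensional alternating sums $T_{\xi_k}$ across the residue classes of $\xi_k$ mod $p_k$ is just a restatement of the goal. But the entire content of the lemma is exactly that cross-class constancy, and at that point your proposal stops being a proof: the two cases end with ``should promote'' and ``will be required,'' with no mechanism that actually forces layers in different mod-$p_k$ classes to balance. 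Knowing which of $\Phi_{p_k}$, $\Phi_{p_k^2}$ divides $A$, the multiplicity-$p_k$ relation $\bar A\bar B\equiv p_k\,\frac{X^{N_k}-1}{X-1}$, and a single pair at distance $M/p_k^2$ used as a ``saturating-set anchor'' do not, as stated, yield any comparison between the layers $a^{(s)}$ and $a^{(s')}$, so the proof is not closed and it is far from clear it can be closed along these lines without new input.

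The missing idea is to argue on $B$ rather than on $A$. If $\Phi_{N_k}\nmid A$, then $\Phi_{N_k}\mid B$, and your own divisor exclusions say precisely that $N_k/p_k\notin\Div_{N_k}(B)$ (both lifts $M/p_k$ and $M/p_k^2$ are excluded). Lemma \ref{gen_top_div_mis} (missing top difference implies fibering, applicable since $M$ is odd and $B$ is a set) then forces $B$ to be $N_k$-fibered in the $p_i$ or $p_j$ direction on each $D(N_k)$-grid; any such fiber puts a difference with $(b-b',M)\in\{M/p_i,M/p_ip_k\}$ (or the analogue with $j$) into $\Div(B)$, which is impossible because under (DB2) condition (\ref{db-e1000}) holds and hence $\Div(B)\cap\{D(M)\,|\,m\,|\,M\}=\{M\}$. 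This contradiction gives $\Phi_{N_k}\mid A$ in a few lines; it is this duality step (cyclotomic divisibility of $B$ plus the fibering lemma), absent from your outline, that carries the proof.
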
 

\begin{proof}
Assume for contradiction that $\Phi_{N_k}|B$, and apply Lemma \ref{gen_top_div_mis} to $B$, on scale $N=N_k$. Since $N_k,N_k/p_k\not\in\Div(B)$, it follows that $B$ is $N_k$-fibered in one of the $p_i$ and $p_j$ directions. However, that is impossible, given that $\Div(B)\cap\{D|m|M\}=\{M\}$ due to (\ref{db-e1000}).
\end{proof}

We now begin the proof of Theorem \ref{db-theorem} under the assumption (DB2).
By Lemma \ref{db2-lemma1}, it suffices to consider the case when
 \begin{equation}\label{db2-e00}
(A\cap\Lambda)\setminus(\{a_0\}\cup (I\times J\times K))=\emptyset.
\end{equation}  
Let $x\in I\times J^c\times K$, so that in particular $x\not\in A$. 
Fix $b\in B$. Since $\bbA_{M/p_j}[x]\geq 2$, by Corollary \ref{bispan-corollary} (ii) we have $A_{x,b}\subset\Pi(x,p_j^2)$. Taking also into account that, by (\ref{bispan}), we must have $A_{x,b}\subset \Bispan(x,a_0)$, we get
$$
A_{x,b}\subset \ell_i(x)\cup\ell_i(a_0)\cup\ell_k(x)\cup\ell_k(a_0).
$$
We claim that $A_{x,b}$ cannot intersect more than one of the above lines.
Indeed, by (\ref{db-e1000}) we have
\begin{itemize}
\item $A_{x,b}\cap\ell_i(x)\neq\emptyset$ implies $\bbA_{M/p_i^2}[x]\bbB_{M/p_i^2}[b]>0$, hence $M/p_i^2p_k\in \Div(A)$ and $M/p_i^2\in \Div(B)$.
\item $A_{x,b}\cap\ell_k(x)\neq\emptyset$ implies $\bbA_{M/p_k^2}[x]\bbB_{M/p_k^2}[b]>0$, hence $M/p_ip_k^2\in \Div(A)$ and $M/p_k^2\in \Div(B)$.
\item $A_{x,b}\cap\ell_i(a_0)\neq\emptyset$ implies $\bbA_{M/p_i^2p_k}[x|\ell_i(a_0)]\bbB_{M/p_i^2p_k}[b]>0$, hence $M/p_i^2\in \Div(A)$ and $M/p_i^2p_k\in \Div(B)$.
\item $A_{x,b}\cap\ell_k(a_0)\neq\emptyset$ implies $\bbA_{M/p_ip_k^2}[x|\ell_k(a_0)]\bbB_{M/p_ip_k^2}[b]>0$, hence $M/p_k^2\in \Div(A)$ and $M/p_ip_k^2\in \Div(B)$.
\end{itemize}
It is then easy to check that $A_{x,b}$ cannot have nonempty intersection with any two of the above lines, either 
by Lemma \ref{triangles} or due to direct divisor conflict.

\begin{lemma}\label{db2-lemma3}
Assume (DB2) and (\ref{db2-e00}), and let $x$ be as above. Then $A_{x,b}\cap\ell_k(a_0)=\emptyset$.
\end{lemma}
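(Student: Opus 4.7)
The proof will proceed by contradiction. Suppose there is a $b\in B$ with $A_{x,b}\cap\ell_k(a_0)\ne\emptyset$. By the mutual-exclusivity analysis in the discussion preceding the lemma, this upgrades to $A_{x,b}\subset\ell_k(a_0)$, and the fourth bullet immediately gives $M/p_k^2=p_i^2p_j^2\in\Div(A)$ (witnessed by $a_0$ paired with any $a\in A\cap\ell_k(a_0)$ having $\pi_k(a)\not\equiv 0\pmod{p_k}$) together with $M/p_ip_k^2=p_ip_j^2\in\Div(B)$. Applying Lemma~\ref{db2-lemma2} with $\nu=k$ then forces $\Phi_{N_k}\mid A$, where $N_k=M/p_k$.

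Unpacking $A_{x,b}\subset\ell_k(a_0)$: for $a\in\ell_k(a_0)$ the divisor $(x-a,M)$ can only be $p_ip_j^2p_k^{e_k}$ for $e_k\in\{0,1,2\}$, and the values for $e_k=1,2$ are top divisors excluded from $\Div(B)$ by (\ref{db-e1000}). Thus every $a\in A_{x,b}$ satisfies $(x-a,M)=p_ip_j^2$; box-product saturation then yields $\bbA_{p_ip_j^2}[x]\cdot\bbB_{p_ip_j^2}[b]=\phi(p_ip_k^2)$, and the hypothesis forces every $a\in A$ with $(x-a,M)=p_ip_j^2$ onto $\ell_k(a_0)\setminus\Lambda$ (such an $a$ automatically lies in $A_{x,b}$ once $p_ip_j^2\in\Div(B)$).

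The contradiction will come from combining $\Phi_{N_k}\mid A$ with the mod-$N_k$ weight profile of $A\cap\Lambda$ in $\ZZ_{N_k}=\ZZ_{p_i^2}\oplus\ZZ_{p_j^2}\oplus\ZZ_{p_k}$: on the subgrid $p_i\ZZ_{p_i^2}\times p_j\ZZ_{p_j^2}\times\{0\}$, the weight equals $p_k-1$ on each point of $I\times J$, equals $1$ at the corner $(p_il_i^c,p_jl_j^c,0)$ (from $a_0$), and vanishes elsewhere. Choosing an $N_k$-cuboid $\Delta$ with $\pi_k=0$ vertices at $(p_il_i^c,p_jl_j^c,0)$, $(p_il^*,p_jl_j^c,0)$, $(p_il_i^c,p_jl^\dagger,0)$, $(p_il^*,p_jl^\dagger,0)$ for $l^*\in I\setminus\{l_i\}$ and $l^\dagger\in J$ gives an $A\cap\Lambda$ contribution of $1-0-0+(p_k-1)=p_k\ne 0$. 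For the cuboid balance demanded by $\Phi_{N_k}\mid A$, the $A\setminus\Lambda$ weights at the four $\pi_k\ne 0$ vertices must sum with signs to $-p_k$; the hypothesis $A_{x,b}\subset\ell_k(a_0)$ kills the weight at $(p_il^*,p_jl_j^c,m)$ for every $m\ne 0$ (a nonzero weight there yields an $A$-element at distance $p_ip_j^2$ from $x$ off $\ell_k(a_0)$), and aggregating the resulting cuboid identities over $m\in\ZZ_{p_k}\setminus\{0\}$ and $l^\dagger\in J$ together with the plane bounds $|A\cap\Pi(x,p_j^2)|\leq p_ip_k$, $|B\cap\Pi(b,p_j^2)|\leq p_ip_k$ and the budget $|A|=p_ip_jp_k$ should overdetermine the available $A\setminus\Lambda$ weights and produce the contradiction.

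The main obstacle is this last step: a single cuboid identity is not restrictive enough by itself, since each of the three surviving $\pi_k\ne 0$ vertices can carry up to $p_k$ preimages. The contradiction must therefore be extracted from a simultaneous system of cuboid balances — ranging over $m$ and $l^\dagger$ — combined with the plane-bound and saturation estimates, and one must check that this aggregate system is infeasible under the size constraint $|A|=p_ip_jp_k$ and the lower bound $\bbA_{p_ip_j^2}[x]\geq(p_i-1)(p_k-1)$ coming from the plane bound applied to $B$.
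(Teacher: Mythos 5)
Your opening moves are sound and match the paper: from $A_{x,b}\cap\ell_k(a_0)\neq\emptyset$ you correctly extract $M/p_k^2\in\Div(A)$ and $M/p_ip_k^2\in\Div(B)$, invoke Lemma \ref{db2-lemma2} to get $\Phi_{N_k}|A$, and pin down that every element of $A_{x,b}$ sits on $\ell_k(a_0)$ at distance $M/p_ip_k^2$ from $x$, with the saturation $\bbA_{M/p_ip_k^2}[x|\ell_k(a_0)]\,\bbB_{M/p_ip_k^2}[b]=\phi(p_ip_k^2)$. But the proof is not finished: your contradiction is only a plan (``should overdetermine \dots one must check that this aggregate system is infeasible''), and the aggregation of $N_k$-cuboid balances over $m$ and $l^\dagger$ together with plane bounds and $|A|=p_ip_jp_k$ is exactly the step you have not carried out. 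There is also reason to doubt it can be closed by counting alone: the true obstruction depends on whether $p_i<p_k$ or $p_i>p_k$, and in the latter case the decisive constraint lives on the $B$ side (divisor exclusion), which your mass estimates on $A$ do not see.

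The paper's mechanism, which is the missing idea, uses the \emph{combined} divisibility $\Phi_M\Phi_{N_k}|A$, i.e.\ nullity with respect to cuboids of type $(M,(1,1,2),1)$, rather than plain $N_k$-cuboids. Take such a cuboid with one vertex at $a_0$, one at $x_j\in I^c\times J\times K^c$ with $(x_j-a_0,M)=M/p_j$, and one at $z\in\ell_k(x)$ with $(x-z,M)=M/p_k^2$. By (\ref{db2-e0}) and (\ref{db2-e00}) the only in-plane vertex in $A$ is $a_0$, so balancing forces one of the two remaining negative vertices $z_i,z_j$ into $A$; the option $z_j\in A$ is killed because it would place $M/p_ip_k^2\in\Div(A)$, conflicting with $M/p_ip_k^2\in\Div(B)$ (your own saturation). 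Hence $z_i\in\ell_k(a_0)\cap A$ for every such $z$, giving $\bbA_{M/p_k^2}[a_0]=\phi(p_k^2)$. The finish is then a clean dichotomy: if $p_i<p_k$ this violates Lemma \ref{planebound}, while if $p_i>p_k$ the saturation identity forces $\bbB_{M/p_ip_k^2}[b]=\phi(p_i)$, which cannot hold without introducing $M/p_i$ or $M/p_ip_k$ into $\Div(B)$, contradicting (\ref{db-e1000}). Your single correct structural observation in the final step (that the hypothesis kills the weight over the vertices $(p_il^*,p_jl_j^c,m)$, $m\neq0$) is a shadow of this divisor-conflict argument, but without the scale-$M$ cuboids anchored at $a_0$ and without the $p_i$ versus $p_k$ case split on the $B$ side, the contradiction is not established.
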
 

\begin{proof}
Assume for contradiction that 
\begin{equation}\label{db2-e2}
A_{x,b}\subset\ell_k(a_0).
\end{equation}
Then $M/p_k^2\in \Div(A)$, hence by Lemma \ref{db2-lemma2} we have $\Phi_{M/p_k}|A$. We are also assuming as part of (DB2) that $\Phi_M|A$. It follows that $A$ is null with respect to all cuboids of type $(M, (1,1,2),1)$. 

Consider any such cuboid with one vertex at $a_0$, a second one at a point $x_j\in I^c\times J\times K^c$ so that $(x_j-a_0,M)=M/p_j$, and a third one at a point $z\in \ell_k(x)$ with $(x-z,M)=M/p_k^2$. By (\ref{db2-e00}),
none of the cuboid vertices in the plane $\Pi(a_0,p_k^2)$ except for $a_0$ are in $A$. In order to balance this cuboid, one of the vertices $z_i$ and $z_j$ must be in $A$, where $(z_i-z,M)=M/p_i$ and $(z_j-z,M)=M/p_j$. However, if $z_j\in A$, then $M/p_ip_k^2 \in \Div(\{z_j\},I\times J\times K)\subset\Div(A)$, which contradicts (\ref{db2-e2}) due to divisor conflict. It follows that $z_i\in A$.

Repeating this argument for all $z\in \ell_k(x)$ with $(x-z,M)=M/p_k^2$, we get that
\begin{equation}\label{db2-e3}
\bbA_{M/p_k^2}[a_0]=\phi(p_k^2).
\end{equation}
If $p_i<p_k$, then we must in fact have $p_i\leq p_k-2$ (since both are odd primes), so that $\phi(p_k^2)=p_k(p_k-1)>p_ip_k$. In this case, (\ref{db2-e3}) contradicts Lemma \ref{planebound}.

Assume now that $p_i>p_k$. In this case, (\ref{db2-e2}) and (\ref{db2-e3}) imply that 
\begin{align*}
1&=\frac{1}{\phi(p_ip_k^2)} \bbA_{M/p_ip_k^2}[x|\ell_k(a_0)] \bbB_{M/p_ip_k^2}[b]
\\
&= \frac{1}{\phi(p_ip_k^2)} \bbA_{M/p_k^2}[a_0] \bbB_{M/p_ip_k^2}[b]\\
&= \frac{1}{\phi(p_i)}  \bbB_{M/p_ip_k^2}[b].
\end{align*}
But when $p_i>p_k$, this is not possible without introducing $M/p_i$ or $M/p_ip_k$ as divisors of $B$, which would contradict (\ref{db-e1000}).
\end{proof}

Repeating Lemmas \ref{db2-lemma2} and \ref{db2-lemma3} with $i$ and $k$ interchanged, we get that for each $b\in B$ we must have one of
$$
A_{x,b}\subset\ell_i(x) \hbox{ or } A_{x,b}\subset\ell_k(x).
$$
Now let $x'\in (I\times J\times K^c)\cap \Pi(x,p_i^2)$. The same analysis, with the $p_j$ and $p_k$ directions interchanged, shows that 
for each $b\in B$ we must have one of
$$
A_{x',b}\subset\ell_i(x') \hbox{ or } A_{x',b}\subset\ell_j(x').
$$
If $A_{x,b}\subset\ell_k(x)$ and $A_{x',b'}\subset\ell_j(x')$ for some $b,b'\in B$, we use the same fiber crossing argument as in the case (DB1). By Lemma \ref{1dim_sat-cor}, $A$ must contain an $M$-fiber in the $p_k$ direction at distance $M/p_k^2$ from $x$, as well as an $M$-fiber in the $p_j$ direction at distance $M/p_j^2$ from $x'$, both of them in the plane $\Pi:=\Pi(x,p_i^2)$ and disjoint from each other. The same plane also contains $(p_j-1)(p_k-1)$ elements of $I\times J\times K$. Therefore
$$
|\Pi\cap A|\geq (p_j-1)(p_k-1)+p_j+p_k=p_jp_k+1>p_jp_k,
$$
which contradicts Lemma \ref{planebound}.

It follows that either $A_{x,b}\subset\ell_i(x) $ for all $b\in B$, or else $A_{x',b}\subset\ell_i(x')$ for all $b\in B$. An application of Lemma \ref{db2-lemma0} concludes the proof of the theorem.

%%%%%%%%%%%%%%%%%%%%%%%%%%%%%%%%%%%%%%

\section{Corners}\label{corner-section}

%%%%%%%%%%%%%%%%%%%%%%%%%%%%%%%%%%%%%%

In this section we address the extended corner structure, defined in Definition \ref{corner}.
We will assume the following.

\medskip\noindent
{\bf Assumption (C):} Assume that $A\oplus B=\ZZ_M$, where $M=p_i^{n_i}p_j^{n_j}p_k^{n_k}$, $|A|=p_ip_jp_k$, 
and $\Phi_M|A$. Moreover, assume that $A$ contains a $p_i$ extended corner in the sense of Definition \ref{corner} on a $D(M)$-grid $\Lambda$, that is, there exist $a,a_i\in A\cap\Lambda$
 with $(a-a_i,M)=M/p_i$ such that
\begin{itemize}
\item $A\cap(a*F_j*F_k)$ is $M$-fibered in the $p_j$ direction but not in the $p_k$ direction,
\item $A\cap(a_i*F_j*F_k)$ is $M$-fibered in the $p_k$ direction but not in the $p_j$ direction.
\end{itemize}
In particular, we may choose $a,a_i$ so that
%\begin{equation}\label{corner-e1}
$$
a*F_j\subset A,\ \ a_i*F_k\subset A,
$$
%\end{equation}
%\begin{equation}\label{corner-e2}
$$
a_i*F_j\not\subset A,\ \ a*F_k\not\subset A.
$$
%\end{equation}
We fix such $a,a_i$ for the rest of this section.

\medskip

The following theorem is our main result for the extended corner structure.

\begin{theorem}\label{cornerthm}
Assume that (C) holds with $n_j=n_k=2$. 
Then the tiling $A\oplus B=\ZZ_M$ is T2-equivalent to $\Lambda\oplus B=\ZZ_M$ via fiber shifts.
By Corollary \ref{get-standard}, both $A$ and $B$ satisfy (T2).
\end{theorem}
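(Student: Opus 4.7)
The plan is to apply the Fiber-Shifting Lemma (Lemma~\ref{fibershift}) finitely many times to build a chain of $T2$-equivalent tilings $A \oplus B \to A_1 \oplus B \to \cdots \to \Lambda \oplus B = \ZZ_M$. After translating I take $a = (0,0,0)$ in array coordinates, so that $a_i = (-p_i,0,0)$ and $\Lambda$ is the subgroup of $\ZZ_M$ of order $p_ip_jp_k$ containing $a$. Because $a*F_j, a_i*F_k \subset A$ and $a - a_i = M/p_i$, divisor exclusion gives $M/p_\nu \notin \Div(B)$ for every $\nu\in\{i,j,k\}$, so in particular hypothesis~(\ref{notopdivB}) holds. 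Introduce
$$
K_a := \{l \in \ZZ_{p_k} : (a + l M/p_k)*F_j \subset A\}, \qquad J_{a_i} := \{l \in \ZZ_{p_j} : (a_i + lM/p_j)*F_k \subset A\};
$$
by assumption (C) each is a proper subset containing $0$.

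Fix $l_k \in \ZZ_{p_k}\setminus K_a$ and set $x := a + l_k M/p_k \in \Lambda\setminus A$ and $\hat a := a_i + l_k M/p_k \in a_i*F_k \subset A$. Since $(x-a,M) = M/p_k$ and $(x-\hat a, M) = M/p_i$, intersecting the bispan restrictions from $a$ and $\hat a$ yields
$$
A_x \;\subset\; \bigl(\Pi(x,p_k^2)\cup\Pi(a,p_k^2)\bigr)\cap\bigl(\Pi(a,p_i^2)\cup\Pi(a_i,p_i^2)\bigr) \;=\; \ell_j(x)\cup\ell_j(\hat a)\cup\ell_j(a)\cup\ell_j(a_i),
$$
and $a\notin A_x$ because $M/p_k\notin\Div(B)$. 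The central technical claim is that in fact $A_x \subset \ell_j(x)$. For any $b\in B$, Lemma~\ref{triangles} forbids $A_{x,b}$ from meeting two different lines among the four (their pairwise mixings would force incompatible divisor pairs in both $A$ and $B$); the plane bound (Lemma~\ref{planebound}) applied to $\Pi(a,p_i^2)$, which already carries $a*F_j$ together with the extra fibers indexed by $K_a$, rules out $A_{x,b}\subset\ell_j(a)$; the parallel bound on $\Pi(a_i,p_i^2)$ combined with the hypothesis that $A\cap(a_i*F_j*F_k)$ is $p_k$-fibered and not $p_j$-fibered rules out $\ell_j(a_i)$ and $\ell_j(\hat a)$. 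Once $A_x\subset\ell_j(x)$ is established, Lemma~\ref{1dim_sat-cor} produces a $(1,2)$-cofibered structure in the $p_j$ direction with an $M$-cofiber $F\subset A$ at distance $M/p_j^2$ from $x$, and Lemma~\ref{fibershift} shifts $F$ onto $\ell_j(x)$, yielding a $T2$-equivalent tiling whose tile now contains $x*F_j$.

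Iterating this step across all $l_k \in \ZZ_{p_k}\setminus K_a$ produces a $T2$-equivalent tile $A'$ with $a*F_j*F_k \subset A'$; the symmetric procedure driven by $J_{a_i}$, shifting $p_k$-direction fibers to cover the missing points $a_i + l M/p_j$ for $l \notin J_{a_i}$, then fills $a_i*F_j*F_k \subset A'$. With both extremal $p_i$-layers of $\Lambda$ fully contained in $A'$, I propagate to the remaining $p_i$-layers by repeating the saturating-set analysis at any $x\in\Lambda\setminus A'$, with the recently filled layers playing the role of $a*F_j$ and $a_i*F_k$ in the bispan computation; Lemma~\ref{smallcube} (no missing joints) closes any residual gaps. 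The cardinality equality $|A'| = |A| = |\Lambda|$ then forces $A' = \Lambda$. Since every step preserved T2-equivalence, the original tiling is T2-equivalent to $\Lambda\oplus B$, and Corollary~\ref{get-standard} delivers (T2) for both $A$ and $B$.

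\textbf{Main obstacle.} The technical heart is the single-line claim $A_x \subset \ell_j(x)$: a priori all four candidate lines are compatible both with the bispan restrictions and with the tiling identity $\langle \bbA[x], \bbB[b]\rangle = 1$. Excluding the three alternative lines requires exploiting the asymmetry built into the extended corner (fibered in $p_j$ through $a$ but in $p_k$ through $a_i$), combined with the plane bound and the missing top divisors in $B$. A secondary concern is that iterated fiber shifts must not destroy the bispan hypotheses needed for later steps; this is ensured because each shift modifies $A$ along a single fiber transverse to the data anchoring subsequent bispan computations, namely the unchanged vertex $a_i+l_k M/p_k$ and its $p_k$-fiber partners.
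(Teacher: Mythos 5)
Your high-level strategy (collapse saturating sets to a line, invoke Lemma \ref{1dim_sat-cor} and Lemma \ref{fibershift}, fill $\Lambda$) is indeed the paper's strategy, but the step that carries all the difficulty is asserted rather than proved, and it cannot be proved with the tools you cite. Your central claim is $A_x\subset\ell_j(x)$ for $x=a+l_kM/p_k\notin A$, with the competing lines $\ell_j(a),\ell_j(\hat a),\ell_j(a_i)$ excluded by the plane bound and the corner asymmetry. This does not work: the scenario $A_{x,b}\subset\ell_j(a)$ is numerically consistent with everything you have assumed, since the fiber $a*F_j$ already gives $\bbA_{M/p_jp_k}[x\,|\,\ell_j(a)]=\phi(p_j)$, and if $\bbB_{M/p_jp_k}[b]=\phi(p_k)$ then $\langle\bbA[x],\bbB[b]\rangle$ is exactly saturated inside $\ell_j(a)$ without placing a single extra point in $\Pi(a,p_i^2)$, so Lemma \ref{planebound} detects nothing. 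Likewise, excluding $\ell_j(\hat a)$ and $\ell_j(a_i)$ requires knowing that divisors such as $M/p_ip_j^2$ and $M/p_ip_j^2p_k$ are unavailable to $B$, or that $\bbA_{M/p_j^2}[\hat a]=0$; neither follows from (C) — in fact Lemma \ref{sizediv} shows that when $\Phi_M\Phi_{M/p_j}|A$ one has $\bbA_{M/p_j^2}[a]+\bbA_{M/p_j^2}[a_k]\ge\phi(p_j^2)$, so such elements are typically present. This is precisely why the paper does not run a saturating-set argument from the corner alone: it first proves $\Phi_{N_j}\Phi_{N_k}\nmid A$ (Corollary \ref{N_jN_knot}), splits into cases (C1)--(C3) according to which of $\Phi_{N_j},\Phi_{N_k}$ divides $B$ and with the normalization $p_k<p_j$, applies Lemma \ref{gen_top_div_mis} to $B$ on the scale $N_k$ (resp.\ $N_j$) to fiber $B$, eliminates the alternative via the bound $\bbB_{M/p_jp_k}[b]\le\phi(p_k)<\phi(p_j)$, and in case (C3) needs the long Proposition \ref{N_j->N_k} to force $\Phi_{N_k}|B$. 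Your proposal has no substitute for this input coming from $B$'s cyclotomic divisors and the prime-size comparison; indeed, which direction of shift is available at a given point is dictated by that data, so your ``symmetric procedure'' filling both extremal layers cannot be run as stated.

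There is also a concrete false claim at the outset: (\ref{notopdivB}) does not follow from (C). The extended corner forces every top divisor into $\Div(A)$ except $M/p_jp_k$ (this is (\ref{cornerdiv})), and whether $M/p_jp_k\in\Div(B)$ is exactly the dividing line between the paper's cases (it is excluded by hypothesis in (C2) but can genuinely occur otherwise); hence your early appeals to Lemma \ref{smallcube} and, implicitly, Lemma \ref{flatcorner} are unjustified. Finally, the endgame is under-specified: after one layer is filled, the remaining elements of the tile need not lie in $\Lambda$ at all, so ``repeating the bispan computation and closing gaps with Lemma \ref{smallcube}'' does not reach $A'=\Lambda$. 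The paper instead deduces $\Phi_{p_i^{n_i}}|A'$ and $A'\subset\Pi(a,p_i^{n_i-1})$ (Corollary \ref{wniosekwniesiony}), shows via Lemma \ref{flat-cuboids} and (\ref{Bdivfinal}) that each parallel plane is $M$-fibered in a single direction, and then aligns those fibers by further applications of Lemma \ref{fibershift}.
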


We first note that when $A$ contains a $p_i$ corner, 
\begin{equation}\label{cornerdiv}
\{D(M)|m|M\}\setminus \Div(A)\subseteq\{M/p_jp_k\}
\end{equation}

\begin{lemma} [\bf{Size-Divisor Lemma}] \label{sizediv}
Assume that $A\subset \ZZ_M$, and that there exist $a,a_i\in A$
 with $(a-a_i,M)=M/p_i$ such that  
%\begin{equation}\label{corner-minus}
$$
a*F_j\subset A,\ \ a_i*F_k\subset A,\ \ a*F_k\not\subset A.
$$
%\end{equation}
(In particular, this is true if (C) holds.)
Suppose that $\Phi_M \Phi_{M/p_j}|A$. Then we have the following.

(i) For all $a_k\in A$ such that $(a_k-a_i,M)=M/p_k$ and $\bbA_{M/p_i}[a_k|\ell_k(a)]=0$, we have
$$
\bbA_{M/p_j^{2}}[a]+\bbA_{M/p_j^{2}}[a_k]\geq\phi(p_j^{2}),
$$

(ii) $M/p_j^{2},\,M/p_ip_j^{2} p_k\in \Div(A)$. Moreover, if $\bbA_{M/p_j^{2}}[a]>0$, then $M/p_ip_j^{2}\in \Div(A)$, and if $\bbA_{M/p_j^{2}}[a]<\phi(p_j^{2})$, then $M/p_j^{2} p_k\in \Div(A)$.

\end{lemma}

\begin{proof}
Under the assumptions of the lemma, $A$ is $\calt$-null with respect to the cuboid type $\calt=(M,\vec{\delta},1)$, where $\vec{\delta}=(1,2,1)$. Now the lemma follows by considering all possible $\calt$ cuboids with vertices at $a,a_i,a_k$ and $x_k$, where $x_k\not\in A$ satisfies $(x_k-a,M)=M/p_k$ and $(x_k-a_k,M)=M/p_i$.
\end{proof}

\begin{corollary}\label{N_jN_knot}
Assume (C).
Then $\Phi_{N_j}\Phi_{N_k}\nmid A$.
\end{corollary}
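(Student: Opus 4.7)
The plan is to argue by contradiction. Suppose $\Phi_{N_j}\Phi_{N_k}\mid A$. Combined with $\Phi_M\mid A$ from (C), this gives $\Phi_M\Phi_{M/p_j}\mid A$ and $\Phi_M\Phi_{M/p_k}\mid A$, so Lemma \ref{sizediv} is applicable in two complementary forms.

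First I would apply Lemma \ref{sizediv} in its stated form. Since $a*F_k\not\subset A$, there exists $t\in\{1,\dots,p_k-1\}$ with $a+tM/p_k\notin A$; setting $a_k:=a_i+tM/p_k$ yields a point of $A$ (from $a_i*F_k\subset A$) satisfying $(a_k-a_i,M)=M/p_k$ and $\bbA_{M/p_i}[a_k\,|\,\ell_k(a)]=0$, because the unique candidate $a+tM/p_k\in\ell_k(a)$ is missing from $A$. Lemma \ref{sizediv}(i) then delivers $\bbA_{M/p_j^2}[a]+\bbA_{M/p_j^2}[a_k]\geq\phi(p_j^2)$. The extended corner in (C) is symmetric under the simultaneous swap $(a,a_i,j,k)\leftrightarrow(a_i,a,k,j)$, so I would reapply Lemma \ref{sizediv} to the swapped configuration, using $\Phi_M\Phi_{M/p_k}\mid A$. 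Choosing $s$ with $a_i+sM/p_j\notin A$ (available since $a_i*F_j\not\subset A$) and setting $a_j:=a+sM/p_j\in A$, the symmetric conclusion reads $\bbA_{M/p_k^2}[a_i]+\bbA_{M/p_k^2}[a_j]\geq\phi(p_k^2)$.

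Next I would count points of $A$ in the two planes $\Pi_a:=\Pi(a,p_i^2)$ and $\Pi_{a_i}:=\Pi(a_i,p_i^2)$, each of which contains at most $p_jp_k$ elements of $A$ by Lemma \ref{planebound}. In $\Pi_a$, the three subsets $a*F_j\subset A$, the $\bbA_{M/p_j^2}[a]$ elements of $A\cap\ell_j(a)$ at distance $M/p_j^2$ from $a$, and the $\bbA_{M/p_k^2}[a_j]$ elements of $A\cap\ell_k(a_j)$ at distance $M/p_k^2$ from $a_j$ are pairwise disjoint: the first two sit on $\ell_j(a)$ at different distances from $a$, while the third sits on $\ell_k(a_j)$, which meets $\ell_j(a)\cup(a*F_j)$ only at $a_j$, and $a_j$ is excluded from it. This forces $p_j+\bbA_{M/p_j^2}[a]+\bbA_{M/p_k^2}[a_j]\leq p_jp_k$, and the parallel count in $\Pi_{a_i}$ yields $p_k+\bbA_{M/p_j^2}[a_k]+\bbA_{M/p_k^2}[a_i]\leq p_jp_k$. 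Adding these and substituting the two lower bounds from Lemma \ref{sizediv}(i) collapses to $p_j^2+p_k^2\leq 2p_jp_k$, i.e., $(p_j-p_k)^2\leq 0$, which contradicts $p_j\neq p_k$.

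The only delicate point is the symmetric reapplication of Lemma \ref{sizediv}: one must verify that the interchange $a\leftrightarrow a_i$, $j\leftrightarrow k$ preserves all of its hypotheses under (C), and then pick the correct missing element of $a_i*F_j$ to validate the vanishing condition. Once both instances of Lemma \ref{sizediv}(i) are in hand, the arithmetic collapse through Lemma \ref{planebound} is routine.
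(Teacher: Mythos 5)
Your proof is correct and follows essentially the same route as the paper: two applications of Lemma \ref{sizediv}(i) (in the $p_j$ and $p_k$ directions, using the symmetry of the extended corner), followed by a count of elements in the planes $\Pi(a,p_i^{n_i})$ and $\Pi(a_i,p_i^{n_i})$ that yields $p_j^2+p_k^2\leq 2p_jp_k$, contradicting Lemma \ref{planebound}. The only difference is presentational: you make explicit the choice of the witnesses $a_k$ and $a_j$ validating the hypotheses of Lemma \ref{sizediv}(i) and sum two per-plane bounds, whereas the paper bounds the union of the two planes and takes a maximum.
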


\begin{proof} 
Assume that $\Phi_{N_j}\Phi_{N_k}|A$. By Lemma \ref{sizediv} ($i$), we have
$$
|A\cap(\Pi(a,p_i^{n_i})\cup\Pi(a_i,p_i^{n_i}))|\geq  p_j+\phi(p_j^2)+p_k+\phi(p_k^2)=p_j^2+p_k^2>2p_jp_k
$$
hence $\max\{|A\cap\Pi(a,p_i^{n_i})|,|A\cap\Pi(a_i,p_i^{n_i})|\}> p_jp_k$, which contradicts Lemma \ref{planebound}. 
\end{proof}

By Corollary \ref{N_jN_knot}, it suffices to consider the following sets of assumptions.

\medskip\noindent
{\bf Assumption (C1):} Assume that (C) holds, $p_k<p_j$, and $\Phi_{N_k}|B$.

\medskip\noindent
{\bf Assumption (C2):} Assume that (C) holds,  $p_k<p_j$, $\Phi_{N_j}|B$, and 
\begin{equation}\label{dzielnikzabrany}
M/p_jp_k\notin \Div(B).
\end{equation}
(In particular, (\ref{dzielnikzabrany}) holds if
either $\bbA_{M/p_jp_k}[a]>0$ or $\bbA_{M/p_jp_k}[a_i]>0$.)

\medskip\noindent
{\bf Assumption (C3):} Assume that (C) holds, $p_k<p_j$,
$\Phi_{N_j}|B$, and 
$$\bbA_{M/p_jp_k}[a]=\bbA_{M/p_jp_k}[a_i]=0.$$

\medskip

We first prove in Section \ref{ec12} that under either of the assumptions (C1) or (C2), the conclusion of Theorem \ref{cornerthm} holds. We then prove in Section \ref{ec3} that if (C3) holds, then we must also have $\Phi_{N_k}|B$,
so that we may return to (C1) to complete the proof.

\subsection{Cases (C1) and (C2)}\label{ec12}
The first two cases are similar and will be considered together. 

\begin{lemma}\label{Bfiber} 
Assume that (C1) holds with $n_k=2$.
Then:

\smallskip
(i) Every element of $B$ belongs to an $N_k$-fiber in $B$ in either the $p_j$ or $p_k$ direction
(not necessarily the same for all $b$). 

\smallskip
(ii) Furthermore, suppose that there is a $b\in B$ that does not belong to an $N_k$-fiber in the $p_j$ direction in $B$.
Then for all $x_j\in \ZZ_M\setminus A$ such that $(a_i-x_j,M)=M/p_j$ we have $A_{x_j,b}\subset\ell_k(x_j)$,
with the product saturated by an $N_k$-fiber in $B$ in the $p_k$ direction, rooted at $b$, and 
an $M$-cofiber in $A$ at distance $M/p_k^2$ from $x_j$.

\smallskip\noindent
Moreover, this lemma does not require the assumption $p_k<p_j$, therefore the same conclusions with $j$ and $k$ interchanged hold under the assumptions (C2) and (C3).

\end{lemma}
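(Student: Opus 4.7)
My plan is to treat parts (i) and (ii) in sequence, both leveraging the reduction $\tilde B := B \bmod N_k \in \calm(\ZZ_{N_k})$ together with $\Phi_{N_k}\mid B$.

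For part (i), I first verify the binary condition (\ref{bin_cond}) for $\tilde B$ with $c_0 = 1$: if $b_1\neq b_2\in B$ reduced to the same class modulo $N_k=M/p_k$, then $(b_1-b_2,M)=M/p_k$, which lies in $\Div(A)$ since $a_i\ast F_k\subset A$, contradicting divisor exclusion. Next, $N_k/p_i\notin\Div_{N_k}(\tilde B)$: a pair of elements of $B$ realizing this $N_k$-divisor must have their $M$-divisor in $\{M/p_i,M/(p_ip_k)\}$, and the extended corner structure places both values in $\Div(A)$ (the first from $(a,a_i)$, the second from cross differences between $a\ast F_j$ and $a_i\ast F_k$ with matching $p_j$-coordinate). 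Since $p_i$ is odd, Lemma \ref{gen_top_div_mis}(i) applied to each $D(N_k)$-grid forces $\tilde B$ to be $N_k$-fibered in either the $p_j$ or $p_k$ direction on that grid, and by binarity these lift to genuine $N_k$-fibers in $B$, yielding (i).

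For part (ii), let $b$ satisfy the hypothesis; by (i), $b$ lies in an $N_k$-fiber $G\subset B$ in the $p_k$ direction, and since $n_k=2$, each $g\in G\setminus\{b\}$ satisfies $(g-b,M)=M/p_k^2$. Fix $x_j\in\ZZ_M\setminus A$ with $(a_i-x_j,M)=M/p_j$. Using $\Bispan(x_j,a_i)=\Pi(x_j,p_j^{n_j})\cup\Pi(a_i,p_j^{n_j})$ and intersecting with $\Bispan(x_j,a')$ for $a'\in a\ast F_j\subset A$ (which, as $a'$ varies, contributes divisors $M/p_i$ when $\pi_j(a')=\pi_j(x_j)$ and $M/(p_ip_j)$ otherwise), I reduce $A_{x_j}$ to a union of a few $p_k$-lines; the lines $\ell_k(a)$ and $\ell_k(a_i)$ are eliminated by Lemma \ref{triangles} paired with the known $A$-fibers $a\ast F_j$, $a_i\ast F_k$, leaving $A_{x_j}\subset\ell_k(x_j)\cup\ell_k^{\ast}$, where $\ell_k^{\ast}$ is the $p_k$-line with $p_i$-coordinate $\pi_i(a)$ and $p_j$-coordinate $\pi_j(x_j)$. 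For $a'\in\ell_k^{\ast}$, the value $(x_j-a',M)$ must lie in $\{M/p_i,M/(p_ip_k),M/(p_ip_k^2)\}$; the first two are in $\Div(A)$, so $a'\in A_{x_j,b}$ forces $(x_j-a',M)=M/(p_ip_k^2)=(b''-b,M)$ for some $b''\in B$.

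The main obstacle is ruling out such a $b''$, and this is where the hypothesis on $b$ enters. From $b$ not being in any $N_k$-fiber of $B$ in the $p_j$ direction, there exists $l_0\in\{1,\ldots,p_j-1\}$ with $\tilde B[b+l_0 N_k/p_j]=0$. Setting $d_i:=N_k/p_i$, $d_j:=l_0 N_k/p_j$, $d_k:=N_k/p_k$ and examining the $N_k$-cuboid at $b$ with these displacements, the balance equation from $\Phi_{N_k}\mid B$ combines the vanishings $\tilde B[b+d_i]=\tilde B[b+d_j]=\tilde B[b+d_i+d_j]=0$ (divisor exclusion applied to the divisors $M/p_i$, $M/(p_ip_k)$, $M/(p_ip_j)$, $M/(p_ip_jp_k)$, all in $\Div(A)$) with $\tilde B[b]=\tilde B[b+d_k]=1$ (from $G$); cascading through the analogous cuboid obtained by replacing $d_k$ by $lN_k/p_k$ for each $l\in\{1,\ldots,p_k-1\}$, together with part (i) applied at the putative $\tilde b''$, forces $\tilde B[b+d_i+d_k]=0$, precluding $b''$. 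Hence $A_{x_j,b}\subset\ell_k(x_j)$, and Lemma \ref{1dim_sat-cor} (invoked with $M/p_k\in\Div(A)$) delivers $\bbA_{M/p_k^2}[x_j]\bbB_{M/p_k^2}[b]=\phi(p_k^2)$ saturated by the declared $(1,2)$-cofiber pair. The symmetric statement under (C2) or (C3) is obtained by interchanging $j$ and $k$ throughout, using $\Phi_{N_j}\mid B$ in place of $\Phi_{N_k}\mid B$.
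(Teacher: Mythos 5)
The decisive step of part (ii) --- ruling out the alternative $A_{x_j,b}\subset\ell_k(a_i)$ --- is missing from your argument. You claim that $\ell_k(a)$ and $\ell_k(a_i)$ are ``eliminated by Lemma \ref{triangles} paired with the known $A$-fibers'', but for $\ell_k(a_i)$ this cannot work: the points of $a_i*F_k\setminus\{a_i\}$ lie on that line at distance $M/p_jp_k$ from $x_j$ and genuinely belong to $A$, so whether they belong to $A_{x_j,b}$ depends only on whether $\bbB_{M/p_jp_k}[b]>0$, which is not excluded under (C1) (for those $b$ that do lie in $N_k$-fibers of $B$ in the $p_j$ direction it typically holds, so your stated containment $A_{x_j}\subset\ell_k(x_j)\cup\ell_k^{*}$ is false for the full saturating set). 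The only pairing available to Lemma \ref{triangles} here is $m=M/p_jp_k$, $m'=M/p_k^2$, and on the $A$-side the factor $\bbA_{M/p_k^2}[x_j]>0$ is exactly the cofiber you are trying to produce; so the lemma only yields the exclusive-or between the two saturation scenarios, not the elimination of $\ell_k(a_i)$. Resolving that dichotomy is precisely where the hypothesis on $b$ must enter: if $A_{x_j,b}\subset\ell_k(a_i)$, the saturation identity forces $\bbA_{M/p_jp_k}[x_j|\ell_k(a_i)]\,\bbB_{M/p_jp_k}[b]=\phi(p_jp_k)$, hence $\bbB_{M/p_jp_k}[b]=\phi(p_j)$, which by binarity of $\bbB^{N_k}_{N_k}$ places $b$ in an $N_k$-fiber in the $p_j$ direction, contradicting its choice. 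In your write-up the hypothesis on $b$ is used only in the cuboid cascade aimed at $\ell_k^{*}$, so the main alternative is never addressed and the conclusion $A_{x_j,b}\subset\ell_k(x_j)$ is not established.

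Two further points. First, your part (i) assumes $p_i$ odd, which neither (C) nor the lemma does; the paper treats $p_i=2$ separately precisely because Lemma \ref{gen_top_div_mis} is unavailable there (it uses Lemma \ref{even_struct} plus a direct saturating-set argument), and the corner analysis is meant to cover even $M$ as well. Second, the cuboid ``cascade'' precluding $b''$ with $(b''-b,M)=M/p_ip_k^2$ is not a proof as written: the single $N_k$-cuboid balance only gives $\tilde B[b+d_i+d_k]+\tilde B[b+d_j+d_k]=\tilde B[b+d_i+d_j+d_k]$, and the announced cascade is never carried out. This step is, however, easy to repair and in fact unnecessary: the element $g\in G$ whose $p_k$-coordinate is congruent to that of $b''$ mod $p_k$ would satisfy $(b''-g,N_k)=N_k/p_i$, contradicting (\ref{rest_corn}); alternatively (the paper's route, entirely on the $A$-side), a point of $\ell_k^{*}\cap A$ at distance $M/p_ip_k^2$ from $x_j$ lies at distance $M/p_k^2$ from $a_j\in a*F_j\subset A$, contradicting $M/p_k^2\in\Div(B)$.
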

\begin{proof}
Observe that, by (\ref{cornerdiv}),
\begin{equation}\label{rest_corn}
\bbB^{N_k}_{N_k/p_i}[b]=0 \text{ for all } b\in B.
\end{equation} 
Assume first that $p_i=2$, and suppose that there is a $b_0\in B$ that does not belong to an $N_k$-fiber in either the
$p_k$ or the $p_j$ direction.
In this case, by (\ref{rest_corn}) and Lemma \ref{even_struct} with $N=N_k$ we have 
\begin{equation}\label{divres}
N_k/p_k=M/p_k^2\in \Div(B).
\end{equation}
Let $x_j\in \ZZ_M\setminus A$ with $(a_i-x_j,M)=M/p_j$, and consider the saturating set $A_{x_j}$. Let $a_j$ be the element of $A$ satisfying $(a-a_j,M)=M/p_j,(x_j-a_j,M)=M/p_i$. Apply Corollary \ref{bispan-corollary} (i) to $A_{x_j,b_0}$, once with respect to $a$ and again with respect to $a_j$, to get 
$$
A_{x_j,b_0}\subset \ell_k(x_j)\cup\ell_k(a_i)\cup\ell_k(a)\cup\ell_k(a_j).
$$
By (\ref{cornerdiv}), no top level divisors except possibly $M/p_jp_k$ are available to contribute to the product identity
$\langle \bbA[x_j], \bbB[b_0]\rangle=1$. 
Therefore, if $A_{x_j,b_0}\cap\ell_k(a_j)\neq \emptyset$, we must have
$$
\bbA_{M/p_ip_k^2}[x_j|\ell_k(a_j)]=\bbA_{M/p_k^2}[a_j]>0 
$$ 
which contradicts (\ref{divres}). Similarly $A_{x_j,b_0}\cap\ell_k(a)\neq \emptyset$ implies
$$
\bbA_{M/p_ip_jp_k^2}[x_j|\ell_k(a)]=\bbA_{M/p_k^2}[a]>0 
$$ 
which, again, contradicts (\ref{divres}). 
Next,
if $A_{x_j,b_0}\cap\ell_k(a_i)\neq \emptyset$, then by (\ref{divres})
\begin{equation}\label{linesat2}
\bbA^{N_k}_{N_k/p_j}[x_j|\ell_k(a_i)]\bbB^{N_k}_{N_k/p_j}[b_0]=\bbA_{M/p_jp_k}[x_j|\ell_k(a_i)]\bbB_{M/p_jp_k}[b_0]>0,
\end{equation}
and if $A_{x_j,b_0}\cap\ell_k(x_j)\neq \emptyset$, then 
\begin{equation}\label{linesat1}
\bbA^{N_k}_{N_k/p_k}[x_j]\bbB^{N_k}_{N_k/p_k}[b_0]=\bbA_{M/p_k^2}[x_j]\bbB_{M/p_k^2}[b_0]>0.
\end{equation}
By Lemma \ref{triangles}, we cannot have both (\ref{linesat2}) and (\ref{linesat1}) for a fixed $b_0$. Hence either 
$A_{x_j,b_0}\subset\ell_k(a_i)$ or $A_{x_j,b_0}\subset\ell_k(x_j)$. In the first case, we have
$$
\bbA_{M/p_jp_k}[x_j|\ell_k(a_i)]\bbB_{M/p_jp_k}[b_0]=\bbA_{M/p_k}[a_i]\bbB_{M/p_jp_k}[b_0]=\phi(p_jp_k);
$$ 
since $\bbA_{M/p_k}[a_i]=\phi(p_k)$, it follows that $\bbB_{M/p_jp_k}[b_0]=\bbB^{N_k}_{N_k/p_j}[b_0]=\phi(p_j)$, 
hence $b_0$ belongs to an $N_k$-fiber in the $p_j$ direction, contradicting the choice of $b_0$.
In the second case, we have
$$
\bbA_{M/p_k^2}[x_j]\bbB_{M/p_k^2}[b_0]=\phi(p_k^2);
$$
since $M/p_k\in \Div(A)$, this is only possible if $A$ and $B$ contain a configuration as in part (ii) of the lemma.
In this case, it follows that $b_0$ belongs to an $N_k$-fiber in the $p_k$ direction, contradicting our initial choice of $b_0$. This concludes the proof of (i).

We now prove (ii) in the case $p_i=2$. Assume that $b\in B$ does not belong to an $N_k$-fiber in the $p_j$ direction in $B$. By (i), it must belong to an $N_k$-fiber in the $p_k$ direction in $B$, so that in particular (\ref{divres}) holds. With this in place, we now repeat the proof of (i) until we get to $A_{x_j,b}\subset\ell_k(a_i)$ or $A_{x_j,b}\subset\ell_k(x_j)$. By the assumption on $b$, we must be in the second case for all $x_j$ chosen as above.
This concludes the proof for $p_i=2$.

%
%Assume now that there exists  (ii) fails for some $x_j$ chosen as above. Then there , then we must always be in the first case for that $x_j$, and therefore every $b\in B$ must in fact belong to an $N_k$ fiber in the $p_j$ direction, a contradiction.

Assume now that $p_i>2$. Then part (i) of the lemma follows directly from
Lemma \ref{gen_top_div_mis}, (\ref{rest_corn}) and the fact that $B$ satisfies (\ref{bin_cond}) with $N=N_k$ and $c=1$.
For (ii), suppose that there is a $b\in B$ that does not belong to an $N_k$-fiber in $B$ in the $p_j$ direction.
By (i), there is an $N_k$-fiber in $B$ in the $p_k$ direction containing $b$. In particular, this implies that (\ref{divres}) 
holds. The rest of the proof of (ii) is the same as for the $p_i=2$ case.
\end{proof}

\begin{corollary}\label{Bfibercor}

\smallskip

(i) Assume (C1) with $n_k=2$. Then the pair $(A,B)$ has a $(1,2)$-cofibered structure in the $p_k$ direction. Moreover, for each $x_j\in \ZZ_M\setminus A$ such that $(a_i-x_j,M)=M/p_j$, $A$ has a cofiber
at distance $M/p_k^2$ from $x_j$.

\smallskip

(ii) Assume (C2) with $n_j=2$, then the same conclusion holds with $k$ and $j$ interchanged.
\end{corollary}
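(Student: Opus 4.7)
The plan is to deduce both parts of the corollary from Lemma \ref{Bfiber}, the divisor restriction (\ref{cornerdiv}), and Lemma \ref{1dim_sat-cor}. The statement has two conclusions to establish: the global $(1,2)$-cofibered structure (in the $p_k$ direction for $(i)$, $p_j$ for $(ii)$) and the individual $M$-cofibers attached to each relevant $x_j$ (resp.\ $x_k$).

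For part $(i)$ under assumption (C1), I would first verify the hypothesis of Lemma \ref{Bfiber}(ii): there exists $b_0 \in B$ that is not contained in any $N_k$-fiber of $B$ in the $p_j$ direction. The argument splits along (\ref{cornerdiv}). If $M/p_jp_k \in \Div(A)$, divisor exclusion immediately forbids any $N_k$-fiber of $B$ in the $p_j$ direction, because such a fiber produces pairwise $\ZZ_M$-differences at scale $M/p_jp_k$; any $b_0 \in B$ then works. In the complementary subcase $M/p_jp_k \notin \Div(A)$, I assume for contradiction that every $b \in B$ lies in a $p_j$-direction $N_k$-fiber, so that $B$ is $N_k$-fibered in the $p_j$ direction. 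Each such fiber deposits $p_j$ elements into a single $p_i^{n_i}$-plane; combined with $\Phi_{N_k} \mid B$ and the strict inequality $p_k < p_j$, this forces at least one plane to exceed the bound $|B \cap \Pi(b, p_i^{n_i})| \leq p_jp_k$ from Lemma \ref{planebound}, giving the desired contradiction.

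Once $b_0$ exists, Lemma \ref{Bfiber}(ii) gives, for every $x_j \in \ZZ_M \setminus A$ with $(a_i - x_j, M) = M/p_j$, the inclusion $A_{x_j, b_0} \subset \ell_k(x_j)$ saturated by a $(1,2)$-cofiber pair $(F_{x_j}, G_0)$ where $F_{x_j} \subset A$ is an $M$-fiber in the $p_k$ direction at distance $M/p_k^2$ from $x_j$. This is the ``moreover'' clause of the corollary. For the global cofibered structure, I apply the same reasoning to every $b \in B$: if any $b$ lies only in a $p_j$-direction $N_k$-fiber, it re-introduces $M/p_jp_k$ as a divisor of $B$ in the configuration already excluded, so by Lemma \ref{Bfiber}(i) every $b \in B$ must lie in an $N_k$-fiber of $B$ in the $p_k$ direction. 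Hence $B$ is $N_k$-fibered in the $p_k$ direction, and combined with $a_i * F_k \subset A$ this yields the $(1,2)$-cofibered structure of Definition \ref{cofibers}.

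Part $(ii)$ follows by the symmetric statement of Lemma \ref{Bfiber} noted at the end of its statement, with $j$ and $k$ swapped. Under (C2) the explicit hypothesis $M/p_jp_k \notin \Div(B)$ takes over the role of the first subcase above, immediately forbidding $p_k$-direction $N_j$-fibers in $B$ and removing the need for a plane-bound contradiction; the rest of the argument carries across. The main obstacle is therefore the plane-bound contradiction in the second subcase of $(i)$, which relies on both $\Phi_{N_k} \mid B$ and the strict inequality $p_k < p_j$.
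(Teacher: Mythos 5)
Your part (ii) and the first subcase of part (i) (when $M/p_jp_k\in\Div(A)$, hence $\bbB_{M/p_jp_k}[b]=0$ for all $b$) are fine and agree with the paper. The gap is in the complementary subcase $M/p_jp_k\notin\Div(A)$, which is exactly the case where $M/p_jp_k$ may occur as a difference in $B$ and where the real work lies. You claim that if every $b\in B$ lay in a $p_j$-direction $N_k$-fiber, then Lemma \ref{planebound} would be violated, ``combined with $\Phi_{N_k}\mid B$ and $p_k<p_j$''. This does not follow. Each such fiber contributes $p_j$ elements of $B$ to a single plane $\Pi(\cdot,p_i^{n_i})$, there are $|B|/p_j=p_ip_k$ fibers, and each plane can accommodate up to $p_jp_k$ elements, i.e.\ $p_k$ fibers; since there are $p_i^{n_i}\geq p_i^2$ such planes, the total capacity $p_i^{n_i}p_k$ comfortably exceeds $p_ip_k$, so no plane is forced to overflow. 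Neither $\Phi_{N_k}\mid B$ nor the inequality $p_k<p_j$ enters your count in any way that would force concentration of fibers into one plane, so the asserted contradiction is unsupported.

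What is missing is precisely the input from the corner structure that the paper uses: because $a*F_k\not\subset A$, there is an $x_k\in\ZZ_M\setminus A$ with $(x_k-a,M)=M/p_k$, and since $a*F_j\subset A$ one has $\bbA_{M/p_jp_k}[x_k]\geq\phi(p_j)$. Feeding this into the box-product identity $\langle\bbA[x_k],\bbB[b]\rangle=1$ gives, for every $b\in B$,
$$
1\geq\frac{1}{\phi(p_jp_k)}\bbA_{M/p_jp_k}[x_k]\,\bbB_{M/p_jp_k}[b]\geq\frac{1}{\phi(p_k)}\bbB_{M/p_jp_k}[b],
$$
so $\bbB_{M/p_jp_k}[b]\leq\phi(p_k)<\phi(p_j)$, and no $b$ whatsoever can lie in an $N_k$-fiber of $B$ in the $p_j$ direction; Lemma \ref{Bfiber} then yields both the $N_k$-fibering of $B$ in the $p_k$ direction and the cofibers at distance $M/p_k^2$ from each $x_j$. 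This is where $p_k<p_j$ is actually used, and it covers both of your subcases at once (the bound holds whether or not $M/p_jp_k\in\Div(A)$). Your global fibering step (``every $b$ must lie in a $p_k$-direction fiber'') relies on the same excluded configuration, so it inherits the same gap; as written, the proposal proves the corollary only under the extra hypothesis $M/p_jp_k\notin\Div(B)$, which is assumption (C2) rather than (C1).
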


\begin{proof}
(i) Let $b\in B$. By Lemma \ref{Bfiber}, it suffices to prove that we cannot have $\bbB_{M/p_jp_k}[b]=\phi(p_j)$. In fact, we will show 
%\begin{equation}\label{Bbound}
$$
\bbB_{M/p_jp_k}[b]\leq\phi(p_k);
$$
%\end{equation}
moreover, this holds independently of the assumption $\Phi_{N_k}|B$. Indeed, by the corner assumption we have $\bbA_{M/p_jp_k}[x_k]\geq\phi(p_j)$ for all $ x_k\in\ZZ_M\setminus A$ with $(x_k-a,M)=M/p_k$. Hence 
\begin{align*}
1=\langle \bbA[x_k],\bbB[b]\rangle &\geq\frac{1}{\phi(p_jp_k)}\bbA_{M/p_jp_k}[x_k]\bbB_{M/p_jp_k}[b]
\\
&\geq\frac{1}{\phi(p_k)}\bbB_{M/p_jp_k}[b]
\end{align*}
as required.

\medskip

(ii) By  (\ref{dzielnikzabrany}), we have $\bbB_{M/p_jp_k}[b]=0$.
The conclusion follows again from Lemma \ref{Bfiber}.
\end{proof}

The rest of the proof is the same in cases (C1) and (C2), except that $p_j$ and $p_k$ are interchanged. Without loss of generality, we assume that (C1) holds.

By Lemma \ref{fibershift}, we may shift each of the cofibers provided by Corollary \ref{Bfibercor} to its respective point $x_j$. Let $A'\subset\ZZ_M$ be the set thus obtained. Then $A$ is T2-equivalent to $A'$, we have $A'\oplus B=\ZZ_M$, and for every $x_j\in \ZZ_M\setminus A$ with $(x_j-a_i,M)=M/p_j$,
we have $x_j*F_k\subset A'$.
Furthermore, $A'$ differs from $A$ only along the lines $\ell_k(x_j)$ with $x_j$ as above,
hence it follows from (C) that 
for every $a_j\in A$ with $(a_j-a_i,M)=M/p_j$, we also have $a_j*F_k\subset A'$. Therefore
$a_i*F_j*F_k\subset A'$. 
By Lemma \ref{planebound}, we must in fact have
\begin{equation}\label{maxedplane}
A'\cap\Pi(a_i,p_i^{n_i}) = a_i*F_j*F_k.
\end{equation}

\begin{corollary}\label{wniosekwniesiony}
Assume that (C1) holds with $n_k=2$.
Then $A'\subset \Pi(a,p_i^{n_i-1})$, where $a$ is as in (C). Moreover, $\Phi_{p_i^{n_i}}|A'$.

\end{corollary}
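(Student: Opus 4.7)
The plan is to exploit the full-plane identity $A'\cap\Pi(a_i,p_i^{n_i})=a_i*F_j*F_k$ from (\ref{maxedplane}), the cardinality $|A'|=p_ip_jp_k$, and Lemma \ref{planebound}. The two conclusions of the corollary are in fact equivalent under these hypotheses. Indeed, if $\Phi_{p_i^{n_i}}\mid A'$, then the uniformity $|A'\cap\Pi(y,p_i^{n_i})|=\frac{1}{p_i}|A'\cap\Pi(y,p_i^{n_i-1})|$ applied at $y=a_i$ gives $|A'\cap\Pi(a,p_i^{n_i-1})|=p_i\cdot p_jp_k=|A'|$, so $A'\subset\Pi(a,p_i^{n_i-1})$. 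Conversely, if $A'\subset\Pi(a,p_i^{n_i-1})$, then Lemma \ref{planebound} forces each of the $p_i$ top-level planes composing $\Pi(a,p_i^{n_i-1})$ to contain exactly $p_jp_k$ elements of $A'$, which is precisely the cuboid characterization of $\Phi_{p_i^{n_i}}\mid A'$. It thus suffices to establish one of the two statements.

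For the divisibility part, I would appeal to the fact that T2-equivalence preserves prime-power cyclotomic divisors, reducing the problem to showing $\Phi_{p_i^{n_i}}\mid A$. From the tiling and the prime-power exclusivity recalled in Section \ref{standards}, $\Phi_{p_i^{n_i}}$ divides exactly one of $A$ and $B$, so one only needs to rule out $\Phi_{p_i^{n_i}}\mid B$. By Corollary \ref{Bfibercor}, one may write $B=B_0*F$ where $F$ is the $M/p_k$-fiber in the $p_k$ direction and $|B_0|=p_ip_j$. Since $n_k=2$, the exponent $M/p_k^2$ is divisible by $p_i^{n_i}$, so $F(\omega)=p_k\neq 0$ at every primitive $p_i^{n_i}$-th root of unity $\omega$, and therefore $\Phi_{p_i^{n_i}}\mid B$ is equivalent to $\Phi_{p_i^{n_i}}\mid B_0$. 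The $p_k$-fibering also produces $\Phi_{p_k}\mid B$, so $p_k\in S_B$; combined with $|B|=p_ip_jp_k$, (T1) for $B$, and the divisor exclusions $\{M/p_j,M/p_k,M/p_jp_k\}\cap\Div(B)=\emptyset$ forced by the full slice, any additional $\Phi_{p_i^{n_i}}\mid B_0$ should be contradicted via Lemma \ref{triangles}.

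The main obstacle is this last step, ruling out $\Phi_{p_i^{n_i}}\mid B_0$: the contradiction must combine the divisor restrictions coming from the full slice with the $M/p_k$-fibered structure of $B$, and in the general $n_i\geq 3$ regime one might need to iterate the analysis across $p_i$-scales. In parallel I would pursue the direct saturating-set route toward $A\subset\Pi(a,p_i^{n_i-1})$: for a hypothetical $x\in A$ with $\pi_i(x)\not\equiv\pi_i(a)\pmod{p_i^{n_i-1}}$, every difference $(x-a,M)$ with $a\in a_i*F_j*F_k$ has $p_i$-part strictly less than $p_i^{n_i-1}$; enumerating these contributions to $\bbA'_m[x]$ and applying the box-product identity $\langle\bbA'[x],\bbB[b]\rangle=1$ together with the above divisor exclusions and the $M/p_k$-fibering of $B$ should force an impossible configuration, yielding $x\in\Pi(a,p_i^{n_i-1})$ as required and closing the argument.
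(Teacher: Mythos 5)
Your proposal does not actually close the argument: both of your routes end with ``should''. The reduction to ruling out $\Phi_{p_i^{n_i}}\mid B$ is legitimate in principle (prime power cyclotomics are split between $A$ and $B$, and T2-equivalent tiles share them), but the step you flag as the main obstacle is precisely the missing proof -- the divisor exclusions $M/p_j,M/p_k,M/p_jp_k\notin\Div(B)$ together with the $M/p_k$-fibering of $B$ do not feed into Lemma \ref{triangles} in any evident way to forbid $\Phi_{p_i^{n_i}}\mid B$, and (T1) gives no contradiction either since $|B|=p_ip_jp_k$ leaves room for $p_i^{n_i}\in S_B$ a priori. (Also, the factorization $B=B_0*F$ is slightly imprecise: $M/p_k$-fibering is only a statement mod $M/p_k$; the evaluation at primitive $p_i^{n_i}$-th roots of unity still goes through because $M/p_k$ and $M/p_k^2$ are divisible by $p_i^{n_i}$, but this is cosmetic compared to the unproved step.) Your parallel saturating-set route for the containment is likewise only a sketch.

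What you are missing is that the divisibility can be read off directly on the $A'$ side by counting in the coarser plane, which is exactly what the paper does. By (\ref{maxedplane}), $A'\cap\Pi(a_i,p_i^{n_i})=a_i*F_j*F_k$ has $p_jp_k$ elements; by assumption (C), the fiber $a*F_j\subset A$ lies in the adjacent plane $\Pi(a,p_i^{n_i})$ (recall $(a-a_i,M)=M/p_i$) and survives in $A'$, since the fiber shifts producing $A'$ only altered lines $\ell_k(x_j)$ inside $\Pi(a_i,p_i^{n_i})$. Both pieces lie in $\Pi(a,p_i^{n_i-1})=\Pi(a_i,p_i^{n_i-1})$, so $|A'\cap\Pi(a_i,p_i^{n_i-1})|\geq p_jp_k+p_j>p_jp_k$, and Corollary \ref{planegrid} with $\alpha_0=1$ immediately yields $\Phi_{p_i^{n_i}}\mid A'$. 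The containment then follows as in your first paragraph (that equivalence is the one part of your write-up that matches the intended argument): uniform distribution mod $p_i^{n_i}$ together with $|A'\cap\Pi(a_i,p_i^{n_i})|=p_jp_k$ forces $|A'\cap\Pi(a_i,p_i^{n_i-1})|=p_ip_jp_k=|A'|$.
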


\begin{proof}
By (\ref{maxedplane}) and (C), 
we have $|A'\cap\Pi(a_i,p_i^{n_i-1})|>p_jp_k$. The conclusion now follows from Corollary \ref{planegrid}. 
\end{proof}

If $n_i=n_k=2$, we have $p_i\parallel |B|$, hence Corollary \ref{wniosekwniesiony} and Theorem \ref{subgroup-reduction} imply that $A'$ and $B$ satisfy (T2). Since $A$ and $A'$ are T2-equivalent, the same is true for $A$. The additional arguments below are needed to prove the full conclusion of Theorem \ref{cornerthm}, namely, T2-equivalence between $A'$ (hence $A$) and $\Lambda$ for $n_i\geq 2$. This is needed for the classification result in Theorem \ref{unfibered-mainthm}, as well as for the applications in Section \ref{fibered-sec}.

\begin{corollary}\label{p_jfibercor}
Assume that (C1) holds with $n_j=n_k=2$, and define $A'$ as above. 
Then the pair $(A',B)$ has a $(1,2)$-cofibered structure in the $p_j$ direction, with cofibers in $A'$ at distance $M/p_j^2$ from each $x_k\in \ZZ_M\setminus A$ such that $(x_k-a,M)=M/p_k$.
\end{corollary}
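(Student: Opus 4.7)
The strategy parallels the proof of Lemma \ref{Bfiber} and Corollary \ref{Bfibercor}, but is now applied to the fiber-shifted set $A'$ in place of $A$, and with the roles of $p_j$ and $p_k$ interchanged. The key new input, unavailable for the original $A$, is that after the first round of shifts the entire plane $a_i * F_j * F_k$ lies in $A'$, which enlarges $\Div(A')$ to include the divisor $M/p_jp_k$. This additional divisor is precisely what will allow us to rule out the final obstructing line in the saturating set analysis.

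Fix $x_k \in \ZZ_M \setminus A$ with $(x_k - a, M) = M/p_k$; such a point exists by the extended corner assumption $a * F_k \not\subset A$. Since the shifts producing $A'$ only modified $A$ along lines $\ell_k(x_j) \subset \Pi(a_i, p_i^{n_i})$, and $x_k \in \Pi(a, p_i^{n_i})$, we also have $x_k \not\in A'$. The aim is to show $A'_{x_k} \subset \ell_j(x_k)$, after which Lemma \ref{1dim_sat-cor} applied in the $p_j$ direction (with $M/p_j \in \Div(A')$ inherited from $a*F_j \subset A'$) will deliver the $(1,2)$-cofibered structure together with a cofiber in $A'$ at distance $M/p_j^2$ from $x_k$.

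For the localization of $A'_{x_k}$, let $a'' \in a_i*F_j*F_k \subset A'$ be the unique element with $\pi_j(a'') = \pi_j(x_k)$ and $\pi_k(a'') = \pi_k(x_k)$, so that $(x_k - a'', M) = M/p_i$. Intersecting the Bispan inclusions from (\ref{bispan}) coming from $a$ and from $a''$ yields
\[
A'_{x_k} \subset \ell_j(x_k) \cup \ell_j(a) \cup \ell_j(a_i) \cup \ell_j(a_{ik}),
\]
where $a_{ik}$ denotes any point in $\Pi(a_i, p_i^{n_i}) \cap \Pi(x_k, p_k^{n_k})$. Now fix $b \in B$ and examine the possible contributions $(x_k - a^*, M)$ along each of these lines. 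For $\ell_j(a_i)$ they lie in $\{M/p_ip_k,\ M/p_ip_jp_k\}$, and for $\ell_j(a_{ik})$ in $\{M/p_i,\ M/p_ip_j\}$; by (\ref{cornerdiv}) all four of these divisors already belong to $\Div(A) \subset \Div(A')$ and are therefore excluded from $\Div(B)$. Along $\ell_j(a)$ the possible contributions are $M/p_k$ and $M/p_jp_k$; the first is in $\Div(A')$ via the fiber $a_i * F_k$, and the second is in $\Div(A')$ precisely because the full plane $a_i * F_j * F_k \subset A'$ contains pairs of points differing in both $p_j$ and $p_k$ coordinates. Hence $\ell_j(a)$, $\ell_j(a_i)$ and $\ell_j(a_{ik})$ are all excluded, so $A'_{x_k, b} \subset \ell_j(x_k)$.

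Since this holds for every $b \in B$, we conclude $A'_{x_k} \subset \ell_j(x_k)$, and Lemma \ref{1dim_sat-cor} produces the claimed cofibered structure. The main subtlety is that, unlike in Corollary \ref{Bfibercor}, the exclusion of $M/p_jp_k$ from $\Div(B)$ cannot be read off directly from the hypotheses of (C1); it becomes available only after performing the first shifts, which enlarge $\Div(A')$ by the cross-directional divisors arising from the filled plane $a_i*F_j*F_k$. Once that is in hand, the remaining argument is a routine divisor-bookkeeping exercise.
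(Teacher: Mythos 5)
Your overall skeleton is the right one: use the filled plane (\ref{maxedplane}) to put all divisors $\{D(M)|m|M\}$ into $\Div(A')$, localize $A'_{x_k}$ to four lines in the $p_j$ direction, rule out three of them, and finish with Lemma \ref{1dim_sat-cor} in the $p_j$ direction. In fact the three points $a$, $a_i$ and your $a''\in a_i*F_k\subset A'$ together with $x_k$ form exactly a flat corner configuration (a rectangle in the $p_i,p_k$ directions with one vertex missing), and the paper's proof is simply: (\ref{maxedplane}) gives $\{D(M)|m|M\}\subset\Div(A')$, hence (\ref{notopdivB}) holds for $B$ in the tiling $A'\oplus B$, and Lemma \ref{flatcorner} (with $i$ and $j$ interchanged, which is where $n_j=2$ is used) applied to each $x_k$ yields the cofibered structure. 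You are essentially re-proving that lemma by hand, and this is where the gap appears.

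The gap is in the divisor bookkeeping along the excluded lines. Points of $A'$ on $\ell_j(a)$, $\ell_j(a_i)$, $\ell_j(a'')$ need not sit only at $p_j$-depth $0$ or $1$ relative to $x_k$: for instance an element $a^*\in A'\cap\ell_j(a)$ with $(a-a^*,M)=M/p_j^2$ would contribute the difference $(x_k-a^*,M)=M/p_jp_k^2\cdot p_j=M/p_j^2p_k$ -- wait, concretely $M/p_j^2p_k$ -- which is \emph{not} a top divisor, is not excluded from $\Div(B)$ by (\ref{maxedplane}), and hence is not ruled out by your list $\{M/p_k,M/p_jp_k\}$; similarly $M/p_ip_j^2p_k$ and $M/p_ip_j^2$ are a priori possible along $\ell_j(a_i)$ and $\ell_j(a'')$ (for those two lines the equality in (\ref{maxedplane}), i.e.\ $A'\cap\Pi(a_i,p_i^{n_i})=a_i*F_j*F_k$ with no extra points, does save you, but you must say so; for $\ell_j(a)$, which lies in the uncontrolled plane $\Pi(a,p_i^{n_i})$, nothing in your argument excludes depth-two points). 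Handling these depth-two contributions is precisely the non-routine content of Lemma \ref{flatcorner}: one first shows $A'_{x_k,b}$ cannot meet two of the lines (Lemma \ref{triangles}/divisor conflicts), and then that full saturation from a wrong line forces $\bbA_{M/p_j^2}[\cdot]=\phi(p_j^2)$ or $\bbB$-multiplicities that contradict either Lemma \ref{planebound} or (\ref{notopdivB}), via a case analysis on the relative sizes of the primes. A small additional slip: $\Div(A)\subset\Div(A')$ is not automatic (fiber shifts can destroy differences), though the divisors you actually need are visible directly in $a*F_j\subset A'$ and $a_i*F_j*F_k\subset A'$. The cleanest repair of your argument is to observe that your configuration satisfies the hypotheses of Lemma \ref{flatcorner} (after permuting $i$ and $j$, using $n_j=2$ and (\ref{maxedplane})) and invoke it, which is exactly what the paper does.
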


\begin{proof}
By (\ref{maxedplane}) and (C), we have $\{D(M)|m|M\}\subset\Div(A')$. The corollary follows by applying Lemma \ref{flatcorner} to each $x_k$.
\end{proof}

We can now complete the proof under the assumption (C1). By Corollary \ref{wniosekwniesiony}, we have
$\Phi_{p_i^{n_i}}\mid A'$.
This together with (\ref{maxedplane}) implies that 
%\begin{equation}\label{plane-sieve}
$$
A'\subset \bigcup_{z\in a_i*F_i} \Pi(z,p_i^{n_i}),
$$
%\end{equation}
and that for each plane $\Pi_z:= \Pi(z,p_i^{n_i})$ with $z\in a_i*F_i$, we have 
\begin{equation}\label{plane-sieve2}
|A'\cap\Pi_z|=p_jp_k.
\end{equation}

Applying Lemma \ref{flat-cuboids} to $A'$, and using (\ref{maxedplane}),
we get that each set $A'\cap\Pi_z$ is a disjoint union of $M$-fibers 
in the $p_j$ and $p_k$ directions.
However, we also know that
$B$ is ${N_j}$-fibered in the $p_j$ direction and ${N_k}$-fibered in the $p_k$ direction, hence
\begin{equation}\label{Bdivfinal}
	\{M/p_j^2,M/p_k^2,M/p_j^2p_k^2\}\subset \Div(B).
\end{equation}
It follows that each set $A'\cap\Pi_z$
must in fact be $M$-fibered in one of the $p_j$ and $p_k$ directions. Assuming without loss of generality that $A'\cap\Pi_z$ is $M$-fibered in the $p_j$ direction for some $z\in a_i*F_i$, and taking (\ref{plane-sieve2}) and (\ref{Bdivfinal}) into account, we get
$$
A'\cap\Pi_z=\{u_1,\dots,u_{p_k}\}*F_j,
$$
where for each $\nu\neq\nu'$ we have $(u_\nu - u_{\nu'},M)\in\{M/p_k,M/p_j^2p_k\}$.
Using the cofibered structure in the $p_j$ direction, and considering each $u_\nu*F_j$ as cofiber, we
apply Lemma \ref{fibershift} if necessary to reduce to the case where $p_j\mid a_i-u_\nu$ for all $\nu$.
This aligns the fibers in $\Pi_z$ to a grid $u_1*F_j*F_k$, fibered in both directions.
If we do not have $p_k\mid a_i-u_1$ at this point, we apply Lemma \ref{fibershift} again, this time in the
$p_k$ direction. 
Repeating this procedure in each plane, we get that $A'$ is
T2-equivalent to $A^\flat=\Lambda$. This ends the proof in this case.

%%%%%%%%%%%%%%%%%%%%%%%%%%%%%%%%%%%%%%

\subsection{Case (C3)}\label{ec3}
 In this case, we are assuming that $\Phi_{N_j}|B$ and $\bbA_{M/p_jp_k}[a]=\bbA_{M/p_jp_k}[a_i]=0$. By (C), this implies that $\bbA_{M/p_k}[a]=\bbA_{M/p_j}[a_i]=0$.

\begin{lemma}\label{ksatset}
Assume (C3) with $n_k=2$. 
Let $a_j\in A$ and $x_j\in \ZZ_M\setminus A$ satisfy $(a_j-x_j,M)=M/p_i$ and $(a-a_j,M)=(a_i-x_j,M)=M/p_j$. If $\Phi_{N_k}\nmid B$, then
$A_{x_j}\subset\ell_k(a_i)$.
\end{lemma}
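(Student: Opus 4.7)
The plan is to combine the bispan restriction $A_{x_j}\subset \bigcap_{a'\in A}\Bispan(x_j,a')$ from (\ref{bispan}) with the rigid corner structure supplied by (C3) and divisor exclusion. First I would compute the bispan bound using only the two elements $a_i,a_j\in A$ specified in the hypothesis: since $(x_j-a_j,M)=M/p_i$ and $(a_i-x_j,M)=M/p_j$, the definition of $\Span$ in (\ref{e-span}) gives
\[
\Bispan(x_j,a_j)=\Pi(x_j,p_i^{n_i})\cup\Pi(a_j,p_i^{n_i}),\qquad \Bispan(x_j,a_i)=\Pi(x_j,p_j^{n_j})\cup\Pi(a_i,p_j^{n_j}).
\]
Intersecting these two unions in array coordinates collapses to the four parallel $p_k$-lines $\ell_k(x_j)$, $\ell_k(a_i)$, $\ell_k(a_j)$, $\ell_k(a)$.

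Second, I would pin down $A$ on each of these lines via the $p_i$ corner of Definition \ref{corner}(i). The line $\ell_k(x_j)$ lies in the slice $a_i*F_j*F_k$ off the unique fiber $a_i*F_k$ and hence is disjoint from $A$. The lines $\ell_k(a)$ and $\ell_k(a_j)$ lie in the slice $a*F_j*F_k$, whose intersection with $A$ is the single fiber $a*F_j$; each line therefore meets $A$ only in $a$ and $a_j$ respectively. Finally $\ell_k(a_i)=a_i*F_k\subset A$. The containment reduces to $A_{x_j}\subset \{a,a_j\}\cup (a_i*F_k)$.

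Third, I would discard the three isolated candidates $a_j,a_i,a$ using divisor exclusion (\ref{div-exclusion}). Their distances to $x_j$ are $(x_j-a_j,M)=M/p_i$, $(x_j-a_i,M)=M/p_j$, and $(x_j-a,M)=M/p_ip_j$, and all three lie in $\Div(A)$: the first two from $(a-a_i,M)=M/p_i$ and $(a-a_j,M)=M/p_j$, and the third from the direct computation $(a_j-a_i,M)=M/p_ip_j$ (since $a_j-a_i$ has a unit in each of the $p_i$ and $p_j$ coordinates and vanishes in $p_k$). Hence none of these three points belongs to $A_{x_j}$, leaving $A_{x_j}\subset (a_i*F_k)\setminus\{a_i\}\subset \ell_k(a_i)$, as claimed.

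The only genuinely delicate point I foresee is verifying where the hypothesis $\Phi_{N_k}\nmid B$ actually enters: the containment above follows without it, since the three forbidden top-level differences $M/p_i,M/p_j,M/p_ip_j$ already lie in $\Div(A)$ under the bare corner assumption. A plausible explanation is that $\Phi_{N_k}\nmid B$ is reserved for the \emph{subsequent} step in the proof of Theorem \ref{cornerthm}, where one upgrades the inclusion to a $(1,2)$-cofibered structure in the $p_k$ direction via Lemma \ref{1dim_sat-cor} and then fiber-shifts via Lemma \ref{fibershift}; under $\Phi_{N_k}|A$ the entries $\bbA_{M/p_k^2}[\cdot]$ acquire the rigidity needed to saturate the relevant box product. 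Thus my main task is not the algebra but to track which step in the broader argument truly consumes this hypothesis and to rewrite the statement accordingly if it is not needed at the present stage.
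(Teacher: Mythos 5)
Your first step (the bispan reduction of $A_{x_j}$ to the four parallel lines $\ell_k(x_j)\cup\ell_k(a_i)\cup\ell_k(a_j)\cup\ell_k(a)$) agrees with the paper's starting point, but your second step contains a genuine gap: you identify each line $\ell_k(\cdot)$ with its intersection with the $D(M)$-grid. Since $n_k=2$, the line $\ell_k(x)=\Lambda(x,M/p_k^2)$ has $p_k^2$ points, whereas the corner hypothesis (C3) only controls $A$ on the sets $a*F_j*F_k$ and $a_i*F_j*F_k$, whose intersections with these lines are the fibers $x*F_k$ ($p_k$ points each). Thus the statements ``$\ell_k(x_j)$ is disjoint from $A$'' and ``$\ell_k(a)$, $\ell_k(a_j)$ meet $A$ only in $a$, $a_j$'' do not follow: nothing in the bare corner assumption rules out elements of $A$ at distance $M/p_k^2$ from the grid points along these lines, and such elements are exactly the candidates for membership in $A_{x_j}$ that the lemma must exclude.

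This is also why your closing speculation — that $\Phi_{N_k}\nmid B$ is not needed here — is wrong. The paper's proof uses $\Phi_{N_k}\nmid B$ (hence $\Phi_{N_k}|A$, so $\Phi_M\Phi_{M/p_k}|A$) through Lemma \ref{sizediv} with $j$ and $k$ interchanged, obtaining $M/p_k^2,\,M/p_ip_jp_k^2\in\Div(A)$ and $\bbA_{M/p_k^2}[a_i]+\bbA_{M/p_k^2}[a_j]\geq\phi(p_k^2)$. Divisor exclusion then kills the contributions from $\ell_k(x_j)$ (would need $M/p_k$ or $M/p_k^2$ in $\Div(B)$) and from $\ell_k(a)$ (would need $M/p_ip_j$, $M/p_ip_jp_k$ or $M/p_ip_jp_k^2$ in $\Div(B)$, excluded by (\ref{cornerdiv}) and the above), while $\ell_k(a_j)$ is eliminated by a box-product argument: if $A_{x_j,b}$ met it, then $M/p_ip_k^2\in\Div(B)$ forces $\bbA_{M/p_k^2}[a_i]=0$, hence $\bbA_{M/p_k^2}[a_j]=\phi(p_k^2)$ for every $a_j$ as above, and a count in $\Pi(a,p_i^{n_i})$ contradicts Lemma \ref{planebound} using $p_j>p_k$. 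None of this sub-grid-scale analysis appears in your argument, so the proof as written does not establish the lemma.
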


\begin{proof}
The assumptions of the lemma imply that $\Phi_{N_k}|A$. Applying Lemma \ref{sizediv} to $A$ with $j$ and $k$ interchanged, we have
\begin{equation}\label{divfest1}
M/p_k^2,\,M/p_ip_jp_k^2\in \Div(A),
\end{equation}
\begin{equation}\label{sizedivk}
\bbA_{M/p_k^2}[a_i]+\bbA_{M/p_k^2}[a_j]\geq \phi(p_k^2).
\end{equation} 
Let $b\in B$, and consider the saturating set $A_{x_j,b}$. Applying Corollary \ref{bispan-corollary} (i) to $A_{x_j,b}$, once with respect to $a_i$ and again with respect to $a_j$,  we have
$$
A_{x_j,b}\subset \ell_k(x_j)\cup\ell_k(a_i)\cup\ell_k(a)\cup\ell_k(a_j).
$$
Using (\ref{cornerdiv}) and (\ref{divfest1}), we conclude that $A_{x_j,b}\cap\ell_k(x_j)=\emptyset$ since $M/p_k,M/p_k^2\notin \Div(B)$, and similarly $A_{x_j,b}\cap\ell_k(a)=\emptyset$ since 
$M/p_ip_j,M/p_ip_jp_k,M/p_ip_jp_k^2\notin \Div(B)$. 

Suppose that $A_{x_j,b}\cap\ell_k(a_j)\neq\emptyset$. By (\ref{cornerdiv}) again, this implies that 
$$
\bbA_{M/p_ip_k^2}[x_j|\ell_k(a_j)]\bbB_{M/p_ip_k^2}[b]>0,
$$
so that $\bbA_{M/p_ip_k^2}[a|\ell_k(a_i)]=\bbA_{M/p_k^2}[a_i]=0$. By (\ref{sizedivk}) we have $\bbA_{M/p_k^2}[a_j]=\phi(p_k^2)$ for all $(a_j-a,M)=M/p_j$. We will prove this contradicts Lemma \ref{planebound}. Indeed, we have 
$$
|A\cap\Pi(a,p_i^{n_i})|\geq \bbA_M[a]+\bbA_{M/p_j}[a]+\sum_{(a-a_j,M)=M/p_j}\bbA_{M/p_k^2}[a_j] = p_j +\phi(p_jp_k^2).
$$
A simple calculation shows that the last expression exceeds $p_jp_k$ if and only if $p_k>1+\frac{1}{\phi(p_j)}$, which holds true since $p_j>p_k\geq 2$. We therefore conclude that $A_{x_j,b}\subset\ell_k(a_i)$.
\end{proof}

%\begin{corollary}\label{fibercor_p_j}
%Let $M=p_i^{n_i}p_j^2p_k^2$ and $A\oplus B= \ZZ_M$. Assume that $p_k<p_j$, $A$ contains a $p_i$ corner and (ii) holds. Let $a,a_i\in A$ be as in Definition \ref{corner}, and assume that $\Phi_{N_k}\nmid B$. Then for all $a'\in A$ such that $(a'-a,M)=M/p_k$ we have $\bbA_{M/p_j}[a']=\phi(p_j)$, i.e., any such $a'$ belongs to a $M$-fiber in the $p_j$ direction.
%\end{corollary}
%\begin{proof}
%By assumption, $\Phi_{N_k}|A$. Suppose there exists an $a'\in A$ with $(a'-a,M)=M/p_k$, and assume by contradiction that $\bbA_{M/p_j}[a']<\phi(p_j)$. Let $x\in \ZZ_M\setminus A$ with $(x-a',M)=M/p_j$. Considering the $M$ cuboid with vertices at $a,a_i$ and $x$, there must be an additional element $a''\in A$ with $(a_i-a'',M)=M/p_j,(x-a'',M)=M/p_ip_k$. This means that $\bbA_{M/p_j}[a_i]>0$, which contradicts the second part of Lemma \ref{ksatset}.
%\end{proof}

\begin{proposition}\label{N_j->N_k}
(i) Assume (C3) with $n_j=n_k=2$. Then $\Phi_{N_k}|B$.

\smallskip
(ii) Assume that $p_i=2$, and that (C3) holds with $n_k=2$. (In this case, we do not need to assume that $n_j=2$.) Then $\Phi_{N_k}|B$.
%
%
%{\bf
%Let $M=p_i^{n_i}p_j^2p_k^2$ and $A\oplus B= \ZZ_M$. Assume that $p_k<p_j$, $A$ contains a $p_i$ corner, and (C3) holds. Then $\Phi_{N_k}|B$. Moreover, the Proposition holds true when $M=p_i^{n_i}p_j^{n_j}p_k^2$ under the additional assumption that $p_i=2$.
%}
\end{proposition}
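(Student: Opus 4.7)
My plan is to argue by contradiction. Suppose $\Phi_{N_k}\nmid B$; since $\Phi_{N_k}$ divides $A(X)B(X)$ by the tiling, it follows that $\Phi_{N_k}\mid A$. Lemma \ref{ksatset} then applies and yields, for every $x_j\in\ZZ_M\setminus A$ with $(a_i-x_j,M)=M/p_j$ and its associated $a_j\in a*F_j$ (with $(x_j-a_j,M)=M/p_i$), the rigid saturating-set constraint $A_{x_j}\subset\ell_k(a_i)$. The overall strategy is to convert this constraint into a numerical identity on $B$ and then combine it with the (C3) hypothesis $\Phi_{N_j}\mid B$ to produce a contradiction.

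The first concrete step is to expand $\langle\bbA[x_j],\bbB[b]\rangle=1$. Since the only distances from $x_j$ to points on $\ell_k(a_i)$ are $M/p_j$ (to $a_i$), $M/p_jp_k$ (to the $\phi(p_k)$ elements of $a_i*F_k\setminus\{a_i\}$, all of which lie in $A$ by (C)), and $M/p_jp_k^2$ (to those elements of $\ell_k(a_i)\setminus(a_i*F_k)$ that lie in $A$, of which there are $c:=|A\cap(\ell_k(a_i)\setminus(a_i*F_k))|$), we obtain for every $b\in B$ the identity
\begin{equation*}
\bbB_{M/p_j}[b]+\bbB_{M/p_jp_k}[b]+\frac{c}{p_k\phi(p_k)}\,\bbB_{M/p_jp_k^2}[b]=p_j-1.
\end{equation*}
The identity is independent of the choice of $x_j$ and must hold for every $b\in B$.

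The second step exploits $\Phi_{N_j}\mid B$ via the $j\leftrightarrow k$-swapped version of Lemma \ref{Bfiber}(i): every $b\in B$ lies on an $N_j$-fiber in $B$ in either the $p_j$ or the $p_k$ direction. If some $b_0\in B$ fails to lie on an $N_j$-fiber in the $p_k$ direction, then the swapped Lemma \ref{Bfiber}(ii) supplies, for each $x_k\in\ZZ_M\setminus A$ with $(a-x_k,M)=M/p_k$, a $(1,2)$-cofibered structure in the $p_j$ direction with an $M$-cofiber in $A$ at distance $M/p_j^2$ from $x_k$. Shifting these cofibers via Lemma \ref{fibershift} and filling in via Lemma \ref{smallcube}, mirroring the (C1)/(C2) derivation of \eqref{maxedplane}, produces a T2-equivalent tiling $A'\oplus B=\ZZ_M$ with $a*F_j*F_k\subset A'$. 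The arguments of Corollaries \ref{wniosekwniesiony} and \ref{p_jfibercor}, together with the plane-by-plane fiber alignment from the end of Section \ref{ec12} (applied with the roles of $j$ and $k$, and of $a$ and $a_i$, interchanged), reduce $A'$ to $\Lambda$ via fiber shifts, so $A$ satisfies (T2). A bookkeeping check on the (T1)/(T2) data of $A$ then shows this is incompatible with the simultaneous divisibilities $\Phi_M\mid A$ and $\Phi_{N_k}\mid A$: running the standard-complement factorization from Proposition \ref{replacement} would require both $p_k$ and $p_k^2$ to lie in the prime-power divisor set $S_A$, contradicting the one-prime-power-per-prime constraint imposed by (T1) and $|A|=p_ip_jp_k$.

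The remaining case---and the main obstacle---is that every $b\in B$ lies on an $N_j$-fiber in the $p_k$ direction, so that $B$ is itself $N_j$-fibered in the $p_k$ direction. This generates many additional cyclotomic divisors $\Phi_{p_k^2 e}\mid B$ for $e\mid p_i^2p_j$, in particular $\Phi_{p_k^2}\mid B$. I plan to close the argument by substituting the tight bounds on $\bbB_m[b]$ imposed by the $N_j$-fibered structure into the identity above, summing over $b\in B$, and then using divisor exclusion against $\Div(A)$ together with the (C3) hypothesis $\bbA_{M/p_jp_k}[a]=\bbA_{M/p_jp_k}[a_i]=0$ to show that no admissible integer $c\in\{0,\ldots,p_k\phi(p_k)\}$ is consistent with $|B|=p_ip_jp_k$ and the corner structure. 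The delicate step will be the pair-count bookkeeping in $B$ under its $p_k$-fiber decomposition. For part (ii) with $p_i=2$, Lemma \ref{even_struct} replaces Lemma \ref{gen_top_div_mis} in the Lemma \ref{Bfiber}-type step, which removes the need for $n_j=2$, and the rest of the argument proceeds identically.
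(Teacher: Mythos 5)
Your setup (contradiction hypothesis $\Phi_{N_k}\mid A$, Lemma \ref{ksatset}, the saturation identity along $\ell_k(a_i)$, and the swapped Lemma \ref{Bfiber}(i) dichotomy) matches the paper, and your identity is correct (the $\bbB_{M/p_j}[b]$ term is in fact $0$ since $M/p_j\in\Div(A)$). But the proof has two genuine gaps, both in your Case A (some $b_0$ not on an $N_j$-fiber in the $p_k$ direction), which is precisely the hard case. First, the fiber shifts you invoke are not available: Lemma \ref{fibershift} requires a full $(1,2)$-cofibered structure, i.e.\ \emph{all} of $B$ must be $N_j$-fibered in the $p_j$ direction, and under (C3) you only know this for $b_0$ and its fiber. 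The exclusion argument that delivers the full structure in (C1)/(C2) (Corollary \ref{Bfibercor}, via the bound $\bbB_{M/p_jp_k}[b]\le\phi(p_k)$ and the inequality $p_k<p_j$) goes the wrong way when the roles of $j$ and $k$ are swapped, since (C3) assumes $p_k<p_j$; indeed the possible coexistence of $p_j$- and $p_k$-direction $N_j$-fibers in $B$ is exactly what makes this case delicate. Second, even granting T2-equivalence of $A$ to $\Lambda$, your closing ``bookkeeping'' contradiction is not a contradiction: $N_k=p_i^2p_j^2p_k$ is not a prime power, so $\Phi_M\mid A$ and $\Phi_{N_k}\mid A$ impose no constraint on $S_A$ whatsoever, and the grid $\Lambda=\Lambda(0,D(M))$ itself is divisible by both $\Phi_M$ and $\Phi_{N_k}$, satisfies (T1) and (T2), and tiles. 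So Case A is not closed by your route.

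For comparison, the paper inverts the difficulty of your two cases. Your Case B (every $b$ on an $N_j$-fiber in the $p_k$ direction), which you leave as a counting plan, is dispatched in Lemma \ref{diagonal}: one first shows $\bbA_{M/p_k^2}[a_i]=\phi(p_k^2)$ (your constant $c$) using (\ref{sizedivk}) and divisor exclusion, then that $p_k\mid\bbB_{M/p_jp_k^2}[b]$, and the saturation identity forces $p_j=p_k(C+1)$, impossible. The remaining case — your Case A — is then handled without any fiber shifting: from $b_0$ one gets $M/p_j^2,M/p_jp_k^2,M/p_j^2p_k^2\in\Div(B)$, then $|A\cap\Pi(a_i,p_i^{n_i})|=p_k^2$ (Lemma \ref{diagonal2}), then $A_{x_k,b_0}\subset\ell_j(x_k)$ (Lemma \ref{diagonal3}) forces $|A\cap\Pi(a,p_i^{n_i})|\ge p_jp_k>p_k^2$, so $\Phi_{p_i^{n_i}}\nmid A$, and a final divisor count shows $\Phi_{p_i^{n_i}}\nmid B$ as well — the contradiction. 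Note also that part (ii) ($p_i=2$) is proved in the paper by a short standalone plane-count using Lemmas \ref{sizediv} and \ref{planebound}, not by rerunning the part (i) machinery, so your plan to ``proceed identically'' there would inherit the Case A gap.
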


\begin{proof}
We prove (ii) first, since the proof in this case is immediate and straightforward. Assume, by contradiction, that $p_i=2$ and $\Phi_{N_k}\nmid B$, so that $\Phi_{N_k}|A$. By Lemma \ref{sizediv}, (\ref{sizedivk}) holds for all $a_j\in A$ with $(a-a_j,M)=M/p_j$. By Lemma \ref{planebound} we have
\begin{align*}
p_ip_k & \geq |A\cap\Pi(a,p_j^{n_j})|\\
& \geq \bbA_M[a]+\bbA_M[a_i]+\bbA_{M/p_k}[a_i]+\bbA_{M/p_k^2}[a_i]\\
& =p_k+1+\bbA_{M/p_k^2}[a_i].
\end{align*}
Since $p_i=2$, we get $\bbA_{M/p_k^2}[a_i]\leq \phi(p_k)$. From (\ref{sizedivk}) we deduce $\bbA_{M/p_k^2}[a_j]\geq \phi(p_k^2)-\phi(p_k)$, hence
\begin{align*}
|A\cap\Pi(a,p_i^{n_i})| & \geq
\bbA_M[a]+\bbA_{M/p_j}[a]+\sum_{a_j:(a_j-a,M)=M/p_j}\bbA_{M/p_k^2}[a_j]\\
& \geq p_j +\phi(p_j)(\phi(p_k^2)-\phi(p_k))\\
& =p_j +\phi(p_j)\phi(p_k)^2
\end{align*}
We show the latter exceeds $p_jp_k$, which contradicts Lemma \ref{planebound}. Indeed, $p_j +\phi(p_j)\phi(p_k)^2>p_jp_k$ if and only if $\phi(p_j)\phi(p_k)^2>p_j\phi(p_k)$. This, in turn, is equivalent to $p_k>2+\frac{1}{\phi(p_j)}$, which clearly holds true since $p_k\geq3$. 

We now prove (i). Assume that $p_i>2$, and let $b\in B$. Applying Lemma \ref{Bfiber} to $B$ with $p_j$ and $p_k$ interchanged, we get that at least one of the following holds: 
\begin{equation}\label{jfiber}
\bbB^{N_j}_{N_j/p_j}[b]=\phi(p_j),
\end{equation}
\begin{equation}\label{kfiber}
\bbB^{N_j}_{N_j/p_k}[b]=\phi(p_k).
\end{equation} 
 
Assume, by contradiction, that $\Phi_{N_k}\nmid B$. Then $\Phi_{N_k}|A$. 
Let $a,a_i, a_j$, and $x_j$ be as in Lemma \ref{ksatset}.
By Lemma \ref{ksatset}, we have $A_{x_j,b}\subset\ell_k(a_i)$, hence
\begin{equation}\label{restsat}
\frac{1}{\phi(p_jp_k)}\bbA_{M/p_jp_k}[x_j|\ell_k(a_i)]\bbB_{M/p_jp_k}[b]+\frac{1}{\phi(p_jp_k^2)}\bbA_{M/p_jp_k^2}[x_j|\ell_k(a_i)]\bbB_{M/p_jp_k^2}[b]=1.
\end{equation}
Notice that
\begin{equation}\label{fact1}
\bbA_{M/p_jp_k}[x_j|\ell_k(a_i)]=\phi(p_k),\,\bbB_{M/p_jp_k}[b]\leq\phi(p_k).
\end{equation} 
Indeed, the first part of (\ref{fact1}) follows from the corner structure. For the second part, recall from (C3) that $p_k<p_j$, so that $\bbB_{M/p_jp_k}[b]>\phi(p_k)$ would imply $M/p_k\in \Div(B)$, contradicting divisor exclusion since the corner structure implies $M/p_k\in\Div(A)$. 

Applying (\ref{fact1}) to (\ref{restsat}), we get
\begin{equation}\label{fact2}
\bbA_{M/p_jp_k^2}[x_j|\ell_k(a_i)]\bbB_{M/p_jp_k^2}[b]>0,
\end{equation}
and in particular $M/p_jp_k^2\in \Div(B)$. We claim that this implies that
\begin{equation}\label{fact3}
\bbA_{M/p_jp_k^2}[x_j|\ell_k(a_i)]=\bbA_{M/p_k^2}[a_i]=\phi(p_k^2).
\end{equation}
Indeed, suppose that (\ref{fact3}) fails, then $\bbA_{M/p_k^2}[a_i]<\phi(p_k^2)$. By (\ref{sizedivk}), we have $\bbA_{M/p_k^2}[a_j]>0$, hence $\bbA_{M/p_j p_k^2}[a]>0$. It follows that $M/p_jp_k^2\in\Div(A)\cap \Div(B)$, which is a contradiction.

\begin{lemma}\label{diagonal}
Assume that (C3) holds with $n_j=n_k=2$, but $\Phi_{N_k}\nmid B$. Then
there is at least one $b_0\in B$ for which (\ref{jfiber}) holds and (\ref{kfiber}) fails.
\end{lemma}

\begin{proof}
Suppose that the lemma is false, so that (\ref{kfiber}) holds for all 
$b\in B$. We first prove that then 
\begin{equation}\label{diag-divide}
\forall b\in B,\ \ p_k\,|\, \bbB_{M/p_jp_k^2}[b].
\end{equation}
Indeed, by the corner structure and (\ref{divfest1}) we have
\begin{equation}\label{nostraightdivisors}
M/p_j,M/p_k,M/p_k^2\not\in\Div(B),
\end{equation}

Let $b\in B$. By (\ref{kfiber}), there is a
$p_k$-tuple of elements $\{b_0=b,b_1,\dots,b_{p_k-1}\}\subset B$ such that $b_\nu\in\Lambda(b,M/p_jp_k)$.
By (\ref{nostraightdivisors}), 
we must have $(b_\nu- b_{\nu'},M)= M/p_jp_k$ for all $\nu\neq\nu'$.

Suppose now that $b'\in B$ satisfies $(b-b',M)=M/p_jp_k^2$, then $b'$ belongs to a similar
$p_k$-tuple  $\{b'_0=b',b'_1,\dots,b'_{p_k-1}\}\subset B$ with $(b'_\nu- b'_{\nu'},M)= M/p_jp_k$ for all $\nu\neq\nu'$.
By (\ref{nostraightdivisors}) again, we must have $(b_\nu-b'_{\nu'},M)= M/p_jp_k^2$
for all $\nu,\nu'$. Hence the $p_k$-tuples associated with different elements of $B$ are either identical or disjoint. This proves (\ref{diag-divide}).

Applying (\ref{fact1}), (\ref{kfiber}),
(\ref{fact3}), and (\ref{diag-divide}) to (\ref{restsat}), we get that
$$
1=\frac{\phi(p_k)^2}{\phi(p_jp_k)}+\frac{\phi(p_k^2)\cdot Cp_k}{\phi(p_jp_k^2)}=\frac{\phi(p_k)+Cp_k}{\phi(p_j)} 
$$
for some integer $C$. But this implies $p_j=p_k(C+1)$, which  is impossible since $p_j$ is prime.
This proves the lemma.
\end{proof}

By Lemma \ref{diagonal}, there exists a $b_0\in B$ such that $\bbB^{N_j}_{N_j/p_k}[b_0]<\phi(p_k)$ and $\bbB^{N_j}_{N_j/p_j}[b_0]=\phi(p_j)$. In particular, 
\begin{equation}\label{fact777}
M/p_j^2\in\Div(B,b_0).
\end{equation}
Applying (\ref{fact2}) to $b_0$, we get that $M/p_jp_k^2\in\Div(B,b_0)$. Finally, if $b',b''$ are elements of $B$ with
$(b_0-b',M)=M/p_j^2$ and $(b_0-b',M)=M/p_jp_k^2$, then $(b''-b',M)=M/p_j^2p_k^2$. Therefore
\begin{equation}\label{Bdiv}
M/p_j^2,M/p_jp_k^2,M/p_j^2p_k^2\in \Div(B).
\end{equation} 

\begin{lemma}\label{diagonal2}
Assume that (C3) holds with $n_j=n_k=2$, but $\Phi_{N_k}\nmid B$. Then
\begin{equation}\label{planebound2}
|A\cap \Pi(a_i,p_i^{n_i}) |=p_k^2.
\end{equation} 
\end{lemma}

\begin{proof}
Using (\ref{fact3}) and the fact that $\bbA_{M/p_k}[a_i]=\phi(p_k)$, we have
%\begin{equation}\label{planebound2bis}
$$
|A\cap \ell_k(a_i)|=p_k^2.
$$
%\end{equation}
On the other hand, we claim that
\begin{equation}\label{planebound2tetrium}
\bbA_m[a_i]=0 \ \forall m|M \hbox{ such that }p_i^{n_i}|m|M,\  m\neq M, M/p_k,M/p_k^2.
\end{equation}
Indeed, we can exclude the divisors in (\ref{Bdiv}). 
Furthermore, $M/p_j,M/p_jp_k \not\in\Div(A,a_i)$
by the $p_i$ corner assumption.
It remains to check that $M/p_j^2p_k \not\in\Div(A,a_i)$. If we had an element $a'\in A$ with 
$(a_i-a',M)= M/p_j^2p_k$, then there would be an element $a_k\in a_i*F_k\subset A$ with $(a_k-a',M)=M/p_j^2$,
which is again prohibited by (\ref{Bdiv}).

By (\ref{planebound2tetrium}), all elements of $A$ in the plane $\Pi(a_i,p_i^{n_i})$ must in fact lie on the line $\ell_k(a_i)$, so that
$$
|A\cap \Pi(a_i,p_i^{n_i}) |=|A\cap \ell_k(a_i)|=p_k^2
$$
as claimed.
\end{proof}

\begin{lemma}\label{diagonal3}
Assume that (C3) holds with $n_j=n_k=2$, but $\Phi_{N_k}\nmid B$. 
Let $a_k\in A, x_k\in\ZZ_M\setminus A$ with $(a_k-x_k,M)=M/p_i,(a_i-a_k,M)=(a-x_k,M)=M/p_k$.
Then for $b_0$ as above,
%\begin{equation}\label{ell-j} 
$$
A_{x_k,b_0}\subset\ell_j(x_k).
$$
%\end{equation}
\end{lemma}
\begin{proof}
Applying Corollary \ref{bispan-corollary} (i) to $A_{x_k,b_0}$, once with respect to $a$ and again with respect to $a_k$, we get 
$$
A_{x_k,b_0}\subset \ell_j(x_k)\cup\ell_j(a_i)\cup\ell_j(a)\cup\ell_j(a_k).
$$
Observe first that $A_{x_k,b_0}\cap(\ell_j(a_k)\cup\ell_j(a_i))=\emptyset$, for otherwise, by (\ref{cornerdiv}) we would have either $\bbA_{M/p_j^2}[a_i]>0$ or $\bbA_{M/p_j^2}[a_k]>0$; both are not allowed due to (\ref{Bdiv}).

Next, we claim that we cannot have both $A_{x_k,b_0}\cap\ell_j(a)\neq \emptyset$ and $A_{x_k,b_0}\cap\ell_j(x_k)\neq \emptyset$. 
Indeed, the former implies $\bbA_{M/p_jp_k}[x_k|\ell_j(a)]\bbB_{M/p_jp_k}[b_0]>0$ and the latter implies
$\bbA_{M/p_j^2}[x_k]\bbB_{M/p_j^2}[b_0]>0$; having both would contradict Lemma \ref{triangles}.

Suppose that $A_{x_k,b_0}\subset\ell_j(a)$. Then
\begin{equation}\label{ell-j-a}
1=\frac{1}{\phi(p_jp_k)}\bbA_{M/p_jp_k}[x_k|\ell_j(a)]\bbB_{M/p_jp_k}[b_0]
+ \frac{1}{\phi(p_j^2 p_k)}\bbA_{M/p_j^2 p_k}[x_k|\ell_j(a)]\bbB_{M/p_j^2 p_k}[b_0].
\end{equation}
We chose $b_0$ so that $\bbB_{M/p_jp_k}[b_0]<\phi(p_k)$. By the corner assumption, $\bbA_{M/p_jp_k}[x_k|\ell_j(a)]=\phi(p_j)$. We also have $\bbA_{M/p_j^2p_k}[x_k|\ell_j(a)]=\bbA_{M/p_j^2}[a]=0$,
by (\ref{Bdiv}). Therefore the right side of (\ref{ell-j-a}) is strictly less than 
$\frac{1}{\phi(p_jp_k)}\phi(p_j)\phi(p_k)=1$, a contradiction.
The lemma follows.
\end{proof}

We now complete the proof of the proposition. Recall from (C3) that 
\begin{equation}\label{fact888}
\bbA_{M/p_k}[a]=\bbA_{M/p_jp_k}[a]=0.
\end{equation}
Let $x_k\in \ZZ_M\setminus A$ with $(x_k-a,M)=M/p_k$.
By (\ref{fact888}), we have $x_k\not\in A$, and it follows from
Lemma \ref{diagonal3} that $A_{x_k,b_0}\subset\ell_j(x_k)$. Since $M/p_j\in\Div(A)$ and $M/p_j^2\in\Div(B)$ by (\ref{fact777}), this implies that 
$$
\bbA_{M/p_j^2}[x_k]\bbB_{M/p_j^2}[b_0]=\phi(p_j^2),
$$ 
with
\begin{equation}\label{xk-sat}
\bbA_{M/p_j^2}[x_k]=p_j
\end{equation} 
and $\bbB_{M/p_j^2}[b_0]=\phi(p_j)$.  In particular, $|A\cap\ell_j(x_k)|\geq p_j$.
Taking also (\ref{fact888}) into account, we see that $|A\cap\ell_j(x_k)|= p_j$ for each $x_k$ as above. 
This together with $a*F_j\subset A$ yields $|A\cap(\Pi(a,p_i^{n_i}))|\geq p_jp_k$.
By Lemma \ref{planebound}, we must in fact have
$$
|A\cap(\Pi(a,p_i^{n_i}))|= p_jp_k > p_k^2.
$$
On the other hand, by (\ref{planebound2}) we have $|A\cap \Pi(a_i,p_i^{n_i}) |=p_k^2$. It follows that
$\Phi_{p_i^{n_i}}\nmid A$, since otherwise the number of elements of $A$ in both planes would be the same.

It remains to prove that we must also have $\Phi_{p_i^{n_i}}\nmid B$, which provides the final contradiction.
 Indeed, we use the following 
$$
\frac{1}{\phi(p_j)}\bbA_{M/p_j}[a]=\frac{1}{p_j}\bbA_{M/p_j^2}[x_k]=\frac{1}{\phi(p_k^2)}\bbA_{M/p_k^2}[a_i]=\frac{1}{\phi(p_k)}\bbA_{M/p_k}[a_i]=1,
$$
where the first and last part follow from the corner structure, the second part from (\ref{xk-sat}), 
and the third part from (\ref{fact3}). It is easy to see that this configuration implies that $\{p_i^{n_i-1}\parallel m|M\}\subset \Div(A)$. This means that for every $b\in B$
$$ 
|B\cap(\Pi(b,p_i^{n_i-1}))|=|B\cap(\Pi(b,p_i^{n_i}))|,
$$
hence $\Phi_{p_i^{n_i}}\nmid B$. 
This gives the desired contradiction and ends the proof of the proposition.
\end{proof}

Proposition \ref{N_j->N_k} implies that (C1) holds, and we can now follow the rest of the proof for that case.

%%%%%%%%%%%%%%%%%%%%%%%%%%%%%%%%%%%%%%

\section{Fibered grids} \label{fibered-sec}

%%%%%%%%%%%%%%%%%%%%%%%%%%%%%%%%%%%%%%

Throughout most of this section we will work under the following assumption. 

\medskip\noindent
{\bf Assumption (F):} We have $A\oplus B=\ZZ_M$, where $M=p_i^{2}p_j^{2}p_k^{2}$ is odd. Furthermore,
$|A|=|B|=p_ip_jp_k$, $\Phi_M|A$, and $A$ is fibered on $D(M)$-grids.

\medskip

Let $\cali$ be the set of elements of $A$ that belong to an $M$-fiber in the $p_i$ direction, that is, 
$$
\cali=\{a\in A\,|\,\bbA_{M/p_i}[a]=\phi(p_i)\}.
$$
The sets $\calj$ and $\calk$ are defined similarly.  
The assumption (F) implies that every element of $A$ belongs to an $M$-fiber in some direction, hence $A=\cali\cup\calj\cup\calk$. We emphasize that this does {\em not} have to be a disjoint union and that it is possible for an element of $A$ to belong to two or three of these sets.

Our main result on fibered grids is the following theorem.

\begin{theorem}\label{fibered-mainthm}
Assume that (F) holds.

\medskip\noindent
{\rm (I)}  
If $\cali\cap\calj\cap\calk\neq\emptyset$, then the tiling $A\oplus B=\ZZ_M$ is T2-equivalent to $\Lambda\oplus B=\ZZ_M$, where $\Lambda:=\Lambda(0,D(M))$. By Corollary \ref{get-standard}, both $A$ and $B$ satisfy (T2).

\medskip\noindent
{\rm (II)} Assume that $\cali\cap\calj\cap\calk=\emptyset$. Then, after a permutation of the $i,j,k$ indices if necessary, the following holds.

\begin{itemize}
\item [(II\,a)] At least one of the sets $\cali,\calj,\calk$ is empty. Without loss of the generality, we may assume that $\cali=\emptyset$, so that $A\subset \calj\cup\calk$.

\item [(II\,b)] If $A\subset \calj$ or $A\subset\calk$, then $A$ is $M$-fibered in the $p_j$ or $p_k$ direction, respectively. Consequently, 
the conditions of Theorem \ref{subtile} are satisfied in that direction. By Corollary \ref{slab-reduction}, both $A$ and $B$ satisfy (T2).

\item [(II\,c)] Suppose that $\cali=\emptyset$, and that $\calj\setminus\calk$ and $\calk\setminus\calj$ are both nonempty. 

\begin{itemize}
\item [$\bullet$]  If $\Phi_{p_i}|A$, then, after interchanging $A$ and $B$, the conditions of Theorem \ref{subtile} are satisfied in the $p_i$ direction. By Corollary \ref{slab-reduction}, both $A$ and $B$ satisfy (T2).

\item [$\bullet$]  If $\Phi_{p_i^2}|A$, then $A\subset \Pi(a,p_i)$ for any $a\in A$.
By Theorem \ref{subgroup-reduction}, both $A$ and $B$ satisfy (T2).

\end{itemize}
\end{itemize}

\end{theorem}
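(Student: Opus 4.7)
The plan is to handle Parts (I) and (II) separately, with Part (II) further decomposed into its three sub-claims.

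For Part (I), fix $a_0\in\cali\cap\calj\cap\calk$ and translate so that $a_0=0$; then $F_i\cup F_j\cup F_k\subset A$, all three fibers lying in $\Lambda:=\Lambda(0,D(M))$. These three orthogonal fibers already realise every top-level difference, so $\{m:D(M)|m|M\}\subset\Div(A\cap\Lambda)$ holds automatically. The plan is to fill $\Lambda\setminus A$ via iterated fiber shifts: for each $x\in\Lambda\setminus A$, its three neighbours at distances $M/p_i, M/p_j, M/p_k$ either already belong to $A$ (so Lemma \ref{smallcube} forces $x\in A$) or produce the flat-corner configuration of Lemma \ref{flatcorner}, yielding a $(1,2)$-cofibered structure in some direction and letting Lemma \ref{fibershift} replace $A$ by a T2-equivalent set including $x$. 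The organisation of these shifts parallels Lemma \ref{db-nontrivial} and Corollary \ref{db+fiber}, and terminates in a T2-equivalent set equal to $\Lambda$; Corollary \ref{get-standard} concludes.

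The central claim of Part (II) is (IIa): under $\cali\cap\calj\cap\calk=\emptyset$, at least one of $\cali,\calj,\calk$ is empty. The plan is by contradiction: assume all three are nonempty. Proposition \ref{prop-twodirections} assigns each $D(M)$-grid a direction set of size $\leq 2$, and by hypothesis no grid can carry all three directions. A key divisibility input is that for every $\nu\in\{i,j,k\}$, both $\Phi_{p_\nu}$ and $\Phi_{p_\nu^2}$ cannot simultaneously divide $B$, since otherwise $|B\cap\Pi(x,p_\nu^2)|=|B|/p_\nu^2=p_jp_k/p_\nu\notin\NN$; hence for each $\nu$ at least one of $\Phi_{p_\nu},\Phi_{p_\nu^2}$ divides $A$. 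I would then combine this with cofibered structures across neighbouring grids to show that the only globally consistent assignment of direction labels forces one of the three directions to vanish, yielding the contradiction. I expect this step to be the main obstacle of the proof, as it requires passing from per-grid local structure (at most two directions) to a global structural restriction (one direction absent entirely) via the cyclotomic and divisor constraints.

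Part (IIb) is immediate: $A\subset\calj$ means $A$ is a disjoint union of $M$-fibers in the $p_j$ direction, so $A$ is $M$-fibered in that direction, and Corollary \ref{slab-reduction} applies via its ``in particular'' clause. For (IIc), with $\cali=\emptyset$ and both $\calj\setminus\calk,\calk\setminus\calj$ nonempty, the same obstruction ruling out $\Phi_{p_i},\Phi_{p_i^2}|B$ simultaneously forces at least one of $\Phi_{p_i}|A$, $\Phi_{p_i^2}|A$. In the case $\Phi_{p_i^2}|A$, I would use $\cali=\emptyset$, the fibering $A\subset\calj\cup\calk$ (so every fiber in $A$ lies within a single plane $\Pi(\cdot,p_i)$), the uniform $p_i^2$-distribution inside each $p_i$-plane, the cardinality $|A|=p_ip_jp_k$, and divisor exclusion against $B$ to conclude that all of $A$ fits into a single $p_i$-plane $\Pi(a,p_i)$; then Theorem \ref{subgroup-reduction} gives (T2). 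In the case $\Phi_{p_i}|A$ but $\Phi_{p_i^2}\nmid A$ (so $\Phi_{p_i^2}|B$), I would verify condition (ii) of Theorem \ref{subtile} applied to $B$ in the $p_i$ direction: for each $d$ with $p_i^2|d|M$, either $\Phi_d|A$ (inferred from $\Phi_M|A$, $\Phi_{p_i}|A$, and the per-grid fibering via appropriate cuboid tests) or $\Phi_{d/p_i}\Phi_{d/p_i^2}|B$ (forced by $\Phi_{p_i^2}|B$ together with further cyclotomic propagation); Corollary \ref{slab-reduction} then concludes.
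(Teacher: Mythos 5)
Your outline matches the paper's architecture at the top level (Part (I) via fiber shifts, (IIb) immediate, (IIa) and (IIc) by cyclotomic/fibering analysis), and your observation that exactly one of $\Phi_{p_\nu},\Phi_{p_\nu^2}$ divides $A$ is correct. But the heart of the theorem is Part (IIa), and there you have not given an argument: the sentence ``I would then combine this with cofibered structures across neighbouring grids to show that the only globally consistent assignment of direction labels forces one of the three directions to vanish'' is a statement of intent, not a proof, and you acknowledge it is the main obstacle. In the paper this step is the bulk of Section \ref{fibered-sec}: it splits into the pairwise-disjoint case (Proposition \ref{F1emptyset}, whose core is Proposition \ref{initialstatement} with ten claims, using saturating sets, Lemma \ref{Knotunfiberedunstructured}-type counting, the classification of unfibered lower-scale grids via Lemma \ref{Kunfibered}, fiber shifts, and an appeal to the extended-corner Theorem \ref{cornerthm}) and the intersecting case (Proposition \ref{F2emptyset}, with its own chain of lemmas such as \ref{planeBsaturating}, \ref{nodoubleinters}, \ref{noPhi_Ni}, \ref{p_kmincalksubcalj}, \ref{lastoneformixedcase}, \ref{p_ismallest}, organized by which $\Phi_{N_\nu}$ divides $A$ or $B$ and by the size ordering of the primes). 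Nothing in your proposal substitutes for this; Proposition \ref{prop-twodirections} plus the prime-power dichotomy for $B$ does not by itself rule out all three of $\cali,\calj,\calk$ being nonempty.

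Part (IIc) as sketched also has concrete problems. First, after interchanging $A$ and $B$, condition (ii) of Theorem \ref{subtile} in the $p_i$ direction requires, for every $d$ with $p_i^2|d|M$, that either $\Phi_d|B$ or $\Phi_{d/p_i}\Phi_{d/p_i^2}|A$; you state the condition with the roles of $A$ and $B$ reversed, and moreover the claim that the needed divisibilities are ``forced by $\Phi_{p_i^2}|B$ together with further cyclotomic propagation'' is not true in general -- in the paper this verification is exactly the content of Lemmas \ref{Kp_i^2plane}, \ref{Phi_N_j|B}, \ref{Phi_M/p_ip_k^2}, \ref{M_kBdivisors}, \ref{M_kp_isubtile} (when $\Phi_{M_k}|A$) and Lemmas \ref{BPhi_{N_j}}, \ref{noM_nusubtile} (when $\Phi_{M_j}\Phi_{M_k}|B$, using the decomposition (\ref{Brewrite})). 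Second, in the subcase $\Phi_{p_i^2}|A$ your route to $A\subset\Pi(a,p_i)$ via ``uniform $p_i^2$-distribution plus cardinality plus divisor exclusion'' is missing the key input: one needs $|A\cap\Pi(a,p_i^2)|=p_jp_k$ for some $a$, which the paper extracts from the $M_k$-fibering of $\calk$ in the $p_j$ direction (Lemma \ref{Kp_i^2plane}, requiring $\Phi_{M_k}|A$) before applying Lemma \ref{splitting}(i), while the complementary case $\Phi_{M_j}\Phi_{M_k}|B$ is eliminated by showing $\Phi_{p_i}|A$ there (Lemma \ref{BPhi_{N_j}}). Part (I) and (IIb) are essentially right and follow the paper's Corollary \ref{triple-intersection}, but as a whole the proposal leaves the decisive steps unproved.
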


The proof of Theorem \ref{fibered-mainthm} is organized as follows.
We will consider the following sets of assumptions.

\medskip\noindent
{\bf Assumption (F'):} We have $A\oplus B=\ZZ_M$, where $M=p_i^{2}p_j^{2}p_k^{2}$. (Note that $M$ is not required to be odd). Furthermore,
$|A|=|B|=p_ip_jp_k$, $\Phi_M|A$, and $A$ is fibered on $D(M)$-grids.

\medskip\noindent
\textbf{Assumption (F1):} We have $A\oplus B=\ZZ_M$, where $M=p_i^{2}p_j^{2}p_k^{2}$ is odd. Furthermore, $|A|=|B|=p_ip_jp_k$, $\Phi_M|A$, $A$ is fibered on $D(M)$-grids,
and $\cali,\calj,\calk$ are pairwise disjoint.

\medskip\noindent
\textbf{Assumption (F2):} We have $A\oplus B=\ZZ_M$, where $M=p_i^{2}p_j^{2}p_k^{2}$ is odd. Furthermore, $|A|=|B|=p_ip_jp_k$, $\Phi_M|A$, $A$ is fibered on $D(M)$-grids,
$\cali\cap \calj\cap\calk =  \emptyset$, and $\calj\cap\calk\neq \emptyset$.

\medskip\noindent
\textbf{Assumption (F3):} We have $A\oplus B=\ZZ_M$, where $M=p_i^{2}p_j^{2}p_k^{2}$ is odd. Furthermore,  $|A|=|B|=p_ip_jp_k$, $\Phi_M|A$, $A$ is fibered on $D(M)$-grids, $\cali=\emptyset$, $\calj\setminus \calk\neq \emptyset$, and $\calk\setminus \calj\neq \emptyset$.

\medskip

We prove part (I) of Theorem \ref{fibered-mainthm} in Corollary \ref{triple-intersection}; in fact, this part holds under the weaker assumption (F'). Assume now that $\cali\cap\calj\cap\calk=\emptyset$. In that case, we first prove part (II\,a) of Theorem \ref{fibered-mainthm}. While the conclusion is the same, the methods of proof will be very different, so that it is preferable to split this part into two results.

\begin{proposition}\label{F1emptyset}
Assume that (F1) holds. Then one of the sets $\cali,\calj,\calk$ is empty.
\end{proposition}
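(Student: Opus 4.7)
The plan is to argue by contradiction: assume $\cali$, $\calj$, and $\calk$ are all nonempty. The starting point is the divisor-exclusion consequence of this assumption. Since every element of $\cali$ belongs to an $M$-fiber in the $p_i$ direction, whose consecutive members have mutual $M$-gcd equal to $M/p_i$, we have $M/p_i\in\Div(A)$, and analogously $M/p_j,M/p_k\in\Div(A)$. Divisor exclusion (\ref{div-exclusion}) then forces
$$
\{M/p_i,\,M/p_j,\,M/p_k\}\cap\Div(B)=\emptyset,
$$
so $\bbB_{M/p_\nu}[b]=0$ for every $b\in B$ and every $\nu\in\{i,j,k\}$: no two elements of $B$ share a common $M$-fiber in any of the three directions. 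This is a very restrictive condition on $B$, and the strategy is to combine it with the grid-level fibered structure of $A$ to force a configuration that violates the ``at most two directions per grid'' conclusion of Proposition \ref{prop-twodirections}.

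Next I would organize the grid-level picture. By Proposition \ref{prop-twodirections}, on each $D(M)$-grid $\Lambda$ with $A\cap\Lambda\neq\emptyset$, the set $A\cap\Lambda$ is a disjoint union of $M$-fibers in at most two of the three prime directions. Under the pairwise disjointness hypothesis (F1), every element of $A\cap\Lambda$ belongs to exactly one such fiber, so the set of represented directions on $\Lambda$ has size at most two, and $A\cap\Lambda$ meets at most two of the three classes $\cali,\calj,\calk$. Since all three of $\cali,\calj,\calk$ are assumed nonempty, each of the three direction-classes appears on some grid, but no single grid can accommodate fibers in all three directions at once.

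The heart of the argument is then to exhibit a point $x\in\ZZ_M\setminus A$ whose saturating set $A_x$ is on the one hand forced, by the geometric restriction (\ref{bispan}) together with the fibered structure of $A$, to meet lines through representatives of $\cali$, $\calj$ and $\calk$ in all three directions, while on the other hand being confined by the $M/p_\nu$-exclusion and the box product identity (\ref{e-ortho2}) to a configuration supported on a single $D(M)$-grid. A natural candidate is $x=a+M/p_\mu$ for some $a\in\cali$ and $\mu\in\{j,k\}$: then $x\notin A$ by pairwise disjointness (otherwise $a$ would sit in an $M$-fiber in the $p_\mu$ direction in $A$, contradicting $a\in\cali$), and the natural tools to convert these constraints into a contradiction are Lemma \ref{flatcorner}, the enhanced divisor exclusion of Lemma \ref{triangles}, and the plane bound of Lemma \ref{planebound}.

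The principal obstacle I anticipate is that the classes $\cali,\calj,\calk$ need not have representatives in a common $D(M)$-grid, so the desired contradiction --- a single grid forced to admit fibers in all three directions, directly contradicting Proposition \ref{prop-twodirections} --- must be engineered by propagating the $B$-constraints across adjacent grids. I expect this to require a case analysis based on which intermediate divisors $M/p_\nu p_\mu$ and $M/p_\nu^2$ belong to $\Div(B)$, combined with a cyclotomic analysis of the mask decomposition $A(X)=Q_i(X)F_i(X)+Q_j(X)F_j(X)+Q_k(X)F_k(X)$, where $Q_\nu$ is the $\{0,1\}$-valued root-polynomial of the $p_\nu$-fibers in the corresponding class. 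In particular, since $\Phi_{M/p_\nu}\mid F_\mu$ for $\mu\neq\nu$ while $\Phi_{M/p_\nu}\nmid F_\nu$, the divisibility $\Phi_{M/p_\nu}\mid A$ is equivalent to $\Phi_{M/p_\nu}\mid Q_\nu$; careful bookkeeping of these cyclotomic conditions, together with the box product and the flat corner analysis, should produce the required contradiction.
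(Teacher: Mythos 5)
Your opening moves are correct: since $\cali,\calj,\calk$ are all assumed nonempty, $M/p_i,M/p_j,M/p_k\in\Div(A)$, so these differences are excluded from $\Div(B)$, and under the pairwise disjointness in (F1) no $D(M)$-grid can meet all three classes (this is indeed consistent with Proposition \ref{prop-twodirections}). But from that point on the proposal is a plan rather than a proof, and the plan defers exactly the part that carries all the difficulty: you write that a case analysis on the intermediate divisors, together with the box product and a flat-corner analysis, ``should produce the required contradiction,'' without exhibiting the configuration or the case analysis. The paper's argument at this stage is several pages long and has a quite different shape: it first orders the primes $p_i<p_j<p_k$ and uses Corollary \ref{subset} (which rests on Lemma \ref{Kunfibered} and the classification of unfibered structures on the scale $N_\nu$) to force $\Phi_{N_j}\Phi_{N_k}\mid B$; it then proves, in Proposition \ref{initialstatement} through ten separate claims, that $\Phi_{N_i}\nmid A$, using lower-scale fibering (scales $N_i$ and $M_i$), saturating sets, fiber shifts and T2-equivalence, and even an appeal to Theorem \ref{cornerthm}; and it finally eliminates the remaining case $\Phi_{N_i}\Phi_{N_j}\Phi_{N_k}\mid B$ by a cuboid-type argument. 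The contradictions obtained there are arithmetic or counting statements (e.g.\ $p_i\mid p_j$, or a plane containing more than $p_ip_j$ points of $B$), not the ``three fiber directions on one grid'' contradiction you anticipate; nothing in your sketch supplies a mechanism for propagating constraints between grids that would realize your intended endgame.

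There are also concrete technical problems with the tools you name. Lemma \ref{flatcorner} and Lemma \ref{smallcube} both require hypothesis (\ref{notopdivB}), i.e.\ that \emph{all} differences $m$ with $D(M)\mid m\mid M$ are absent from $\Div(B)$; here you only know $M/p_i,M/p_j,M/p_k\notin\Div(B)$, and the divisors $M/p_ip_j,M/p_ip_k,M/p_jp_k,M/p_ip_jp_k$ may well occur in $\Div(B)$ — indeed the paper must treat the case $M/p_jp_k\in\Div(B)$ separately and at length (Claims 7--10 of Proposition \ref{initialstatement}). Finally, your candidate point is not well chosen: for $a\in\cali$ and $x=a+M/p_\mu$, the fact that $a\notin\calj\cup\calk$ only says that \emph{some} point of $a*F_\mu$ is missing from $A$, not that this particular $x$ is; so even the starting configuration of your saturating-set argument is not guaranteed. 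As it stands, the proposal records the easy preliminary observations and a hope for the rest, so it has a genuine gap at its core.
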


\begin{proposition}\label{F2emptyset}
Assume that (F2) holds. Then $\cali=\emptyset$.
\end{proposition}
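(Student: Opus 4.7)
The plan is to argue by contradiction. Assume $\cali\neq\emptyset$ and fix $a_i\in\cali$ (so $a_i*F_i\subset A$) together with $a_{jk}\in\calj\cap\calk$ (so $a_{jk}*F_j\cup a_{jk}*F_k\subset A$); the goal is to derive a contradiction. As a first reduction, set $\Lambda_0:=\Lambda(a_{jk},D(M))$. Since $A\cap\Lambda_0$ contains disjoint $M$-fibers in the $p_j$ and $p_k$ directions, Proposition \ref{prop-twodirections} forces the fibering directions of $A\cap\Lambda_0$ to be exactly $\{p_j,p_k\}$, so no $p_i$-fiber of $A$ is contained in $\Lambda_0$. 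In particular $\cali\cap\Lambda_0=\emptyset$ and $a_i\notin\Lambda_0$.

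Since $a_{jk}\notin\cali$, I may pick $x\in a_{jk}*F_i\setminus A$; note $x\in\Lambda_0$. I would then bound the saturating set $A_x$ using (\ref{bispan}) applied to the elements of the cross $C:=a_{jk}*F_j\cup a_{jk}*F_k\subset A$. A direct computation of the $\Bispan$ intersections yields
\[
A_x\subset \Pi(x,p_i^2)\cup\Pi(a_{jk},p_i^2)\cup\ell_i(x).
\]
The distances from $x$ to the elements of $C$ are exactly $M/p_i$, $M/p_ip_j$, $M/p_ip_k$; together with $M/p_j$, $M/p_k$, $M/p_jp_k\in\Div(A)$ arising from differences within the cross itself, and $M/p_i\in\Div(A)$ from $a_i*F_i$, divisor exclusion removes all of these from $\Div(B)$.

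I would now invoke the box product identity $\langle\bbA[x],\bbB[b]\rangle=1$ for each $b\in B$. Since every distance at which $x$ sees the cross $C$ lies in $\Div(A)\setminus\{M\}$, those terms contribute zero for $y=b\in B$, and the identity must be saturated by elements of $A\cap A_x$ lying outside $C$ at distances in $\Div(B)\setminus\{M\}$. Lemma \ref{triangles} together with the geometric constraint above then forces a highly rigid configuration on the planes $\Pi(x,p_i^2)$, $\Pi(a_{jk},p_i^2)$ and the line $\ell_i(x)$. The principal obstacle, and the hardest step, is to extract a concrete contradiction from this rigidity: splitting on which of $\Phi_{p_i}$, $\Phi_{p_i^2}$ divides $A$ (exactly one does, by (T1)) and on the relative position of $a_i$ with respect to $a_{jk}$, I expect to either produce a $(1,2)$-cofibered structure in a direction incompatible with the $\{p_j,p_k\}$-fibering of $\Lambda_0$ (via Lemma \ref{1dim_sat-cor}), or to overload a plane $\Pi(a_i,p_\nu^2)$ for some $\nu\in\{j,k\}$ beyond the bound of Lemma \ref{planebound} and Corollary \ref{planegrid}. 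The argument should parallel the saturating-set/plane-bound strategy used in the corner analysis of Section \ref{corner-section}, adapted to the setting where the "extra" structure comes from the fiber $a_i*F_i$ rather than an extended corner.
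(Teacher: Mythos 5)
Your opening reductions are sound and in fact parallel the paper's first steps: the case $\Phi_{p_i^2}\mid A$ is disposed of exactly as in Lemma \ref{subgroup} and its corollary, and in the complementary case $\Phi_{p_i}\mid A$ your saturating-set bound for $x\in a_{jk}*F_i\setminus A$ is (a slightly weaker form of) Lemmas \ref{saturatingplanes} and \ref{planeBsaturating}. One local inaccuracy: $\cali\cap\Lambda_0=\emptyset$ does not follow from Proposition \ref{prop-twodirections}, which only asserts that two directions suffice and does not exclude fibers in a third direction; the correct route is Lemma \ref{fiberedstructure}(i) together with $\cali\cap\calj\cap\calk=\emptyset$, since any $M$-fiber of $A$ in the $p_i$ direction rooted in $\Pi(a_{jk},p_i)$ would meet $A\cap\Pi(a_{jk},p_i^2)=a_{jk}*F_j*F_k$ and so produce a point of $\cali\cap\calj\cap\calk$.

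The genuine gap is that your argument stops exactly where the proof becomes hard: ``I expect to either produce a $(1,2)$-cofibered structure \dots or to overload a plane'' is a statement of intent, not a derivation, and the rigidity you have set up does not by itself yield either outcome. In the paper, eliminating $\cali\neq\emptyset$ under $\Phi_{p_i^2}\nmid A$ takes several pages: one first extracts structure of $B$, not just of $A$ near $a_i$ --- the identity (\ref{Bplanerest}), the divisibility $\Phi_d\mid B$ for all $p_i^2\mid d\mid M/p_jp_k$, and then $\cali\cap(\calj\cup\calk)=\emptyset$ (Lemma \ref{nodoubleinters}) and $\Phi_{N_i}\nmid A$ (Lemma \ref{noPhi_Ni}). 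The argument then splits on whether $p_i=\min_\nu p_\nu$ (Lemma \ref{emptycali}, a delicate count of $B$ inside a $D(M)$-grid using (\ref{Bplanerest})) and, when $p_i$ is not the smallest prime, on how $\Phi_{N_j},\Phi_{N_k}$ are distributed between $A$ and $B$ (Corollary \ref{subset}, Lemmas \ref{p_kmincalksubcalj}--\ref{p_ismallest}); these cases rely on fibering properties of $B$ on the scales $N_i,N_j,N_k$, and in Lemma \ref{calknointer} on a fiber shift that creates a $p_j$ extended corner so that Theorem \ref{cornerthm} can be invoked to contradict $\Phi_{p_i^2}\nmid A$. Several of the terminal contradictions are arithmetic (forcing $p_k\mid p_j$ or $p_i\mid p_j$) rather than of the two types you anticipate, and none of the machinery involving $B$'s fibering or the corner theorem is visible in your plan. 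So while the setup is compatible with the paper's, the core of the proof is missing.
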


Relabeling the primes if necessary, we may assume that $\cali=\emptyset$ in the case (F1) as well. If $A$ is $M$-fibered in one of the $p_j$ or $p_k$ directions, then (II\,b) holds, and we are done. It remains to consider the case covered in (F3). The following result completes the proof of the theorem.

\begin{proposition}\label{F3structure}
Assume (F3). Then the conclusion (II\,c) of Theorem \ref{fibered-mainthm} holds.
\end{proposition}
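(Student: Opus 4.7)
\medskip\noindent\textbf{Proof plan.} The natural split is on which of the prime-power cyclotomics $\Phi_{p_i}$, $\Phi_{p_i^2}$ divides $A$ versus $B$. Since each such cyclotomic divides exactly one of $A$ and $B$, and if both $\Phi_{p_i}|B$ and $\Phi_{p_i^2}|B$ then $|B\cap\Pi(x,p_i)|=p_jp_k$ would have to be divisible by $p_i$, contradicting $\gcd(p_i,p_jp_k)=1$, at least one of $\Phi_{p_i},\Phi_{p_i^2}$ divides $A$. The same integrality argument rules out both dividing $A$. Hence exactly one of the following holds: (a) $\Phi_{p_i^2}|A$ with $\Phi_{p_i}|B$, matching the second bullet of (II\,c); or (b) $\Phi_{p_i}|A$ with $\Phi_{p_i^2}|B$, matching the first bullet. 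I treat the two cases separately.

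For case (a), the aim is to show $A\subset\Pi(a,p_i)$, whereupon Theorem \ref{subgroup-reduction} finishes. Under (F3) with $\cali=\emptyset$, $A$ decomposes as a disjoint partition $A=A^{(j)}\sqcup A^{(k)}$ with mask polynomials $A^{(j)}(X)=C(X)F_j(X)$ and $A^{(k)}(X)=D(X)F_k(X)$, and since $p_i^2$ divides both $M/p_j$ and $M/p_k$ every such fiber lies in a single plane $\Pi(\cdot,p_i^2)$. Writing $N^A(\xi):=|A\cap\pi_i^{-1}(\xi)|$ for $\xi\in\ZZ_{p_i^2}$, the criterion (\ref{cyclo-uniform}) applied to $\Phi_{p_i^2}|A$ forces $N^A(\xi)=f(\xi\bmod p_i)$ for some $f:\ZZ_{p_i}\to\ZZ_{\geq 0}$ with $\sum_r f(r)=p_jp_k$. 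Suppose for contradiction that $f(r_1),f(r_2)>0$ for distinct $r_1,r_2\in\ZZ_{p_i}$; the hypotheses $\calj\setminus\calk\neq\emptyset$ and $\calk\setminus\calj\neq\emptyset$ allow us to select a ``$p_j$-only'' element $a_j$ and a ``$p_k$-only'' element $a_k$ that can be placed in different $\pi_i$-classes. Combined with $\Phi_{p_i}|B$ (which forces $|B\cap\Pi(x,p_i)|=p_jp_k$ and hence many $B$-differences at each $p_i$-residue), a saturating-set / divisor-exclusion argument in the spirit of Lemmas \ref{triangles} and \ref{smallcube} is expected to produce a divisor shared by $\Div(A)$ and $\Div(B)$, violating Sands's criterion (\ref{div-exclusion}).

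For case (b), the plan is to verify condition (ii) of Theorem \ref{subtile} in the $p_i$ direction for the interchanged tiling (so with $B$ playing the role of $A$); together with the automatic preconditions $\Phi_{p_i^2}|B$ and $p_i\parallel|B|$, Corollary \ref{slab-reduction} then gives (T2). The condition amounts to checking, for each of the nine divisors $d=p_i^2p_j^bp_k^c$ with $b,c\in\{0,1,2\}$, that either $\Phi_d|B$ or $\Phi_{d/p_i}\Phi_{d/p_i^2}|A$. The decomposition $A=CF_j+DF_k$ and the factorization $F_\nu=(X^M-1)/(X^{M/p_\nu}-1)$ give the automatic relations $\Phi_s|F_\nu\Leftrightarrow p_\nu^2|s$, handling divisors where both $p_j^2$ and $p_k^2$ appear. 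For the remaining $d$, I would use the given $\Phi_{p_i}|A$, $\Phi_M|A$, $\Phi_{p_i^2}|B$, and the tiling identity $A(X)B(X)\equiv(X^M-1)/(X-1)\pmod{X^M-1}$: when $\Phi_d\nmid B$ we get $\Phi_d|A$ for free, and the cuboid-type criteria of Section \ref{cuboid-section} should propagate this together with $\Phi_M|A$ to the required pair $\Phi_{d/p_i}\Phi_{d/p_i^2}|A$. This step reduces to a finite bookkeeping over the nine divisors.

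The main obstacle is the structural argument in case (a). The two-direction hypothesis is essential: the single-direction case (II\,b) admits an immediate slab reduction in the fibering direction, while here we must transmute a purely cyclotomic constraint ($\Phi_{p_i^2}|A$) into a geometric confinement ($A\subset\Pi(a,p_i)$) in the presence of genuinely two-dimensional fiber content. Identifying the precise divisor conflict is delicate; I expect a reduction to a two-dimensional analysis in $\Pi(\cdot,p_i^2)$ via the plane bound (Lemma \ref{planebound}) and the box-product identity (\ref{e-ortho2}), after which the structural classification of unfibered grids from Section \ref{unfibered-missing} together with the saturating-set lemmas should deliver the contradiction. Case (b), by contrast, is expected to be mechanical.
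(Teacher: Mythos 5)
Your proposal is a plan rather than a proof, and the plan leaves exactly the hard parts unproved. The top-level dichotomy (exactly one of $\Phi_{p_i}$, $\Phi_{p_i^2}$ divides $A$) is fine and matches the two bullets of (II\,c), but it is not how the real work is organized: the paper's proof splits instead on whether $\Phi_{M_k}$ (or $\Phi_{M_j}$) divides $A$ or not. In the case $\Phi_{M_k}|A$ it first proves a structural fact (Lemma \ref{Kp_i^2plane}: $\Phi_{p_j^2}|A$ and $\calk$ is $M_k$-fibered in the $p_j$ direction, forcing $|A\cap\Pi(a_k,p_i^2)|=p_jp_k$), from which the subgroup containment $A\subset\Pi(a,p_i)$ under $\Phi_{p_i^2}|A$ is an immediate count, and the subtile verification under $\Phi_{p_i}|A$ follows from Lemmas \ref{Phi_N_j|B}--\ref{M_kp_isubtile}. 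In the complementary case $\Phi_{M_j},\Phi_{M_k}\nmid A$, the paper shows (Lemma \ref{BPhi_{N_j}}) that $\Phi_{p_i^2}|A$ cannot occur at all, so only the subtile branch survives. Your case (a) instead hopes that two occupied residue classes mod $p_i$ plus ``a saturating-set / divisor-exclusion argument in the spirit of Lemmas \ref{triangles} and \ref{smallcube}'' will produce a divisor conflict; nothing of the sort is exhibited, and the paper's contradictions in the analogous sub-cases come from counting identities of the form $p_jp_k=c_jp_j^2+c_kp_k$, which themselves require first establishing fibering of $\calj\setminus\calk$ or $\calk$ at the intermediate scales $N_j$, $M_k$ -- i.e., cyclotomic information you have not derived.

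Your case (b) is also misjudged as ``mechanical.'' Condition (ii) of Theorem \ref{subtile} (with $A$ and $B$ interchanged) asks, for each $d$ with $p_i^2|d|M$, that either $\Phi_d|B$ or $\Phi_{d/p_i}\Phi_{d/p_i^2}|A$; your remark that ``when $\Phi_d\nmid B$ we get $\Phi_d|A$ for free'' does not help, because what is needed is divisibility at the two lower scales $d/p_i$ and $d/p_i^2$, which does not follow from $\Phi_d|A$. For instance, for $d=M_k$ one must prove $\Phi_{M_k/p_i}|A$ and $\Phi_{p_j^2}|A$, and for the remaining divisors one must prove $\Phi_{N_j}|B$ and $\Phi_{M/p_jp_k}\Phi_{M/p_j^2p_k}\Phi_{M/p_jp_k^2}|B$. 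In the paper these occupy Lemmas \ref{Kp_i^2plane}, \ref{splitting}, \ref{Phi_N_j|B}, \ref{Phi_M/p_ip_k^2}, \ref{M_kBdivisors}, \ref{BPhi_{N_j}} and \ref{noM_nusubtile}, and use the plane bound, the unfibered-grid classification (Lemma \ref{Kunfibered}, Corollary \ref{subset}), cuboid balancing at several scales, and in one place a fiber shift; none of this is ``finite bookkeeping over nine divisors.'' So the proposal identifies the correct endgame (Theorem \ref{subgroup-reduction} and Corollary \ref{slab-reduction}) but contains a genuine gap at both of the steps where the actual content of Proposition \ref{F3structure} lies.
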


\medskip

We briefly discuss the notation used in this section.
For $N|M$, we will use $\bbI^N$, $\bbJ^N$, $\bbK^N$ to denote the $N$-boxes associated with $\cali$, $\calj$, $\calk$.
We continue to write
%\begin{equation}\label{fifi-e2}
$$
F_i=\{0,M/p_i,\dots,(p_i-1)M/p_i\},
$$
%\end{equation}
with $F_j,F_k$ defined similarly. Recall also that
%\begin{equation}\label{Li-definition}
$$
M_i=M/p_i^2,\ \ M_j=M/p_j^2,\ \ M_k= M/p_k^2.
$$
%\end{equation}
Note that $M_i=p_j^2p_k^2$ has only two distinct prime factors, and similarly for $M_j,M_k$. In particular, all $M_\nu$-cuboids are 2-dimensional for $\nu\in\{i,j,k\}$, and all conclusions of Lemma \ref{2d-cyclo} apply on that scale. Thus, if $\Phi_{M_i}|A$, then $A$ mod $M_i$ is a linear combination of $M_i$-fibers in the $p_j$ and $p_k$ directions, with non-negative integer coefficients. In particular, if $\Phi_{M_i}|A$
and
$$
\bbA^{M_i}_{M_i}[x]\in\{0,c_0\}\ \ \forall x\in\ZZ_M,
$$
then $A$ is $M_i$-fibered in one of the $p_j$ and $p_k$ directions on every $D(M_i)$-grid. Similar statements hold with $A$ replaced by $B$, as well as for other permutations of the indices $i,j,k$.

\subsection{Intersections of $\cali,\calj,\calk$}

It is possible for any of the sets $\cali, \calj$ and $\calk$ to intersect the others. Furthermore, given a $D(M)$-grid $\Lambda$, $A\cap\Lambda$ can contain fibers in some direction without necessarily being fibered in that direction. For example, consider the set
\begin{equation}\label{fifi-e1}
A_0=F_i* \big[ (F_j*a)\cup (F_k*a) \big].
\end{equation}
Then $a\in \cali\cap\calj\cap\calk$, but 
$A_0$ is not fibered in either the $p_j$ or the $p_k$ direction.

Nonetheless, the condition (F) places significant limits on the ways in which $\cali,\calj,\calk$ may intersect, as provided by the following structure lemma. In fact, the weaker condition (F') is sufficient for this purpose.

\begin{lemma}\label{fiberedstructure}
Assume (F'), and suppose that $a\in \calj\cap\calk$ for some $a\in A$. Let $D=D(M)$.

\smallskip
(i) Suppose that $A\cap \Lambda(a,D)$ is $M$-fibered in at least one of the $p_j$ and $p_k$ directions. (In particular, this holds if $a\not\in\cali$.) then 
$$A\cap\Pi(a,p_i^{2})=a*F_j*F_k.$$

\smallskip
(ii) If $A\cap\Lambda(a,D)$ is $M$-fibered in the $p_i$ direction (so that $a\in\cali\cap\calj\cap\calk$), then $A_0\subset A\cap\Lambda(a,D)$, where
$A_0$ is the set in (\ref{fifi-e1}). 
\end{lemma}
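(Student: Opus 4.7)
The plan is to derive both parts directly from the fibering hypotheses, with the only nontrivial ingredient being the plane bound (Lemma \ref{planebound}) for part (i).

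For part (i), I first record the geometric fact that $D = D(M) = p_ip_jp_k$ divides both $M/p_j$ and $M/p_k$, so every $M$-fiber in the $p_j$ or $p_k$ direction is contained in a single $D$-grid $\Lambda(\cdot,D)$. In particular, $a*F_j$ and $a*F_k$, both contained in $A$ because $a\in\calj\cap\calk$, lie inside $\Lambda(a,D)$. Assuming without loss of generality that $A\cap\Lambda(a,D)$ is $M$-fibered in the $p_j$ direction (the $p_k$ case is symmetric), each element of $a*F_k$ then sits in an $M$-fiber in the $p_j$ direction contained in $A\cap\Lambda(a,D)$; taking the union of those $p_k$ fibers produces $a*F_j*F_k\subset A$. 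Since $a*F_j*F_k\subset\Pi(a,p_i^2)$ and has cardinality $p_jp_k$, the plane bound $|A\cap\Pi(a,p_i^2)|\leq p_jp_k$ (Lemma \ref{planebound} applied with $\alpha_i=0$, using $|A|=p_ip_jp_k$) forces equality, giving $A\cap\Pi(a,p_i^2) = a*F_j*F_k$ as claimed. The ``in particular'' clause is immediate from assumption (F): $A\cap\Lambda(a,D)$ must be $M$-fibered in some direction, and if that direction were $p_i$, then $a$ itself would belong to an $M$-fiber in the $p_i$ direction, contradicting $a\notin\cali$.

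For part (ii), the hypothesis says that every point of $A\cap\Lambda(a,D)$ lies on an $M$-fiber in the $p_i$ direction contained in $A\cap\Lambda(a,D)$. Since $a\in\calj$ gives $a*F_j\subset A$, and $a*F_j\subset\Lambda(a,D)$ as noted above, applying the fibering hypothesis to each of the $p_j$ points of $a*F_j$ yields $F_i*(a*F_j)\subset A\cap\Lambda(a,D)$. The analogous application to $a*F_k$ (using $a\in\calk$) produces $F_i*(a*F_k)\subset A\cap\Lambda(a,D)$. Taking the union gives $A_0=F_i*[(a*F_j)\cup(a*F_k)]\subset A\cap\Lambda(a,D)$.

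No step is expected to pose genuine difficulty: the lemma is essentially bookkeeping once one observes that $a\in\calj\cap\calk$ supplies two ``seed'' fibers and that the fibering hypothesis propagates these in the remaining direction, with the plane bound serving only to close off part (i) by ruling out further points of $A$ in $\Pi(a,p_i^2)$.
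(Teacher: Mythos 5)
Your proof is correct and follows essentially the same route as the paper's: use one of the two seed fibers supplied by $a\in\calj\cap\calk$, propagate it with the fibering hypothesis to get $a*F_j*F_k\subset A$, and close part (i) with the plane bound (Lemma \ref{planebound}); part (ii) is the same direct propagation the paper dismisses as obvious. The only cosmetic difference is the choice of which seed fiber to propagate under the WLOG, which is immaterial.
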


\begin{proof}
(i) Assume, without loss of generality, that $A\cap\Lambda(a,D)$ is $M$-fibered in the $p_k$ direction. Since $a\in\calj$, we have $a*F_j\subset A$, and the fibering assumption implies that $a*F_j*F_k\subset A$. The set $a*F_j*F_k$ is contained in the plane $\Pi(a,p_i^2)$ and has cardinality $p_jp_k$. By Lemma \ref{planebound}, there are no other elements of $A$ in that plane. 

If $a\not\in \cali$, then by (F') the grid $\Lambda(a, D)$ must be fibered in at least one of the other two directions, so that the above statement applies.

Part (ii) is obvious.
\end{proof}

\begin{corollary}\label{triple-intersection}
Assume (F'), and suppose that $\cali\cap\calj\cap\calk\neq\emptyset$. Then 
the tiling $A\oplus B=\ZZ_M$ is T2-equivalent to
$\Lambda(a,D(M)) \oplus B=\ZZ_M$. Consequently, $A$ and $B$ satisfy (T2).
\end{corollary}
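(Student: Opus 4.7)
Let $a\in\cali\cap\calj\cap\calk$, set $\Lambda_0:=\Lambda(a,D(M))$, and identify $\Lambda_0$ with $\ZZ_{p_i}\oplus\ZZ_{p_j}\oplus\ZZ_{p_k}$ by the natural isomorphism sending $a$ to $(0,0,0)$. The plan is to perform a sequence of $M$-fiber shifts in the $p_k$ direction, each preserving T2-equivalence by Lemma \ref{fibershift}, thereby producing a set $A'$ T2-equivalent to $A$ with $\Lambda_0\subseteq A'$. Since $|A'|=|A|=p_ip_jp_k=|\Lambda_0|$, this forces $A'=\Lambda_0$, so $A\oplus B=\ZZ_M$ is T2-equivalent to $\Lambda_0\oplus B=\ZZ_M$, and Corollary \ref{get-standard} gives that both $A$ and $B$ satisfy (T2).

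By assumption (F), $A\cap\Lambda_0$ is $M$-fibered in some direction, which after permuting $i,j,k$ we may take to be $p_k$. Applying Lemma \ref{fiberedstructure}(i) with the pair $a\in\calj\cap\calk$ yields $A\cap\Pi(a,p_i^{2})=a*F_j*F_k$, and by the symmetry $i\leftrightarrow j$ combined with $a\in\cali\cap\calk$, the same lemma also gives $A\cap\Pi(a,p_j^{2})=a*F_i*F_k$. Thus $A$ contains both planes $a*F_j*F_k$ and $a*F_i*F_k$. Enumerating differences within these two planes shows that every divisor $m$ of $M$ with $D(M)\mid m$ and $m\neq M$ lies in $\Div(A)$; by divisor exclusion, $\Div(B)\cap\{m:D(M)\mid m\mid M,\ m\neq M\}=\emptyset$, so hypothesis (\ref{notopdivB}) is in force.

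To construct $\Lambda_0\subseteq A'$, I fill in the remaining plane $a*F_i*F_j$ one $p_k$-column at a time. Fix $x=(\alpha,\beta,0)\in a*F_i*F_j$ with $\alpha,\beta\neq 0$ and $x\notin A$. The points $a=(0,0,0)$, $(\alpha,0,0)\in a*F_i$, and $(0,\beta,0)\in a*F_j$ all lie in $A$, and together with $x$ verify the hypotheses of Lemma \ref{flatcorner} with $p_k$ playing the role of the ``special'' prime $p_i$ (justified by the symmetry $n_i=n_j=n_k=2$): one checks $(a-(\alpha,0,0),M)=((0,\beta,0)-x,M)=M/p_i$ and $(a-(0,\beta,0),M)=((\alpha,0,0)-x,M)=M/p_j$. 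The lemma then yields $A_x\subseteq\ell_k(x)$ together with a $(1,2)$-cofibered structure in the $p_k$ direction and an $M$-cofiber $F\subseteq A$ at distance $M/p_k^{2}$ from $x$. Applying Lemma \ref{fibershift} to shift $F$ onto $x*F_k$ produces a T2-equivalent tiling $A_1\oplus B=\ZZ_M$ with $x*F_k\subseteq A_1\cap\Lambda_0$; the shift only redistributes mass within $\ell_k(x)$, which is disjoint from both $a*F_j*F_k$ and $a*F_i*F_k$, so the two planes remain intact in $A_1$ and (\ref{notopdivB}) continues to hold.

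Iterating this procedure over every $x\in a*F_i*F_j$ still missing from the current set produces the desired $A'\supseteq\Lambda_0$. The principal care required is iterative bookkeeping---checking at each stage that the two original planes survive and the divisor condition persists---but this is immediate from the preceding paragraph, and the conclusion then follows as described at the outset. The main obstacle that could arise in a more general setting (arbitrary $n_i,n_j,n_k$) is the absence of full symmetry among the three primes, and the consequent failure of the permuted flat-corner lemma to apply cleanly; here the assumption $n_i=n_j=n_k=2$ sidesteps that issue entirely.
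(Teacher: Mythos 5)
Your proposal is correct and follows essentially the same route as the paper: establish the ``cross'' structure through $a$ (the two planes containing the fibering direction), deduce $\{D(M)\,|\,m\,|\,M\}\subset\Div(A)$ so that (\ref{notopdivB}) holds for $B$, and then repeatedly apply Lemma \ref{flatcorner} together with Lemma \ref{fibershift} to fill in the missing fibers until the grid $\Lambda(a,D(M))$ is obtained. The only step you leave implicit is that for points of the grid already in $A$ (with both transverse coordinates nonzero) the full $p_k$-column lies in $A$, which follows at once from your WLOG assumption that $A\cap\Lambda_0$ is $M$-fibered in the $p_k$ direction, exactly as the paper notes in its version of the argument.
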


\begin{proof}
Let $a\in \cali\cap\calj\cap\calk$. Without loss of generality, we may assume that $A\cap\Lambda(a,D(M))$ is $M$-fibered in the $p_i$ direction. It follows that $\{D(M)|m|M\}\subset \Div(A)$, and, by Lemma \ref{fiberedstructure} (ii), we have $A_0\subset A$. 

For every $a_{jk}\in A$ with $(a-a_{jk},M)=M/p_jp_k$, we have $a_{jk}*F_i\subset A$. 
Suppose now that there is a $z\in \ZZ_M\setminus A$ with $(a-z,M)=M/p_jp_k$. By Lemma \ref{flatcorner}, we have
$A_z\subset\ell_i(z)$, so that the pair $(A,B)$ has a (1,2)-cofibered structure in the $p_i$ direction with the cofiber in $A$ at distance $M/p_i^2$ from $z$. For each such $z$, we apply Lemma \ref{fibershift} to shift the cofiber, obtaining a new set $A'$ with $A'\oplus B=\ZZ_M$ such that $z*F_i\subset A'$ and $A'$ is T2-equivalent to $A$. 
After all such shifts have been performed, we see that $A$ is T2-equivalent to $\Lambda(a,D(M))$.
\end{proof}

%%%%%%%%%%%%%%%%%%%%%%%%%%%%%%%%%%%%%%%%%%%%%%%%

\subsection{Toolbox for fibered grids}

We start by pointing out a special case when Theorem \ref{fibered-mainthm} is very easy to prove.

\begin{lemma}
Let $A\oplus B=\ZZ_M$, where $M=p_i^{2}p_j^{2}p_k^{2}$. If $\Phi_M$ divides both $A$ and $B$, then at least one of the sets $A$ and $B$ is $M$-fibered in some direction. By Corollary \ref{slab-reduction}, both $A$ and $B$ satisfy (T2).
\end{lemma}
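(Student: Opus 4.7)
The plan is to pigeonhole the three ``top'' divisors $M/p_i,\,M/p_j,\,M/p_k$ between $\Div(A)$ and $\Div(B)$, and then feed the result into the Missing Top Difference Lemma.

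First I would set
\[
S_A := \{\nu \in \{i,j,k\} : M/p_\nu \in \Div(A)\},\qquad
S_B := \{\nu \in \{i,j,k\} : M/p_\nu \in \Div(B)\}.
\]
Divisor exclusion (the formulation \eqref{div-exclusion} of Sands's theorem) gives $S_A \cap S_B = \emptyset$, so $|S_A| + |S_B| \leq 3$ and hence $\min(|S_A|,|S_B|) \leq 1$. After relabeling, assume $|S_A| \leq 1$, so that there are at least two distinct indices $\nu,\nu' \in \{i,j,k\}$ with $M/p_\nu,\,M/p_{\nu'}\notin \Div(A)$. Call the third direction $p_\kappa$.

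Next I would verify the hypotheses of Lemma~\ref{gen_top_div_mis}(ii) applied to $A$ with $N=M$. Since $M$ is odd, both $p_\nu$ and $p_{\nu'}$ are odd; since $A$ is an honest set, the binary weight condition \eqref{bin_cond} holds automatically with $c_0 = 1$; and $\Phi_M \mid A$ is one of the hypotheses of the lemma being proved. The conclusion of Lemma~\ref{gen_top_div_mis}(ii) is then that $A$ is $M$-fibered in the $p_\kappa$ direction, which is the first assertion. The second assertion (that $A$ and $B$ both satisfy (T2)) follows at once by invoking Corollary~\ref{slab-reduction}, whose parenthetical remark explicitly covers the case in which $A$ is $M$-fibered in some $p_\nu$ direction.

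I do not anticipate a genuine obstacle here: the argument is a three-line pigeonhole on a 3-element set followed by a black-box application of two tools already developed in Section~\ref{sec-toolbox}. The only point worth a moment's care is confirming that the odd-$M$ hypothesis is exactly what discharges the ``$p_\nu \neq 2$'' restriction in Lemma~\ref{gen_top_div_mis}(ii), and that no separate assumption on $|A|$ is required because working with a set forces $c_0=1$ in \eqref{bin_cond}. By symmetry, if instead $|S_B|\leq 1$ the same argument with the roles of $A$ and $B$ reversed produces an $M$-fiber in $B$, still triggering Corollary~\ref{slab-reduction}.
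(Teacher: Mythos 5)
Your argument is correct and is essentially the paper's own proof: pigeonhole the three top differences $M/p_i, M/p_j, M/p_k$ using divisor exclusion so that one of $A,B$ misses two of them, then apply Lemma \ref{gen_top_div_mis}(ii) (with oddness of $M$ discharging the $p_\nu\neq 2$ hypothesis and $c_0=1$ since $A$ is a set) to get $M$-fibering in the remaining direction, and finish with Corollary \ref{slab-reduction}. No substantive difference from the paper's proof.
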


\begin{proof}
Each of the differences $M/p_i$, $M/p_j$, $M/p_k$ can belong to at most one of $\Div(A)$ and $\Div(B)$. By pigeonholing, at least one of $\Div(A)$ and $\Div(B)$ must avoid at least two of these differences. 
Assume without loss of generality that $M/p_i,M/p_j\notin \Div(A)$. By the assumptions of the lemma, we also have $\Phi_M|A$. Applying Lemma \ref{gen_top_div_mis} (ii) to $A$, we get that $A$ is $M$-fibered in the $p_k$ direction, as claimed.
\end{proof}

%From now on, we will assume that $\Phi_M\nmid B$.

Next, we discuss cyclotomic divisibility. 
Since $F_i(X)=(X^M-1)/(X^{M/p_i}-1)$, we have
\begin{equation}\label{fibercyc}
\Phi_s|F_i \ \Leftrightarrow   \ p_i^{2}|s|M,
\end{equation}
and similarly for $F_j$ and $F_k$. In particular,
\begin{equation}\label{fibercyc2}
\prod_{\alpha_i=1}^{2}\Phi_{M/p_i^{\alpha_i} }(X)\prod_{\alpha_j=1}^{2 }\Phi_{M/p_j^{\alpha_j}}(X)\Big|\calk(X),
\end{equation}
and similarly for other permutations of the indices $i,j,k$.

\begin{lemma}\label{fibercyc3}
Assume (F'). For each $\alpha_k\in\{1,2\}$, we have
$$\Phi_{M/p_k^{\alpha_k} }|A \ \Leftrightarrow \ \Phi_{M/p_k^{\alpha_k} }|\calk.
$$
In particular, if $\Phi_{M/p_k^{\alpha_k} }\nmid A$ for some $\alpha_k\in\{1,2\}$, then $\calk\neq\emptyset$.
Similar conclusions hold for other permutations of the indices $i,j,k$.
\end{lemma}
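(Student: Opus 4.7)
Set $\Phi := \Phi_{M/p_k^{\alpha_k}}$. The strategy is to reduce the claimed equivalence via inclusion--exclusion to showing that a certain ``correction'' polynomial vanishes modulo $\Phi$, then to verify this grid-by-grid using assumption (F). Since $a \in \cali$ implies $a * F_i \subset A$, and any $a' \in a * F_i$ inherits $a' * F_i = a * F_i \subset A$, the set $\cali$ is a disjoint union of $M$-fibers in the $p_i$ direction, so $\cali(X) = C_i(X) F_i(X)$ for an integer polynomial $C_i$. Because $M/p_k^{\alpha_k}$ satisfies $p_i^2 \mid M/p_k^{\alpha_k} \mid M$, identity (\ref{fibercyc}) gives $\Phi \mid F_i$, hence $\Phi \mid \cali$; symmetrically, $\Phi \mid \calj$.

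Expanding $A = \cali \cup \calj \cup \calk$ by inclusion--exclusion and reducing modulo $\Phi$ yields
\[
A \equiv \calk - E \pmod{\Phi}, \qquad E := (\cali\cap\calj) + (\cali\cap\calk) + (\calj\cap\calk) - (\cali\cap\calj\cap\calk).
\]
Hence the equivalence $\Phi \mid A \Leftrightarrow \Phi \mid \calk$ reduces to proving $\Phi \mid E$. I partition $\ZZ_M$ into $D(M)$-grids $\Lambda$ and write $E = \sum_\Lambda E_\Lambda$ with $E_\Lambda$ supported on $\Lambda$; it suffices to verify $\Phi \mid E_\Lambda$ for each grid with $A \cap \Lambda \neq \emptyset$.

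For such a grid, assumption (F) supplies a direction $\nu \in \{i,j,k\}$ in which $A \cap \Lambda$ is $M$-fibered, so $A \cap \Lambda = F_\nu * R$ for some $R$ in the transverse grid $\ZZ_{p_{\nu_1}} \oplus \ZZ_{p_{\nu_2}}$, where $\{\nu_1,\nu_2\} = \{i,j,k\}\setminus\{\nu\}$. I will describe $\cali \cap \Lambda$, $\calj \cap \Lambda$, $\calk \cap \Lambda$ and their pairwise and triple intersections in terms of $R$ and its ``full rows'' and ``full columns'', using the key observation that $A \cap \Lambda$ is contained in whichever of $\cali,\calj,\calk$ corresponds to $\nu$ (for example, when $\nu = k$, every element of $A \cap \Lambda$ lies in a $p_k$-fiber in $A$, hence in $\calk$). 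This coincidence forces $\cali\cap\calj\cap\calk$ on $\Lambda$ to agree with one of the double intersections, producing a cancellation in $E_\Lambda$ that leaves
\[
E_\Lambda = F_\nu(X)\bigl(F_{\nu_1}(X)\, U(X) + F_{\nu_2}(X)\, V(X)\bigr)
\]
for polynomials $U, V$ determined by $R$.

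When $\nu \in \{i,j\}$, the outer factor $F_\nu$ is already divisible by $\Phi$, so $\Phi \mid E_\Lambda$ immediately. The main obstacle is the case $\nu = k$: now $F_k$ is \emph{not} divisible by $\Phi$, but in this case $\{\nu_1,\nu_2\} = \{i,j\}$, so each summand $F_k F_i U$ and $F_k F_j V$ is individually divisible by $\Phi$, and $\Phi\mid E_\Lambda$ again. Summing over all grids gives $\Phi \mid E$, and hence the equivalence. The ``in particular'' statement then follows by contrapositive: if $\calk = \emptyset$ then $\calk(X) = 0$ is trivially divisible by every $\Phi$, forcing $\Phi \mid A$, so $\Phi \nmid A$ implies $\calk \neq \emptyset$.
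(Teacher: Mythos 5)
Your proposal is correct. It rests on the same two facts as the paper's argument: $\Phi_{M/p_k^{\alpha_k}}$ divides $F_i$ and $F_j$ but not $F_k$ (i.e.\ (\ref{fibercyc})), and the overlaps between $\cali,\calj,\calk$ on a fibered $D(M)$-grid have a two-dimensional product structure ($F_\mu*F_\nu*$ something) — your ``full rows/columns'' observation is exactly the content of Lemma \ref{fiberedstructure}(i), up to permutation of indices. Where you differ is the bookkeeping: you expand $A=\cali\cup\calj\cup\calk$ by inclusion--exclusion and show the correction term $E$ vanishes mod $\Phi$ grid by grid, whereas the paper avoids a correction term by disjointifying first — it assigns each grid piece $A\cap\Lambda_\tau$ to one of $\cali',\calj',\calk'$ according to its fibering direction, notes $\Phi\mid A\Leftrightarrow\Phi\mid\calk'$, and then compares $\calk'$ with $\calk$ by writing $\calk\setminus\calk'$ as a disjoint union of sets $a*F_i*F_k$ and $a*F_j*F_k$ via Lemma \ref{fiberedstructure}. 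The two routes are essentially equivalent in content; yours has the mild advantage that in the case $\nu=k$ no structural input is needed at all ($\cali_\Lambda$ and $\calj_\Lambda$ are already unions of $p_i$- and $p_j$-fibers), while the product structure is only needed for the $\calk$-contribution on grids fibered in the $p_i$ or $p_j$ direction — which is precisely where Lemma \ref{fiberedstructure} enters in the paper. In a full write-up you should spell out the one-line verification behind ``full $i$-lines inside a union of full $k$-lines come in full $(i,k)$-planes,'' or simply cite Lemma \ref{fiberedstructure}.
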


\begin{proof}
The lemma follows immediately from (\ref{fibercyc2}) if $\cali,\calj,\calk$ are mutually disjoint, since then we have $A(X)=\cali(X)+\calj(X)+\calk(X)$. In the general case, we need a mild workaround as follows.

Write $A(X)=\cali'(X)+\calj'(X)+\calk'(X)$, where the sets $\cali',\calj',\calk'$ are pairwise disjoint and $\cali',\calj',\calk'$ are $M$-fibered in the $p_i,p_j$, and $p_k$ direction, respectively. This can be done by
splitting up $\ZZ_M$ into pairwise disjoint $D(M)$-grids $\Lambda_\tau$ and adding $A\cap\Lambda_\tau$ to one of $\cali',\calj',\calk'$, according to the direction in which $A\cap\Lambda_\tau$ is $M$-fibered. (If $A\cap\Lambda_\tau$ is fibered in more than one direction, choose one arbitrarily and add $A\cap\Lambda_\tau$ to the corresponding set.)

It follows from (\ref{fibercyc2}) that $\Phi_{M/p_k^{\alpha_k} }|A$ if and only if $\Phi_{M/p_k^{\alpha_k} }|\calk'$. To pass from $\calk'$ to $\calk$, we write $\calk(X)=\calk'(X)+\calk_i(X)+\calk_j(X)$, where $\calk_i\subset\cali'$ and $\calk_j\subset\calj'$. By Lemma \ref{fiberedstructure}, $\calk_i$ is a union of pairwise disjoint sets of the form $a*F_i*F_k$, where $a\in A$. In particular, by (\ref{fibercyc2}) we have $\Phi_{M/p_k^{\alpha_k} }|\calk_i(X)$. Applying the same argument to $\calk_j$, we see that $\Phi_{M/p_k^{\alpha_k} }|\calk'$ if and only if $\Phi_{M/p_k^{\alpha_k} }|\calk$, and the lemma follows.
\end{proof}

We finish with two counting lemmas.

\begin{lemma}\label{Knotunfiberedunstructured}
Let $A\oplus B=\ZZ_M, M=p_i^2p_j^2p_k^2, |A|=|B|=p_ip_jp_k$. Then
\begin{equation}\label{Kdivrest}
\{m/p_k^\alpha :\ \alpha\in\{0,1,2\}, m\in \{M,M/p_i,M/p_j,M/p_ip_j\} \}\cap \Div(B)\neq	\{M\} .
\end{equation}
\end{lemma}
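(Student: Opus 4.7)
The plan is a straightforward pigeonhole argument. First I would identify the set
\[
S := \{m/p_k^\alpha :\ \alpha\in\{0,1,2\}, m\in \{M,M/p_i,M/p_j,M/p_ip_j\} \}
\]
more intrinsically. Writing each element of $S$ in the form $p_i^{a_i}p_j^{a_j}p_k^{a_k}$, one sees by a direct enumeration that
\[
S = \{d\,|\,M:\ p_ip_j \mid d\},
\]
that is, $S$ consists exactly of those divisors of $M$ that are divisible by $p_ip_j$.

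Next I would apply pigeonhole on $B$ modulo $p_ip_j$. Consider the reduction map $\pi:\ZZ_M\to \ZZ_M/p_ip_j\ZZ_M$; the quotient has order $M/(p_ip_jp_k^2)=p_ip_j$. Since $|B|=p_ip_jp_k>p_ip_j$, there must exist distinct $b,b'\in B$ with $\pi(b)=\pi(b')$, i.e.\ $b-b'\in p_ip_j\ZZ_M$. Consequently $p_ip_j$ divides $(b-b',M)$, so $(b-b',M)$ is a divisor of $M$ lying in $S$, and $S\cap\Div(B)\neq\emptyset$. (In fact, because $b\neq b'$ in $\ZZ_M$, one gets a nontrivial divisor $(b-b',M)\neq M$, which is the content one actually wants to use downstream.)

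There is no real obstacle here; the only thing worth being explicit about is the identification of $S$ with $\{d\mid M:\ p_ip_j\mid d\}$, which is a finite check. The proof requires neither cyclotomic divisibility nor saturating-set technology, just the hypothesis $|B|=p_ip_jp_k$.
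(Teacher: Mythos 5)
Your proof is correct and is essentially the paper's own argument: the paper counts elements of $B$ in the $p_ip_j$ cosets ($M/p_ip_jp_k^2$-grids), of which there are $p_ip_j < |B| = p_ip_jp_k$, which is exactly your pigeonhole on $\ZZ_M/p_ip_j\ZZ_M$. Your identification of the divisor set as $\{d\mid M:\ p_ip_j\mid d\}$ and the remark that the resulting divisor is $\neq M$ (the form in which the lemma is actually used later) are both accurate.
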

\begin{proof}
Suppose that (\ref{Kdivrest}) fails. It follows that any $M/p_ip_jp_k^2$-grid may contain at most one element of $B$. Since $\ZZ_M$ is a disjoint union of $p_ip_j$ such grids, it follows that $|B|\leq M/p_ip_jp_k^2$, contradicting our assumption that $|B|=M/p_ip_jp_k$.
\end{proof}

\begin{lemma}\label{planesubsetcalk}
Assume that (F') holds. 

\smallskip

(i) Let $a_k\in \calk$ and $\Pi_k:=\Pi(a_k,p_i^{\alpha_i})$ for some $\alpha_i\in\{1,2\}$.
Suppose that $|A\cap\Pi_k|=p_jp_k$ and $\cali\cap\Pi_k=\emptyset$. Then $A\cap\Pi_k\subset\calk$.

\smallskip

(ii) Let $a_i\in \cali$ and $\Pi_i:= \Pi(a_i,p_i)$.
Suppose that
\begin{equation}\label{Pi_isize}
|A\cap\Pi_i|=p_jp_k
\end{equation}
and $A\cap\Pi_i\subset\cali\cup\calk$. Then $p_i<p_j$.

\smallskip
The same conclusions hold with $j$ and $k$ interchanged.
\end{lemma}

\begin{proof}
(i) 
Suppose first that $\calj\cap\calk\cap\Pi_k \neq\emptyset$, and let $a\in \calj\cap\calk\cap\Pi_k$. Then $a\not\in\cali$. By Lemma \ref{fiberedstructure} (i), we have $A\cap\Pi_k =a*F_j*F_k$, so that $A\cap\Pi_k\subset\calj \cap\calk$.

Assume now that $\calj\cap\calk\cap\Pi_k=\emptyset$. Since $a_k\in\calk$, 
there exists a nonnegative integer $c_j$ and a positive integer $c_k$ such that 
$$
p_jp_k=|A\cap\Pi_k|=c_jp_j+c_kp_k.
$$
This clearly implies $c_j=0$ and $c_k=p_j$, thus proving $\calj\cap\Pi_k=\emptyset$. 

\medskip
(ii) Assume first
\begin{equation}\label{nointesection}
\cali\cap\calj\cap\calk\cap\Pi_i\neq\emptyset,
\end{equation}
and let  $a\in \cali\cap\calj\cap\calk\cap\Pi_i$. Denote $\Lambda_0=\Lambda(a,D(M))$. By assumption, $A\cap\Lambda_0$ is $M$-fibered in the $p_\nu$ direction for some $\nu\in \{i,j,k\}$. Suppose that the latter holds with $\nu=j$. Since $a*F_k\subset A\cap\Lambda_0$, in fact we have $a*F_k*F_j\subset A\cap\Lambda_0$. But $a*F_i$ is also contained in $A\cap\Lambda_0$, hence
$$
|A\cap\Pi_i|\geq|A\cap\Lambda_0|\geq p_jp_k+p_i-1>p_jp_k,
$$
thus contradicting (\ref{Pi_isize}). Interchanging $j$ and $k$, the same argument shows $A\cap\Lambda_0$ cannot be $M$-fibered in the $p_k$ direction. 

It follows that $A\cap\Lambda_0$ is $M$-fibered in the $p_i$ direction. Since $(a*F_j)\cup (a*F_k)\subset A\cap\Lambda_0$, the set $A\cap\Lambda_0$ must contain a structure as in (\ref{fifi-e1}). By (\ref{Pi_isize})
$$
p_jp_k = |A\cap\Pi_i|\geq |A\cap\Lambda_0| >p_ip_k,
$$
which is what we wanted to show.

For the rest of the proof we assume (\ref{nointesection}) fails. We show that $A\cap \Pi_i$ can be written (not necessarily uniquely) as a disjoint union of $M$-fibers in the $p_i$ and $p_k$ directions. Indeed, if $\cali\cap\Pi_i$ and $\calk\cap\Pi_i$ are disjoint, we are done. Assume now that 
\begin{equation}\label{IKintersection}
\cali\cap\calk\cap\Lambda\neq\emptyset,
\end{equation}
for some $D(M)$-grid $\Lambda\subset\Pi_i$. From the failure of (\ref{nointesection}), we deduce that $A\cap\Lambda$ is $M$-fibered in the $p_\nu$ direction, with either $\nu=i$ or $\nu=k$. By Lemma \ref{fiberedstructure} (i), $\cali\cap\calk\cap\Lambda$ is a union of pairwise disjoint cosets of $F_i*F_k$. We write all such cosets as unions of $M$-fibers in the $p_i$ direction. Then, we add all the remaining $M$-fibers in the $p_i$ and $p_k$ directions in $A\cap\Lambda$, disjoint from $\cali\cap\calk\cap\Lambda$ and from each other. Since $\Lambda$ is an arbitrary grid satisfying (\ref{IKintersection}), the claim follows.

Since $A\cap\Pi_i$ can be written a disjoint union of $M$-fibers in the $p_i$ and $p_k$ directions, there must exist a positive integer $c_i$ and a nonnegative integer $c_k$ such that 
$$
p_jp_k=|A\cap\Pi_i|=c_ip_i+c_kp_k.
$$
It follows that $c_i=c'_ip_k$ for some positive integer $c'_i$. But then $p_j=c'_ip_i + c_k$, so that $p_j>p_i$ as claimed.
\end{proof}

%%%%%%%%%%%%%%%%%%%%%%%%%%%%%%%%%%%%%%%%%%%

\subsection{Fibering on lower scales}

%%%%%%%%%%%%%%%%%%%%%%%%%%%%%%%%%%%%%%%%%%%%%

\begin{lemma}\label{fiberfibering}
Assume that (F) holds.

\smallskip

(i) Let $\Lambda:= \Lambda(a_0,D(N_i))$ for some $a_0\in\cali$. If $\cali\cap\calj\cap\Lambda=\emptyset$ and $A$ is $N_i$-fibered on $\Lambda$, it cannot be fibered in the $p_j$ direction.

\smallskip

(ii) If $\Phi_{N_i}|B$ and
\begin{equation}\label{kwartet}
	\{M/p_j,M/p_k,M/p_ip_j,M/p_ip_k\}\cap\Div(B)=\emptyset,
\end{equation}
then $B$ must be $N_i$-fibered in the $p_i$ direction. Note in particular that if $\{D(M)|m|M\}\subset\Div(A)$, then (\ref{kwartet}) holds.
\end{lemma}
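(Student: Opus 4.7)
The plan is to analyze the induced weights $\bbA^{N_i}_{N_i}[x]=|A\cap(x*F_i)|$ on $\ZZ_{N_i}$ (and similarly for $B$), using that $\Lambda=\Lambda(a_0,D(N_i))$ is closed under $F_i$-translation because $D(N_i)=p_jp_k$ divides $M/p_i$.

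For part (i), I argue by contradiction. Suppose $A\cap\Lambda$ is $N_i$-fibered in the $p_j$ direction with uniform multiplicity $c_0$. Since $a_0\in\cali$, the fiber $a_0*F_i\subset A\cap\Lambda$ collapses modulo $N_i$ to the single class $\bar a_0$ of weight $p_i$, so $c_0=p_i$. Every class $\bar a_0+\mu N_i/p_j$, $\mu=0,\ldots,p_j-1$, in the $N_i$-fiber through $\bar a_0$ has weight $p_i$, which lifts to $y_\mu*F_i\subset A$ for $y_\mu:=a_0+\mu M/(p_ip_j)$. Thus $y_\mu\in\cali$, and the $p_j$ pairwise disjoint fibers $y_\mu*F_i$ lie in $\Pi(a_0,p_k^2)$, contributing $p_ip_j$ points and saturating Lemma~\ref{planebound}. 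Hence $A\cap\Pi(a_0,p_k^2)=\{a_0+\rho M/(p_ip_j):0\leq\rho<p_ip_j\}$, which contains $a_0*F_j$ (the subset with $\rho=p_i\mu$). So $a_0*F_j\subset A$ and $a_0\in\cali\cap\calj\cap\Lambda$, contradicting the hypothesis.

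For part (ii), my plan is to invoke Lemma~\ref{gen_top_div_mis}(ii) applied to $B$ as a multiset in $\ZZ_{N_i}$, with $p_j,p_k$ playing the role of the primes with missing top differences and $p_i$ the resulting fibering direction. The hypothesis $\Phi_{N_i}|B$ is given, and $p_j,p_k$ are odd since $M$ is odd. A direct gcd computation shows that for distinct $b_1,b_2\in B$, $(b_1-b_2,M)\in\{M/p_j,M/p_ip_j\}$ is equivalent to $(b_1-b_2,N_i)=N_i/p_j$, and symmetrically for $N_i/p_k$; so (\ref{kwartet}) translates exactly into $N_i/p_j,N_i/p_k\notin\Div_{N_i}(B)$. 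The remaining ingredient is the uniform multiplicity (\ref{bin_cond}): $\bbB^{N_i}_{N_i}[x]\in\{0,c_0\}$. Since two elements of $B$ on a common $F_i$-fiber differ by a multiple of $M/p_i$, (\ref{bin_cond}) holds with $c_0=1$ whenever $M/p_i\notin\Div(B)$. In the special case highlighted, $\{D(M)|m|M\}\subset\Div(A)$ gives $M/p_i\in\Div(A)$ and therefore $M/p_i\notin\Div(B)$ by divisor exclusion, completing the argument immediately via Lemma~\ref{gen_top_div_mis}(ii).

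The main obstacle in the general hypothesis (\ref{kwartet}) is establishing (\ref{bin_cond}) without a priori knowledge that $M/p_i\notin\Div(B)$. The plan is to argue by contradiction: assume some $F_i$-fiber contains at least two elements of $B$. The restrictions $N_i/p_j,N_i/p_k\notin\Div_{N_i}(B)$ constrain the support of $B$ modulo $N_i$ severely (in the CRT decomposition $\ZZ_{N_i}\cong\ZZ_{p_i}\oplus\ZZ_{p_j^2}\oplus\ZZ_{p_k^2}$, each fixed-$(x_i,x_k)$ slice contains at most one support point per $p_j$-coset in $\ZZ_{p_j^2}$, and symmetrically for $p_k$); combined with $N_i$-cuboid vanishing from $\Phi_{N_i}|B$, this should iterate to a contradiction with divisor exclusion against $A$ or with the plane bound applied to $B$. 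Once (\ref{bin_cond}) is secured, Lemma~\ref{gen_top_div_mis}(ii) delivers the desired $N_i$-fibering of $B$ in the $p_i$ direction.
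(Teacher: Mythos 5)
Part (i) of your proposal is correct and is essentially the paper's own argument: since $a_0\in\cali$, the class of $a_0$ mod $N_i$ has weight exactly $p_i$, so an $N_i$-fibering of $A\cap\Lambda$ in the $p_j$ direction would force weight $p_i$ on every class of the $N_i$-fiber through $a_0$, which lifts to $\Lambda(a_0,M/p_ip_j)\subset A$ and in particular $a_0*F_j\subset A$, i.e. $a_0\in\cali\cap\calj\cap\Lambda$, a contradiction. Your detour through Lemma \ref{planebound} is unnecessary (the lifted chain already contains $a_0*F_j$), but harmless.

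For part (ii) your route is the same as the paper's: translate (\ref{kwartet}) into $N_i/p_j,N_i/p_k\notin\Div_{N_i}(B)$ (your gcd computation is correct) and invoke Lemma \ref{gen_top_div_mis} (ii), whose parity hypothesis is supplied by $M$ being odd. The obstacle you flag, namely the hypothesis (\ref{bin_cond}) for $B$ mod $N_i$, is exactly the step the paper's two-sentence proof leaves implicit: the paper applies Lemma \ref{gen_top_div_mis} (ii) without comment on (\ref{bin_cond}). In every place the present lemma is actually used ((\ref{Inotfibereddiv}) in Proposition \ref{initialstatement}, (\ref{Jfiberdiv}) in Lemma \ref{Phi_N_j|B}), the clause $\{D(M)|m|M\}\subset\Div(A)$ is in force, so $M/p_i\notin\Div(B)$ and (\ref{bin_cond}) holds with $c_0=1$ --- precisely the case you completed. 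So you are not missing an idea that the authors supply; but as a standalone proof of the lemma in its stated generality your last paragraph is a plan, not an argument, and that piece remains open in your write-up. If you want to push further, note that the first stage of the proof of Lemma \ref{gen_top_div_mis} uses only positivity, not binarity, of the weights: balancing the two $N_i$-cuboids through a support point $\bar b$ of $B$ mod $N_i$ (whose neighbours at distances $N_i/p_j$ and $N_i/p_k$ all have weight zero by (\ref{kwartet})) already shows, without (\ref{bin_cond}), that the support is a union of full $N_i$-fibers in the $p_i$ direction; what binarity buys is the uniform multiplicity in the definition of ``$N_i$-fibered'', and that is what your sketched iteration would still have to produce.
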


\begin{proof}
(i) Assume, by contradiction, that $A$ is $N_i$-fibered in the $p_j$ direction on $\Lambda$. 
Then $\bbA^{N_i}_{N_i}[a]=p_i$ and $\bbA^{N_i}_{N_i/p_j}[a]=p_i \phi(p_j)$ for all $a\in\cali\cap\Lambda$, meaning that $\cali\cap\Lambda$ is also $M$-fibered in the $p_j$ direction. This contradicts the assumption that $\cali\cap\calj\cap\Lambda$ is empty.

\medskip

(ii) By (\ref{kwartet}), we have $\bbB^{N_i}_{N_i/p_j}[b]=\bbB^{N_i}_{N_i/p_k}[b]=0$ for all $b\in B$. It follows from Lemma \ref{gen_top_div_mis} (ii) that $B$ is $N_i$-fibered in the $p_i$ direction.
\end{proof}

By Lemma \ref{fibercyc3}, if $\Phi_{N_k}|A$, then $\Phi_{N_k}|\calk$. 
We consider the question of whether, in these circumstances, $\calk$ is permitted to have an unfibered grid on a lower scale.

\begin{lemma}\label{Kunfibered}
Assume that (F) holds. Suppose that $\Phi_{N_k}|A$ and that there exists a $D(N_k)$-grid on which $\calk$ is not fibered. Then:
\begin{itemize}
\item $\{D(M)|m|M\}\cup\{M/p_k^2,M/p_ip_jp_k^2\} \subset \Div(A)$,
\item there exists an $x\in \ZZ_M$ such that $\mathbb{K}^{N_k}_{N_k/p_k}[x]=\phi(p_k^2)$.
\end{itemize}
\end{lemma}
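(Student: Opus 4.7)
The plan is to translate the hypothesis into a statement about $\calk_0\subset\ZZ_{N_k}$, the set of base points of the $M$-fibers comprising $\calk$, so that $\calk=\calk_0*F_k$ in $\ZZ_M$ and $\calk\bmod N_k = p_k\cdot\calk_0$ as multisets in $\ZZ_{N_k}$. By Lemma~\ref{fibercyc3}, the hypothesis $\Phi_{N_k}|A$ gives $\Phi_{N_k}|\calk_0$, and the assumption that $\calk$ is not fibered on a $D(N_k)$-grid becomes: for some $D(N_k)$-grid $\Lambda\subset\ZZ_{N_k}$, $\calk_0\cap\Lambda$ is not $N_k$-fibered in any direction. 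I would then apply Proposition~\ref{notopdiff} together with Lemmas~\ref{misscorner_fullplane} and~\ref{oddcornerplus} to $\calk_0\cap\Lambda$, obtaining a finite menu of possible shapes: (a) all top $N_k$-divisors present; (b) a corner or almost-corner with exactly one missing top divisor $N_k/p_{\kappa_1}p_{\kappa_2}$; or (c) a $p_\iota$-full-plane with exactly two missing top divisors.

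The crux is to use the plane bound (Lemma~\ref{planebound}) applied to $A\supset\calk$ in $\ZZ_M$ to force the ``special direction'' to be $p_k$. Lifting an $N_k$-configuration on $\calk_0\cap\Lambda$ through the $p_k$-fibers of $\calk$ places many $\calk$-elements into a single coordinate plane $\Pi(y,p_\nu^2)\subset\ZZ_M$. The numerical estimate $p_k(p_j-1)(p_k-1)>p_jp_k$ for odd primes $p_j,p_k\ge 3$ rules out the $p_i$-full plane (case (c) with $\iota=i$), and symmetrically the $p_j$-full plane; the identity $|\mathcal{L}_i|\cdot p_ip_k > p_ip_k$ for $|\mathcal{L}_i|\ge 2$ eliminates the $p_i$- and $p_j$-almost-corner variants as well as the $p_k$-almost-corner (whose $p_i$-fibers share a common $p_j$-coordinate, so stack into the same plane); and the computation $rp_k + p_jp_k > p_jp_k$ whenever a $p_j$-fiber and at least one $p_i$-fiber coexist rules out the $p_k$-direction corner. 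The surviving shapes are therefore: (a) all top differences present, (c) the $p_k$-full plane, or (b) a pure corner oriented in a $p_i$ or $p_j$ direction that necessarily contains a full $p_k$-fiber.

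With these reductions both conclusions follow. For the first bullet, the correspondence $m\mapsto(m,N_k)$ sends $\{D(M)|m|M\}\cup\{M/p_k^2,M/p_ip_jp_k^2\}$ bijectively into $\{D(N_k)|m|N_k\}\setminus\{N_k/p_ip_k,N_k/p_jp_k\}$, each member of which lies in $\Div_{N_k}(\calk_0\cap\Lambda)$ in every surviving case; the analysis of how $p_k$-lifts change the $p_k$-part of a divisor returns each such $m$ to $\Div(\calk)\subseteq\Div(A)$. For the second bullet, in the $p_k$-full plane case the exceptional point $x$ guaranteed by Lemma~\ref{misscorner_fullplane} satisfies $\bbK^{N_k}_{N_k/p_k}[x] = c_0\phi(p_k) = p_k\phi(p_k) = \phi(p_k^2)$ directly; in the surviving corner cases one takes $x$ on the complete $N_k$-fiber in the $p_k$ direction (which is forced to be present by the corner geometry together with the ``not fibered'' hypothesis), and in the all-top-differences case $x$ is extracted by combining $N_k/p_k\in\Div_{N_k}(\calk_0\cap\Lambda)$ with $\Phi_{N_k}$-cuboid balance to propagate a pair of aligned elements into a full $p_k$-line.

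The main obstacle I expect is closing out case (a): there no single structural lemma directly produces a $p_k$-fiber of length $\phi(p_k)$ inside $\calk_0\cap\Lambda$, and a more delicate cuboid-balance argument is needed to promote the short divisor $N_k/p_k$ into a full aligned line. Secondarily, tightening the plane-bound elimination of the $p_k$-direction corner so that it genuinely leaves no pathological configuration (rather than merely ruling out ``too many $\calk$ elements in one plane'') needs careful bookkeeping of which $p_\nu^2$-planes are saturated versus unsaturated under each lifted fiber, and that accounting is where the argument is least mechanical.
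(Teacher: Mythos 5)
Your skeleton matches the paper's (pass to $\calk$ mod $N_k$ via Lemma \ref{fibercyc3}, classify the unfibered grid via Proposition \ref{notopdiff} and Lemmas \ref{misscorner_fullplane}, \ref{oddcornerplus}, then use Lemma \ref{planebound} to narrow the list), but there is a genuine gap at exactly the point you flag as your ``main obstacle'': case (a), where all top divisors of $N_k$ are present in $\Div_{N_k}(\calk\cap\Lambda)$. The paper does not try to produce a $p_k$-line in that case; it shows the case cannot occur at all, and this is where the tiling hypothesis enters in an essential way. Since every point of $\calk$ mod $N_k$ carries multiplicity $p_k$ (a full $M$-fiber in the $p_k$ direction), each top divisor of $N_k$ lifts to differences in $A$ realizing every admissible $p_k$-part, so that the whole family $\{m/p_k^\alpha:\ \alpha\in\{0,1,2\},\ m\in\{M,M/p_i,M/p_j,M/p_ip_j\}\}$ lands in $\Div(A)$; by divisor exclusion none of these (except $M$) can lie in $\Div(B)$, and then the counting argument of Lemma \ref{Knotunfiberedunstructured} forces $|B|\leq p_ip_j<p_ip_jp_k$, a contradiction. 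Your proposal never invokes $B$, divisor exclusion, or Lemma \ref{Knotunfiberedunstructured}; cuboid balance and the plane bound alone cannot close this case (diagonal-box configurations with all top $N_k$-divisors present and no fibering are perfectly consistent with $\Phi_{N_k}|\calk$ and the plane bound), so the second bullet remains unproven on your route.

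A secondary problem is your plane-bound bookkeeping. The claim that the $p_i$- and $p_j$-almost-corner variants are eliminated is false: in, say, a $p_i$-almost-corner on scale $N_k$, the two or more punctured $p_k$-direction fibers contribute only $|\call_k|\phi(p_k)$ distinct points to the common plane perpendicular to $p_j$, and $2(p_k-1)$ need not exceed the bound $p_i$ (take $p_k=3$, $p_i\geq 5$); the paper accordingly retains these structures as its third surviving case, where the conclusions hold with $x$ taken at one of the points carrying a $p_k$-direction fiber. Conversely, your elimination of the $p_k$-corner by ``a $p_j$-fiber and a $p_i$-fiber coexist'' is incomplete as stated: a single $N_k$-fiber in the $p_i$ direction and a single one in the $p_j$ direction, lying in different planes, violate no plane bound by themselves; one must use the hypothesis of Lemma \ref{oddcornerplus} that \emph{only} $N_k/p_ip_j$ is missing (so that e.g.\ $N_k/p_k$ is realized), which is what forces two same-direction fibers, or a fiber plus an extra point, into one plane perpendicular to $p_i$ or $p_j$ and produces the contradiction. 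Finally, in verifying the first bullet, passing from $(m,N_k)\in\Div_{N_k}(\calk)$ back to $m\in\Div(A)$ requires tracking which $p_k$-valuations the realizing pairs admit (a pair with $N_k$-difference coprime to $p_k$ only yields the $p_k^0$ lift), so the divisor list has to be checked structure by structure, as the paper does using the full line $\ell_k$ together with the adjacent $F_j*F_k$-grid.
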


\begin{proof}
Assume that $\Phi_{N_k}|A$. By Lemma \ref{fibercyc3}, we have
 $\Phi_{N_k}|\calk$. Consider $\calk$ as a multiset in $\ZZ_{N_k}$ with constant multiplicity $p_k$. We claim that if $\calk$ is not fibered on a $D(N_k)$-grid $\Lambda$, it must satisfy the conclusion of either Lemma \ref{misscorner_fullplane} or Lemma \ref{oddcornerplus} with $N=N_k$ on that grid. Indeed, if this is not the case, then we must have 
$$
\{m:\ D(N_k)|m|N_k \}\subset \Div_{N_k}(\calk).
$$
But then, for every $D(N_k)|m|N_k$ there exist $a,a'\in\calk$ (depending on $m$) such that $(a-a',N_k)=m$. Since $\bbK^{N_k}_{N_k}[a]=\bbK^{N_k}_{N_k}[a']=\phi(p_k)$, we have 
$$
\big\{m/p_k^\alpha :\ \alpha\in\{0,1,2\}, m\in \{M,M/p_i,M/p_j,M/p_ip_j\} \big\}\subset \Div(\calk).
$$
The latter contradicts Lemma \ref{Knotunfiberedunstructured}.

On the other hand, suppose that $\calk\in\calm(\ZZ_{N_k})$ has at least $p_j+1$ distinct points in some plane $\Pi(x,p_i^2)$ in $\ZZ_{N_k}$, each of multiplicity $p_k$. Then $|A\cap \Pi(x,p_i^2)|\geq (p_j+1)p_k$ in $\ZZ_M$,
contradicting Lemma \ref{planebound}. The same argument applies with $i$ and $j$ interchanged.

Among the structures described in Lemmas \ref{misscorner_fullplane} and \ref{oddcornerplus} with $N=N_k$, the only ones that avoid configurations as in the last paragraph are as follows.

\begin{itemize}
\item $\calk\cap\Lambda$ has the $p_k$ full plane structure as in Lemma \ref{misscorner_fullplane}, so that for some $x\in\ZZ_M$ we have $\mathbb{K}^{N_k}_{N_k/p_k}[x]=\phi(p_k^2)$ and $\mathbb{K}^{N_k}_{N_k/p_ip_j}[x]=p_k\phi(p_ip_j)$.

\item $\calk\cap\Lambda $ has a $p_i$ or $p_j$ corner structure as in Lemma \ref{oddcornerplus} (i).

\item $\calk\cap\Lambda $ has a $p_i$ or $p_j$ almost corner structure as in Lemma \ref{oddcornerplus} (ii), so that (possibly after a permutation of $i$ and $j$) there exist $x_1, x_2,x_3,x_4\in \ZZ_M$ such that $(x_\nu-x_{\nu'},N_k)=N_k/p_i$ for $\nu\neq\nu'$, 
$\mathbb{K}^{N_k}_{N_k/p_k}[x_1]=\mathbb{K}^{N_k}_{N_k/p_k}[x_2]=\phi(p_k^2)$ and $\mathbb{K}^{N_k}_{N_k/p_j}[x_3]=\mathbb{K}^{N_k}_{N_k/p_j}[x_4]=p_k\phi(p_j)$.

\end{itemize}

We address the second case, the first and third case being similar.
Indeed, a $p_i$ corner structure in $\calk$ on the $N_k$ scale means that there exist $a,a'\in\calk$ with $(a-a',N_k)=N_k/p_i$ so that 
\begin{equation}\label{latka}
\mathbb{K}^{N_k}_{N_k/p_k}[a]= \mathbb{K}_{M/p_k^2}[a]=\phi(p_k^2)
\end{equation} 
and 
\begin{equation}\label{latka2}
\mathbb{K}^{N_k}_{N_k/p_j}[a']= \mathbb{K}_{M/p_j}[a'] + \mathbb{K}_{M/p_jp_k}[a']
=p_k \phi(p_j).
\end{equation} 
Now consider $\calk$ on scale $M$.
We have $(a-a',M)\in\{M/p_i,M/p_ip_k\}$, with the fiber chain in (\ref{latka}) attached to $a$.
By (\ref{latka2}) and the fact that $a'\in \calk$, we also have $a'*F_j*F_k\subset\calk$. Hence the conclusions of the lemma hold with $x=a$.
\end{proof}

\begin{corollary}\label{subset}
Assume (F). If $\Phi_{N_k}|A$ and $p_k>\min_\nu p_\nu$, then $\calk$ is $N_k$-fibered on each $D(N_k)$-grid in one of the $p_i$ and $p_j$ directions. In particular, $\calk\subset\cali\cup\calj$.
\end{corollary}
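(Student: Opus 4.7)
The plan is to prove the contrapositive. Suppose there is a $D(N_k)$-grid $\Lambda$ in $\ZZ_{N_k}$ on which $\calk$ is not $N_k$-fibered in the $p_i$ direction and not $N_k$-fibered in the $p_j$ direction. I will derive a contradiction from the plane bound (Lemma \ref{planebound}). After relabeling I assume $p_i<p_k$; since $p_i,p_k$ are distinct odd primes, this forces $p_i\leq p_k-2$.

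First I record the structural observation that every $a\in\calk$ lies inside the $M$-fiber $a*F_k\subset\calk$, so the induced multiset $\calk$ mod $N_k$ has uniform multiplicity $p_k$, and any fiber of $\pi_{N_k}:\ZZ_M\to\ZZ_{N_k}$ meeting $\calk$ is entirely contained in $\calk$. Now split on $\Lambda$. If $\calk\cap\Lambda$ is $N_k$-fibered in the $p_k$ direction, then some $a\in\calk\cap\Lambda$ satisfies $a*F_k^{N_k}\subset\calk$ on scale $N_k$; lifting through $\pi_{N_k}^{-1}$ yields the full line $\ell_k(\tilde a)\subset\calk\subset A$ (all $p_k^2$ points), so $|A\cap\Pi(\tilde a,p_j^2)|\geq p_k^2$, contradicting Lemma \ref{planebound} which bounds this by $p_ip_k<p_k^2$. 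Otherwise $\calk\cap\Lambda$ is not $N_k$-fibered in any direction, and Lemma \ref{Kunfibered} supplies $x\in\ZZ_M$ with $\bbK^{N_k}_{N_k/p_k}[x]=\phi(p_k^2)$. The $p_k-1$ elements $y\in\ZZ_{N_k}$ with $(x-y,N_k)=N_k/p_k$ all differ from $x$ by multiples of $p_i^2p_j^2$, hence share the $\pi_i,\pi_j$ coordinates of $x$, so their $p_k$ $\pi_{N_k}^{-1}$-lifts each lie on the line $\ell_k(\tilde x)\subset\ZZ_M$. Uniform multiplicity $p_k$ then gives $|\calk\cap\ell_k(\tilde x)|\geq p_k(p_k-1)$, and Lemma \ref{planebound} on $\Pi(\tilde x,p_j^2)$ yields $p_k(p_k-1)\leq p_ip_k$, i.e.\ $p_k-1\leq p_i$, contradicting $p_i\leq p_k-2$.

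Hence on every $D(N_k)$-grid, $\calk$ is $N_k$-fibered in the $p_i$ or $p_j$ direction, proving the first assertion. For $\calk\subset\cali\cup\calj$, pick $a\in\calk$ and apply this to the grid containing $\pi_{N_k}(a)$. If the fibering direction is $p_i$, then $\pi_{N_k}(a)*F_i^{N_k}\subset\calk$ on scale $N_k$ lifts through $\pi_{N_k}^{-1}$ to a $p_i\times p_k$ rectangle in $\ZZ_M$ contained in $\calk$; its rows in the $p_i$ direction are exactly $M$-fibers in the $p_i$ direction, and the row through $a$ is precisely $a*F_i\subset\calk\subset A$, so $a\in\cali$. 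The symmetric argument handles the $p_j$ case, giving $a\in\calj$.

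The main technical obstacle is the scale bookkeeping in the unfibered case: one must parse the $\calk$-box identity $\bbK^{N_k}_{N_k/p_k}[x]=\phi(p_k^2)$ as the presence of $p_k-1$ entire $M$-fibers in the $p_k$ direction inside $\ell_k(\tilde x)\subset\ZZ_M$, and verify that lifts of $N_k$-fibers in a non-$p_k$ direction do contain an honest $M$-fiber in the same direction through every one of their points. Once these scale translations are set up, the arithmetic gap $p_k\geq p_i+2$ forced by oddness collides immediately with Lemma \ref{planebound}.
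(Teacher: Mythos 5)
Your proof is correct and follows essentially the same route as the paper: apply Lemma \ref{Kunfibered} on an unfibered $D(N_k)$-grid to produce the count $\phi(p_k^2)=p_k(p_k-1)$ along a line $\ell_k$, rule out $N_k$-fibering in the $p_k$ direction by the same plane count, and collide both with Lemma \ref{planebound} using $p_i\leq p_k-2$. The only difference is presentational: you make the lifting from $\ZZ_{N_k}$ to $\ZZ_M$ and the deduction of the ``in particular'' clause explicit, which the paper leaves implicit.
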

\begin{proof}
Assume without loss of generality that $p_i=\min_\nu p_\nu$. By Lemma \ref{fibercyc3}, 
we have $\Phi_{N_k}|A$ if and only if $\Phi_{N_k}|\calk$. By Lemma \ref{Kunfibered}, if there exists a $D(N_k)$-grid on which $\calk$ is unfibered, then there must exist $x\in \ZZ_M$ such that $\mathbb{K}^{N_k}_{N_k/p_k}[x]=\phi(p_k^2)$, thus 
\begin{align*}
|A\cap \Pi(x,p_j^2)|& \geq	\mathbb{K}^{N_k}_{N_k/p_k}[x] =\phi(p_k^2)\\
& >p_ip_k
\end{align*}
which contradicts Lemma \ref{planebound}. Thus $\calk$ must be fibered on all $D(N_k)$-grids. On the other hand, by the same argument as above, $\calk$ cannot be $N_k$-fibered in the $p_k$ direction on any $D(N_k)$-grid.
\end{proof}

%%%%%%%%%%%%%%%%%%%%%%%%%

\subsection{Proof of Proposition \ref{F1emptyset}}

%%%%%%%%%%%%%%%%%%%%%%%%%

In this section, we are assuming (F1), which we state here again for the reader's convenience.

\medskip\noindent
\textbf{Assumption (F1).} We have $A\oplus B=\ZZ_M$, where $M=p_i^{2}p_j^{2}p_k^{2}$ is odd. Furthermore, $|A|=|B|=p_ip_jp_k$, $\Phi_M|A$, $A$ is fibered on $D(M)$-grids,
and
\begin{equation}\label{pairwise-disjoint}
\cali,\calj,\calk \hbox{ are pairwise disjoint.}
\end{equation}

\medskip

We must prove that at least one of the sets $\cali, \calj$ or $\calk$ has to be empty.
To this end we assume the contrary, i.e.,
\begin{equation}\label{nonempty}
\cali,\calj,\calk\neq\emptyset,
\end{equation}
and prove by contradiction that (\ref{nonempty}) cannot hold.
We may assume, without loss of generality, that 
\begin{equation}\label{maleduze}
p_i<p_j<p_k.
\end{equation}
By Corollary \ref{subset}, this implies that $\Phi_{N_j}$ and $\Phi_{N_k}$ cannot divide $A$, so that 
\begin{equation}\label{BPhi_{N_j}Phi_{N_k}}
\Phi_{N_j}\Phi_{N_k}|B.
\end{equation}

We start with a cyclotomic divisibility result.
\begin{proposition}\label{initialstatement}
Assume that (F1), (\ref{nonempty}), and (\ref{BPhi_{N_j}Phi_{N_k}}) hold. Then $\Phi_{N_i}\nmid A$.
\end{proposition}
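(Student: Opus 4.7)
The plan is to assume $\Phi_{N_i}\mid A$ for contradiction (equivalently, $\Phi_{N_i}\mid\cali$ by Lemma \ref{fibercyc3}) and derive a contradiction to the (T1) condition for $A$. The key intermediate step is to establish
\[
\{m:D(M)\mid m\mid M\}\subset\Div(A).
\]
This is precisely the top-divisor conclusion that the analog of Lemma \ref{Kunfibered} (with $k$ replaced by $i$) yields in the case where $\cali$ has an unfibered $D(N_i)$-grid. If $\cali$ is instead $N_i$-fibered on every $D(N_i)$-grid, I would argue by case analysis on the fibering direction that the same divisor conclusion still holds: if some grid is $N_i$-fibered in the $p_j$ or $p_k$ direction, plane saturation via Lemma \ref{planebound} forces $\cali$ to fill an entire plane perpendicular to the third direction, and then combining Lemma \ref{flat-cuboids} with the fact that $\calj,\calk\neq\emptyset$ must lie outside this plane (by the (F1) disjointness) produces the remaining cross-direction top divisors via Lemma \ref{smallcube}; if every grid is $N_i$-fibered in the $p_i$ direction, $\cali$ is a union of full $p_i$-lines, and the missing top divisors are produced by pairing $\cali$-line elements with $\calj$- and $\calk$-fiber roots. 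Note that Corollary \ref{subset} cannot be applied to $\cali$ because $p_i=\min_\nu p_\nu$, which is exactly what makes the proposition nontrivial.

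Once $\{m:D(M)\mid m\mid M\}\subset\Div(A)$ is in hand, divisor exclusion gives $\Div(B)\cap\{m:D(M)\mid m\mid M\}=\{M\}$, and in particular both quartets $\{M/p_i,M/p_k,M/p_ip_j,M/p_jp_k\}$ and $\{M/p_i,M/p_j,M/p_ip_k,M/p_jp_k\}$ are absent from $\Div(B)$. The hypotheses of Lemma \ref{fiberfibering}(ii) (with appropriate index permutations) are therefore satisfied for both $\Phi_{N_j}\mid B$ and $\Phi_{N_k}\mid B$, so $B$ is simultaneously $N_j$-fibered in the $p_j$ direction and $N_k$-fibered in the $p_k$ direction. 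Consequently the two corresponding $N_\nu$-scale $p_\nu$-fiber polynomials divide $B(X)$, and their cyclotomic factors are precisely the $\Phi_s$ with $s\mid M$ such that $v_{p_j}(s)=1$ or $v_{p_k}(s)=1$; together these account for $9+9-3=15$ of the $26$ nontrivial cyclotomic polynomials $\Phi_s$ with $s\mid M$.

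Since each such $\Phi_s$ divides exactly one of $A(X)$ and $B(X)$, the complementary $11$ cyclotomic polynomials, namely those $\Phi_s$ with $v_{p_j}(s),v_{p_k}(s)\in\{0,2\}$, must all divide $A$. Among these are both $\Phi_{p_i}$ (taking $v_{p_i}=1$) and $\Phi_{p_i^2}$ (taking $v_{p_i}=2$), together with $\Phi_{p_j^2}$ and $\Phi_{p_k^2}$, so $\{p_i,p_i^2,p_j^2,p_k^2\}\subset S_A$. Applying (T1) from Theorem \ref{CM-thm} then forces
\[
|A|=\prod_{s\in S_A}\Phi_s(1)\geq p_i\cdot p_i\cdot p_j\cdot p_k=p_i^2p_jp_k,
\]
contradicting the hypothesis $|A|=p_ip_jp_k$. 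The main obstacle in this plan is rigorously establishing the top-divisor conclusion in the fibered case for $\cali$, since the direct plane-bound contradiction used in the proof of Corollary \ref{subset} is unavailable when $p_i$ is the smallest prime; the $p_i$-fibered subcase is the most delicate and will require careful use of the nonemptiness of $\calj$ and $\calk$ together with the tools of Section \ref{sec-toolbox}.
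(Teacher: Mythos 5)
Your endgame does not work. From $B$ being $N_j$-fibered in the $p_j$ direction and $N_k$-fibered in the $p_k$ direction you do obtain $\Phi_s\mid B$ for the $15$ values of $s$ with $v_{p_j}(s)=1$ or $v_{p_k}(s)=1$, but the step ``since each such $\Phi_s$ divides exactly one of $A(X)$ and $B(X)$, the complementary $11$ must divide $A$'' is a non sequitur: the exactly-one property holds only for prime-power cyclotomics, and even for those, knowing that certain $\Phi_s$ divide $B$ never forces the complementary ones to divide $A$ -- it only tells you that whatever does \emph{not} divide $B$ must divide $A$, which you have not shown. The only prime-power information your bookkeeping yields is $\Phi_{p_j},\Phi_{p_k}\mid B$, hence $\Phi_{p_j^2},\Phi_{p_k^2}\mid A$; it says nothing about the powers of $p_i$. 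In fact, since $|A|=|B|=p_ip_jp_k$, (T1) forces exactly one of $\Phi_{p_i},\Phi_{p_i^2}$ into $S_A$ and the other into $S_B$, so the inclusion $\{p_i,p_i^2\}\subset S_A$ that your contradiction requires can never be derived from divisor/fibering information of this kind; the situation you reach ($S_B\supseteq\{p_j,p_k\}$ plus one power of $p_i$) is perfectly consistent with (T1). This is precisely why the paper's proof cannot stop here: after establishing (Claim 1, via Lemma \ref{Kunfibered}, fiber shifts and Theorem \ref{cornerthm}) that $\cali$ is $N_i$-fibered in the $p_i$ direction, it proves $\Phi_{M_i}\mid B$ (Claim 2) and then runs a long case analysis on whether $M/p_jp_k\in\Div(B)$, ending in arithmetic contradictions of the form ``$p_i$ divides $p_j$'' (Claims 6 and 10), not a (T1) count.

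Your key intermediate claim $\{m:\,D(M)\mid m\mid M\}\subset\Div(A)$ is also unsubstantiated outside the unfibered case. If some $D(N_i)$-grid of $\cali$ were $N_i$-fibered in the $p_j$ or $p_k$ direction, the argument of Lemma \ref{fiberfibering}(i) shows its elements would lie in $\calj$ or $\calk$, contradicting the (F1) disjointness -- so that subcase is vacuous rather than a source of top differences; and in the remaining subcase ($p_i$-direction fibering, which is what actually happens by Claim 1) ``pairing $\cali$-line elements with $\calj$- and $\calk$-fiber roots'' gives no control whatsoever on the gcd's $(a_i-a_j,M)$, $(a_i-a_k,M)$, so the missing top differences such as $M/p_jp_k$ or $M/p_ip_j$ do not follow. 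The paper's own proof treats $M/p_jp_k\in\Div(B)$ (equivalently $M/p_jp_k\notin\Div(A)$) as a live case requiring Claims 7--10, which shows there is no cheap route to your inclusion: establishing it would be essentially as hard as the proposition itself. So both the intermediate step and the final (T1) step have genuine gaps, and the overall strategy cannot be repaired without reverting to an argument of the paper's type.
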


\begin{proof}
The proof is divided into several steps. In each of the following claims, the assumptions of the proposition are assumed to hold.

\medskip
\noindent
{\bf Claim 1.} {\it If $\Phi_{N_i}|A$, then $\cali$ is $N_i$-fibered in the $p_i$ direction, so that
for every $a_i\in\cali$,
\begin{equation}\label{califiber2}
\mathbb{I}_M[a_i]+\mathbb{I}_{M/p_i}[a_i]+\mathbb{I}_{M/p_i^2}[a_i]=p_i^2.
\end{equation}
} %\label{Inotunfibered}
	
\begin{proof}
It suffices to prove that $\cali$ is $N_i$-fibered on each $D(N_i)$-grid. Once we know that, Lemma \ref{fiberfibering} (i) together with (\ref{pairwise-disjoint}) implies that the $N_i$-fibering must be in the $p_i$ direction, and the claim follows.

Assume, by contradiction, that there exists a $D(N_i)$-grid over which $\cali$ is not fibered. By Lemma \ref{Kunfibered}, 
\begin{equation}\label{Inotfibereddiv}
\{D(M)|m|M\}\cup\{M/p_i^2,M/p_i^2p_jp_k\}\subset \Div(A)
\end{equation}
and there exists $x_0\in\ZZ_M$ with 
\begin{equation}\label{ifiberchain}
\mathbb{I}^{N_i}_{N_i/p_i}[x_0]=\phi(p_i^2).
\end{equation}
The proof of Lemma \ref{Kunfibered} implies further that $\cali\cap\Lambda(x_0,D(N_i))$ must contain one of the structures described in Lemmas \ref{misscorner_fullplane} and \ref{oddcornerplus} with $N=N_i$. Additionally,  (\ref{BPhi_{N_j}Phi_{N_k}}) and Lemma \ref{fiberfibering} (ii) imply that $B$ is $N_j$-fibered in the $p_j$ direction and $N_k$-fibered in the $p_k$ direction, hence
\begin{equation}\label{Bfiberdiv} 
M/p_j^2,M/p_k^2,M/p_j^2p_k^2\in \Div(B).
\end{equation}

We claim that 
\begin{equation}\label{voidatthevertex}
\mathbb{I}^{N_i}_{N_i}[x_0]=0.
\end{equation}
Suppose this is not true, then 
\begin{equation}\label{califiber}
\mathbb{A}_{M}[x_0]+\mathbb{A}_{M/p_i}[x_0]+\mathbb{A}_{M/p_i^2}[x_0]=p_i^2.
\end{equation}
		
Let $a_j\in\calj$ and $a_k\in\calk$. Recall from Lemma \ref{fibershift} that the fibering in $B$ allows one to shift the fibers rooted at $a_j$ and $a_k$ by distance $M/p_j^2$ and $M/p_k^2$, respectively. Let $x_j,x_k\in\ZZ_M$ with
$$
(a_\nu-x_\nu,M)=M/p_\nu^2\ \text{ and }\  p_\nu^2|x_0-x_\nu\ \text{ for }\ \nu\in\{j,k\}.
$$
Let $A'$ be the set obtained by shifting the fibers rooted at $a_j$ and $a_k$ to $x_j$ and $x_k$ respectively, so that $x_j*F_j,x_k*F_k\subset A'$. By Lemma \ref{fibershift}, $A'\oplus B=\ZZ_M$ and $A'$ is T2-equivalent to $A$. 
		
Let $a_1,a_2\in \ell_i(x_0)$ be the points with $p_i^2|a_1-x_j$, $p_i^2|a_2-x_k$.  By (\ref{califiber}), $\ell_i(x_0)\subset A\cap A'$. It follows from (\ref{Bfiberdiv}) that we cannot have $M/p_k^2\in\Div(A')$, hence 
$(a_1-x_j,M)=M/p_k$. Similarly, $(a_2-x_k,M)=M/p_j$.
But now $M/p_i^2p_j, M/p_i^2p_k\in \Div(A')$, hence $M/p_i^2p_j, M/p_i^2p_k\notin \Div(B)$. Together with (\ref{Inotfibereddiv}), this contradicts Lemma \ref{Knotunfiberedunstructured}. This proves (\ref{voidatthevertex}).
		
We therefore conclude that $\cali\cap\Lambda(x_0,D(N_i))$ has either a full plane structure (Lemma \ref{misscorner_fullplane}) or an almost corner structure (Lemma \ref{oddcornerplus} (ii)). 
In either one of these cases, there exists a point 
\begin{equation}\label{caliLambda}
a_i^*\in \cali\cap\Lambda(x_0,D(M))
\end{equation}
such that for each $\nu\in\{j,k\}$, $A\cap (a_i^* *F_i*F_\nu)$ is $M$-fibered in the $p_i$ direction, but is not $M$-fibered in the $p_\nu$ direction.

Let $a_j\in\calj$, and let $x_j\in\ell_j(a_j)$ be the point such that $p_j^2|x_0-x_j$. We consider two cases.
\begin{itemize}
\item[(a)] If it is possible to choose $a_j$ so that 
$x_j\in a_j*F_j$, we fix that choice, and let $A':=A$.

\item[(b)] Otherwise, let $A'$ be the set obtained from $A$ by shifting the fiber $a_j*F_j$ to $x_j$ if necessary, so that $x_j*F_j \subset A'$. By Lemma \ref{fibershift}, $A'\oplus B=\ZZ_M$ and $A'$ is T2-equivalent to $A$. 
Since we are not in case (a), 
$A'\cap(x_j*F_i*F_j)$ contains no other $M$-fibers in the $p_j$ direction.
\end{itemize}

We show that either $x_j\in \Lambda(x_0,D(M))$ or 
\begin{equation}\label{M/p_i^2p_knotinB}
M/p_i^2p_k\notin \Div(B).
\end{equation} 
\begin{itemize}
\item Suppose that $p_i|x_0-x_j$. If $(x_0-x_j,M)\in\{M, M/p_i, M/p_k, M/p_ip_k\}$, then clearly $x_j\in \Lambda(x_0,D(M))$. If on the other hand $(x_0-x_j,M)\in\{M/p_k^2, M/p_ip_k^2\}$, 
then by (\ref{caliLambda}), there exists a fiber $a*F_i\subset \cali$ with $a\in  \Lambda(x_0,D(M))$ and $p_i^2|a-x_j$. 
Then $M/p_k^2\in \Div(a,x_j*F_j)\subset\Div(A')$, contradicting (\ref{Bfiberdiv}).

\item Assume now that $p_i\nmid x_0-x_j$. If $(x_0-x_j,M)=M/p_i^2p_k$, then by (\ref{ifiberchain}) together with the fact that $p_i\geq 3$, there must be an $a\in A$ with $(a-x_0,M)=M/p_i^2$ and $(a-x_j,M)=M/p_i^2p_k$, proving (\ref{M/p_i^2p_knotinB}) since $x_j\in A'$. Otherwise, we have $(x_0-x_j,M)=M/p_i^2p_k^2$, but then by (\ref{ifiberchain}), there must be $a\in A$ with $(a-x_0,M)=M/p_i^2$ and $(a-x_j,M)=M/p_k^2$, contradicting (\ref{Bfiberdiv}).
\end{itemize} 

Suppose now that $x_j\in \Lambda(x_0,D(M))$. We claim that, in this case, $A'$ contains a $p_k$ extended corner structure
consisting of the fiber in $x_j*F_i* F_j$ in the $p_j$ direction and
at least one fiber in $a_i^* *F_i*F_j$ in the $p_i$ direction. Indeed, by
(\ref{caliLambda}) and (\ref{pairwise-disjoint}), we have
%\begin{equation}\label{norandompoints}
$$
\Lambda(x_j,D(M))\cap (\calj\cup \calk)=\emptyset.
$$
%\end{equation}
In particular, we must be in case (b) above, and $A\cap(x_j*F_i*F_j)=x_j*F_j$.
This, together with the choice of $a_i^*$, proves that the conditions of Definition \ref{corner} (ii) hold.

In that case, however, we proved in Theorem \ref{cornerthm} that $A'$ (therefore $A$) is T2-equivalent to a $D(M)$-grid. It follows that $A$ satisfies (T2), therefore $A\oplus B^\flat=\ZZ_M$, where $\Phi_{p_\nu}|B^\flat$ for all $\nu\in\{i,j,k\}$. This contradicts (\ref{Inotfibereddiv}), since clearly $M/p_i^2\in \Div(B^\flat)$.
		
We are therefore left with (\ref{M/p_i^2p_knotinB}). By the same argument with $p_j$ and $p_k$ interchanged, we
must also have $M/p_i^2p_j\notin \Div(B)$. Together with (\ref{Inotfibereddiv}), this again contradicts Lemma \ref{Knotunfiberedunstructured}.
\end{proof}

\medskip
\noindent
{\bf Claim 2.} {\it If $\Phi_{N_i}|A$, then $\Phi_{M_i}\nmid A$, and therefore $\Phi_{M_i}|B$.} %\label{BM/p_i^2divisible}
\begin{proof}
By Lemma \ref{fibercyc3},  if $\Phi_{M_i}|A$, then $\Phi_{M_i}|\cali$. Since $M_i$ has only 2 distinct prime divisors, 
we may apply Lemma \ref{2d-cyclo} on the scale $M_i$. We conclude that every element of $\cali$ belongs to either an $M_i$-fiber in the $p_j$ direction, in which case we have $|A\cap\Pi(a_i,p_k^2)|\geq p_i^2p_j$, or to an $M_i$-fiber in the $p_k$ direction, in which case we have $|A\cap\Pi(a_i,p_j^2)|\geq p_i^2p_k$. Since both bounds contradict Lemma \ref{planebound}, we deduce $\Phi_{M_i}|B$, and the lemma follows.
\end{proof}
	
The next claim is a direct consequence of Claim 2, (\ref{califiber2}),  and Lemma \ref{2d-cyclo}.
	
\medskip
\noindent
{\bf Claim 3.} {\it If $\Phi_{N_i}|A$, then $\bbB^{M_i}_{M_i}[y]\in\{0,1\}$ for all $y\in\ZZ_M$. Moreover, for every $b\in B$, either 
\begin{equation}\label{BM/p_i^2fiberp_jeq}
\bbB^{M_i}_{M_i/p_j}[b']=\phi(p_j) \text{ for all } b'\in B\cap\Lambda(b,D(M_i)),
\end{equation}
and, since $p_i=\min_\nu p_\nu$,
\begin{equation}\label{Bfiberdiv2}
M/p_ip_j,M/p_i^2p_j\in \Div(B), 
\end{equation}
or 
\begin{equation}\label{BM/p_i^2fiberp_keq}
\bbB^{M_i}_{M_i/p_k}[b']=\phi(p_k) \text{ for all } b'\in B\cap\Lambda(b,D(M_i))
\end{equation}
and 
\begin{equation}\label{Bfiberdiv3}
M/p_ip_k,M/p_i^2p_k\in \Div(B).
\end{equation}
}

\medskip
\noindent
{\bf Claim 4.} {\it If $\Phi_{N_i}|A$, then for all $a_i\in\cali$ we have 
$\mathbb{I}^{M_i}_{M_i/p_j}[a_i]=\mathbb{I}^{M_i}_{M_i/p_k}[a_i]=0$.}
\begin{proof}
If there exist $b_1,b_2\in B$ such that (\ref{BM/p_i^2fiberp_jeq}) holds with $b'=b_1$ and (\ref{BM/p_i^2fiberp_keq}) holds with $b'=b_2$, then the claim follows from (\ref{Bfiberdiv2}) and (\ref{Bfiberdiv3}). Assume therefore that (\ref{BM/p_i^2fiberp_jeq}) holds for all $b\in B$, and, consequently, $\mathbb{I}^{M_i}_{M_i/p_j}[a_i]=0$ for all $a_i\in\cali$.

Assume, by contradiction, that $\mathbb{I}^{M_i}_{M_i/p_k}[a_i]>0$ for some $a_i\in\cali$. 
It follows from (\ref{califiber2}) that 
\begin{equation}\label{Afibereddiv}
M/p_i, M/p_k,M/p_i^2,M/p_ip_k,M/p_i^2p_k\in \Div(A).
\end{equation}

We claim that
\begin{equation}\label{pjpk-niedzielnik}
M/p_jp_k\notin \Div(B).
\end{equation}
Assuming this, we prove Claim 4 as follows. By (\ref{nonempty}), (\ref{Afibereddiv}), and
(\ref{pjpk-niedzielnik}), we have $N_k/p_i,N_k/p_j\not\in\Div_{N_k}(B)$. 
By (\ref{BPhi_{N_j}Phi_{N_k}}) and Lemma \ref{gen_top_div_mis} (ii), B must be $N_k$-fibered in the $p_k$ direction, so that
$$
\bbB^{N_k}_{N_k/p_k}[b]=\phi(p_k)\ \ \forall b\in B.
$$

Let $a_k\in \calk$. Since $a_i$ satisfies (\ref{califiber2}), we may assume (moving $a_i$ to a different point in the same fiber chain if necessary) that 
\begin{equation}\label{a_ia_ksameplane}
p_i^2|a_k-a_i.
\end{equation}
Then the pair $(A,B)$ has a (1,2)-cofibered structure, with $a_k*F_k$ as a cofiber. 
Let $x'_k\in\ell_k(a_k)$ be the point such that $p_k^2|a_i-x'_k$. If $p_k|a_k-x'_k$, we note that $x'_k\in A$, and let $A':=A$. If on the other hand $(a_k-x'_k,M)=M/p_k^2$, we use
Lemma \ref{fibershift} to shift $a_k*F_k$ to $x'_k$, obtaining a new set $A'$ such that $x'_k\in A'$, $A'\oplus B=\ZZ_M$, and $A'$ is T2-equivalent to $A$.

By (\ref{Bfiberdiv2}) and (\ref{a_ia_ksameplane}), we  must have $(a_i-x'_k,M)=M/p_j^2$, so that $M/p_j^2\in \Div(A')$. In particular, $M/p_j^2\notin \Div(B)$. On the other hand, we also have $\Phi_{N_j}|B$, and
$$
\bbB^{N_j}_{N_j/p_k}[b]=\bbB^{N_j}_{N_j/p_j}[b]=0 \text{ for all } b\in B.
$$
By Lemma \ref{gen_top_div_mis}, B must be $N_j$-fibered in the $p_i$ direction, so that
$\bbB^{N_j}_{N_j/p_i}[b]=\phi(p_i)$ for every $b\in B$. Together with (\ref{nonempty}), this means that every grid $\Lambda(b,M/p_ip_j)$ with $b\in B$ contains exactly $p_i$ points of $B$.
On the other hand, the assumption that (\ref{BM/p_i^2fiberp_jeq}) holds for all $b\in B$ implies that 
every such grid contains exactly $p_j$ points of $B$. This contradiction proves the claim, assuming (\ref{pjpk-niedzielnik}).
		
We now prove (\ref{pjpk-niedzielnik}). Assume, by contradiction, that $b,b'\in B$ with $(b-b',M)=M/p_jp_k$. Let $y,y'\in \ZZ_M\setminus B$ with $(b-y,M)=(b'-y',M)=M/p_k$, $(b-y',M)=(b'-y,M)=M/p_j$, and consider the saturating set $B_{y,a_i}$.  Applying Corollary \ref{bispan-corollary} (i) to $B_y$, once with respect to $b$ and again with respect to $b'$, we get
$$
B_{y,a_i}\subset\ell_i(b)\cup\ell_i(y)\cup\ell_i(b')\cup\ell_i(y').
$$
If $B_{y,a_i}\cap(\ell_i(y)\cup \ell_i(y'))$ is nonempty, then $\{M/p_ip_k,M/p_i^2p_k\}\cap \Div(B)$ must be nonempty, while if $B_{y,a_i}\cap(\ell_i(b)\cup\ell_i(b'))$ is nonempty, then $\{M/p_i,M/p_i^2\}\cap \Div(B)$ must be nonempty. Both of these contradict (\ref{Afibereddiv}).
\end{proof}

\medskip
\noindent
{\bf Claim 5.} {\it If $\Phi_{N_i}|A$ and $M/p_jp_k\notin \Div(B)$, then there must exist $b_j,b_k\in B$ such that (\ref{BM/p_i^2fiberp_jeq}) holds with $b=b_j$ and (\ref{BM/p_i^2fiberp_keq}) holds with $b=b_k$. }

\begin{proof}
Assume, by contradiction, that the conclusion is false. Without loss of generality, we may assume that (\ref{BM/p_i^2fiberp_jeq}) holds for all $b\in B$.

Since $M/p_jp_k\notin \Div(B)$, we have $\bbB^{N_j}_{N_j/p_k}[b]=0$ for all $b\in B$. By (\ref{BPhi_{N_j}Phi_{N_k}}) and Lemma \ref{gen_top_div_mis}, B must be fibered on $D(N_j)$-grids, so that for every $b\in B$ either 
\begin{equation}\label{BN_jp_jfibered}
	\bbB^{N_j}_{N_j/p_j}[b]=\phi(p_j),
\end{equation}
or 
\begin{equation}\label{BN_jp_ifibered}
	\bbB^{N_j}_{N_j/p_i}[b]=\bbB^M_{M/p_ip_j}[b]=\phi(p_i).
\end{equation}
Suppose that there exists $b_0\in B$ satisfying (\ref{BN_jp_jfibered}). Applying (\ref{BM/p_i^2fiberp_jeq}) to all $b'\in B$ with $(b_0-b',M)=M/p_j^2$, we get 
$$
|B\cap\Pi(b_0,p_k^2)|\geq p_j^2>p_ip_j,
$$
which contradicts Lemma \ref{planebound}. 

Hence (\ref{BN_jp_ifibered}) must hold for all $b\in B$, so that $B$ is a union of disjoint $N_j$-fibers in the $p_i$ direction, each of cardinality $p_i$. On the other hand, by Claim 3 and (\ref{BM/p_i^2fiberp_jeq}), for any $b\in B$ we have
\begin{align*}
	\bbB^{N_j/p_i^2}_{N_j/p_i^2}[b]
	%|B\cap\Lambda(b,M_i/p_j)|
	&= \bbB^{M_i/p_j}_{M_i/p_j}[b]\\
	& =1+\bbB^{M_i}_{M_i/p_j}[b]\\
	& =1+ \phi(p_j) = p_j .
\end{align*}
This implies that $p_j$ is divisible by $p_i$, which is obviously false.
\end{proof}

\medskip
\noindent
{\bf Claim 6.} {\it If $M/p_jp_k\notin \Div(B)$, then $\Phi_{N_i}\nmid A$.}%\label{finalcont} 
	
\begin{proof}
Assume, by contradiction, that $M/p_jp_k\notin \Div(B)$
and $\Phi_{N_i}|A$. Then the conclusions of Claims 1-5 apply. By Claim 5, we may find $b_j,b_k$ such that (\ref{BM/p_i^2fiberp_jeq}) holds with $b=b_j$ and (\ref{BM/p_i^2fiberp_keq}) holds with $b=b_k$. We fix these elements for the duration of the proof.

We claim that
\begin{equation}\label{Bfiberdiv4}
M/p_j^2,M/p_k^2\in \Div(B).
\end{equation}
We prove the first part of (\ref{Bfiberdiv4}), the second part being identical with $p_j$ and $p_k$ interchanged. 

As in the proof of Claim 5, we use (\ref{BPhi_{N_j}Phi_{N_k}}) and the assumption that
$M/p_jp_k\notin \Div(B)$ to conclude that either (\ref{BN_jp_jfibered}) or (\ref{BN_jp_ifibered}) holds
for every $b\in B$.
Suppose that (\ref{BN_jp_ifibered}) holds for $b=b_j$. Then $B\cap \Lambda(b_j,D(N_j))$ is a union of disjoint $N_j$-fibers in the $p_i$ direction, and the same argument as in the proof of Claim 5 
implies that $p_j$ is divisible by $p_i$, which is obviously false. Thus (\ref{BN_jp_jfibered}) holds with $b=b_j$, and in particular $M/p_j^2\in \Div(B)$.

With (\ref{Bfiberdiv4}) in place, we complete the proof as follows. 
Fix $a_j\in \calj$ and $a_i\in \cali$ such that $p_i^2|a_j-a_i$. (This is possible by (\ref{califiber2}).) 
Taking (\ref{Bfiberdiv2}), (\ref{Bfiberdiv3}) and (\ref{Bfiberdiv4}) into account, we see that $$
(a_i-a_j,M)\in\{ M/p_j^2p_k^2, M/p_j^2p_k\}.
$$ 
Suppose first that 
$(a_i-a_j,M)=M/p_j^2p_k^2= p_i^2$. By (\ref{califiber2}) again, we must in fact have $1,p_i,p_i^2\in\Div(A)$. 
We deduce that $M_i/p_j^2p_k^2\not\in\Div_{M_i}(B)$, and, therefore, 
one of $B\subset b* p_j\ZZ$ or $B\subset b* p_k\ZZ$ must hold for any fixed $b\in B$. That, however, contradicts (\ref{Bfiberdiv4}).

We are therefore left with 
\begin{equation}\label{a_ia_jdiff}
(a_i-a_j,M)=M/p_j^2p_k.
\end{equation}
We claim that in this case $\Phi_{M_j}\nmid A$. Indeed, otherwise $\Phi_{M_j}|A$ and, by Lemma \ref{fibercyc3}, we have $\Phi_{M_j}|\calj$. Notice that by (\ref{Bfiberdiv4}), $\bbJ^{M_j}_{M_j}[x]\in \{0,p_j\}$ for all $x\in \ZZ_M$. By Lemma \ref{2d-cyclo}, $a_j$ belongs to an $M_j$-fiber in the $p_\nu$ direction, with either $\nu=i$ or $\nu=k$. Assume first that $a_j$ belongs to an $M_j$-fiber in the $p_k$ direction. This means that for every $y\in \ZZ_M$ with $(y-a_j,M)=M/p_k$ we have $\bbJ^{M_j}_{M_j}[y]=p_j$, with $\calj\cap\ell_j(y)$ consisting of a single $M$-fiber in the $p_j$ direction. By (\ref{a_ia_jdiff}), there must be $y_0$ with $(y_0-a_j,M)=M/p_k$, such that $a_i\in \ell_j(y_0)$. Let $a_j'\in \calj\cap\ell_j(y_0)$. 
Let also $a'_i\in\ell_i(a_i)\subset A$ be the element such that $p_i^2|a'_i-a'_j$. We also have $p_k^2|a'_i-a'_j$, and, by (\ref{a_ia_jdiff}), $p_j\nmid a'_i-a'_j$, Hence $(a'_i-a_j',M)=M/p_j^2$, which contradicts (\ref{Bfiberdiv4}).

It follows that $a_j$ must belong to an $M_j$-fiber in the $p_i$ direction. By the same argument as above, for every $y\in\ZZ_M$ with $(y-a_j,M)=M/p_i$, we have $\bbJ^{M_j}_{M_j}[y]=p_j$. We get $|A\cap \Pi(a_j,p_k^2)|\geq p_ip_j$, and by Lemma \ref{planebound}, we have
\begin{equation}\label{fullp_k^2plane}
|A\cap \Pi(a_j,p_k^2)|=p_ip_j.
\end{equation}
Therefore
\begin{align*}
|A\cap \Pi(a_j,p_k)|&\geq |A\cap \Pi(a_j,p_k^2)|+|A\cap \Pi(a_i,p_k^2)|\\
&>p_ip_j,
\end{align*}
where the latter inequality follows from the fact that  $\ell_i(a_i)\subset A$, as follows from Claim 1.
By Corollary \ref{planegrid} we have $\Phi_{p_k^2}|A$. Combining the latter with (\ref{fullp_k^2plane}), we conclude
$$
|A\cap \Pi(a_i,p_k^2)|=|A\cap \Pi(a_j,p_k^2)|=p_ip_j>|\ell_i(a_i)|=p_i^2.
$$
This means that there must be at least one element $a\in A\cap \Pi(a_i,p_k^2)$ which does not belong to $\ell_i(a_i)$. The latter is not possible, because $\hbox{Div}(a,\ell_i(a_i))\subset \hbox{Div}(A)$ would cause a divisor conflict with either (\ref{Bfiberdiv2}) or (\ref{Bfiberdiv4}). This contradiction gives $\Phi_{M_j}|B$. 

Note that by (\ref{nonempty}) $M/p_j\notin \hbox{Div}(B)$, thus $B'=\{b \mod  N_j:\ b\in B\}\subset \ZZ_{N_j}$ is a set (and not a multiset). By (\ref{BPhi_{N_j}Phi_{N_k}}), we have $\Phi_{N_j}\Phi_{M_j}|B$ and therefore $\Phi_{N_j}\Phi_{M_j}|B'$. By Example (1) at the end of Section 2.6 applied with $M$ replaced by $N_j$, $B'$ is $\calt$-null with respect to the cuboid type $\calt=(N_j,(1,0,1),1)$. Since each individual cuboid of type $\calt$ is in fact contained in a 2-dimensional $D(M_j)$-grid in $\ZZ_{N_j}$, this implies that $\Phi_{M_j}|(B'\cap\Lambda)(X)$, for any such grid $\Lambda$. By Lemma \ref{2d-cyclo} (ii), $B'\cap\Lambda$ is $N_j$-fibered in either the $p_i$ or $p_k$ direction on each such grid. 
However, since $M/p_j, M/p_jp_k\notin \hbox{Div}(B)$, we get $N_j/p_k\notin \hbox{Div}_{N_j}(B')$. Hence $B'$, and therefore $B$, is $N_j$-fibered in the $p_i$ direction. As already proved in Claim 5, the latter is not allowed. The claim follows.
\end{proof}
	
Claim 6 proves Proposition \ref{initialstatement} in the case when $M/p_jp_k\notin \Div(B)$. From here on, we will therefore assume that 
\begin{equation}\label{M/p_jp_kas}
M/p_jp_k\in \Div(B).
\end{equation}
	
\medskip
\noindent
{\bf Claim 7.} {\it If $\Phi_{N_i}|A$ and (\ref{M/p_jp_kas}) holds, then $\Phi_{M_i/p_j}\Phi_{M_i/p_k}|B$.}%\label{BPhi_{M/p_i^2p_j}Phi_{M/p_i^2p_k}}

\begin{proof}
	
Assume, by contradiction, that $\Phi_{M_i/p_j}| A$. Let $a_i\in\cali$, and let $x_k\in \ZZ_M$ with $(a_i-x_k,M)=M/p_k$.
By Claim 4, we have $\bbA^{M_i}_{M_i}[x_k]=0$. Furthermore, (\ref{califiber2}) and the assumption that 
$M/p_jp_k\notin \Div(A)$ imply that
$$
\bbA^{M_i/p_j}_{M_i/p_j}[x_k]=\bbA^{M_i}_{M_i}[x_k]+\sum_{x_k':(x_k-x_k',M)=M/p_j} \bbA^{M_i}_{M_i}[x_k']=0.
$$
Considering all $M_i/p_j$ cuboids with vertices at $a_i$ and $x_k$, we see that for every $x_j\in\ZZ_M$ with $(a_i-x_j,M)=M/p_j^2$ we have 
$$
\bbA^{M_i/p_j}_{M_i/p_j}[x_j]\geq p_i^2,
$$
thus
$$
|A\cap\Pi(a_i,p_k^2)|\geq p_jp_i^2>p_ip_j,
$$
contradicting Lemma \ref{planebound}. Since this argument is symmetric with respect to $j$ and $k$, the claim follows.
\end{proof}
	
\medskip
\noindent
{\bf Claim 8.} {\it If $\Phi_{N_i}|A$ and (\ref{M/p_jp_kas}) holds, then $B$ is $M_i$-fibered in both of the $p_j$ and $p_k$ directions, so that
 for all $b\in B$ we have
\begin{equation}\label{BM/p_i^2fibering}
\frac{1}{\phi(p_j)}\bbB^{M_i}_{M_i/p_j}[b]=\frac{1}{\phi(p_k)}\bbB^{M_i}_{M_i/p_k}[b]=1.
\end{equation}}
	
\begin{proof}
By Claims 2 and 7,
we have $\Phi_{M_i}\Phi_{M_i/p_k}|B$. Therefore $B$ is null with respect to all cuboids of type $(M_i,\vec{\delta},1)$, where $\vec{\delta}=(0,1,\delta_k)$ with $\delta_k\in\{1,2\}$.
Suppose that for some $b_0\in B$ we have $\bbB^{M_i}_{M_i/p_j}[b_0]< \phi(p_j)$. Fix $y_j\in \ZZ_M$ such that $(b_0-y_j,M)=M/p_j$ and $\bbB^{M_i}_{M_i}[y_j]=0$, and consider all cuboids as above with vertices at $b_0$ and $y_j$. In order to balance these cuboids, we must have $\bbB^{M_i}_{M_i}[y]=1$ for all $y\in \ZZ_M$ with $(b_0-y,M)\in\{M/p_k,M/p_k^2\}$. But now we get $|B\cap\Pi(b_0,p_j^2)|\geq p_k^2>p_ip_k$, which contradicts Lemma \ref{planebound}. Since this argument is symmetric in $j$ and $k$, the claim follows.
\end{proof}
	
\medskip
\noindent
{\bf Claim 9.} {\it If $\Phi_{N_i}|A$ and (\ref{M/p_jp_kas}) holds, then  for all $b\in B$ we have $\bbB^{N_j}_{N_j/p_i}[b]=\phi(p_i)$.}%\label{BN_jp_ifibering}
	
\begin{proof}
Assume, by contradiction, that there exists $b_0\in B$ with $\bbB^{N_j}_{N_j/p_i}[b_0]<\phi(p_i)$. Since $p_k>p_j$ and $M/p_k\in \Div(A)$, we must also have $\bbB^{N_j}_{N_j/p_k}[b_0]<\phi(p_k)$. Let $y_i,y_k\in \ZZ_M$ with $(b_0-y_i,N_j)=N_j/p_i, (b_0-y_k,N_j)=N_j/p_k$ and such that $\bbB^{N_j}_{N_j}[y_i]=\bbB^{N_j}_{N_j}[y_k]=0$. 
Recall that $\Phi_{N_j}|B$, and consider all $N_j$ cuboids with vertices at $b,y_i$ and $y_k$. We get that 
for every $z$ with $(z-b_0,N_j)=N_j/p_j$, we have
\begin{equation}\label{Njpjplanes}
\bbB^{N_j}_{N_j}[z]+\bbB^{N_j}_{N_j/p_ip_k}[z]\geq 1.
\end{equation}
But by Claim 8, we also have 
%(\ref{BM/p_i^2fibering}), we also have
$$
|B\cap \Pi(b,p_j)|\geq p_jp_k \ \ \forall b\in B.
$$
Applying this to all $b\in B$ contributing to (\ref{Njpjplanes}), and summing over $z$ with $(z-b_0,N_j)=N_j/p_j$,
we get $|B|\geq p_j\cdot (p_jp_k)>p_ip_jp_k$, a contradiction. 
\end{proof}

\medskip
\noindent
{\bf Claim 10.} {\it If (\ref{M/p_jp_kas}) holds, then $\Phi_{N_i}\nmid A$.}%\label{finalcont2}
	
\begin{proof}
Assume, for contradiction, that (\ref{M/p_jp_kas}) holds and $\Phi_{N_i} | A$.
By Claim 8, $B$ must satisfy (\ref{BM/p_i^2fibering}), and in particular
$$
\bbB^{M_i}_{M_i/p_j}[b]=\phi(p_j) \text{ for all } b\in B.
$$ 
By Claim 9,
%Lemma \ref{BN_jp_ifibering} 
$$
\bbB^{N_j}_{N_j/p_i}[b]=\bbB_{M/p_ip_j}[b]=\phi(p_i) \text{ for all } b\in B.
$$
As in the proof of Claim 6, these two properties imply that $p_j$ is divisible by $p_i$, which is false.
\end{proof}

Claim 10 concludes the proof of Proposition \ref{initialstatement}.	
\end{proof}

%%%%%%%%%%%%%%%%%%%%%%%%%%%%%%%%%%%%%%%%%%%%%%%%%

By Proposition \ref{initialstatement}, it remains to prove Proposition \ref{F1emptyset} under the assumption that
\begin{equation}\label{BPhi_{N_i}Phi_{N_j}Phi_{N_k}}
\prod_{\nu\in\{i,j,k\}} \Phi_{M/p_\nu}|B.
\end{equation} 

\begin{lemma}
Assume (F1) and (\ref{BPhi_{N_i}Phi_{N_j}Phi_{N_k}}).
Then at least one of the sets $\cali, \calj$, or $\calk$ must be empty.
\end{lemma}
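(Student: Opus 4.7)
The plan is to assume for contradiction that $\cali,\calj,\calk$ are all nonempty under (F1) together with $(\ref{BPhi_{N_i}Phi_{N_j}Phi_{N_k}})$, and to force enough structure on $B$ to contradict the pairwise disjointness of (F1). Writing $\cali=I*F_i$, $\calj=J*F_j$, $\calk=K*F_k$ for the three disjoint pieces of fiber-roots, the nonemptiness of each piece gives $M/p_\nu\in\Div(A)$, hence $M/p_\nu\notin\Div(B)$, for every $\nu\in\{i,j,k\}$. Evaluating the mask $A(X)=I(X)F_i(X)+J(X)F_j(X)+K(X)F_k(X)$ at a primitive $N_i$-th root of unity $\eta$ yields $A(\eta)=p_iI(\eta)$, since $F_i(\eta)=p_i$ and $F_j(\eta)=F_k(\eta)=0$; because $\Phi_{N_i}\mid B$ forces $A(\eta)\neq 0$, we obtain $\Phi_{N_i}\nmid I$, and analogously $\Phi_{N_j}\nmid J$ and $\Phi_{N_k}\nmid K$.

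The main and hardest step is to prove that $B$ is $N_\nu$-fibered in the $p_\nu$ direction for every $\nu$. The tool is Lemma \ref{fiberfibering}(ii), whose hypothesis for a given $\nu$ is $\{M/p_\mu,M/p_\lambda,M/p_\nu p_\mu,M/p_\nu p_\lambda\}\cap\Div(B)=\emptyset$ with $\{\mu,\lambda\}=\{i,j,k\}\setminus\{\nu\}$. The first two inclusions are already in hand. For each of the six remaining conditions $M/p_\nu p_\mu\notin\Div(B)$, if the inclusion failed one would pick $b,b'\in B$ realizing that divisor and combine them with an $N_\lambda$-fiber of $B$ rooted at $b$ (which $\Phi_{N_\lambda}\mid B$ produces via Lemma \ref{gen_top_div_mis}) together with a fixed $M$-fiber of $\calk$; the resulting saturating-set computation, in the style of Lemma \ref{Bfiber} and Claim 6 of Proposition \ref{initialstatement}, then produces either a configuration forbidden by Lemma \ref{triangles} or a plane in $A$ exceeding the bound of Lemma \ref{planebound}. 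Running these six symmetric sub-cases while respecting the ordering $p_i<p_j<p_k$ is where the bulk of the work lies.

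With the triple fibering of $B$ in hand, each fiber mask $G_\nu(X):=1+X^{M_\nu}+\cdots+X^{(p_\nu-1)M_\nu}$ divides $B(X)$ modulo $X^M-1$, and since $\Phi_{p_\nu}\mid G_\nu$ we get $\Phi_{p_\nu}\mid B$ for every $\nu$. The (T1) pigeonhole applied to $|B|=p_ip_jp_k$ then forces $\Phi_{p_\nu^2}\mid A$ for every $\nu$. Substituting the fibered expression of $A$ into the identity at a primitive $p_i^2$-th root $\omega$ (where $F_i(\omega)=0$, $F_j(\omega)=p_j$, $F_k(\omega)=p_k$) gives $p_jJ(\omega)+p_kK(\omega)=0$, i.e.\ $\Phi_{p_i^2}\mid p_jJ+p_kK$, together with two cyclic analogues. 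Using the cofibered structures for $(A,B)$ supplied by Lemma \ref{1dim_sat-cor}, I then apply the fiber-shifting of Lemma \ref{fibershift} to T2-equivalently reduce $A$ to the standard tile $F_i*F_j*F_k$, in which every element lies in fibers in all three directions. Since (F1) demands $\cali\cap\calj=\emptyset$, this forces $A=\emptyset$, contradicting the assumed nonemptiness of the three pieces and completing the proof.
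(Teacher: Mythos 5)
Your route is genuinely different from the paper's, but it has two real gaps. The step you yourself identify as the main one --- proving that $B$ is $N_\nu$-fibered in the $p_\nu$ direction for all three $\nu$, which requires showing $M/p_ip_j,\,M/p_ip_k,\,M/p_jp_k\notin\Div(B)$ before Lemma \ref{fiberfibering}(ii) can be invoked --- is only described, not carried out. Saying that a saturating-set computation ``in the style of'' Lemma \ref{Bfiber} or Claim 6 of Proposition \ref{initialstatement} would produce a contradiction with Lemma \ref{triangles} or Lemma \ref{planebound} is a plan, not a proof; moreover those arguments are run under quite different divisibility hypotheses (e.g.\ $\Phi_{N_i}\mid A$ in Proposition \ref{initialstatement}, or the corner assumption (C1) in Lemma \ref{Bfiber}), so there is no reason offered why they transfer. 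The paper avoids this entirely: since $p_i=\min_\nu p_\nu$ and $M/p_j,M/p_k\notin\Div(B)$, the set $B$ cannot be $N_i$-fibered in the $p_j$ or $p_k$ direction, so Lemma \ref{gen_top_div_mis} forces $M/p_i^2\in\Div(B)$; a short case split on whether $\Phi_{M_i}$ divides $A$ or $B$ then finishes, the second case by a two-dimensional cuboid-type nullity argument that pushes $M/p_j$ or $M/p_k$ into $\Div(B)$, contradicting the nonemptiness of $\calj,\calk$.

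The endgame is also flawed even if the triple fibering were granted. You invoke Lemma \ref{1dim_sat-cor} and Lemma \ref{fibershift} without establishing any saturating-set containment $A_{x,b}\subset\ell_\nu(x)$, so the cofibered structures needed to shift fibers are not available; and, more fundamentally, fiber shifts replace $A$ by a different set $A'$, so the statement that every element of $A'=F_i*F_j*F_k$ lies in fibers in all three directions says nothing about the fibering decomposition of the original $A$, which is where the disjointness hypothesis of (F1) lives. Being T2-equivalent to the grid $\Lambda(0,D(M))$ is never a contradiction in this paper --- it is precisely the desired conclusion in Theorem \ref{fibered-mainthm}(I) --- so no contradiction is reached. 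A smaller but symptomatic error: $\Phi_{N_i}\mid B$ does not force $A(\eta)\neq 0$ at a primitive $N_i$-th root of unity, since cyclotomic polynomials with more than one prime factor may divide both $A$ and $B$; only prime-power cyclotomics are exclusive between the two factors.
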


\begin{proof}
Assume, by contradiction, that (\ref{nonempty}) holds. By (\ref{BPhi_{N_i}Phi_{N_j}Phi_{N_k}}), $\Phi_{N_i}|B$. Since $p_i=\min_\nu p_\nu$ and $M/p_j,M/p_k\notin \Div(B)$, we must have 
$$
\frac{1}{\phi(p_j)}\bbB^{N_i}_{N_i/p_j}[b], \frac{1}{\phi(p_k)}\bbB^{N_i}_{N_i/p_k}[b]<1 \text{ for all } b\in B,
$$
and, in particular, $B$ cannot be $N_i$-fibered in either the $p_j$ or the $p_k$ direction on any $D(N_i)$-grid. By Lemma \ref{gen_top_div_mis}, it follows that $M/p_i^2\in \Div(B)$.

Suppose first that $\Phi_{M_i}|A$, so that $\Phi_{M_i}|\cali$. We have $\bbI^{M_i}_{M_i}[x]\in \{0,p_i\}$ for all $x\in\ZZ_M$, 
and $N_i/p_i\not\in\Div_{N_i}(A)$. By Lemma \ref{2d-cyclo}, $\cali$ must be $M_i$-fibered in one of the $p_j$ and $p_k$ directions, on each $D(M_i)$-grid. In particular, for a given $a_i\in \cali$, we have either
\begin{equation}\label{IM/p_i^2fibering}
\mathbb{I}^{M_i}_{M_i}[y]=p_i, \text{ for all } y\in\ZZ_M \text{ with } (y-a_i,M)=M/p_j,
\end{equation}
or
\begin{equation}\label{IM/p_i^2fiberingp_k}
\mathbb{I}^{M_i}_{M_i}[y]=p_i, \text{ for all } y\in\ZZ_M \text{ with } (y-a_i,M)=M/p_k.
\end{equation}
%for all $a_i\in\cali$. 

Assume that there exists $a_i\in\cali$ satisfying (\ref{IM/p_i^2fibering}), and fix such an element. It follows that $|\cali\cap\Pi(a_i,p_k^2)|\geq p_ip_j$. By Lemma \ref{planebound}, the last inequality holds as equality, i.e.,
\begin{equation}\label{maxp_k^2plane}
|\cali\cap\Pi(a_i,p_k^2)|= p_ip_j \text{ and so } A\cap\Pi(a_i,p_k^2)\subset \cali.
\end{equation}
We now consider two cases.
\begin{itemize}
\item If $\Phi_{p_k^2}|A$, then (\ref{maxp_k^2plane}) implies $A\subset \Pi(a_i,p_k)$. But since $\calk$ is nonempty, we may find $a_k\in \calk\subset \Pi(a_i,p_k)$. It follows that there must exist $a_k'\in (a_k*F_k)\cap\Pi(a_i,p_k^2)$. By (\ref{maxp_k^2plane}) we have $a_k'\in \cali$, thus contradicting (\ref{pairwise-disjoint}). 
\item If $\Phi_{p_k}|A$, then 
\begin{equation}\label{maxplaneallA}
|A\cap\Pi(a,p_k)|= p_ip_j \text{ for all } a\in A.
\end{equation}
Evidently, the latter implies that (\ref{IM/p_i^2fibering}) must hold for all $a_i'\in \cali$. Indeed, otherwise there would be $a_i'\in\cali$ satisfying (\ref{IM/p_i^2fiberingp_k}), and so $|A\cap\Pi(a_i',p_k)|\geq p_ip_k$. But now (\ref{maleduze}) implies $|A\cap\Pi(a_i',p_k)|>p_ip_j$, which contradicts (\ref{maxplaneallA}). Hence (\ref{IM/p_i^2fibering}) holds for all $a_i\in \cali$. Consequently (\ref{maxp_k^2plane}) holds for all $a_i\in \cali$. 

Let $a_k\in\calk$. If $\cali\cap \Pi(a_k,p_k)$ is nonempty, then (\ref{maxp_k^2plane}) and (\ref{maxplaneallA}) imply that $A\cap\Pi(a_k,p_k)$ is contained in $\cali$. In particular we would have $a_k\in \cali$, thus contradicting (\ref{pairwise-disjoint}). Hence $A\cap \Pi(a_k,p_k)\subset \calj\cup\calk$.
Lemma \ref{planesubsetcalk} (ii) implies now that $p_i>p_k$, contradicting (\ref{maleduze}).
\end{itemize}
We note that the complementary case, in which (\ref{IM/p_i^2fiberingp_k}) holds for all $a_i\in \cali$, is proved in the exact same way, interchanging $j$ and $k$. The only difference between the two cases is that in the analogous subcase $\Phi_{p_j}|A$, we do not require the argument showing (\ref{IM/p_i^2fiberingp_k}) holds for all $a_i\in \cali$.

We conclude that  $\Phi_{M_i}\nmid A$, hence $\Phi_{N_i}\Phi_{M_i}|B$. This implies that $B$ is $\calt$-null with respect to the cuboid type $\calt=(N_i,\vec{\delta},T)$, where $\vec{\delta}=(0,1,1)$ and $T(X)=1$. (See the first of the two examples at the end of Section \ref{cuboid-section}, with $M$ replaced by $N_i$.)
 Since $M/p_i\notin \Div(B)$, we have $\bbB^{\calt}[y]\in\{0,1\}$ for all $y\in \ZZ_M$. Note that $\calt$ is a 2-dimensional cuboid type, so that for every $b\in B$ we either have 
$$
\bbB^{\calt}[y_j]=\bbB_M[y_j]+\bbB_{M/p_i}[y_j]=1 \text{ for all } y_j\in \ZZ_M \text{ with } (b-y_j,M)=M/p_j,
$$
or
$$
\bbB^{\calt}[y_k]=\bbB_M[y_k]+\bbB_{M/p_i}[y_k]=1 \text{ for all } y_k\in \ZZ_M \text{ with } (b-y_k,M)=M/p_k.
$$
As $p_i=\min_\nu p_\nu$, the former implies $M/p_j\in \Div(B)$, and the latter implies $M/p_k\in \Div(B)$, both contradicting (\ref{nonempty}). 
\end{proof}

%%%%%%%%%%%%%%%%%%%%%%%%%%%%%%%%%%%%%%%%%%%%%%%%%%%%

%%%%%%%%%%%%%%%%%%%%%%%%%%%%%%%%%%%%%%%%%%%%%%%%

\subsection{Proof of Proposition \ref{F2emptyset}}

%%%%%%%%%%%%%%%%%%%%%%%%%%%%%%%%%%%%%%%%%%%%%%%%%

In this section, we will prove that $\cali=\emptyset$ under the following conditions.

\medskip\noindent
\textbf{Assumption (F2).} We have $A\oplus B=\ZZ_M$, where $M=p_i^{2}p_j^{2}p_k^{2}$ is odd. Furthermore, $|A|=|B|=p_ip_jp_k$, $\Phi_M|A$, $A$ is fibered on $D(M)$-grids, $\calj\cap\calk\neq \emptyset$, and
\begin{equation}\label{pairwise-disjoint-2}
\cali\cap \calj\cap\calk =  \emptyset.
\end{equation}

\medskip

Let $a\in \calj\cap\calk$. 
By Lemma \ref{fiberedstructure} (i), we have 
\begin{equation}\label{Aplanegrid}
a*F_j*F_k=A\cap\Pi(a,p_i^{2}),
\end{equation}
and in particular
\begin{equation}\label{Adivintersec}
\{M/p_j,M/p_k,M/p_jp_k\}\subset \Div(A).
\end{equation}

We first prove Proposition \ref{F2emptyset} under the assumption that $\Phi_{p_i^{2}}|A$.

\begin{lemma}\label{subgroup}
Assume (F2), and let $a\in \calj\cap\calk$. If $\Phi_{p_i^{2}}|A$, then 
\begin{equation}\label{Asubgroup}
A\subset\Pi(a,p_i).
\end{equation}
\end{lemma}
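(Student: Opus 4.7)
The plan is to prove this directly from the combinatorial characterization of $\Phi_{p_i^2}\mid A$ combined with the already-established plane identity (\ref{Aplanegrid}). No saturating-set or cuboid argument is needed.

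First I would observe that the two pieces of information are already lined up for us: the hypotheses (F2) give $\cali\cap\calj\cap\calk=\emptyset$, so $a\in\calj\cap\calk$ forces $a\notin\cali$, which is exactly what is needed to have invoked Lemma \ref{fiberedstructure}(i) to obtain (\ref{Aplanegrid}), namely
\[
A\cap\Pi(a,p_i^{2})=a*F_j*F_k,\qquad\text{so}\qquad |A\cap\Pi(a,p_i^{2})|=p_jp_k.
\]
This gives full control of $A$ on the finest plane through $a$ perpendicular to the $p_i$ direction.

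Next I would feed this into the cyclotomic uniform-distribution identity (\ref{cyclo-uniform}) applied with $\alpha=2$ and $x=a$: since $\Phi_{p_i^2}\mid A$,
\[
|A\cap\Pi(a,p_i^{2})|=\frac{1}{p_i}\,|A\cap\Pi(a,p_i)|.
\]
Combining with the cardinality above gives $|A\cap\Pi(a,p_i)|=p_ip_jp_k=|A|$. Hence every element of $A$ lies in $\Pi(a,p_i)$, which is exactly the conclusion (\ref{Asubgroup}).

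There is no real obstacle here: the work was done in assembling (\ref{Aplanegrid}) from Lemma \ref{fiberedstructure}(i). The only thing to double-check is the indexing in (\ref{cyclo-uniform}), i.e.\ that $\Pi(a,p_i^{2})\subset\Pi(a,p_i)$ with index $p_i$, which is immediate from $\Pi(a,p_i^{\alpha})=\Lambda(a,p_i^{\alpha})$. So the proof will be essentially a two-line application of Lemma \ref{fiberedstructure}(i) and the plane-bound characterization of $\Phi_{p_i^2}$-divisibility.
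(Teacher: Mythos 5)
Your argument is correct and is essentially the paper's own proof: the paper deduces $|A\cap\Pi(a,p_i)|=p_ip_jp_k=|A|$ directly from $\Phi_{p_i^2}\mid A$ together with (\ref{Aplanegrid}), which is exactly your application of (\ref{cyclo-uniform}) with $\alpha=2$ at $x=a$. Your extra check that $a\notin\cali$ (via $\cali\cap\calj\cap\calk=\emptyset$) justifying Lemma \ref{fiberedstructure}(i) is the same step the paper performs just before stating the lemma.
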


\begin{proof}
The assumption $\Phi_{p_i^2}|A$, together with (\ref{Aplanegrid}), implies that $|A\cap\Pi(a,p_i)|=p_ip_jp_k=|A|$. 
\end{proof}

\begin{corollary}
Assume that (F2) holds, and that $\Phi_{p_i^{2}}|A$. Then $\cali=\emptyset$.
\end{corollary}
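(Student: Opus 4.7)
The plan is to combine Lemma \ref{subgroup} (which gives $A\subset \Pi(a,p_i)$) with the planar identity (\ref{Aplanegrid}) to force any hypothetical $a_i\in\cali$ to intersect the grid $a*F_j*F_k$, thereby producing an element of $\cali\cap\calj\cap\calk$ and contradicting (\ref{pairwise-disjoint-2}).

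First I would observe that $\Pi(a,p_i)=a+p_i\ZZ_M$ decomposes as a disjoint union of exactly $p_i$ cosets of $\Pi(\cdot,p_i^2)=a+p_i^2\ZZ_M$. Since $A\subset\Pi(a,p_i)$ and $|A|=p_ip_jp_k$, applying Lemma \ref{planebound} to each of these $p_i$ parallel ``slice'' planes forces equality
$$
|A\cap\Pi(a',p_i^2)|=p_jp_k\quad\text{for every }a'\in A,
$$
and in particular recovers the full-plane identity $A\cap\Pi(a,p_i^2)=a*F_j*F_k$ on the slice through $a$, as already known from (\ref{Aplanegrid}).

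Now I would argue by contradiction: suppose $a_i\in\cali$, so that $a_i*F_i\subset A$. The $p_i$ elements of $a_i*F_i$ lie in pairwise distinct cosets of $\Pi(\cdot,p_i^2)$ inside $\Pi(a,p_i)$, since the difference of any two of them is $\nu M/p_i=\nu p_ip_j^2p_k^2$ for some $1\le \nu\le p_i-1$, which is divisible by $p_i$ but not by $p_i^2$. Hence exactly one element $a_i'\in a_i*F_i$ falls in the slice $\Pi(a,p_i^2)$, and by (\ref{Aplanegrid}) we have $a_i'\in a*F_j*F_k$. A direct computation shows that each point $z$ of the grid $a*F_j*F_k$ satisfies $z*F_j\subset a*F_j*F_k\subset A$ and $z*F_k\subset a*F_j*F_k\subset A$, so $z\in\calj\cap\calk$; in particular $a_i'\in\calj\cap\calk$. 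On the other hand $a_i'$ lies on the $p_i$-fiber $a_i*F_i\subset A$, so $a_i'\in\cali$. Therefore $a_i'\in\cali\cap\calj\cap\calk$, contradicting (\ref{pairwise-disjoint-2}).

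There is no real obstacle here: the heavy lifting has already been done by Lemma \ref{subgroup} (and ultimately by (\ref{Aplanegrid})), and what remains is a short pigeonhole argument on the cosets of $p_i^2\ZZ_M$ inside $p_i\ZZ_M$. The one subtlety worth flagging is the disjointness of the fiber $a_i*F_i$ modulo $p_i^2$, which is exactly the place where the strict hypothesis $\Phi_{p_i^2}\mid A$ (as opposed to the weaker $\Phi_{p_i}\mid A$) is used.
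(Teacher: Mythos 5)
Your proof is correct and takes essentially the same route as the paper: invoke Lemma \ref{subgroup} to get $A\subset\Pi(a,p_i)$, use the fiber $a_i*F_i\subset A$ to locate a point of $\cali$ in the slice $\Pi(a,p_i^2)$, and then (\ref{Aplanegrid}) puts that point in $a*F_j*F_k\subset\calj\cap\calk$, contradicting (\ref{pairwise-disjoint-2}). One quibble: the hypothesis $\Phi_{p_i^2}\mid A$ is used only through Lemma \ref{subgroup} (to force $A\subset\Pi(a,p_i)$), not in the mod-$p_i^2$ disjointness of the fiber $a_i*F_i$, which holds automatically.
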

\begin{proof}
Let $a\in\calj\cap\calk$. Suppose, by contradiction, that $\cali$ is nonempty. By (\ref{Asubgroup}), there must exist an element $a_i\in \cali\cap \Pi(a,p_i^2)$. It follows from (\ref{Aplanegrid}) that $a_i\in a*F_j*F_k$. But then $a_i\in\cali\cap\calj\cap\calk$, contradicting (\ref{pairwise-disjoint-2}).
\end{proof}

In the rest of this section, it remains to consider the following case.

\medskip\noindent
\textbf{Assumption (F2').} Assume that (F2) holds, and that 
\begin{equation}\label{notthehighest}
\Phi_{p_i^2} \nmid  A.
\end{equation}

\begin{lemma}\label{saturatingplanes}
Assume (F2'). Then:
\begin{enumerate}
\item [(i)]  
We have $\Phi_{p_i}|A$. Moreover, 
\begin{equation}\label{podzial1}
A\cap\Pi(a,p_i)=A\cap\Pi(a,p_i^{2})\ \ \forall a\in \calj\cap\calk,
\end{equation}	
\begin{equation}\label{podzial2}
|A\cap\Pi(x',p_i)|=p_jp_k\ \ \forall x'\in \ZZ_M.
\end{equation}

\item [(ii)] Let $a\in \calj\cap\calk$.
For every $x\in \ZZ_M\setminus A$ with $(a-x,M)=M/p_i$, we have
\begin{equation}\label{asaturatingplane}
A_x\subset \Pi(a,p_i^{2}).
\end{equation}
\end{enumerate}
\end{lemma}
\begin{proof}
Since $|A|=p_ip_jp_k$, exactly one of $\Phi_{p_i}$ and $\Phi_{p_i^2}$ must divide $A$.
By (\ref{notthehighest}), we must in fact have $\Phi_{p_i}|A$.
Let $a\in\calj\cap \calk$. 
Then 
\begin{equation}\label{podzial10}
p_jp_k\geq |A\cap\Pi(a,p_i)|\geq |A\cap\Pi(a,p_i^{2})|=p_jp_k,
\end{equation}
by (\ref{Aplanegrid}). Hence the above must hold with equality, which proves (\ref{podzial1}). It also follows that for any $x'\in\ZZ_M$, we must have $|A\cap\Pi(x',p_i)|=p_jp_k$, so that (\ref{podzial2}) holds. This proves (i).

Next, let $a,x$ be as in (ii). By (\ref{podzial10}) again, we have
%$$
%|A\cap\Pi(a,p_i)|=|A\cap\Pi(a,p_i^2)|=p_jp_k,
%$$
%which also implies
\begin{equation}\label{Ap_iplanebound}
A\cap\Pi(a,p_i)\subset \Pi(a,p_i^2).
\end{equation}
Let $b\in B$. Applying Corollary \ref{bispan-corollary} (i) to $A_{x,b}$, from (\ref{two-planes}) we get $A_{x ,b}\subset \Pi(a,p_i^{2})\cup \Pi(x,p_i^{2})$. Now (\ref{asaturatingplane}) follows directly from (\ref{Ap_iplanebound}).
\end{proof}

\begin{lemma}\label{planeBsaturating}
Assume (F2'). Then:
\begin{enumerate}
\item [(i)]  For every $b\in B$, and for every $y\in \ZZ_M$ with $(b-y,M)=M/p_i$, we have
\begin{equation}\label{Bplanerest}
\bbB_{M}[y]+\bbB_{M/p_j}[y]+\bbB_{M/p_k}[y]+\bbB_{M/p_jp_k}[y]=1.
\end{equation}
	
\item [(ii)] For all $d$ with $p_i^{2}|d|(M/p_jp_k)$, we have $\Phi_d|B$. Additionally,
\begin{equation}\label{Btopdiv}
\{M/p_i,M/p_ip_j,M/p_ip_k,M/p_ip_jp_k\}\cap \Div(B)\neq \emptyset .
\end{equation}		
\end{enumerate}
\end{lemma}
\begin{proof}
Fix $b\in B$, and write $N_{jk}=M/p_jp_k$ for short. 
Let $a\in \calj\cap\calk$, and let $x\in \ZZ_M\setminus A$ with $(a-x,M)=M/p_i$. 

By (\ref{e-ortho2}), (\ref{Aplanegrid}), and (\ref{asaturatingplane}), we have
\begin{align*}
1&=
\frac{1}{\phi(p_i)}\bbA_{M/p_i}[x|\Pi(a,p_i^{2})]\,\bbB_{M/p_i}[b]+\frac{1}{\phi(p_ip_j)}\bbA_{M/p_ip_j}[x|\Pi(a,p_i^{2})]\,\bbB_{M/p_ip_j}[b]\\
&\qquad+\frac{1}{\phi(p_ip_k)}\bbA_{M/p_ip_k}[x|\Pi(a,p_i^{2})]\,\bbB_{M/p_ip_k}[b]
+\frac{1}{\phi(p_ip_jp_k)}\bbA_{M/p_ip_jp_k}[x|\Pi(a,p_i^{2})]\,\bbB_{M/p_ip_jp_k}[b]\\
&=\frac{1}{\phi(p_i)}(\bbB_{M/p_i}[b]+\bbB_{M/p_ip_j}[b]+\bbB_{M/p_ip_k}[b]+\bbB_{M/p_ip_jp_k}[b])\\
&=\frac{1}{\phi(p_i)}\sum_{y:(y-b,M)=M/p_i}\bbB_{N_{jk}}^{N_{jk}}[y].
\end{align*}	
On the other hand, by (\ref{Adivintersec}) and divisor exclusion, any $N_{jk}$-grid may contain at most one element of $B$, so that $\bbB_{N_{jk}}^{N_{jk}}[y]\leq 1$ for all $y\in\ZZ_M$. Therefore
$$
\bbB_{N_{jk}}^{N_{jk}}[y]= 1 \text{ for all } y \text{ such that } (y-b,M)=M/p_i,
$$
which is (\ref{Bplanerest}). We also note that the first equation in the above calculation implies (\ref{Btopdiv}).

Finally, let $d\neq p_i^{2}$ and $p_i^{2}|d|(M/p_jp_k)$, and consider any $d$-cuboid with one vertex at $a\in \calj\cap\calk$. By (\ref{Aplanegrid}), we have $\bbA^d_d[a]\geq p_jp_k$. However, we also have $|A\cap\Pi(a,p_i)|=p_jp_k$ by the third claim in the lemma, so that $\bbA^d_{d'}[a]=0$ for all $d'<d$ with $D(d)|d'$. It follows that the cuboid cannot be balanced. Therefore	$\Phi_d\nmid A$, which proves (ii). 
\end{proof}

\begin{lemma}\label{nodoubleinters}
Assume (F2'). Then $\cali\cap(\calj\cup\calk)=\emptyset$.
\end{lemma}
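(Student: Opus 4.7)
The plan is by contradiction. Suppose $a' \in \cali \cap \calj$; the case $\cali \cap \calk \neq \emptyset$ is handled symmetrically by interchanging $j$ and $k$. Fix any $a \in \calj \cap \calk$, whose existence is part of (F2). Since $\cali \cap \calj \cap \calk = \emptyset$ forces $a \notin \cali$, Lemma \ref{fiberedstructure}(i) combined with (\ref{podzial1}) yields the identity $A \cap \Pi(a, p_i) = A \cap \Pi(a, p_i^2) = a * F_j * F_k$. I then split according to whether $a'$ lies in the $p_i$-plane of $a$.

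If $a' \in \Pi(a, p_i)$, the identity places $a' \in a * F_j * F_k \subset \Pi(a, p_i^2)$, so $\pi_i(a') = \pi_i(a)$. However, $a' \in \cali$ gives $a' * F_i \subset A$, and the non-$a'$ points of $a' * F_i$ have $p_i$-coordinate in $\pi_i(a) + p_i \ZZ_{p_i^2} \setminus \{\pi_i(a)\}$, hence lie in $\Pi(a, p_i) \setminus \Pi(a, p_i^2)$. The identity above shows this region is disjoint from $A$, contradicting $a' * F_i \subset A$.

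If $a' \notin \Pi(a, p_i)$, I exploit the fibering hypothesis (F) on $\Lambda(a', D(M))$. The fibers $a' * F_i$ and $a' * F_j$ both lie in $A \cap \Lambda(a', D(M))$ and meet only at $a'$, so by Proposition \ref{prop-twodirections} they cannot both appear in a disjoint fiber decomposition. A short case analysis on the allowed fibering directions---handling the edge case where a $p_k$-fiber in the decomposition would place some translate of $a'$ into $\calj \cap \calk$, activating Lemma \ref{fiberedstructure}(i) and a Case-1-style contradiction---shows that in every possibility, the full block $a' * F_i * F_j$ must lie in $A$. Since this block is contained in $\Pi(a', p_k^2)$ and $|A \cap \Pi(a', p_k^2)| \leq p_i p_j$ by Lemma \ref{planebound}, we obtain the sharp identity $A \cap \Pi(a', p_k^2) = a' * F_i * F_j$.

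To finish, I compare the two rigid structures. When $\pi_k(a') \in p_k \ZZ_{p_k^2}$, the plane $\Pi(a', p_k^2)$ meets the block $a * F_j * F_k$ in $p_j$ points of $A$ whose $p_i$-coordinate is $0$; these cannot lie inside $a' * F_i * F_j$ (whose $p_i$-coordinate is in the nonzero coset $\pi_i(a') + p_i\ZZ_{p_i^2}$), contradicting the sharp identity. When $p_k \nmid \pi_k(a')$, the two rigid structures do not overlap geometrically, and I instead invoke Lemma \ref{saturatingplanes} with $x$ of the form $a + t M/p_i$ ($t = 1, \ldots, p_i-1$), combined with the box-product identity $\langle \bbA[x], \bbB[b] \rangle = 1$ and the divisibility constraints (\ref{Bplanerest})--(\ref{Btopdiv}) from Lemma \ref{planeBsaturating}, to force a divisor $(x - z, M) \in \Div(B)$ with $z \in a' * F_i * F_j$ that is incompatible with divisor exclusion $\Div(A) \cap \Div(B) = \{M\}$. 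I expect the main obstacle to be this latter sub-case, where the two rigid structures must be reconciled by a careful cuboid/saturating-set computation rather than by direct geometric incidence.
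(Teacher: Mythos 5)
Your Case 1 and the sub-case of Case 2 where $p_k$ divides $\pi_k(a')-\pi_k(a)$ are correct, and the incidence arguments there are more elementary than anything in the paper (which does not split into cases at all); also, your sharp identity $A\cap\Pi(a',p_k^2)=a'*F_i*F_j$ is legitimate, since $a'\notin\calk$ lets one apply Lemma \ref{fiberedstructure} (i) with the indices permuted. The genuine gap is the remaining sub-case, which is the generic one: there you only sketch a plan, and the plan as stated cannot work. You propose to take $x=a+tM/p_i$ and to ``force a divisor $(x-z,M)\in\Div(B)$ with $z\in a'*F_i*F_j$''. But Lemma \ref{saturatingplanes} --- the very lemma you invoke --- gives $A_x\subset\Pi(a,p_i^2)$, and in your Case 2 the block $a'*F_i*F_j$ is disjoint from $\Pi(a,p_i^2)$ (its points satisfy $\pi_i\not\equiv\pi_i(a)\pmod{p_i}$), so no point of $a'*F_i*F_j$ can belong to any $A_{x,b}$, and no such difference is ever forced into $\Div(B)$. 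What the box products at these $x$ do force is exactly (\ref{Bplanerest}) and (\ref{Btopdiv}); after discarding the options that would place $M/p_i$ or $M/p_ip_j$ (both in $\Div(A)$, since $a'\in\cali\cap\calj$) into $\Div(B)$, what survives is only that one of $M/p_ip_k$, $M/p_ip_jp_k$ lies in $\Div(B)$ --- and neither of these is known to lie in $\Div(A)$, so there is no divisor-exclusion clash. Note also that your sub-case hypothesis $p_k\nmid\pi_k(a')-\pi_k(a)$ never enters this computation, which is a sign that it cannot close the argument by itself.

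The missing content is substantial. The paper (working with $\cali\cap\calk\neq\emptyset$, i.e.\ your case with $j$ and $k$ interchanged) first combines the reduced form of (\ref{Bplanerest})--(\ref{Btopdiv}) with the divisors supplied by the intersection point and a pigeonhole on the at most $p_i$ elements of $B\cap\Lambda(b,D(M))$ to obtain the strict prime inequality (\ref{smallracoon}), which in your labeling reads $p_i<p_k$. It then reruns the whole saturating-set analysis a second time, centered at $a'$ rather than at $a$: for $x'\notin A$ with $(a'-x',M)=M/p_k$, either some $A_{x',b}$ stays inside $\Pi(a',p_k^2)=a'*F_i*F_j$, and repeating the proof of Lemma \ref{planeBsaturating} with the roles of $p_i$ and $p_k$ interchanged gives the reverse inequality $p_k<p_i$, a contradiction; or $A_{x'}$ meets $\Pi(x',p_k^2)$, which yields $|A\cap\Pi(a',p_k)|>p_ip_j$, hence $A\subset\Pi(a',p_k)$ by Corollary \ref{planegrid}, and then every $M$-fiber of $\calk$ in the $p_k$ direction must meet $A\cap\Pi(a',p_k^2)=a'*F_i*F_j$, producing a point of $\cali\cap\calj\cap\calk$ and contradicting (F2). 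Your proposal contains none of this two-sided argument; to complete it you would need either to reproduce it or to supply a genuinely new mechanism for the non-intersecting sub-case, and the one you sketch does not do the job.
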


\begin{proof}
Suppose that the conclusion fails. Without loss of generality, we may assume that 
$\cali\cap\calk\neq\emptyset$. Taking into account that $\calj\cap\calk\neq \emptyset$, we have from (\ref{Adivintersec})
$$
M/p_i,M/p_j,M/p_k,M/p_ip_k,M/p_jp_k\in \Div(A)
$$
so that (\ref{Btopdiv}) reduces to $\{M/p_ip_j,M/p_ip_jp_k\}\cap \Div(B)\neq\emptyset$, and (\ref{Bplanerest}) to
$$
\bbB_{M/p_j}[y]+\bbB_{M/p_jp_k}[y]=1 \text{ for all } y \text{ with } (y-b,M)=M/p_i.
$$ 
This means we must have 
\begin{equation}\label{smallracoon}
p_i<p_j,
\end{equation} 
otherwise one must introduce $M/p_i$ or $M/p_ip_k$ as differences in $B$. 
	
We now repeat the same procedure with $i$ and $j$ interchanged. Let $a'\in \cali\cap\calk$. We note that $A\cap \Lambda(a',D(M))$ cannot be $M$-fibered in the $p_j$ direction, since that would contradict (\ref{pairwise-disjoint-2}).
It follows from Lemma \ref{fiberedstructure} (i) that $a'*F_i*F_k\subset A$, and, together with Lemma \ref{planebound}, this implies that
\begin{equation}\label{smallracoon2}
a'*F_i*F_k= A\cap\Pi(a',p_j^{2}).
\end{equation} 
Let $x'\in \ZZ_M\setminus A$ satisfy $(a'-x',M)=M/p_j$. As in the proof of Lemma \ref{saturatingplanes}, by Corollary \ref{bispan-corollary} (i)
we have
$$A_{x'}\subset\Pi(a',p_j^{2})\cup \Pi(x',p_j^{2}).
$$
Assume, by contradiction, that there exists a $b\in B$ such that $A_{x' ,b}\subset \Pi(a',p_j^{2})$. Repeating the proof of Lemma \ref{planeBsaturating} with that $b$, we get the analogues of (\ref{Bplanerest}) and 
(\ref{Btopdiv}) with $i$ and $j$ interchanged.
The same argument as in the proof of (\ref{smallracoon}) shows then that $p_j<p_i$, a contradiction.

It follows that $A_{x' }\cap \Pi(x',p_j^{2})\neq\emptyset$, and, in particular, 
$|A\cap \Pi(a',p_j)|>p_ip_k$. By Corollary \ref{planegrid}, 
$A\subset \Pi(a',p_j)$. In particular, 
$\calj\subset \Pi(a',p_j)$, with each fiber in $\calj$ containing a point in $A\cap \Pi(a',p_j^{2})$.
But by (\ref{smallracoon2}), any such point would belong to $\cali\cap\calj\cap\calk$, contradicting (\ref{pairwise-disjoint-2}).
\end{proof}

\begin{lemma}\label{noPhi_Ni}
Assume (F2'). If $\cali\neq\emptyset$, then $\Phi_{N_i}\nmid A$. 
\end{lemma}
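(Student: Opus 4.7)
The plan is to suppose for contradiction that $\Phi_{N_i}\mid A$ and $\cali\neq\emptyset$, reduce that divisibility to a constraint on the base-point set $\cali_0$ of the $F_i$-fibers inside $\cali$ (so $\cali = \cali_0 * F_i$), and then show this constraint is incompatible with the absence of $F_j$- and $F_k$-fibers in $\cali_0$ forced by Lemma \ref{nodoubleinters}.

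First, inclusion-exclusion combined with $\cali \cap (\calj \cup \calk) = \emptyset$ (Lemma \ref{nodoubleinters}) gives
\begin{equation*}
A(X) = \cali(X) + \calj(X) + \calk(X) - (\calj\cap\calk)(X).
\end{equation*}
A direct evaluation at a primitive $N_i$-th root of unity shows $\Phi_{N_i}\mid F_j$ and $\Phi_{N_i}\mid F_k$, whereas $F_i(\omega) = p_i \ne 0$, so $\Phi_{N_i}\nmid F_i$. Since $\calj, \calk$ and $\calj\cap\calk$ each factor through $F_j$ or $F_k$, we have $\Phi_{N_i}\mid \calj, \calk, \calj\cap\calk$, and the assumption collapses to $\Phi_{N_i}\mid \cali$. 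Irreducibility of $\Phi_{N_i}$ and $\Phi_{N_i}\nmid F_i$ then force $\Phi_{N_i}\mid \cali_0$.

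Next, Lemma \ref{nodoubleinters} prevents $\cali_0$ from containing any translate of $F_j$ or $F_k$: indeed $x*F_\nu \subset \cali_0 \subset A$ for $\nu\in\{j,k\}$ would place $x$ simultaneously in $\cali$ and in $\cal\nu$. View $\cali_0$ as a subset of $\ZZ_{N_i}$ via the natural reduction, which is injective because distinct $F_i$-fibers have distinct base points modulo $N_i$. Since $p_i,p_j,p_k$ are odd and $\Phi_{N_i}\mid \cali_0$ with constant multiplicity $1$, I apply Lemma \ref{gen_top_div_mis}(i) with $N=N_i$: if there were a $D(N_i)$-grid $\Lambda$ with $N_i/p_i = M_i \notin \Div_{N_i}(\cali_0 \cap \Lambda)$, then $\cali_0 \cap \Lambda$ would be $N_i$-fibered in the $p_j$ or $p_k$ direction, which on lifting to $\ZZ_M$ places a translate of $F_j$ or $F_k$ inside $\cali_0$, a contradiction. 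Hence $M_i \in \Div_{N_i}(\cali_0 \cap \Lambda)$ for every $D(N_i)$-grid $\Lambda$.

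Finally, I combine this with the cuboid-null condition for $A$ of type $\calt = (M,(2,1,1),1)$, which holds precisely because $\Phi_M\Phi_{N_i}\mid A$. Taking cuboids whose bottom face sits inside the grid $a*F_j*F_k$ for some $a\in\calj\cap\calk$ (so that the four bottom vertices contribute $0$) and whose top face lies in the sub-plane $\Pi(a_i,p_i^2)$ through a chosen $a_i\in\cali$, the null condition collapses to a family of 2D cuboid identities at each of the $p_jp_k$ vertices projected into $\Pi(a_i,p_i^2)$. Merging these 2D identities with the cyclotomic divisors of $B$ from Lemma \ref{planeBsaturating}(ii) (notably $\Phi_{p_i^2}\Phi_{p_i^2p_j}\Phi_{p_i^2p_k}\Phi_{p_i^2p_jp_k}\mid B$), with the forced line structure along $p_i$-direction in $\cali$ produced by $M_i\in\Div_{N_i}(\cali_0)$, and with the plane bound (Lemma \ref{planebound}), one obtains either a plane whose $A$-count exceeds $p_jp_k$ or a divisor lying in $\Div(A)\cap\Div(B)\setminus\{M\}$, contradicting divisor exclusion.

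The main obstacle is precisely this last step in the regime $p_i=\min(p_i,p_j,p_k)$: there the elementary plane-bound argument that underlies Corollary \ref{subset} and Lemma \ref{Kunfibered} in the analogous ``$p_k>\min$'' case is too weak to directly forbid an $F_i^{N_i}$-fiber in $\cali_0$. The contradiction must be extracted from the interplay between $M_i \in \Div_{N_i}(\cali_0 \cap \Lambda)$, the rigid JK-grid structure $A\cap\Pi(a,p_i)=a*F_j*F_k$ from Lemma \ref{planeBsaturating}, and the saturating-set geometry of Lemma \ref{1dim_sat-cor}.
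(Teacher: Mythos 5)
Your first two steps are essentially sound: the decomposition $A=\cali+\calj+\calk-(\calj\cap\calk)$ and the reduction to $\Phi_{N_i}\mid\cali$ reproduce Lemma \ref{fibercyc3}, and the intermediate claim that $N_i/p_i\in\Div_{N_i}(\cali_0\cap\Lambda)$ for every $D(N_i)$-grid can be repaired, though not by the one-line lifting you give: an $N_i$-fiber of $\cali_0$ mod $N_i$ in the $p_j$ direction has spacing $M/p_ip_j$ and is not a translate of $F_j$. What saves that step is that the union of the $F_i$-cosets of $\cali$ lying over such an $N_i$-fiber is a full coset of the subgroup generated by $M/p_ip_j$, hence contains $M$-fibers in the $p_j$ direction, producing a point of $\cali\cap\calj$ and contradicting Lemma \ref{nodoubleinters}.

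The genuine gap is your final step. You never exhibit a concrete configuration violating Lemma \ref{planebound} or divisor exclusion; the concluding paragraph only lists ingredients to be ``merged'', and you concede yourself that the argument does not close when $p_i=\min_\nu p_\nu$ --- which is precisely a case the lemma must cover, since it is invoked before any ordering of the primes is fixed. The paper finishes in one stroke with an idea your outline never uses: exploit the rigid plane through a point $a\in\calj\cap\calk$, namely $A\cap\Pi(a,p_i)=A\cap\Pi(a,p_i^{2})=a*F_j*F_k$, from (\ref{Aplanegrid}) and (\ref{podzial1}). Given $a_i\in\cali$, pick $x_j\in a_i*F_j\setminus A$ and $x_k\in a_i*F_k\setminus A$ (possible since $\cali\cap(\calj\cup\calk)=\emptyset$; moreover $\bbI^{N_i}_{N_i}[x_j]=\bbI^{N_i}_{N_i}[x_k]=0$ because $\cali$ is a union of full $F_i$-fibers), and consider a single $N_i$-cuboid with one face through $a_i,x_j,x_k$ and the opposite face placed inside $\Pi(a,p_i)$ (if $a_i$ itself lies in $\Pi(a,p_i)$ the contradiction below is immediate). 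Since $\Phi_{N_i}\mid\cali$, this cuboid must be balanced, and the $a_i$-face contributes a strictly positive amount, so some vertex of the opposite face meets $\cali$ mod $N_i$; as congruence mod $N_i$ preserves residues mod $p_i$, this forces $\cali\cap\Pi(a,p_i)\neq\emptyset$, hence an element of $\cali$ inside $a*F_j*F_k\subset\calj\cap\calk$, contradicting (\ref{pairwise-disjoint-2}). Without this device, or some substitute for your vague third step, the lemma is not proved; with it, no case distinction on the size of $p_i$ and none of your $\Div_{N_i}$ bookkeeping is needed.
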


\begin{proof}
By Lemma \ref{fibercyc3}, it suffices to prove that $\Phi_{N_i}\nmid\cali$. Let $a_i\in \cali$. By Corollary \ref{nodoubleinters}, we have $a_i\not\in\calj$ and $a_i\not\in\calk$, so that there must exist $x_j, x_k\in \ZZ_M\setminus A$ with $(a_i-x_j,N_i)=N_i/p_j, (a_i-x_k,N_i)=N_i/p_k$ and such that $\mathbb{I}^{N_i}_{N_i}[x_j]=\mathbb{I}^{N_i}_{N_i}[x_k]=0$. 
	
Consider the $N_i$ cuboid with one face containing vertices at $a_i, x_j$ and $x_k$, and the other face in $\Pi(a,p_i)$, where $a\in\calj\cap\calk$. In order for this cuboid to be balanced, $\cali\cap\Pi(a,p_i)$ must be nonempty,
and in particular $\cali\cap\Pi(a,p_i^{2})\neq \emptyset$. But this together with (\ref{Aplanegrid}) contradicts
(\ref{pairwise-disjoint-2}).
\end{proof}

\begin{lemma}\label{emptycali}
Assume (F2'). If $p_i=\min_\nu p_\nu$, then $\cali=\emptyset$.
\end{lemma}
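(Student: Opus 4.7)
I argue by contradiction: assume $\cali \neq \emptyset$ with some $a_i \in \cali$, and $p_i = \min_\nu p_\nu$. By Lemma \ref{nodoubleinters}, $a_i \notin \calj \cup \calk$, and by Lemma \ref{noPhi_Ni}, $\Phi_{N_i} \mid B$. Combined with $\Phi_{p_i^2} \mid B$ (from (\ref{notthehighest})) and Lemma \ref{planeBsaturating}(ii), this gives $\Phi_d \mid B$ for every $d \in \{p_i^2, p_i^2 p_j, p_i^2 p_k, p_i^2 p_j p_k, p_j^2 p_k^2\}$. Divisor exclusion applied to the differences in $a * F_j * F_k \subset A$ (from any $a \in \calj \cap \calk$) and in $a_i * F_i \subset A$ yields $\{M/p_i, M/p_j, M/p_k, M/p_j p_k\} \cap \Div(B) = \emptyset$.

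The first main step is to sharpen Lemma \ref{planeBsaturating}(i). Since $M/p_i \notin \Div(B)$, the identity (\ref{Bplanerest}) collapses for every $b \in B$ and every $y$ with $(b-y,M) = M/p_i$ to
\[
\bbB_{M/p_j}[y] + \bbB_{M/p_k}[y] + \bbB_{M/p_j p_k}[y] = 1.
\]
Exploiting this constraint and the accumulated cyclotomic divisibility data for $B$, I would mimic Claim 8 in the proof of Proposition \ref{initialstatement}: test $B$ against cuboid types on scale $M_i = p_j^2 p_k^2$ (or an appropriate sub-scale) and use Lemma \ref{planebound} to rule out excess accumulation of $B$-mass in any $p_j^2$- or $p_k^2$-plane. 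The conclusion should be that there exists $\nu \in \{j,k\}$ such that, for every $b \in B$,
\[
\bbB^{M_i}_{M_i/p_\nu}[b] = \phi(p_\nu).
\]
In other words, $B$ is uniformly $M_i$-fibered in the $p_\nu$ direction.

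The second main step combines this with Lemma \ref{gen_top_div_mis}(ii) applied to $B$ on scale $N_\mu$ (where $\mu \in \{j,k\} \setminus \{\nu\}$): using $N_\mu/p_\mu, N_\mu/p_\nu \notin \Div_{N_\mu}(B)$, we force $B$ to be $N_\mu$-fibered in the $p_i$ direction, so that $\bbB^{N_\mu}_{N_\mu/p_i}[b] = \phi(p_i)$ for every $b \in B$. As in Claim 6 and Claim 10 of Proposition \ref{initialstatement}, the two uniform fiberings are inconsistent: computing $\bbB^{N_\mu/p_i^2}_{N_\mu/p_\nu^2}[b]$ via each fibering and comparing gives $1 + \phi(p_\nu) = p_\nu$ elements from the $M_i$-fibering, but a multiple of $p_i$ from the $N_\mu$-fibering, forcing $p_i \mid p_\nu$. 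Since $p_i, p_\nu$ are distinct primes with $p_i = \min_\nu p_\nu$, this is impossible, yielding the contradiction.

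The principal obstacle is the first main step, namely establishing the uniform $M_i$-fibering of $B$ in a single direction. One has to show that the pointwise saturation identity and the available divisors of $B$ are incompatible with a ``mixed'' structure (where some $b \in B$ satisfy the $p_j$-fibering and others the $p_k$-fibering); this was handled in Proposition \ref{initialstatement} via plane-bound arguments (Lemma \ref{planebound}) together with divisor-exclusion conflicts arising when two such $b$'s are neighbors. The same strategy should go through here because the cyclotomic data on $B$ is very close to that in the analogous (F1) situation, and the structural constraints on $A$ near $a \in \calj \cap \calk$ and $a_i \in \cali$ play the role of the fibering constraints on $A$ that drove Proposition \ref{initialstatement}.
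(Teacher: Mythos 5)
Your setup (the list of excluded divisors of $B$, the collapse of (\ref{Bplanerest}) when $M/p_i\notin\Div(B)$, and the appeal to Lemmas \ref{nodoubleinters}, \ref{noPhi_Ni}, \ref{planeBsaturating}) is fine, but the heart of your argument is missing and, as sketched, would not go through. Your ``first main step'' — that $B$ is uniformly $M_i$-fibered in a single direction — is exactly the content you do not prove, and the analogy with Claim 8 of Proposition \ref{initialstatement} does not transfer: that claim rests on $\Phi_{M_i}\Phi_{M_i/p_j}\Phi_{M_i/p_k}\mid B$, which was derived (Claims 2 and 7 there) under the hypotheses of (F1) (pairwise disjoint nonempty $\cali,\calj,\calk$, with $\Phi_{N_i}\mid A$ assumed for contradiction) and, for Claims 7--10, under the standing assumption (\ref{M/p_jp_kas}) that $M/p_jp_k\in\Div(B)$. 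Under (F2') the situation is reversed: $a*F_j*F_k\subset A$ forces $M/p_jp_k\in\Div(A)$, hence $M/p_jp_k\notin\Div(B)$, $\Phi_{N_i}$ divides $B$ rather than $A$, and none of $\Phi_{M_i},\Phi_{M_i/p_j},\Phi_{M_i/p_k}$ is known to divide $B$. Your divisibility list already slips on this point: Lemma \ref{noPhi_Ni} gives $\Phi_{N_i}\mid B$ with $N_i=p_ip_j^2p_k^2$, not $\Phi_{p_j^2p_k^2}\mid B$. The second step has the same defect: applying Lemma \ref{gen_top_div_mis}(ii) to $B$ at scale $N_\mu$ requires $\Phi_{N_\mu}\mid B$, which is unknown under (F2') (the paper must later split into cases according to whether $\Phi_{N_j},\Phi_{N_k}$ divide $A$ or $B$), and it also requires $N_\mu/p_\mu\notin\Div_{N_\mu}(B)$, i.e.\ $M/p_\mu^2\notin\Div(B)$, which is likewise unestablished.

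The paper's proof is a short counting argument that bypasses all of this, and its key input is one you list but never exploit: (\ref{notthehighest}) gives $\Phi_{p_i^2}\mid B$, hence $\Phi_{p_i}\nmid B$, so $B$ cannot be $N_i$-fibered in the $p_i$ direction; since $M/p_i\in\Div(A)$ also gives $\bbB^{N_i}_{N_i}[y]\in\{0,1\}$, there exist $b_0\in B$ and $y$ with $(b_0-y,N_i)=N_i/p_i$ and $\bbB^{N_i}_{N_i}[y]=0$. Balancing the $N_i$-cuboids through $b_0$ and $y$ whose vertices adjacent to $b_0$ in the $p_j$ and $p_k$ directions avoid $B$ produces at least $(p_j-\beta_j-1)(p_k-\beta_k-1)$ elements of $B$ at $N_i$-distance $N_i/p_jp_k$ from $y$, all lying in a single $D(M)$-grid, where $\beta_\nu=\bbB^{N_i}_{N_i/p_\nu}[b_0]$; on the other hand (\ref{Bplanerest}) caps any such count by $p_i$ and gives $\beta_j+\beta_k+1\le|B\cap\Lambda(b_0,D(M))|\le p_i$, and an elementary estimate using $p_i=\min_\nu p_\nu$ and the oddness of $M$ shows the product always exceeds $p_i$, a contradiction. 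If you wish to salvage your fibering-based outline, you would first have to supply the missing cyclotomic information on $B$ at scale $M_i$ in the (F2') setting, which is essentially a new argument rather than a transplant of Proposition \ref{initialstatement}.
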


\begin{proof}
Assume by contradiction that $\cali\neq\emptyset$. By Lemma \ref{noPhi_Ni}, $\Phi_{N_i}\nmid A$, hence $\Phi_{N_i}|B$. By assumption we have $M/p_i\in\Div(A)$, so that $\bbB^{N_i}_{N_i}[y]\in \{0,1\}$ for all $y\in \ZZ_M$. By (\ref{notthehighest}), $\Phi_{p_i^2}|B$. 
Hence $\Phi_{p_i}\nmid B$, and in particular $B$ cannot be $N_i$-fibered in the $p_i$ direction. It follows that there must exist $b_0\in B$ and $y\in \ZZ_M$ with $(b_0-y,N_i)=N_i/p_i$ and $\bbB^{N_i}_{N_i}[y]=0$. 
	
In order to simplify notation, we shall denote $\beta_\nu=\bbB^{N_i}_{N_i/p_\nu}[b_0]$ for $\nu\in\{j,k\}$. By (\ref{Bplanerest}) we must have 
\begin{equation}\label{Bplanegridbnd}
\beta_j+\beta_k+1\leq |B\cap\Lambda(b_0,D(M))| \leq p_i
\end{equation}
 thus 
\begin{equation}\label{avgbound}
\beta_\nu\leq (p_i-1)/2 \text{ for some } \nu\in \{j,k\}.
\end{equation}
In addition, considering all $N_i$ cuboids with vertices at $b_0$ and $y$ such that the vertices at distance $N_i/p_j$ and $N_i/p_k$ from $b_0$ do not belong to $B$, we see that
%\begin{equation}\label{yplanebound}
$$
\bbB^{N_i}_{N_i/p_jp_k}[y]\geq (p_j-\beta_j-1)(p_k-\beta_k-1).
$$

Now, if $\beta_j=1$ then $\beta_k\leq p_i-2$ and so 
\begin{align*}
(p_j-\beta_j-1)(p_k-\beta_k-1)& \geq (p_j-2)(p_k-p_i+1)\\
&\geq (p_k-p_i+1)p_i\\
&>p_i
\end{align*}
which contradicts (\ref{Bplanerest}). We may therefore assume $\beta_\nu\geq2$ for $\nu=j,k$. In this case, however, assuming (\ref{avgbound}) for $\nu=k$, applying (\ref{Bplanegridbnd}) and the fact that $p_j-p_i\geq2$, we have
\begin{align*}
(p_j-\beta_j-1)(p_k-\beta_k-1)& = (p_j-p_i+p_i-\beta_j-1)(p_k-\beta_k-1)\\
&\geq (p_j-p_i+\beta_k)(p_k-\beta_k-1)\\
&\geq 4(p_i-(p_i-1)/2-1)\\
&= 2p_i-2
\end{align*}
The latter exceeds $p_i$ whenever $p_i>2$. Since $M$ is odd, again we get a contradiction to (\ref{Bplanerest}) and the lemma follows.
\end{proof}

Lemma \ref{emptycali} proves Proposition \ref{F2emptyset}, assuming that (F2') holds and that $p_i$ is the smallest prime. From now on, we will therefore assume that
\begin{equation}\label{pinotthesmallest}
p_i>\min_\nu p_\nu.
\end{equation}
The rest of the proof will be split into the following cases:

\begin{itemize}
\item Assume (F2'),  (\ref{pinotthesmallest}), and 
$\Phi_{N_j}\Phi_{N_k}|A$. This case is addressed in Lemma  \ref{p_kmincalksubcalj} and Corollary \ref{p_kmin}.

\item Assume (F2'), (\ref{pinotthesmallest}), and (interchanging $j$ and $k$ if necessary)
$\Phi_{N_j}\nmid A$, $ \Phi_{N_k}\nmid B$.
This case is addressed in Lemmas  \ref{calinonempty}, \ref{Phi_N_jnmidAPhi_N_knmidB},
and  \ref{lastoneformixedcase}.

\item Assume (F2'),  (\ref{pinotthesmallest}), and 
$\Phi_{N_j}\Phi_{N_k}|B$. This case is addressed in Lemma \ref{calknointer}, Corollary \ref{p_nuorder}, and Lemma \ref{p_ismallest}.

\end{itemize}

\begin{lemma}\label{p_kmincalksubcalj}
Assume (F2') and (\ref{pinotthesmallest}).
If $p_k<p_i<p_j$ and $\Phi_{N_k}|A$, then $\calk\subset\calj$.
\end{lemma}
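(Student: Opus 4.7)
The plan is to argue by contradiction: suppose there exists $a_k \in \calk \setminus \calj$. First I would establish $\cali = \emptyset$ directly. By Lemma \ref{nodoubleinters}, $\cali$ is disjoint from $\calj \cup \calk$ in $A$. Since $\Phi_{p_i} \mid A$ (from Lemma \ref{planeBsaturating}~(iii)) and $\Phi_{p_i} \mid F_i \mid \cali$, it follows that $\Phi_{p_i} \mid (\calj \cup \calk)$, so $|(\calj \cup \calk) \cap \Pi_t|$ is constant across the $p_i$ cosets $\Pi_t$ of $p_i \ZZ_M$. By (F2) there is some $a \in \calj \cap \calk$, and (\ref{podzial1}) combined with (\ref{Aplanegrid}) gives $A \cap \Pi(a, p_i) = a * F_j * F_k$, contained in $\calj \cup \calk$ and of size $p_j p_k$. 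Summing $|(\calj \cup \calk) \cap \Pi_t| = p_j p_k$ over the $p_i$ cosets yields $|\calj \cup \calk| = p_i p_j p_k = |A|$, so $\cali = \emptyset$ and $A = \calj \cup \calk$.

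Next I would analyze $\Pi := \Pi(a_k, p_i)$. First, $\calj \cap \calk \cap \Pi = \emptyset$: otherwise any $a \in \calj \cap \calk \cap \Pi$ gives $A \cap \Pi = A \cap \Pi(a, p_i^2) = a * F_j * F_k \subset \calj \cap \calk$ by (\ref{podzial1}) and (\ref{Aplanegrid}), forcing $a_k \in \calj$, a contradiction. Combined with $\cali = \emptyset$, $A \cap \Pi$ is a disjoint union of $\beta$ $M$-fibers in the $p_j$-direction (from $\calj \setminus \calk$) and $\gamma$ $M$-fibers in the $p_k$-direction (from $\calk \setminus \calj$). Writing $|A \cap \Pi| = \beta p_j + \gamma p_k = p_j p_k$ via (\ref{podzial2}) with $\gamma \geq 1$ (since $a_k * F_k \subset \calk \cap \Pi$), the divisibility $p_k \mid \beta p_j$ forces $p_k \mid \beta$; combined with $\beta p_j < p_j p_k$ this yields $\beta = 0$ and $\gamma = p_j$. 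Hence $\Pi$ contains exactly $p_j$ disjoint $p_k$-fibers, all in $\calk \setminus \calj$, with root set $R_\Pi \subset \Pi$ of cardinality $p_j$; moreover $R_\Pi$ is not an $M$-fiber in the $p_j$-direction (else $R_\Pi * F_k$ would form a $p_j \times p_k$ rectangle in $\calj \cap \calk \cap \Pi$).

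Finally I would derive a contradiction from $\Phi_{N_k} \mid A$. By Lemma \ref{fibercyc3}, $\Phi_{N_k} \mid \calk$. Writing $\calk(X) = R(X) F_k(X)$ where $R$ is the set of $p_k$-fiber roots of $\calk$, and noting $\Phi_{N_k} \nmid F_k$ (since $p_k^2 \nmid N_k$), we get $\Phi_{N_k} \mid R$. The set $R$ has a rigid $p_j$-fibered structure on each plane $\Pi(a, p_i)$ with $a \in \calj \cap \calk$, where $R \cap \Pi(a, p_i) = a * F_j$, whereas $R \cap \Pi = R_\Pi$ is not a $p_j$-fiber. To close the argument I would apply Lemma \ref{Kunfibered} to $\calk$ considered as a constant-multiplicity-$p_k$ multiset on $\ZZ_{N_k}$, and eliminate each admissible unfibered structure from Lemmas \ref{misscorner_fullplane} and \ref{oddcornerplus} by combining the planebound in $\ZZ_M$ with the rigid $p_j$-fibered configuration of $R$ on the planes of the first type. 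The main obstacle is precisely this last elimination: because $p_k = \min_\nu p_\nu$, Corollary \ref{subset} fails to apply and the planebound in $\ZZ_M$ alone does not exclude every unfibered configuration of $\calk$ on $\ZZ_{N_k}$; one has to carefully exploit the tension between the $p_j$-fibered alignment of $R$ on planes hitting $\calj \cap \calk$ and its forced non-$p_j$-alignment on $\Pi$ to rule out each surviving candidate structure.
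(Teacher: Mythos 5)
Your opening step --- establishing $\cali=\emptyset$ outright --- rests on a false cyclotomic claim. You write ``$\Phi_{p_i}\mid F_i\mid\cali$'', but by (\ref{fibercyc}) we have $\Phi_s\mid F_i$ if and only if $p_i^{2}\mid s\mid M$; evaluating $F_i$ at a primitive $p_i$-th root of unity gives the value $p_i\neq 0$, so $\Phi_{p_i}\nmid F_i$. Writing $\cali(X)\equiv R_i(X)F_i(X)$ therefore gives no information about $\Phi_{p_i}\mid\cali$ unless the root set $R_i$ is itself equidistributed mod $p_i$, which is not known; note that under (F2') it is exactly $\Phi_{p_i^2}$ (the one relevant cyclotomic that \emph{does} divide $F_i$) which is assumed not to divide $A$. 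Consequently the asserted constancy of $|(\calj\cup\calk)\cap\Pi_t|$ over the $p_i$ cosets, and hence $|\calj\cup\calk|=|A|$ and $\cali=\emptyset$, do not follow. This is not a local slip: the lemma is used in the paper precisely in situations where $\cali\neq\emptyset$ has not been ruled out (Corollary \ref{p_kmin} and Lemma \ref{lastoneformixedcase} assume $\cali\neq\emptyset$ for contradiction and then invoke this lemma), and the paper's own proof never shows $\cali=\emptyset$; it only proves the localized statement $\cali\cap\Pi(a'_k,p_i)=\emptyset$, and that step needs a divisor/saturating argument (the extended divisor list (\ref{Ap_kdivlist}) forces $|B\cap\Pi(b,p_i^2)|\le 1+\bbB_{M/p_jp_k^2}[b]\le p_k$, hence $p_j\le p_i$, contradicting $p_i<p_j$ --- this is where the hypothesis $p_k<p_i<p_j$ actually enters). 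Without $\cali=\emptyset$, your count $\beta p_j+\gamma p_k=p_jp_k$ on $\Pi(a_k,p_i)$ and the conclusion $\beta=0$, $\gamma=p_j$ are unjustified.

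The second gap is the one you flag yourself: the elimination of unfibered configurations of $\calk$ on $D(N_k)$-grids is left as an unresolved ``obstacle,'' so the proof is incomplete exactly at its crux. The paper's resolution is not the plane bound at all: if $\calk$ were unfibered on some $D(N_k)$-grid, Lemma \ref{Kunfibered} would force $\{D(M)\mid m\mid M\}\subset\Div(A)$, which is incompatible with (\ref{Btopdiv}) from Lemma \ref{planeBsaturating}(ii) (under (F2') some top-level difference must lie in $\Div(B)$). Hence $\calk$ is $N_k$-fibered on every $D(N_k)$-grid; fibering in the $p_i$ direction is excluded by Lemmas \ref{nodoubleinters} and \ref{fiberfibering}(i); and the $p_k$-direction case is eliminated by first proving $\cali\cap\Pi(a'_k,p_i)=\emptyset$ as above and then observing that $A\cap\Pi(a'_k,p_i)\subset\calk$ (Lemma \ref{planesubsetcalk}(i)) would give $p_jp_k=cp_k^2$, i.e.\ $p_k\mid p_j$. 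What remains is $N_k$-fibering in the $p_j$ direction on every grid, which yields $\calk\subset\calj$ directly. Your proposal reaches a similar final dichotomy in spirit, but neither of its two load-bearing steps is actually proved.
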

\begin{proof}
We first claim that $\calk$ is $N_k$-fibered on each $D(N_k)$-grid in one of the $p_j$ and $p_k$ directions.
Indeed, if $\calk$ were not $N_k$-fibered on some $D(N_k)$-grid, then it would follow from Lemma \ref{Kunfibered} that $\{D(M)|m|M\}\subset\Div(A)$; however, that is not compatible with (\ref{Btopdiv}). 
Furthermore, by Lemma \ref{nodoubleinters} and Lemma \ref{fiberfibering} (i), 
$\calk$ cannot be $N_k$-fibered in the $p_i$ direction on any $D(N_k)$-grid. This proves the claim.

Recall from Lemma \ref{saturatingplanes} (i) that for any $a_0\in A$,
\begin{equation}\label{tosamo}
	|A\cap \Pi(a_0,p_i)|=p_jp_k.
\end{equation}
If $\calk$ is $N_k$-fibered in the $p_j$ direction on some $D(N_k)$-grid $\Lambda_j$,
then for every $a_k\in\calk\cap\Lambda_j $ we have
\begin{equation}\label{KJinter}
	\mathbb{K}^{N_k}_{N_k/p_j}[a_k]=p_k\phi(p_j) ,
\end{equation}
so that $a_k\in \calj\cap\calk$. By (\ref{tosamo}),
$A\cap \Pi(a_k,p_i)= a_k* F_j * F_k$ is fibered in both directions, and in particular $\Pi(a_k,p_i)$ contains no elements of $A$ outside of $\Lambda_j$. 

Assume now that there exists $a'_k\in\calk$ such that $\calk$ is $N_k$-fibered in the $p_k$ direction on $\Lambda_k:=\Lambda(a'_k,D(N_k))$.
Then 
\begin{equation}\label{fullkline}
	\mathbb{K}^{N_k}_{N_k/p_k}[a']=p_k\phi(p_k) \text{ for all } a'\in\calk\cap\Pi(a'_k,p_i),
\end{equation}
so that
\begin{equation}\label{fullklinecount}
	\bbA_M[a']+\bbA_{M/p_k}[a']+\bbA_{M/p_k^2}[a']=p_k^2\text{ for all } a'\in\calk\cap\Pi(a'_k,p_i).		
\end{equation}

Fix $a'_k\in\calk$ satisfying (\ref{fullkline}) and (\ref{fullklinecount}). We first claim that 
\begin{equation}\label{nicniema}
\cali\cap\Pi(a'_k,p_i)=\emptyset.
\end{equation}
Indeed, suppose that (\ref{nicniema}) fails, and let $a_i\in\cali\cap\Pi(a'_k,p_i^2)$. Since $\calk\cap \Lambda_k$ is $N_k$-fibered in the $p_k$ direction, by Lemma \ref{nodoubleinters}
we must have $a_i\notin\Lambda_k$, so that $a_i$ must be at distance $M/p_j^2$ from the fiber chain in the $p_k$ direction rooted at $a'_k$. We can now extend (\ref{Adivintersec}) to 
\begin{equation}\label{Ap_kdivlist}
	M/p_j,M/p_k,M/p_jp_k,M/p_j^2,M/p_k^2,M/p_j^2p_k^2,M/p_j^2p_k\in \Div(A),
\end{equation}
where all the differences that do not appear in (\ref{Adivintersec}), come from the interaction between $a_i$ and $\ell_k(a'_k)$. 

By (\ref{Ap_kdivlist}), we see that for every $b\in B$ 
\begin{align*}
	|B\cap\Pi(b,p_i^2)|&\leq 1+\bbB_{M/p_jp_k^2}[b]\\
	&\leq p_k	.
\end{align*}
But, since $\ZZ_M$ has only $p_i^2$ residue classes modulo $p_i^2$, it follows that
$$
p_ip_jp_k=|B|\leq p_i^2p_k,
$$
so that $p_j\leq p_i$, contradicting the assumption that (\ref{nicniema}) fails.

It, therefore, follows that $A\cap \Pi(a'_k,p_i)\subset \calj\cup\calk$. By (\ref{tosamo}) and Lemma \ref{planesubsetcalk} (i), we have $A\cap \Pi(a'_k,p_i)\subset \calk$. But since all $a'\in\calk\cap\Pi(a'_k,p_i)$ satisfy (\ref{fullkline}) and (\ref{fullklinecount}), we get 
$$
p_jp_k=cp_k^2
$$
for some positive integer $c$. The latter implies $p_k$ divides $p_j$, which is not allowed. This completes the proof of the lemma.
\end{proof}

\begin{corollary}\label{p_kmin}
Assume (F2') and (\ref{pinotthesmallest}). Assume further that
$\Phi_{N_j}\Phi_{N_k}|A$ and that $p_k<p_j$.
Then $\cali=\emptyset$, and $A$ is $M$-fibered in the $p_k$ direction. 
\end{corollary}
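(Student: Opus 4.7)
The plan combines Lemma \ref{p_kmincalksubcalj} with a plane-counting argument driven by $\Phi_{p_i}\mid A$. The hypotheses (F2'), (\ref{pinotthesmallest}), and $p_k<p_j$ immediately force $p_k=\min_\nu p_\nu$, so in particular $p_k<p_i$; I would split on whether $p_i<p_j$ or $p_i>p_j$.

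In the subcase $p_k<p_i<p_j$, Lemma \ref{p_kmincalksubcalj} yields $\calk\subset\calj$ at once. Each $a\in\calk$ then lies in $\calj\cap\calk$, so by Lemma \ref{fiberedstructure}(i), $A\cap\Pi(a,p_i^2)=a*F_j*F_k$ is a complete $p_j\times p_k$ grid. Since $\Phi_{p_i}\mid A$ by Lemma \ref{planeBsaturating}(iii), we have $|A\cap\Pi(x,p_i)|=p_jp_k$ for every $x\in\ZZ_M$, and this grid exhausts $A\cap\Pi(a,p_i)$. Each of the $p_i$ residue classes modulo $p_i$ therefore either hosts exactly one such full grid (and no other $A$-elements) or contains no element of $\calk$, with its $p_jp_k$ $A$-elements supplied entirely by $\cali\cup(\calj\setminus\calk)$ via $p_i$- and $p_j$-fibers (each such fiber being confined to a single $p_i$-plane). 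The decisive step is to rule out this ``mixed'' alternative: invoking $\Phi_{N_k}\mid A$ together with the excluded divisors from Lemma \ref{planeBsaturating}(ii) and the separation of $\cali$ from $\calj\cup\calk$ given by Lemma \ref{nodoubleinters}, I would build a cuboid or saturating-set configuration linking a full-grid plane to a putative mixed plane that violates divisor exclusion. Once all $p_i$ residue classes carry a full grid, $|A|=p_i\cdot p_jp_k$ is exhausted by $\calj\cap\calk$, so $A=\calk$ ($M$-fibered in the $p_k$ direction) and $\cali=\emptyset$.

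In the subcase $p_k<p_j<p_i$, Lemma \ref{p_kmincalksubcalj} does not apply directly, but its proof strategy adapts with $\Phi_{N_j}\mid A$ in place of $\Phi_{N_k}\mid A$. By Lemma \ref{Kunfibered} applied to $\calj$, and with $p_i$-direction fibering excluded via Lemma \ref{fiberfibering}(i) and Lemma \ref{nodoubleinters}, $\calj$ must be $N_j$-fibered in either the $p_j$ or the $p_k$ direction on every $D(N_j)$-grid; the $p_j$-direction ``bad case'' is then ruled out using the additional hypothesis $\Phi_{N_k}\mid A$ and the plane bounds of Lemma \ref{planebound}, forcing $\calj\subset\calk$. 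The counting argument from the first subcase (with $j$ and $k$ exchanged in their roles) then closes the proof, giving $A=\calj\subset\calk$ and $\cali=\emptyset$. The main obstacle throughout is the cuboid-balance/saturating-set exclusion of mixed $p_i$-planes in each subcase, requiring delicate bookkeeping of the divisors forced into $\Div(A)$ by the putative mixed structure and those simultaneously forbidden in $\Div(B)$ by Lemma \ref{planeBsaturating}.
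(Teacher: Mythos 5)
There is a genuine gap, and it sits exactly where you flag the ``decisive step'' in your first subcase. For $p_k<p_i<p_j$ you obtain only the inclusion $\calk\subset\calj$ from Lemma \ref{p_kmincalksubcalj}, and you never use the hypothesis $\Phi_{N_j}\mid A$ in that subcase at all. But that hypothesis is precisely what closes the argument: by Corollary \ref{subset} applied with $j$ and $k$ interchanged (legitimate since $p_j>\min_\nu p_\nu=p_k$), one gets $\calj\subset\cali\cup\calk$, hence $\calj\subset\calk$ by Lemma \ref{nodoubleinters}, so every element of $\calj\cup\calk$ lies in a full grid $a*F_j*F_k$ and there is no ``mixed plane'' problem left to solve. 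Without $\calj\subset\calk$, a plane $\Pi(x,p_i)$ consisting entirely of $M$-fibers of $\calj\setminus\calk$ in the $p_j$ direction is not excluded by any counting ($c_2p_j=p_jp_k$ with $c_2=p_k$ is numerically consistent), so your plan can at best yield $\cali=\emptyset$ (the count $c_1p_i+c_2p_j=p_jp_k$ with $p_k<p_i$ does force $c_1=0$, a cleaner route than the cuboid/saturating-set configuration you gesture at), but it cannot deliver the stated conclusion that $A$ is $M$-fibered in the $p_k$ direction. The sentence ``I would build a cuboid or saturating-set configuration linking a full-grid plane to a putative mixed plane that violates divisor exclusion'' is not a proof step: no configuration, divisors, or contradiction is specified, and the tool that actually does this job in the paper is Corollary \ref{subset}, not a new divisor-exclusion construction.

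For comparison, the paper's proof avoids your case split on $p_i$ versus $p_j$ at the crucial stage: it first derives $\calj\subset\calk$ unconditionally from Corollary \ref{subset} plus Lemma \ref{nodoubleinters}; then, assuming $\cali\neq\emptyset$, it deduces $p_i<p_j$ from Lemma \ref{planesubsetcalk}(ii) (using that a plane through $a_i\in\cali$ cannot meet $\calj$, by (\ref{tosamo})), applies Lemma \ref{p_kmincalksubcalj} to get $\calk=\calj$, and concludes that $A\cap\Pi(a_i,p_i)\subset\cali$ has cardinality $p_jp_k$, which is not divisible by $p_i$ --- a contradiction. Your second subcase ($p_k<p_j<p_i$) is essentially a re-derivation of Corollary \ref{subset} for $\calj$ (the exclusion of $N_j$-fibering in the $p_j$ direction needs only the plane bound $\phi(p_j^2)>p_jp_k$, not $\Phi_{N_k}\mid A$), and once $\calj\subset\calk$ is in hand your counting does finish that subcase; so the fix for subcase one is simply to invoke the same corollary there, i.e., to use the hypothesis $\Phi_{N_j}\mid A$ that your proposal currently leaves idle.
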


\begin{proof}
By (\ref{pinotthesmallest}), we have $p_k=\min_\nu p_\nu$.
We first apply Corollary \ref{subset}, with $j$ and $k$ interchanged. Since $\Phi_{N_j}|A$ and $p_j\neq \min_\nu p_\nu$,
we get that $\calj\subset\cali\cup\calk$. However, by Lemma \ref{nodoubleinters} we have $\cali\cap\calj=\emptyset$, so that we must in fact have $\calj\subset\calk$.

Assume, by contradiction, that $\cali$ is nonempty. We first prove that this implies 
\begin{equation}\label{p_jp_i}
p_i<p_j.
\end{equation}
Let $a_i\in \cali$. Observe that $\Pi(a_i,p_i)$ cannot contain any elements $a'\in\calj$, since any such element would be associated with a grid $a' * F_j * F_k\subset A$. This together with Lemma \ref{nodoubleinters} would imply 
$$
|A\cap\Pi(a_i,p_i)|\geq |a_i*F_i|+|a' * F_j * F_k|>p_jp_k,
$$
contradicting (\ref{tosamo}). We get $\calj\cap\Pi(a_i,p_i)=\emptyset$. 
Hence $A\cap \Pi(a_i,p_i)\subset \cali\cup\calk$, and (\ref{p_jp_i}) follows from Lemma \ref{planesubsetcalk} (ii).

Applying Lemma \ref{p_kmincalksubcalj}, we see that $\calk\subset\calj$. This, together with the first part of the proof, implies $\calk=\calj$. Hence any element $a_k\in\mathcal{K}$ is associated with a grid $a_k * F_j * F_k\subset A$, and as shown above, such grids cannot intersect $\Pi(a_i,p_i)$. Therefore $A\cap\Pi(a_i,p_i)$ must be contained in $\cali$. That, however, is clearly false since $p_jp_k=|A\cap\Pi(a_i,p_i)|$ cannot be a multiple of $p_i$. This contradiction concludes the proof. 
\end{proof}

Before we move on to the next two cases, we need two lemmas on the fibering properties of $B$.

\begin{lemma}\label{BfiberedAfibered}
Assume (F2). If $\Phi_{N_k}|B$, then $B$ is $N_k$-fibered on each $D(N_k)$-grid, either in the $p_k$ direction or in the $p_i$ direction. The same is true with $j$ and $k$ interchanged. 
\end{lemma}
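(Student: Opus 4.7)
The plan is to reduce the claim to Lemma \ref{gen_top_div_mis} (i), applied to $B$ on scale $N_k$ with the role of the ``excluded'' prime played by $p_j$. To do this I need three ingredients: (a) $\Phi_{N_k}\mid B$, which is a hypothesis; (b) the binary-weight condition (\ref{bin_cond}) for $B$ modulo $N_k$; and (c) $N_k/p_j\notin\Div_{N_k}(B)$. Once these are in place, Lemma \ref{gen_top_div_mis} (i) (applicable because $M$ is odd, so $p_j\neq 2$) will force $B\cap\Lambda$ to be $N_k$-fibered in one of the $p_i,p_k$ directions on every $D(N_k)$-grid $\Lambda$.

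First I will extract the relevant divisor information from (F2). Pick any $a\in\calj\cap\calk$. Since $\cali\cap\calj\cap\calk=\emptyset$, we have $a\notin\cali$, so Lemma \ref{fiberedstructure} (i) gives $A\cap\Pi(a,p_i^{2})=a*F_j*F_k$, whence
$$\{M/p_j,\,M/p_k,\,M/p_jp_k\}\subset\Div(A).$$
By divisor exclusion, none of these three differences lies in $\Div(B)$.

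Next I verify (b) and (c). For (b), if $b,b'\in B$ are distinct with $b\equiv b'\pmod{N_k}$, then $(b-b',M)$ is a divisor of $M$ that is a proper multiple of $N_k=M/p_k$, forcing $(b-b',M)=M/p_k$, which is excluded. Hence $\bbB^{N_k}_{N_k}[y]\in\{0,1\}$ for every $y\in\ZZ_{N_k}$, so (\ref{bin_cond}) holds with $c_0=1$. For (c), suppose $(b-b',N_k)=N_k/p_j=M/p_jp_k$ for some $b,b'\in B$. Then $(b-b',M)$ is a divisor of $M$ that is a multiple of $M/p_jp_k$ but not of $M/p_k$; the only possibilities are $M/p_jp_k$ and $M/p_j$, both of which lie in $\Div(A)$ and are therefore forbidden in $\Div(B)$. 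Thus $N_k/p_j\notin\Div_{N_k}(B)$, and in particular $N_k/p_j\notin\Div_{N_k}(B\cap\Lambda)$ for every $D(N_k)$-grid $\Lambda$.

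With (a), (b), (c) in place, Lemma \ref{gen_top_div_mis} (i) gives the desired $N_k$-fibering of $B\cap\Lambda$ in one of the $p_i,p_k$ directions on each $D(N_k)$-grid. The statement with $j$ and $k$ interchanged is obtained by running the identical argument with $\Phi_{N_j}\mid B$ in place of $\Phi_{N_k}\mid B$, using the same divisor data $\{M/p_j,M/p_k,M/p_jp_k\}\subset\Div(A)$ to rule out the $p_k$-direction. I do not foresee any real obstacle: the whole proof is essentially unpacking the definitions of $\Div_N$ and $\bbB^N$ and invoking Lemma \ref{gen_top_div_mis}; the content of (F2) enters only through the three divisors supplied by Lemma \ref{fiberedstructure} (i).
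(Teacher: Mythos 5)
Your proof is correct and is essentially the paper's own argument: the paper likewise extracts $\{M/p_j,M/p_k,M/p_jp_k\}\subset\Div(A)$ from the point $a\in\calj\cap\calk$ via Lemma \ref{fiberedstructure} (i), deduces $\bbB^{N_k}_{N_k/p_j}[b]=0$ and $\bbB^{N_k}_{N_k}[b]=1$ for all $b\in B$ by divisor exclusion, and then invokes Lemma \ref{gen_top_div_mis} (using that $M$ is odd). Your write-up just spells out the divisor bookkeeping that the paper leaves implicit.
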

\begin{proof}
Suppose that $\Phi_{N_k}|B$. By (\ref{Adivintersec}), we have $\bbB^{N_k}_{N_k/p_j}[b]=0$ and $\bbB^{N_k}_{N_k}[b]=1$ for all $b\in B$. Since $M$ is odd, the lemma follows from Lemma \ref{gen_top_div_mis}.
\end{proof}

\begin{lemma}\label{BnoN_ip_jfiber}
Assume (F2'). If $\cali\neq\emptyset$ and $B$ is $N_i$-fibered on a $D(N_i)$-grid, then it must be fibered in the $p_i$ direction on that grid.
\end{lemma}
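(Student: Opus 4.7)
The plan is to argue by contradiction: suppose $B\cap\Lambda_0$ is $N_i$-fibered in direction $p_\nu$ with $\nu\in\{j,k\}$; by the $j\leftrightarrow k$ symmetry of the hypotheses, I will treat $\nu=j$. The goal is to extract enough divisor information about the lifted fiber to force $p_j\leq p_i$, and then rule out the remaining case via a combination of the identity obtained by summing (\ref{Bplanerest}) along a $p_i$-line, the cyclotomic constraint $\Phi_{p_i^2}\mid B$ from Lemma \ref{planeBsaturating}(iii), and Lemma \ref{planebound}.

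First I would assemble the divisor restrictions. Since $\cali\neq\emptyset$, pick $a_i\in\cali$: the fiber $a_i*F_i\subset A$ gives $M/p_i\in\Div(A)$, hence $M/p_i\notin\Div(B)$. Since $\calj\cap\calk\neq\emptyset$, Lemma \ref{fiberedstructure}(i) gives an $a\in A$ with $a*F_j*F_k\subset A$, so $M/p_j,\,M/p_k,\,M/(p_jp_k)\in\Div(A)$ and therefore none lies in $\Div(B)$. Next, because $M/p_i\notin\Div(B)$ every $N_i$-residue class of $\ZZ_M$ contains at most one element of $B$, so the fibering has multiplicity $c=1$. Lift the fiber to $\{b_0,\dots,b_{p_j-1}\}\subset B$; the congruence $(b_\alpha-b_\beta,N_i)=N_i/p_j$ forces $(b_\alpha-b_\beta,M)\in\{M/p_j,\,M/(p_ip_j)\}$, and since $M/p_j\notin\Div(B)$ we conclude $(b_\alpha-b_\beta,M)=M/(p_ip_j)$ for all $\alpha\neq\beta$. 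Writing $b_\alpha-b_0=t_\alpha M/(p_ip_j)$ with $(t_\alpha,p_ip_j)=1$ and $t_\alpha\equiv\alpha\pmod{p_j}$, the same divisor exclusion applied to pairs of fiber elements forces the residues $t_\alpha\bmod p_i$ to be pairwise distinct, producing an injection $\{0,\dots,p_j-1\}\hookrightarrow\ZZ_{p_i}$. In particular $p_j\leq p_i$, and the case $p_j>p_i$ is already a contradiction.

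For the remaining case $p_j\leq p_i$ (noting $p_j\neq p_i$ since primes are distinct), I would sum (\ref{Bplanerest}) over the $p_i-1$ points $y\in\ell_i(b_0)\setminus\{b_0\}$. A direct $\gcd$-computation sets up a bijection between summation terms at each of the three distances $M/p_j,\,M/p_k,\,M/(p_jp_k)$ and elements of $B$ at the distances $M/(p_ip_j),\,M/(p_ip_k),\,M/(p_ip_jp_k)$ from $b_0$, yielding the identity
\[
\bbB_{M/(p_ip_j)}[b_0]+\bbB_{M/(p_ip_k)}[b_0]+\bbB_{M/(p_ip_jp_k)}[b_0]=p_i-1.
\]
Moreover, any hypothetical $b'\in B$ at distance $M/(p_ip_j)$ from $b_0$ outside of the fiber would, together with the unique $b_\alpha$ sharing the corresponding residue class mod $p_j$, witness $M/p_i\in\Div(B)$, contradicting Step~1. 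Thus $\bbB_{M/(p_ip_j)}[b_0]=p_j-1$ exactly, leaving $\bbB_{M/(p_ip_k)}[b_0]+\bbB_{M/(p_ip_jp_k)}[b_0]=p_i-p_j$.

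The hard step is now to convert this residual counting information into a contradiction. My plan is to apply the same identity at each $b_\alpha$ (obtaining $p_j$ copies of the equation), and then combine it with the structural constraint $\Phi_{p_i^2}\mid B$ from Lemma \ref{planeBsaturating}(iii), which forces $w^{p_i^2}_B$ to be constant on each residue class of $\ZZ_{p_i^2}$ modulo $p_i$. Since the $p_j$ points $\pi_i(b_\alpha)$ are distinct in $\ZZ_{p_i^2}$ but all share the same residue modulo $p_i$, the cyclotomic condition forces the whole $p_i$-class containing $\pi_i(b_0)$ to have equal weight $f_0\geq 1$, producing $p_if_0\geq p_i$ elements of $B$ with a specified $\pi_i$-residue class. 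Together with the divisor-exclusion constraints (Lemma \ref{triangles}) coming from the distances saturated by the $b_\alpha$'s and the bound of Lemma \ref{planebound} applied to the plane $\Pi(b_0,p_k^2)$, I expect to arrive at either a plane overcount or a forbidden divisor $M/p_i\in\Div(B)$. This final step, accommodating the full range $p_j<p_i$ (and the dual $p_k<p_i$ by symmetry), is the main obstacle, and will require the most delicate combinatorial bookkeeping of the argument.
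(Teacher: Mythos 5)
Your first two steps are sound: the divisor exclusions ($M/p_i, M/p_j, M/p_k, M/p_jp_k \notin \Div(B)$), the multiplicity-one lifting of the fiber with pairwise gcd $M/p_ip_j$, the identity $\bbB_{M/p_ip_j}[b_0]+\bbB_{M/p_ip_k}[b_0]+\bbB_{M/p_ip_jp_k}[b_0]=p_i-1$ (which is exactly the paper's count $|B\cap\Lambda(b_0,D(M))|=p_i$ in (\ref{card-pi}), localized at $b_0$), and the observation that $\bbB_{M/p_ip_j}[b_0]=p_j-1$ are all correct, and they dispose of the case $p_j>p_i$. But the case $p_j<p_i$ — which is where the lemma actually has content, since the hypothesis (F2') puts no ordering on the primes — is left as a plan rather than a proof. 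The proposed completion via $\Phi_{p_i^2}\mid B$, equal weights on residue classes mod $p_i$, Lemma \ref{triangles}, and a plane bound in $\Pi(b_0,p_k^2)$ is never carried out, and as described it is not clear it closes: you would still have to account for the $p_i-p_j$ elements at distances $M/p_ip_k$ and $M/p_ip_jp_k$ from $b_0$, and nothing in your bookkeeping so far forces either an overcount or a forbidden divisor from them. So there is a genuine gap.

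The missing idea is much simpler, and it is how the paper argues: use the fibering hypothesis on \emph{all} of $B\cap\Lambda_0$, not just on the fiber through $b_0$. Since $M/p_i\notin\Div(B)$, every $N_i$-fiber of $B\cap\Lambda_0$ in the $p_j$ direction lifts to exactly $p_j$ points of $B$, and since $M/p_j\notin\Div(B)$ their pairwise gcd with $M$ is $M/p_ip_j$, which is divisible by $D(M)$; hence each lifted fiber lies inside a single $D(M)$-grid, so it is either entirely contained in $\Lambda:=\Lambda(b_0,D(M))$ or disjoint from it. As $\Lambda\subset\Lambda_0$, the set $B\cap\Lambda$ is therefore a disjoint union of complete lifted fibers, so $p_j$ divides $|B\cap\Lambda|$. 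Your own identity (equivalently (\ref{Bplanerest}) summed over the $\phi(p_i)$ points at distance $M/p_i$ from $b_0$) gives $|B\cap\Lambda|=p_i$, and $p_j\mid p_i$ is impossible for distinct primes — with no case split on the relative size of $p_i$ and $p_j$, and no need for $\Phi_{p_i^2}\mid B$, Lemma \ref{triangles}, or Lemma \ref{planebound}.
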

\begin{proof}
We argue by contradiction. Let $\Lambda_0:=\Lambda(b,D(N_i))$ for some $b\in B$, and assume that $B\cap \Lambda_0$ is $N_i$-fibered in one of the other directions, say $p_j$. Let also $\Lambda:=\Lambda(b,D(M))$. 
By (\ref{Bplanerest}), we have
\begin{equation}\label{card-pi}
|B\cap\Lambda|=p_i.
\end{equation}
On the other hand, the $N_i$-fibering assumption means that 
$B\cap\Lambda_0$ can be divided into mutually disjoint $N_i$-fibers in the $p_j$ direction, each one of cardinality $p_j$, and each one either entirely contained in $\Lambda$ or disjoint from it. This implies that $p_j$ divides $|B\cap\Lambda|$. That, however, contradicts (\ref{card-pi}). 
\end{proof}

Next, we consider the case $\Phi_{N_j}\nmid A, \Phi_{N_k}\nmid B$. This case will be split further, according to whether $p_j$ or $p_k$ is the smallest prime.

\begin{lemma}\label{calinonempty}
Assume (F2') and (\ref{pinotthesmallest}).
Assume further that $\Phi_{N_j}\nmid A$, $\Phi_{N_k}\nmid B$, and 
$p_j=\min_\nu p_\nu$. Then $\calk\subseteq\calj$. In addition, if $\cali\neq \emptyset$, then:
\begin{itemize}
\item $B$ is $N_j$-fibered in the $p_j$ direction,
\item $|A\cap\Pi(a',p_i)|=p_jp_k$ for any $a'\in A$,
\item $p_i<p_k$.
\end{itemize}
\end{lemma}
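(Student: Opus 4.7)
My plan is to establish the four bullets in order of increasing difficulty, with the $N_j$-fibering of $B$ as the main obstacle. Throughout, I rely on the cyclotomic and plane-count machinery already assembled in Section \ref{fibered-sec}.

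The inclusion $\calk\subseteq\calj$ follows from Corollary \ref{subset} applied with the roles of $j$ and $k$ swapped: since $\Phi_{N_k}\nmid B$ we have $\Phi_{N_k}\mid A$, and by hypothesis $p_k>p_j=\min_\nu p_\nu$, so $\calk\subseteq\cali\cup\calj$; Lemma \ref{nodoubleinters} then collapses this to $\calk\subseteq\calj$. The plane-count $|A\cap\Pi(a',p_i)|=p_jp_k$ is immediate from (\ref{podzial2}) in Lemma \ref{planeBsaturating}(iii), as $a'\in A\cap\Pi(a',p_i)$ rules out the value $0$.

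For $p_i<p_k$, I would fix $a_i\in\cali$ and first argue that $\calk\cap\Pi(a_i,p_i)=\emptyset$: any $a_k\in\calk\cap\Pi(a_i,p_i)$ would lie in $\calj\cap\calk$ by the previous bullet, and Lemma \ref{fiberedstructure}(i) would then force $A\cap\Pi(a_k,p_i^2)=a_k*F_j*F_k$ to already account for all $p_jp_k$ elements of $A\cap\Pi(a_i,p_i)$, placing $a_i$ inside $\calj\cup\calk$ in contradiction with Lemma \ref{nodoubleinters}. Given $\calk\cap\Pi(a_i,p_i)=\emptyset$, so $A\cap\Pi(a_i,p_i)\subseteq\cali\cup\calj$, and $\cali\cap\calj\cap\Pi(a_i,p_i)=\emptyset$ (free from Lemma \ref{nodoubleinters}), the analogue of Lemma \ref{planesubsetcalk}(ii) with $j,k$ interchanged concludes $p_i<p_k$.

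The fibering statement for $B$ is the hard part. From $\Phi_{N_j}\mid B$, Lemma \ref{BfiberedAfibered} with $j$ and $k$ swapped ensures $B$ is $N_j$-fibered on each $D(N_j)$-grid either in the $p_j$ or the $p_i$ direction. Suppose for contradiction the $p_i$ direction is taken on some grid $\Lambda_0$; pick $b_0\in B\cap\Lambda_0$ with $\bbB^{N_j}_{N_j/p_i}[b_0]=\phi(p_i)$. Unpacking the induced weight at scale $N_j$ as $\bbB_{M/p_i}[b_0]+\bbB_{M/(p_ip_j)}[b_0]=\phi(p_i)$, and using $M/p_i\in\Div(A)$ coming from $\cali\neq\emptyset$, yields $\bbB_{M/(p_ip_j)}[b_0]=p_i-1$. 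My plan to reach a contradiction is twofold: first, combine this with the box-product identity $\langle\bbA[a_i],\bbB[b_0]\rangle=1$ from Theorem \ref{e-ortho2}, which, via $\bbA_M[a_i]\bbB_M[b_0]=1$, forces $\bbA_{M/(p_ip_j)}[a_i]=0$ for every $a_i\in\cali$; second, use the structure of $A\cap\Pi(a_i,p_i)$ from the third bullet (a $p_i$-fiber in $\cali$ together with $n\ge 1$ disjoint $p_j$-fibers in $\calj\setminus\calk$ satisfying $mp_i+np_j=p_jp_k$ with $m\equiv 0\bmod p_j$) to exhibit an element of $A\cap\Pi(a_i,p_i)$ at distance exactly $M/(p_ip_j)=p_ip_jp_k^2$ from $a_i$, giving the needed contradiction. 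I expect the main obstacle to be locating this witness cleanly in the quotient $\Pi(a_i,p_i)/F_i\cong\ZZ_{p_j^2p_k^2}$; if the direct geometric count in the quotient is too coarse, a backup is to combine $\Phi_{p_i^2}\mid B$ from Lemma \ref{planeBsaturating}(ii) (which propagates the $p_i-1$ mass across all $p_i^2$-cosets inside $\Pi(b_0,p_i)$) with the plane bound $|B\cap\Pi(b_0,p_k^2)|\le p_ip_j$ and the $N_{jk}$-grid constraint from Lemma \ref{planeBsaturating}(i) to get the contradiction by a direct count.
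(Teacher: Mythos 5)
Your treatment of the last three bullets is essentially the paper's: $\calk\subseteq\calj$ from Corollary \ref{subset} plus Lemma \ref{nodoubleinters}, the plane count from Lemma \ref{planeBsaturating}(iii), and $p_i<p_k$ from Lemma \ref{planesubsetcalk}(ii) with $j,k$ interchanged (your preliminary step showing $\calk\cap\Pi(a_i,p_i)=\emptyset$ is correct but unnecessary, since $\calk\subseteq\calj$ already gives $A=\cali\cup\calj$). The problem is the first bullet, which is the main content, and there your argument has a genuine gap. After reducing to $\bbB_{M/p_ip_j}[b_0]=\phi(p_i)$ and hence $M/p_ip_j\notin\Div(A)$, you propose to contradict this by exhibiting an element of $A\cap\Pi(a_i,p_i)$ at distance exactly $M/p_ip_j$ from $a_i$, using only the fiber decomposition $mp_i+np_j=p_jp_k$ of that plane. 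But a difference of gcd exactly $M/p_ip_j$ requires the two points to agree in the $p_k$-coordinate mod $p_k^2$ and to differ by a nonzero multiple of $p_j$ in the $p_j$-coordinate, and neither an $M$-fiber in the $p_i$ direction nor one in the $p_j$ direction moves in the $p_k$ direction: nothing in the count forces any $\calj$-fiber in the plane to be aligned with $a_i*F_i$ mod $p_k^2$ (nor mod $p_j$ in the $p_j$-coordinate). Indeed, Lemma \ref{planeBsaturating}(ii) together with $M/p_i\in\Div(A)$ only says that \emph{one} of $M/p_ip_j$, $M/p_ip_k$, $M/p_ip_jp_k$ lies in $\Div(B)$, so an unconditional witness forcing $M/p_ip_j\in\Div(A)$ is not something the hypotheses hand you; your backup plan (propagating mass via $\Phi_{p_i^2}|B$ and the plane bound $|B\cap\Pi(b_0,p_k^2)|\leq p_ip_j$) only yields $|B\cap\Pi(b_0,p_k^2)|\geq p_i$, which is far from a contradiction, so it does not close the gap either.

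The missing step is much simpler and stays entirely on the $B$ side, which is how the paper argues. Since $\cali\neq\emptyset$ and $\calj\cap\calk\neq\emptyset$, both $M/p_i$ and $M/p_j$ lie in $\Div(A)$. Any two elements $b',b''$ counted by $\bbB^{N_j}_{N_j/p_i}[b_0]$ satisfy $(b_0-b',M)=(b_0-b'',M)=M/p_ip_j$, so $(b'-b'',M)\in\{M/p_i,\,M/p_j,\,M/p_ip_j\}$; the first two are excluded by divisor exclusion, so the counted elements must occupy pairwise distinct nonzero multiples of $p_j$ in the $p_j$-coordinate mod $p_j^2$. Hence $\bbB^{N_j}_{N_j/p_i}[b_0]\leq p_j-1<\phi(p_i)$, because $p_j=\min_\nu p_\nu<p_i$. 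This rules out $N_j$-fibering of $B$ in the $p_i$ direction on any $D(N_j)$-grid, and Lemma \ref{BfiberedAfibered} then forces the $p_j$ direction, which is exactly the paper's one-line conclusion ``since $p_j<p_i$, $\bbB^{N_j}_{N_j/p_i}[b]<\phi(p_i)$.'' You should replace your witness construction by this pigeonhole.
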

\begin{proof}
We first note that the claim $\calk\subseteq\calj$ follows from Corollary \ref{subset} and Lemma \ref{nodoubleinters}, while the second bullet point follows from Lemma \ref{saturatingplanes} (i).
Next, let $a_i\in \cali$. Then $\cali\cap\Pi(a_i,p_i)$ is nonempty and, by Lemma \ref{nodoubleinters}, disjoint from $\calj\cup\calk=\calj$. By Lemma \ref{planesubsetcalk} (ii) with $j$ and $k$ interchanged, it follows that $p_i<p_k$.

It remains to prove the first point. We have $\Phi_{N_j}|B$, hence  by Lemma \ref{BfiberedAfibered}, $B$ is $N_j$-fibered on each $D(N_j)$-grid in one of the $p_i$ and $p_j$ directions.
Since $p_j<p_i$, it follows that $\bbB^{N_j}_{N_j/p_i}[b]<\phi(p_i)$ for all $b\in B$, hence $B$ must be $N_j$-fibered in the $p_j$ direction.
\end{proof}

\begin{lemma}\label{Phi_N_jnmidAPhi_N_knmidB}
Assume (F2') and (\ref{pinotthesmallest}). Assume further that
$\Phi_{N_j}\nmid A$, $\Phi_{N_k}\nmid B$, and $p_j=\min_\nu p_\nu$, 
Then $\cali= \emptyset$, and $A$ is $M$-fibered in the $p_j$ direction. 
\end{lemma}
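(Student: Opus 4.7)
The proof proceeds by contradiction. Suppose $\cali \neq \emptyset$. Applying Lemma \ref{calinonempty} under the assumption $p_j = \min_\nu p_\nu$, we obtain the prime ordering $p_j < p_i < p_k$, the containment $\calk \subseteq \calj$ (so that $A = \cali \sqcup \calj$ by Lemma \ref{nodoubleinters}), the fact that $B$ is $N_j$-fibered in the $p_j$ direction with $\bbB^{N_j}_{N_j/p_j}[b] = \phi(p_j)$ for every $b \in B$, and the slab equality $|A \cap \Pi(a', p_i)| = p_j p_k$ for every $a' \in A$. Combined with $|A| = p_i p_j p_k$, this forces each of the $p_i$ slabs perpendicular to the $p_i$ direction to be nonempty and to contain exactly $p_j p_k$ elements of $A$.

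The condition $\bbB^{N_j}_{N_j/p_j}[b] = \phi(p_j)$, together with $B$ being a set in $\ZZ_M$, forces the multiplicity of $B$ modulo $N_j$ to equal one: each $N_j$-fiber of $B$ then lifts to $p_j$ distinct elements of $B$ at pairwise distance exactly $M/p_j^2$. Hence $M/p_j^2 \in \Div(B)$, so by divisor exclusion $M/p_j^2 \notin \Div(A)$. Along every $p_j$-line $L \subset \ZZ_M$ (a coset of $\langle M/p_j^2 \rangle$ of size $p_j^2$), any two $A$-elements must therefore have difference in $\langle M/p_j \rangle$, so $A \cap L$ lies in a single $M$-fiber in the $p_j$ direction; in particular $|A \cap L| \leq p_j$.

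A slab accounting now shows that each slab $\Pi(a, p_i)$ containing an element of $\calj \cap \calk$ is saturated by the grid $a * F_j * F_k$ alone (no $\cali$-elements permitted), while each ``type I'' slab must decompose as $c_i p_i + c_j p_j = p_j p_k$, with $c_i$ the number of $\cali$-fibers and $c_j$ the number of $\calj \setminus \calk$-fibers it contains; one solves $p_j \mid c_i$ and $c_i \geq p_j$. To derive the contradiction, fix an $\cali$-fiber $a_i * F_i$ in a type I slab and consider the plane $\Pi(a_i, p_j^2)$, which contains $a_i * F_i$ entirely (since $\pi_j$ is constant along an $\cali$-fiber). Combine the plane bound $|A \cap \Pi(a_i, p_j^2)| \leq p_i p_k$ from Lemma \ref{planebound}, the contributions from $\calk$-grids whose $\pi_j$-residue modulo $p_j$ matches $\pi_j(a_i)$ (each such grid contributing $p_k$ $\calj$-fibers to this plane), and a saturating set analysis in the style of Lemmas \ref{saturatingplanes} and \ref{planeBsaturating} applied to a point $x \notin A$ adjacent to $a_i$ in the $p_j$ direction. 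Exploiting $\Phi_M|A$, $\Phi_{N_k}|A$, and $\Phi_{N_j}|B$, this yields a divisor conflict between $\Div(A)$ and $\Div(B)$ that is prohibited by divisor exclusion.

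Once $\cali = \emptyset$ is established, the second conclusion follows at once: since $\calk \subseteq \calj$ we have $A = \calj$, and $\calj$ is a union of cosets of $\langle M/p_j \rangle$ (pairwise either equal or disjoint), so $A$ decomposes into disjoint $M$-fibers in the $p_j$ direction. The principal technical obstacle is engineering the explicit divisor conflict in the third step: carefully coordinating the saturating set computations, the planar counts, and the cyclotomic constraints on $A$ and $B$, much in the spirit of the claim-by-claim strategy of Proposition \ref{initialstatement}.
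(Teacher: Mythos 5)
Your setup is fine as far as it goes: invoking Lemma \ref{calinonempty} to get $p_j<p_i<p_k$, $\calk\subseteq\calj$, the $N_j$-fibering of $B$ in the $p_j$ direction, and the slab equality, and then deducing $M/p_j^2\in\Div(B)$ (your gcd bookkeeping is correct once one notes $M/p_j\in\Div(A)$ rules out the alternative) are all legitimate steps, and your final paragraph deducing the $p_j$-fibering of $A$ from $\cali=\emptyset$ and $\calk\subseteq\calj$ matches the paper. But the heart of the lemma — the contradiction that actually forces $\cali=\emptyset$ — is not proved. Your slab count $c_ip_i+c_jp_j=p_jp_k$ with $p_j\mid c_i$ is consistent ($tp_i+c_j=p_k$ has solutions), so it yields nothing by itself, and the third step is only a promissory note: you name the ingredients ($\Phi_M|A$, $\Phi_{N_k}|A$, $\Phi_{N_j}|B$, a saturating-set analysis at a point adjacent to $a_i$) and assert that ``a divisor conflict'' results, without exhibiting which divisors of $A$ and $B$ collide or why the planar counts close. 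You acknowledge this yourself as the principal obstacle; as written, the proof attempt is incomplete at exactly the point where the work lies.

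For comparison, the paper closes the argument quite differently and more economically. From Lemma \ref{noPhi_Ni} (which you never use) one gets $\Phi_{N_i}|B$, and the proof splits on whether $M/p_i^2\in\Div(B)$. If $M/p_i^2\notin\Div(B)$, Lemma \ref{gen_top_div_mis} forces $B$ to be $N_i$-fibered in the $p_j$ or $p_k$ direction on $D(N_i)$-grids, contradicting Lemma \ref{BnoN_ip_jfiber}. If $M/p_i^2\in\Div(B)$, then the $N_j$-fibering of $B$ in the $p_j$ direction combined with (\ref{Bplanerest}) gives $|B\cap\Pi(b,p_k)|>p_ip_j$, so Corollary \ref{planegrid} yields $\Phi_{p_k^2}|B$, hence $\Phi_{p_k}|A$ and $|A\cap\Pi(a',p_k)|\leq p_ip_j$ for all $a'\in A$; but any $a\in\calj\cap\calk$ carries the grid $a*F_j*F_k$, giving $|A\cap\Pi(a,p_k)|\geq p_jp_k>p_ip_j$ (using $p_i<p_k$), a contradiction. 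If you want to complete your version, the missing mechanism is essentially this dichotomy on the $B$-side (driven by $\Phi_{N_i}|B$ and the cyclotomic divisor $\Phi_{p_k^2}$ or $\Phi_{p_k}$), not a further refinement of the $A$-side slab counting you set up.
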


\begin{proof}
Assume, by contradiction, that $\cali\neq \emptyset$. By Lemma \ref{noPhi_Ni}, $\Phi_{N_i}|B$. 

Assume first that $N_i/p_i\notin \Div(B)$. By Lemma \ref{gen_top_div_mis}, 
$B$ is fibered on $D(N_i)$-grids in one of the $p_j$ and $p_k$ direction.
That, however, contradicts Lemma \ref{BnoN_ip_jfiber}.

Suppose now that $b,b'\in B$ with $(b-b',N_i)=N_i/p_i$. By Lemma \ref{calinonempty}, $B$ is $N_j$-fibered in the $p_j$ direction. This together with
(\ref{Bplanerest}) implies that $|B\cap\Pi(b,p_k)|> p_ip_j$. By Corollary \ref{planegrid}, we have $\Phi_{p_k^2}|B$, hence $\Phi_{p_k}|A$. The latter, in turn, implies
\begin{equation}\label{pipjdystrybucja}
|A\cap\Pi(a',p_k)|= p_ip_j\ \ \forall a'\in A.
\end{equation}
On the other hand, let $a\in \calj\cap\calk$ with $a*F_j*F_k\subset A$ as provided by (\ref{Aplanegrid}). Then
$$
|A\cap\Pi(a,p_k)|\geq p_jp_k> p_ip_j,
$$
where at the last step we used Lemma \ref{calinonempty} again. 
This contradicts (\ref{pipjdystrybucja}).

This proves that $\cali=\emptyset$. By the second claim in Lemma \ref{calinonempty}, we have $\calk\subseteq\calj$, hence $A$ is $M$-fibered in the $p_j$ direction. 
\end{proof}

\begin{lemma}\label{lastoneformixedcase}
Assume (F2') and (\ref{pinotthesmallest}). Assume further that
$\Phi_{N_j}\nmid A$, $\Phi_{N_k}\nmid B$, and $p_k =\min_\nu p_\nu$.
Then $\cali=\emptyset$ and $ \calk\subseteq\calj $. Consequently, $A$ is $M$-fibered in the $p_j$ direction. 
\end{lemma}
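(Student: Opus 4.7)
The plan is to first establish $\cali=\emptyset$ by contradiction, then deduce $\calk\subseteq\calj$; the $M$-fibering of $A$ in the $p_j$ direction follows at once. The argument mirrors the proofs of Lemmas \ref{Phi_N_jnmidAPhi_N_knmidB} and \ref{p_kmincalksubcalj}, adjusted for $p_k=\min_\nu p_\nu$.

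For the first part, assume $\cali\neq\emptyset$. Lemma \ref{noPhi_Ni} gives $\Phi_{N_i}|B$; combined with $\Phi_{N_j}|B$ (from the hypothesis $\Phi_{N_j}\nmid A$ and $\Phi_{N_j}|A(X)B(X)$) we have $\Phi_{N_i}\Phi_{N_j}|B$. I would first establish $M/p_i^2\in\Div(B)$: otherwise Lemma \ref{gen_top_div_mis}(i) applied to $B$ on scale $N_i$ would force every $D(N_i)$-grid meeting $B$ to be $N_i$-fibered in the $p_j$ or $p_k$ direction, contradicting Lemma \ref{BnoN_ip_jfiber}. Fix $b,b'\in B$ with $(b-b',M)=M/p_i^2$. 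By Lemma \ref{BfiberedAfibered} (with $j$ and $k$ interchanged), $B$ is $N_j$-fibered on every $D(N_j)$-grid in either the $p_j$ or the $p_i$ direction. In the $p_j$-subcase, the $\phi(p_j)$ partners of $b$ in its $N_j$-fiber all lie at $M$-distance $M/p_j^2$ from $b$ with matching $\pi_i,\pi_k$ coordinates; applying (\ref{Bplanerest}) at each partner and including $b'$ yields $|B\cap\Pi(b,p_k)|\geq p_ip_j+1>p_ip_j$, so Corollary \ref{planegrid} gives $\Phi_{p_k^2}|B$ and $\Phi_{p_k}|A$. The desired contradiction uses the extra input $\Phi_{N_k}|A$ (unavailable in Lemma \ref{Phi_N_jnmidAPhi_N_knmidB}): via Lemma \ref{Kunfibered} this supplies further divisors of $A$, which combined with $\Phi_{p_i^2}\Phi_{p_k^2}|B$, the $M$-fiber through $a_i\in\cali$, and the planar bounds of Lemma \ref{planeBsaturating}(iii) and (\ref{podzial2}) give an overcount of $|A|$ against Lemma \ref{planebound}. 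The $p_i$-subcase for the $N_j$-fibering of $B$ is handled by the same strategy, using the fiber of length $p_i$ in place of the fiber of length $p_j$.

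For the second part, once $\cali=\emptyset$ we have $A=\calj\cup\calk$ and $\Phi_{N_k}|\calk$ by Lemma \ref{fibercyc3}. If $\calk$ failed to be $N_k$-fibered on some $D(N_k)$-grid, Lemma \ref{Kunfibered} would force $\{D(M)|m|M\}\subset\Div(A)$, contradicting (\ref{Btopdiv}). So $\calk$ is $N_k$-fibered on every $D(N_k)$-grid. A direct distance computation, using that $\calk$ reduces mod $N_k$ with constant multiplicity $p_k$, shows that an $N_k$-fiber of $\calk$ in the $p_i$ direction through $a\in\calk$ forces $\bbA_{M/p_i}[a]\geq\phi(p_i)$, placing $a\in\cali$ and contradicting $\cali=\emptyset$. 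An $N_k$-fiber in the $p_k$ direction through some $a'_k$ would give $\ell_k(a'_k)\subset\calk$; then the closing count from the proof of Lemma \ref{p_kmincalksubcalj} (with the hypothesis of Lemma \ref{planesubsetcalk}(i) holding trivially since $\cali=\emptyset$) yields $p_jp_k=cp_k^2$, so $p_k\mid p_j$, a contradiction. Hence $\calk$ is $N_k$-fibered in the $p_j$ direction on every grid, and the same distance computation gives $\bbA_{M/p_j}[a]=\phi(p_j)$ for each $a\in\calk$, whence $\calk\subseteq\calj$.

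The main obstacle is locating the final contradiction in Step 1. In Lemma \ref{Phi_N_jnmidAPhi_N_knmidB}, the condition $p_k>p_i$ (from Lemma \ref{calinonempty}) gave $p_jp_k>p_ip_j$ directly from $a*F_j*F_k\subset A$ and $\Phi_{p_k}|A$; here, with $p_k=\min_\nu p_\nu$, we have $p_jp_k<p_ip_j$ instead, so the closing comparison must be rerouted through the additional divisibility $\Phi_{N_k}|A$ and the structural constraints from Lemma \ref{nodoubleinters} in order to force the required overcount.
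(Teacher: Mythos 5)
Your Step 2 (deducing $\calk\subseteq\calj$ once $\cali=\emptyset$) is essentially sound and reproduces the mechanism of Lemma \ref{p_kmincalksubcalj} in the easier setting $\cali=\emptyset$. The genuine gap is in Step 1, the proof that $\cali=\emptyset$, and you flag it yourself. In your $p_j$-subcase you correctly reach $\Phi_{p_k^2}|B$ and $\Phi_{p_k}|A$, but the closing contradiction is only described as an unspecified ``overcount of $|A|$ against Lemma \ref{planebound}'' to be extracted from $\Phi_{N_k}|A$, Lemma \ref{Kunfibered} and various planar bounds; no such count is exhibited, and note that Lemma \ref{Kunfibered} yields extra divisors of $A$ only if $\calk$ is unfibered on some $D(N_k)$-grid, which you have not established. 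Worse, the $p_i$-subcase is not ``handled by the same strategy'': an $N_j$-fiber of $B$ in the $p_i$ direction through $b$ consists of elements at distance $M/p_ip_j$ from $b$, i.e.\ it stays inside the single grid $\Lambda(b,D(M))$, which (\ref{Bplanerest}) already caps at $p_i$ elements of $B$; so no plane overcount of $B$ arises there, and the analogue of the $p_j$-subcase count simply does not exist. As written, the heart of the lemma is missing.

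The paper closes this case by reversing your order: it first pins down the structure of $\calk$ (this is where $\Phi_{N_k}|A$ is really used), and only then eliminates $\cali$, splitting on the position of $p_i$. When $p_k<p_i<p_j$, Lemma \ref{p_kmincalksubcalj} gives $\calk\subseteq\calj$ directly; then for $a_i\in\cali$ one has $|A\cap\Pi(a_i,p_i)|=p_jp_k$ with $A\cap\Pi(a_i,p_i)\subset\cali\cup\calj$ and $\cali\cap\calj=\emptyset$ (Lemma \ref{nodoubleinters}), so Lemma \ref{planesubsetcalk} (ii) forces $p_i<p_k$, contradicting $p_k=\min_\nu p_\nu$. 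When $p_k<p_j<p_i$, the assumption $\cali\neq\emptyset$ forces $B$ to be $N_j$-fibered in the $p_j$ direction, and the dichotomy from the proof of Lemma \ref{p_kmincalksubcalj} applies to each $a_k\in\calk$: either (\ref{KJinter}) holds for all of $\calk$, so $\calk\subseteq\calj$ and the previous count applies, or (\ref{fullkline})--(\ref{fullklinecount}) hold for some $a_k$, which is ruled out because $M/p_j^2\in\Div(B)$ excludes an element of $\cali$ near the fiber chain, whence $A\cap\Pi(a_k,p_i)\subset\calk$ and the impossible identity $p_jp_k=cp_k^2$ follows. Your proposal would need an argument of this type (in particular, the counting step via Lemma \ref{planesubsetcalk} that converts structural information about $\calk$ into the inequality $p_i<p_k$); the route you sketch does not supply it.
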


\begin{proof}
The proof splits between two cases.

\medskip

{\it Case 1:} $p_k<p_i<p_j$.
In this case, by Lemma \ref{p_kmincalksubcalj} we have $\calk\subseteq\calj$. 
Assume, by contradiction, that $\cali$ is nonempty, and let 
$a_i\in\cali$. 
By Lemma \ref{saturatingplanes} (i), we have $|A\cap\Pi(a_i,p_i)|=p_jp_k$. By Lemma \ref{planesubsetcalk} (ii) with $j$ and $k$ interchanged,
it follows that $p_k>p_i$, contradicting our assumption. Therefore $\cali=\emptyset$.

\medskip

{\it Case 2:} $p_k<p_j<p_i$.
We first note that if $\cali$ is nonempty, then $\bbB^{N_j}_{N_j/p_i}[b]<\phi(p_i)$ for all $b\in B$. Since $\Phi_{N_j}|B$, by Lemma \ref{BfiberedAfibered} 
\begin{equation}\label{Bp_jfibered}
B \text{ must be } N_j \text{-fibered in the } p_j \text{ direction}.
\end{equation}

Next, we follow the first part of the proof of Lemma \ref{p_kmincalksubcalj}.
(This part does not use the Lemma \ref{p_kmincalksubcalj} assumption that $p_i<p_j$.) By the same argument as there, $\calk$ must be $N_k$-fibered on every $D(N_k)$-grid in either the $p_j$ or the $p_k$ direction, and for every $a_k\in\calk$ we have one of the following:
\begin{itemize}
\item (\ref{KJinter}) holds, hence $a_k\in \calj\cap\calk$ and $a_k*F_j*F_k=A\cap \Pi(a_k,p_i)$,
\item (\ref{fullkline}) and (\ref{fullklinecount}) hold for all $a'\in\calk\cap\Lambda(a_k,D(N_k))$.
\end{itemize}

If (\ref{KJinter}) holds for all $a'\in \calk$, then $\calk\subseteq\calj$, and it follows by the same argument as in Case 1 that $\cali=\emptyset$.

We now prove that the second case is impossible. Indeed,
assume by contradiction that there exists $a_k\in\calk$ such that (\ref{fullkline}) and (\ref{fullklinecount}) hold for all $a'\in\calk\cap\Lambda(a_k,D(N_k))$. We first claim that
\begin{equation}\label{nicniema2}
\cali\cap\Pi(a_k,p_i)=\emptyset.
\end{equation}
Indeed, if (\ref{nicniema2}) fails, we continue as in the proof of Lemma \ref{p_kmincalksubcalj} and find an element $a_i\in \cali$ at distance $M/p_j^2$ from the fiber chain through $a_k$. But (\ref{Bp_jfibered}) implies $M/p_j^2\in \Div(B)$, which is a contradiction. Hence (\ref{nicniema2}) holds.

By (\ref{podzial2}), we have $|A\cap\Pi(a_k,p_i)|=p_jp_k$. 
It follows from (\ref{nicniema2}) and Lemma \ref{planesubsetcalk} (i) that $A\cap\Pi(a_k,p_i)\subset\calk$. As in the proof of Lemma \ref{p_kmincalksubcalj}, all elements of $A$ in $\Pi(a_k,p_i)$ must satisfy (\ref{fullkline}). Hence 
$$
p_jp_k=|A\cap\Pi(a_k,p_i)|=cp_k^2,
$$
for some positive integer $c$, so that $p_k$ divides $p_j$, a contradiction. This completes the proof of the lemma.
\end{proof}

We now address the case in which $\Phi_{N_\nu}|B$ for $\nu\in \{j,k\}$.

\begin{lemma}\label{calknointer}
Assume (F2') and (\ref{pinotthesmallest}). Assume further that
 $B$ is $N_k$-fibered in the $p_k$ direction, $M/p_j^2\in \Div(B)$, and that $\cali\neq\emptyset$. Then for all $a_i\in\cali$ we have $\calk\cap \Pi(a_i,p_i)=\emptyset$.
\end{lemma}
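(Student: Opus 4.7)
The plan is to argue by contradiction: supposing $a_i\in\cali$ and $a_k\in\calk\cap\Pi(a_i,p_i)$, I will exploit the $(1,2)$-cofibered structure in the $p_k$ direction to shift $a_k*F_k$ into an $M$-fiber whose intersection with $\ell_i(a_i)$ is nonempty, producing a T2-equivalent tiling $A'\oplus B=\ZZ_M$ in which some element belongs to $\cali'\cap\calk'$; this contradicts Lemma \ref{nodoubleinters} applied to $A'$.

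First I would collect preliminaries. By Lemma \ref{nodoubleinters}, $a_k\notin\cali$, so $a_k\neq a_i$ and $a_k\notin\ell_i(a_i)$; the hypotheses yield $M/p_j^2,M/p_k^2\notin\Div(A)$, and combining Lemma \ref{noPhi_Ni} (which gives $\Phi_{N_i}\mid B$), Lemma \ref{gen_top_div_mis}(i), and Lemma \ref{BnoN_ip_jfiber} together with its $p_k$-analogue yields also $M/p_i^2\notin\Div(A)$. Consequently, $\ell_i(a_i)\cap A=a_i*F_i\subset\cali$. The $N_k$-fibering hypothesis gives $F_k^{N_k}(X)=\Phi_{p_k}(X^{M/p_k^2})\mid B(X)\bmod(X^{N_k}-1)$; evaluating at a primitive $p_k$-th root of unity shows $\Phi_{p_k}\mid B$, and (T1) applied to $A$ then forces $\Phi_{p_k^2}\mid A$. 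The same hypothesis together with $a_k*F_k\subset A$ equips $(A,B)$ with a $(1,2)$-cofibered structure in the $p_k$ direction with $a_k*F_k$ as cofiber, so by Lemma \ref{fibershift}, for every $y\in\ZZ_M$ with $(y-a_k,M)=M/p_k^2$, the set $A'=(A\setminus a_k*F_k)\cup y*F_k$ is a T2-equivalent tiling complement of $B$ in $\ZZ_M$.

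The central mechanism is as follows. Under the CRT identification $\ZZ_M\cong\ZZ_{p_i^2}\oplus\ZZ_{p_j^2}\oplus\ZZ_{p_k^2}$, the constraint $(y-a_k,M)=M/p_k^2$ amounts to $\pi_i(y)=\pi_i(a_k)$, $\pi_j(y)=\pi_j(a_k)$, and $\pi_k(y)\not\equiv\pi_k(a_k)\pmod{p_k}$. The line $\ell_i(a_i)=\{z:\pi_j(z)=\pi_j(a_i),\,\pi_k(z)=\pi_k(a_i)\}$ and the shifted fiber $y*F_k=\{z:\pi_i(z)=\pi_i(y),\,\pi_j(z)=\pi_j(y),\,\pi_k(z)\equiv\pi_k(y)\pmod{p_k}\}$. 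A direct check shows that $(y*F_k)\cap\ell_i(a_i)\neq\emptyset$ can be realized for a valid $y$ precisely when
\begin{equation*}
\pi_j(a_k)=\pi_j(a_i)\qquad\text{and}\qquad\pi_k(a_k)\not\equiv\pi_k(a_i)\pmod{p_k}.
\end{equation*}
Under these conditions, the intersection point $z$ satisfies $\pi_i(z)=\pi_i(a_k)\equiv\pi_i(a_i)\pmod{p_i}$, and therefore $z\in\ell_i(a_i)\cap\Pi(a_i,p_i)=a_i*F_i\subset A'$; together with $z\in y*F_k\subset A'$ this yields $z\in\cali'\cap\calk'$, contradicting Lemma \ref{nodoubleinters} for $A'\oplus B=\ZZ_M$.

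The remaining task, and the main technical obstacle, is arranging this favorable configuration for every $a_k\in\calk\cap\Pi(a_i,p_i)$. The second condition will be automatic once the first has been secured: if $\pi_j(a_k)=\pi_j(a_i)$ and $\pi_k(a_k)\equiv\pi_k(a_i)\pmod{p_k}$, then $p_ip_j^2p_k\mid a_k-a_i$, and ruling out $M/p_j^2,M/p_k^2,M/p_i^2$ leaves only $(a_k-a_i,M)\in\{M/p_k,M/p_i\}$, both impossible (the former forces $a_i\in a_k*F_k\subset\calk$, while the latter places $a_k\in\ell_i(a_i)\cap A=a_i*F_i\subset\cali$, each contradicting Lemma \ref{nodoubleinters}). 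To reduce to the case $\pi_j(a_k)=\pi_j(a_i)$ when it fails a priori, I would invoke the hypothesis $M/p_j^2\in\Div(B)$: combined with $\Phi_{N_j}\mid B$ and Lemma \ref{BfiberedAfibered}, it produces an auxiliary $(1,2)$-cofibered structure in the $p_j$ direction through a suitable $\calj\cap\calk$-element of $\Pi(a_i,p_i)$, whose existence follows from plane counting in $\Pi(a_i,p_i)$ using $\Phi_{p_i},\Phi_{p_k^2}\mid A$ and the forbidden divisors $M/p_j^2,M/p_k^2$. A single $p_j$-direction fiber shift then aligns $\pi_j(a_k)$ with $\pi_j(a_i)$ while keeping $a_k$ in $\calk\cap\Pi(a_i,p_i)$, reducing to the favorable configuration handled above. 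Producing this auxiliary $\calj\cap\calk$-element and verifying the validity of the $p_j$-shift are the delicate points in which the precise role of the hypothesis $M/p_j^2\in\Div(B)$ becomes essential.
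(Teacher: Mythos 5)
Your argument has two genuine gaps, one in the central mechanism and one in the reduction that is supposed to feed it. First, the contradiction you aim for — an element of $\cali'\cap\calk'$ in the shifted set $A'$ — is obtained by applying Lemma \ref{nodoubleinters} to $A'\oplus B=\ZZ_M$, but that lemma needs the full package (F2') for $A'$, in particular that $A'$ is fibered on every $D(M)$-grid and that $\calj'\cap\calk'\neq\emptyset$. Neither is preserved by a fiber shift in general: removing $a_k*F_k$ from its grid and depositing $y*F_k$ on a different grid can destroy the single-direction fibering of both the source and the target grid, so (F) may fail for $A'$. This is precisely why the paper, after shifting, argues only through divisor exclusion or through the extended-corner Theorem \ref{cornerthm}, whose hypotheses do not require (F). Moreover, your ``favorable configuration'' ($\pi_j(a_k)=\pi_j(a_i)$ and $\pi_k(a_k)\not\equiv\pi_k(a_i)\bmod p_k$) is actually vacuous: choosing $a'\in a_i*F_i$ with $\pi_i(a')=\pi_i(a_k)$ gives $(a'-a_k,M)=M/p_k^2\in\Div(A)$, which already contradicts $M/p_k^2\in\Div(B)$ (a consequence of the $N_k$-fibering hypothesis) with no shift at all; so the engine of your proof is aimed at a case that never occurs. (A smaller slip: your case analysis of $p_ip_j^2p_k\mid a_k-a_i$ omits $(a_k-a_i,M)=M/p_ip_k$, which also needs an argument.)

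Second, the real content of the lemma is exactly the cases you defer to the final paragraph, and the plan there cannot work. After normalizing $p_i^2\mid a_i-a_k$, the only surviving cases are $(a_i-a_k,M)\in\{M/p_jp_k^2,\,M/p_j^2p_k^2\}$; the latter is dispatched by a $p_k$-shift producing $M/p_j^2\in\Div(A'')$ against the hypothesis $M/p_j^2\in\Div(B)$, but the former is the hard case, which the paper handles by shifting $a_k*F_k$ to distance $M/p_j$ from $a_i$, exhibiting a $p_j$ extended corner, and invoking Theorem \ref{cornerthm} to contradict $\Phi_{p_i^2}\nmid A$. Your substitute reduction fails on three counts: (a) an element of $\calj\cap\calk$ inside $\Pi(a_i,p_i)$ cannot exist, since by Lemma \ref{fiberedstructure}(i) together with (\ref{podzial1})--(\ref{podzial2}) it would force $|A\cap\Pi(a_i,p_i)|\geq p_jp_k+p_i>p_jp_k$; (b) $\Phi_{N_j}\mid B$ and an $N_j$-fibering of $B$ in the $p_j$ direction are not among the lemma's hypotheses and are not established, so the auxiliary $(1,2)$-cofibered structure in the $p_j$ direction is not available; and (c) a fiber shift in the $p_j$ direction moves points by multiples of $M/p_j^2$, hence changes $\pi_j$ only by multiples of $p_j$, so it can never create the alignment $\pi_j(a_k)=\pi_j(a_i)$ when $p_j\nmid a_i-a_k$. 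As written, the proposal therefore does not prove the lemma; the missing ingredient is exactly the corner-structure argument via Theorem \ref{cornerthm} used in the paper.
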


\begin{proof}
Assume, by contradiction, that $a_i\in \cali$ and $\calk\cap\Pi(a_i,p_i)\neq\emptyset$. Replacing $a_i$ by a different element of $a_i*F_i$ if necessary, we may further assume that $p_i^{2}|a_i-a_k$ for some $a_k\in\calk\cap\Pi(a_i,p_i)$
Moreover, it follows from the fibering assumption on $B$ that the pair $(A,B)$ has a (1,2)-cofibered structure in the $p_k$ direction, with the cofiber in $A$ rooted at $a_k$.
	
Suppose that $(a_i-a_k,M)=M/p_jp_k^2$. Applying Lemma \ref{fibershift}, we could then shift the cofiber $a_k*F_k$ in the $p_k$ direction, obtaining a new T2-equivalent tiling $A'\oplus B=\ZZ_M$ in which the shifted cofiber $a'_k*F_k$ satisfies 
$(a_i-a_k,M)=M/p_j$. 
We claim that $A'$ contains a $p_j$ extended corner structure. Indeed, by Lemma \ref{nodoubleinters} we have 
$\Lambda(a_i,D(M)) \cap(\calj\cup\calk)=\emptyset$. Hence $A\cap(a_i*F_i*F_k)$ is $M$-fibered in the $p_i$ direction but not in the $p_k$ direction, and $A\cap(a'_k*F_i*F_k)$ must be empty, so that $A'\cap(a'_k*F_i*F_k)=a'_k*F_k$.
This proves the claim.
However, Theorem \ref{cornerthm} now implies that $\Phi_{p_i^2}|A$, 
contradicting (F2').

Since $M/p_k^2\in \Div(B)$ by the fibering assumption, we are now left with $(a_i-a_k,M)=M/p_j^2p_k^2$. But then, by the same fiber-shifting argument as above, we get a T2-equivalent tiling $A''\oplus B=\ZZ_M$, where $M/p_j^2\in \Div(A'')$. This contradicts the assumption that $M/p_j^2\in \Div(B)$.
\end{proof}

\begin{corollary}\label{p_nuorder}
Assume (F2') and (\ref{pinotthesmallest}).
 If $B$ is $N_\nu$-fibered in the $p_\nu$ direction for both $\nu=j$ and $\nu=k$, then $\cali=\emptyset$. 
%Moreover $p_i<\max_\nu p_\nu$.
\end{corollary}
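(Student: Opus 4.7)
The plan is to argue by contradiction: suppose $\cali\neq\emptyset$ and fix $a_i\in\cali$. I will show that $A\cap\Pi(a_i,p_i)$ has cardinality simultaneously equal to $p_jp_k$ (by Lemma \ref{planeBsaturating}(iii)) and divisible by $p_i$ (because it is a disjoint union of $M$-fibers in the $p_i$ direction), producing a contradiction since $p_i,p_j,p_k$ are distinct primes.

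To carry this out, I must first empty the plane $\Pi(a_i,p_i)$ of all elements of $\calj$ and $\calk$, and this is where the two fibering hypotheses on $B$ are used. Since $B$ is $N_j$-fibered in the $p_j$ direction, we have $M/p_j^2\in\Div(B)$; together with the $N_k$-fibering of $B$ in the $p_k$ direction, the hypotheses of Lemma \ref{calknointer} are met, yielding $\calk\cap\Pi(a_i,p_i)=\emptyset$. The argument of Lemma \ref{calknointer} is symmetric in $j$ and $k$ once both $B$-fiberings are available: the cofibered structure invoked there in the $p_k$ direction has an exact analogue in the $p_j$ direction, the fiber-shifting via Lemma \ref{fibershift} and the appeal to Theorem \ref{cornerthm} (giving a $p_k$ extended corner in the original proof, a $p_j$ extended corner after swapping) go through verbatim, and the final divisor obstruction uses $M/p_k^2\in\Div(B)$, which follows from the $N_k$-fibering of $B$ in the $p_k$ direction. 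This swapped invocation therefore produces $\calj\cap\Pi(a_i,p_i)=\emptyset$ as well.

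Since (F) implies $A=\cali\cup\calj\cup\calk$, combining the previous paragraph's conclusions gives $A\cap\Pi(a_i,p_i)\subset\cali$. Each $a'\in\cali$ satisfies $a'*F_i\subset A$, and because $F_i\subset p_i\ZZ_M$, the fiber $a'*F_i$ is contained in $\Pi(a',p_i)=\Pi(a_i,p_i)$. Hence $A\cap\Pi(a_i,p_i)$ decomposes into disjoint $M$-fibers in the $p_i$ direction, so $p_i$ divides $|A\cap\Pi(a_i,p_i)|$. On the other hand, Lemma \ref{planeBsaturating}(iii), which is applicable since we work under (F2'), forces $|A\cap\Pi(a_i,p_i)|=p_jp_k$, giving the desired contradiction $p_i\mid p_jp_k$. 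The only step requiring genuine care is the symmetric invocation of Lemma \ref{calknointer}; this is routine once one checks that each stage of its proof has a direct $j\leftrightarrow k$ analogue under the corollary's hypotheses, but it is the place where both of the two $B$-fiberings are essential.
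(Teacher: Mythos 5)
Your proof is correct and follows essentially the same route as the paper: the fibering hypotheses give $M/p_j^2,M/p_k^2\in\Div(B)$, Lemma \ref{calknointer} (applied as stated and with $j\leftrightarrow k$ interchanged, which is legitimate since (F2') and (\ref{pinotthesmallest}) are symmetric in $j,k$) empties $\Pi(a_i,p_i)$ of $\calj$ and $\calk$, and then $A\cap\Pi(a_i,p_i)\subset\cali$ forces $p_i\mid|A\cap\Pi(a_i,p_i)|=p_jp_k$, contradicting (\ref{podzial2}) of Lemma \ref{planeBsaturating}(iii). The only (inconsequential) slip is in your parenthetical: in the original proof of Lemma \ref{calknointer} the structure produced is a $p_j$ extended corner, becoming a $p_k$ extended corner after swapping, not the reverse.
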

\begin{proof}
By the fibering assumption, 
%\begin{equation}\label{Bdiff}
$$
M/p_j^2,M/p_k^2\in \Div(B).
$$
%\end{equation}
Suppose that $\cali\neq\emptyset$, and let $a_i\in \cali$.
It follows from Lemma \ref{calknointer} that $\calj$ and $\calk$ are both disjoint from $\Pi(a_i,p_i)$.  
Therefore $A\cap\Pi(a_i,p_i)\subset\cali$, and, in particular, $p_i$ divides $|A\cap\Pi(a_i,p_i)|$. But this contradicts
(\ref{podzial2}). 
\end{proof}

\begin{lemma}\label{p_ismallest}
Assume (F2') and (\ref{pinotthesmallest}). Assume further that
$\Phi_{N_j}\Phi_{N_k}|B$. Then $\cali=\emptyset$.
\end{lemma}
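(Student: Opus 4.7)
The plan is to argue by contradiction. Assume $\cali \neq \emptyset$; we derive a contradiction. By Lemma~\ref{noPhi_Ni}, $\Phi_{N_i}\nmid A$, so $\prod_{\nu\in\{i,j,k\}}\Phi_{N_\nu}$ divides $B$. Combined with $\Phi_{p_i^2}|B$ from (F2') and Lemma~\ref{planeBsaturating}(iii), $B$ carries substantial cyclotomic structure. From $\cali\neq\emptyset$ and $\calj\cap\calk\neq\emptyset$, divisor exclusion gives $\{M/p_i,M/p_j,M/p_k,M/p_jp_k\}\cap\Div(B)=\emptyset$, and (\ref{Bplanerest}) then yields $|B\cap\Lambda(b,D(M))|=p_i$ for every $D(M)$-grid that meets $B$.

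The key reduction is to Corollary~\ref{p_nuorder}: if $B$ is globally $N_\nu$-fibered in the $p_\nu$ direction for both $\nu\in\{j,k\}$, then $\cali=\emptyset$, contradicting our standing assumption. By Lemma~\ref{BfiberedAfibered}, the hypothesis $\Phi_{N_\nu}|B$ forces $B\cap\Lambda$ to be $N_\nu$-fibered on each $D(N_\nu)$-grid $\Lambda$ in either the $p_\nu$ or the $p_i$ direction. It therefore suffices to rule out $p_i$-direction fibering on every $D(N_\nu)$-grid, for both $\nu\in\{j,k\}$.

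Suppose, for contradiction, that $B$ is $N_j$-fibered in the $p_i$ direction on some $D(N_j)$-grid $\Lambda^\ast$, and let $b\in B\cap\Lambda^\ast$. Translating the fibering to $\ZZ_M$ (using $M/p_i\notin\Div(B)$) gives $\bbB_{M/p_ip_j}[b]=\phi(p_i)$; since $M/p_ip_j$ is a multiple of $D(M)$, these $p_i-1$ neighbors all lie in $\Lambda(b,D(M))$ and fill its $p_i-1$ non-$b$ slots, forcing $\bbB_{M/p_ip_k}[b]=\bbB_{M/p_ip_jp_k}[b]=0$. The vanishing of $\bbB_{M/p_ip_k}[b]$, interpreted on the $N_k$ scale, means the $D(N_k)$-grid through $b$ cannot be $N_k$-fibered in the $p_i$ direction, so by Lemma~\ref{BfiberedAfibered} it must be $N_k$-fibered in the $p_k$ direction, giving $M/p_k^2\in\Div(B)$. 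We then test this tight local structure against the box-product identity $\langle\bbA[x],\bbB[b]\rangle=1$ for $x\in\ZZ_M\setminus A$ with $(a-x,M)=M/p_i$ and $a\in\calj\cap\calk$. By Lemma~\ref{saturatingplanes}, $A_x\subset\Pi(a,p_i^2)$, and (\ref{Aplanegrid}) together with direct computation gives $\bbA_{M/p_ip_j}[x]=\phi(p_j)$, $\bbA_{M/p_ip_k}[x]=\phi(p_k)$, and $\bbA_{M/p_ip_jp_k}[x]=\phi(p_jp_k)$. Evaluating the identity at $b\in\Lambda^\ast$ is self-consistent (the $M/p_ip_j$ term alone gives unity), but evaluating it at a $b'\in B$ lying on a grid with different local fibering direction produces an overdetermined system: the analogous analysis at $b'$ forces conflicting values of $\bbB_{M/p_ip_\nu}[b']$, leading either to a cardinality violation via Lemma~\ref{planebound} on the plane $\Pi(b',p_i)$, or to an incompatibility with Lemma~\ref{BnoN_ip_jfiber} applied on the $N_i$ scale using $\Phi_{N_i}|B$. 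A symmetric argument rules out $p_i$-direction $N_k$-fibering on any $D(N_k)$-grid.

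Having excluded $p_i$-direction fibering on both the $N_j$ and $N_k$ scales, $B$ is globally $N_\nu$-fibered in the $p_\nu$ direction for $\nu\in\{j,k\}$, and Corollary~\ref{p_nuorder} yields $\cali=\emptyset$, the desired contradiction. The principal obstacle lies in the third paragraph: ruling out $p_i$-direction fibering requires coordinating the tight local constraint $|B\cap\Lambda(b,D(M))|=p_i$ from (\ref{Bplanerest}) with the global divisibilities $\prod_\nu\Phi_{N_\nu}\cdot\Phi_{p_i^2}|B$ and the rigid geometry of $A$ on $\Pi(a,p_i^2)$ supplied by (\ref{Aplanegrid}), so that no assignment of the $D(N_\nu)$-fibering directions across the grids of $\ZZ_M$ is compatible with the tiling.
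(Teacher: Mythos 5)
Your overall frame is the same as the paper's: assume $\cali\neq\emptyset$, use Lemma \ref{noPhi_Ni} and Lemma \ref{BfiberedAfibered} to reduce everything to showing that $B$ is $N_\nu$-fibered in the $p_\nu$ direction on \emph{every} $D(N_\nu)$-grid for both $\nu\in\{j,k\}$, and then invoke Corollary \ref{p_nuorder}. Your preliminary observations (the four excluded divisors, $|B\cap\Lambda(b,D(M))|=p_i$ from (\ref{Bplanerest}), and the implication that $p_i$-direction $N_j$-fibering at $b$ forces $\bbB_{M/p_ip_k}[b]=\bbB_{M/p_ip_jp_k}[b]=0$ and hence $p_k$-direction $N_k$-fibering through $b$) are correct. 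But the decisive step is missing: you never actually rule out $p_i$-direction fibering. You concede that the box-product identity at such a $b$ is self-consistent, and then assert that evaluating it at some $b'$ on a ``differently fibered'' grid yields ``an overdetermined system'' leading ``either'' to a Lemma \ref{planebound} violation ``or'' to a conflict with Lemma \ref{BnoN_ip_jfiber}. No such $b'$ is shown to exist (a priori all $D(N_j)$-grids could be $p_i$-fibered), no conflicting values are computed, and Lemma \ref{BnoN_ip_jfiber} cannot even be invoked without first establishing that $B$ is $N_i$-fibered on some grid, which requires a missing-divisor hypothesis such as $M/p_ip_k\notin\Div(B)$ that you have neither proved nor refuted. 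This is exactly where the real work lies, and the hand-wave does not substitute for it.

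You also ignore the role of the relative sizes of the primes, which is what makes the problem asymmetric. The cheap exclusion of $p_i$-direction $N_\nu$-fibering (via the slot count $|B\cap\Lambda(b,D(M))|=p_i$ together with divisor exclusion, which forces $p_i$ pairwise $M/p_ip_\nu$-separated elements into only $p_\nu$ available residues) works only when $p_\nu<p_i$; since (\ref{pinotthesmallest}) guarantees just one of $p_j,p_k$ below $p_i$, the case $p_k<p_i<p_j$ remains, and there the paper needs a genuinely global argument: prove $M/p_ip_k\in\Div(B)$ (via Lemmas \ref{gen_top_div_mis}, \ref{BnoN_ip_jfiber} and \ref{planeBsaturating}(ii)), use (\ref{Bplanerest}) to produce $p_i$ elements $b_1,\dots,b_{p_i}$ with pairwise differences in $\{M/p_ip_k,M/p_ip_j,M/p_ip_jp_k\}$, conclude their grids are $N_j$-fibered in the $p_j$ direction, deduce from the two fiberings that $B$ is exactly the union of the $p_i$ planes $\Pi(b_\nu,p_i^2)$ each containing $p_jp_k$ points, and then show every $D(M)$-cell of $B$ contains an $M/p_ip_k$ difference so that the $p_j$-fibering propagates to all $D(N_j)$-grids. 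None of this, nor any working replacement for it, appears in your proposal, so the proof as written has a genuine gap at its central step.
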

\begin{proof}
Suppose that $\cali\neq\emptyset$. Without loss of generality, we may assume that $p_k=\min_\nu p_\nu$.
By Lemma \ref{BfiberedAfibered}, $B$ is $N_k$-fibered on $D(N_k)$-grids in one of the $p_i$ and $p_k$ directions. However, $B$ cannot be $N_k$-fibered in the $p_i$ direction on any $D(N_k)$-grid, since the assumptions that $\cali\neq\emptyset$ and $p_k<p_i$ imply that $\bbB^{N_k}_{N_k/p_i}[b]<\phi(p_i)$ for all $b\in B$.
Hence
\begin{equation}\label{BN_kfiber}
B \text{ must be } N_k \text{-fibered in the } p_k \text{ direction}.
\end{equation}
By the same argument as above, if $p_j<p_i$, then $B$ must also be $N_j$-fibered in the $p_j$ direction, and an application of  Corollary \ref{p_nuorder} concludes the proof.
	
It remains to consider the case when 
%\begin{equation}\label{porzadek}
$$
p_k < p_i < p_j.
$$
%\end{equation}
We claim that in this case,
\begin{equation}\label{jeszczejeden}
M/p_ip_k\in\Div(B).
\end{equation}
Indeed, since $\calk\neq \emptyset$, the failure of (\ref{jeszczejeden}) would imply that $N_i/p_k\not\in\Div_{N_i}(B)$. Since $\cali\neq \emptyset$, Lemma \ref{noPhi_Ni} implies $\Phi_{N_i}| B$.
It would then follow from Lemma \ref{gen_top_div_mis} that $B$ is $N_i$-fibered on $D(N_i)$-grids. By 
Lemma \ref{BnoN_ip_jfiber}, $B$ can only be $N_i$-fibered in the $p_i$ direction. 
In particular, $\Phi_{p_i}|B$, contradicting Lemma \ref{saturatingplanes} (i).

Let $b_1,b_2\in B$ with 
\begin{equation}\label{M/p_ip_kelements}
(b_1-b_2,M)=M/p_ip_k.
\end{equation}
By (\ref{Bplanerest}), one may find $b_3,\ldots, b_{p_i}\in B$ satisfying $(b_\nu-b_{\nu'},M)\in \{M/p_ip_k,M/p_ip_j,M/p_ip_jp_k\}$ for all $\nu\in\{1,\dots,p_i\}$ with $\nu\neq\nu'$. 
	
By Lemma \ref{BfiberedAfibered}, $B$ is $N_j$-fibered on each $D(N_j)$-grid in one of the $p_i$ and $p_j$ directions. 
On the other hand, we have
\begin{align*}
p_i& =|B\cap \Lambda(b_1,D(M))| 
\geq \bbB^{N_j}_{N_j/p_i}[b_1] + \bbB^M_{M/p_ip_k}[b_1]
\\
&\geq \bbB^{N_j}_{N_j/p_i}[b_1] +1,
\end{align*}
where the first equality follows from (\ref{Adivintersec}) and from (\ref{Bplanerest}) applied to $b_2,\dots,b_{p_i}$, and the last inequality follows from 
(\ref{M/p_ip_kelements}).
In particular, $\bbB^{N_j}_{N_j/p_i}[b_1]<\phi(p_i)$, so that $B$ cannot be $N_j$-fibered in the $p_i$ direction on the grid $\Lambda:=\Lambda(b_1, D(N_j))$.
It follows that $B$ is $N_j$-fibered in the $p_j$ direction on $\Lambda$. 

Taking also (\ref{BN_kfiber}) into account, we see that 
\begin{equation}\label{podwojne}
|B\cap\Pi(b_\nu,p_i^2)|\geq p_jp_k
\text{ for all } \nu\in\{1,2,\ldots,p_i\}.
\end{equation}
%\begin{equation}\label{Bdiff2}
%M/p_j^2,M/p_k^2,M/p_j^2p_k^2\in Div(B).
%\end{equation}
Since $|B|=p_ip_jp_k$, (\ref{podwojne}) must in fact hold with equality for each $\nu$, and 
\begin{equation}\label{plaszczyzny}
B\subset\Pi(b_1,p_i).
%B\setminus\bigcup_{\nu=1}^{p_i}\Pi(b_\nu,p_i^2) = \emptyset.
\end{equation}

Now, let $b\in B$ be arbitrary. We claim that
\begin{equation}\label{jeszczejeden2}
M/p_ip_k\in\Div(B\cap \Lambda(b,D(N_i))) 
\end{equation}
To prove this, we start by arguing as in the proof of (\ref{jeszczejeden}) that if (\ref{jeszczejeden2}) fails, then $B$ is $N_i$-fibered in the $p_i$ direction on $ \Lambda(b,D(N_i))$. However, if that were the case, then we would have $M/p_i^2\in\Div(B)$, contradicting (\ref{plaszczyzny}). 
	
We further note that by (\ref{plaszczyzny}), $B\cap \Lambda(b,D(N_i))= B\cap \Lambda(b,D(M))$, so that in fact we have
$$
M/p_ip_k\in\Div(B\cap \Lambda(b,D(M)) \ \ \forall b\in B.
$$
With this in place, we repeat the argument starting with (\ref{M/p_ip_kelements}) to prove that $B$ is $N_j$-fibered in the $p_j$ direction on all $D(N_j)$-grids.
	
It follows that $B$ is $N_\nu$-fibered in the $p_\nu$ direction for both $\nu=j$ and $\nu=k$. 
By Corollary \ref{p_nuorder}, we have
 $\cali=\emptyset$ as claimed.
\end{proof}

%%%%%%%%%%%%%%%%%%%%%%%%%%%%%%%%%%%%%%%%%%%%%%%

\subsection{Proof of Proposition \ref{F3structure}}

%%%%%%%%%%%%%%%%%%%%%%%%%%%%%%%%%%%%%%%%%

In this section, we are working under the following assumption.

\medskip\noindent
\textbf{Assumption (F3):} We have $A\oplus B=\ZZ_M$, where $M=p_i^{2}p_j^{2}p_k^{2}$ is odd. Furthermore,  $|A|=|B|=p_ip_jp_k$, $\Phi_M|A$, $A$ is fibered on $D(M)$-grids, $\cali=\emptyset$, and
\begin{equation}\label{symdif}
	\text{the sets } \calj\setminus\calk \text{ and } \calk\setminus\calj \text{ are nonempty}.
\end{equation}

%We have $A\oplus B=\ZZ_M$, where $M=p_i^{2}p_j^{2}p_k^{2}$ is odd. Furthermore, $|A|=|B|=p_ip_jp_k$, $\Phi_M|A$, $A$ is fibered on $D(M)$-grids, $\cali=\emptyset$, and
%\begin{equation}\label{symdif}
%\text{the sets } \calj\setminus\calk \text{ and } \calk\setminus\calj \text{ are nonempty}.
%\end{equation}

The proof below works regardless of whether $\calj$ and $\calk$ are disjoint or not. If $\calj\cap\calk\neq\emptyset$, then
(since $\cali=\emptyset$) any element $a\in\calj\cap\calk$ must satisfy the conditions of Lemma \ref{fiberedstructure} (i), so that 
%\begin{equation}\label{jkintersection}
$$
A\cap\Pi(a,p_i^{2})=a*F_j*F_k.
$$
%\end{equation}
It follows that the set $\calj\setminus\calk$ is $M$-fibered in the $p_j$ direction, and 
$\calk\setminus\calj$ is $M$-fibered in the $p_k$ direction.

We begin with the case when at least one of $\Phi_{M_j}$ and $\Phi_{M_k}$ divides $A$.

\begin{lemma}\label{Kp_i^2plane} 
Assume (F3), and that $\Phi_{M_k}|A$. Then 
\begin{equation}\label{Phi_p_j^2|A}
\Phi_{p_j^2}|A.
\end{equation}
Furthermore, $\calk$ is $M_k$-fibered in the $p_j$ direction, so that for every $a_k\in\calk$ we have
\begin{equation}\label{KM/p_k^2jfiber}
\mathbb{K}^{M_k}_{M_k/p_j}[a_k]=p_k\cdot\phi(p_j).
\end{equation}
and 
\begin{equation}\label{Kp_i^2p_jplane}
A\cap\Pi(a_k,p_i^2)\subset\Lambda(a_k,p_i^2p_j).
\end{equation}
The same holds with $p_k$ and $p_j$ interchanged.
\end{lemma}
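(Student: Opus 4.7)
The plan is to transfer the cyclotomic divisibility from $A$ to $\calk$, exploit the two-prime structure of $M_k=p_i^2p_j^2$ via Lemma \ref{2d-cyclo}, and then use the plane bound together with $\cali=\emptyset$ to pin down the fibering direction.

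First I would apply Lemma \ref{fibercyc3} with $\alpha_k=2$ to deduce $\Phi_{M_k}\mid\calk$ from $\Phi_{M_k}\mid A$. Viewed as a weighted multiset in $\ZZ_{M_k}$, every $M$-fiber in the $p_k$ direction inside $\calk$ collapses to a single point with weight $p_k$, so $w_\calk^{M_k}(x)$ takes values in $p_k\cdot\{0,1,2,\ldots\}$. Because $M_k$ has only the two prime factors $p_i,p_j$, Lemma \ref{2d-cyclo}(i) expresses $\calk$ mod $M_k$ as a non-negative integer combination of $M_k$-fibers in the $p_i$ and $p_j$ directions.

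The crucial and most delicate step is to exclude $M_k$-fibers in the $p_i$ direction from this decomposition. Suppose, toward a contradiction, that such a fiber passes through the reduction $\overline{a_k}$ of some $a_k\in\calk$. Lifting to $\ZZ_M$ produces $p_i$ roots of $M$-fibers in the $p_k$ direction in $\calk$, sharing the $\pi_j$-coordinate of $a_k$ and exhausting the coset $\pi_i(a_k)+p_i\ZZ_{p_i^2}$ in their $\pi_i$-values. This places at least $p_ip_k$ elements of $A$ inside the plane $\Pi(a_k,p_j^2)$; Lemma \ref{planebound} forces equality, so the plane is saturated by exactly these $p_i$ disjoint $p_k$-fibers. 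Since $\cali=\emptyset$, the $\pi_k$-residues of the fiber roots cannot all coincide mod $p_k$, and so the fibers occupy at least two distinct $D(M)$-grids. Under (F3), each such grid must be $M$-fibered in the $p_j$ or $p_k$ direction, and I would track the grid types: on any $p_j$-fibered grid, the relevant fiber roots lie in $\calj\cap\calk$ and Lemma \ref{fiberedstructure}(i) forces a full $p_jp_k$-block in the perpendicular plane; on any $p_k$-fibered grid, a cofiber-shift via Lemma \ref{fibershift} is available. Combining these, I expect to produce a T2-equivalent tiling in which the $\pi_k$-residues align, yielding an $M$-fiber in the $p_i$ direction and the desired contradiction. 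This combinatorial bookkeeping, together with verifying that the shifts preserve T2-equivalence and the relevant divisor exclusions, is the main technical obstacle.

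Once the $p_i$ direction is eliminated, $\calk$ mod $M_k$ is a non-negative integer combination of $p_j$-direction $M_k$-fibers only. For each $a_k\in\calk$, the fiber through $\overline{a_k}$ lifts to $p_j$ elements of $\calk$ with common $\pi_i$-coordinate, giving $p_jp_k$ elements of $\calk\subset A$ in $\Pi(a_k,p_i^2)$ and saturating Lemma \ref{planebound}. Since $\cali\cap\Pi(a_k,p_i^2)=\emptyset$, Lemma \ref{planesubsetcalk}(i) with $\alpha_i=2$ gives $A\cap\Pi(a_k,p_i^2)\subset\calk$, which immediately yields (\ref{KM/p_k^2jfiber}); the containment (\ref{Kp_i^2p_jplane}) follows because the roots' $\pi_j$-coordinates lie in $\pi_j(a_k)+p_j\ZZ_{p_j^2}$. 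Finally, (\ref{Phi_p_j^2|A}) comes from aggregating: inside each plane $\Pi(a_k,p_i^2)$ there are $p_k$ elements of $A$ at each of the $p_j$ values of $\pi_j$ in a single coset of $p_j\ZZ_{p_j^2}$, and combining with the compatible structure forced on $\calj\setminus\calk$ by symmetric cyclotomic analysis produces a uniform within-coset $\pi_j$-distribution of $A$, which is exactly $\Phi_{p_j^2}\mid A$. The symmetric statement with $p_j$ and $p_k$ interchanged follows by identical reasoning applied with $M_j$ in place of $M_k$.
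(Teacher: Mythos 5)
Your opening and closing moves line up with the paper: Lemma \ref{fibercyc3} (or, as the paper prefers, the observation that $\Phi_{M_k}$ divides the $p_j$-fibered part of $A$, hence divides $\call:=\calk\setminus\calj$) transfers the divisibility, Lemma \ref{2d-cyclo} gives the two-direction decomposition mod $M_k$, and once $p_i$-direction $M_k$-fibers are excluded, the conclusions (\ref{KM/p_k^2jfiber}), (\ref{Kp_i^2p_jplane}) and, via (\ref{fibercyc}), also (\ref{Phi_p_j^2|A}) do follow much as you indicate. But the step you yourself single out as the main obstacle --- ruling out $M_k$-fibers in the $p_i$ direction --- is a genuine gap, and the route you sketch would not close it. The cofiber shift you invoke on a ``$p_k$-fibered grid'' is not available: Lemma \ref{fibershift} requires a $(1,2)$-cofibered structure, i.e.\ that $B$ be $M/p_k$-fibered in the $p_k$ direction, and under (F3) with only $\Phi_{M_k}|A$ in hand you have no such information about $B$. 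Moreover, even if the shifts could be performed, the endpoint you aim for is not a contradiction: $\cali=\emptyset$ is a hypothesis about $A$ itself and is not preserved under T2-equivalent fiber shifts, so exhibiting an $M$-fiber in the $p_i$ direction inside a shifted set $A'$ contradicts nothing. Your concluding ``aggregation'' argument for $\Phi_{p_j^2}|A$ is likewise only gestured at, although it is recoverable once the fibering statement is known.

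The paper closes the gap by reversing the order and using only counting. Step one: prove (\ref{Phi_p_j^2|A}) first. If instead $\Phi_{p_j}|A$, then $|A\cap\Pi(a_j,p_j)|=p_ip_k$ for every $a_j\in\calj$; such a plane cannot meet an element of $\call$ lying on a $p_i$-direction $M_k$-fiber, since that fiber alone already accounts for $p_ip_k$ elements (Lemma \ref{planebound}), leaving no room for $a_j\notin\call$. Hence the plane is covered by $M$-fibers of $\calj$ (blocks of size $p_j$) and $p_j$-direction $M_k$-fibers of $\call$ (blocks of size $p_jp_k$), forcing $p_ip_k=c_1p_j+c_2p_jp_k$ and thus $p_j\mid p_ip_k$, a contradiction. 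Step two: with $\Phi_{p_j^2}|A$ established, a hypothetical $a_0\in\call$ with $\mathbb{L}^{M_k}_{M_k/p_i}[a_0]=p_k\phi(p_i)$ saturates $\Pi(a_0,p_j^2)$, so $\calj\cap\Pi(a_0,p_j^2)=\emptyset$; but $|A\cap\Pi(a_0,p_j^2)|=p_ip_k$ together with $\Phi_{p_j^2}|A$ forces $A\subset\Pi(a_0,p_j)$, and since $\calj\neq\emptyset$ is a union of $M$-fibers in the $p_j$ direction it must then meet $\Pi(a_0,p_j^2)$ --- a contradiction. This eliminates the $p_i$ direction with no fiber shifts, no saturating sets, and no information about $B$ beyond the plane bound; if you wish to keep your ordering, you would need an invariant obstruction of this kind rather than an appeal to $\cali=\emptyset$ after shifting.
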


\begin{proof}
Notice first that the fibering statement holds trivially for all $a_k\in \calk\cap\calj$. 

Let now $\call:=\calk\setminus\calj$, with the corresponding $N$-boxes $\mathbb{L}^N$ for $N|M$. 
The assumption $\Phi_{M_k}|A$ implies, by (\ref{fibercyc2}), that $\Phi_{M_k}|\call$. By Lemma \ref{2d-cyclo}, $\call$ mod $M_k$ is a disjoint union of $M_k$-fibers in the $p_i$ and $p_j$ direction. Hence, any element
 $a_0\in \call$ which does not belong to an $M_k$-fiber in the $p_j$ direction must satisfy
\begin{equation}\label{KM/p_k^2ifiber}
\mathbb{L}^{M_k}_{M_k/p_i}[a_0]=p_k\cdot\phi(p_i)
\end{equation}
and $|A\cap \Pi(a_0,p_j^2)|\geq p_ip_k$. By Lemma \ref{planebound},
%\begin{equation}\label{maxjplane}
$$
|A\cap \Pi(a_0,p_j^2)|= p_ip_k,
$$
%\end{equation}
in particular
\begin{equation}\label{Ksubset}
A\cap \Pi(a_0,p_j^2)\subset \call \text{ and } \calj\cap \Pi(a_0,p_j^2)=\emptyset.
\end{equation}

We first prove (\ref{Phi_p_j^2|A}). Indeed, assume for contradiction that $\Phi_{p_j}|A$. Then
\begin{equation}\label{maxjplane2}
|A\cap \Pi(a,p_j)|= p_ip_k \text{ for all } a\in A.
\end{equation}
Let $a_j\in \calj$, and consider the plane system $\Pi(a_j,p_j)$. These planes cannot contain any elements $a\in\call$ satisfying (\ref{KM/p_k^2ifiber}), since any such element would belong to an $M_k$-fiber in the $p_i$ direction in $\call$, of cardinality $p_ip_k$  and contained in $\Pi(a_j,p_j)$, and this would leave no room for the additional element $a_j\not\in\call$.  

Thus every element of $A\cap\Pi(a_j,p_j)$ must either belong to $\calj$, or else it must be an element of $\call$ 
belonging to an $M_k$-fiber in the $p_j$ direction in $\call$, of cardinality $p_jp_k$. Since the two sets are disjoint, 
(\ref{maxjplane2}) implies that
$$
p_ip_k= |A\cap \Pi(a,p_j)|= c_1 p_j + c_2 p_j p_k,
$$
where $c_1,c_2$ are nonnegative integers. 
But then $p_j$ divides either $p_i$ or $p_k$, a contradiction.

Therefore $\Phi_{p_j^2}|A$. Suppose now that there actually exists an element $a_0\in\call$ such that (\ref{KM/p_k^2ifiber}) holds. Then $A\subset \Pi(a_0,p_j)$. Since $\calj$ is nonempty, it follows that $\calj$ must intersect $\Pi(a_0,p_j^2)$, contradicting (\ref{Ksubset}). This proves the fibering conclusion of the lemma.
\end{proof}

\begin{lemma}\label{splitting}
Assume (F3).
The following holds true:
\begin{enumerate}
\item [(i)] If $\Phi_{p_i^2}\Phi_{M_k}|A$, then $A\subset\Pi(a,p_i)$ for any $a\in A$ (hence $A$ is contained in a coset of $p_i\ZZ_M$).
\item [(ii)] If $\Phi_{p_i}|A$, then $|A\cap\Pi(a,p_i)|=p_jp_k$ for all $a\in A$.
Moreover, for every $a\in A$ we have either $A\cap \Pi(a,p_i)\subset \calj$ or $A\cap \Pi(a,p_i)\subset \calk$.  \end{enumerate}
\end{lemma}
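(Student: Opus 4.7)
For part (i), the plan is to combine Lemma \ref{Kp_i^2plane} with the cyclotomic uniformity criterion (\ref{cyclo-uniform}) in the $p_i$-direction. Under (F3), $\calk\setminus\calj\neq\emptyset$, so we can pick $a_k\in\calk$. Since $\Phi_{M_k}|A$, Lemma \ref{Kp_i^2plane} gives $\Phi_{p_j^2}|A$ together with the structural statement that inside $\Pi(a_k,p_i^2)$, the set $A$ consists of $p_j$ disjoint $M$-fibers in the $p_k$-direction (rooted at the $p_j$ elements of $\calk$ provided by $\mathbb{K}^{M_k}_{M_k/p_j}[a_k]=p_k\phi(p_j)$). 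Hence $|A\cap\Pi(a_k,p_i^2)|\geq p_jp_k$, and Lemma \ref{planebound} forces equality. Now $\Phi_{p_i^2}|A$ together with (\ref{cyclo-uniform}) applied with $\alpha=2$ in the $p_i$-direction yields $|A\cap\Pi(a_k,p_i)|=p_i\cdot|A\cap\Pi(a_k,p_i^2)|=p_ip_jp_k=|A|$, forcing $A\subset\Pi(a_k,p_i)$, i.e., $A$ sits inside a coset of the proper subgroup $p_i\ZZ_M$.

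For the first clause of part (ii), $\Phi_{p_i}|A$ combined with (\ref{cyclo-uniform}) at $\alpha=1$ (using $\Pi(x,1)=\ZZ_M$) immediately gives $|A\cap\Pi(a,p_i)|=|A|/p_i=p_jp_k$ for every $a\in A$. For the second clause, fix $a\in A$ and write $\Pi:=\Pi(a,p_i)$; I will split into two cases based on whether $\Pi$ meets $\calj\cap\calk$. If $a_0\in\Pi\cap\calj\cap\calk$, then (\ref{jkintersection}) gives $A\cap\Pi(a_0,p_i^2)=a_0*F_j*F_k$, a subset of $\calj\cap\calk$ of cardinality $p_jp_k$ contained in $\Pi$. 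Since $|A\cap\Pi|=p_jp_k$ by the first clause, equality of sets holds and $A\cap\Pi\subset\calj\cap\calk$, which is contained in both $\calj$ and $\calk$.

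The remaining case is $\Pi\cap\calj\cap\calk=\emptyset$. Using $\cali=\emptyset$ from (F3), $A\cap\Pi$ partitions as $((\calj\setminus\calk)\cap\Pi)\sqcup((\calk\setminus\calj)\cap\Pi)$. The key structural observation is that $\calj\setminus\calk$ is a union of $M$-fibers in the $p_j$-direction: if some fiber-mate $a'\in a''*F_j$ of a point $a''\in\calj\setminus\calk$ were to lie in $\calk$, then (\ref{jkintersection}) applied at $a'\in\calj\cap\calk$ would place $a''\in a'*F_j*F_k\subset\calk$, a contradiction. The same argument (swapping $j$ and $k$) shows $\calk\setminus\calj$ is a union of $M$-fibers in the $p_k$-direction. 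Since $a*F_j\subset\Pi$ whenever $a\in\Pi$ (as $p_i\mid M/p_j$), and similarly for $p_k$-fibers, the intersections with $\Pi$ are again unions of such fibers. Write them as $m$ disjoint $p_j$-fibers and $n$ disjoint $p_k$-fibers respectively. Then $mp_j+np_k=p_jp_k$. Coprimality of $p_j$ and $p_k$ forces $p_k\mid m$, and the bound $mp_j\leq p_jp_k$ leaves only $(m,n)\in\{(0,p_j),(p_k,0)\}$, so one of the two pieces is empty and $A\cap\Pi$ is contained in $\calj$ or in $\calk$.

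I do not expect a serious obstacle here. Part (i) is quick once one notices that Lemma \ref{Kp_i^2plane} already maxes out the $\Pi(a_k,p_i^2)$-count, so that the $\Phi_{p_i^2}$ uniformity has room to propagate the full cardinality across the $p_i$ sub-planes. The only step in part (ii) that needs a brief argument is the closure of $\calj\setminus\calk$ under the $p_j$-fiber action (and symmetrically); the concluding arithmetic is immediate from $\gcd(p_j,p_k)=1$.
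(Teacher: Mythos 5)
Your argument is correct and follows essentially the same route as the paper: part (i) is exactly the paper's proof (Lemma \ref{Kp_i^2plane} plus Lemma \ref{planebound} give $|A\cap\Pi(a_k,p_i^2)|=p_jp_k$, and $\Phi_{p_i^2}|A$ then forces $A\subset\Pi(a_k,p_i)$), and the first clause of (ii) is the same uniform-distribution count. For the second clause of (ii), the paper simply cites Lemma \ref{planesubsetcalk}\,(i) with $\alpha_i=1$ (together with its $j$--$k$ symmetric version); your case split on $\Pi\cap\calj\cap\calk$ and the coprimality count $mp_j+np_k=p_jp_k$ just re-derives that lemma's proof inline, so the content is identical.
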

\begin{proof}
For part (i), by Lemma \ref{Kp_i^2plane} every $a_k\in\calk$ satisfies (\ref{KM/p_k^2jfiber}). By Lemma \ref{planebound}, 
\begin{equation}\label{maxp_iplane}
|A\cap\Pi(a_k,p_i^{2})|=p_jp_k \text{, hence } A\cap\Pi(a_k,p_i^2)\subset \calk.
\end{equation}
If $\Phi_{p_i^{2}}|A$, then $A\subset\Pi(a_k,p_i)$. This proves the first part of the lemma. 

For part (ii), assume that $\Phi_{p_i}|A$. Then $|A\cap\Pi(a,p_i)|=p_jp_k$ for all $a\in A$. 
The second part follows from Lemma \ref{planesubsetcalk} (i) with $\alpha_i=1$. 
%Fix any $a\in A$. It's clear that if there exists $a'\in \Pi(a,p_i)\cap\calk$, then the same must apply to all the elements in $A\cap\Pi(a,p_i)$. By assumption, if no such element exists then $A\cap\Pi(a,p_i)$ must be contained in $\calj$, as required.	
\end{proof}

\begin{lemma}\label{Phi_N_j|B}
Assume (F3) and 
\begin{equation}\label{Phi_{p_i}|Aass}
\Phi_{p_i}\Phi_{M_k}|A.
\end{equation}
Then $\Phi_{N_j}\nmid A$. 
\end{lemma}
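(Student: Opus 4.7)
I would argue by contradiction, assuming $\Phi_{N_j}\mid A$ and deriving a contradiction with (F3) together with the standing hypothesis $\Phi_{p_i}\Phi_{M_k}\mid A$.

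First, I would collect the cyclotomic divisors of $A$. Beyond the assumptions of the lemma, Lemma~\ref{Kp_i^2plane} supplies $\Phi_{p_j^2}\mid A$, and the contradiction hypothesis adds $\Phi_{N_j}\mid A$. A direct inspection of divisors of $M$ shows that $\Phi_M\Phi_{N_j}\mid A$ already makes $A$ null for the cuboid type $\calt=(M,(1,2,1),1)$: for every $m\mid M$ with $m\notin\{M,N_j\}$, the mask $\Delta(X)$ absorbs $\Phi_m$ through one of its edge factors $1-X^{d_\nu}$, while $\Phi_M$ and $\Phi_{N_j}$ are absorbed by $A$. Such a cuboid has all eight vertices inside a single $p_i$-plane $\Pi(v_0,p_i)$, since $\delta_i=1$ forces the $p_i$-component of $d_i$ to lie in $p_i\ZZ_{p_i^2}^{*}$.

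Second, I would combine this cuboid nullity with the $p_i$-plane decomposition from Lemma~\ref{splitting}(ii): $A$ partitions into $n_\calj+n_\calk=p_i$ planes $\Pi(a,p_i)$, each entirely in $\calj$ or entirely in $\calk$, with $n_\calj,n_\calk\geq 1$ by (F3). On any $\calk$-plane the $\calt$-null condition becomes automatic, because the edge $d_k$ has $p_k$-component $p_ku_k'$, a within-$p_k$-fiber shift, so vertices $v_\epsilon$ and $v_{\epsilon+e_k}$ share the same $\mathbf{1}_\calk$-value and cancel with opposite signs. On a $\calj$-plane, however, the condition is genuine: the fine-$p_j$ edge $d_j$ has a unit $p_j$-component, so it jumps between different $p_j$-residue classes, whereas each $p_j$-fiber of $A_\calj$ is trapped in a single $p_j$-residue class. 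The resulting identity is an $N_j$-style alternating relation on the indicator function recording which $(p_i,p_k)$-positions and $p_j$-residues support $p_j$-fibers of $A_\calj$ in that plane.

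Third, I would feed in the rigid geometric data from Lemma~\ref{Kp_i^2plane}: $\calk$ is $M_k$-fibered in the $p_j$ direction, and $A\cap\Pi(a_k,p_i^2)\subset\Lambda(a_k,p_i^2p_j)$ for every $a_k\in\calk$. The strip containment forces the $A$-content of each $\calk$-plane to sit in a single $p_j$-residue class modulo $p_j$, and $\Phi_{p_j^2}\mid A$ then uniformly distributes this content across the $p_j$ residues modulo $p_j^2$ within that class. Propagating these constraints through the $\calt$-null identity on the $\calj$-planes, one forces each $\calj$-plane to be itself $M$-fibered in the $p_k$ direction, so every $\calj$-plane is in fact a $\calm$-plane, emptying $\calj\setminus\calk$ and contradicting (F3).

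The main obstacle is the third step: one must carefully orchestrate three scales ($M$, $M_k$, $N_j$) so that the rigidity on $\calk$-planes fully determines the plane-by-plane fiber structure on the $\calj$-planes. If the direct propagation proves too intricate, a complementary route I would keep in reserve is to push the combined cyclotomic divisibilities through Lemma~\ref{planebound}, by locating a plane $\Pi(x,p_i^2)$ forced to contain more than $p_jp_k$ elements of $A$, which would yield an immediate contradiction.
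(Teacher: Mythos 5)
Your first two steps are sound: $\Phi_M\Phi_{N_j}\mid A$ does make $A$ null for the cuboid type $(M,(1,2,1),1)$ (the only $m\mid M$ not absorbed by the edge factors are $m\in\{N_j,M\}$), every such cuboid lies in a single residue class mod $p_i$, and on a plane whose $A$-content lies in $\calk$ the alternating sum cancels in pairs along the $d_k$ edge, since $A\cap\Pi$ is then a union of $M$-fibers in the $p_k$ direction and is invariant under translation by $d_k$. The decomposition of $A$ into mod-$p_i$ planes each contained in $\calj$ or in $\calk$ (Lemma \ref{splitting}(ii), using $\Phi_{p_i}\mid A$) is also correctly invoked.

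The genuine gap is your third step, which you yourself flag: the assertion that the constraints force every $\calj$-plane to be $M$-fibered in the $p_k$ direction is never argued, and the proposed mechanism cannot deliver it. Precisely because each $(M,(1,2,1),1)$-cuboid is confined to one residue class mod $p_i$, it never couples a $\calk$-plane to a $\calj$-plane, so the rigidity you extract from Lemma \ref{Kp_i^2plane} (which lives on the planes through points of $\calk$) cannot propagate into the $\calj$-planes through that identity: the constraint it imposes on a $\calj$-plane is purely internal to that plane. Internally, the identity only says that a triple difference of the indicator of $A\cap\Pi$ (as a function of the mod-$p_i^2$ layer, the residue of $\pi_j$ mod $p_j$, and $\pi_k$) vanishes, and this is satisfied by many unions of $p_j$-fibers that are not $p_k$-fibered; for instance any configuration whose indicator is independent of the mod-$p_i^2$ layer kills the $d_i$-difference outright, so ruling such things out requires further input that your plan does not supply. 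In the paper that input comes from the tiling complement $B$: one first gets $p_j=\min_\nu p_\nu$ from Corollary \ref{subset}, then proves that $\calj$ is $N_j$-fibered on $D(N_j)$-grids by combining Lemma \ref{Kunfibered} with $\Phi_{N_k}\mid B$, the $N_k$-fibering of $B$ (Lemma \ref{fiberfibering}), fiber shifts (Lemma \ref{fibershift}), saturating sets and the box product, and Corollary \ref{planegrid}; the contradiction is then a one-line count in a single mod-$p_i$ plane, namely that $p_jp_k=|A\cap\Pi(a_j,p_i)|$ would have to be a multiple of $p_j^2$. Your proposal uses no information about $B$ beyond what is baked into the cited lemmas, and the reserve plan (locating a plane that violates Lemma \ref{planebound}) is equally unsubstantiated, so as it stands the argument does not close.
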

\begin{proof}
Assume, by contradiction, that $\Phi_{N_j}|A$. By (\ref{symdif}) and Corollary \ref{subset}, we must have $p_j=\min_\nu p_\nu$. We first claim that
\begin{equation}\label{fiberatti}
\hbox{if }\Phi_{N_j}|A \hbox{, then }\calj \hbox{ must be }N_j\hbox{-fibered on }D(N_j) \hbox{-grids.}
\end{equation}
Indeed, suppose that (\ref{fiberatti}) fails. By Lemma \ref{Kunfibered}, we have 
\begin{equation}\label{Jfiberdiv}
\{D(M)|m|M\}\subset \Div(A).
\end{equation}
Applying Corollary \ref{subset} and (\ref{symdif}) again in the $p_k$ direction, we get $\Phi_{N_k}|B$. 
By (\ref{Jfiberdiv}) and Lemma \ref{fiberfibering} (ii), $B$ is $N_k$-fibered in the $p_k$ direction, implying a (1,2)-cofibered structure for $(A,B)$ with all fibers in $\calk$ as cofibers. 

Fix $a_k\in \calk$, and recall that it must satisfy (\ref{KM/p_k^2jfiber}). This produces a family of $M$-fibers in the $p_k$ direction, all contained in $A\cap\Pi(a_k,p_i^2)$. 
Using Lemma \ref{fibershift} to shift and align these fibers if necessary, we get a T2-equivalent set $A'$ such that
$A'\oplus B=\ZZ_M$ and $a_k*F_j*F_k\subset A'$. 
By T2-equivalence, $\Phi_{p_i}|A'$, and by Lemma \ref{planebound},
\begin{equation}\label{fiberatti2}
a_k*F_j*F_k = A'\cap \Pi(a_k,p_i^{2}).
\end{equation} 

Since $\cali=\emptyset$, $a_k$ cannot belong to an $M$-fiber in the $p_i$ direction, and in particular there exists an $x\in \ZZ_M\setminus A$ with $(a_k-x,M)=M/p_i$. We have $A'_{x}\subset \Pi(a_k,p_i^{2})\cup \Pi(x,p_i^{2})$. However, if $A'_{x}\cap\Pi(x,p_i^{2})\neq\emptyset$, then $|A'\cap\Pi(x,p_i^{2})|>0$. By Corollary \ref{planegrid} and
(\ref{maxp_iplane}),  we have $\Phi_{p_i^{2}}|A'$, which contradicts the fact that $\Phi_{p_i}|A'$. 

Thus $A'_x\subset \Pi(a_k,p_i^{2})$, and by (\ref{fiberatti2}), for any $b\in B$ we have
$$
1=\sum_{m\in \{1,p_j,p_k,p_jp_k\}}\frac{1}{\phi(mp_i)}\bbA'_{M/mp_i}[x|\Pi(a_k,p_i^{2})]\bbB_{M/mp_i}[b].
$$
Hence $\{D(M)|m|M\}\cap \Div(B)$ is nontrivial, contradicting (\ref{Jfiberdiv}). This proves (\ref{fiberatti}).

By (\ref{symdif}), we may find $a_j\in \calj\setminus\calk$. It follows from Lemma \ref{fiberfibering} (i) that $A\cap\Lambda(a_j,D(N_j))$ can only be 
$N_j$-fibered in the $p_j$ direction. Recall from Lemma \ref{splitting} that $|A\cap\Pi(a_j,p_i)|=p_jp_k$ and $A\cap\Pi(a_j,p_i)\subset\calj\setminus\calk$. But the fibering in $\calj\setminus\calk$ implies that
$$
p_jp_k=c\cdot p_j^2,
$$
thus $p_j$ must divide $p_k$, which is clearly false. The lemma follows.
\end{proof}

\begin{lemma}\label{Phi_M/p_ip_k^2}
Assume (F3) and (\ref{Phi_{p_i}|Aass}).
Then $\Phi_{M_k/p_i}|A$.
\end{lemma}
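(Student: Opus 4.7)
Set $N := M_k/p_i = p_ip_j^2$; the goal is $\Phi_N\mid A$. Since (F3) gives $\cali=\emptyset$, we have $A = \calj\cup\calk$, and inclusion-exclusion yields the polynomial identity $A(X) = J(X) + K(X) - L(X)$, where $L(X)$ is the mask polynomial of $\calj\cap\calk$. The plan is to show that $\Phi_N$ divides each of $J$, $K$, and $L$ separately.

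The divisibilities $\Phi_N\mid J$ and $\Phi_N\mid L$ will be immediate. By definition of $\calj$, every $a\in\calj$ satisfies $a*F_j\subset A$, and two such fibers are either identical or disjoint, so $\calj$ is a disjoint union of $M$-fibers in the $p_j$ direction. For each $a\in\calj\cap\calk$, Lemma \ref{fiberedstructure} (i) (applicable since $\cali=\emptyset$ forces $a\notin\cali$) yields $A\cap\Pi(a,p_i^2) = a*F_j*F_k$, so $\calj\cap\calk$ is a disjoint union of such planar grids, and in particular again a union of $M$-fibers in the $p_j$ direction. Since $p_j^2\mid N\mid M$, the fiber polynomial $F_j$ is divisible by $\Phi_N$ (by (\ref{fibercyc}), applied in the $p_j$ direction by symmetry), and consequently $\Phi_N\mid J$ and $\Phi_N\mid L$.

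The main step will be $\Phi_N\mid K$. Write $\calk = \bigsqcup_{r\in R} r*F_k$ as a disjoint union of $M$-fibers in the $p_k$ direction, set $P(X) = \sum_{r\in R} X^r$, and fix a primitive $N$-th root of unity $\zeta$. Since $N$ divides $M/p_k = p_i^2p_j^2p_k$, one has $\zeta^{M/p_k}=1$ and hence $F_k(\zeta)=p_k\neq 0$; thus $K(\zeta) = P(\zeta)F_k(\zeta)$ vanishes iff $P(\zeta)=0$. Now by Lemma \ref{Kp_i^2plane}, $\calk$ is $M_k$-fibered in the $p_j$ direction, which (after invoking Lemma \ref{planebound} to pin down the multiplicity) means $\calk\bmod M_k$ equals $p_k\cdot\tilde\calk$ for a set $\tilde\calk\subset\ZZ_{M_k}$ that is a disjoint union of $M_k$-fibers in the $p_j$ direction. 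Because each $p_k$-fiber already collapses under $\ZZ_M\to\ZZ_{M_k}$ (as $M/p_k\equiv 0\pmod{M_k}$), the polynomial $P(X)$ reduced mod $X^N-1$ coincides with the mask of the image of $\tilde\calk$ in $\ZZ_N$. A direct computation shows $M_k/p_j=p_i^2p_j$ generates a subgroup of order $p_j$ in $\ZZ_N$, namely the standard $p_j$-fiber subgroup, so each $M_k$-fiber in the $p_j$ direction projects to an $N$-fiber in the same direction. Hence the image of $\tilde\calk$ in $\ZZ_N$ is a non-negative integer linear combination of $N$-fibers in the $p_j$ direction, whose mask polynomial is divisible by $\Phi_N$ (again using $p_j^2\mid N$). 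Therefore $P(\zeta)=0$, yielding $\Phi_N\mid K$. Combining the three divisibilities then gives $\Phi_N\mid A$.

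The only nontrivial bookkeeping lies in the cascade of two reductions $M\to M_k\to N$ used for $\calk$; this works cleanly because (i) Lemma \ref{Kp_i^2plane} already supplies the $p_j$-direction $M_k$-fibering of $\calk$ and (ii) the collapse of $p_k$-fibers under reduction mod $M_k$ lets the $M_k$-level picture of $\tilde\calk$ stand in for the $\ZZ_M$-level evaluation of $P(\zeta)$. I do not expect a substantive obstacle here; the real geometric work is already contained in Lemma \ref{Kp_i^2plane}.
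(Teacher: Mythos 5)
Your proof is correct, and it follows the paper's skeleton up to the key step: like the paper, you write $A(X)=\calj(X)+\calk(X)-(\calj\cap\calk)(X)$, dispose of $\calj$ and $\calj\cap\calk$ via their $M$-fibering in the $p_j$ direction (Lemma \ref{fiberedstructure} (i) plus (\ref{fibercyc})), and reduce everything to $\Phi_N\mid\calk$ with $N=M_k/p_i=p_ip_j^2$. Where you diverge is in how $\Phi_N\mid\calk$ is established. The paper runs an $N$-cuboid balancing argument: it uses $\Phi_{p_i}\mid A$ to conclude $A\cap\Pi(a_k,p_i)$ is exactly the $\calk$-structure in one plane, computes $\bbK^N_N[a_k]=\bbK^N_N[x_j]$ along $p_j$-neighbours, and then balances each $N$-cuboid by the dichotomy of Lemma \ref{splitting} (ii) ($A\cap\Pi(x,p_i)\subset\calj$ or $\subset\calk$). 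You instead push the $M_k$-fibering of $\calk$ in the $p_j$ direction (Lemma \ref{Kp_i^2plane}) down to the scale $N$: after pinning the multiplicity to $p_k$ via Lemma \ref{planebound} (your bookkeeping here is right, and needed for your identification of $P\bmod M_k$ with $\tilde\calk$), each $M_k$-fiber in the $p_j$ direction projects bijectively onto an $N$-fiber in the $p_j$ direction, whose polynomial is divisible by $\Phi_N$ since $p_j^2\mid N$ and $N\nmid N/p_j$. This is a genuinely different and in some respects leaner route: it avoids the cuboid case analysis and Lemma \ref{splitting} (ii) entirely, and it only leans on $\Phi_{M_k}\mid A$ (through Lemma \ref{Kp_i^2plane}) together with the plane bound, rather than invoking $\Phi_{p_i}\mid A$ directly. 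One small remark: your detour through $P(X)$ and $F_k(\zeta)=p_k$ can be shortcut --- once you know $\calk\bmod M_k$ is a nonnegative integer combination of $M_k$-fibers in the $p_j$ direction, the divisibility $\Phi_N\mid\frac{X^{M_k}-1}{X^{M_k/p_j}-1}$ already gives $\calk(\zeta)=0$ at every primitive $N$-th root of unity $\zeta$, so the exact multiplicity $p_k$ is not strictly required. What the paper's cuboid argument buys in exchange is that it works directly with box counts and does not require unpacking the precise multiset sense of ``$M_k$-fibered'' from Lemma \ref{Kp_i^2plane}.
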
 
\begin{proof}
Denote $N=M_k/p_i$, and write 
$$
A(X)=\calj(X)-(\calj\cap\calk)(X)+\calk(X).
$$
By Lemma \ref{fiberedstructure} (i) and (\ref{fibercyc}), $\Phi_N$ divides both $\calj$ and $\calj\cap\calk$. 
Hence it suffices to prove that $\Phi_N|\calk$.

Let $a_k\in\calk$. By Lemma \ref{Kp_i^2plane}, we have $\bbK^{M_k}_{M_k}[a_k]=\bbK^{M_k}_{M_k}[x_j]=p_k$ for all $x_j\in \ZZ_M$ with $(a_k-x_j,M)=M/p_j$. In particular, $|\calk \cap \Pi(a_k,p_i^2)|=p_jp_k$, and, since $\Phi_{p_i}|A$, there are no other elements of $A$ in $\Pi(a_k,p_i)$. 
It follows that, for any $x_j$ as above,
\begin{equation}\label{balancedNvertices}
\bbK^N_N[a_k]=\bbK^N_N[x_j]=p_k
\end{equation}

We need to prove that $\bbK^N_N [\Delta]=0$ for every $N$-cuboid $\Delta$. It suffices to check this under the assumption that at least one vertex of $\Delta$ belongs to $\calk$, so that two of its vertices are at points $a_k$ and $x_j$ as above. The other two vertices are at $x,x'\in \ZZ_M$ with 
$$
(a_k-x,N)=(x_j-x',N)=N/p_i \text{ and } (x-x',N)=N/p_j .
$$ 
By (\ref{balancedNvertices}), the cuboid face containing $a_k$ and $x_j$ is balanced. Consider now the face containing $x$ and $x'$. By Lemma \ref{splitting} (ii) we need to consider two cases. 
If $A\cap\Pi(x,p_i)\subset\calj$, then this face must be balanced on the scale $N$, since $\Phi_N|\calj$. Otherwise, we must have $A\cap\Pi(x,p_i)\subset\calk$, and then by the same argument as above, either $\mathbb{K}^N_N[x]=\mathbb{K}^N_N[x']=p_k$ or $\mathbb{K}^N_N[x]=\mathbb{K}^N_N[x']=0$. In both cases, the cuboid is balanced, which proves the lemma.
\end{proof}

\begin{lemma}\label{M_kBdivisors}
Assume (F3) and (\ref{Phi_{p_i}|Aass}).
Then 
$$\Phi_d|B \ \ \forall d\in\{N_j,N_k,M/p_jp_k, M/p_j^2p_k, M/p_jp_k^2\}.$$
\end{lemma}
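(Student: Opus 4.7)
The divisor $N_j$ is handled immediately: Lemma \ref{Phi_N_j|B}, which is already proved under exactly our hypotheses (F3) and (\ref{Phi_{p_i}|Aass}), asserts $\Phi_{N_j}\nmid A$. Since $\Phi_{N_j}\mid A(X)B(X)$ by (\ref{poly-e2}), this forces $\Phi_{N_j}\mid B$. So from now on I focus on the four remaining divisors $d\in\{N_k,\,M/p_jp_k,\,M/p_j^2p_k,\,M/p_jp_k^2\}$; for each one the plan is to show $\Phi_d\nmid A$, which again yields $\Phi_d\mid B$. The structural ingredients I would exploit are: (a) the established divisibilities $\Phi_{p_i}\Phi_{p_j^2}\Phi_{M_k}\Phi_{M_k/p_i}\mid A$ and the consequent determination $S_A\supseteq\{p_i,p_j^2\}$; (b) Lemma \ref{splitting}(ii), which says that under $\Phi_{p_i}\mid A$ each of the $p_i$ planes $\Pi_l=\Pi(a_l,p_i)$ meeting $A$ carries exactly $p_jp_k$ points and is of pure type $\calj$ or pure type $\calk$, with both types occurring by (\ref{symdif}); and (c) Lemma \ref{Kp_i^2plane}, which pins $\calk$ as $M_k$-fibered in the $p_j$ direction.

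For each of the three ``small'' divisors $d\in\{M/p_jp_k^2,\,M/p_j^2p_k,\,M/p_jp_k\}$, I would exhibit an unbalanced $d$-cuboid $\Delta$ in $A$. Each such $d$ carries the factor $p_i^2$, so any $d$-cuboid has one of its edges in the $p_i$ direction of length $(d_i,M)=M/p_i^2$; I place the two $p_i^2$-opposite faces of $\Delta$ so that one lies in a plane $\Pi_l$ of type $\calj$ and the other in a plane $\Pi_{l'}$ of type $\calk$, both of which are available by (\ref{symdif}). Within a $\calj$-plane, $A$ is a disjoint union of $p_k$ fibers in the $p_j$ direction, so the $\Pi_l$-face contribution to $\bbA_M[\Delta]$ is computable explicitly from the $F_j$-divisibility properties (cf.~(\ref{fibercyc})); within a $\calk$-plane, using in addition the $M_k$-fibering of $\calk$ in the $p_j$ direction from Lemma \ref{Kp_i^2plane}, the $\Pi_{l'}$-face contribution is determined analogously but with $F_k$ playing the role of $F_j$. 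For the three values of $d$ under consideration, $\Phi_d$ divides exactly one of $F_j$, $F_k$ (if either), and a direct computation of the two face sums shows they do not cancel, so $\bbA_M[\Delta]\neq 0$ and hence $\Phi_d\nmid A$. The case $d=M/p_jp_k$ is the cleanest: a two-direction cuboid straddling one plane of each type gives an immediate count mismatch $p_j$ vs.~$p_k$.

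The case $d=N_k=M/p_k$ is the main obstacle, since $N_k$ is comparable in size to $N_j$ but the argument cannot be obtained by interchanging $j$ and $k$ in Lemma \ref{Phi_N_j|B}: we have $\Phi_{M_k}\mid A$ but only $\Phi_{M_j}\mid B$, so the asymmetry built into (\ref{Phi_{p_i}|Aass}) has to be handled by a different route. I would argue by contradiction: suppose $\Phi_{N_k}\mid A$. By Lemma \ref{fibercyc3} applied to $\calk$ (in the role that $\calj$ played for $N_j$), this is equivalent to $\Phi_{N_k}\mid\calk$. Applying Corollary \ref{subset} with the roles of $j$ and $k$ interchanged gives that, unless $\calk\subseteq\calj$, one must have $p_k=\min_\nu p_\nu$; but $\calk\setminus\calj\neq\emptyset$ by (\ref{symdif}), so $p_k$ is the smallest prime. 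Under this constraint, the structure dichotomy of Lemma \ref{Kunfibered}, together with Lemma \ref{fiberfibering}(i) and $\cali=\emptyset$, forces $\calk$ to be $N_k$-fibered in the $p_k$ direction on every $D(N_k)$-grid. Combining this with the $M_k$-fibering of $\calk$ in the $p_j$ direction from Lemma \ref{Kp_i^2plane} over-determines the structure of $\calk$: the resulting grid of $p_k$-fibers in the $p_k$ direction and $M_k$-fibers in the $p_j$ direction, together with $\Phi_{p_i}\mid A$ (which forces $|A\cap\Pi(a,p_i)|=p_jp_k$ in every occupied $p_i$-plane by Lemma \ref{splitting}(ii)), produces a count violating Lemma \ref{planebound}, giving the required contradiction. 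The delicacy of reconciling these partial fiberings of $\calk$ with the plane bound, all while $\Phi_{M_j}\nmid A$ rules out the $j\leftrightarrow k$ analog of Lemma \ref{Phi_N_j|B}, is where I expect the bulk of the work to lie.
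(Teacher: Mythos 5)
Your treatment of $d=N_j$ (quote Lemma \ref{Phi_N_j|B} and use (\ref{poly-e2})) is exactly what the paper does, and your outline for $d=N_k$ is close in spirit to the paper's argument: the paper also argues by contradiction via Lemma \ref{fibercyc3}, the dichotomy fibered/unfibered on $D(N_k)$-grids, Lemma \ref{fiberfibering}(i), and a counting contradiction coming from Lemma \ref{Kp_i^2plane} together with $\Phi_{p_i}\mid A$ (concretely, the fact that every line $\ell_k$ through a point of $\calk$ carries exactly $p_k$ points of $\calk$). Two caveats there: Lemma \ref{Kunfibered} does not ``force'' $N_k$-fibering --- its unfibered branch must be killed separately, using the fiber-chain conclusion $\mathbb{K}^{N_k}_{N_k/p_k}[x]=\phi(p_k^2)$ against the $p_k$-point-per-line bound --- and your final ``count violating Lemma \ref{planebound}'' is asserted rather than carried out, though the ingredients you cite do suffice. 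The detour through Corollary \ref{subset} to get $p_k=\min_\nu p_\nu$ is correct but unnecessary.

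The genuine gap is in the three divisors $d\in\{M/p_jp_k,\,M/p_j^2p_k,\,M/p_jp_k^2\}$. Your construction rests on placing the two faces of a $d$-cuboid perpendicular to the $p_i$ direction in planes of different types, one $\calj$ and one $\calk$. This is geometrically impossible. Since $p_i^2\mid d$ while $n_i=2$, the $p_i$-edge $d_i$ of a $d$-cuboid satisfies $(d_i,d)=d/p_i$, so $d_i$ has $p_i$-adic valuation exactly $1$: the two faces lie in different planes mod $p_i^2$ but in the \emph{same} plane mod $p_i$; moreover the $p_j$- and $p_k$-edges do not move the $\pi_i$-coordinate at all. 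Hence all vertices of any $d$-cuboid sit inside a single plane $\Pi(\cdot,p_i)$, and by Lemma \ref{splitting}(ii) (which applies because $\Phi_{p_i}\mid A$) that plane is of a single pure type (or misses $A$ entirely). So the $\calj$-versus-$\calk$ ``face mismatch'' you want to exploit can never occur, and your claim that ``a direct computation of the two face sums shows they do not cancel'' has nothing behind it: inside a pure $\calj$-plane all vertex counts are multiples of $p_j$ and the cuboids can perfectly well balance. The actual source of non-divisibility is different and is missing from your plan: one anchors the cuboid at a point $a_k\in\calk\setminus\calj$, uses (\ref{Kp_i^2p_jplane}) together with Lemma \ref{splitting}(ii) to show that \emph{every} vertex other than $a_k$ (and, for $d=M/p_jp_k$ and $M/p_j^2p_k$, the vertex displaced along $\ell_k(a_k)$) has zero count, and then gets an arithmetic contradiction: balancing forces $p_jp_k=|A\cap\Pi(a_k,p_i^2)|$ to equal $p_k\,\bbA^{d}_{d}[a_k]$, whose right-hand side is divisible by $p_k^2$ because all elements counted at a $\calk$-point come in full $M$-fibers in the $p_k$ direction that are constant mod $d$ (for $d=M/p_jp_k^2$ the cuboid simply has a single nonzero vertex and cannot balance at all). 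Without this zero-count-plus-$p_k$-divisibility mechanism, your argument for these three divisors does not go through.
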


\begin{proof}
For $d=N_j$, this follows from Lemma \ref{Phi_N_j|B}. 

Assume, by contradiction, that $\Phi_{N_k}|A$. By Lemma \ref{fibercyc3}, we must have $\Phi_{N_k}|\calk$. As in the proof of Lemma \ref{Phi_M/p_ip_k^2}, we have $|\calk \cap \Pi(a_k,p_i^2)|=p_jp_k$ for all $a_k\in\calk$, with each line $\ell_k(x)$ for $x\in a_k*F_j$ containing an $M$-fiber in the $p_k$ direction. In particular,
\begin{equation}\label{fancykline}
|\calk\cap\ell_k(a_k)|=p_k \ \  \forall a_k\in\calk.
\end{equation}
On the other hand, let $a'_k\in\calk\setminus\calj$. Then $\Phi_{N_k}|\calk$ implies one of the following:
\begin{itemize}
\item $\calk$ is not fibered on $\Lambda(a'_k, D(N_k))$. By Lemma \ref{Kunfibered}, 
there exists an $x\in \ZZ_M$ such that $\mathbb{K}^{N_k}_{N_k/p_k}[x]=\phi(p_k^2)$.
\item $\calk$ is $N_k$-fibered on $\Lambda(a'_k, D(N_k))$. By Lemma \ref{fiberfibering} (i), it can only be fibered in the $p_k$ direction.
\end{itemize}
Both of these are clearly incompatible with (\ref{fancykline}). This proves that $\Phi_{N_k}|B$.

To prove that $\Phi_{M/p_jp_k}\nmid A$, let $a_k\in \calk\setminus\calj$, and consider $M/p_jp_k$-cuboids with  vertices at $a_k$ and $x\in\ell_k(a_k)$ with $(x-a_k,M)=M/p_k^2$.  
By (\ref{Kp_i^2p_jplane}) and Lemma \ref{splitting} (ii), we have $\bbA^{M/p_jp_k}_{M/p_jp_k}[v]=0$ for any cuboid vertex $v$ other than $a_k$ and $x$. Varying $x$ as above, we see that in order for all such cuboids to be balanced we must have
$$|A\cap\Pi(a_k,p_i^2)|=p_k \bbA^{M/p_jp_k}_{M/p_jp_k}[a_k].$$
But this is not possible, since the left side is equal to $p_jp_k$ and the right side is divisible by $p_k^2$.
The same argument, with the cuboids collapsed further to scale $M/p_j^2p_k$, proves that 
$\Phi_{M/p_j^2p_k}\nmid A$.

Finally, we prove that $\Phi_{M_k/p_j}\nmid A$. Consider any $M_k/p_j$-cuboid with one vertex at $a_k\in \calk\setminus\calj$. By the same argument as above, we have $\bbA^{M_k/p_j}_{M_k/p_j}[v]=0$ for all vertices $v\neq a_k$, hence the cuboid cannot be balanced.
\end{proof}

\begin{lemma}\label{M_kp_isubtile}
Assume (F3) and (\ref{Phi_{p_i}|Aass}). Then the conditions of Theorem \ref{subtile} are satisfied in the $p_i$ direction,
after interchanging $A$ and $B$.
\end{lemma}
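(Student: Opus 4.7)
I will verify condition (ii) of Theorem \ref{subtile} applied to the tiling $B\oplus A=\ZZ_M$ in the $p_i$ direction (that is, with $A$ and $B$ interchanged). The initial hypothesis $\Phi_{p_i^2}|B$ follows since $\Phi_{p_i}|A$ by (\ref{Phi_{p_i}|Aass}) and T1 forces exactly one of $\Phi_{p_i}$ and $\Phi_{p_i^2}$ to divide $A$. Since also $p_i\parallel|B|$, Corollary \ref{slab-reduction} will complete the proof once (ii) is in place.

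Condition (ii) requires, for each $d|M$ with $p_i^2|d$, that either $\Phi_d|B$ or $\Phi_{d/p_i}\Phi_{d/p_i^2}|A$. Writing $d=p_i^2 e$ with $e|p_j^2p_k^2$, six of the nine cases are immediate: $e=1$ is handled by $\Phi_{p_i^2}|B$ above, and $e\in\{p_j,p_k,p_jp_k,p_j^2p_k,p_jp_k^2\}$ by the five corresponding $B$-divisibilities in Lemma \ref{M_kBdivisors} (namely $\Phi_{M/p_jp_k^2},\Phi_{M/p_j^2p_k},\Phi_{M/p_jp_k},\Phi_{N_k},\Phi_{N_j}|B$, respectively). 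The case $e=p_j^2$ (i.e.\ $d=M_k$) follows from $\Phi_{p_j^2}|A$ (Lemma \ref{Kp_i^2plane}) together with $\Phi_{p_ip_j^2}=\Phi_{M_k/p_i}|A$ (Lemma \ref{Phi_M/p_ip_k^2}).

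For $e=p_k^2$ (i.e.\ $d=M_j$) I will argue by dichotomy on $\Phi_{M_j}$. If $\Phi_{M_j}|B$, we are done. Otherwise $\Phi_{M_j}|A$; together with $\Phi_{p_i}|A$ this is the $p_j\leftrightarrow p_k$ swap of hypothesis (\ref{Phi_{p_i}|Aass}), and since every preceding structural lemma of Section \ref{fibered-sec} is symmetric in $\calj$ and $\calk$, the proofs of Lemmas \ref{Kp_i^2plane} and \ref{Phi_M/p_ip_k^2} go through verbatim after the interchange, producing $\Phi_{p_k^2}|A$ and $\Phi_{p_ip_k^2}|A$, which is the second alternative.

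The remaining case $e=p_j^2p_k^2$ (i.e.\ $d=M$) is the main obstacle: one needs $\Phi_M|B$ or $\Phi_{N_i}\Phi_{M_i}|A$. Although (F3) gives $\Phi_M|A$, this says nothing about $\Phi_M|B$ or about $\Phi_{N_i},\Phi_{M_i}$ dividing $A$. My plan is to propagate the cuboid-balancing argument of Lemma \ref{Phi_M/p_ip_k^2} up from scale $M_k$ to scales $N_i$ and $M_i$: a generic $M$-cuboid for $A$ with a vertex in $\calk$ has each of its two $p_i$-faces entirely contained either in $\calj$ or in $\calk$, by Lemmas \ref{splitting} and \ref{Kp_i^2plane}; on each such face the divisibility $\Phi_{M_k}|A$ (or its $p_j\leftrightarrow p_k$ counterpart established in the preceding step) forces balance, and the two face balances combine to yield balance of the full cuboid. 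Iterating this at scales $N_i$ and $M_i$ should yield $\Phi_{N_i}\Phi_{M_i}|A$. The step I expect to be most delicate is ensuring the propagation works uniformly across both sub-cases of the $d=M_j$ dichotomy, since the lower-scale cyclotomic inputs differ between them; careful tracking of the fibering structure of $\calj$ and $\calk$ developed throughout Section \ref{fibered-sec} will be essential.
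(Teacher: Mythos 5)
Your case breakdown is the same as the paper's: reduce to condition (ii) of Theorem \ref{subtile} in the $p_i$ direction with $A$ and $B$ interchanged, dispose of $d\in\{p_i^2,\,M/p_jp_k^2,\,M/p_j^2p_k,\,M/p_jp_k,\,N_k,\,N_j\}$ via $\Phi_{p_i^2}|B$ and Lemma \ref{M_kBdivisors}, handle $d=M_k$ by Lemmas \ref{Kp_i^2plane} and \ref{Phi_M/p_ip_k^2}, and handle $d=M_j$ by the dichotomy $\Phi_{M_j}|B$ versus $\Phi_{M_j}|A$ with the $j\leftrightarrow k$ swap of those two lemmas. All of that is correct and matches the paper.

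The gap is the case $d=M$, which you call the main obstacle and for which you only offer a sketch (a multi-scale cuboid-propagation from $M_k$ up to $N_i$ and $M_i$) whose delicate steps you acknowledge but do not carry out; as written, the needed divisibility $\Phi_{N_i}\Phi_{M_i}|A$ is not established. Moreover, the elaborate argument is unnecessary: assumption (F3) includes $\cali=\emptyset$, and Lemma \ref{fibercyc3} (applied in the $p_i$ direction) says that for $\alpha\in\{1,2\}$ one has $\Phi_{M/p_i^{\alpha}}|A$ if and only if $\Phi_{M/p_i^{\alpha}}|\cali$ — equivalently, if $\Phi_{M/p_i^{\alpha}}\nmid A$ then $\cali\neq\emptyset$. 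Since $\cali$ is empty (its mask polynomial is identically zero), this gives $\Phi_{N_i}\Phi_{M_i}|A$ at once, which is exactly how the paper settles $d=M$ in one line. So the missing idea is not a new balancing argument but simply recognizing that the hypothesis $\cali=\emptyset$, through Lemma \ref{fibercyc3}, already delivers the two cyclotomic divisibilities you need.
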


\begin{proof}
By assumption, $\Phi_{p_i^{2}}|B$. We need to verify that 
for every $d$ such that $p_i^{n_i}|d|M$ and $\Phi_d\nmid B$, we have
\begin{equation}\label{laundrylist}
\Phi_{d/p_i^\alpha}|A,\ \ \alpha\in \{1,2\}.
\end{equation}
By Lemma \ref{M_kBdivisors}, it remains to check (\ref{laundrylist}) for $d\in\{M,M_j, M_k\}$.

\begin{itemize}
\item For $d=M$, (\ref{laundrylist}) follows from Lemma \ref{fibercyc3} since $\cali=\emptyset$.

\item For $d=M_k$, we have $\Phi_{M_k/p_i}\Phi_{p_j^2}|A$ by Lemmas \ref{Kp_i^2plane} 
and \ref{Phi_M/p_ip_k^2}.

\item For $d=M_j$, if $\Phi_{M_j}|B$, there is nothing to prove. If on the other hand $\Phi_{M_j}|A$, then
Lemma \ref{Kp_i^2plane} and Lemma \ref{Phi_M/p_ip_k^2} hold with $j$ and $k$ interchanged, and so (\ref{laundrylist}) also holds in this case.
\end{itemize}
\end{proof}

This resolves the case $\Phi_{M_k}|A$. The case $\Phi_{M_j}|A$ is similar, with $j$ and $k$ interchanged. It remains to prove Proposition \ref{F3structure} under the assumption that
\begin{equation}\label{cycassumption}
\Phi_{M_\nu}\nmid A \text{ for } \nu\in \{j,k\}. 
\end{equation}
Without loss of generality, we may also assume that 
\begin{equation}\label{porzadekalfabetyczny}
p_k>p_j.
\end{equation}
By Corollary \ref{subset}, this implies that 
$\Phi_{N_k}\Phi_{M_k}|B$. It follows that $B$ is $\calt$-null with respect to the cuboid type $\calt=(N_k,\vec{\delta},1)$, where $\vec{\delta}=(1,1,0)$. Since cuboids of this type are 2-dimensional, it follows by Lemma \ref{2d-cyclo} that for every $b\in B$ at least one of the following holds:
\begin{equation}\label{BTp_jfiber}
\bbB_M[y]+ \bbB_{M/p_k}[y]=1 \text{ for every } y\in \ZZ_M \text{ with } (b-y,M)=M/p_j,
\end{equation}
\begin{equation}\label{BTp_ifiber}
\bbB_{M}[y]+\bbB_{M/p_k}[y]=1 \text{ for every } y\in \ZZ_M \text{ with } (b-y,M)=M/p_i.
\end{equation}
In particular, this implies that
\begin{equation}\label{Btopdivisors}
\{D(M)|m|M\}\cap \Div(B)\neq \emptyset.
\end{equation}

\begin{lemma}\label{BPhi_{N_j}}
Assume (F3), (\ref{cycassumption}), and (\ref{porzadekalfabetyczny}). 
Then $\Phi_{p_i}|A$ and $\Phi_{N_j}\nmid A$.
\end{lemma}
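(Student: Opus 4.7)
My plan is to prove both assertions by contradiction, using the cyclotomic divisibility $\Phi_{M_j}\Phi_{M_k}\Phi_{N_k}|B$ established just before the lemma, the resulting dichotomy \eqref{BTp_jfiber}/\eqref{BTp_ifiber} for each $b\in B$, and the hypotheses (F3), \eqref{cycassumption}, together with $p_k>p_j$.

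\emph{For $\Phi_{p_i}|A$.} Suppose for contradiction that $\Phi_{p_i^2}|A$; then $\Phi_{p_i}|B$ and $A\subset \Pi(a_0,p_i)$ for every $a_0\in A$. Fix $a_j\in\calj\setminus\calk$ and $a_k\in\calk\setminus\calj$ using (F3); both lie in the common hyperplane $\Pi(a_j,p_i)$. Choose $x\in(a_j*F_k)\setminus A$ (which exists since $a_j\notin\calk$); note $x\in\Pi(a_j,p_i)$ as well. I will analyse the saturating set $A_x$: the enriched divisor list $\Phi_{p_i}\Phi_{M_k}\Phi_{N_k}|B$ combined with the dichotomy \eqref{BTp_jfiber}/\eqref{BTp_ifiber} restricts $\Div(B)$ sharply enough that, via $\langle \bbA[x],\bbB[b]\rangle=1$ and Lemma \ref{triangles}, each $A_{x,b}$ must lie on the single line $\ell_i(x)$. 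Lemma \ref{1dim_sat-cor} then produces an $M$-fiber of $A$ in the $p_i$ direction at distance $M/p_i^2$ from $x$, contradicting $\cali=\emptyset$. The ordering $p_k>p_j$ will be invoked to exclude alternative placements of $A_{x,b}$ on $\ell_k(x)$ or on lines parallel to $\ell_j(a_j)$ and $\ell_k(a_k)$.

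\emph{For $\Phi_{N_j}\nmid A$.} Assume $\Phi_{N_j}|A$, so $\Phi_{N_j}|\calj$ by Lemma \ref{fibercyc3}. Corollary \ref{subset} applied with $j$ in place of $k$ gives $\calj\subset\cali\cup\calk=\calk$ whenever $p_j>\min_\nu p_\nu$, contradicting $\calj\setminus\calk\neq\emptyset$; hence $p_j<p_i$. Lemma \ref{Kunfibered} (with $j$ in place of $k$) shows that if $\calj$ fails to be $N_j$-fibered on some $D(N_j)$-grid then $\{D(M)|m|M\}\subset\Div(A)$, which combined with \eqref{Btopdivisors} violates divisor exclusion; so $\calj$ is $N_j$-fibered on every $D(N_j)$-grid. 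Using Part 1 we have $\Phi_{p_i}|A$, so Lemma \ref{splitting}(ii) applies: for $a_j'\in\calj\setminus\calk$, the plane intersection $A\cap\Pi(a_j',p_i)$ has cardinality $p_jp_k$, is contained in $\calj$, and decomposes as a union of $p_k$ disjoint $p_j$-fibers of $\calj$. On scale $N_j$ the $\calj$-multiset has constant multiplicity $p_j$, so after factoring out $p_j$ the $N_j$-fibering of $\calj\cap\Lambda(a_j',D(N_j))$ must decompose the remaining $p_k$ points into $N_j$-fibers whose lengths lie in $\{p_i,p_j,p_k\}$; since $p_i,p_j,p_k$ are distinct primes, the only decomposition is a single $N_j$-fiber of length $p_k$. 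Lifting this alignment back to $\ZZ_M$, the $p_k$ base points form an $M$-fiber of $A$ in the $p_k$ direction rooted at $a_j'$, so $a_j'\in\calk$, contradicting $a_j'\in\calj\setminus\calk$.

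\emph{Main obstacle.} The delicate part is Part 1. Because \eqref{Btopdivisors} tells us that $\{D(M)|m|M\}\cap\Div(B)\neq\emptyset$, the hypothesis \eqref{notopdivB} of Lemma \ref{flatcorner} fails and we cannot outsource the saturating set calculation to it. The argument must instead enumerate by hand which divisors of $M$ actually lie in $\Div(B)$, using $\Phi_{p_i}\Phi_{M_k}\Phi_{N_k}|B$ and the dichotomy \eqref{BTp_jfiber}/\eqref{BTp_ifiber} to bound each term in $\langle\bbA[x],\bbB[b]\rangle=1$; the asymmetry $p_k>p_j$ is indispensable for ruling out the \eqref{BTp_jfiber} branch for the $b$'s we need to invoke.
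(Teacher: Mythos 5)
Your proposal has a genuine gap, and it sits exactly where you flagged it. Your Part 1 (proving $\Phi_{p_i}|A$ before anything is known about $\Phi_{N_j}$) is only a plan: the central claim, that the divisor information from $\Phi_{M_j}\Phi_{M_k}\Phi_{N_k}|B$ together with the dichotomy \eqref{BTp_jfiber}/\eqref{BTp_ifiber} forces $A_{x,b}\subset\ell_i(x)$ for every $b\in B$, is never carried out, and it is far from clear that it can be: for $x\in a_j*F_k$ the constraint \eqref{bispan} coming from $a_j$ alone only gives $A_x\subset\Pi(x,p_k^2)\cup\Pi(a_j,p_k^2)$, and \eqref{Btopdivisors} guarantees that some top-level divisor does lie in $\Div(B)$, so the Lemma \ref{flatcorner}-type exclusions are unavailable, as you note. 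Two further problems: from $\Phi_{p_i^2}|A$ you assert $A\subset\Pi(a_0,p_i)$, which does not follow unless some plane already contains $p_jp_k$ elements of $A$; and Lemma \ref{1dim_sat-cor} requires $M/p_i\in\Div(A)$, which you never verify. The paper proceeds in the opposite order: it first proves $\Phi_{N_j}\nmid A$, with a case analysis over $\Phi_{p_i}|A$ and $\Phi_{p_i^2}|A$ (the latter handled by the slab count $p_jp_k=|A'_{p_i}|=c_jp_j^2+c_kp_k$ with $c_j>0$, in the notation of Theorem \ref{subtile}), and only then deduces $\Phi_{p_i}|A$: once $\Phi_{N_j}\Phi_{M_j}|B$ is known, the two-dimensional cuboid dichotomy for $p_j$ shows that every $b\in B$ satisfies \eqref{BTjp_ifibered} (the other branch would force $M/p_k\in\Div(B)$, impossible since $\calk\neq\emptyset$ and $p_k>p_j$), and Lemma \ref{cyclo-grid} with $m=M/p_ip_j$, $s=p_i^2$ then gives $\Phi_{p_i^2}|B$, hence $\Phi_{p_i}|A$. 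Your ordering deprives you of precisely the input ($\Phi_{N_j}|B$) that makes this step work.

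Your Part 2, conditional on Part 1, starts like the paper's argument (Corollary \ref{subset} forcing $p_j=\min_\nu p_\nu$, Lemma \ref{Kunfibered} plus \eqref{Btopdivisors} forcing $N_j$-fibering, Lemma \ref{splitting}(ii) for the plane count), but the concluding count is wrong. The $p_k$ points of multiplicity $p_j$ obtained by projecting $A\cap\Pi(a_j',p_i)$ mod $N_j$ need not lie in a single $D(N_j)$-grid, different grids may be $N_j$-fibered in different directions, and the arithmetic claim that $p_k$ admits no decomposition into parts of sizes $p_i,p_j$ is false (e.g.\ $13=2\cdot 5+3$); moreover an $N_j$-fiber in the $p_k$ direction lifts to differences in $\{M/p_k,M/p_jp_k\}$, so it does not yield an $M$-fiber in the $p_k$ direction through $a_j'$ and hence no contradiction with $a_j'\notin\calk$. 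The paper avoids all of this by first pinning down the direction of the $N_j$-fibering to be $p_j$ via Lemma \ref{fiberfibering}(i), after which the plane count $p_jp_k$ would have to be divisible by $p_j^2$, which is the desired contradiction.
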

\begin{proof}
We start with the second part. Assume for contradiction that
$\Phi_{N_j}|A$. By Lemma \ref{Kunfibered} applied to $p_j$, (\ref{Btopdivisors}), and Lemma \ref{fiberfibering} (i), 
the set $\calj\setminus\calk$ must be $N_j$-fibered in the $p_j$ direction. Let $a_j\in \calj\setminus\calk$.
We now consider two cases.
 
\begin{itemize} 
\item Suppose that $\Phi_{p_i}|A$.
By Lemma \ref{splitting} (ii), we have $A\cap\Pi(a_j,p_i)\subset\calj\setminus\calk$ and $|A\cap\Pi(a_j,p_i)|=p_jp_k$.
But then the fibering of $\calj\setminus\calk$ implies that $p_jp_k$ is divisible by $p_j^2$, a contradiction.

\item Assume now that $\Phi_{p_i^{2}}|A$. Let $A'$ be a translate of $A$ such that $a_j\in A'_{p_i}$. By the cyclotomic divisibility assumption, we have $|A'_{p_i}|=p_jp_k$. On the other hand, by the fibering properties of $A$, 
$$
p_jp_k=|A'_{p_i}|=c_jp_j^2+c_kp_k,\ \ c_j>0.
$$
Thus $c_j=p_kc'_j$ and $p_j=c'_jp_j^2+c_k$ with $c'_j>0$, a contradiction.
\end{itemize}

Therefore $\Phi_{N_j}\nmid A$. By (\ref{cycassumption}), we have $\Phi_{N_j}\Phi_{M_j}|B$. Applying the same argument as in (\ref{BTp_jfiber}), (\ref{BTp_ifiber}) to $p_j$ instead of $p_k$, we get that every $b\in B$ must satisfy at least one of
$$
\bbB_M[y]+ \bbB_{M/p_j}[y]=1 \text{ for every } y\in \ZZ_M \text{ with } (b-y,M)=M/p_k,
$$ 
\begin{equation}\label{BTjp_ifibered}
\bbB_{M}[y]+\bbB_{M/p_j}[y]=1 \text{ for every } y\in \ZZ_M \text{ with } (b-y,M)=M/p_i.
\end{equation}
But since $p_k>p_j$, if the former holds for some $b\in B$, we must have $M/p_k\in \Div(B)$, which is not allowed. Thus (\ref{BTjp_ifibered}) holds for all $b\in B$. Hence the assumptions of Lemma \ref{cyclo-grid} hold for $B$, with $m=M/p_ip_j$ and $s=p_i^2$. It follows that $\Phi_{p_i^{2}}|B$, and therefore $\Phi_{p_i}|A$.
\end{proof}

\begin{lemma}\label{noM_nusubtile}
Assume (F3), (\ref{cycassumption}), and (\ref{porzadekalfabetyczny}). 
Then the conditions of Theorem \ref{subtile} are satisfied in the $p_i$ direction,
after interchanging $A$ and $B$. 
\end{lemma}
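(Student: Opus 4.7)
The plan is to verify condition (ii) of Theorem \ref{subtile} for the tiling $B \oplus A = \ZZ_M$ in the $p_i$ direction. Since $\Phi_{p_i}|A$ by Lemma \ref{BPhi_{N_j}}, the precondition $\Phi_{p_i^{2}}|B$ is satisfied. We must check, for each of the nine divisors $d$ with $p_i^{2}|d|M$, that either $\Phi_d|B$ or $\Phi_{d/p_i}\Phi_{d/p_i^{2}}|A$.

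First I would dispose of six of the nine cases directly. The divisors $d\in\{p_i^{2},p_i^{2}p_j^{2},p_i^{2}p_k^{2},p_i^{2}p_j^{2}p_k,p_i^{2}p_jp_k^{2}\}$ are handled by the divisibilities $\Phi_{p_i^{2}}|B$ (Lemma \ref{BPhi_{N_j}}), $\Phi_{M_j}\Phi_{M_k}|B$ (from (\ref{cycassumption})), $\Phi_{N_j}|B$ (Lemma \ref{BPhi_{N_j}}), and $\Phi_{N_k}|B$ (Corollary \ref{subset} applied in the $p_k$ direction, which is valid since $p_k>p_j\geq\min_\nu p_\nu$). For $d=M$, since $\cali=\emptyset$ we have $\cali(X)\equiv 0$, so the equivalence in Lemma \ref{fibercyc3} (applied in the $p_i$ direction) yields both $\Phi_{N_i}|A$ and $\Phi_{M_i}|A$, which is the required alternative.

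The three remaining divisors $d\in\{p_i^{2}p_j,p_i^{2}p_k,p_i^{2}p_jp_k\}$ are the substantive cases. For each I would show $\Phi_d|B$ via Lemma \ref{cyclo-grid}: it suffices to exhibit, for every $b\in B$, a subgrid $\Lambda(b,m)$ on which $\Phi_d$ divides $B\cap\Lambda(b,m)$. The key structural input is the transversal identity (\ref{BTjp_ifibered}) from Lemma \ref{BPhi_{N_j}}, which forces $B\cap\Lambda(b,M/p_ip_j)$ to contain exactly one element of $B$ in each $p_i$-layer of the $p_i\times p_j$ subgrid. Rerunning the proof of Lemma \ref{Bfiber} with $j$ and $k$ interchanged, and using $M/p_k\notin\Div(B)$ together with (\ref{porzadekalfabetyczny}) to select the correct branch of the dichotomy, should yield the analogous transversal in the $p_k$ direction. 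Combined with the global divisibilities $\Phi_{M_j}\Phi_{M_k}\Phi_{p_i^{2}}|B$, these two transversal identities then yield the required per-grid divisibilities: for $d=p_i^{2}p_j$ one uses the plane $m=p_k^{2}$ with the $p_j$-transversal and $\Phi_{M_k}|B$; the case $d=p_i^{2}p_k$ is symmetric; and for $d=p_i^{2}p_jp_k$ one works in $\Lambda(b,p_jp_k)$ (of size $p_i^{2}p_jp_k$) and combines both transversals.

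The main obstacle is the three-prime case $d=p_i^{2}p_jp_k$, which demands simultaneous compatibility of the $p_j$- and $p_k$-direction transversal structures inside the same subgrid $\Lambda(b,p_jp_k)$. I expect the argument to proceed by showing that the two transversals force $B\cap\Lambda(b,p_jp_k)$ to have exactly one element in each column (i.e., each pair of residues modulo $(p_j,p_k)$ within a given $p_i$-layer), which is precisely the cuboid-balance condition for $\Phi_{p_i^{2}p_jp_k}$ to divide the restriction. Making the symmetric $p_k$-direction transversal argument fully rigorous and verifying the compatibility of the two transversal structures will be the bulk of the remaining work.
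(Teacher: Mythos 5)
Your handling of the six routine divisors ($p_i^{2}$, $M_j$, $M_k$, $N_j$, $N_k$, and $M$, the last via $\cali=\emptyset$ and Lemma \ref{fibercyc3}) matches the paper. The gap is in the three remaining divisors $d\in\{p_i^{2}p_j,\,p_i^{2}p_k,\,p_i^{2}p_jp_k\}$, which are the substantive content of the lemma. Your plan rests on a $p_k$-direction analogue of the transversal identity (\ref{BTjp_ifibered}), and the route you propose to it does not work. First, Lemma \ref{Bfiber} is a statement proved under the corner assumptions (C1)--(C3) and is not available under (F3); the relevant dichotomy here is (\ref{BTp_jfiber})/(\ref{BTp_ifiber}), coming from $\Phi_{N_k}\Phi_{M_k}|B$. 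Second, and more seriously, the branch selection fails in that direction: in the proof of Lemma \ref{BPhi_{N_j}} the excluded branch has $\phi(p_k)$ positions at distance $M/p_k$ from $b$ matched against only $\phi(p_j)$ possible nonzero $M/p_j$-offsets, so $p_k>p_j$ forces a collision producing the forbidden difference $M/p_k$. In your symmetric situation there are only $\phi(p_j)$ positions against $\phi(p_k)$ offsets, so (\ref{porzadekalfabetyczny}) points the wrong way, no collision is forced, and the differences actually produced are of the form $M/p_jp_k$, which need not lie outside $\Div(B)$ under (F3) (the section explicitly allows $\calj\cap\calk=\emptyset$, in which case $M/p_jp_k$ need not be a difference of $A$). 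So the $p_k$-transversal you need for $d=p_i^{2}p_k$ and $d=p_i^{2}p_jp_k$ is unsupported, and you yourself flag it as the bulk of the remaining work.

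The missing observation -- and the paper's actual argument -- is that no second transversal is needed: all three remaining divisors divide $N_j=M/p_j$. Since (\ref{BTjp_ifibered}) holds for every $b\in B$, one can write $B(X)=B^i(X)+Q(X)(X^{M/p_j}-1)$, where $B^i$ is $M$-fibered in the $p_i$ direction. For each $d$ in question, $p_i^{2}|d$ gives $\Phi_d|B^i$ by (\ref{fibercyc}), while $d\,|\,M/p_j$ gives $\Phi_d\,|\,(X^{M/p_j}-1)$; hence $\Phi_d|B$ for all three divisors simultaneously, with no need for Lemma \ref{cyclo-grid}, per-grid cuboid checks, or any compatibility analysis of two transversal structures. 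Your sketch for $d=p_i^{2}p_j$ alone (the plane $p_k^{2}$ plus $\Phi_{M_k}|B$) is also vaguer than this one-line decomposition, which subsumes it.
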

\begin{proof}
We verify the conditions of Theorem \ref{subtile}. By (\ref{cycassumption}) and Lemma \ref{BPhi_{N_j}}, we have
$\Phi_d|B$ for $d\in\{p_i^2,M_j,M_k,N_j\}$. Next, we claim that
\begin{equation}\label{almostdone}
\Phi_d|B\hbox{ for }d\in\{M/p_jp_k,M/p_j^2p_k,M/p_jp_k^2\}.
\end{equation}
Since (\ref{BTjp_ifibered}) holds for all $b\in B$, we may write $B$ as 
%\begin{equation}\label{Brewrite}
$$
B(X)=B^i(X)+Q(X)(X^{M/p_j}-1)
$$
%\end{equation}
for some polynomial $Q(X)$, where $B^i$ is $M$-fibered in the $p_i$ direction. By (\ref{fibercyc}), we have $\Phi_d|B^i$ for all $p_i^2|d$. Using also that $\Phi_d|(X^{M/p_j}-1)$ for all $d|M/p_j$, we get (\ref{almostdone}).  

Finally, since $\Phi_M|A$, we need to prove that $\Phi_{M/p_i^\alpha}|A$ for $\alpha\in \{1,2\}$. Indeed, since $\cali$ is empty, this follows from Lemma \ref{fibercyc3}.
\end{proof}

This concludes the proof of Proposition \ref{F3structure}.

%%%%%%%%%%%%%%%%%%%%%%%%%%%%%%%%%%%%%%%%%%%%%%

\section{Acknowledgement} 

Both authors were supported by NSERC Discovery Grants.
We are grateful to the anonymous referee for many helpful comments and suggestions.

%%%%%%%%%%%%%%%%%%%%%%%%%%%%%%%%%%%%%%%%%%%%%%

%%%%%%%%%%%%%%%%%%%%%%%%%%%%%%%%%%%%%%%%%%%%%

\bibliographystyle{amsplain}

\bigskip

\noindent{{\sc Laba:} Department of Mathematics, UBC, Vancouver,
B.C. V6T 1Z2, Canada}

\noindent{\it ilaba@math.ubc.ca}

\smallskip

\noindent{{\sc Londner:} Department of Mathematics, UBC, Vancouver,
B.C. V6T 1Z2, Canada. 
\\
{\it Current address:} Department of Mathematics, Faculty of Mathematics and Computer Science, Weizmann Institute of Science, Rehovot 7610001, Israel}

\noindent{\it itay.londner@weizmann.ac.il}

\medskip

\noindent{ORCID ID: {\L}aba 0000-0002-3810-6121, Londner 0000-0003-3337-9427}

\end{document}